\DeclareMathOperator{\Gal}{Gal}
\DeclareMathOperator{\Spec}{Spec}
\DeclareMathOperator{\Hom}{Hom}
\DeclareMathOperator{\End}{End}
\DeclareMathOperator{\Aut}{Aut}
\DeclareMathOperator{\Ker}{Ker}
\DeclareMathOperator{\Ind}{Ind}
\DeclareMathOperator{\ind}{ind}
\DeclareMathOperator{\Tr}{Tr}
\DeclareMathOperator{\Trd}{Trd}
\DeclareMathOperator{\Nrd}{Nrd}
\DeclareMathOperator{\Br}{Br}
\DeclareMathOperator{\disc}{disc}
\DeclareMathOperator{\Inv}{Inv}
\DeclareMathOperator{\Id}{Id}
\DeclareMathOperator{\Pf}{Pf}
\DeclareMathOperator{\Quad}{Quad}
\DeclareMathOperator{\Alt}{Alt}
\DeclareMathOperator{\Hyp}{Hyp}
\DeclareMathOperator{\ext}{ext}
\newcommand{\inj}{\hookrightarrow}
\newcommand{\impl}{\Longrightarrow}
\newcommand{\Isom}{\stackrel{\sim}{\longrightarrow}}
\newcommand{\ie}{\textit{i.e.}}
\newcommand{\Z}{\mathbb{Z}}
\newcommand{\N}{\mathbb{N}}
\newcommand{\Q}{\mathbb{Q}}
\newcommand{\R}{\mathbb{R}}
\newcommand{\pfis}[1]{\langle\langle #1\rangle\rangle}
\newcommand{\To}{\longrightarrow}
\newcommand{\fdiag}[1]{\langle #1\rangle}
\newcommand{\ens}[2]{\{ #1\,|\, #2\}}
\newcommand{\transp}[1]{{}^t \! #1}
\newcommand{\tld}{\widetilde}
\newcommand{\eps}{\varepsilon}
\newcommand{\p}{\mathfrak{p}}
\newcommand{\bitem}{\item[$\bullet$]}
\newcommand{\pgq}{\geqslant}
\newcommand{\ppq}{\leqslant}
\newcommand{\CBr}[1][K]{\mathbf{Br}(#1)}
\newcommand{\CBrh}[1][K]{\mathbf{Br}_h(#1)}
\newcommand{\Zd}{\Z/2\Z}
\newcommand{\can}{\overline{\phantom{a}}}
\renewcommand{\phi}{\varphi}
\newcommand{\foncdef}[5]{\begin{array}{rrcl}
#1 : & #2 & \To & #3 \\
 & #4 & \longmapsto & #5
\end{array}}
\newcommand{\anonfoncdef}[4]{\begin{array}{rcl}
#1 & \To & #2 \\
#3 & \longmapsto & #4
\end{array}}
\newcommand{\isomdef}[5]{\begin{array}{rrcl}
#1 : & #2 & \Isom & #3 \\
 & #4 & \longmapsto & #5
\end{array}}
\newcommand{\anonisomdef}[4]{\begin{array}{rcl}
#1 & \Isom & #2 \\
#3 & \longmapsto & #4
\end{array}}
\numberwithin{equation}{section}
\newtheorem{thm}[equation]{Théorème}
\newtheorem{prop}[equation]{Proposition}
\newtheorem{coro}[equation]{Corollaire}
\newtheorem{lem}[equation]{Lemme}
\newtheorem{propdef}[equation]{Proposition-définition}
\theoremstyle{definition}
\newtheorem{defi}[equation]{Définition}
\newtheorem{rem}[equation]{Remarque}
\newtheorem{ex}[equation]{Exemple}
\author{Nicolas Garrel}
\title{Invariants cohomologiques de groupes algébriques et d'algèbres à involution}
\begin{document}

\thispagestyle{empty}

\newgeometry{top=6em}

\vspace{-50em}

\includegraphics[scale=0.65]{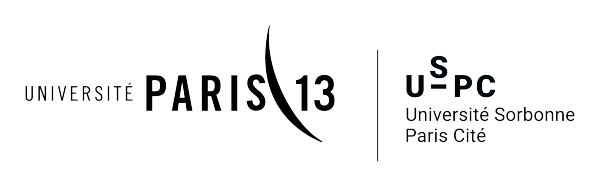}%
\hfill
\includegraphics[scale=0.45]{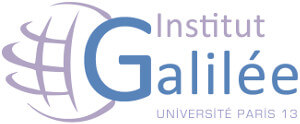}

{\large

\vspace*{1cm}

\begin{center}

{\bf THÈSE DE DOCTORAT DE \\ l'UNIVERSITÉ PARIS 13}

\vspace*{0.5cm}

Spécialité \\ [2ex]
{\bf Mathématiques}\ \\ 

\vspace*{0.5cm}

École doctorale Galilée

\vspace*{1cm}

Présentée par \ \\

\vspace*{0.5cm}

{\Large {\bf Nicolas GARREL}}

\vspace*{1cm}
Pour obtenir le grade de \ \\[1ex]
{\bf DOCTEUR de l'UNIVERSITÉ PARIS 13} \ \\

\vspace*{1cm}

Sujet de la thèse : \\
{\Large {\bf Invariants cohomologiques de groupes algébriques et d'algèbres à involution \\ }}

\vspace*{1.5cm} 
Soutenue le 17 octobre 2018\\[2ex]
devant le jury composé de :\\[3ex]
\begin{tabular}{r@{\ }lll}
  & Mme Anne {\sc Quéguiner} & Directrice de thèse & Université Paris 13\\
  & M. Philippe {\sc Gille} & Rapporteur & CNRS, Université Lyon 1\\
  & M. Michel {\sc Brion} & Examinateur & CNRS, Université Grenoble-Alpes \\
  & M. Charles {\sc De Clercq} & Examinateur & Université Paris 13  \\
  & M. Bruno {\sc Kahn} & Examinateur & CNRS, Université Paris 7  \\
  & M. Jean-Pierre {\sc Tignol} & Examinateur & Université Catholique de Louvain \\
\end{tabular}

\end{center}

\pagebreak

\restoregeometry

\tableofcontents

\chapter*{Remerciements}
\addcontentsline{toc}{chapter}{Remerciements}

Il était une fois un petit garçon qui aimait bien les nombres, et les énigmes.
Il était une fois un adolescent qui nourrissait quelque aspiration de devenir
chercheur, peut-être, qui sait. Il était une fois un jeune adulte qui,
au coeur de la nuit, à la lueur vacillante d'une lampe Ikea™, se prenait la tête entre les mains,
se demandant avec désespoir s'il était judicieux de séparer la proposition
2.3.9 en deux lemmes distincts, ou
s'il valait mieux conserver une preuve plus longue mais un énoncé plus clair,
ou bien encore aller dormir et trancher le problème le lendemain, J-3 avant la
sacro-sainte date limite.

Quelle aventure que tout ceci. Comme toute belle épopée, elle a ses origines, et ses
détours. Des épreuves, de l'amour, des rires, des tragédies. Et ce sentiment, en tournant la dernière
page, que voilà, ah, ça y est, c'est fini. C'est un peu triste, un livre qui
s'achève, on voudrait toujours qu'il y ait une suite. Il y en aura une. Mais
dans ce livre-ci, vous n'avez pas droit aux épreuves, à l'amour, aux rires ou aux tragédies.
Rien de tout cela n'est écrit ici, vous n'y pourrez lire que quelques théorèmes,
quelques remarques, quelques exemples. Et à vrai dire, il y a des chances que vous
ne les lisiez pas. Personne ne veut retenir de la Quête du Graal qu'une interminable
description d'un vase en or, ou peut-être d'une pierre incandescente, ou d'un bocal
à anchois. On rêve de chevalerie, et pas d'artisanat, si minutieux soit-il.
C'est pourquoi, dans ces rares pages que vous lirez, je veux au moins présenter quelques
personnages, quelques companions, qui m'ont accompagné dans cette quête.
Dire merci aux Merlin, aux Lancelot, aux Guenièvre
et aux Perceval de cette légende, qui n'y auraient pas autrement figuré. 
\\

Il me faut avant tout remercier mon jury, Charles De Clercq, Bruno Kahn,
Michel Brion, Jean-Pierre Tignol, d'avoir pris le temps et la peine
de venir m'écouter et de me lire, en particulier mes rapporteurs, Philipe
Gille et Alexander Merkurjev, qui ont dû éplucher mon manuscrit pendant des jours
que l'on consacre plus volontier aux vacances. Bien entendu il reste un nom,
le plus évident, celui d'Anne, ma directrice, à qui cette thèse doit énormément,
plus que je n'ai de place ici à y consacrer.
Elle lui doit des mathématiques, bien sûr, mais aussi et surtout de l'écoute,
du soutien, des encouragements, toutes ces choses sans lesquelles on
ne peut pas envisager de passer trois ans avec un manuscrit pour horizon.

Je dois ensuite remercier ma famille, et en premier lieu mon père, qui m'a élevé,
m'a toujours soutenu même lorsqu'il n'avait pas la plus petite idée de ce dans
quoi je me lançais, et m'a consacré un gros bout de sa vie. Mina aussi, qui tient
une place particulière dans mon coeur. Je ne peux pas citer tous les noms, grand-parents, oncles,
tantes, cousins et cousines de toutes sortes, de sang ou de coeur, toutes
ces personnes qui n'ont pas toutes été nécessairement à mes côtés pendant cette thèse,
mais qui toutes m'ont vu grandir, m'ont fait grandir; et il faut bien grandir avant de partir
à l'aventure.

Mes ami-e-s de Nice, Damien, Jonjon, le Baobab Rousseau, Foued-Barnabé,
et bien d'autres, avec qui j'ai été très jeune et stupide,
puis de moins en moins jeune, et tout aussi stupide. Je persiste à penser que l'UNSS Sloubi
avait une vraie chance de succès.

Celleux de prépa, qui ont permis à deux années que l'on qualifiera pudiquement de
studieuses d'être parmi les meilleures de ma vie. Que ce soient les internes de
HX1 (Paul/Pierre, Fiona, Anthony, Pauline, Miguel, Benoît et les autres) ou le clan des calorifères
par la suite (Charlotte lumière de mes jours, Ric le panda, ...).
Je suis arrivé en prépa comme on arrive à Poudlard, les yeux écarquillés et le
coeur serré, j'en suis parti avec plus de souvenirs et d'amis qu'à savoir quoi en faire.

Mes innombrables et inoubliables ami-e-s de la période ENS, qu'il m'est physiquement
impossible de citer nommément; que ce soit justement
à l'École, en DG, au volley (la première équipe 2 de l'Histoire, la meilleure équipe 1
de l'Histoire), en K-Fêt, au BOcal, au COF, au BDS, au Mega, en AG, en Courô,
à Montrouge, en cave 26, parfois même dans l'amphi Rataud, dans ces mille et un lieux que j'ai hantés,
où j'ai hurlé, ri, bu, pleuré, trollé et où surtout je me suis senti chez moi ; dans
d'autres écoles aussi puisqu'une ne me suffisait pas, et notamment à l'X (Morgane,
Hugo, Bastien, ...) où j'ai si souvent trouvé bon accueil.

Enfin pour la chronologie, mes camarades de thèse, cette bande de braves qui se retrouvent au 12,
Pierre, Tom, Mattia, Carlos, Anna, Marta, Jean-Michel, notre regrettée cheffe Anna-Laura,
Delphin, Daniel, Didier et les autres, passés et présents, mais aussi les chercheuses
et chercheurs qui ont accompagné mes premiers pas, et les membres de l'équipe de support,
notamment Isabelle et Yolande qui ont compensé comme elles ont pu mes maladresses
administratives.
\\

Je n'ai pas pu rendre justice à toute la galerie de personnages qu'il m'aurait fallu
évoquer, par manque de temps, par manque de place, par défaillance de ma mémoire,
et si des yeux venaient à parcourir ces lignes et à ne s'y reconnaître nulle part, je
tiens à m'en excuser. Mais il est un groupe qui doit figurer plus que tous les autres,
ma Table Ronde, ma deuxième famille : les grotas.

Louis, seigneur de la nuit, des profondeurs,
et guérisseur de machines. Paul Odieux et Puant Simon, l'ours le plus brave dont un couple
de saumons pouvait rêver. Grogrodile, à qui je reconnais que en vrai, Grenoble, ça passe.
Machin, qui doit être en train de devenir fou en triant les pages de cette thèse par couleur.
Pandarion, dont le demi-médaillon est forgé non dans l'acier mais dans mon coeur. Amiel,
et ses divers alter-ego qui sont autant de frères et de soeurs. Béné, mi-loutre, mi-lapin, et re-mi-loutre
derrière ; nous aurons toujours Philadelphie. Grotillon, le sanglier au coeur le plus tendre
parmi les hommes. Surée, qui m'a appris beaucoup sur moi-même, et aussi un peu sur le café.
Hélène, qui m'a offert à la Scène, à qui j'appartiens désormais. Jonas, qui m'a autant fait
découvrir de jeux que d'astuces sur le bon emploi d'un PEL. Léa, puisses-tu voler sur les
ailes de tes oiseaux quand mon souffle t'appelle. Maud, pour qui je pourrais presque envisager
d'aller poser les pieds en Bretagne. Marc, l'inexplicable motard sans moto, à l'hospitalité
indéfectible, quoique mousticale. Marine la Bio, qui est depuis longtemps devenue simplement
Marine. P4bl0, dont je suis obligé de dire du bien vu que c'est mon proprio, et mon
compagnon de magret. Picomango, California Girl et co-dégé belle, grande et forte.
Snoopy, mon complémentaire exemplaire, est-il besoin d'en dire davantage. Ted,
dont je reconnais l'existence. Laetitia, non-malicieuse au sens très, très fort.

Plus que quiconque, vous êtes celles et ceux sur qui repose cette thèse, tout à la
fois mon épée légendaire et le rocher d'où je l'extirpe.

\chapter*{Introduction}
\addcontentsline{toc}{chapter}{Introduction}

\section*{Algèbres à involution}

Comme l'indique son titre, les objets centraux de cette thèse sont les algèbres
à involution. On entend par là des algèbres simples centrales sur un corps
munies d'un anti-automorphisme involutif (souvent noté $\sigma$ ou $\tau$, parfois
$\theta$ ou $\rho$), l'exemple le plus élémentaire étant une algèbre de matrices
munie de la transposition. \emph{On fera toujours l'hypothèse que le corps de base $k$
est de caractéristique différente de 2.} Excepté brièvement dans le dernier
chapitre, les involutions que nous considérerons seront de \emph{première espèce},
c'est-à-dire qu'elles agissent trivialement sur le centre. Bien que ne faisant
pas nécessairement partie du bagage mathématique standard de tout algébriste,
les algèbres à involution apparaissent naturellement dans de nombreux contextes,
et les chemins qui y conduisent sont de nature variée.

Historiquement, les algèbres à involution ont été introduites par Albert
dans les années 30 dans le contexte de la géométrie riemannienne (voir par
exemple \cite{Alb}). Albert a initié une étude systématique des algèbres
à involution afin de caractériser les algèbres à division sur $\mathbb{Q}$
qui apparaissent comme algèbre de multiplication d'une surface de Riemann.
Cette motivation historique est toutefois très éloignée des considérations
qui apparaîtront dans cette thèse.
\\

Une porte d'entrée vers les algèbres à involution revêtant une bien plus
grande importance en ce qui nous concerne est à trouver dans les travaux de
Weil sur les groupes algébriques (\cite{Wei}). Se fondant sur les travaux de
Killing, Élie Cartan avait, à la fin du 20\up{e} siècle, classifié les groupes
de Lie simples sur $\mathbb{C}$ (\cite{Car}), en passant par la classification,
plus simple à établir, des algèbres de Lie correspondantes. La classification se fait
en terme des fameux \emph{diagrammes de Dynkin}, associés aux systèmes de
racines. On y retrouve en particulier les groupes dits \emph{classiques},
à savoir $PSL_{n+1}$ (correspondant au diagramme $A_n$), $SO_{2n+1}$ (correspondant
à $B_n$), $PSp_{2n}$ (correspondant à $C_n$) et $PSO_{2n}$ (correspondant à
$D_n$) pour $n\neq 4$ (les groupes de type $D_4$ ne sont généralement pas
considérés comme classiques à cause des phénomènes exceptionnels de \emph{trialité}).

En ce qui concerne les groupes algébriques, Chevalley montre dans les années
50 que la classification des groupes semi-simples sur un corps algébriquement
clos ne dépend pas du corps, et notamment pas de la caractéristique, et on
retrouve essentiellement la même classification en terme de diagrammes
de Dynkin (il faut également tenir compte des isogénies, par exemple
les groupes de type $A_n$ sont de la forme $SL_{n+1}/\mu_d$
où $d$ divise $n+1$, le groupe $PSL_{n+1}$ correspondant à $d=n+1$). En réalité,
il apparaît rapidement que la classification est identique si on suppose
non pas que le corps est algébriquement clos, mais que le groupe est
\emph{déployé} (c'est-à-dire qu'il contient un tore maximal déployé).

Si on veut tenir compte des groupes non déployés, on se rend aisément
compte que les formes bilinéaires jouent un rôle primordial dans
la description attendue. C'est particulièrement manifeste pour les groupes
de type $B_n$ et $D_n$, puisque le groupe $SO(q)$ où $q$ est une forme
quadratique est de toute évidence une forme non triviale du groupe
déployé correspondant. Les groupes de type $C_n$ sont par construction
liés aux formes alternées, mais en revanche celles-ci sont entièrement
caractérisées par leur dimension, même sur un corps quelconque. Quant
aux groupes de type $A_n$, il est un peu moins évident de constater qu'ils
sont liés aux formes hermitiennes : en effet, si $K'/K$ est une extension
quadratique, on dispose pour toute forme hermitienne $h$ sur $(K',\can)$
(où $\can$ est l'automorphisme non trivial de $K'/K$) du groupe unitaire
$SU(h)$, et $SL_{n+1}$ correspond au cas où $K'=K\times K$, donc ces groupes
sont bien des formes des groupes $A_n$ déployés.

L'observation de Weil est qu'on peut obtenir tous les groupes classiques
si on déplace notre attention des formes bilinéaires vers les involutions
qu'elles induisent sur des algèbres de matrices. En effet, toute forme
$b$ symétrique, alternée ou hermitienne sur un $K$-espace vectoriel $V$
induit sur $\End_K(V)$ une involution $\sigma_b$ dite \emph{adjointe} à la forme,
qui vérifie pour tout $u\in \End_K(V)$ et tous $x,y\in V$
\[ b(u(x),y) = b(x,\sigma_b(u)(y)), \]
et on peut définir le groupe d'isométrie de la forme comme
\begin{equation}\label{eq_gpe_isom}
   G = \ens{u\in \End_K(V)}{u\sigma_b(u)=\Id}. 
\end{equation}
De plus, toute involution sur $\End_K(V)$ correspond à une certaine forme
$b$, qui est unique à un facteur près. Si l'involution est de première
espèce (donc si elle fixe $K$), elle correspond soit à une forme symétrique
(on dit que l'involution est \emph{orthogonale}) soit à une forme
alternée (on dit que l'involution est \emph{symplectique}). Dans le cas
contraire, l'involution est dite \emph{unitaire}.

Une fois qu'on dispose de cette description, il apparaît qu'on peut
remplacer $(\End_K(V),\sigma_b)$ par une algèbre à involution $(A,\sigma)$
quelconque, et définir un groupe algébrique par l'analogue direct
de la formule (\ref{eq_gpe_isom}). Après extension à une clôture
séparable, l'algèbre $A$ est déployée, donc $\sigma$ correspond
à un certain $\sigma_b$, et le groupe est donc une forme d'un
des groupes déployés classiques. Le type de $\sigma$ est défini comme
le type du $\sigma_b$ obtenu après déploiement. Le point crucial,
suivant Weil, est qu'on obtient ainsi tous les groupes classiques
(on renvoie au chapitre VI de \cite{BOI} pour une étude complète
de la question).
\\

Comme le met en exergue la discussion précédente sur les groupes algébriques,
les algèbres à involution entretiennent des liens étroits avec les
formes bilinéaires ou hermitiennes, ce qui constitue un autre point d'entrée
possible dans la théorie. Précisément, on a vu qu'on pouvait naturellement
associer une algèbre à involution à une forme symétrique, alternée ou
hermitienne ; on peut regrouper ces trois catégories en une même définition :
si $K$ est muni d'un automorphisme $\tau$ d'ordre 2 et si $\eps=\pm 1$,
on peut définir une forme $\eps$-hermitienne sur $(K,\tau)$ comme
vérifiant $h(ax,by)=\tau(a)h(x,y)b$ et $h(y,x)=\eps\tau(h(x,y))$.
Le cas orthogonal est donné par $\tau=\Id$ et $\eps=1$, le cas
symplectique par $\tau=\Id$ et $\eps=-1$, et le cas unitaire par
$\tau=\can$ et $\eps=1$. À partir d'une paire $(K,\tau)$, on peut
donc définir des formes hermitiennes $(V,h)$ qui induisent des algèbres
à involution $(\End_K(V),\sigma_h)$. Le fait de se placer dans le
cadre des algèbres à involution permet de symétriser complètement
la situation : sur une algèbre à involution $(A,\sigma)$, on peut
définir des formes $\eps$-hermitiennes $(V,h)$, et alors
$(\End_A(V),\sigma_h)$ est encore une algèbre à involution. On renvoie
à \cite{Knu} ainsi qu'à la partie \ref{sec_morita} pour plus de détails
sur ces considérations, mais on peut déjà constater que les
algèbres à involution forment un cadre stable pour étudier les
formes hermitiennes et leurs algèbres d'endomorphismes.

D'autre part, un certain nombre d'autres constructions classiques
autour des formes quadratiques trouvent naturellement leur place au
sein de la théorie des algèbres à involution. Un exemple naturel est
celui des algèbres de Clifford : à toute forme quadratique $q$ on
associe une certaine algèbre $C(q)$, qui est canoniquement munie
d'une involution. On ne peut pas directement étendre cette définition
aux algèbres à involution orthogonale à cause d'une obstruction
omniprésente dans ce type de construction : l'involution associée
à une forme quadratique ne la caractérise qu'à similitude près,
donc on ne peut étendre aux involutions que les constructions qui
sont invariantes par similitude. Dans le cas de l'algèbre de Clifford,
on peut ainsi définir l'algèbre de Clifford \emph{paire} d'une algèbre
à involution orthogonale, et ainsi à une algèbre à involution on
associe une autre algèbre à involution (\cite[§8]{BOI}). On peut également citer
l'exemple de la forme d'Albert associée à une algèbre de biquaternions,
qui se généralise en l'algèbre discriminante d'une algèbre à involution
unitaire (\cite[§10]{BOI}). La correspondance classique entre formes
quadratique de dimension 6 et de discriminant 1 et algèbres de biquaternions
se généralise ainsi naturellement en une correspondance entre algèbres
de type $D_3$ et $A_3$ (voir \cite[§15.D]{BOI}, ainsi que la partie
\ref{sec_a3_d3} de cette thèse pour des calculs explicites).

Enfin, parmi les autres situations où les algèbres à involution
peuvent jouer un rôle déterminant, on peut notamment citer la
théorie des représentations de groupes, puisque l'algèbre de
groupe $K[G]$ d'un groupe fini est, si $K$ est de caractéristique
nulle, une algèbre semi-simple naturellement munie d'une involution
induite par $g\mapsto g^{-1}$.

\section*{Invariants}

On a décrit le lien étroit entre deux des termes du titre de cette thèse,
à savoir les groupes algébriques (qu'on ne présente plus) et les algèbres
à involution (qu'on a présentées). Il reste donc à préciser ce qu'on entend
par \emph{invariants cohomologiques}, et à décrire le contexte dans lequel
s'inscrit cette thèse.
\\

L'origine de l'étude de ces invariants se trouve sans nul doute dans
la théorie des formes quadratiques. C'est un fait très élémentaire que
si le déterminant de la matrice d'une forme bilinéaire symétrique dans une
certaine base n'est pas un invariant bien défini, en revanche il est
bien défini modulo les carrés : c'est ce qu'on appelle le déterminant
d'une forme quadratique, et c'est le premier invariant non trivial
que l'on définit ; il est donc à valeurs dans $K^*/(K^*)^2$. On peut
définir un deuxième invariant, à valeurs dans le groupe de Brauer
$\Br(K)$, de deux façons différentes. Une première possibilité
est de considérer la classe de Brauer $[C(q)]$ de l'algèbre de Clifford
de $q$ si $q$ est de dimension paire, et la classe $[C_0(q)]$ si
$q$ est de dimension impaire ; c'est ce qu'on appelle l'invariant
de Clifford $c(q)\in \Br(K)$. Une deuxième idée est de partir d'une
diagonalisation $q=\fdiag{a_1,\dots,a_r}$, et de considérer
$w_2(q)=\sum_{i<j}(a_i,a_j)$ où $(a_i,a_j)\in \Br(K)$ est la classe
de Brauer de l'algèbre de quaternions correspondante ; c'est ce qu'on
appelle l'invariant de Hasse-Witt de $q$. Ces deux invariants à
valeur dans le groupe de Brauer ne sont pas égaux, mais on peut
les relier par des formules simples faisant intervenir la dimension
et le déterminant (voir par exemple \cite[prop 3.20]{Lam}).

Ces deux invariants représentent les deux directions classiques
dans lesquelles on peut poursuivre la construction. Tout d'abord,
on observe que le déterminant est à valeurs dans le premier groupe
de cohomologie $H^1(K,\mu_2)$ (pour les prérequis sur la cohomologie galoisienne,
on renvoie par exemple à \cite{Ser}, \cite{GS}, ou au chapitre VII de
\cite{BOI}), tandis que les invariants de Clifford
et de Hasse-Witt sont à valeurs dans $H^2(K,\mu_2)$ (qui est naturellement
isomorphe à la 2-torsion de $\Br(K)$). Il est donc naturel
de chercher à construire des invariants à valeurs dans
$H^d(K,\mu_2)$ pour $d$ de plus en plus grand ; on qualifiera
logiquement ces invariants de \emph{cohomologiques}. Notons qu'en
degré 0, on peut définir $\dim_0(q)\in \Zd=H^0(K,\mu_2)$ comme la
dimension modulo 2 de $q$.

Si on poursuit dans la direction de l'invariant de Hasse-Witt, on
arrive à la construction des invariants de Stiefel-Whitney (analogues
aux classes de Stiefel-Whitney en K-théorie) : pour tout $d\in \N$,
si $q=\fdiag{a_1,\dots,a_r}$, on peut poser
\begin{equation}\label{eq_stiefel}
   w_d(q) = \sum_{i_1<\dots <i_d}(a_{i_1},\dots,a_{i_d}) \in H^d(K,\mu_2) 
\end{equation}
où $(a_{i_1},\dots,a_{i_d})\in H^d(K,\mu_2)$ est le symbole galoisien
naturel. On revient sur ces invariants dans la partie \ref{sec_dim_fix}.

Si on poursuit dans la direction de l'invariant de Clifford : on commence
par observer que $\dim_0(q)$ ne dépend que de la classe de Witt de $q$
(voir par exemple \cite{EKM} ou \cite{Lam} pour toutes les définitions
et tous les résultats standard sur l'anneau de Witt),
et définit donc une application $e_0: W(K)\to H^0(K,\mu_2)$, qui est clairement
un morphisme de groupes additifs.
On peut ensuite observer que si $q$ est de dimension paire $2r$, et qu'on modifie
la définition de $\det(q)$ par $\disc(q)=(-1)^r\det(q)$ (ce qu'on appelle
le \emph{discriminant} de $q$, parfois aussi appelé \emph{déterminant signé}
dans la littérature), alors $\disc(q)$ ne dépend que de la classe de Witt
de $q$, et donc définit une application $e_1: I(K)\to H^1(K,\mu_2)$, où
$I(K)\subset W(K)$ est l'idéal fondamental de l'anneau de Witt $W(K)$
constitué des formes de dimension paire. De plus, on voit facilement
que cette application est en réalité un morphisme de groupes additifs,
et que son noyau est précisément $I^2(K)$ (\cite[thm 13.7]{EKM}).
On constate alors que sur $I^2(K)$, $c(q)$ ne dépend encore que de
la classe de Witt de $q$ et définit un morphisme additif $e_2: I^2(K)\to H^2(K,\mu_2)$
(\cite[thm 14.3]{EKM}). On voit alors la généralisation se dessiner: on
souhaiterait construire pour tout $n\in \N$ un morphisme additif
\begin{equation}\label{eq_milnor}
   e_n: I^n(K)\To H^n(K,\mu_2) 
\end{equation}
tel que $e_{n+1}(q)$ soit défini lorsque $e_n(q)=0$. Cela constitue une
partie de la \emph{conjecture de Milnor} (voir \cite{Mil}), qui
affirme notamment que $e_n$ est surjective de noyau $I^{n+1}(K)$,
et donc qu'il induit un isomorphisme de $I^n(K)/I^{n+1}(K)$ vers
$H^n(K,\mu_2)$. La construction des invariants $e_n$ est \emph{considérablement}
plus difficile que celle des invariants $w_n$ ; l'invariant $e_3$ a
été construit par Arason en 1975 (\cite{Ara}), et l'invariant $e_4$ par
Jacob et Rost en 1989 (\cite{JR}). La conjecture de Milnor est entièrement
résolue en 2002 par Voevodsky (\cite{Voe}), notamment grâce à
la construction de la \emph{cohomologie motivique}, ce qui lui vaudra
la médaille Fields.
\\

\label{par_inv}Une fois la situation des formes quadratiques en tête pour servir
de motivation, on peut établir une notion générale d'invariant, et
notamment d'invariant cohomologique, due à Serre dans \cite{GMS}.
On se fixe donc un corps de base $k$, et on considère $\mathbf{Field}_{/k}$
la catégorie des extensions de corps de $k$. Alors si $F$ est un foncteur
de $\mathbf{Field}_{/k}$ vers la catégorie des ensembles, et si $H$ est
un foncteur de $\mathbf{Field}_{/k}$ vers la catégorie des groupes abéliens,
on définit
\[ \Inv(F,H) \]
comme étant le groupe abélien des transformations naturelles de $F$ vers
$H$ (vu par composition avec le foncteur d'oubli comme un foncteur vers
les ensembles). Autrement dit, on demande simplement d'avoir pour
toute extension $K/k$ une fonction $F(K)\to H(K)$, avec pour seule
condition d'avoir une compatibilité avec les extensions de scalaires.
On dispose toujours au moins des invariants constants :
$H(k)\subset \Inv(F,H)$. Si $F$ est un foncteur vers les ensembles \emph{pointés},
on définit le groupe des invariants \emph{normalisés} $\Inv_{norm}(F,H)$
comme le sous-groupe constitué des invariants qui envoient le point
distingué sur 0. On a dans ce cas toujours une décomposition
\[ \Inv(F,H) = \Inv_{norm}(F,H) \oplus H(k). \]
Les invariants \emph{cohomologiques} de $F$ correspondent au cas où
$H(K) = \linebreak H^*(K,C)$ pour un certain module galoisien $C$ défini
sur $k$ (on dit que les invariants sont \og{} à valeurs dans $C$\fg{}).
On note plus succintement $\Inv(F,C)$ ce groupe, et $\Inv^d(F,C)$
pour la partie de degré $d$ (c'est-à-dire correspondant à $H(K) = H^d(K,C)$).
L'exemple qui nous intéressera le plus est $C=\mu_2=\Zd$.

On parle d'invariant (par exemple cohomologique) d'un groupe algébrique
$G$ lorsque $F(K) = H^1(K,G)$ est l'ensemble (pointé) des classes
d'isomorphisme de $G$-torseurs sur $K$ ; on note alors $\Inv(G,H)$
(et $\Inv(G,C)$ pour les invariants cohomologiques à valeurs dans $C$).
On parle d'invariant d'algèbres
à involution lorsque $F(K)$ est un sous-ensemble des algèbres à
involution sur $K$ (on peut par exemple se restreindre à un certain
type d'involution, ou à un degré fixé, ou un indice borné, etc.).
Le lien évoqué ci-dessus entre groupes classiques et algèbres à
involution permet d'établir que ces deux notions sont en réalité
extrêmement proches. Notamment, si $(A,\sigma)$ est une algèbre
à involution, alors on dispose du groupe algébrique $G=\Aut(A,\sigma)$
(groupe absolument presque simple de type adjoint)
dont les points sont donnés par les automorphismes d'algèbre de $A$
commutant avec $\sigma$ et il est établi dans \cite[19.14]{BOI}
que $H^1(K,G)$ est naturellement identifié aux classes d'isomorphismes
d'algèbres à involution sur $K$ de même type que $\sigma$ et de même
degré que $A$ (l'ensemble étant bien entendu pointé par $(A,\sigma)$).

La définition d'invariants cohomologiques non triviaux d'algèbres
à involution ou de groupes algébriques est globalement un exercice
difficile, et relativement peu d'invariants ont été construits jusqu'ici.
Les seuls cas traités de façon satisfaisante sont les invariants de
degré au plus 3. Il est montré dans \cite[31.15]{BOI} que si $G$
est un groupe algébrique sur $k$,
$\Inv_{norm}^1(G,C)$ est canoniquement isomorphe à $\Hom_\Gamma(\pi_0(G_{sep}),C)$
où $\Gamma$ est le groupe de Galois absolu de $k$, et
$\pi_0(G_{sep})$ est le groupe des composantes connexes de $G_{sep}$ ;
en particulier, un groupe connexe n'a pas d'invariant de degré 1 non
constant. Il est également montré (\cite[31.19]{BOI}) que si $n$ est
premier à la caractéristique de $k$ et si $G$ est connexe
alors $\Inv^2_{norm}(G,\mu_n)$ est naturellement
isomorphe à la $n$-torsion du groupe de Picard $Pic(G)$ ; en particulier un groupe
simplement connexe n'a pas d'invariant modulo $n$ non constant de degré 1 ou 2.

Pour le cas des invariants de degré 3, jusqu'à récemment le mieux
que l'on savait faire était de traiter le cas des groupes simplement
connexes : $\Inv^3_{norm}(G,\mu_n)$ est alors un groupe cyclique
muni d'un générateur distingué appelé \emph{invariant de Rost}
(voir la deuxième partie de \cite{GMS} pour le détail des constructions).
Mais très récemment, Merkurjev a dans une série d'articles déterminé
$\Inv_{norm}^3(G,\mu_n)$ pour n'importe quel groupe $G$ semi-simple
(voir \cite{Mer1} et \cite{Mer2}).

On renvoie à \cite{Tig} pour un recensement relativement récent des
invariants cohomologiques d'algèbres à involution connus. Il apparaît
que les invariants de degré au moins 4 sont quasiment inexistants excepté
dans des contextes très restreints, notamment en indice au plus 2 (voir
\cite{Ber}, sur lequel on reviendra dans la partie \ref{sec_ind_2}).

\section*{Contenu de la thèse}

Maintenant qu'on a exposé le contexte mathématique sous-jacent, on
peut décrire l'objectif de cette thèse : construire des invariants
cohomologique d'algèbres à involution (et donc de groupes algébriques)
de degré supérieur. La thèse suit une progression logique en trois
chapitres : le premier revient sur le cas des formes quadratiques et
propose une étude relativement complète des invariants de $I^n$, le
deuxième construit des outils pour étendre ces méthodes au cas des
algèbres à involution, et le troisième met ces outils en application.
On décrit un peu plus en détail le contenu de chaque chapitre.
\\

Le point de départ est la volonté d'étendre des invariants définis
sur $I^n$ à des algèbres à involution en indice 2 suivant la méthode
de Berhuy (\cite{Ber}). Il apparaît que pour rentabiliser cette méthode,
il faut connaître le plus possible d'invariants de $I^n$ ; or mis à
part $e_n$, on ne disposait que de quelques constructions, notamment
un invariant à valeurs dans $H^{2n}(K,\mu_2)$, construit à partir d'une
opération de \og carré divisé\fg{} $I^n\to I^{2n}$ (voir \cite[§19]{Gar}).
De plus, pour étendre ces invariants il faut un certain contrôle sur
leur comportement vis-à-vis des similitudes (le problème récurrent pour
passer des formes quadratiques aux involutions orthogonales) et
de la ramification pour des valuations discrètes de rang 1 (une
considération standard quand on déploie génériquement une algèbre).
On a donc entrepris d'étendre la méthode décrite dans \cite[§19]{Gar}
pour classifier entièrement les invariants dans $\Inv(I^n,\mu_2)$
et décrire leurs propriétés, en utilisant des opérations $I^n\to I^{nd}$
pour tout $d\in \N$.

L'observation de base pour construire ces opérations est que les
formes de Pfister ont un comportement régulier vis-à-vis des
$\lambda$-opérations sur l'anneau de Grothendieck-Witt (voir
la proposition \ref{prop_pi}), ce qui permet de \og déformer\fg{}
la structure de $\lambda$-anneau en une autre pour laquelle les
opérations envoient $I^n$ dans $I^{nd}$. Comme toutes ces constructions
sont essentiellement formelles, on développe un formalisme général
décrivant comment on peut déformer naturellement une structure de
pré-$\lambda$-anneau, et en particulier comment obtenir une
structure qui a un comportement naturel vis-à-vis d'une notion
abstraite de forme de Pfister (voir le théorème \ref{thm_pi}).

On démontre ensuite (voir le théorème \ref{thm_g})
que ces opérations $I^n\to I^{nd}$ forment une
\og base\fg{} des invariants $\Inv(I^n,W)$, et que de même les
invariants cohomologiques $I^n(K)\to H^{nd}(K,\mu_2)$ qui s'en déduisent
par composition avec $e_{nd}$ forment
une \og{}base\fg{} de $\Inv(I^n,\mu_2)$ (les guillemets sont
présents parce que des combinaisons infinies sont nécessaires).
Pour éviter de dupliquer toutes les preuves on met en place un
formalisme commun aux deux types d'invariants (invariants de Witt
et invariants cohomologiques). Le reste de la section est consacrée
à l'étude des propriétés élémentaires de ces invariants, notamment leur
comportement par produit, similitude, et restriction de $I^n$ à
$I^{n+1}$. On consacre également une partie à l'étude des invariants
de formes dans $I^n$ qui ont une $r$-forme de Pfister en facteur,
afin de généraliser la construction donnée dans \cite[20.8]{Gar}.

Enfin, une courte section expose une idée, qui sera à développer ultérieurement,
consistant à décrire les invariants qui sont définis lorsqu'un invariant
précédemment défini s'annule, de sorte à construire des \og tours\fg{}
d'invariants, à la manière des invariants $e_n$.
\\

Le deuxième chapitre, indépendant du premier, décrit les outils
nécessaires pour étendre les méthodes précédentes aux algèbres
à involution. Un des problèmes récurrents pour passer des
formes quadratiques aux involutions orthogonales, outre
évidemment la complexité accrue de l'objet étudié, est le
manque de structure dont on dispose. Les formes quadratiques
peuvent s'additionner, se multiplier, et on dispose également
des opérations $\lambda$, dont on a vu dans le chapitre précédent
qu'elles étaient une clé possible pour définir des invariants.
À l'inverse, on peut multiplier des algèbres à involution (par
le produit tensoriel), et \cite[§10]{BOI} décrit comment définir
des opération $\lambda$, mais on ne peut pas combiner ces opérations
car on ne dispose pas a priori d'addition. Il a déjà été observé
(notamment dans \cite{KPS} et \cite{Dej}) que si en revanche on dispose
d'une équivalence de Morita entre deux algèbres à involution,
alors on peut définir leur somme, par exemple en les représentant
comme des formes $\eps$-hermitiennes sur une même algèbre à
involution ; si on ne dispose pas d'une équivalence de Morita
explicite, les formes hermitiennes sont seulement définies
à un facteur près, et la somme n'est pas bien définie.

On met en place dans ce chapitre un formalisme qui permet
d'effectuer toutes les opérations que l'on souhaite sans
avoir à garder un contrôle permanent sur des équivalences
de Morita explicite. Précisément, pour toute algèbre à
involution $(A,\sigma)$ de première espèce, on définit
un anneau commutatif gradué $\tld{GW}(A,\sigma)$ qui
contient toutes les classes d'isométrie de formes
$\eps$-hermitiennes sur $(A,\sigma)$, ainsi que l'équivalent
$\tld{W}(A,\sigma)$ de l'anneau de Witt pour les formes
quadratiques. On utilise pour
cela des équivalences de Morita canoniques données par la
forme trace d'involution $T_\sigma$. On définit également
une structure de pré-$\lambda$-anneau sur $\tld{GW}(A,\sigma)$,
qui correspond aux opérations sur les involutions décrites
dans \cite[§10]{BOI}.

Enfin, afin de préparer l'application de ces outils à la
construction d'invariants cohomologiques, on définit
un analogue de la filtration fondamentale de l'anneau de Witt,
et on donne une description du gradué associé, qui remplace
la cohomologie modulo 2 dans ce contexte (par analogie avec
la conjecture de Milnor).
\\

Pour finir, le dernier chapitre est celui où on confronte les deux
premiers pour enfin construire des invariants cohomologiques
d'algèbres à involution. C'est l'objet de la première partie,
où les opérations contruites dans le premier chapitre sont appliquées
à la filtration de $\tld{GW}(A,\sigma)$ (voir la proposition
\ref{prop_inv_herm}). On discute après la proposition de comment
exploiter ces constructions pour construire des invariants supérieurs,
notamment des invariants relatifs, et on propose une étude plus
approfondie du cas des algèbres d'indice 2.

Le reste du chapitre est consacré à des phénomènes en petite dimension.
La deuxième partie est dédiée à des calculs explicites concernant
l'équivalence entre algèbres de type $A_3$ et $D_3$, ce qui est
mis en application dans la troisième partie où on fait le lien
entre nos constructions et un invariant de degré 4 défini dans
\cite{RST}, et où on exploite les outils du premier chapitre
pour étendre au cas de l'indice 2 un certain invariant défini
sur les formes quadratiques de degré 12 dans $I^3$ (voir
\cite[20.8]{Gar}).

\chapter{Invariants de classes de Witt}

\label{par_pfister}Dans ce chapitre on propose un traitement détaillé des invariants
de Witt et des invariants cohomologiques de $I^n$. On rappelle qu'on
s'est fixé un corps de base $k$ de caractéristique différente de 2,
et que $K/k$ désigne une extension de corps. Par \og forme quadratique\fg{}
on entendra \emph{toujours} forme quadratique \emph{non dégénérée}.
On note $GW(K)$ l'anneau
de Grothendieck-Witt et $W(K)$ l'anneau de Witt. Dans $GW(K)$ on note
$\hat{I}(K)$ l'idéal fondamental des formes de dimension virtuelle nulle,
dont la projection dans $W(K)$ est l'idéal fondamental $I(K)$. On
fait la remarque importante que la projection naturelle est en réalité
un isomorphisme $\hat{I}(K)\simeq I(K)$, et en particulier on identifiera
systématiquement $\hat{I}^n(K)$ et $I^n(K)$. Notamment, une $n$-forme
de Pfister $\pfis{a_1,\dots,a_n}$ désignera sauf mention explicite du
contraire un élément de $\hat{I}^n(K)$ ; précisément, $\pfis{a} = \fdiag{1}-\fdiag{a}$
(et non comme on en a l'habitude $\fdiag{1,-a}$), ce qui
ne change rien pour les classes de Witt.

Notre objectif est de construire des applications naturelles
$\pi_n^d: I^n(K)\to I^{nd}(K)$, vérifiant
\[ \pi_n^d\left( \sum_{i=1}^r \phi_i \right) = \sum_{i_1<\dots<i_d}\phi_{i_1}\cdots \phi_{i_d} \]
pour toutes $n$-formes de Pfister $\phi_i$ (voir la proposition
\ref{prop_somme_pfis}), et d'en déduire par composition avec $e_{nd}$
des invariants $u_{nd}^{(n)}: I^n(K)\to H^{nd}(K,\mu_2)$.
La construction des $\pi_n^d$ est essentiellement formelle, ne dépendant
que des relations des formes de Pfister dans le $\lambda$-anneau $GW(K)$.
On donne donc une définition valable dans n'importe quel $\lambda$-anneau
(voir le théorème \ref{thm_pi}).

Le résultat central du chapitre est le théorème \ref{thm_g}, qui montre
en particulier (voir le corollaire \ref{cor_ecr_unique}) que tout invariant
de Witt (resp. cohomologique) peut s'écrire de façon unique comme combinaison
infinie des $\pi_n^d$ (resp. des $u_{nd}^{(n)}$). Pour éviter de dupliquer
les preuves, on adopte un formalisme commun pour ces deux types d'invariants
(voir la partie \ref{sec_contexte}), dans lequel $\pi_n^d$ et $u_{nd}^{(n)}$
sont tous les deux notés $f_n^d$. La description exacte donnée dans le
théorème se fait en terme d'invariants notés $g_n^d$, qui sont décrits
explicitement (proposition \ref{prop_f_g}) et ont la propriété remarquable
que toute combinaison, même infinie, donne un invariant.

Une fois ce théorème établi, on peut utiliser les propriétés particulières
des $f_n^d$ pour démontrer un certain nombre de propriétés des invariants,
notamment comment exprimer le produit de deux invariants (proposition \ref{prop_pi_prod}),
la restriction d'un invariant de $I^n$ à $I^{n+1}$ (proposition \ref{prop_restr_f}),
le comportement d'un invariant par similitude (proposition \ref{prop_simil}),
et par résidus pour une valuation discrète (proposition \ref{prop_ram_witt}).
On étudie également le lien entre notre description des invariants de $I$
et $I^2$ avec la description de Serre des invariants de formes quadratiques
en dimension fixée, et on fait le lien avec une description de Vial des
opérations sur la cohomologie modulo 2.

Dans la dernière partie du chapitre, on propose une réflexion sur une
notion de \og tours d'invariants\fg{}, à savoir des suites d'invariants
tels que chaque invariant est défini si les précédents sont nuls (sur le
modèle des invariants $e_n$).

\section{Anneaux grecs et éléments de Pfister}\label{sec_grec}

On présente ici des définitions et résultats classiques (voir \cite{Yau}) sur
les $\lambda$-anneaux, ainsi que quelques constructions qui nous seront utiles.
En revanche, on change un peu le point de vue en ne privilégiant pas les
$\lambda$-opérations telles que $\lambda^d(1)=0$ pour $d\pgq 2$ ; on réservera
le symbole $\lambda$ à celles-ci, et on utilisera plutôt le symbole $\alpha$
pour des opérations qui ont une action quelconque sur 1.

Pour cette raison, on choisit d'adopter une terminologie plus neutre en appelant
\emph{anneau grec} ce qui est appelé \emph{pré-$\lambda$-anneau} dans la littérature
moderne (et était initialement appelé \emph{$\lambda$-anneau} chez Grothendieck).
À un anneau grec est associée une famille d'opérations 
vérifiant les axiomes de pré-$\lambda$-anneau au sens de \cite{Yau}, chacune caractérisée
par son action sur 1.

La finalité est de contruire des opérations ayant un bon comportement
sur une famille fixée d'éléments (en ce qui nous concerne, les
\emph{éléments de Pfister}).

\subsection{Anneaux grecs}

\subsubsection{Opérations grecques}

\begin{defi}\label{defi_ope_grec}
  Soit $R$ un anneau commutatif. Une \emph{opération grecque} sur
  $R$ est une suite d'applications
  \[ \alpha^d : R \To R \]
  pour tout $d\in \N$ vérifiant les propriétés suivantes :
  \begin{enumerate}[label=(\roman*)]
  \item $\forall x\in R,\, \alpha^0(x) = 1$ ;
    
  \item $\forall x\in R,\, \alpha^1(x) = x$ ;
    
  \item $\forall x,y\in R,\, \forall d\in \N,\, \alpha^d(x + y) = \sum_{k=0}^d \alpha^k(x)\alpha^{d-k}(y)$.
  \end{enumerate}

  On note $\Gamma(R)$ l'ensemble des opérations grecques de $R$.
\end{defi}

La définition usuelle d'un pré-$\lambda$-anneau consiste à dire dans ce langage
que c'est un anneau muni d'une opération grecque ; notre définition consistera
plutôt à se donner une classe d'équivalence de telles opérations.

\begin{ex}\label{ex_Z_lambda}
  La suite $(\lambda^d)$ où $\lambda^d(n) = \binom{n}{d}$ est une opération grecque sur $\Z$.
\end{ex}

\begin{ex}\label{ex_GW_lambda}
  L'exemple qui nous intéresse en premier lieu : les $\lambda$-opérations usuelles
  sur les formes quadratiques définissent une opération grecque sur $GW(K)$. Il est
  à noter que si ces opérations s'étendent bien à $GW(K)$, elles ne passent
  en revanche pas au quotient sur $W(K)$, ce qui explique pourquoi on ne travaille
  pas directement sur $W(K)$.
\end{ex}

On propose une caractérisation plus algébrique de la définition précédente :
pour tout anneau commutatif $R$,
on définit
\begin{equation}\label{eq_gr}
 G(R) = 1 + tR[[t]] \subset R[[t]]. 
\end{equation}

Alors $G(R)$ est un groupe abélien pour la multiplication dans $R[[t]]$,
et on obtient ainsi un foncteur $G: \mathbf{Ring}\to \mathbf{Ab}$.
On a un morphisme naturel de groupes
\begin{equation}\label{eq_eps_r}
\foncdef{\eps_R}{G(R)}{R}{1+\sum a_k t^k}{a_1.} 
\end{equation}

\begin{prop}\label{prop_alpha_t}
  Soit $\alpha_t: R\To G(R)$ une application. On note
  \[ \alpha_t(x) = \sum_d \alpha^d(x)t^d. \]
  Alors la suite d'applications $(\alpha^d)$ est une opération
  grecque si et seulement si $\alpha_t$ est un morphisme de groupe
  qui est une section de $\eps_R$.
\end{prop}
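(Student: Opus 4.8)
Le plan est de traduire chaque axiome de la définition \ref{defi_ope_grec} en une propriété de la série $\alpha_t$, en identifiant les coefficients de $t^d$, puisque tout se ramène à une comparaison formelle dans $R[[t]]$.

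D'abord je remarque que l'hypothèse que $\alpha_t$ prend ses valeurs dans $G(R) = 1 + tR[[t]]$ force le terme constant à valoir $1$, c'est-à-dire $\alpha^0(x) = 1$ pour tout $x$ : l'axiome (i) est donc automatiquement satisfait dès que l'on impose l'arrivée dans $G(R)$, et n'intervient pas dans l'équivalence à démontrer. Ensuite, puisque $\eps_R$ extrait le coefficient de $t$, on a $\eps_R(\alpha_t(x)) = \alpha^1(x)$. Dire que $\alpha_t$ est une section de $\eps_R$, autrement dit $\eps_R \circ \alpha_t = \Id_R$, équivaut donc exactement à l'axiome (ii), à savoir $\alpha^1(x) = x$ pour tout $x$.

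Il reste à traiter l'axiome (iii), et c'est le point central, qui repose sur le produit de Cauchy dans $R[[t]]$ :
\[ \alpha_t(x)\,\alpha_t(y) = \left(\sum_k \alpha^k(x)t^k\right)\left(\sum_j \alpha^j(y)t^j\right) = \sum_d \left(\sum_{k=0}^d \alpha^k(x)\alpha^{d-k}(y)\right)t^d. \]
Le coefficient de $t^d$ dans ce produit est précisément le membre de droite de (iii), tandis que le coefficient de $t^d$ dans $\alpha_t(x+y)$ est $\alpha^d(x+y)$. Dire que $\alpha_t$ est un morphisme du groupe additif $(R,+)$ vers le groupe multiplicatif $(G(R),\times)$, c'est-à-dire $\alpha_t(x+y) = \alpha_t(x)\,\alpha_t(y)$, équivaut donc par identification des coefficients à la validité de (iii) pour tout $d$.

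En combinant ces trois observations, les conditions (i), (ii) et (iii) sont satisfaites si et seulement si $\alpha_t$ est un morphisme de groupes qui est une section de $\eps_R$, ce qui conclut. Je ne m'attends à aucune difficulté réelle ici : la preuve est une pure réécriture des définitions, et la seule précaution à garder en tête est que (i) est \og gratuit\fg{} et ne doit pas être compté dans l'équivalence, le vrai contenu se partageant entre la section (qui code (ii)) et la multiplicativité (qui code (iii)).
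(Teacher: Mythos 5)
Votre preuve est correcte et suit exactement la démarche du texte, qui expédie cette proposition comme \og une simple reformulation de la définition précédente\fg{} : vous ne faites qu'expliciter cette reformulation (terme constant égal à $1$ forcé par l'arrivée dans $G(R)$, la propriété de section codant l'axiome (ii), le produit de Cauchy codant l'axiome (iii)). Rien à ajouter.
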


\begin{proof}
  C'est une simple reformulation de la définition précédente.
\end{proof}

On se donnera donc une opération grecque indifféremment sous
la forme d'une suite d'applications de $R$ dans $R$, ou d'une unique
application à valeurs dans $G(R)$.

\subsubsection{Lettres grecques}

L'idée des définitions qui suivent est la suivante : si on s'est donné une 
opération grecque $(\alpha^d)$, peut-on la \og{} déformer\fg{} en $(\beta^d)$ avec
\[ \beta^d = \sum_{k=0}^d a_{k,d} \alpha^k \]
où les $a_{k,d}$ sont des coefficients dans $R$, de sorte que les $\beta^d$
forment encore une opération grecque ?

Il s'avère que c'est possible, et que les $a_{k,d}$ sont entièrement
déterminés par les $\beta^d(1)\in R$ pour $d\pgq 2$. De plus, toutes les
suites $(\beta^d(1))_d$ sont atteignables de cette façon, et en
particulier on peut inverser le processus pour retrouver les $\alpha^d$
à partir des $\beta^d$ par le même genre de déformation.

On prend alors le parti de dire que toutes ces opérations
définissent une même structure d'\emph{anneau grec}, et que le choix d'une opération
spécifique $(\alpha^d)$ correspond au choix d'une suite d'éléments (qui est alors
$(\alpha^d(1))$), qu'on appelle la \emph{lettre grecque} de l'opération. On
peut ainsi pour un même anneau grec passer d'une lettre grecque à l'autre pour
faire varier les opérations qu'on utilise selon les propriétés qu'on désire.
\\

Plus précisément : pour tout anneau commutatif $R$, on pose
\begin{equation}\label{eq_l_r}
  L(R) = tG(R) = t + t^2R[[t]] \subset R[[t]].
\end{equation}

Alors $L(R)$ est un groupe (pas abélien) pour la composition dans $R[[t]]$,
ce qui définit un foncteur $L: \mathbf{Ring}\to \mathbf{Gpe}$.

\begin{defi}\label{def_lettre}
  Soit $R$ un anneau commutatif, et $\alpha_t: R\to G(R)$ une opération
  grecque. On appelle \emph{lettre grecque} de l'opération l'élément
  $\alpha_t(1)-1\in L(R)$, ce qui définit une application naturelle
  $\Lambda_R : \Gamma(R)\to L(R)$ (où on rappelle que $\Gamma(R)$ désigne
  l'ensemble des opérations grecques sur $R$).
\end{defi}

On pose aussi
\begin{equation}\label{eq_a_r}
    A(R) = \ens{\Phi\in \Aut(G(R))}{ \eps_R\circ \Phi = \eps_R}. 
\end{equation}
Autrement dit, un élément de $A(R)$ doit préserver le terme de degré $1$.

\begin{prop}
L'application
\[ \anonfoncdef{L(R)^{op}}{A(R)}{\tau}{(f\mapsto f\circ \tau)} \]
définit un morphisme de groupes injectif.
\end{prop}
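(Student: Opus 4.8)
The plan is to check, in order, that the assignment $\tau\mapsto\Phi_\tau$ with $\Phi_\tau(f)=f\circ\tau$ is well defined with values in $A(R)$, that it reverses the composition order (which is exactly why $L(R)^{op}$ appears), and that it is injective. Every verification is a formal manipulation of power series. For well-definedness, fix $f=1+\sum_{k\pgq 1}b_k t^k\in G(R)$ and $\tau\in L(R)$. Since $\tau$ has no constant term, the substitution $f\circ\tau=f(\tau(t))$ is a well-defined element of $R[[t]]$, and because $\tau(t)=t+t^2R[[t]]$ its constant term is $1$, so $f\circ\tau\in 1+tR[[t]]=G(R)$. The map $\Phi_\tau$ is an endomorphism of the multiplicative group $G(R)$ because substitution is a ring homomorphism, whence $(fg)\circ\tau=(f\circ\tau)(g\circ\tau)$. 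It is bijective with inverse $\Phi_{\tau^{-1}}$, where $\tau^{-1}$ denotes the compositional inverse of $\tau$ in $L(R)$, since both $\Phi_\tau\circ\Phi_{\tau^{-1}}$ and $\Phi_{\tau^{-1}}\circ\Phi_\tau$ reduce to composition with the identity series $t$. Finally, to see $\Phi_\tau\in A(R)$ I compute the coefficient of $t$ in $f\circ\tau$: as $\tau(t)=t+O(t^2)$, the linear term of $\tau(t)^k$ vanishes for $k\pgq 2$ and equals $t$ for $k=1$, so that coefficient is $b_1=\eps_R(f)$, giving $\eps_R\circ\Phi_\tau=\eps_R$.

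The homomorphism property is where the order reversal is forced. A direct computation gives, for $\tau,\sigma\in L(R)$ and any $f$,
\[ \Phi_\sigma\big(\Phi_\tau(f)\big)=(f\circ\tau)\circ\sigma=f\circ(\tau\circ\sigma), \]
that is $\Phi_\sigma\circ\Phi_\tau=\Phi_{\tau\circ\sigma}$ inside $A(R)$. Writing $\star$ for the product of $L(R)^{op}$, so that $\tau\star\sigma=\sigma\circ\tau$, this identity reads $\Phi_{\tau\star\sigma}=\Phi_\tau\circ\Phi_\sigma$, which is precisely the condition for $\tau\mapsto\Phi_\tau$ to be a group morphism out of $L(R)^{op}$.

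For injectivity it suffices to recover $\tau$ from $\Phi_\tau$: applying $\Phi_\tau$ to the element $f=1+t\in G(R)$ yields $\Phi_\tau(1+t)=1+\tau(t)$, so $\Phi_\tau$ determines $\tau$ and the morphism has trivial kernel. The only point genuinely requiring attention is the bookkeeping of the composition order that dictates the use of $L(R)^{op}$ rather than $L(R)$; beyond that there is no real obstacle, each assertion being a routine identity for formal power series.
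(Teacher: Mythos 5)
Your proof is correct and follows essentially the same route as the paper's: composition with $\tau$ is a multiplicative endomorphism of $G(R)$, the group structure of $L(R)$ upgrades these to automorphisms, injectivity is read off from the action on $1+t$, and membership in $A(R)$ comes from preservation of the degree-one coefficient. You merely spell out what the paper leaves implicit (the explicit inverse $\Phi_{\tau^{-1}}$ and the order-reversal bookkeeping behind $L(R)^{op}$), which is fine but not a different argument.
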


\begin{proof}
Quel que soit $\tau\in tR[[t]]$, l'application $f\mapsto f\circ \tau$
est un morphisme de groupe de $G(R)$.

Clairement, l'application de l'énoncé est un morphisme de monoïdes vers les endomorphismes de $G(R)$,
et comme $L(R)$ est un groupe, on a bien un morphisme de groupes vers les
automorphismes de $G(R)$. Il est injectif puisque l'action de $\tau$
sur $1+t$ est $1+\tau$.

De plus, comme le terme de degré $1$ de $\tau \in L(R)$ est $t$, composer 
par $\tau$ à droite préserve le terme de degré $1$, donc le morphisme
est à valeurs dans $A(R)$.
\end{proof}

\begin{coro}
  Le groupe $L(R)$ agit à droite sur $\Gamma(R)$,
  par $\alpha_t\cdot \tau = \alpha_{\tau(t)}$, et
  $\Lambda_R: \Gamma(R)\To L(R)$ est un morphisme de
  $L(R)$-ensembles à droite.

  En particulier, l'action de $L(R)$ sur $\Gamma(R)$ est
  libre.
\end{coro}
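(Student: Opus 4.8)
The plan is to deduce everything from the preceding proposition, which furnishes the injective group morphism $\phi\colon L(R)^{op}\to A(R)$, $\tau\mapsto(f\mapsto f\circ\tau)$. The starting observation is that $A(R)$ acts naturally on $\Gamma(R)$ on the left: by Proposition \ref{prop_alpha_t} an element of $\Gamma(R)$ is the same datum as a group morphism $\alpha_t\colon R\to G(R)$ which is a section of $\eps_R$, and for $\Phi\in A(R)$ one sets $(\Phi\cdot\alpha)_t=\Phi\circ\alpha_t$. First I would check this lands back in $\Gamma(R)$: composing the group morphism $\alpha_t$ with the automorphism $\Phi$ of $G(R)$ is again a group morphism, and $\eps_R\circ(\Phi\circ\alpha_t)=(\eps_R\circ\Phi)\circ\alpha_t=\eps_R\circ\alpha_t=\Id_R$, precisely because $\Phi\in A(R)$ satisfies $\eps_R\circ\Phi=\eps_R$ and $\alpha_t$ is a section. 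This is exactly where the defining condition of $A(R)$ enters, and it explains why it is $A(R)$, rather than all of $\Aut(G(R))$, that preserves $\Gamma(R)$.

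Next, precomposing this left action of $A(R)$ with the morphism $\phi\colon L(R)^{op}\to A(R)$ turns it into a left action of $L(R)^{op}$, that is, a right action of $L(R)$, on $\Gamma(R)$. Unwinding $\phi$ gives $(\alpha\cdot\tau)_t(x)=\phi(\tau)(\alpha_t(x))=\alpha_t(x)\circ\tau=\alpha_{\tau(t)}(x)$, which is the announced formula. The point requiring care here is the bookkeeping of composition orders: one must verify that the passage to the opposite group in the proposition is exactly what makes $\alpha\cdot\tau=\alpha_{\tau(t)}$ a genuine \emph{right} action, i.e. that $(\alpha\cdot\tau_1)\cdot\tau_2=\alpha\cdot(\tau_1\tau_2)$ for the group law of $L(R)$. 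This reduces to the associativity of substitution $(f\circ\tau_1)\circ\tau_2=f\circ(\tau_1\circ\tau_2)$, with the identity element $t\in L(R)$ acting trivially.

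For the equivariance of $\Lambda_R$ I would simply compute, regarding $L(R)$ as a right $L(R)$-set via right translation. Writing $\alpha_t(1)=1+\Lambda_R(\alpha)$, applying $\phi(\tau)$ (substitution of $\tau(t)$) and subtracting $1$ yields $\Lambda_R(\alpha\cdot\tau)=\alpha_t(1)\circ\tau-1=\Lambda_R(\alpha)\circ\tau$, which is exactly right translation by $\tau$ in the group $L(R)$; hence $\Lambda_R$ is a morphism of right $L(R)$-sets. Freeness is then immediate: a group acts freely on itself by right translation (left cancellation), so if $\tau$ stabilizes some $\alpha\in\Gamma(R)$, then it stabilizes $\Lambda_R(\alpha)$, forcing $\tau$ to be the identity of $L(R)$. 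I expect no substantial obstacle; the only genuine pitfall is to keep the left/right and opposite-group conventions consistent throughout, so that $\Lambda_R$ comes out equivariant rather than anti-equivariant.
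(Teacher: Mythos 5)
Votre preuve est correcte et suit essentiellement la même démarche que celle du texte : action à gauche de $A(R)$ sur $\Gamma(R)$ tirée en arrière le long de l'injection $L(R)^{op}\hookrightarrow A(R)$, calcul de la lettre grecque donnant $\Lambda_R(\alpha\cdot\tau)=\Lambda_R(\alpha)\circ\tau$, puis liberté déduite de l'équivariance vers $L(R)$ agissant librement sur lui-même (votre argument direct est exactement l'instanciation du lemme général invoqué dans le texte : un $G$-ensemble admettant un morphisme vers un $G$-ensemble libre est libre).
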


\begin{proof}
  Le groupe $A(R)$ agit à gauche sur $\Gamma(R)$ par
  $\Phi\cdot \alpha_t = (x\mapsto \Phi(\alpha_t(x)))$ ; le
  fait que $\Phi$ soit un automorphisme de groupe montre
  que $\Phi\cdot \alpha_t$ est bien un morphisme de groupes
  $R\to G(R)$, et le fait que $\Phi$ préserve le coefficient
  de degré 1 montre que $\Phi\cdot \alpha_t$ est bien une
  section de $\eps_R$. Comme $L(R)^{op}$ s'injecte dans $A(R)$,
  $L(R)$ agit à droite sur $\Gamma(R)$.

  De plus, la lettre grecque de $\alpha_t\cdot \tau$ est par
  contruction $\alpha_{\tau(t)}(1)-1 = (\alpha_t(1)-1)\circ \tau$,
  donc l'action de $L(R)$ sur $\Gamma(R)$ est bien compatible
  avec $\Lambda_R$.

  De façon générale, pour tout groupe $G$, si un $G$-ensemble admet un morphisme vers
  un $G$-ensemble libre, il est lui-même libre. Or $L(R)$ est
  bien entendu un $L(R)$-ensemble libre, donc $\Gamma(R)$ aussi.
\end{proof}

\subsubsection{Anneaux grecs}

On en vient enfin à la définition qui nous intéresse :

\begin{defi}
  Un \emph{anneau grec} est un anneau commutatif $R$ muni
  du choix d'une orbite d'opérations grecques sous l'action
  de $L(R)$.
\end{defi}

\begin{propdef}
  Soit $R$ un anneau grec, et soit $\alpha\in L(R)$ une lettre grecque.
  Alors il existe une unique opération grecque dans l'orbite donnée
  par la définition qui a $\alpha$ pour lettre grecque. On l'appelle
  la $\alpha$-opération de $R$, et on note $\alpha_t: R\To G(R)$
  et $\alpha^d: R\To R$ les applications correspondantes.
\end{propdef}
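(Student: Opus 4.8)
Le plan est de déduire l'énoncé du corollaire précédent, qui affirme que l'action à droite de $L(R)$ sur $\Gamma(R)$ est libre et que $\Lambda_R$ est un morphisme de $L(R)$-ensembles à droite (où $L(R)$ agit sur lui-même par sa loi de groupe, la composition). Autrement dit, je montrerais que la restriction de $\Lambda_R$ à l'orbite $O\subset \Gamma(R)$ choisie pour définir la structure d'anneau grec est une bijection sur $L(R)$ : l'existence correspond à la surjectivité, l'unicité à l'injectivité.

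Pour l'unicité, je partirais de deux opérations $\beta,\beta'\in O$ de même lettre grecque $\alpha$. Comme elles appartiennent à la même orbite, il existe $\tau\in L(R)$ avec $\beta' = \beta\cdot \tau$, et l'équivariance donne $\alpha = \Lambda_R(\beta') = \Lambda_R(\beta)\circ \tau = \alpha\circ \tau$. Puisque $L(R)$ est un groupe, la simplification à gauche par $\alpha$ force $\tau$ à être l'élément neutre $t$, d'où $\beta' = \beta$ ; on peut aussi invoquer directement la liberté de l'action.

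Pour l'existence, je fixerais un point $\beta_0$ de l'orbite (non vide), de lettre grecque $\alpha_0 = \Lambda_R(\beta_0)$, puis je résoudrais dans le groupe $L(R)$ l'équation $\alpha_0\circ \tau = \alpha$, dont l'unique solution est $\tau = \alpha_0^{-1}\circ \alpha$. L'élément $\beta = \beta_0\cdot \tau$ reste dans $O$ (même orbite) et a pour lettre grecque $\Lambda_R(\beta_0)\circ \tau = \alpha$ par équivariance, ce qui conclut. Les notations $\alpha_t$ et $\alpha^d$ désignent alors sans ambiguïté les applications associées à cette unique opération.

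L'argument est entièrement formel : il revient à observer qu'un morphisme équivariant entre torseurs sous un même groupe est une bijection, l'orbite $O$ et $L(R)$ (muni de la translation régulière à droite) étant tous deux des torseurs à droite sous $L(R)$. Le seul point délicat, purement calculatoire, sera de s'assurer de la cohérence des conventions gauche/droite — notamment que l'action sur la cible est bien la translation régulière à droite, ce qui est exactement le contenu de l'identité $\Lambda_R(\alpha_t\cdot \tau) = \Lambda_R(\alpha_t)\circ \tau$ du corollaire, et que l'élément neutre de $L(R)$ pour la composition est $t$.
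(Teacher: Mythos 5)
Votre preuve est correcte et suit essentiellement la même démarche que celle du manuscrit : la preuve originale observe en une ligne qu'une orbite est un $L(R)$-ensemble transitif et que, l'action étant libre, la restriction de $\Lambda_R$ à l'orbite est bijective vers $L(R)$ — ce qui est exactement votre argument de morphisme équivariant entre torseurs, que vous explicitez en séparant injectivité (liberté) et surjectivité (résolution de $\alpha_0\circ\tau=\alpha$ dans le groupe $L(R)$).
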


\begin{proof}
  Une orbite est un $L(R)$-ensemble transitif, donc l'action étant
  libre la restriction de $\Lambda_R$ à cette orbite doit être
  bijective vers $L(R)$.
\end{proof}

Le symbole $\alpha$ sera généralement utilisé pour désigner une
lettre grecque quelconque.

\begin{ex}
  On notera $\lambda$ l'élément neutre de $L(\Z)$, à savoir $t\in \Z[[t]]$.
  La $\lambda$-opération d'un anneau grec est donc celle vérifiant $\lambda^d(1)=0$
  si $d\pgq 2$.

  Les exemples \ref{ex_Z_lambda} et \ref{ex_GW_lambda} étaient des
  $\lambda$-opérations, comme la notation employée le suggérait.
\end{ex}

\begin{ex}
  Dans son étude de la K-théorie, Grothendieck a introduit des opérations
  sur tout $\lambda$-anneau, dites $\gamma$-opérations,  qui sont devenues
  classiques dans la théorie. Elles correspondent bien à notre notion de
  $\gamma$-opérations sur un anneau grec si on note $\gamma$ la lettre grecque $\sum_{d\pgq 1} t^d$
  (autrement dit c'est l'opération caractérisée par $\gamma^d(1)=1$ pour tout $d$). 
\end{ex}

Le fait de définir la structure d'anneau grec à partir d'une orbite d'opérations
est justifié par le fait que de nombreuses constructions ne dépendent pas du choix
d'une lettre grecque particulière.

\begin{propdef}
  Soient $R$ et $S$ deux anneaux grecs, et soit $f:R\To S$ un morphisme
  d'anneaux. Soit $\alpha\in L(R)$ ; on note $f(\alpha)\in L(S)$ la lettre
  grecque obtenue par fonctorialité. Alors le fait que pour tout $d\in \N$
  et tout $x\in R$ on ait $f(\alpha^d(x)) = f(\alpha)^d(f(x))$ ne dépend
  pas du choix de $\alpha$.

  Quand cette condition est vérifiée, on dit que $f$ est un morphisme
  d'anneaux grecs.
\end{propdef}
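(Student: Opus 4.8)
L'idée est de reformuler la condition de compatibilité ponctuelle comme une unique identité entre séries formelles, puis d'exploiter la transitivité de l'action de $L(R)$ conjuguée au fait que les foncteurs $G$ et $L$ commutent avec la composition des séries formelles. Tout d'abord, je reformulerais la condition $f(\alpha^d(x)) = f(\alpha)^d(f(x))$ pour tout $d\in\N$ en une seule équation. Le foncteur $G$ appliqué à $f$ fournit un morphisme $G(f): G(R)\to G(S)$ qui agit en appliquant $f$ à chaque coefficient, de sorte que $G(f)(\alpha_t(x)) = \sum_d f(\alpha^d(x))t^d$, tandis que $f(\alpha)_t(f(x)) = \sum_d f(\alpha)^d(f(x))t^d$ par définition de la $f(\alpha)$-opération sur $S$. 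En comparant les coefficients, la condition de compatibilité pour la lettre grecque $\alpha$ équivaut à l'unique identité
\[ G(f)(\alpha_t(x)) = f(\alpha)_t(f(x)) \qquad \text{pour tout } x\in R. \]
Il suffit alors de montrer que cette identité vaut pour une lettre grecque de l'orbite si et seulement si elle vaut pour toutes.

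Ensuite, étant données deux lettres grecques $\alpha$ et $\beta$ de la même orbite, la liberté et la transitivité de l'action de $L(R)$ (cf. le corollaire ci-dessus et la définition \ref{def_lettre}) fournissent un unique $\tau\in L(R)$ tel que $\beta = \alpha\cdot\tau$ ; concrètement $\beta_t(x) = \alpha_{\tau(t)}(x)$, c'est-à-dire que $\beta_t(x)$ s'obtient à partir de $\alpha_t(x)\in G(R)$ par composition à droite avec $\tau$. L'observation cruciale est que la composition des séries formelles est donnée par des polynômes universels à coefficients entiers en les coefficients des séries ; appliquer le morphisme d'anneaux $f$ coefficient par coefficient commute donc avec elle. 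En particulier $L(f)$ est un morphisme de groupes pour la composition, d'où $f(\beta) = f(\alpha)\cdot f(\tau)$ dans $L(S)$, et de même pour tout $g\in G(R)$,
\[ G(f)(g\circ\tau) = G(f)(g)\circ f(\tau). \]

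Enfin, je combinerais ces observations. En supposant l'identité vérifiée pour $\alpha$, on calcule
\[ G(f)(\beta_t(x)) = G(f)(\alpha_t(x)\circ\tau) = G(f)(\alpha_t(x))\circ f(\tau) = f(\alpha)_t(f(x))\circ f(\tau) = f(\beta)_t(f(x)), \]
où la dernière égalité utilise $f(\beta) = f(\alpha)\cdot f(\tau)$, qui donne $f(\beta)_t = f(\alpha)_{f(\tau)(t)}$. C'est exactement l'identité pour $\beta$. Comme $\tau$ est inversible dans le groupe $L(R)$, le même argument appliqué à $\tau^{-1}$ fournit l'implication réciproque, ce qui établit l'équivalence des deux conditions, et donc l'indépendance vis-à-vis de la lettre grecque choisie.

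Le principal obstacle est le lemme de compatibilité de l'étape intermédiaire : vérifier qu'appliquer $f$ aux coefficients commute avec la substitution des séries formelles, de sorte que $G(f)$ et $L(f)$ respectent les structures de groupe et d'action pertinentes. Bien que de nature purement formelle, c'est le seul point où une vérification est réellement requise ; une fois celle-ci acquise, le reste n'est qu'un jeu de réécriture reposant sur le caractère libre et transitif de l'action.
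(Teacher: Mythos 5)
Votre preuve est correcte et suit essentiellement la même démarche que celle du texte, qui présente exactement le même argument sous forme d'un diagramme commutatif (carré de gauche $=$ condition pour $\alpha$, carré de droite commutant par fonctorialité, grand rectangle $=$ condition pour $\beta$). Votre rédaction explicite simplement deux points que le texte laisse implicites — la vérification que l'application de $f$ coefficient par coefficient commute avec la substitution des séries formelles, et l'usage de $\tau^{-1}$ pour la réciproque (que le texte obtient par symétrie des rôles de $\alpha$ et $\beta$) — sans changer la substance de l'argument.
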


\begin{proof}
  Soit $\beta\in L(R)$ une autre lettre grecque, et soit $\tau\in L(R)$
  tel que $\beta = \alpha\cdot \tau$. Considérons le diagramme commutatif
  suivant :
  \[ \begin{tikzcd}
      R \rar{\alpha_t} \dar{f} & G(R) \rar{\cdot \tau} \dar{f_*} & G(R) \dar{f_*} \\
      S \rar{f(\alpha)_t} & G(S) \rar{\cdot f(\tau)} & G(S)
    \end{tikzcd} \]
  Alors le fait que le carré de gauche commute traduit la condition voulue pour $\alpha$,
  le carré de droite commute toujours par fonctorialité, et le fait que le grand
  rectangle commute traduit la condition pour $\beta$.
\end{proof}

\begin{propdef}
  Soit $R$ un anneau grec, et soit $I$ un idéal. Soit $\alpha\in L(R)$.
  Alors le fait que pour tout $d\in \N^*$ on ait $\alpha^d(I)\subset I$
  ne dépend pas du choix de $\alpha$.

  Quand cette condition est vérifiée, on dit que $I$ est un idéal grec.
\end{propdef}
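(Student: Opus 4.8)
The plan is to show that if the condition $\alpha^d(I)\subset I$ for all $d\in\N^*$ holds for one Greek letter $\alpha$, then it holds for any other letter $\beta$ in the chosen orbit; since the roles of $\alpha$ and $\beta$ will be symmetric, this yields the stated independence. Recall that $L(R)$ acts freely and transitively on the orbit, and that $\Lambda_R$ identifies this orbit with $L(R)$; so there is a unique $\tau\in L(R)$ with $\beta = \alpha\cdot\tau$, and by the description of the action the corresponding operations are related by $\beta_t(x) = \alpha_{\tau(t)}(x)$ for all $x\in R$.

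The key step is to expand this relation coefficient by coefficient. Writing $\alpha_{\tau(t)}(x) = \sum_{d}\alpha^d(x)\,\tau(t)^d$ and extracting the coefficient of $t^m$ gives
\[ \beta^m(x) = \sum_{d=0}^m a_{d,m}\,\alpha^d(x), \qquad a_{d,m} = [t^m]\,\tau(t)^d \in R. \]
Because $\tau\in L(R) = t + t^2 R[[t]]$ has zero constant term, $\tau(t)^d$ is divisible by $t^d$, so $a_{d,m}=0$ for $d>m$; more importantly, for $m\pgq 1$ the term $d=0$ contributes $a_{0,m} = [t^m]\,1 = 0$. Hence for every $m\pgq 1$ the expression $\beta^m(x)$ is an $R$-linear combination of $\alpha^1(x),\dots,\alpha^m(x)$ alone, with no occurrence of $\alpha^0(x)=1$.

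The conclusion is then immediate: assuming $\alpha^d(I)\subset I$ for all $d\pgq 1$, take $x\in I$; each $\alpha^d(x)$ with $1\ppq d\ppq m$ lies in $I$, so $\beta^m(x)$, being an $R$-combination of elements of $I$, lies in $I$ since $I$ is an ideal. This proves $\beta^m(I)\subset I$ for all $m\pgq 1$. Applying the same argument to $\tau^{-1}$ (which recovers $\alpha$ from $\beta$) gives the reverse implication, so the condition does not depend on the chosen letter. The only real subtlety — and the single point where the structure of $L(R)$ genuinely intervenes — is the vanishing of the $\alpha^0$ term for $m\pgq 1$: this is essential, since $\alpha^0\equiv 1$ never lies in a proper ideal, and it is guaranteed precisely because every $\tau\in L(R)$ starts in degree $1$.
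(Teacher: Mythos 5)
Your proof is correct and follows essentially the same route as the paper, whose one-line argument states exactly your key fact: $\beta^d$ is a combination of the $\alpha^k$ for $1\ppq k\ppq d$ only, so that each $\alpha^k$ preserving $I$ forces $\beta^d$ to as well. You merely make explicit what the paper leaves implicit — namely that the coefficient $[t^m]\,\tau(t)^0=0$ for $m\pgq 1$, which is why $\alpha^0\equiv 1$ never appears — and this is a faithful expansion, not a different method.
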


\begin{proof}
  Si $\beta$ est une autre lettre grecque, alors $\beta^d$ est une
  combinaison des $\alpha^k$ pour $1\ppq k\ppq d$, donc si chaque
  $\alpha^k$ préserve $I$, $\beta^d$ aussi.
\end{proof}

\subsubsection{Dimension}

Dans cette thèse, on utilisera le fait de changer de lettre
grecque pour contrôler la dimension des éléments.

\begin{defi}
  Soit $R$ un anneau grec et $\alpha\in L(R)$. La $\alpha$-dimension
  d'un élément $x\in R$ est le degré de $\alpha_t(x)$ (qui peut être
  infini).
\end{defi}

\begin{ex}
  Dans $GW(K)$, la $\lambda$-dimension d'une forme quadratique $q$
  est sa dimension au sens usuel (aussi appelé son rang).
\end{ex}

\begin{ex}
  Dans tout anneau grec, $1$ est de $\lambda$-dimension $1$.
  De façon générale, la $\alpha$-dimension de $1$ est le degré
  de $\alpha$.
\end{ex}

Bien évidemment, la $\alpha$-dimension d'un élément dépend fortement du
choix de $\alpha$. On s'intéresse particulièrement aux éléments
de dimension 1 :

\begin{prop}\label{prop_dim1}
  Soient $R$ un anneau grec, $X\subset R$ un sous-ensemble et $\alpha\in L(R)$.
  Alors il existe $\beta\in L(R)$ tel que tout $x\in X$ soit de $\beta$-dimension
  1 si et seulement si il existe $\tau\in L(R)$ tel que pour tout $x\in X$ 
  \[ \alpha_t(x) = 1 + x\tau(t), \]
  c'est-à-dire qu'il existe $a_d\in R$ tels que pour tout $d\pgq 1$ et tout
  $x\in X$ on ait $\alpha^d(x) = a_dx$.
\end{prop}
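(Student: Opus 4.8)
The plan is to read off everything from the fact, established in the corollary just above, that the orbit defining the Greek ring structure of $R$ is a free and transitive $L(R)$-set on which $\Lambda_R$ restricts to a bijection onto $L(R)$. Thus choosing $\beta\in L(R)$ is exactly choosing the Greek operation $\beta_t$ in the orbit whose Greek letter is $\beta$, and since the action is transitive there is a unique $\rho\in L(R)$ with $\beta_t = \alpha_t\cdot\rho$. By the definition of the action, this means precisely that for every $x\in R$ one has $\beta_t(x) = \alpha_{\rho(t)}(x)$, i.e. $\beta_t(x)$ is obtained from the series $\alpha_t(x)$ by the substitution $t\mapsto\rho(t)$. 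The first thing I would record is this translation: as $\beta$ ranges over $L(R)$, the associated $\rho$ ranges over all of $L(R)$ as well.

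Next I would characterize the dimension condition. For any Greek operation one has $\beta^0(x)=1$ and $\beta^1(x)=x$ by axioms (i) and (ii), so an element $x$ is of $\beta$-dimension $1$ if and only if $\beta^d(x)=0$ for all $d\pgq 2$, that is, if and only if $\beta_t(x) = 1+xt$. Combining with the previous paragraph, the condition \og tout $x\in X$ est de $\beta$-dimension $1$\fg{} is equivalent to $\alpha_{\rho(t)}(x) = 1+xt$ for all $x\in X$.

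The core step is then a single substitution in the composition group $L(R)$. Since $L(R)$ is a group for composition, $\rho$ admits an inverse $\rho^{-1}\in L(R)$, and applying the substitution $t\mapsto\rho^{-1}(t)$ to the identity $\alpha_{\rho(t)}(x)=1+xt$ yields the equivalent identity
\[ \alpha_t(x) = 1 + x\,\rho^{-1}(t). \]
Setting $\tau = \rho^{-1}$ gives exactly the desired form $\alpha_t(x)=1+x\tau(t)$, valid simultaneously for all $x\in X$ because $\rho$, hence $\tau$, is a single element of $L(R)$ independent of $x$. Reading the equivalence in the other direction establishes the converse: given $\tau\in L(R)$ with $\alpha_t(x)=1+x\tau(t)$ on $X$, the Greek letter $\beta = \alpha\cdot\tau^{-1}$ makes every $x\in X$ of $\beta$-dimension $1$. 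Finally, writing $\tau(t)=\sum_{d\pgq 1}a_d t^d$ and comparing coefficients with $\alpha_t(x)=\sum_d\alpha^d(x)t^d$ gives the reformulation $\alpha^d(x)=a_d x$ for all $d\pgq 1$ (with $a_1=1$, consistent with $\alpha^1=\Id$).

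The argument is essentially formal, so there is no serious obstacle; the only point requiring care is the bookkeeping of the \emph{right} composition action and its inverse in $L(R)$ — in particular checking that the substitution $t\mapsto\rho^{-1}(t)$ is legitimate (which is exactly the invertibility of elements of $L(R)$ for composition) and that the resulting $\tau$ does not depend on the individual $x\in X$. These are precisely the properties of the free transitive action that were proved above, so the verification is routine.
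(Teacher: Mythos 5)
Your proof is correct and takes essentially the same route as the paper's: your $\tau=\rho^{-1}$ is exactly the paper's $\tau=\beta^{-1}\circ\alpha$, and your converse choice $\beta=\alpha\circ\tau^{-1}$ is also the paper's. The paper dispatches the substitution bookkeeping as \og trivial\fg{}; you simply make explicit the verification via the free transitive action of $L(R)$ on $\Gamma(R)$ established in the preceding corollary.
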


\begin{proof}
  Si $\beta$ existe, on pose $\tau = \beta^{-1}\circ \alpha$, qui
  donne trivialement $\alpha_t(x) = 1 + x\tau(t)$ pour tout $x\in X$,
  et si $\tau$ existe on pose $\beta = \alpha\circ \tau^{-1}$.
\end{proof}

\begin{rem}
  La preuve montre qu'on peut prendre $\beta = \alpha\circ \tau^{-1}$.
  Tout autre choix de $\beta$ s'écrit $\beta+\delta$ avec $\delta\in t^2Ann_R(X)[[t]]$
  (où $Ann_R(X)$ est l'idéal annulateur de $X$ dans $R$).
\end{rem}

\begin{prop}\label{prop_somme_dim_1}
  Soit $R$ un anneau grec, et soient $x_1,\dots,x_r\in R$ de $\alpha$-dimension 1.
  Alors pour tout $d\in \N$, on a
  \[ \alpha^d(x_1+\cdots +x_r) = \sum_{i_1<\dots < i_d} x_{i_1}\cdots x_{i_d}. \]
  En particulier, si $r<d$ alors $\alpha^d(x_1+\cdots +x_r)=0$. 
\end{prop}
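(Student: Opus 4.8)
The plan is to translate everything into the generating-series language of Proposition \ref{prop_alpha_t}, where the sum-to-product behaviour of the operation becomes transparent. The starting observation is that the hypothesis ``$x_i$ is of $\alpha$-dimension 1'' means precisely that $\alpha_t(x_i)$ is a polynomial in $t$ of degree exactly $1$. Combined with axioms (i) and (ii) of Definition \ref{defi_ope_grec}, which force $\alpha^0(x_i) = 1$ and $\alpha^1(x_i) = x_i$, this pins down the series completely:
\[ \alpha_t(x_i) = 1 + x_i t. \]

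The second step is to invoke that $\alpha_t : R \To G(R)$ is a morphism of groups (the content of Proposition \ref{prop_alpha_t}), so it carries the additive structure of $R$ to the multiplicative structure of $G(R) = 1 + tR[[t]]$. Applying this to the sum $x_1 + \cdots + x_r$ gives
\[ \alpha_t(x_1 + \cdots + x_r) = \prod_{i=1}^r \alpha_t(x_i) = \prod_{i=1}^r (1 + x_i t). \]
It then remains to expand the finite product: choosing, for each factor, either the constant term $1$ or the linear term $x_i t$, the coefficient of $t^d$ is exactly the sum of the products $x_{i_1}\cdots x_{i_d}$ over all $d$-element subsets $\{i_1 < \cdots < i_d\}$ of $\{1, \dots, r\}$. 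Reading off the coefficient of $t^d$ and comparing with the definition $\alpha_t(y) = \sum_d \alpha^d(y) t^d$ yields the claimed formula for $y = x_1 + \cdots + x_r$.

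The final assertion, that $\alpha^d(x_1 + \cdots + x_r) = 0$ whenever $r < d$, is then immediate either from the right-hand side (there are no $d$-element subsets of a set of size $r < d$, so the sum is empty) or from the left-hand side (the product $\prod_{i=1}^r (1 + x_i t)$ has degree at most $r$, so all coefficients in degree $> r$ vanish).

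I do not expect any genuine obstacle here: the whole argument is the formal manipulation of a group morphism into $G(R)$, and the only point requiring a moment's care is the very first one, namely justifying that ``$\alpha$-dimension $1$'' forces $\alpha_t(x_i) = 1 + x_i t$ rather than merely some degree-$1$ series with an arbitrary linear coefficient --- this is exactly where axiom (ii) ($\alpha^1 = \mathrm{id}$) is used.
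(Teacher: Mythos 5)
Your proof is correct and is essentially the paper's argument in the generating-series packaging: the paper expands $\alpha^d(x_1+\cdots+x_r)$ by induction on axiom (iii) into the multinomial sum $\sum_{k_1+\cdots+k_r=d}\alpha^{k_1}(x_1)\cdots\alpha^{k_r}(x_r)$ and then kills all terms with some $k_i\pgq 2$, which is exactly the coefficient-of-$t^d$ reading of your identity $\alpha_t(x_1+\cdots+x_r)=\prod_i(1+x_it)$, the two formulations being interchangeable by Proposition \ref{prop_alpha_t}. Your closing remark that axiom (ii) is what forces $\alpha_t(x_i)=1+x_it$ (rather than an arbitrary degree-$1$ series) is the right point of care and matches the paper's use of the definition of $\alpha$-dimension.
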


\begin{proof}
  De façon générale, on voit par récurrence que quels que soient
  $x_1,\dots,x_r$ on a
  \[ \alpha^d(x_1+\cdots +x_r) = \sum_{k_1+\cdots +k_r=d} \alpha^{k_1}(x_1)\cdots \alpha^{k_r}(x_r). \]
  On spécialise alors au cas où les $x_i$ sont de $\alpha$-dimension 1,
  et donc il ne reste que les $k_i=0$ ou $1$.
\end{proof}

\subsubsection{Semi-anneaux et graduations}\label{sec_semi_ann}

On aura besoin dans le chapitre suivant de travailler avec des semi-anneaux,
et en particulier des semi-anneaux gradués. On consacre donc
cette courte partie à généraliser les définitions précédentes
à ce cadre, qui ne servira pas dans la suite de ce chapitre.

Soit $S$ un semi-anneau commutatif, et soit $\Gamma$ un groupe
commutatif, noté additivement. Une $\Gamma$-graduation sur $S$
est une décomposition $S = \bigoplus_{g\in \Gamma}S_g$ en tant que monoïde
additif telle que $S_gS_h\subset S_{g+h}$. Notamment, tout semi-anneau
commutatif peut tautologiquement être vu comme gradué sur le groupe
trivial.

\begin{lem}\label{lem_semi_grad}
  Soit $R$ le groupe de Grothendieck de $S$, muni de sa structure
  naturelle d'anneau commutatif. Alors toute $\Gamma$-graduation
  de $S$ s'étend de façon unique à $R$.
\end{lem}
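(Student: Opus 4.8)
On cherche à prouver que toute $\Gamma$-graduation d'un semi-anneau commutatif $S$ s'étend de façon unique à son groupe de Grothendieck $R$. Rappelons que $R$ est construit comme le groupe de Grothendieck du monoïde additif $(S,+)$, muni de la structure d'anneau héritée de la multiplication de $S$. Concrètement, $R$ est le quotient de $S\times S$ par la relation $(a,b)\sim (a',b')$ ssi il existe $c\in S$ avec $a+b'+c = a'+b+c$, et on note $a-b$ la classe de $(a,b)$.

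**Stratégie d'ensemble.** L'idée naturelle est d'exploiter la propriété universelle du groupe de Grothendieck. On dispose pour chaque $g\in\Gamma$ du monoïde additif $S_g$, et de la projection (morphisme de monoïdes additifs) $p_g : S\to S_g$ donnée par la décomposition $S=\bigoplus_g S_g$. Le point-clé est le suivant : le groupe de Grothendieck $R$ de $S$ se décompose lui-même comme somme directe des groupes de Grothendieck $R_g$ des $S_g$, car le foncteur groupe de Grothendieck, étant un adjoint à gauche, commute aux sommes directes (coproduits de monoïdes commutatifs). On obtient ainsi une décomposition $R = \bigoplus_{g\in\Gamma} R_g$ en groupes abéliens, où $R_g$ est l'image de $S_g$ dans $R$ (plus précisément le sous-groupe engendré). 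C'est cette décomposition qui fournira la graduation cherchée.

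**Étapes détaillées.** D'abord je poserais, pour chaque $g\in\Gamma$, $R_g = \{a-b \mid a,b\in S_g\}\subset R$, et je vérifierais que c'est un sous-groupe abélien de $R$. Ensuite je démontrerais que $R = \bigoplus_g R_g$ : la surjectivité (tout $x\in R$ s'écrit $x=a-b$ avec $a,b\in S$, et en décomposant $a=\sum a_g$, $b=\sum b_g$ on a $x=\sum(a_g-b_g)$) est immédiate ; l'indépendance des $R_g$ (la somme est directe) résulte de l'application des projections $p_g$, qui s'étendent en morphismes de groupes $R\to R_g$ par propriété universelle et fournissent des rétractions montrant qu'un élément de $R_g$ nul dans $R$ est déjà nul. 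La compatibilité multiplicative $R_g R_h\subset R_{g+h}$ se ramène à $S_g S_h\subset S_{g+h}$ par bilinéarité : $(a-b)(c-d) = (ac+bd)-(ad+bc)$ avec $ac+bd, ad+bc\in S_{g+h}$ si $a,b\in S_g$ et $c,d\in S_h$. Enfin, l'unicité vient de ce que toute graduation de $R$ prolongeant celle de $S$ doit contenir l'image de $S_g$ dans sa composante de degré $g$, donc contenir $R_g$ tout entier (différences d'éléments de $S_g$), et la somme directe force alors l'égalité des composantes.

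**Principal obstacle.** La difficulté technique principale réside dans la vérification que la somme $\sum_g R_g$ est \emph{directe}, c'est-à-dire dans le contrôle de l'indépendance des composantes. Le passage du monoïde au groupe de Grothendieck peut introduire des relations inattendues (à cause de l'annulation $a+b+c=a'+b'+c$ avec éléments de degrés mixtes), et il faut s'assurer que deux éléments de $S_g$ identifiés dans $R$ le sont via un élément $c$ qu'on peut lui-même décomposer et dont seule la composante de degré $g$ intervient. Le bon outil pour cela est précisément l'extension des projections $p_g$ en morphismes $R\to R_g$, qui rend la preuve formelle et évite toute manipulation combinatoire fastidieuse sur les relations définissant $R$.
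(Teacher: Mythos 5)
Votre preuve est correcte et suit essentiellement la même voie que celle du texte : on identifie $R_g$ au groupe de Grothendieck de $S_g$, la propriété universelle (que vous explicitez via les rétractions $p_g$ et la commutation du foncteur de Grothendieck aux sommes directes) donne $R=\bigoplus_g R_g$, et la compatibilité $R_gR_h\subset R_{g+h}$ se vérifie par bilinéarité. Vous détaillez en plus l'injectivité de $K(S_g)\to R$ et l'unicité de la graduation, points que le texte passe sous silence mais qui reposent exactement sur les mêmes ingrédients.
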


\begin{proof}
  Si $R_g$ est le groupe de Grothendieck de $S_g$, alors la propriété
  universelle montre que $R=\bigoplus_{g\in G}R_g$, et aussi que
  $R_gR_h\subset R_{g+h}$.
\end{proof}

Si $S$ est $\Gamma$-gradué, on définit une opération grecque homogène
sur $S$ comme un morphisme de monoïdes $\alpha_t:S\To G(S)$ tel
que $\alpha^d(S_g)\subset S_{dg}$. 

\begin{lem}\label{lem_semi_grec}
  Soit $S$ un semi-anneau commutatif $\Gamma$-gradué, et
  soit $R$ son anneau de Grothendieck. Alors toute opération
  grecque homogène sur $S$ s'étend de façon unique en une
  opération grecque homogène sur $R$.
\end{lem}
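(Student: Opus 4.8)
Le plan est de ramener l'énoncé à la propriété universelle de l'anneau de Grothendieck. Notons $\iota: S\To R$ le morphisme canonique de semi-anneaux ; il induit sur les séries formelles un morphisme de monoïdes multiplicatifs $\iota_*: G(S)\To G(R)$, où $G(S)=1+tS[[t]]$ n'est a priori qu'un monoïde (faute d'inverses dans $S$), tandis que $G(R)$ est un groupe puisque $R$ est un anneau. Partant d'une opération grecque homogène $\alpha_t: S\To G(S)$, je composerais pour obtenir un morphisme de monoïdes $\iota_*\circ\alpha_t:(S,+,0)\To G(R)$ à valeurs dans un groupe abélien. Le groupe additif de $R$ étant précisément le groupe de Grothendieck de $(S,+)$, ce morphisme se prolonge alors de façon unique en un morphisme de groupes $\tld\alpha_t:(R,+)\To G(R)$.

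Ce prolongement règle d'emblée la question de l'unicité : toute opération grecque homogène sur $R$ prolongeant $\alpha_t$ est en particulier un morphisme de groupes $(R,+)\To G(R)$ coïncidant avec $\iota_*\circ\alpha_t$ sur $\iota(S)$, donc égale $\tld\alpha_t$. Il reste à vérifier que $\tld\alpha_t$ est bien une opération grecque homogène. Pour la structure d'opération grecque, d'après la proposition \ref{prop_alpha_t} il suffit de montrer que $\tld\alpha_t$ est une section de $\eps_R$. Or $\eps_R\circ\tld\alpha_t:(R,+)\To(R,+)$ est un morphisme de groupes qui vaut $\iota$ sur $\iota(S)$ (car $\alpha^1=\Id$ sur $S$) ; comme $\iota(S)$ engendre $R$ comme groupe abélien, on en déduit $\eps_R\circ\tld\alpha_t=\Id_R$, soit $\tld\alpha^1=\Id$, et $\tld\alpha_t$ définit donc bien une opération grecque sur $R$.

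Le point véritablement délicat est l'homogénéité. Pour chaque $g\in\Gamma$, j'introduirais le sous-ensemble $G(R)_{[g]}\subset G(R)$ formé des séries $1+\sum_{d\pgq 1}c_dt^d$ telles que $c_d\in R_{dg}$ pour tout $d$. C'est un sous-groupe de $G(R)$ : la stabilité par produit provient de $R_{ig}R_{jg}\subset R_{(i+j)g}$, et la stabilité par inverse de la formule récursive exprimant les coefficients de l'inverse comme combinaisons de produits des $c_d$, lesquels restent dans les bons degrés. L'hypothèse d'homogénéité sur $S$ se lit $\alpha_t(S_g)\subset G(S)_{[g]}$, d'où $\iota_*\circ\alpha_t(S_g)\subset G(R)_{[g]}$. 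D'après le lemme \ref{lem_semi_grad}, $R_g$ est le groupe de Grothendieck de $S_g$, de sorte que $\iota(S_g)$ engendre $R_g$ comme groupe abélien. Le morphisme de groupes $\tld\alpha_t$ restreint à $R_g$ envoyant une partie génératrice dans le sous-groupe $G(R)_{[g]}$, son image tout entière y est contenue, ce qui signifie exactement $\tld\alpha^d(R_g)\subset R_{dg}$ pour tout $d$. L'obstacle principal est ainsi d'isoler correctement ce sous-groupe $G(R)_{[g]}$ — en particulier d'en établir la stabilité par passage à l'inverse — et de le conjuguer au fait que $\iota(S_g)$ engendre $R_g$, le caractère multiplicatif de $G(R)_{[g]}$ propageant alors l'homogénéité des générateurs à tout $R_g$.
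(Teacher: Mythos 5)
Votre preuve est correcte et suit essentiellement la même démarche que celle du texte : composition avec le morphisme naturel de monoïdes $G(S)\To G(R)$, extension unique par la propriété universelle du groupe de Grothendieck, puis homogénéité via le fait que $R_g$ est engendré par l'image de $S_g$. Vous ne faites qu'expliciter ce que le texte expédie par un \og{}on voit facilement\fg{} (la vérification que l'extension est une section de $\eps_R$, et l'introduction du sous-groupe $G(R)_{[g]}$ des séries à coefficients dans les $R_{dg}$, stable par inverse), ce qui est une mise au propre bienvenue mais ne change pas l'argument.
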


\begin{proof}
  On dispose d'un morphisme naturel de monoïdes $G(S)\to G(R)$,
  ce qui induit $\alpha_t:S\to G(R)$ par composition, et par
  propriété universelle on a une unique extension $R\to G(R)$
  en un morphisme de groupes. Comme $R_g$ est engendré par $S_g$,
  on voit facilement que $\alpha^d(R_g)\subset R_{dg}$.
\end{proof}

Si un anneau gradué $R$ est muni d'une opération grecque homogène,
alors sa lettre grecque est dans $L(R_0)$. De plus, si $R$ est
un anneau grec, et si $\alpha,\beta\in L(R_0)$, alors $\alpha_t$
est homogène si et seulement si $\beta_t$ l'est. On peut donc adapter
les constructions précédentes au cas gradué en définissant un anneau
grec gradué comme un anneau gradué muni d'une structure d'anneau
grec telle que les opérations dont la lettre grecque est dans $L(R_0)$
soient homogènes. Le cas non gradué se retrouve directement
en prenant la graduation triviale.

On définit un morphisme d'anneaux grecs $\Gamma$-gradués comme un
morphisme d'anneau grecs qui est un morphisme homogène,
ce qui définit clairement une catégorie des anneaux
grecs $\Gamma$-gradués.

\begin{ex}
  Si on pose $GW^\pm(K)=GW(K)\oplus GW^-(K)$, alors c'est
  un anneau grec $\Zd$-gradué.
\end{ex}

\begin{ex}
  Si $R$ est un anneau grec et $A$ un groupe commutatif, alors l'anneau de groupe
  $R[A]$ est naturellement un anneau grec $A$-gradué,
  en posant pour tout $\alpha\in L(R)$ $\alpha^d(xa)=\alpha^d(x)(da)$
  si $x\in R$ et $a\in A$.
\end{ex}

\begin{ex}
  De façon plus précise, si $R$ est un anneau grec $B$-gradué,
  alors $R[A]$ est un anneau grec $A\times B$-gradué. Notamment,
  $GW^\pm(K)[\Zd]$ est naturellement un anneau grec gradué sur
  le groupe de Klein.
\end{ex}

\subsection{$\lambda$-anneaux}

La plupart des anneaux grecs que l'on rencontre \og dans la nature\fg{}
(et en particulier $GW(K)$) satisfont des propriétés supplémentaires,
qui cette fois s'expriment spécifiquement en fonction des opérations $\lambda$.

\begin{prop}\label{prop_prod_gr}
Le groupe abélien $(G(R), +_G)$ (où $+_G$ est le produit des séries formelles)
est muni d'un unique produit $\times_G$ naturel en $R$ tel
que $(G(R),+_G,\times_G)$ soit un anneau commutatif, et tel que
pour tout $f(t)\in G(R)$ et tout $a\in R$, on ait
\[  f(t) \times_G (1+at) = f(at).   \] 
\end{prop}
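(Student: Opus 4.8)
The plan is to build $\times_G$ by the classical splitting-principle / universal-polynomial method, exactly as for the big Witt (or $\lambda$-ring) structure on $1+tR[[t]]$. The guiding observation is that the required identity already determines the product on linear factors: taking $f(t)=1+xt$ gives $(1+xt)\times_G(1+at)=1+xat$, and in particular $1+t$ is forced to be the unit (case $a=1$). Since $\times_G$ must be $+_G$-bilinear and $+_G$ is multiplication of series, for elements that split as finite products $\prod_{i=1}^N(1+x_it)$ and $\prod_{j=1}^M(1+y_jt)$, bilinearity together with the linear case forces
\[ \prod_i(1+x_it)\ \times_G\ \prod_j(1+y_jt)\ =\ \prod_{i,j}(1+x_iy_jt). \]
Everything reduces to showing that this split formula both determines the product (uniqueness) and consistently defines one (existence).

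For existence I would first extract universal coefficients. In $\Z[x_1,\dots,x_d,y_1,\dots,y_d]$ the coefficient of $t^d$ in $\prod_{i,j}(1+x_iy_jt)$ is bihomogeneous of bidegree $(d,d)$ and separately symmetric in the $x$'s and in the $y$'s; by the fundamental theorem of symmetric functions it is therefore a fixed integer polynomial $P_d\big(e_1(x),\dots,e_d(x);e_1(y),\dots,e_d(y)\big)$, and bihomogeneity makes this expression stable (unchanged by adding further variables). I then \emph{define}, for $f=1+\sum_k a_kt^k$ and $g=1+\sum_l b_lt^l$,
\[ f\times_G g\ :=\ 1+\sum_{d\geqslant 1}P_d(a_1,\dots,a_d;b_1,\dots,b_d)\,t^d. \]
Naturality in $R$ is immediate, because the $P_d$ are fixed polynomials and a ring homomorphism commutes with their evaluation; and by construction, on split elements this reproduces the boxed formula above.

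It then remains to verify that $(G(R),+_G,\times_G)$ is a commutative ring with unit $1+t$ and that $f\times_G(1+at)=f(at)$. Each of these is an identity between two integer polynomials in the coefficients of the elements involved (and in $a$), so it suffices to check it in a single universal polynomial ring where those coefficients are the elementary symmetric functions of independent variables, that is, on split elements. There commutativity, associativity, distributivity, the unit law and the displayed identity all become manifest equalities of products of the form $\prod(1+(\text{monomial in the roots})\,t)$ over a common index set (for instance associativity reads $\prod_{i,j,k}(1+x_iy_jz_kt)$ from both sides), and they transfer to all $R$ because the elementary symmetric polynomials in enough variables are algebraically independent over $\Z$. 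The same naturality-plus-splitting argument yields uniqueness: any admissible $\times_G$ is determined by its value on the generic pair $\bigl(1+\sum E_kt^k,\,1+\sum F_lt^l\bigr)$, and the injection $E_k\mapsto e_k(x)$, $F_l\mapsto e_l(y)$ identifies that value with the split product computed above.

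The main point to handle with care is precisely the passage between split (factored) elements and arbitrary elements of $G(R)$: a general $f$ has no roots in $R$, so one cannot define the product by literally ``multiplying roots'' — the whole content is that the coefficients are universal polynomials, which is what the symmetric-function argument supplies. A secondary nuisance is that the split factorizations appearing in the uniqueness step involve infinitely many variables; I would sidestep any worry about infinite distributivity by checking each coefficient of $t^d$ after reducing to $N\geqslant d$ variables, where all products are finite and ordinary bilinearity applies directly, and then invoking the stability of $P_d$.
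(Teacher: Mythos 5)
Your proposal is correct and follows essentially the same route as the paper: uniqueness by naturality reducing to the universal ring $\Z[\sigma_1,\sigma_2,\dots;s_1,s_2,\dots]$, projection to finitely many variables, and the splitting principle identifying the $\sigma_i$ with elementary symmetric functions of formal roots, forcing $\prod_i(1+x_it)\times_G\prod_j(1+y_jt)=\prod_{i,j}(1+x_iy_jt)$. The only difference is that where the paper delegates the existence verification to \cite[thm 2.5]{Yau}, you spell out the standard universal-polynomial argument yourself, which is a fair (and correct) expansion of the same idea.
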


\begin{proof}
  Pour montrer l'unicité, il suffit par naturalité de montrer que pour
  $R = \Z[\sigma_1,\sigma_2,\dots ; s_1,s_2,\dots]$ (où les $\sigma_i,s_i$
  sont des indéterminées) le produit $f(t)\times_G g(t)$ est uniquement
  déterminé, avec $f(t) = 1+\sum_i \sigma_it^i$ et $g(t) = 1+\sum_i s_it^i$.

  Pour cela, il suffit de montrer que $p_n(f(t)\times_G g(t))$ est
  uniquement déterminé pour tout $n$, où $p_n : R\to R_n = \Z[\sigma_1,\dots,\sigma_n
  ; s_1,\dots,s_n]$ est la projection naturelle. Par naturalité, $p_n(f(t)\times_G g(t))$
  est
  \[ (1+\sum_{i=1}^n \sigma_i t^i)\times_G (1 +\sum_{i=1}^n s_i t^i)\in R_n. \]
  Or on sait qu'on a une inclusion $R_n\subset \Z[x_1,\dots,x_n ; y_1,\dots, y_n]$
  de sorte que $\sigma_i$ soit la $i$-ème fonction symétrique sur les $x_i$,
  et $s_i$ celle sur les $y_i$. On a alors
  \[  1+\sum_{i=1}^n \sigma_i t^i = \prod_{i=1}^n (1+x_it)  \]
  et de même pour les $s_i$, d'où, en se souvenant que le produit des polynômes
  est la somme $+_G$ dans $G(R_n)$ et que donc $\times_G$ est distributif sur
  cette opération :
  \[ (1+\sum_{i=1}^n \sigma_i t^i)\times_G (1 +\sum_{i=1}^n s_i t^i)
    = \prod_{i,j} (1+x_it)\times_G (1+y_jt)  = \prod_{i,j} (1+x_iy_jt).\]

  Pour l'existence, il faut vérifier que cela définit bien un produit
  associatif naturel sur $G(R)$, voir \cite[thm 2.5]{Yau}.
\end{proof}

\begin{prop}
  L'anneau $G(R)$ est muni d'une unique structure d'anneau grec naturelle
  en $R$ telle que pour tout $a\in R$ l'élément $1+at$ soit de
  $\lambda$-dimension 1.
\end{prop}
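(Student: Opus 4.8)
The plan is to construct a greek operation on $G(R)$ directly via the universal formula and then verify it is natural, that it satisfies the axioms, and that the $\lambda$-dimension of $1+at$ is exactly $1$. First I would define, for $f(t) \in G(R)$, the candidate $\lambda$-operation by a universal construction analogous to the proof of Proposition \ref{prop_prod_gr}: reduce by naturality to the universal ring $R = \Z[\sigma_1,\sigma_2,\dots]$, split $f(t) = \prod_i (1+x_it)$ over the splitting ring $\Z[x_1,\dots,x_n]$, and declare $\lambda^d$ to be the operation whose effect on the splitting variables makes each $1+x_it$ of $\lambda$-dimension $1$. Concretely, the requirement that $1+at$ be of $\lambda$-dimension $1$ forces $\lambda^d(1+at) = 0$ for $d\geq 2$ (since its $\lambda_t$ must have degree $1$), and by the additivity axiom (iii) for greek operations together with the additive structure $+_G$ on $G(R)$, this pins down $\lambda^d$ on every product of linear terms, hence on all of $G(R)$ by the splitting principle.

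The key steps, in order, are as follows. Step one: establish existence by the universal/splitting argument, exactly parallel to Proposition \ref{prop_prod_gr}; the formulas for $\lambda^d$ on $G(R)$ are the universal polynomials in the coefficients obtained from the symmetric-function computation, so naturality in $R$ is automatic from the construction over $\Z[\sigma_1,\sigma_2,\dots]$. Step two: verify the three axioms of Definition \ref{defi_ope_grec}, namely $\lambda^0 = 1$, $\lambda^1 = \Id$, and the addition formula $\lambda^d(f +_G g) = \sum_k \lambda^k(f)\lambda^{d-k}(g)$; by naturality it suffices to check these over the universal ring on split elements, where they reduce to standard identities among elementary symmetric polynomials. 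Step three: confirm the normalization, that $1+at$ has $\lambda$-dimension $1$, which is built into the construction and is what the statement demands. Step four: prove uniqueness, using that any greek operation on $G(R)$ making each $1+at$ of $\lambda$-dimension $1$ must vanish in degrees $\geq 2$ on the linear elements $1+at$; since by the splitting principle every element of $G(R_n)$ is a $+_G$-sum of such linear elements, the addition formula (iii) then determines $\lambda^d$ uniquely on all of $G(R)$, and naturality extends this to arbitrary $R$.

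The main obstacle I expect is making the splitting principle rigorous in this setting, that is, justifying that checking identities on elements of the form $\prod_i(1+x_it)$ over the polynomial extension $\Z[x_1,\dots,x_n]$ suffices to establish them for all of $G(R)$. This is the same technical device invoked in Proposition \ref{prop_prod_gr}, and the cleanest route is to reduce everything by functoriality to the universal rings $R_n = \Z[\sigma_1,\dots,\sigma_n]$, embed $R_n$ into the symmetric-function ring $\Z[x_1,\dots,x_n]^{S_n} \subset \Z[x_1,\dots,x_n]$, and exploit that $G$ sends this injection to an injection of the relevant coefficient rings. Once that is set up, both the axiom verification and the uniqueness argument become bookkeeping with elementary symmetric polynomials. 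I would rely on \cite[thm 2.5]{Yau} for the fact that $G(R)$ is indeed a $\lambda$-ring in the classical sense with $1+at$ of dimension $1$, and simply translate that statement into the language of greek operations developed above, so that the bulk of the analytic content is imported rather than reproved.
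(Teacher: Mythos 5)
Your proposal matches the paper's proof essentially verbatim: reduce by naturality to the universal ring $\Z[\sigma_1,\sigma_2,\dots]$, split $1+\sum_i \sigma_i t^i$ as a $+_G$-sum of linear elements $(1+x_it)$ via the symmetric-function embedding to force $\lambda^d\bigl(1+\sum_i \sigma_i t^i\bigr) = \prod_{i_1<\dots<i_d}(1+x_{i_1}\cdots x_{i_d}t)$ and hence uniqueness, then import existence from Yau. The only quibble is the reference: the greek-ring (pre-$\lambda$) structure on $G(R)$ is \cite[thm 2.6]{Yau}, not thm 2.5, which the paper used for the product $\times_G$ in the preceding proposition.
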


\begin{proof}
  Comme pour la proposition précédente, on se ramène à montrer
  que les $\lambda^d(1+\sum_{i=1}^n \sigma_i t^i)$ sont uniquement
  déterminés, et comme $1+\sum_{i=1}^n \sigma_i t^i = (1+x_1t) +_G\dots +_G (1+x_nt)$,
  on a
  \[ \lambda^d(1+\sum_{i=1}^n \sigma_i t^i) = \prod_{i_1<\dots<i_d} (1+x_{i_1}\cdots x_{i_d}t). \]

  Le fait que cela définisse bien une structure d'anneau grec est
  encore montré dans \cite[thm 2.6]{Yau}.
\end{proof}

\begin{defi}
  Soit $R$ un anneau grec. On dit que $R$ est un $\lambda$-anneau
  si $\lambda_t: R\To G(R)$ est un morphisme d'anneaux grecs.
\end{defi}

\begin{rem}
  Cela correspond bien à la définition usuelle : on requiert que
  $\lambda_t(xy)=\lambda_t(x)\times_G \lambda_t(y)$ et
  $\lambda_t(\lambda^d(x)) = \lambda^d(\lambda_t(x))$, ce qui donne
  les formules usuelles pour $\lambda^d(xy)$ et $\lambda^n(\lambda^m(x))$
  étant donné la structure d'anneau grec $G(R)$.
\end{rem}

\begin{ex}
  Les anneaux $\Z$ et $GW(K)$ sont des $\lambda$-anneaux. De façon
  plus générale, pour une large classe de catégories abéliennes
  monoïdales symétriques $\mathbf{C}$, l'anneau de Grothendieck $K(\mathbf{C})$
  est un $\lambda$-anneau ($\Z$ correspondant par exemple à une
  catégorie d'espaces vectoriels, et $GW(K)$ à la catégorie des espaces
  bilinéaires symétriques non dégénérés sur $K$).
\end{ex}

\begin{ex}\label{ex_lambda_libre}
  L'anneau $\Z[x_1,x_2,\dots]$ admet une unique structure de $\lambda$-anneau
  telle que $\lambda^d(x_1)=x_d$. Cette structure fait de cet anneau le $\lambda$-anneau
  libre sur un élément (voir \cite[thm 2.25]{Yau}).

  De même, on peut construire le $\lambda$-anneau libre sur $n$ éléments en prenant
  $n$ jeux de variables indépendantes.
\end{ex}

\subsection{Éléments de Pfister}

Notre but est de montrer que les $n$-formes de Pfister sont
de $\pi_n$-dimension 1 dans $GW(K)$ pour une certaine lettre
grecque $\pi_n$ (indépendante de $K$). On va aboutir à ce résultat en
abstrayant la notion de $n$-forme de Pfister à un anneau grec quelconque,
à partir de na notion de $1$-forme de Pfister.

\begin{defi}
  Soient $R$ un anneau grec, $\alpha\in L(R)$ et $x\in R$.
  On dit que $x$ est un $\alpha$-élément de Pfister si $x^2=2x$
  et pour tout $d\in \N^*$, $\alpha^d(x)=x$.

  Si $\alpha=\lambda$, on parlera simplement d'élément de Pfister.

  Un $n$-élément de Pfister est le produit de $n$ élements de Pfister (en
  particulier 1 est l'unique $0$-élément de Pfister, et un 1-élément de Pfister
  est simplement un élément de Pfister). On note $\Pf_n(R)$ l'ensemble
  des $n$-éléments de Pfister de $R$ ; $\Pf_n$ est un foncteur des anneaux
  grecs dans les ensembles.
\end{defi}

\begin{rem}
  En utilisant la proposition \ref{prop_dim1}, on voit que les éléments
  de $R$ qui sont des $\alpha$-éléments de Pfister
  pour un certain $\alpha$ sont exactement ceux qui vérifient
  $x^2=2x$ et sont de $\beta$-dimension 1 pour un certain $\beta$.
\end{rem}

On peut préciser ça dans le cas $\alpha=\lambda$ :

\begin{prop}\label{prop_pi}
  Soit $R$ un anneau grec. Les éléments de Pfister de $R$
  sont exactement les $x\in R$ vérifiant $x^2=2x$ et
  de $\pi$-dimension au plus 1, où $\pi = \sum (-1)^{d+1}t^d$.

  De plus, ce sont exactement les éléments de la forme $1-a$
  où $a$ est de $\lambda$-dimension 1 et vérifie $a^2 = 1$. 
\end{prop}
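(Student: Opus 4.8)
The plan is to reduce both characterizations to elementary manipulations with the group homomorphism $\lambda_t : R\to G(R)$ and with the substitution that expresses $\pi$ in terms of $\lambda$. First I would reformulate the Pfister $\lambda$-condition: since $\lambda^1(x)=x$ always, requiring $\lambda^d(x)=x$ for all $d\pgq 1$ is the same as
\[ \lambda_t(x)=1+x\gamma(t),\qquad \gamma(t)=\sum_{d\pgq 1}t^d=\frac{t}{1-t}. \]
Next I record how $\pi$ relates to $\lambda$. The $\pi$-operation equals $\lambda\cdot\pi$ (the $\lambda$-operation acted on by the group element $\pi\in L(R)$), because the letter grecque of $\lambda\cdot\tau$ is $t\circ\tau=\tau$; writing $\pi(t)=t/(1+t)$ for the underlying series, this gives $\pi_t(y)=\lambda_{\pi(t)}(y)$ for all $y$. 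The one real observation is that, as elements of $L(\Z)$, the series $\gamma$ and $\pi$ are two-sided inverses for composition: a one-line check gives $\gamma\circ\pi=t$ and $\pi\circ\gamma=t$. In particular $\lambda=\pi\cdot\gamma$, so also $\lambda_t(y)=\pi_{\gamma(t)}(y)$.

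The first characterization is then immediate, and does not even use $x^2=2x$ (which is common to both statements). If $\lambda_t(x)=1+x\gamma(t)$, substituting $\pi(t)$ for $t$ yields $\pi_t(x)=1+x\,\gamma(\pi(t))=1+xt$, i.e.\ $x$ has $\pi$-dimension at most $1$; conversely, if $\pi_t(x)=1+xt$ then $\lambda_t(x)=\pi_{\gamma(t)}(x)=1+x\gamma(t)$. Hence $x$ is a Pfister element if and only if $x^2=2x$ and $\pi_t(x)=1+xt$.

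For the second characterization I would use the involution $x\mapsto a=1-x$. Expanding $(1-a)^2=1-2a+a^2$ shows at once that $x^2=2x$ is equivalent to $a^2=1$. For the $\lambda$-part, since $\lambda_t$ is a group morphism from $(R,+)$ to $(G(R),\cdot)$ and $1$ has $\lambda$-dimension $1$, one has $\lambda_t(1-a)=(1+t)\,\lambda_t(a)^{-1}$. If $a^2=1$ and $a$ has $\lambda$-dimension $1$, then $\lambda_t(a)=1+at$, and since $(1+at)(1-at)=1-a^2t^2=1-t^2$ we get $(1+at)^{-1}=\tfrac{1-at}{1-t^2}$, whence
\[ \lambda_t(1-a)=\frac{(1+t)(1-at)}{1-t^2}=\frac{1-at}{1-t}=1+(1-a)\gamma(t), \]
which is exactly the Pfister $\lambda$-condition for $x=1-a$. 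Reading the same computation backwards (the relation between $\lambda_t(a)$ and $\lambda_t(1-a)$ being symmetric under $y\mapsto 1-y$): if $x$ is Pfister then $a=1-x$ satisfies $a^2=1$ and, using $\lambda_t(x)=1+x\gamma(t)=\tfrac{1-at}{1-t}$, one finds $\lambda_t(a)=(1+t)\lambda_t(x)^{-1}=\tfrac{(1+t)(1-t)}{1-at}=1+at$, so $a$ has $\lambda$-dimension $1$.

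The power-series bookkeeping is routine; the single conceptual point is the compositional inverse relation $\gamma\circ\pi=\mathrm{id}$, which collapses the first characterization entirely. In the second characterization the only place the hypothesis is genuinely used is the closed form $(1+at)^{-1}=(1-at)/(1-t^2)$, which holds precisely because $a^2=1$; everything else is forced by $\lambda_t$ being a group homomorphism.
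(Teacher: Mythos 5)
Your proof is correct and follows essentially the same route as the paper: the first characterization via the compositional inverse relation between $\gamma(t)=t/(1-t)$ and $\pi(t)=t/(1+t)$ acting through $\pi_t(x)=\lambda_{\pi(t)}(x)$, and the second via the substitution $a=1-x$ together with the group-homomorphism property $\lambda_t(1-a)=(1+t)\lambda_t(a)^{-1}$, with $a^2=1$ entering exactly where the paper uses it. The only cosmetic difference is that you expand $(1+at)^{-1}=(1-at)/(1-t^2)$ explicitly where the paper works directly with $\lambda_t(x)=(1+t)/(1+at)$; the content is identical.
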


\begin{proof}
  Les éléments de Pfister sont ceux qui vérifient $x^2=2x$
  et $\lambda_t(x) = 1 + x\tau(t)$ où $\tau(t) = \sum_{d\pgq 1}t^d$.
  Cette dernière condition est équivalente à $\pi_t(x)=1+xt$
  si $\pi=\tau^{-1}$. Or $\tau = \frac{t}{1-t}$, donc il est
  facile de voir que sa réciproque est $\pi = \frac{t}{1+t}$,
  comme annoncé.

  Soit $x\in R$, et $a=1-x$. Alors $x^2=2x$ est équivalent à
  $a^2=1$. Comme $\lambda_t$ est un morphisme de groupe,
  $\lambda_t(a)= \frac{\lambda_t(1)}{\lambda_t(x)} =  \frac{1+t}{\lambda_t(x)}$ donc
  le fait que $a$ soit de $\lambda$-dimension 1 est équivalent
  à $\lambda_t(x)=\frac{1+t}{1+at}$ et comme $a^2=1$ cela donne
  $\lambda_t(x)=1+\sum_{d\pgq 1}(1-a)t^d$,
  donc à $\lambda^d(x)=x$ pour tout $d\pgq 1$.
\end{proof}

\begin{ex}
  Soit $\phi\in I(K)$ la classe de Witt d'une $n$-forme de Pfister,
  et soit $x\in \hat{I}(K)$ son unique relevé. Alors $x$ est un
  $n$-élément de Pfister dans $GW(K)$ (et ce sont en fait les seuls).
  C'est ce qui explique qu'on choisisse comme convention dans cette
  thèse d'appeler \og forme de Pfister\fg{} les éléments de cette forme
  plutôt que la classe dans $GW(K)$ d'une forme de Pister au sens
  usuel.
\end{ex}

\begin{ex}
  Le foncteur $\Pf$ est représentable si on le restreint
  aux $\lambda$-anneaux : si on considère
  $U = \Z[x_1,x_2,\dots]$ le $\lambda$-anneau libre sur un élément
  (voir exemple \ref{ex_lambda_libre}), on peut construire $P$
  comme le quotient de $U$ par l'idéal grec engendré par $x_1^2-2x_1$
  et $x_d-x_1$ (pour $d\pgq 2$). Alors l'image de $x_1$ dans $P$
  est l'élément de Pfister universel.

  On peut préciser la structure de $P$ : si on considère $P'$
  le quotient de $U$ par l'idéal engendré par $x_1^2-2x_1$
  et $x_d-x_1$, alors $P$ est en tant qu'anneau un quotient de $P'$.
  Or $P' = \Z \oplus \Z x_1$, donc si $P$ n'est pas isomorphe à $P'$
  on doit avoir $a,b\in \Z$ tels que pour tout élément de Pfister $x$
  dans un $\lambda$-anneau on ait $ax=b$ ; mais l'exemple des formes
  de Pfister dans un $GW(K)$ nous permet de voir que c'est faux. Donc
  $P = \Z \oplus \Z x_1$.
\end{ex}

\begin{ex}\label{ex_repr_pfis}
  De même, le foncteur $\Pf^n$ est représentable sur les
  $\lambda$-anneaux, et l'anneau $P_n$ qui le représente est isomorphe
  à $(\Z\oplus \Z x)^{\otimes n}$.

  Cela signifie que tout $n$-élément de Pfister dans un $\lambda$-anneau
  est l'image d'un élément de cet anneau par un certain morphisme,
  mais qui n'est pas unique, donc on ne peut pas en conclure que
  $\Pf_n$ est représentable. On peut parler d'un $n$-élément
  de Pfister \emph{versel}.
\end{ex}

Pour généraliser la proposition \ref{prop_pi} aux $n$-éléments de Pfister,
on va se restreindre aux $\lambda$-anneaux.

\begin{thm}\label{thm_pi}
  Soit $n\in \N^*$. Il existe une unique lettre grecque $\pi_n\in L(\Z)$
  telle que pour tout $\lambda$-anneau $R$, tout $n$-élément de Pfister
  de $R$ est de $\pi_n$-dimension 1.

  Précisément :
  \[ \pi_n = 1 - \frac{2}{1 + (1+2^nt)^{\frac{1}{2^{n-1}}}}, \]
  soit :
  \[ \pi_{n+1} = \pi_n \circ (tC(-2^{n-1}t)), \quad \pi_n = \pi_{n+1}\circ (t + 2^{n-1}t^2)   \]
  où $C(t)$ est la série génératrice des nombres de Catalan, soit
  $C(t) = \frac{1-\sqrt{1-4t}}{2t}$.
\end{thm}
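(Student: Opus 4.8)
The plan is to reformulate the dimension condition and then induct on $n$. Write $\tau_n := \pi_n^{-1}$ for the compositional inverse in $L(\Z)$. Since the base operation $\lambda$ has Greek letter the identity $t$, Proposition \ref{prop_dim1} (with $\alpha = \lambda$) says that an element $x$ has $\pi_n$-dimension $1$ exactly when $\lambda_t(x) = 1 + x\,\tau_n(t)$, i.e. $\lambda^d(x) = a_d\,x$ for all $d\pgq 1$, where $\tau_n(t) = \sum_{d\pgq 1} a_d t^d$. So the statement reduces to producing a universal series $\tau_n$ with $\lambda_t(x) = 1 + x\,\tau_n(t)$ for every $n$-Pfister element $x$, and then inverting. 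I claim the answer is $\tau_n(t) = \frac{1}{2^n}\bigl(\bigl(\frac{1+t}{1-t}\bigr)^{2^{n-1}} - 1\bigr)$, whose inverse is the stated $\pi_n$ (solve $s=\frac{u-1}{u+1}$ for $u=(1+2^nt)^{1/2^{n-1}}$). For $n=1$ this is Proposition \ref{prop_pi}: $\tau_1 = \frac{t}{1-t}$, $\pi_1 = \frac{t}{1+t}$.

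For the inductive step I would write an $(n+1)$-Pfister element as a product $P = x\,y$ with $x$ an $n$-Pfister element and $y = 1-b$ a Pfister element, so $b^2=1$ and $b$ has $\lambda$-dimension $1$ (the second description in Proposition \ref{prop_pi}). Since $R$ is a $\lambda$-ring, $\lambda_t(P) = \lambda_t(x)\times_G\lambda_t(y)$. Using $\lambda_t(y) = (1+t)(1+bt)^{-1}$, the distributivity of $\times_G$ over $+_G$, and the identity $f(t)\times_G(1+at)=f(at)$ of Proposition \ref{prop_prod_gr} (so that $\lambda_t(x)\times_G(1+bt) = \lambda_{bt}(x)$), this collapses to
\[ \lambda_t(P) = \frac{\lambda_t(x)}{\lambda_{bt}(x)} = \frac{1 + x\,\tau_n(t)}{1 + x\,\tau_n(bt)}, \]
the denominator being $\lambda_t(x)$ with $t$ replaced by $bt$.

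The simplification comes from the Pfister relations. Splitting $\tau_n = E + O$ into even and odd parts, $b^2=1$ gives $\tau_n(bt) = E(t) + b\,O(t)$, so numerator minus denominator is $x\,O(t)(1-b) = P\,O(t)$. Moreover $x^2 = 2^n x$ forces $xP = 2^n P$ and $(xb)P = -2^n P$, whence $P\cdot(1 + xE + xbO) = P\bigl(1 + 2^nE - 2^nO\bigr)$; inverting this scalar factor turns the quotient into
\[ \lambda_t(P) = 1 + P\cdot\frac{O(t)}{1 + 2^n\bigl(E(t) - O(t)\bigr)}, \]
so $P$ has $\pi_{n+1}$-dimension $1$ with $\tau_{n+1}(t) = \frac{\tau_n(t)-\tau_n(-t)}{2(1 + 2^n\tau_n(-t))}$ (using $E-O = \tau_n(-t)$ and $O = \frac{\tau_n(t)-\tau_n(-t)}{2}$). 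One then checks directly from the closed form (via the auxiliary identity $\tau_n(-t) = \frac{-\tau_n(t)}{1+2^n\tau_n(t)}$) that this equals $\tau_n(t) + 2^{n-1}\tau_n(t)^2$, i.e. $\tau_{n+1} = (t + 2^{n-1}t^2)\circ\tau_n$. Inverting gives $\pi_{n+1} = \pi_n\circ(t + 2^{n-1}t^2)^{-1} = \pi_n\circ\bigl(t\,C(-2^{n-1}t)\bigr)$, since the Catalan relation $C = 1 + tC^2$ makes $w = t\,C(-2^{n-1}t) = \frac{\sqrt{1+2^{n+1}t}-1}{2^n}$ satisfy $w + 2^{n-1}w^2 = t$; this is exactly the pair of substitutions in the statement.

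For uniqueness it suffices that the identities $\lambda^d(x) = a_d x$ pin down the $a_d$, and for this I would evaluate on a versal $n$-Pfister element (Example \ref{ex_repr_pfis}), or concretely on an anisotropic $n$-fold Pfister form over a real field, which is non-torsion in $GW(K)$; then $a_d x = a_d' x$ forces $a_d = a_d'$, so $\tau_n$ and hence $\pi_n$ are unique. The main obstacle is the inductive step: seeing that the quotient $\lambda_t(x)/\lambda_{bt}(x)$ really does collapse back to the shape $1 + P\,\tau_{n+1}(t)$. The relations $xP = 2^n P$ and $(xb)P = -2^n P$ coming from $x^2 = 2^n x$ are precisely what make the $P$-coefficient close up; once the recursion is in hand, matching it to the Catalan closed form is a routine verification.
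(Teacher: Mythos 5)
Your proof is correct, and while it shares the paper's skeleton — reduce the dimension-one condition to $\lambda_t(x)=1+x\,\pi_n^{-1}(t)$, induct on $n$ with Proposition \ref{prop_pi} as base case, derive the recursion $\tau_{n+1}=\tau_n+2^{n-1}\tau_n^2$, and match it against the closed form via $C(t)=1+tC(t)^2$ — the heart of your induction is genuinely different. The paper cannot run its computation in an arbitrary $\lambda$-ring because it divides by $2$: it first reduces to the versal $n$-Pfister element in the representing ring $P_n$ (Example \ref{ex_repr_pfis}), which is torsion-free, extends the $\lambda$-structure to $P_n\otimes_\Z\Q$ via \cite[prop 3.50]{Yau}, and then exploits $(\frac{x}{2^n})^i=\frac{x}{2^n}$, giving $\frac{x}{2^n}f(t)=f(\frac{x}{2^n}t)$ for all $f\in tR[[t]]$, so that the inductive step becomes rational-function algebra in $\frac{1+t}{1-t}$. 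You instead stay inside the given ring: writing the new factor as $1-b$ with $b^2=1$, splitting $\tau_n$ into even and odd parts, and using $xP=2^nP$ and $(xb)P=-2^nP$ to collapse the denominator $1+x\tau_n(bt)$ against $P$ into the invertible scalar series $1+2^n\tau_n(-t)$ — no localization, no versal element, no division in $R$. Your route buys self-containedness (bypassing the representability of $\Pf_n$ and Yau's extension result) and an explicit uniqueness argument, which the paper's proof leaves implicit (you rightly note that evaluating on a non-torsion anisotropic Pfister form over a real field pins down the coefficients); the paper's route buys shorter formula-pushing once $2$ is invertible. The one point you should make explicit is that your terminal identities — $\tau_n(-t)=\frac{-\tau_n(t)}{1+2^n\tau_n(t)}$ and $\frac{O(t)}{1+2^n\tau_n(-t)}=\tau_n(t)+2^{n-1}\tau_n(t)^2$ — are identities between series with \emph{integer} coefficients (indeed $\tau_n=\pi_n^{-1}\in L(\Z)$ since $L(\Z)$ is a group under composition, and $1+2^n\tau_n(-t)$ is invertible over $\Z$), so that checking them over $\Q$ from the closed form legitimately yields identities in $R[[t]]$ for every ring $R$ by base change; with that remark your argument is complete, and both proofs land on the same pair of mutually inverse substitutions $tC(-2^{n-1}t)$ and $t+2^{n-1}t^2$.
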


\begin{proof}
  On vérifie que si $\pi_n$ est donné par la formule ci-dessus, alors
  \[ \pi_n^{-1}= \frac{1}{2^n}\left[ \left( \frac{1+t}{1-t} \right)^{2^{n-1}} -1\right]. \]
  On doit donc montrer que si $\phi$ est un $n$-élément de Pfister
  alors $\lambda_t(\phi) = 1 + \phi \pi_n^{-1}(t)$ avec $\pi_n^{-1}$
  donné ci-dessus.

  On procède par récurrence sur $n$. Pour $n=1$, il s'agit juste
  de la proposition \ref{prop_pi}. On suppose que la propriété
  est vraie jusqu'à $n\pgq 1$. D'après l'exemple \ref{ex_repr_pfis},
  on peut se ramener au cas d'un $n$-élément de Pfister versel dans
  l'anneau $P_n$, qui a la particularité d'être sans torsion. En
  particulier, $P_n$ se plonge dans $R=P_n\otimes_\Z \Q$, et d'après
  \cite[prop 3.50]{Yau}, on peut étendre la structure de $\lambda$-anneau de $P_n$
  à $R$ ; on s'est donc ramené au cas d'un $n$-élément de Pfister
  dans un anneau où $2$ est inversible.

  On peut alors facilement observer par définition d'un élément de
  Pfister que pour tout $n$-élément de Pfister $x$, $(\frac{x}{2^n})^i = \frac{x}{2^n}$,
  donc pour tout $f\in tR[[t]]$,
  \begin{equation}\label{eq_pfis_polyn}
    \frac{x}{2^n}f(t) = f(\frac{x}{2^n}t).
  \end{equation}
  En particulier, si $x$ est un $1$-élément de Pfister, on a 
  \[ \lambda_t(x) =  1+ x\pi_1^{-1}(t) = 1 + 2\pi_1^{-1}(\frac{xt}{2}) = \frac{1+\frac{xt}{2}}{1-\frac{xt}{2}}.   \]

  De là, soit $\phi\in \Pf_{n+1}(R)$, avec $\phi = xy$ où
  $x\in \Pf_1(R)$ et $y\in \Pf_n(R)$. Alors:
  \begin{align*}
    \lambda_t(\phi) &= \lambda_t(xy) \\
                    &= \lambda_t(x)\times_G \lambda_t(y) \\
                    &= \left[ (1+\frac{x}{2}t) -_G (1-\frac{x}{2}t)\right] \times_G (1+y\pi_n^{-1}(t)) \\
                    &= (1 + y\pi_n^{-1}(\frac{xt}{2})) -_G (1 + y\pi_n^{-1}(-\frac{xt}{2})) \\
                    &= \frac{1 + 2^n\pi_n^{-1}(\frac{\phi t}{2^{n+1}})}{1 + 2^n\pi_n^{-1}(-\frac{\phi t}{2^{n+1}})} \\
                    &= 1 + \frac{\phi}{2^{n+1}}\left( \frac{1+2^n\pi_n^{-1}(t)}{1+2^n\pi_n^{-1}(-t)} -1 \right)
  \end{align*}
  où la dernière égalité vient de (\ref{eq_pfis_polyn}) appliqué à
  $f= \frac{1 + 2^n\pi_n^{-1}(t)}{1 + 2^n\pi_n^{-1}(-t)}-1$. Or
  \begin{align*}
    \frac{1+2^n\pi_n^{-1}(t)}{1+2^n\pi_n^{-1}(-t)} &= \frac{\left( \frac{1+t}{1-t} \right)^{2^{n-1}}}{\left( \frac{1-t}{1+t} \right)^{2^{n-1}}} \\
                                                   &= \left( \frac{1+t}{1-t} \right)^{2^n}
  \end{align*}
  donc on trouve bien $\lambda_t(\phi) = 1 +\phi\pi_{n+1}^{-1}(t)$.

  Il faut ensuite vérifier que les formules de récurrence donnent
  bien la bonne formule pour $\pi_n$. Tout d'abord, on montre
  facilement que $tC(-2^{n-1}t)$ et $t + 2^{n-1}t^2$ sont
  réciproques l'une de l'autre, puis :
  \begin{align*}
    \pi_{n+1}\circ (t + 2^{n-1}t^2) &= 1 - \frac{2}{1 + (1+2^{n+1}(t + 2^{n-1}t^2))^{\frac{1}{2^n}}} \\
                                    &= 1 - \frac{2}{1 + ((1+2^nt)^2)^{\frac{1}{2^n}}} \\
                                    &= \pi_n.
  \end{align*}
\end{proof}

\begin{coro}\label{cor_relat_pi}
  On a les relations explicites :
  \[ \pi_1^d = \sum_{k=1}^d (-1)^{d-k}\binom{d-1}{k-1}\lambda^k \]
  \[ \pi_{n+1}^d = \sum_{k=1}^d (-1)^{d-k} 2^{(d-k)(n-1)}\frac{k}{d}\binom{2d-k-1}{d-1}\pi_n^k \]
  \[  \pi_n^d = \sum_{\frac{d}{2}\ppq k\ppq d} \binom{k}{d-k}2^{(d-k)(n-1)}\pi_{n+1}^k.    \]
\end{coro}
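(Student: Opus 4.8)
L'idée centrale de la preuve est que les trois identités ne sont que la traduction, au niveau des opérations $\alpha^d$, de la relation entre lettres grecques. En effet, si $\beta = \alpha\cdot\tau$ dans $L(\Z)$, alors par définition de l'action de $L(R)$ sur $\Gamma(R)$ on a, dans tout anneau grec $R$ et pour tout $x$, l'égalité $\beta_t(x) = \alpha_{\tau(t)}(x) = \sum_k \alpha^k(x)\tau(t)^k$. Il suffit alors d'extraire le coefficient de $t^d$ pour exprimer $\beta^d$ comme combinaison explicite des $\alpha^k$. Chacune des trois relations correspond à un choix de $\tau$ fourni par la proposition \ref{prop_pi} ou par les formules de récurrence du théorème \ref{thm_pi}.

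Pour la première identité, je partirais de la proposition \ref{prop_pi}, qui donne $\pi_1 = \frac{t}{1+t}$ comme lettre grecque. Comme $\lambda$ est l'élément neutre de $L(\Z)$, on a $\pi_1 = \lambda\cdot\tau$ avec $\tau(t) = \frac{t}{1+t}$, d'où $\pi_{1,t}(x) = \sum_k \lambda^k(x)\bigl(\tfrac{t}{1+t}\bigr)^k$. Le coefficient de $t^d$ dans $\frac{t^k}{(1+t)^k}$ est celui de $t^{d-k}$ dans $(1+t)^{-k}$, à savoir $(-1)^{d-k}\binom{d-1}{k-1}$ par la formule du binôme généralisé, ce qui donne exactement la première relation.

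Pour la deuxième, j'utiliserais la récurrence $\pi_{n+1} = \pi_n\circ(tC(-2^{n-1}t))$ du théorème \ref{thm_pi}, qui se lit $\pi_{n+1} = \pi_n\cdot\tau_n$ avec $\tau_n(t) = tC(-2^{n-1}t)$. On obtient alors $\pi_{n+1,t}(x) = \sum_k \pi_n^k(x)\,t^k\,C(-2^{n-1}t)^k$, et il reste à extraire le coefficient de $t^{d-k}$ dans $C(-2^{n-1}t)^k$. L'ingrédient clé est la formule close pour les puissances de la série de Catalan, $[t^m]\,C(t)^k = \frac{k}{2m+k}\binom{2m+k}{m}$ (conséquence de l'inversion de Lagrange appliquée à $C = 1 + tC^2$). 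En spécialisant à $m=d-k$, en utilisant la simplification $\frac{k}{2d-k}\binom{2d-k}{d-k} = \frac{k}{d}\binom{2d-k-1}{d-1}$, et en tenant compte du facteur $(-2^{n-1})^{d-k} = (-1)^{d-k}2^{(d-k)(n-1)}$, on récupère précisément la deuxième identité.

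Pour la troisième, j'emploierais l'autre formule de récurrence $\pi_n = \pi_{n+1}\circ(t + 2^{n-1}t^2)$, soit $\pi_n = \pi_{n+1}\cdot\sigma_n$ avec $\sigma_n(t) = t + 2^{n-1}t^2$. On a alors $\pi_{n,t}(x) = \sum_k \pi_{n+1}^k(x)\,t^k(1+2^{n-1}t)^k$, et le coefficient de $t^{d-k}$ dans $(1+2^{n-1}t)^k$ vaut $\binom{k}{d-k}2^{(d-k)(n-1)}$ par la formule du binôme ; la non-nullité de $\binom{k}{d-k}$ force $\frac{d}{2}\ppq k\ppq d$, ce qui fixe le domaine de sommation. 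L'étape de loin la plus délicate est donc la deuxième : il faut disposer de la formule pour $[t^m]\,C(t)^k$ et mener à bien la simplification de coefficients binomiaux, les deux autres relations ne reposant que sur la formule du binôme une fois la correspondance lettre grecque / opérations bien posée.
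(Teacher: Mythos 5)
Votre preuve est correcte et suit essentiellement la même démarche que celle du texte : traduire l'action d'une lettre grecque (composition des séries formelles) en une extraction de coefficients, puis établir les trois identités de séries correspondantes, les première et troisième par la formule du binôme, la deuxième par la formule close pour les puissances de la série de Catalan. La seule différence, mineure, est que vous justifiez cette dernière par inversion de Lagrange appliquée à $C(t)=1+tC(t)^2$, alors que le texte la démontre par récurrence sur $k$ via la relation $C(t)^{k+1}=\frac{C(t)^k-C(t)^{k-1}}{t}$ issue de la même équation fonctionnelle ; les deux arguments sont valables et vos simplifications de coefficients binomiaux sont exactes.
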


\begin{proof}
  Par construction, si $\alpha = \beta\circ \tau$, alors
  si $\tau^k = \sum_da_{k,d}t^d$, on a $\alpha^d = \sum_k a_{k,d}\beta^k$.

  Les équations ci-dessus se traduisent donc par

  \[ \left( \frac{t}{1+t} \right)^k = \sum_{d\pgq k}(-1)^{d-k}\binom{d-1}{k-1}t^d, \]
  \[  \left( tC(-2^{n-1}t) \right)^k = \sum_{d\pgq k}(-1)^{d-k}2^{(d-k)(n-1)}\frac{k}{d}\binom{2d-k-1}{d-1}t^d, \]
  et
  \[  (t+2^{n-1}t^2)^k = \sum_{k\ppq d\ppq 2k}\binom{k}{d-k}2^{(d-k)(n-1)}t^d. \]
  La première vient directement du développement en série de $(1+t)^{-n}$
  et la troisième du binôme de Newton. On peut se concentrer sur la deuxième,
  qui se réécrit
  \[  C(t)^k = \sum_{d\pgq 0} \frac{k}{d+k}\binom{2d+k-1}{d+k-1}t^d.  \]
  On la montre sans difficulté par récurrence en utilisant l'équation
  fonctionnelle $C(t)=1+tC(t)^2$, qui donne $C(t)^{k+1}=\frac{C(t)^k-C(t)^{k-1}}{t}$.
\end{proof}

\begin{coro}\label{cor_pi_simil}
  Soit $R$ un $\lambda$-anneau. Soient $x\in \Pf_n(R)$ et $a\in R$
  de $\lambda$-dimension 1 tel que $a^2=1$. Alors si $d\pgq 2$ :
  \[ \pi_n^d(ax) = (-1)^d 2^{n(d-1)-1}(1-a)x. \]
\end{coro}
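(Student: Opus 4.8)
The plan is to recognise the right-hand side as a scalar multiple of an $(n+1)$-élément de Pfister, and to compute $\pi_{n,t}(ax)$ by relating $ax$ to that element. First I would note, via Proposition \ref{prop_pi}, that since $a$ is of $\lambda$-dimension $1$ with $a^2=1$, the element $1-a$ is a $1$-élément de Pfister; hence $y:=(1-a)x$, being the product of a Pfister element and the $n$-élément de Pfister $x$, lies in $\Pf_{n+1}(R)$. The only algebraic identities I will need are the standard Pfister relations $x^2=2^nx$ and $y^2=2^{n+1}y$ (a product of $m$ Pfister elements squares to $2^m$ times itself), together with $xy=(1-a)x^2=2^ny$, which follows by commutativity of $R$.

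Next I would write $ax=x-y$ and exploit that $\pi_{n,t}$ is a group morphism into $G(R)$ (axiom (iii) of an opération grecque), so that $\pi_{n,t}(ax)=\pi_{n,t}(x)\cdot\pi_{n,t}(y)^{-1}$ as power series. By Theorem \ref{thm_pi} the element $x$ is of $\pi_n$-dimension $1$, giving $\pi_{n,t}(x)=1+xt$. For $y$ the idea is to start from $\pi_{n+1,t}(y)=1+yt$ and transport it through the recursion $\pi_n=\pi_{n+1}\circ(t+2^{n-1}t^2)$ of Theorem \ref{thm_pi}: since changing the lettre grecque amounts to substituting $\tau(t)=t+2^{n-1}t^2$ for $t$ (as $\alpha_t\cdot\tau=\alpha_{\tau(t)}$), this yields $\pi_{n,t}(y)=1+y(t+2^{n-1}t^2)$.

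It then remains to invert and multiply. Writing $u=t+2^{n-1}t^2$ and using $y^k=2^{(n+1)(k-1)}y$, a geometric sum gives $\pi_{n,t}(y)^{-1}=1-\frac{yu}{1+2^{n+1}u}$. Multiplying by $1+xt$ and replacing $xy$ by $2^ny$, the $y$-contribution becomes $-y\,\frac{u(1+2^nt)}{1+2^{n+1}u}$. The key simplification, which makes everything collapse, is the identity $1+2^{n+1}u=(1+2^nt)^2$; cancelling one factor of $1+2^nt$ reduces this to $-y\,\frac{t(1+2^{n-1}t)}{1+2^nt}$, while the $xt$ term contributes only in degree $1$.

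Finally I would extract the coefficient of $t^d$ for $d\pgq 2$ by expanding $\frac{1}{1+2^nt}=\sum_k(-1)^k2^{nk}t^k$: the contributions coming from the $t$ and the $2^{n-1}t^2$ factors combine into the single coefficient $(-1)^{d+1}2^{n(d-1)-1}$, whence $\pi_n^d(ax)=(-1)^d2^{n(d-1)-1}y=(-1)^d2^{n(d-1)-1}(1-a)x$. The main obstacle is purely the power-series bookkeeping: carefully transporting between the $\pi_n$- and $\pi_{n+1}$-parametrisations and spotting the factorisation $1+2^{n+1}u=(1+2^nt)^2$, without which the two coefficient contributions would not collapse into a single closed form.
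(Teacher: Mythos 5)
Your proof is correct, and it takes a genuinely different route from the paper's. The paper stays at level $n$: it uses multiplicativity of $\lambda_t$ in the $\lambda$-ring (the $\times_G$-product with $1+at$) to write $(\pi_n)_t(ax) = 1 + x\,\pi_n^{-1}(a\pi_n(t))$, then evaluates this series directly from the closed formula for $\pi_n$ — fractional powers $(1+2^nt)^{1/2^{n-1}}$ and all — using the identities $(1-a)^m=2^{m-1}(1-a)$, $a(1-a)=-(1-a)$ and $(1+a)(1-a)=0$, ending with the compact generating series $\frac{at}{1+2^{n-1}(1-a)t}$. You instead exploit Theorem \ref{thm_pi} at two consecutive levels: writing $ax = x - y$ with $y=(1-a)x\in\Pf_{n+1}(R)$ (legitimate by Proposition \ref{prop_pi}), you only need additivity of the operation, the fact that $x$ has $\pi_n$-dimension $1$ and $y$ has $\pi_{n+1}$-dimension $1$, and the recursion $\pi_n=\pi_{n+1}\circ(t+2^{n-1}t^2)$ to get $(\pi_n)_t(y)=1+y(t+2^{n-1}t^2)$; the rest is formal series bookkeeping with $x^2=2^nx$, $xy=2^ny$, $y^2=2^{n+1}y$ and the factorisation $1+2^{n+1}u=(1+2^nt)^2$, all of which check out (including the final coefficient extraction). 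Your route buys two things: it avoids the fractional-power manipulations entirely, working only with polynomial substitutions and geometric series, and it explains conceptually why the answer is a multiple of $(1-a)x$ — it is precisely the $(n+1)$-Pfister "correction term" in the decomposition of $ax$. The paper's route, in exchange, never leaves level $n$ and produces the whole generating series in closed form, which packages all degrees $d\pgq 2$ at once.
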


\begin{proof}
  On a $\lambda_t(ax) = (1+at)\times_G \lambda_t(x) = 1+x\pi_n^{-1}(at)$,
  donc en faisant agir $\pi_n$ à gauche $(\pi_n)_t(ax) = 1+x\pi_n^{-1}(a\pi_n(t))$. On doit donc montrer
  \[ \pi_n^{-1}(a\pi_n(t)) = at + \sum_{d\pgq 2}(-1)^d2^{n(d-1)-1}(1-a)t^d.  \]
  On montre en utilisant $(1-a)^m=2^{m-1}(1-a)$ et $a(1-a)^n=-(1-a)^n$ que le membre
  de droite donne $\frac{at}{1+2^{n-1}(1-a)t}$.

  Pour le membre de gauche : on a
  \begin{align*}
    \frac{1+a\pi_n(t)}{1-a\pi_n(t)} &=  \frac{  1+a\frac{  (1+2^nt)^{\frac{1}{2^{n-1}}}-1  }{(1+2^nt)^{\frac{1}{2^{n-1}}}+1}  }
                                      { 1-a\frac{  (1+2^nt)^{\frac{1}{2^{n-1}}}-1  }{(1+2^nt)^{\frac{1}{2^{n-1}}}+1}} \\
                                    &= \frac{(1+2^nt)^{\frac{1}{2^{n-1}}}(1+a) + (1-a)}{(1+2^nt)^{\frac{1}{2^{n-1}}}(1-a)+(1+a)}.
  \end{align*}
  Or comme $(1+a)(1-a)=0$, on a $(y(1+a)+(1-a))^m = 2^{m-1}(y^m(1+a)+(1-a))$
  pour tous $y\in R$ et $m\in \N^*$, et de même en échangeant les $1+a$ et $1-a$.
  De là :
  \begin{align*}
    \pi_n^{-1}(a\pi_n(t)) &= \frac{1}{2^n}\left( \frac{(1+2^nt)(1+a) + (1-a)}{(1+2^nt)(1-a)+(1+a)}-1\right) \\
                          &= \frac{1}{2^n}\left( \frac{2+2^n(1+a)t}{2+2^n(1-a)t}-1\right) \\
                          &= \frac{at}{1+2^{n-1}(1-a)t}
  \end{align*}
  comme souhaité.
\end{proof}

\section{Invariants de $I^n$}

Dans cette partie on étudie les invariants de $I^n$ à valeurs dans
l'anneau de Witt $W(K)$ (invariants de Witt), et dans $H^*(K,\mu_2)$
(invariants cohomologiques).

Il s'avère que les résultats sont extrêmement proches dans les deux
cas, mais qu'il est délicat de passer de l'un à l'autre directement.
On choisit donc d'opter pour une approche hybride, qui englobe les
deux types d'invariants, en étudiant des invariants à valeurs dans
un anneau $A(K)$ qui vérifie un certain nombre de propriétés formelles
qui sont exactement celles qui font fonctionner les résultats. La
généralisation est un peu artificielle dans la mesure où il n'est pas
évident que des exemples intéressants autres que l'anneau de Witt
et l'anneau de cohomologie soient couvert par ces définitions, mais
l'artifice a le mérite d'une certaine simplicité et lisibilité, et
évite à tout le moins de répéter des arguments presque identiques
pour les deux cas qui nous intéressent.

\subsection{Contexte}\label{sec_contexte}

\label{par_a}On se fixe donc dans cette toute partie un foncteur
$A: \mathbf{Field}_{/k}\To \mathbf{FRing}$, où $\mathbf{FRing}$ est
la catégorie des anneaux filtrés. On a donc une filtration
$A(K)=A^0(K)\supset A^1(K)\supset \dots$, naturelle en $K$,
qu'on suppose par ailleurs séparée, ie $\bigcap_n A^n(K)=0$.

\label{par_f}On suppose également qu'on dispose d'une application naturelle
$f_1:I^1(K)\To A^1(K)$ vérifiant les conditions suivantes :
\begin{enumerate}[label=(\roman*)]
\item l'association
  \[ \pfis{a_1,\dots,a_n}\mapsto \{a_1,\dots,a_n\} := f_1(\pfis{a_1})\cdots f_1(\pfis{a_n}) \]
  définit une fonction injective $\Pf_n(K)\To A^n(K)$ ;
\item pour tout $n\in \N^*$, cette fonction se prolonge en un morphisme de groupes
  $f_n:I^n(K)\To A^n(K)$ ;
\item \label{cond_3} si $x\in A(K)$ vérifie $\{a\}\cdot x\in A^{d+1}(L)$ pour tout $a\in L^*$,
  pour toute extension $L/K$, alors $x\in A^d(K)$ ;
\item \label{cond_4} $\Inv(\Pf_n,A) = A(K)\oplus A(K)\cdot f_n$ où on considère
  les invariants définis sur $K$.
\end{enumerate}

Notons que la condition \ref{cond_3} est équivalente à :
\begin{equation}
   \forall n\in \N^*,\,  \left( \forall L/K,\forall \phi\in \Pf_n(K), f_n(\phi)\cdot x\in A^{d+n}(L) \right) \impl x\in A^d(K)
\end{equation}
et en particulier par séparation implique
\begin{equation}\label{eq_f_gen}
 \forall n\in \N^*,\,  \left( \forall L/K,\forall \phi\in \Pf_n(K), f_n(\phi)\cdot x = 0 \right) \impl x=0. 
\end{equation}

On peut également remarquer que par construction, pour tous
$x\in I^n(K)$, $y\in I^m(K)$, on a :
\begin{equation}\label{eq_f_prod}
  f_n(x)\cdot f_m(y) = f_{n+m}(xy)
\end{equation}

\begin{ex}
  Le premier exemple, tautologique, est $A(K)=W(K)$ (on notera $A=W$), muni de la filtration
  fondamentale $A^n(K)= I^n(K)$, et de $f_1=\Id$ (donc $f_n=\Id$). Les deux
  premières propriétés sont évidentes, la troisième se montre en prenant
  pour $a$ un élément générique, et la quatrième est un résultat de Serre
  (\cite[ex 27.17]{GMS}).
  Dans cet exemple, $\{a_1,\dots,a_n\} = \pfis{a_1,\dots,a_n}$.
\end{ex}

\begin{ex}
  Le deuxième exemple est $A(K)=H^*(K,\mu_2)$ (on notera $A=H$), muni de la filtration
  $A^n(K)=\bigoplus_{d\pgq n}H^n(K,\mu_2)$, et de $f_1=e_1$ (et donc
  $f_n = e_n$). Les deux premières propriétés découlent de la
  conjecture de Milnor, la troisième se démontre encore en prenant
  un élément générique, et la quatrième est encore un résultat de Serre
  (\cite[thm 18.1]{GMS}).
  Dans cet exemple, $\{a_1,\dots,a_n\} = (a_1,\dots,a_n)$.
\end{ex}

\begin{rem}
  En revanche, $A(K)=GW(K)$ ne convient pas : la condition \ref{cond_3}
  n'est pas remplie. En effet, si $x\in GW(K)$ vérifie $x\pfis{a}=0$
  pour tout $a\in L^*$, on peut seulement conclure que
  $x$ est hyperbolique, et non $x=0$, ce qui est en contradiction
  avec (\ref{eq_f_gen}).
\end{rem}

\subsection{Les invariants $f_n^d$}

On met en application le théorème \ref{thm_pi}, dans le cas du
$\lambda$-anneau $GW(K)$. On dispose donc pour tous $n\in \N^*$
et $d\in \N$ d'applications $\pi_n^d:GW(K)\To GW(K)$, naturelles en $K$.

\begin{defi}\label{def_f}
  Pour tous $n,d\in \N^*$, on définit $f_n^d\in \Inv^{nd}(I^n,A)$
  par $f_n^d = f_{nd}\circ \pi_n^d$. On pose aussi $f_n^0=1$.
\end{defi}

\begin{ex}\label{ex_pi_u}
  Si $A(K)=W(K)$, alors $f_n^d=\pi_n^d$ (restreint à $I^n(K)$). Si
  $A(K)=H^*(K,\mu_2)$, alors on notera $u_{nd}^{(n)}=f_n^d=e_{nd} \circ \pi_n^d$.

  En ce qui concerne la notation, on aurait pu choisir d'utiliser
  $u_n^d$ pour être plus cohérent avec les autres notations, mais on
  a choisi de suivre la tradition de noter le degré d'un invariant
  cohomologique en indice. L'exposant sert alors à distinguer les
  invariants de même degré, par exemple $u^{(2)}_6$ et $u^{(3)}_6$,
  définis respectivement sur $I^2$ et $I^3$ (et dont l'un n'est
  absolument pas la restriction de l'autre, par ailleurs).
\end{ex}

Les propriétés de base de ces invariants sont les suivantes :

\begin{prop}\label{prop_f}
  Pour tout $n\in \N^*$, on a :
  \begin{enumerate}[label=(\roman*)]
  \item \label{cond_f_ini} $f_n^0 = 1$ et $f_n^1=f_n$ ;
  \item \label{cond_f_som} pour tous $q,q'\in I^n(K)$ :
    \[ f_n^d(q+q') = \sum_{k=0}^d f_n^k(q)\cdot f_n^{d-k}(q') ; \]
  \item \label{cond_f_pfis}  pour tous $\phi\in Pf_n(K)$ et $d\pgq 2$,
    $f_n^d(\phi)=0$.
  \end{enumerate}
  En particulier, $f_n^d(I^n(K))\subset A^{nd}(K)$.
\end{prop}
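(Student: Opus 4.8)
The whole statement is the transport, across the multiplicative family $(f_m)$ via the product formula (\ref{eq_f_prod}), of the formal identities satisfied by the Greek operation $\pi_n$ on the $\lambda$-anneau $GW(K)$ furnished by Theorem \ref{thm_pi}. I would dispatch the three items in order, keeping in mind the (already established) fact that $\pi_n^k$ sends $I^n(K)$ into $I^{nk}(K)$, so that each $f_n^k = f_{nk}\circ \pi_n^k$ is well-defined.

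For \ref{cond_f_ini}, the equality $f_n^0 = 1$ is Definition \ref{def_f}, while $f_n^1 = f_n \circ \pi_n^1$; since axiom (ii) of Definition \ref{defi_ope_grec} forces $\pi_n^1 = \Id$ for any Greek operation, this gives $f_n^1 = f_n$ at once. For \ref{cond_f_pfis}, I would invoke Theorem \ref{thm_pi}: an $n$-élément de Pfister $\phi$ has $\pi_n$-dimension $1$, so $\pi_n^d(\phi) = 0$ as soon as $d \pgq 2$, whence $f_n^d(\phi) = f_{nd}(0) = 0$.

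The substance is \ref{cond_f_som}. The starting point is the additivity axiom (iii) of Definition \ref{defi_ope_grec} applied to $\pi_n$, namely $\pi_n^d(q+q') = \sum_{k=0}^d \pi_n^k(q)\,\pi_n^{d-k}(q')$. For $q,q'\in I^n(K)$ each factor $\pi_n^k(q)$ lies in $I^{nk}(K)$ and each $\pi_n^{d-k}(q')$ in $I^{n(d-k)}(K)$, so applying the additive map $f_{nd}$ and rewriting each product by (\ref{eq_f_prod}) as $f_{nd}(\pi_n^k(q)\,\pi_n^{d-k}(q')) = f_{nk}(\pi_n^k(q))\cdot f_{n(d-k)}(\pi_n^{d-k}(q')) = f_n^k(q)\cdot f_n^{d-k}(q')$ and summing yields the claim. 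The one point to watch is the boundary: for $k=0$ and $k=d$ the factor $\pi_n^0 = 1$ does not sit in a positive piece of the filtration, so (\ref{eq_f_prod}) does not literally apply there; these two terms must be checked by hand, using $\pi_n^0 = 1$ together with the convention $f_n^0 = 1$, after which they match trivially. This bookkeeping is the only genuinely delicate step, the rest being purely formal.

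For the final inclusion, I would simply note that $\pi_n^d$ already maps $I^n(K)$ into $I^{nd}(K)$ (by Theorem \ref{thm_pi} together with Proposition \ref{prop_somme_dim_1}, which is exactly what makes Definition \ref{def_f} well-posed), so that $f_n^d(I^n(K)) = f_{nd}(\pi_n^d(I^n(K))) \subset f_{nd}(I^{nd}(K)) \subset A^{nd}(K)$ by the context hypotheses on $f_{nd}$; it can equivalently be read off from \ref{cond_f_som} and \ref{cond_f_pfis} since $A$ is a filtered ring. I expect no obstacle beyond the boundary-term check noted above.
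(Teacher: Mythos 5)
Your items (i)--(iii) are handled exactly as in the paper: all three are the transport of the Greek-operation axioms and of Theorem \ref{thm_pi} through $f_{nd}$ via (\ref{eq_f_prod}), and your boundary remark about $k=0,d$ is a fair minor point. The gap is the statement you lean on from the outset, namely that ``$\pi_n^k$ sends $I^n(K)$ into $I^{nk}(K)$'' is \emph{already established} by Theorem \ref{thm_pi} together with Proposition \ref{prop_somme_dim_1}. Those results only control \emph{sums} of $n$-Pfister forms, and $I^n(K)$ is generated by Pfister forms only as a \emph{group}: over a formally real field, $-\pfis{-1}$ has negative signature and hence is not a sum of Pfister forms (compare Example \ref{ex_moins_pfis}), so nothing you cite says anything about $\pi_n^d(-\phi)$. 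This inclusion is not a prerequisite lying around before the proposition --- in the case $A=W$ it \emph{is} the final assertion, and it is also what makes Definition \ref{def_f} well-posed --- so it is part of what must be proved, and your proposal never proves it for opposites of Pfister forms.

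This is precisely where the paper's proof does real work: it reduces, via \ref{cond_f_som}, to Pfister forms and their opposites, and for $-\phi$ invokes Corollaire \ref{cor_pi_simil} with $a=\fdiag{-1}$ (note that $\fdiag{-1}x=-x$ for $x\in\hat{I}(K)$, since $\fdiag{1,-1}\cdot\hat{I}(K)=0$ in $GW(K)$), which gives $\pi_n^d(-\phi)=(-1)^d2^{n(d-1)-1}\pfis{-1}\phi$, whose Witt class lies in $I^{nd}(K)$ because $2=\pfis{-1}$ in $W(K)$. Your fallback --- reading the inclusion off \ref{cond_f_som} and \ref{cond_f_pfis} --- can indeed be made rigorous: expanding $0=f_n^d(\phi-\phi)$ and killing the terms with $k\pgq 2$ by \ref{cond_f_pfis} yields $f_n^d(-\phi)=(-1)^d f_n(\phi)^d\in A^{nd}(K)$. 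But as you have arranged the argument it is circular: your proof of \ref{cond_f_som} applies (\ref{eq_f_prod}) to the factors $\pi_n^k(q)$, which presupposes the very inclusion you would be deducing. The repair is to order things as the paper implicitly does: first treat $A=W$, where \ref{cond_f_som} is the Greek-operation axiom itself and needs no (\ref{eq_f_prod}), obtain the inclusion there by the generator argument (handling $-\phi$ as above), and only then transfer to a general $A$.
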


\begin{proof}
  Les trois propriétés sont une reformulation du fait que
  $\pi_n$ est une opération grecque telle que les $n$-formes de Pfister
  sont de dimension 1 (voir \ref{thm_pi}), en utilisant la formule (\ref{eq_f_prod}).

  Pour la dernière assertion, par la formule \ref{cond_f_som}, il
  suffit de montrer que $f_n^d(x)\in A^{nd}(K)$ si $x$ est une
  $n$-forme de Pfister ou l'opposé d'une telle forme. Or pour une
  forme de Pfister cela découle directement des formules ci-dessus,
  et pour l'opposé d'une $n$-forme de Pfister on peut utiliser
  le corollaire \ref{cor_pi_simil} avec $a=-1$.
\end{proof}

\begin{prop}\label{prop_f_simil_pfis}
  Si $\phi\in Pf_n(K)$, $a\in K^*$, $d\pgq 2$, alors
  \[ f_n^d(\fdiag{a}\phi) = (-1)^d \{-1\}^{n(d-1)-1}\{a\}f_n(\phi). \]
\end{prop}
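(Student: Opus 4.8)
L'idée est de tout ramener au corollaire \ref{cor_pi_simil} appliqué dans le $\lambda$-anneau $GW(K)$, la seule véritable subtilité étant que le passage de $GW(K)$ à l'anneau de Witt est essentiel et doit être effectué au bon moment.

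On commence par observer que $\fdiag{a}$ est un élément de $GW(K)$ de $\lambda$-dimension $1$ (c'est une forme de dimension $1$) vérifiant $\fdiag{a}^2 = \fdiag{a^2} = \fdiag{1} = 1$. Comme $\phi\in \Pf_n(GW(K))$, le corollaire \ref{cor_pi_simil} s'applique directement et donne, pour $d\pgq 2$,
\[ \pi_n^d(\fdiag{a}\phi) = (-1)^d 2^{n(d-1)-1}(1-\fdiag{a})\phi = (-1)^d 2^{n(d-1)-1}\pfis{a}\phi, \]
où l'on a utilisé la convention $\pfis{a} = \fdiag{1}-\fdiag{a} = 1-\fdiag{a}$.

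Ensuite, par la définition \ref{def_f}, $f_n^d(\fdiag{a}\phi)$ s'obtient en appliquant $f_{nd}$ à la classe de Witt de l'élément précédent. C'est ici que se situe le point crucial : dans $GW(K)$ l'entier $2$ ne porte aucun degré de filtration, tandis que dans $W(K)$ on a $2 = \pfis{-1}$ (en effet $\fdiag{1}-\fdiag{-1} = 2\fdiag{1}$ en tant que classes de Witt), de sorte que $2^{n(d-1)-1} = \pfis{-1}^{n(d-1)-1}$ dans $W(K)$. Le passage à la classe de Witt transforme donc l'argument de $f_{nd}$ en
\[ (-1)^d \pfis{-1}^{n(d-1)-1}\pfis{a}\phi, \]
qui est au signe près un produit de $n(d-1)-1+1+n = nd$ facteurs de Pfister de degré $1$, donc un élément de $I^{nd}(K)$ ; c'est précisément ce qui rend $f_{nd}$ applicable (et réexplique pourquoi $f_n^d$ est à valeurs dans $A^{nd}$, cf. proposition \ref{prop_f}).

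Enfin, on applique $f_{nd}$ et on développe à l'aide de la relation de multiplicativité (\ref{eq_f_prod}) et de l'égalité $f_m(\pfis{b_1,\dots,b_m}) = \{b_1,\dots,b_m\}$, ce qui donne
\[ f_{nd}\big(\pfis{-1}^{n(d-1)-1}\pfis{a}\phi\big) = \{-1\}^{n(d-1)-1}\{a\}\,f_n(\phi), \]
et en multipliant par $(-1)^d$ on obtient l'énoncé. L'obstacle principal est davantage conceptuel que calculatoire : il faut se garder de substituer $2 = \pfis{-1}$ tant que l'on est dans $GW(K)$ (où c'est faux et où le degré de filtration serait erroné), et n'effectuer cette substitution qu'après projection dans $W(K)$, là précisément où le facteur $2^{n(d-1)-1}$ fournit les $n(d-1)-1$ unités de degré de filtration manquantes.
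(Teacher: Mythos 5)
Votre preuve est correcte et suit exactement la même voie que le texte, dont la démonstration se réduit à invoquer le corollaire \ref{cor_pi_simil} ; vous ne faites qu'expliciter les détails implicites (vérification que $\fdiag{a}$ est de $\lambda$-dimension 1 avec $\fdiag{a}^2=1$, passage de $GW(K)$ à la classe de Witt où $2=\pfis{-1}$, puis application de $f_{nd}$ via (\ref{eq_f_prod})). Votre remarque sur le moment où effectuer la substitution $2=\pfis{-1}$ est juste et constitue bien le seul point de vigilance de cette application directe.
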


\begin{proof}
  C'est une application directe du corollaire \ref{cor_pi_simil}.
\end{proof}

\begin{prop}\label{prop_somme_pfis}
  Si $q = \sum_{i=1}^r \phi_i \in I^n(K)$ où $\phi_i$ est une $n$-forme de Pfister, 
  alors pour tout $d\pgq1$ :
  \[ f_n^d(q) = \sum_{1\ppq i_1<\dots <i_d\ppq r} f_n(\phi_{i_1})\cdots f_n(\phi_{i_d}), \]
  et en particulier $f_n^d(q)=0$ si $q$ est une somme de $k$ $n$-formes de
  Pfister avec $k<d$.
\end{prop}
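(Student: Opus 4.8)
The plan is to reduce the statement to the behaviour of the operation $\pi_n$ on elements of $\pi_n$-dimension $1$, and then transport the outcome through the homomorphism $f_{nd}$. Throughout I work inside $GW(K)$, using the identification $\hat{I}^n(K)\simeq I^n(K)$ from the start of the chapter to regard each $\phi_i$, and the whole sum $q=\phi_1+\dots+\phi_r$, as genuine elements of $\hat{I}^n(K)$ on which $\pi_n^d$ is defined.

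First I would invoke Theorem~\ref{thm_pi}: every $n$-element of Pfister is of $\pi_n$-dimension $1$, so each lift $\phi_i$ is of $\pi_n$-dimension $1$. Since $q$ is then a sum of elements of $\pi_n$-dimension $1$, Proposition~\ref{prop_somme_dim_1} applies verbatim to the operation $\alpha=\pi_n$ and yields
\[ \pi_n^d(q) = \sum_{1\ppq i_1<\dots<i_d\ppq r} \phi_{i_1}\cdots\phi_{i_d}, \]
with the convention that the sum is empty (hence zero) as soon as $r<d$. This already secures the \emph{en particulier} clause once $f_{nd}$ is applied.

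Next I would apply $f_{nd}$ to both sides. By Definition~\ref{def_f} one has $f_n^d=f_{nd}\circ\pi_n^d$, and $f_{nd}:I^{nd}(K)\To A^{nd}(K)$ is a morphism of groups by property (ii) of the context, so
\[ f_n^d(q) = f_{nd}\!\left( \sum_{i_1<\dots<i_d} \phi_{i_1}\cdots\phi_{i_d} \right) = \sum_{i_1<\dots<i_d} f_{nd}(\phi_{i_1}\cdots\phi_{i_d}). \]
Here each product $\phi_{i_1}\cdots\phi_{i_d}$ is an $nd$-element of Pfister, hence lies in $\hat{I}^{nd}(K)\simeq I^{nd}(K)$, so $f_{nd}$ is legitimately defined on it. It then remains to identify $f_{nd}(\phi_{i_1}\cdots\phi_{i_d})$ with $f_n(\phi_{i_1})\cdots f_n(\phi_{i_d})$, which follows by iterating the multiplicativity relation (\ref{eq_f_prod}) exactly $d-1$ times. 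Substituting gives the announced formula.

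I do not expect a genuine obstacle: the argument is a clean concatenation of Theorem~\ref{thm_pi}, the combinatorial identity of Proposition~\ref{prop_somme_dim_1}, and the multiplicativity (\ref{eq_f_prod}). The only point demanding care is the bookkeeping across the identification $\hat{I}^\bullet(K)\simeq I^\bullet(K)$, ensuring that the formal manipulations carried out in $GW(K)$ descend correctly to statements about $f_{nd}$ on $I^{nd}(K)$.
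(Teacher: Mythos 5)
Your proof is correct and follows exactly the paper's route: the paper's own (one-line) proof is precisely "direct consequence of Proposition~\ref{prop_somme_dim_1}", applied to $\pi_n$ via Theorem~\ref{thm_pi}, with the transport through $f_{nd}$ and the multiplicativity (\ref{eq_f_prod}) left implicit. You have simply made explicit the bookkeeping that the paper omits.
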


\begin{proof}
  C'est une conséquence directe de la proposition \ref{prop_somme_dim_1}.
\end{proof}

\begin{coro}\label{cor_moinsun}
  Si $-1$ est un carré dans $K$, alors pour tout $q\in I^n(K)$,
  $f_n^d(q)=0$ pour $d$ assez grand.
\end{coro}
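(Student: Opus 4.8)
Le plan est de se ramener directement à la proposition \ref{prop_somme_pfis}. L'observation cruciale est que, lorsque $-1$ est un carré dans $K$, on a $\fdiag{-1} = \fdiag{1}$, de sorte que pour toute $1$-forme de Pfister $\pfis{a} = \fdiag{1} - \fdiag{a}$, les deux formes binaires $2\fdiag{1} = \fdiag{1,1} = \fdiag{1,-1}$ et $2\fdiag{a} = \fdiag{a,a} = \fdiag{a,-a}$ sont hyperboliques, donc égales dans $GW(K)$. On en tire $2\pfis{a} = \fdiag{1,1} - \fdiag{a,a} = 0$ dans $GW(K)$, puis, pour toute $n$-forme de Pfister $\phi = \pfis{a_1}\cdots\pfis{a_n}$ avec $n \geq 1$, $2\phi = (2\pfis{a_1})\pfis{a_2}\cdots\pfis{a_n} = 0$. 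Ainsi $-\phi = \phi$ dans $\hat{I}^n(K)$.

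Je commencerais donc par établir soigneusement cette relation $-\phi = \phi$ pour toute $n$-forme de Pfister. Le groupe $\hat{I}^n(K) \simeq I^n(K)$ étant engendré additivement par les $n$-formes de Pfister, tout $q \in I^n(K)$ s'écrit $q = \sum_{i=1}^r \eps_i\phi_i$ avec $\eps_i = \pm 1$ et $\phi_i \in \Pf_n(K)$ ; en remplaçant chaque $-\phi_i$ par $\phi_i$, on récrit $q = \sum_{i=1}^r \phi_i$ comme somme (à coefficients positifs) de $r$ formes de Pfister. La proposition \ref{prop_somme_pfis} fournit alors directement $f_n^d(q) = 0$ dès que $d > r$, ce qui est exactement l'énoncé voulu, l'entier $r$ dépendant de $q$.

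Le point à traiter avec le plus de soin — il s'agit moins d'un véritable obstacle que d'une vérification délicate — est précisément la relation $-\phi = \phi$ : elle revient à constater que la multiplication par $2$ s'annule sur les formes de Pfister lorsque $-1$ est un carré. Il faut veiller à ce que ce calcul soit mené dans $GW(K)$, où sont définies les opérations $\pi_n^d$ (et donc les $f_n^d$), et non simplement dans $W(K)$, même si l'isomorphisme $\hat{I}^n \simeq I^n$ rend finalement les deux cadres équivalents.

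On peut aussi envisager une preuve via la fonction génératrice $f_{n,t}(q) = \sum_d f_n^d(q)\,t^d$, qui est par la proposition \ref{prop_f} un morphisme de $(I^n(K),+)$ vers le groupe $(G(A(K)), +_G)$. Pour une forme de Pfister $\phi$, on a $f_{n,t}(\phi) = 1 + f_n(\phi)\,t$ ; comme $\{-1\} = f_1(\pfis{-1}) = 0$ (puisque $\pfis{-1} = 0$ quand $-1$ est un carré), la relation $f_n(\phi)^2 = \{-1\}^n f_n(\phi) = 0$, déduite de $\phi^2 = \pfis{-1}^n\phi = 2^n\phi$ et de (\ref{eq_f_prod}), montre que $f_{n,t}(-\phi) = (1 + f_n(\phi)\,t)^{-1} = 1 - f_n(\phi)\,t$ reste polynomiale de degré $\leq 1$. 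Par conséquent $f_{n,t}(q)$, produit fini de tels facteurs, est un polynôme en $t$, d'où de nouveau $f_n^d(q) = 0$ pour $d$ assez grand.
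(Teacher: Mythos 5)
Votre démonstration est correcte et suit essentiellement la même voie que celle du manuscrit : la preuve du texte invoque directement le fait que, lorsque $-1$ est un carré, tout élément de $I^n(K)$ est une somme de $n$-formes de Pfister, puis applique la proposition \ref{prop_somme_pfis} — vous ne faites qu'expliciter ce fait laissé implicite, via la relation $2\phi=0$ (donc $-\phi=\phi$) dans $GW(K)$, ce qui est une vérification exacte et bienvenue. Votre variante par la série génératrice $f_{n,t}$, elle aussi valide, n'est au fond qu'une reformulation multiplicative du même mécanisme (le facteur $f_{n,t}(-\phi)$ reste polynomial parce que $\{-1\}=0$).
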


\begin{proof}
  Quand $-1$ est un carré, tout $q\in I^n$ est une somme de $n$-formes de Pfister, 
  donc ça découle de la proposition.
\end{proof}

\begin{rem}
  On peut montrer que, pour $K/k$ fixé, $(f_n^d)$ est la seule famille
  d'applications $I^n(K)\To A(K)$ à vérifier les conditions de la proposition \ref{prop_f}.
  En effet, toute famille $(a_n^d)$ qui vérifie \ref{cond_f_ini} et \ref{cond_f_som}
  doit satisfaire les conclusions de la proposition \ref{prop_somme_pfis},
  donc ses valeurs sont fixées sur les sommes de formes de Pfister. En
  développant $a_n^d(q-q)$ pour un tel $q$ avec \ref{cond_f_som} on trouve
  par récurrence que $a_n^d(-q)$ est aussi déterminé, et donc par somme
  les valeurs sont déterminées sur un élément quelconque.
\end{rem}

On présente une application de la proposition \ref{prop_somme_pfis},
qui utilise le cas $A(K)=W(K)$ :

\begin{prop}
  Si toute forme dans $I^n(K)$ peut s'écrire comme somme d'au plus $r$
  $n$-formes de Pfister pour un certain $r\in \N^*$,
  alors $I^d(K)=0$ pour tout $d\pgq n(r+1)$.
\end{prop}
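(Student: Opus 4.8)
The plan is to reduce the statement to the vanishing of a single graded piece, namely $I^{n(r+1)}(K)$, and then to exploit the fact that $\pi_n^{r+1}$ is a genuine, well-defined function on $GW(K)$ whose value on a Witt class does not depend on the chosen presentation of that class as a sum of Pfister forms. This last point is what turns two a priori unrelated computations into an equality.

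First I would observe that since the fundamental filtration is decreasing, it suffices to prove $I^{n(r+1)}(K)=0$: indeed $I^d(K)\subset I^{n(r+1)}(K)$ as soon as $d\pgq n(r+1)$. Next, since $I^m(K)$ is additively generated by the classes of $m$-formes de Pfister (a standard fact about the fundamental filtration, see e.g.\ \cite{EKM}), it is enough to show that every $n(r+1)$-forme de Pfister vanishes in $W(K)$. So I fix an $n(r+1)$-forme de Pfister $\psi=\pfis{a_1,\dots,a_{n(r+1)}}$ and group its entries into $r+1$ blocks of $n$, writing $\psi=\phi_1\cdots\phi_{r+1}$ where each $\phi_i$ is an $n$-forme de Pfister.

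The crux is to compute $\pi_n^{r+1}$ on the class $q=\phi_1+\cdots+\phi_{r+1}\in I^n(K)$ in two ways, working in the Witt case $A=W$ where $f_n^{r+1}=\pi_n^{r+1}$ (Exemple \ref{ex_pi_u}). On one hand, applying Proposition \ref{prop_somme_pfis} to the presentation $q=\sum_{i=1}^{r+1}\phi_i$, the sum over strictly increasing $(r+1)$-tuples of indices in $\{1,\dots,r+1\}$ has a single term, so $\pi_n^{r+1}(q)=\phi_1\cdots\phi_{r+1}=\psi$. On the other hand, by hypothesis $q$, being an element of $I^n(K)$, is also a sum of at most $r$ $n$-formes de Pfister; applying Proposition \ref{prop_somme_pfis} to that presentation and using $r<r+1$ gives $\pi_n^{r+1}(q)=0$. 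Since $\pi_n^{r+1}$ is a single well-defined operation on $GW(K)$ (Théorème \ref{thm_pi}, Définition \ref{def_f}), both values coincide, whence $\psi=\pi_n^{r+1}(q)=0$. This yields $I^{n(r+1)}(K)=0$, and therefore $I^d(K)=0$ for all $d\pgq n(r+1)$.

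The main obstacle is really just the conceptual one of noticing that the same Witt class admits the two presentations above and that the well-definedness of $\pi_n^{r+1}$ forces the resulting expressions to agree; once this is seen, the reduction to Pfister generators and the grouping of $\psi$ as a product of $n$-formes de Pfister are entirely routine.
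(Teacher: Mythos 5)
Your proposal is correct and follows essentially the same route as the paper's own proof: reduce to showing $I^{n(r+1)}(K)=0$, write an $n(r+1)$-forme de Pfister as $\phi_1\cdots\phi_{r+1}=\pi_n^{r+1}(\phi_1+\cdots+\phi_{r+1})$ via la proposition \ref{prop_somme_pfis}, then apply the hypothesis and the same proposition to the shorter presentation to conclude that this value is zero. Your explicit emphasis on the well-definedness of $\pi_n^{r+1}$ reconciling the two computations is exactly the (implicit) mechanism of the paper's argument.
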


\begin{proof}
  Tout d'abord il suffit de montrer que $I^{n(r+1)}(K)=0$, puisque
  de façon générale si $I^d(K)=0$ alors $I^{d+1}(K)=0$.
  
  Il suffit donc de montrer que toute $n(r+1)$-forme de Pfister
  est nulle. Or une telle forme s'écrit $\phi=\phi_1\cdots \phi_{r+1}$
  où les $\phi_i$ sont des $n$-formes de Pfister.

  De là, $\phi = \pi_n^{r+1}(q)$ avec $q = \phi_1+\cdots +\phi_{r+1}$. Or
  par hypothèse $q$ peut s'écrire comme une somme d'au plus $r$
  $n$-formes de Pfister, donc d'après la proposition \ref{prop_somme_pfis}
  $\pi_n^{r+1}(q)=0$.
\end{proof}

\begin{rem}
  Cette proposition peut être démontrée indépendamment en
  utilisant le Hauptsatz de Pfister.
\end{rem}

\subsection{L'opérateur de décalage}

\label{par_m}Pour alléger les notations, on se fixe un $n\in \N^*$, et
on pose $M = \Inv(I^n, A)$ ainsi que $M^d = \Inv(I^n, A^d)$.

\begin{propdef}\label{prop_def_delta}
  Soit $\eps=\pm 1$. Il existe un unique morphisme de $A(k)$-modules filtrés,
  de degré $-n$,
\[ \begin{foncdef}{\Phi^\eps}{M}{M}{\alpha}{\alpha^\eps,} \end{foncdef} \]
tel que
\begin{equation}\label{eq_alpha_pm1}
\alpha(q + \eps\phi) = \alpha(q) + \eps f_n(\phi)\cdot \alpha^\eps(q)
\end{equation}
pour tous $\alpha\in M$, $q\in I^n(K)$ et $\phi\in \Pf_n(K)$.
\end{propdef}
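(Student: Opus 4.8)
Le plan est de séparer unicité et existence, la première découlant de (\ref{eq_f_gen}) et la seconde reposant sur la condition \ref{cond_4}. Pour l'unicité, je supposerais que deux invariants $\beta, \beta' \in M$ vérifient (\ref{eq_alpha_pm1}) ; en soustrayant, on obtient $\eps f_n(\phi)\cdot(\beta - \beta')(q) = 0$ pour tous $q$ et $\phi \in \Pf_n$. En fixant $q \in I^n(K)$ et en posant $x = (\beta - \beta')(q) \in A(K)$, la naturalité de $\beta$ et $\beta'$ donnerait $f_n(\phi)\cdot x = 0$ dans $A(L)$ pour tout $\phi \in \Pf_n(L)$ et toute extension $L/K$, d'où $x = 0$ par (\ref{eq_f_gen}) ; ainsi $\beta = \beta'$.

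Pour l'existence, l'observation centrale est que, pour $q \in I^n(K)$ fixé (et étendu à chaque extension de $K$), l'application $\phi \mapsto \alpha(q + \eps\phi)$ est un invariant de $\Pf_n$ défini sur $K$. La condition \ref{cond_4} fournirait alors une écriture $\alpha(q + \eps\phi) = a_q + b_q\cdot f_n(\phi)$ avec $a_q, b_q \in A(K)$, l'unicité de $b_q$ résultant de (\ref{eq_f_gen}). En spécialisant au point de base $\phi = 0$ de $\Pf_n$, pour lequel $f_n(0) = 0$, on trouverait $a_q = \alpha(q)$ ; je poserais donc $\alpha^\eps(q) = \eps b_q$, ce qui fait de (\ref{eq_alpha_pm1}) une identité par construction.

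Resteraient à établir les propriétés structurelles de $\Phi^\eps$. La naturalité de $\alpha^\eps$ se déduirait de l'unicité de $b_q$ : sur une extension $M/K$, l'image $(\alpha^\eps(q))_M$ satisfait la relation caractérisant $b_{q_M}$, donc coïncide avec $\alpha^\eps(q_M)$. La $A(k)$-linéarité de $\Phi^\eps$ serait immédiate en réinjectant les combinaisons $A(k)$-linéaires d'invariants dans (\ref{eq_alpha_pm1}) et en invoquant l'unicité déjà prouvée. Le point le plus délicat, et à mon sens le véritable obstacle, serait le contrôle du degré : pour $\alpha \in M^d$, la relation (\ref{eq_alpha_pm1}) donne $f_n(\phi)\cdot(\alpha^\eps(q))_L = \eps(\alpha(q_L + \eps\phi) - \alpha(q_L)) \in A^d(L)$ pour tout $\phi \in \Pf_n(L)$ et toute extension $L/K$ ; c'est précisément la forme équivalente de la condition \ref{cond_3}, appliquée avec $d - n$, qui permettrait d'en conclure $\alpha^\eps(q) \in A^{d-n}(K)$, et donc que $\Phi^\eps$ est filtré de degré $-n$. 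C'est d'ailleurs ce dernier argument qui justifie a posteriori l'incorporation de la condition \ref{cond_3} dans le contexte.
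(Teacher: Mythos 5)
Votre preuve est correcte et suit essentiellement la même démarche que celle du texte : on fixe $q$, on voit $\phi\mapsto\alpha(q+\eps\phi)$ comme invariant de $\Pf_n$ défini sur $K$, la condition \ref{cond_4} fournit la décomposition $x_q+y_q\cdot f_n$, l'évaluation en $\phi=0$ identifie $x_q=\alpha(q)$, et la condition \ref{cond_3} donne le degré $-n$. La seule micro-différence est que vous rederivez l'unicité de $y_q$ via (\ref{eq_f_gen}) alors que le texte l'obtient directement de la décomposition en somme directe dans \ref{cond_4} ; les deux arguments sont équivalents, et votre vérification explicite de la naturalité de $\alpha^\eps$ est un plus par rapport à la rédaction originale.
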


\begin{proof}
  Soient $\alpha\in M$ et $q\in I^n(K)$. Pour toute
  extension $L/K$ et tout $\phi\in \Pf_n(L)$, on pose
  \[ \beta_q(\phi) = \alpha(q + \eps\phi). \]

  Alors $\beta_q\in \Inv(\Pf_n,A)$, défini sur $K$. D'après
  la condition \ref{cond_4} sur $A$, il existe d'uniques $x_q,y_q\in A(K)$
  tels que $\beta_q = x_q + y_q\cdot f_n$.

  En prenant $\phi=0$ on voit que $x_q = \alpha(q)$, et on pose
  alors $\alpha^\eps(q) = \eps y_q$, ce qui nous donne bien la formule voulue,
  ainsi que l'unicité.

  Ainsi défini, $\Phi^\eps$ est clairement un morphisme de $A(k)$-modules,
  et il est bien de degré $-n$ car si $\alpha\in M^d$, alors pour tout
  $q\in I^n(K)$, $f_n(\phi)\cdot \alpha^\eps(q)\in A^d(L)$ pour tout $\phi\in Pf_n(L)$
  et toute extension $L/K$, donc $\alpha^\eps(q)\in A^{d-n}(K)$ par la condition
  \ref{cond_3} sur $A$.
\end{proof}

\begin{rem}
  Les opérateurs $\Phi^\eps$ dépendent de $n$, mais on le
  passe sous silence dans la notation, aucune confusion
  n'étant susceptible d'advenir.
\end{rem}

On a des liens naturels entre $\Phi^+$ et $\Phi^-$, traduisant le fait
que l'addition et la soustraction commutent, et qu'elles
sont inverses l'une de l'autre :

\begin{prop}\label{prop_phi_pm}
  Les opérateurs $\Phi^+$ et $\Phi^-$ commutent, et de plus
  pour tout $\alpha\in \hat{M}$ on a
  \[ \alpha^+ - \alpha^- = \{-1\}^n \alpha^{+-} = \{-1\}^n \alpha^{-+}. \]
\end{prop}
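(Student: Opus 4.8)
L'argument repose sur l'équation (\ref{eq_alpha_pm1}) et sur l'identité préliminaire suivante : pour tout $\phi\in \Pf_n(K)$, on a $f_n(\phi)^2 = \{-1\}^n f_n(\phi)$. Pour l'établir, j'utiliserais d'abord (\ref{eq_f_prod}), qui donne $f_n(\phi)^2 = f_{2n}(\phi^2)$, le carré étant calculé dans $I^{2n}(K)$ via l'identification $\hat I\simeq I$. Or dans l'anneau de Witt on a $\pfis{a}^2 = \pfis{a,-1}$ pour tout $a$, donc $\phi^2 = \pfis{-1}^n\phi$ lorsque $\phi=\pfis{a_1,\dots,a_n}$ ; en réappliquant (\ref{eq_f_prod}) il vient $f_{2n}(\phi^2) = f_n(\pfis{-1}^n)\cdot f_n(\phi) = \{-1\}^n f_n(\phi)$.

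Je démontrerais ensuite les deux égalités de l'énoncé indépendamment de la commutativité. Pour $\alpha^+-\alpha^- = \{-1\}^n\alpha^{-+}$, je partirais de $\alpha(q) = \alpha((q+\phi)-\phi)$ et appliquerais (\ref{eq_alpha_pm1}) deux fois, une fois pour retrancher $\phi$ à $q+\phi$ et une fois pour calculer $\alpha^-(q+\phi)$ ; après simplification de $\alpha(q)$ et usage de l'identité préliminaire pour $f_n(\phi)^2$, il resterait
\[ f_n(\phi)\left[\alpha^+(q) - \alpha^-(q) - \{-1\}^n\alpha^{-+}(q)\right] = 0. \]
Comme ceci vaut pour tout $\phi\in \Pf_n(L)$ et toute extension $L/K$, le crochet (élément de $A(K)$) est nul d'après (\ref{eq_f_gen}), d'où l'égalité. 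Le calcul symétrique à partir de $\alpha(q) = \alpha((q-\phi)+\phi)$ fournit de la même manière $\alpha^+-\alpha^- = \{-1\}^n\alpha^{+-}$.

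Il reste la commutativité $\alpha^{+-} = \alpha^{-+}$. Je fixerais $\phi,\psi\in \Pf_n$ et développerais $\alpha(q+\phi-\psi)$ de deux façons via (\ref{eq_alpha_pm1}) : en ajoutant $\phi$ avant de retrancher $\psi$, le terme en $f_n(\phi)f_n(\psi)$ vaut $-f_n(\phi)f_n(\psi)\alpha^{-+}(q)$ ; dans l'ordre inverse, il vaut $-f_n(\phi)f_n(\psi)\alpha^{+-}(q)$. Tous les autres termes étant identiques, on obtient
\[ f_n(\phi)f_n(\psi)\left[\alpha^{-+}(q) - \alpha^{+-}(q)\right] = 0 \]
pour tous $\phi,\psi$ et toutes extensions. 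L'étape décisive, et le seul point réellement délicat, est alors d'\emph{éliminer} successivement les deux facteurs de Pfister : en fixant $\psi$ sur une extension $M/K$ et en laissant varier $\phi$ sur les extensions de $M$, (\ref{eq_f_gen}) appliqué sur $M$ donne $f_n(\psi)\cdot x = 0$ où $x=\alpha^{-+}(q)-\alpha^{+-}(q)\in A(K)$ ; en laissant ensuite varier $\psi$, (\ref{eq_f_gen}) appliqué sur $K$ donne $x=0$. D'où $\alpha^{+-}=\alpha^{-+}$, ce qui conclut.
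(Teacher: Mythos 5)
Votre preuve est correcte et suit pour l'essentiel la même voie que celle du texte : tout repose sur le développement de $\alpha(q+\phi-\psi)$ (et, pour les égalités affichées, de $\alpha((q\pm\phi)\mp\phi)$, ce qui est exactement le cas $\phi=\psi$ du texte) via (\ref{eq_alpha_pm1}), sur l'identité $f_n(\phi)^2=\{-1\}^n f_n(\phi)$ et sur la condition de généricité (\ref{eq_f_gen}). Les seules différences sont de présentation : vous établissez les deux égalités de façon symétrique au lieu de déduire la seconde de la commutativité, vous justifiez l'identité $f_n(\phi)^2=\{-1\}^n f_n(\phi)$ que le texte utilise sans preuve, et vous explicitez l'élimination en deux temps des facteurs $f_n(\phi)$ et $f_n(\psi)$ que le texte laisse implicite.
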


\begin{proof}
  Soient $q\in I^n(K)$, et $\phi,\psi\in \Pf_n(L)$.
  On a
  \begin{align*}
    \alpha(q+\phi-\psi) &= \alpha(q+\phi) - f_n(\psi) \alpha^-(q+\phi) \\
                        &= \alpha(q) + f_n(\phi)\alpha^+(q) -f_n(\psi)\alpha^-(q)
                          -f_n(\phi)f_n(\psi)\alpha^{-+}(q)
  \end{align*}
  mais aussi
  \begin{align*}
    \alpha(q+\phi-\psi) &= \alpha(q-\psi) + f_n(\phi) \alpha^+(q-\psi) \\
                        &= \alpha(q) - f_n(\psi)\alpha^-(q) +f_n(\phi)\alpha^+(q)
                          -f_n(\phi)f_n(\psi)\alpha^{+-}(q)
  \end{align*}
  donc $f_n(\phi)f_n(\psi)\alpha^{-+}(q) = f_n(\phi)f_n(\psi)\alpha^{+-}(q)$,
  et comme c'est vrai pour tous $\phi$, $\psi$ sur toute extension,
  par la condition (\ref{eq_f_gen}) on a $\alpha^{+-}=\alpha^{-+}$.

  Si maintenant on prend $\phi=\psi$, la formule ci-dessus donne
  \[ f_n(\phi)\alpha^+(q) - f_n(\phi)\alpha^-(q) = f_n(\phi)f_n(\phi)\alpha^{+-}(q) \]
  ce qui permet de conclure en utilisant $f_n(\phi)f_n(\phi) = \{-1\}^nf_n(\phi)$
  et encore la condition (\ref{eq_f_gen}).
\end{proof}

Comme les opérateurs commutent, pour tous $s,t\in \N$ et tout $\alpha\in M$
on définit
\begin{equation}\label{eq_alpha_s_t}
  \alpha^{s+,t-}=(\Phi^+)^s\circ (\Phi^-)^t(\alpha),
\end{equation}
qui ne dépend en réalité pas de l'ordre d'application des opérateurs.

On appelle $\Phi=\Phi^+$ l'\emph{opérateur de décalage}, comme le justifie
le résultat élémentaire suivant :

\begin{prop}\label{prop_phi_pi}
Pour tout $d\in \N$, $\Phi(f_n^{d+1})=f_n^d$ (et $\Phi(f_n^0)=0$).
\end{prop}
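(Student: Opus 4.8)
The plan is to apply the defining relation of the shift operator $\Phi=\Phi^+$ directly to the invariant $f_n^{d+1}$ and to compute the resulting value of $f_n^{d+1}(q+\phi)$ by hand. Recall from the proposition-définition \ref{prop_def_delta} that $\alpha^+=\Phi(\alpha)$ is the \emph{unique} invariant satisfying
\[ \alpha(q+\phi) = \alpha(q) + f_n(\phi)\cdot \alpha^+(q) \]
for all $q\in I^n(K)$ and all $\phi\in \Pf_n(K)$. Thus it suffices to verify that $f_n^d$ is a valid candidate for $(f_n^{d+1})^+$, and then to invoke this uniqueness.

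First I would fix $q\in I^n(K)$, an extension $L/K$ and $\phi\in \Pf_n(L)$, and expand the left-hand side using the additivity property \ref{cond_f_som} of proposition \ref{prop_f}:
\[ f_n^{d+1}(q+\phi) = \sum_{k=0}^{d+1} f_n^k(q)\cdot f_n^{d+1-k}(\phi). \]
The crucial simplification comes from property \ref{cond_f_pfis} of the same proposition, which gives $f_n^{j}(\phi)=0$ for every $j\pgq 2$, since $\phi$ is an $n$-fold Pfister element; combined with $f_n^0(\phi)=1$ and $f_n^1(\phi)=f_n(\phi)$ (property \ref{cond_f_ini}), only the terms $k=d+1$ and $k=d$ survive, so that
\[ f_n^{d+1}(q+\phi) = f_n^{d+1}(q) + f_n(\phi)\cdot f_n^d(q). \]
Matching this with the characterizing equation of $\Phi$ shows that $f_n^d$ plays exactly the role of $(f_n^{d+1})^+$; by the uniqueness established in \ref{prop_def_delta} (which ultimately rests on condition (\ref{eq_f_gen})), I conclude $\Phi(f_n^{d+1})=f_n^d$. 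For the boundary case I would apply the same relation to the constant invariant $f_n^0=1$: then $f_n^0(q+\phi)=1=f_n^0(q)$ forces $f_n(\phi)\cdot (f_n^0)^+(q)=0$ for all $\phi$ over every extension, whence $(f_n^0)^+=0$ by (\ref{eq_f_gen}).

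I expect no genuine obstacle here, which is consistent with the statement being advertised as \emph{élémentaire}: everything reduces to the binomial-type additivity of the family $(f_n^d)$ together with the vanishing of higher operations on Pfister forms. The only point deserving a moment of care is confirming that the sum collapses completely, i.e. that $f_n^{d+1-k}(\phi)$ vanishes for every $k\ppq d-1$; this is immediate from \ref{cond_f_pfis}, and the final identification is then forced, rather than merely suggested, by the uniqueness built into the definition of the décalage operator.
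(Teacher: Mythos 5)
Your proof is correct and follows exactly the paper's argument: the paper likewise reduces the statement to the identity $f_n^{d+1}(q+\phi)=f_n^{d+1}(q)+f_n(\phi)\cdot f_n^d(q)$, obtained immediately from Proposition \ref{prop_f}, and concludes by the uniqueness in \ref{prop_def_delta}. You have merely spelled out the collapse of the sum and the boundary case $\Phi(f_n^0)=0$, both of which are handled exactly as intended.
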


\begin{proof}
  Il faut montrer que $f_n^{d+1}(q+\phi)=f_n^{d+1}(q) + f_n(\phi)\cdot f_n^d(q)$,
  ce qui est une conséquence immédiate de la proposition \ref{prop_f}.
\end{proof}

L'action de $\Phi^-$ sur les $f_n^d$ est plus compliquée, reflétant
ainsi le fait que les $f_n^d$ se comportent bien mieux vis-à-vis
des sommes des formes de Pfister que des soustractions de telles
formes.

\begin{prop}\label{prop_fn_phi_moins}
  Soit $d\in \N^*$. Alors
  \[  (f_n^d)^- = \sum_{k=0}^{d-1} (-1)^{d-k-1} \{-1\}^{n(d-k-1)} f_n^k. \]
\end{prop}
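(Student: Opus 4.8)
The plan is to reduce the operator $\Phi^-$, which behaves badly on the $f_n^d$, to the shift operator $\Phi=\Phi^+$, whose action is given by Proposition \ref{prop_phi_pi}, and then to solve the resulting linear recurrence by induction on $d$.

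First I would invoke the commutation relation of Proposition \ref{prop_phi_pm}, which yields $\alpha^- = \alpha^+ - \{-1\}^n \alpha^{+-}$ for any $\alpha$, where $\alpha^{+-} = (\alpha^+)^-$. Specializing to $\alpha = f_n^d$ (for $d\pgq 1$) and using $\Phi(f_n^d) = f_n^{d-1}$, both $(f_n^d)^+ = f_n^{d-1}$ and $(f_n^d)^{+-} = (f_n^{d-1})^-$ become explicit, giving the recurrence
\[ (f_n^d)^- = f_n^{d-1} - \{-1\}^n (f_n^{d-1})^-. \]
This is the crux of the argument: it replaces the awkward subtraction operator by one application of the known shift together with the previous value of $(f_n^{\bullet})^-$.

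Next I would settle the base case. The cleanest choice is $(f_n^0)^- = 0$, which follows either from degree reasons (the morphism $\Phi^-$ has degree $-n<0$ and $f_n^0\in M^0$) or directly from the defining relation (\ref{eq_alpha_pm1}) together with (\ref{eq_f_gen}), since $f_n^0 = 1$ is constant. Feeding this into the recurrence gives $(f_n^1)^- = f_n^0$, matching the claimed formula for $d=1$.

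Finally I would verify by induction that the proposed closed form satisfies the recurrence: substituting the formula for $(f_n^{d-1})^-$ into $(f_n^d)^- = f_n^{d-1} - \{-1\}^n (f_n^{d-1})^-$, the factor $-\{-1\}^n$ shifts each exponent by one and flips each sign, after which the leading term $f_n^{d-1}$ supplies exactly the missing $k=d-1$ summand. I do not expect any genuine obstacle here; the only point requiring care is the bookkeeping of the signs $(-1)^{d-k-1}$ and the powers $\{-1\}^{n(d-k-1)}$, so that the reindexing closes up correctly. The conceptual content is entirely in the reduction via Proposition \ref{prop_phi_pm}.
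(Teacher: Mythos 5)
Your proof is correct, but it takes a genuinely different route from the paper's. The paper argues in a single step: for $q\in I^n(K)$ and $\phi\in \Pf_n(K)$ it expands $f_n^d(q-\phi)=\sum_{k=0}^d f_n^k(q)\,f_n^{d-k}(-\phi)$ via the addition formula of Proposition \ref{prop_f}, evaluates $f_n^{d-k}(-\phi)$ for $d-k\pgq 2$ using Proposition \ref{prop_f_simil_pfis} with $a=-1$ (note $-\phi=\fdiag{-1}\phi$ in $\hat{I}^n$), and then reads off $(f_n^d)^-$ from the uniqueness in Proposition-définition \ref{prop_def_delta} — no induction and no operator identities. You instead derive the first-order recurrence $(f_n^d)^- = f_n^{d-1}-\{-1\}^n(f_n^{d-1})^-$ from Propositions \ref{prop_phi_pm} and \ref{prop_phi_pi} and solve it by induction; the sign and exponent bookkeeping in your induction step closes up correctly. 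What your route buys is conceptual economy: it exhibits the formula as a purely formal consequence of $\Phi(f_n^{d})=f_n^{d-1}$ and the commutation relation $\Phi^+-\Phi^-=\{-1\}^n\Phi^+\Phi^-$, bypassing the $\lambda$-ring computation behind Proposition \ref{prop_f_simil_pfis} (Corollaire \ref{cor_pi_simil}). What the paper's route buys is brevity and the explicit evaluation of $f_n^{d-k}$ on $-\phi$, which is useful elsewhere (e.g.\ for Example \ref{ex_moins_pfis} and the last assertion of Proposition \ref{prop_f}).

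One small repair to your base case: the ``degree reasons'' justification of $(f_n^0)^-=0$ does not work. That $\Phi^-$ has degree $-n$ means $\Phi^-(M^d)\subset M^{d-n}$, which for $d=0$ is vacuous since the filtration starts at $M^0=M$. Your alternative justification, however, is sound: $f_n^0=1$ is a constant invariant, and constants lie in $\Ker(\Phi^\eps)$ by Proposition \ref{prop_ker_phi} (equivalently, apply (\ref{eq_alpha_pm1}) and conclude by (\ref{eq_f_gen})). Since you offered both and the second suffices, the proof stands as written once the first justification is dropped.
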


\begin{proof}
  Soient $q\in I^n(K)$ et $\phi\in \Pf_n(K)$. Alors
  \begin{align*}
    f_n^d(q-\phi) &= \sum_{k=0}^df_n^k(q)f_n^{d-k}(-\phi) \\
                  &= f_n^d(q) + \sum_{k=0}^{d-1} (-1)^{d-k}\{-1\}^{n(d-k-1)}f_n(\phi)f_n^k(q)
  \end{align*}
  en utilisant la proposition \ref{prop_f_simil_pfis} avec $a=-1$.
\end{proof}

Outre son comportement vis-à-vis des invariants $f_n^d$, la principale
propriété de $\Phi$ est la suivante :

\begin{prop}\label{prop_ker_phi}
  Le morphisme $\Phi^\eps$ induit un morphisme $M/M^{d+n}\to M/M^d$ pour tout
  $d\in \N$, qui donne la suite exacte suivante :
  \[ 0\To A(k)/A^{d+n}(k) \To M/M^{d+n}\To M/M^d. \]
  En particulier, le noyau de $\Phi^\eps$ est le sous-module des invariants
  constants.
\end{prop}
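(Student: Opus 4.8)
The plan is to deduce everything from the fact, established in \ref{prop_def_delta}, that $\Phi^\eps$ is a morphism of filtered $A(k)$-modules of degree $-n$, combined with the fact that $I^n(K)$ is generated as an abelian group by the $n$-forms de Pfister.

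First I would observe that being of degree $-n$ means $\Phi^\eps(M^{d+n})\subset M^d$, so the composite $M\To M\To M/M^d$ (first $\Phi^\eps$, then projection) kills $M^{d+n}$ and therefore factors through a map $\bar\Phi^\eps: M/M^{d+n}\To M/M^d$, which is the induced morphism of the statement. For the left-hand term, I would use the natural map $A(k)\To M$ sending $c$ to the \emph{constant invariant} $\mathrm{const}_c:q\mapsto c_K$ (the image of $c$ under $A(k)\to A(K)$). Evaluating at $K=k$ and invoking naturality of the filtration shows $\mathrm{const}_c\in M^d$ if and only if $c\in A^d(k)$; hence this map descends to an injection $A(k)/A^{d+n}(k)\inj M/M^{d+n}$. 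Plugging $\mathrm{const}_c$ into the defining relation (\ref{eq_alpha_pm1}) gives $\eps f_n(\phi)\cdot\alpha^\eps(q)=0$ for all $\phi,q$, so (\ref{eq_f_gen}) forces $\alpha^\eps=0$; thus constant invariants lie in $\ker\Phi^\eps$, and the image of the injection is contained in $\ker\bar\Phi^\eps$.

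The heart of the argument is the reverse inclusion $\ker\bar\Phi^\eps\subset\mathrm{im}$. Take $\alpha\in M$ with $\Phi^\eps(\alpha)\in M^d$. Then (\ref{eq_alpha_pm1}) reads $\alpha(q+\eps\phi)-\alpha(q)=\eps f_n(\phi)\cdot\alpha^\eps(q)$, and since $f_n(\phi)\in A^n(K)$ while $\alpha^\eps(q)\in A^d(K)$, the right-hand side lies in $A^{d+n}(K)$. So adding a single $n$-forme de Pfister alters $\alpha$ only modulo $A^{d+n}$, and replacing $q$ by $q-\eps\phi$ yields the same statement for subtraction. Since every element of $I^n(K)$ is a finite sum of $n$-formes de Pfister and their opposites, I can reach an arbitrary $q$ from $0$ in finitely many such steps, whence $\alpha(q)-\alpha(0_K)\in A^{d+n}(K)$. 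Setting $c=\alpha(0_k)\in A(k)$ and using naturality of $\alpha$ to identify $\alpha(0_K)$ with $c_K$, the difference $\alpha-\mathrm{const}_c$ takes values in $A^{d+n}$ over every $K$, i.e. lies in $M^{d+n}$; hence $\bar\alpha$ is in the image of the injection, giving exactness at $M/M^{d+n}$.

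For the final assertion, if $\Phi^\eps(\alpha)=0$ then $\Phi^\eps(\alpha)\in M^d$ for every $d$, so the previous paragraph yields $\alpha-\mathrm{const}_c\in M^{d+n}$ for all $d$ (with $c=\alpha(0_k)$ independent of $d$), and separatedness $\bigcap_d M^{d+n}=0$ forces $\alpha=\mathrm{const}_c$; conversely constant invariants are already known to be in the kernel. I expect the propagation step to be the only delicate point: one must use carefully that $I^n$ is additively generated by Pfister forms to control $\alpha$ on a general form, and keep track of naturality so that the limiting value $\alpha(0_K)$ genuinely descends to a single element of $A(k)$.
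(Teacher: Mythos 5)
Your proof is correct and follows essentially the same route as the paper: the induced map comes from $\Phi^\eps(M^{d+n})\subset M^d$, and exactness is obtained by propagating the congruence $\alpha(q\pm\phi)\equiv\alpha(q)\bmod A^{d+n}(K)$ along a decomposition of $q$ as a difference of sums of $n$-formes de Pfister, then letting $d\to\infty$ and using separatedness for the kernel statement. The only differences are cosmetic: you spell out the injectivity of $A(k)/A^{d+n}(k)\to M/M^{d+n}$ and the fact that constants are killed by $\Phi^\eps$ (via (\ref{eq_f_gen})), points the paper leaves implicit.
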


\begin{proof}
  Si $\alpha,\beta\in M$ sont congrus modulo $M^{d+n}$, alors comme
  \[ \Phi^\eps(M^{d+n})\subset M^d, \] $\Phi^\eps(\alpha)$ et $\Phi^\eps(\beta)$ sont
  congrus modulo $M^d$, ce qui montre la première affirmation.

  Soit $\alpha\in M$ tel que $\alpha^\eps\in M^d$. Alors pour tout $q\in I^n(K)$
  et tout $\phi\in \Pf_n(K)$, on a $\alpha(q+\eps\phi)\equiv \alpha(q)$ modulo
  $A^{n+d}(K)$.
  Donc en écrivant $q=q_1-q_2$ où les $q_i$ sont des sommes de $n$-formes
  de Pfister, on voit par une simple récurrence sur le nombre de termes
  dans chaque somme que $\alpha \equiv \alpha(0)$ modulo $M^{d+n}$
  (où $\alpha(0)$ est vu comme invariant constant).

  Pour la dernière assertion, en prenant $d$ assez grand,
  et comme la filtration sur $A(K)$ est séparée,
  on voit que si $\alpha^\eps=0$ alors $\alpha = \alpha(0)$.
\end{proof}

\begin{rem}
  On peut montrer directement que $\Ker(\Phi)=A(k)$ avec le
  même schéma de preuve, mais la version filtrée, plus fine,
  nous sera utile dans la suite.
\end{rem}

\begin{coro}\label{cor_phi_exact}
  Soit $M'$ le sous-module de $M$ engendré par les $f_n^d$
  pour $d\in \N$. Alors $\Phi^\eps$ induit une suite exacte
  \[ 0 \to A(k) \To M' \To M'[-n] \To 0. \]
\end{coro}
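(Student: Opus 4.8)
The plan is to read this corollary as the strengthening of Proposition \ref{prop_ker_phi} obtained by restricting everything to the submodule $M'$; the genuinely new content is the \emph{surjectivity} of $\Phi^\eps$, while left-exactness and the identification of the kernel will be inherited almost for free.

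First I would check that $\Phi^\eps$ really restricts to an endomorphism of $M'$ of degree $-n$. For $\eps = +$ this is immediate from Proposition \ref{prop_phi_pi}, which gives $\Phi^+(f_n^{d+1}) = f_n^d$ and $\Phi^+(f_n^0) = 0$. For $\eps = -$ I would invoke Proposition \ref{prop_fn_phi_moins}, which writes $(f_n^d)^-$ as an $A(k)$-linear combination of the $f_n^k$ with $k < d$. In either case $\Phi^\eps$ sends each generator of $M'$ back into $M'$, so by $A(k)$-linearity $\Phi^\eps(M') \subseteq M'$; the shift $[-n]$ merely records that $\Phi^\eps$ lowers the filtration degree by $n$ (Proposition-définition \ref{prop_def_delta}).

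For exactness on the left, the constant invariants are exactly $A(k) = A(k)\cdot f_n^0$, which is contained in $M'$, and the inclusion $A(k) \hookrightarrow M'$ is injective. By Proposition \ref{prop_ker_phi} the kernel of $\Phi^\eps$ on all of $M$ is precisely this module of constants, so intersecting with $M'$ gives $\ker(\Phi^\eps|_{M'}) = A(k)$. This yields exactness both at $A(k)$ and at the middle term.

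The surjectivity is the step I expect to cost the most, and it is where both signs must be treated at once. The key observation is that in both cases $\Phi^\eps(f_n^d)$ equals $f_n^{d-1}$ plus an $A(k)$-linear combination of the $f_n^k$ with $k \le d-2$, with leading coefficient $1$: for $\eps = +$ the lower terms vanish outright, and for $\eps = -$ this unitriangular shape is read off from the $k = d-1$ summand of Proposition \ref{prop_fn_phi_moins}, whose coefficient is $(-1)^0\{-1\}^0 = 1$. I would then show by induction on $d$ that every $f_n^d$ lies in the image of $\Phi^\eps|_{M'}$: the base case $f_n^0 = \Phi^\eps(f_n^1)$ is direct, and in the inductive step I would solve $f_n^d = \Phi^\eps(f_n^{d+1}) - \sum_{k<d}(\cdots)\,f_n^k$, where the subtracted terms lie in the image by the induction hypothesis and $A(k)$-linearity. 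Since all the generators $f_n^d$ are then in the image and $\Phi^\eps$ is $A(k)$-linear, $\Phi^\eps|_{M'}$ surjects onto $M'[-n]$, which closes the short exact sequence.
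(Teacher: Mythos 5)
Your proof is correct and follows essentially the same route as the paper: the paper's proof likewise notes that only surjectivity requires an argument and deduces it from Proposition \ref{prop_phi_pi} for $\Phi^+$ and Proposition \ref{prop_fn_phi_moins} for $\Phi^-$. Your unitriangular induction on $d$ simply spells out the detail the paper leaves implicit in ``on le déduit facilement''.
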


\begin{proof}
  Seule la surjectivité reste à montrer, mais on le déduit facilement
  des propositions \ref{prop_phi_pi} pour $\Phi^+$, et \ref{prop_fn_phi_moins}
  pour $\Phi^-$.
\end{proof}

\begin{rem}\label{rem_shift}
  Tout ceci implique que $\Phi$ peut être vu comme une sorte d'opérateur différentiel :
  si on connaît $\alpha^+$ pour un certain invariant $\alpha$, on peut \og intégrer\fg{}
  pour trouver $\alpha$, avec une certaine constante d'intégration.
  Précisément, on montrera dans la partie suivante qu'on peut toujours écrire
  $\alpha^+ = \sum a_d f_n^d$ ; alors $\alpha = \alpha(0)+\sum a_d f_n^{d+1}$.

  On utilisera très souvent cette méthode pour calculer $\alpha$ par
  récurrence en passant par $\alpha^+$, ce qu'on qualifiera
  de \og{}récurrence sur le décalage\fg{}.
\end{rem}

On peut tirer une conséquence élémentaire de ce qui précède :

\begin{coro}\label{cor_additif}
  Soit $\alpha\in M$ additif (c'est-à-dire que $\alpha(q+q')=\alpha(q)+\alpha(q')$
  pour tous $q,q'$). Alors $\alpha = af_n$ pour un unique $a\in A(k)$.
\end{coro}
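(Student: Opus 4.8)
Le plan est de s'appuyer sur l'opérateur de décalage $\Phi=\Phi^+$ et de montrer que l'additivité de $\alpha$ contraint $\Phi(\alpha)$ à être un invariant constant, puis d'\og{}intégrer\fg{} pour reconstituer $\alpha$. D'abord, je confronterais l'hypothèse d'additivité à la relation définissant $\Phi^+$ (proposition-définition \ref{prop_def_delta}) : pour tout $q\in I^n(K)$ et tout $\phi\in \Pf_n(K)$, on a d'une part $\alpha(q+\phi)=\alpha(q)+\alpha(\phi)$ par additivité, et d'autre part $\alpha(q+\phi)=\alpha(q)+f_n(\phi)\cdot \alpha^+(q)$ par définition de $\alpha^+$. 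En comparant les deux expressions, j'obtiendrais
\[ f_n(\phi)\cdot \alpha^+(q) = \alpha(\phi), \]
membre de droite qui ne dépend visiblement pas de $q$.

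J'exploiterais alors cette indépendance en soustrayant le cas $q=0$ au cas général, ce qui donne $f_n(\phi)\cdot(\alpha^+(q)-\alpha^+(0))=0$ pour tout $\phi\in \Pf_n(L)$ et toute extension $L/K$. La condition (\ref{eq_f_gen}) sur $A$ permet précisément de \og{}simplifier\fg{} par $f_n(\phi)$ et de conclure que $\alpha^+(q)=\alpha^+(0)$ pour tout $q$ ; autrement dit, $\alpha^+=\Phi(\alpha)$ coïncide avec l'invariant constant de valeur $a:=\alpha^+(0)\in A(k)$. C'est ici que réside le cœur de l'argument, le reste relevant d'un maniement formel de l'opérateur de décalage.

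Il resterait à reconstituer $\alpha$ par \og{}récurrence sur le décalage\fg{} (remarque \ref{rem_shift}). Comme $f_n=f_n^1$ et $\Phi(f_n^{d+1})=f_n^d$ (proposition \ref{prop_phi_pi}), on a $\Phi(f_n)=f_n^0=1$, donc par $A(k)$-linéarité $\Phi(af_n)=a$, et par suite $\Phi(\alpha-af_n)=0$. D'après la proposition \ref{prop_ker_phi}, le noyau de $\Phi$ est exactement le module des invariants constants, donc $\alpha-af_n$ est un invariant constant $c\in A(k)$. En évaluant en $0$, l'additivité fournit $\alpha(0)=0$ et le fait que $f_n$ soit un morphisme de groupes donne $f_n(0)=0$, d'où $c=0$ et finalement $\alpha=af_n$.

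Pour l'unicité, j'appliquerais $\Phi$ à une écriture hypothétique $\alpha=a'f_n$ pour obtenir $a'=\Phi(\alpha)=a$ : le scalaire est donc déterminé sans ambiguïté comme la valeur en $0$ de $\Phi(\alpha)$. L'obstacle principal n'est pas calculatoire mais conceptuel : il faut garantir que $\alpha^+$ est un invariant \emph{constant}, et non seulement constant corps par corps, ce qu'assure l'argument de simplification via (\ref{eq_f_gen}) appliqué uniformément sur toutes les extensions.
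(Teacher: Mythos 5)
Votre preuve est correcte et suit essentiellement la même démarche que celle du texte : vous confrontez l'additivité à la relation définissant $\Phi^+$ pour obtenir $f_n(\phi)\cdot\alpha^+(q)=\alpha(\phi)$, vous en déduisez via la condition (\ref{eq_f_gen}) que $\alpha^+$ est un invariant constant $a\in A(k)$, puis vous reconstituez $\alpha=af_n$ à l'aide du noyau de $\Phi$ et de $\alpha(0)=0$, avec la même identification $a=\alpha^+(0)$ pour l'unicité. La seule différence est de présentation : vous explicitez, via la proposition \ref{prop_ker_phi}, l'étape d'\og{}intégration\fg{} que la preuve du texte condense en \og{}$\alpha^+=(af_n)^+$ et $\alpha(0)=0$, donc $\alpha=af_n$\fg{}.
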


\begin{proof}
  Pour tous $q\in I^n(K)$, $\phi\in \Pf_n(K)$, on a
  $\alpha(q+\phi)=\alpha(q)+\alpha(\phi)$ donc
  $\alpha(\phi)=f_n(\phi)\alpha^+(q)$. En particulier,
  $f_n(\phi)\alpha^+(q)=f_n(\phi)\alpha^+(q')$ pour tout $q'$,
  et comme c'est vrai pour tout $\phi$ sur toute extension,
  $\alpha^+$ est un invariant constant, disons $a\in A(k)$.
  Alors $\alpha^+ = (af_n)^+$, et comme par additivité $\alpha(0)=0$,
  on a bien $\alpha=af_n$. Comme $a=\alpha^+(0)$, il est uniquement
  déterminé.
\end{proof}

\subsection{Classification des invariants}

On rappelle qu'on s'est donné $n\in \N^*$, et qu'on a posé
$M = \Inv(I^n,A)$ et $M^d = \Inv(I^n,A^d)$.

On souhaite montrer que tout élément de $M$ peut s'écrire de façon
unique comme une combinaison $\sum a_d f_n^d$, ce qui est suggéré
par :

\begin{prop}\label{prop_pi_gen}
  Le $A(k)/A^d(k)$-module $M/M^d$ est engendré  
  par les $f_n^k$ avec $nk<d$.
\end{prop}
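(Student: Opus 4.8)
The plan is to prove the statement by induction on $d$, using the shift operator $\Phi = \Phi^+$ together with the exact sequence of Proposition \ref{prop_ker_phi}. The two facts that make everything work are already established: first, $f_n^d \in M^{nd}$ by the last assertion of Proposition \ref{prop_f}, so each $f_n^k$ with $nk < d$ is a well-defined class in $M/M^d$; second, $\Phi(f_n^{k+1}) = f_n^k$ by Proposition \ref{prop_phi_pi}, which is exactly why I use $\Phi^+$ rather than $\Phi^-$ (the action of $\Phi^-$ on the $f_n^k$, governed by Proposition \ref{prop_fn_phi_moins}, is messier).

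For the base case I would handle all $d$ with $1 \le d \le n$ simultaneously. Taking the parameter $0$ in Proposition \ref{prop_ker_phi}, so that $M/M^0 = M/M = 0$, the exact sequence reduces to $0 \To A(k)/A^n(k) \To M/M^n \To 0$, whence $M/M^n \cong A(k)/A^n(k)$ is generated by the constant invariant $f_n^0 = 1$. For any $d \le n$ the module $M/M^d$ is a quotient of $M/M^n$, so it too is generated by the image of $f_n^0$; and since $nk < d \le n$ forces $k = 0$, the allowed set $\{f_n^k : nk < d\}$ is precisely $\{f_n^0\}$, as required.

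For the inductive step, suppose $d > n$ and the statement holds for $d' = d - n$. Given $\alpha \in M$, its image under the induced map $M/M^d \To M/M^{d'}$ can be written, by the induction hypothesis, as $\sum_{nk < d'} a_k f_n^k$ with $a_k \in A(k)$. Setting $\beta = \sum_{nk < d'} a_k f_n^{k+1}$, Proposition \ref{prop_phi_pi} gives $\Phi(\beta) = \Phi(\alpha)$ in $M/M^{d'}$, and the substitution $j = k+1$ turns the constraint $nk < d'$ into $nj < d$, so $\beta$ is a combination of the $f_n^j$ with $nj < d$. By exactness in Proposition \ref{prop_ker_phi}, $\alpha - \beta$ lies in the kernel of $M/M^d \To M/M^{d'}$, which is the image of $A(k)/A^d(k)$; hence $\alpha \equiv \beta + c\, f_n^0 \pmod{M^d}$ for some constant $c \in A(k)$, and $f_n^0$ is an allowed generator since $0 < d$. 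This exhibits $\alpha$ modulo $M^d$ as a combination of the $f_n^k$ with $nk < d$, closing the induction.

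I expect no serious obstacle here: essentially all the content is packaged in Proposition \ref{prop_ker_phi}, and the proof is a formal "integrate the shift and correct by a constant" argument. The only points requiring genuine care are the index bookkeeping in the substitution $k \mapsto k+1$ and the isolation of the base range $d \le n$ from the single instance $M/M^n \cong A(k)/A^n(k)$.
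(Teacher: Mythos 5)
Your proof is correct and follows essentially the same route as the paper: induction on $d$ in steps of $n$, using $\Phi(f_n^{k+1})=f_n^k$ (Proposition \ref{prop_phi_pi}) to match $\Phi(\bar\alpha)$ modulo $M^{d-n}$, then the exact sequence of Proposition \ref{prop_ker_phi} to absorb the difference into a constant. The only difference is cosmetic: you isolate the base range $1\ppq d\ppq n$ explicitly via $M/M^n\simeq A(k)/A^n(k)$, whereas the paper starts the induction at the trivial case $d=0$ and lets the same mechanism cover small $d$.
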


\begin{proof}
  On raisonne par récurrence sur $d$. Pour $d=0$, c'est trivial
  puisque $M^0=M$. Supposons que la propriété tienne jusqu'à
  $d-1$, et soit $\alpha\in M$ ; on note $\bar{\alpha}\in M/M^d$.
  Par hypothèse de récurrence, $\Phi(\bar{\alpha}) = \sum a_k f_n^k$
  avec $nk<d-n$, donc si on pose $\beta = \alpha -\sum a_k f_n^{k+1}$
  on a $\Phi(\bar{\beta})=0$. De là, d'après la proposition \ref{prop_ker_phi},
  $\beta$ est congru modulo $M^d$
  à un invariant constant $a_{-1}$, d'où $\bar{\alpha} = \sum a_{k-1} f_n^k$
  avec $nk<d$.
\end{proof}

\begin{rem}
  Cela montre au passage que la suite exacte de la proposition
  \ref{prop_ker_phi} peut être complétée à droite par un 0, puisque
  tous les $f_n^d$ sont dans l'image des $\Phi^\eps$ (même argument
  que pour le corollaire \ref{cor_phi_exact}).
\end{rem}

Le problème est que pour exprimer un invariant en fonction des $f_n^d$
il est en général nécessaire d'utiliser une combinaison infinie, comme
le souligne l'exemple suivant.

\begin{ex}\label{ex_disc}
  On se place dans le cas $A=W$. Soit $\alpha(q) = \fdiag{\disc(q)}$ ;
  c'est un invariant de Witt de $I$. Alors $\alpha^+ = -\alpha$ ; en effet:
  \[ \fdiag{\disc(q + \pfis{a})} = \fdiag{\disc(q)a} = \fdiag{\disc(q)} -\pfis{a}\fdiag{\disc(q)}. \]
  De là, $\alpha$ ne peut s'écrire comme une combinaison finie
  des $f_1^d$, puisque la longueur d'une telle combinaison diminue
  strictement quand on applique $\Phi^+$. En revanche, on peut imaginer
  écrire, au moins formellement pour l'instant (mais les résultats
  de cette partie permettent de montrer que c'est effectivement valide)
  \[ \alpha = \sum_{d\in \N}(-1)^d f_1^d. \] 
\end{ex}

Cependant, une telle combinaison infinie pourrait ne pas toujours
être bien définie. Comme $f_n^d$ est à valeur dans des rangs strictement
croissants de la filtration de $A$, toute somme $\sum_{d\in \N}a_d f_n^d$
définit bien un invariant à valeurs dans la complétion de $A$, mais
a priori pas dans $A$ lui-même.

\begin{ex}\label{ex_moins_pfis}
  Si $k$ est formellement réel, alors $\sum_d f_1^d$
  envoie $-\pfis{-1}$ sur $\sum_{d\in \N} (-1)^d \{-1\}^d$,
  qui n'est pas un élément de $A(k)$ pour $A=W$ ou $A=H$.
\end{ex}

Comme cet exemple le souligne, le problème est le mauvais
comportement des $f_n^d$ avec les opposés de formes de Pfister
(en effet, toute somme $\sum_{d\in \N}a_d f_n^d$ est en revanche
à valeur dans $A$ si on la restreint aux \emph{sommes} de
formes de Pfiser).
Pour obtenir une description satisfaisante de $M$, on va
introduire une nouvelle famille, dont la définition équilibre
les sommes et les différences de formes de Pfister, et on
montrera que toute combinaison, même infinie, des éléments
de cette famille est cette fois à valeur dans $A$.

\begin{defi}\label{def_gn}
  Pour tout $d\in \N$, on définit  $g_n^d\in \Inv(I^n,A^{nd})$ par :
  \begin{itemize}
    \bitem $g_n^0 = 1$ ;
    \bitem si $d\in \N$ est pair, $(g_n^{d+1})^-=g_n^d$
    et $g_n^{d+1}(0)=0$ ;
    \bitem si $d\in \N$ est impair, $(g_n^{d+1})^+=g_n^d$
    et $g_n^{d+1}(0)=0$.
  \end{itemize}
\end{defi}

Le corollaire \ref{cor_phi_exact} montre que ces conditions
définissent bien les $g_n^d$ de façon unique.
Notons par ailleurs que $g_n^1=f_n$.

\begin{rem}
  On observe que si $-1$ est un carré dans $k$, alors
  $g_n^d = f_n^d$. C'est cohérent avec le fait que si $-1$
  est un carré, tout élément de $I^n(K)$ est une somme de
  formes de Pfister, donc les problèmes soulevés précédemment
  n'apparaissent pas.
\end{rem}

\begin{ex}\label{ex_def_v}
  Dans le cas où $A(K)=H^*(K,\mu_2)$, on notera $g_n^d = v^{(n)}_{nd}$
  pour garder une cohérence avec notre façon de noter les invariants
  cohomologiques.
\end{ex}

Cette définition, qui équilibre $\Phi$ et $\Phi^-$, donne
un comportement raisonnable pour les deux opérateurs à la fois :

\begin{prop}\label{prop_g_pm}
  Soit $d\in \N$. Alors :
  \[ (g_n^{d+2})^{+-} = (g_n^{d+2})^{-+} = g_n^d ; \]
  \[ (g_n^{d+1})^+ = \left\{ \begin{array}{lc} g_n^d & \text{si $d$ impair} \\
             g_n^d + \{-1\}^n g_n^{d-1} & \text{si $d$ pair ;} \end{array} \right. \]
  \[ (g_n^{d+1})^- = \left\{ \begin{array}{lc} g_n^d & \text{si $d$ pair} \\
             g_n^d - \{-1\}^n g_n^{d-1} & \text{si $d$ impair.} \end{array} \right. \]
\end{prop}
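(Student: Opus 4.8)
Le plan est de tout déduire directement des relations de définition et de la proposition \ref{prop_phi_pm}, sans recourir à une récurrence. Je commence par réécrire la définition \ref{def_gn} sous forme symétrique : pour tout $m\pgq 1$, on a $(g_n^m)^- = g_n^{m-1}$ lorsque $m$ est impair, et $(g_n^m)^+ = g_n^{m-1}$ lorsque $m$ est pair. Ce sont les seules relations « gratuites » : l'opérateur servant à définir $g_n^m$ est $\Phi^-$ pour les indices impairs et $\Phi^+$ pour les indices pairs. Or les trois formules à établir portent précisément sur l'opérateur \emph{opposé} à celui de la définition, et c'est là que réside le léger travail. Les ingrédients sont la commutativité de $\Phi^+$ et $\Phi^-$ et la relation $\alpha^+ - \alpha^- = \{-1\}^n \alpha^{+-}$ fournies par la proposition \ref{prop_phi_pm}.

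Pour les formules donnant $(g_n^{d+1})^+$ et $(g_n^{d+1})^-$, je distingue selon la parité de $d$. Lorsque $d$ est impair (resp. pair), l'indice $d+1$ est pair (resp. impair), et $(g_n^{d+1})^+ = g_n^d$ (resp. $(g_n^{d+1})^- = g_n^d$) est exactement la relation de définition : ce sont les deux lignes « simples ». Pour les deux lignes restantes, j'applique la relation $\alpha^+ - \alpha^- = \{-1\}^n \alpha^{+-}$ à $\alpha = g_n^{d+1}$. Par exemple, pour $d$ pair, on écrit $(g_n^{d+1})^+ = (g_n^{d+1})^- + \{-1\}^n (g_n^{d+1})^{+-}$ ; le premier terme vaut $g_n^d$ par définition (car $d+1$ est impair), et le second se calcule en notant $(g_n^{d+1})^{+-} = ((g_n^{d+1})^-)^+ = (g_n^d)^+$, lequel vaut $g_n^{d-1}$ par la relation de définition appliquée cette fois à l'indice pair $d$. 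On obtient bien $(g_n^{d+1})^+ = g_n^d + \{-1\}^n g_n^{d-1}$ ; le cas $d$ impair pour $(g_n^{d+1})^-$ est identique, le signe $-$ provenant de l'écriture $\alpha^- = \alpha^+ - \{-1\}^n \alpha^{+-}$ de la même relation.

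Pour la première formule, $(g_n^{d+2})^{+-} = (g_n^{d+2})^{-+} = g_n^d$, la commutativité de la proposition \ref{prop_phi_pm} donne immédiatement l'égalité des deux premiers membres ; il reste à les identifier à $g_n^d$. L'idée est d'appliquer en premier l'opérateur « de définition » de $g_n^{d+2}$ : comme $d+2$ et $d$ ont même parité, pour $d$ pair je calcule $((g_n^{d+2})^+)^- = (g_n^{d+1})^-$ en utilisant d'abord que $d+2$ est pair, puis que $d+1$ est impair, chaque étape retombant sur une relation de définition et redonnant $g_n^{d+1}$ puis $g_n^d$ ; le cas $d$ impair s'obtient de façon symétrique en échangeant les rôles de $\Phi^+$ et $\Phi^-$.

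Je n'anticipe pas de véritable obstacle : tout repose sur un suivi soigneux des parités, chaque application d'opérateur tombant par construction sur une relation de définition. Le seul point à surveiller est le bord $d=0$ de la deuxième formule, où apparaît formellement $g_n^{-1}$ : on convient que $g_n^{-1}=0$, convention cohérente puisque le terme correctif $\{-1\}^n (g_n^{1})^{+-} = \{-1\}^n (g_n^0)^+$ est nul, l'opérateur $\Phi^+$ annulant les invariants constants d'après la proposition \ref{prop_ker_phi}.
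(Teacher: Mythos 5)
Votre preuve est correcte et suit essentiellement la même démarche que celle du texte : les relations de définition des $g_n^d$, la commutativité de $\Phi^+$ et $\Phi^-$, et la relation $\alpha^+-\alpha^-=\{-1\}^n\alpha^{+-}$ de la proposition \ref{prop_phi_pm}, sans récurrence. Seul l'ordre diffère (vous établissez les formules pour $(g_n^{d+1})^\pm$ avant la première, en calculant $(g_n^{d+1})^{+-}$ directement à partir de la définition), et votre traitement du cas de bord $d=0$ via la convention $g_n^{-1}=0$ est un soin supplémentaire que le texte passe sous silence.
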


\begin{proof}
  D'après la définition \ref{def_gn}, si $d$ est impair, alors $(g_n^{d+2})^-=g_n^{d+1}$ et $(g_n^{d+1})^+=g_n^d$,
  et si $d$ est pair, $(g_n^{d+2})^+=g_n^{d+1}$ et $(g_n^{d+1})^-=g_n^d$.
  Dans tous les cas on vérifie la première formule, puisque les deux
  opérateurs commutent.

  Pour les deux suivantes, on utilise $(g_n^{d+1})^+ - (g_n^{d+1})^- = \{-1\}^n g_n^{d-1}$
  qui vient de la proposition \ref{prop_phi_pm}. On conclut en distinguant
  selon la parité de $d$.
\end{proof}

Le lien entre les $f_n^d$ et $g_n^d$ est donné par :

\begin{prop}\label{prop_f_g}
  Pour tout $d\in \N^*$, on a :
  \begin{align*}
    g_n^d &= \sum_{k=\lfloor \frac{d}{2}\rfloor +1}^d \binom{\lfloor \frac{d-1}{2} \rfloor}{k-\lfloor \frac{d}{2}\rfloor -1} \{-1\}^{n(d-k)} f_n^k \\
    f_n^d &= \sum_{k=1}^d (-1)^{d-k}\binom{d-\lfloor \frac{k+1}{2}\rfloor -1}{\lfloor \frac{k}{2}\rfloor -1}\{-1\}^{n(d-k)} g_n^k.
  \end{align*}
\end{prop}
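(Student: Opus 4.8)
The plan is to argue by induction on the shift (``récurrence sur le décalage'', cf. Remarque \ref{rem_shift}), using that on all of $M$ the operator $\Phi = \Phi^+$ has kernel exactly the constants $A(k)$ (Proposition \ref{prop_ker_phi}). Concretely, to establish an identity $g_n^d = \sum_k c_{d,k} f_n^k$ with all $c_{d,k}\in A(k)$ and all participating $f_n^k$ normalized ($k\pgq 1$), it suffices to check that $\Phi$ applied to both sides agree: the two sides then differ by a constant, which must vanish since $g_n^d(0)=0$ (Définition \ref{def_gn}) and $f_n^k(0)=0$ for $k\pgq 1$. The point of working with $\Phi^+$ rather than $\Phi^-$ is that $\Phi^+ f_n^k = f_n^{k-1}$ is a clean index shift (Proposition \ref{prop_phi_pi}), and that $\Phi^+ g_n^k$ is governed by the three-term relation of Proposition \ref{prop_g_pm}; one thereby avoids the messy expansion of $\Phi^- f_n^k$ coming from Proposition \ref{prop_fn_phi_moins}.

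For the first formula I write $c_{d,k} = \binom{\lfloor (d-1)/2\rfloor}{k - \lfloor d/2\rfloor - 1}\{-1\}^{n(d-k)}$ and run a strong induction on $d$, the cases $d=1,2$ being direct from $g_n^1=f_n$ and Définition \ref{def_gn}. In the inductive step, applying $\Phi^+$ turns the right-hand side into $\sum_k c_{d,k} f_n^{k-1}$, while by Proposition \ref{prop_g_pm} the left-hand side becomes $g_n^{d-1}$ (if $d$ even) or $g_n^{d-1}+\{-1\}^n g_n^{d-2}$ (if $d$ odd), to which the induction hypothesis applies. Matching the coefficient of $f_n^m$ reduces, in the even case, to the tautology $c_{d,m+1}=c_{d-1,m}$ (after reindexing the binomial argument is literally unchanged), and in the odd case to $c_{d,m+1}=c_{d-1,m}+\{-1\}^n c_{d-2,m}$. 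Setting $m_0=(d-1)/2$ and $i=m-m_0$, this last identity collapses to Pascal's rule $\binom{m_0}{i}=\binom{m_0-1}{i-1}+\binom{m_0-1}{i}$, the powers of $\{-1\}^n$ aligning automatically (all three terms carry $\{-1\}^{n(d-1-m)}$). This closes the induction.

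For the second formula I would run the symmetric argument: with $c'_{d,k}=(-1)^{d-k}\binom{d-\lfloor (k+1)/2\rfloor - 1}{\lfloor k/2\rfloor - 1}\{-1\}^{n(d-k)}$, apply $\Phi^+$ to $f_n^d = \sum_k c'_{d,k} g_n^k$, using $\Phi^+ f_n^d = f_n^{d-1}$ on the left and the three-term relation of Proposition \ref{prop_g_pm} for $\Phi^+ g_n^k$ on the right. Collecting the coefficient of $g_n^m$ yields a recurrence $c'_{d-1,m}=c'_{d,m+1}+[m\ \mathrm{odd}]\,\{-1\}^n c'_{d,m+2}$ that again reduces to Pascal's identity after splitting on the parity of $m$. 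Alternatively, since both coefficient matrices $(c_{d,k})$ and $(c'_{d,k})$ are unipotent and lower triangular (their diagonal entries are $1$), it is enough to prove one formula and check that the other matrix is its inverse; the direct induction is however cleaner to carry out than the inversion.

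The main obstacle is purely combinatorial bookkeeping: the floor functions $\lfloor d/2\rfloor$ and $\lfloor (d-1)/2\rfloor$ shift irregularly when passing from $d$ to $d-1$ and $d-2$, so one must separate the parities of $d$ (and of $k$ in the second formula) and track carefully how the binomial arguments and the exponents of $\{-1\}^n$ transform. Once the indices are pinned down in each parity case, every verification collapses to Pascal's rule, so no genuinely new identity is needed beyond what Proposition \ref{prop_g_pm} already encodes.
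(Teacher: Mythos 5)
Your proof is correct and is essentially the paper's own argument: the paper likewise defines the right-hand sides as invariants $\alpha_d$ (resp. $\beta_d$), applies $\Phi^+$ using Proposition \ref{prop_phi_pi} and the relations of Proposition \ref{prop_g_pm}, and concludes by induction on the shift together with normalization and $\Ker(\Phi)=A(k)$ (Proposition \ref{prop_ker_phi}), the coefficient matching being the same Pascal-rule computation you carry out explicitly. The only cosmetic difference is that you organize the verification as a coefficient recurrence $c_{d,m+1}=c_{d-1,m}+\{-1\}^n c_{d-2,m}$ (and its analogue for the $c'_{d,k}$), whereas the paper compares the corresponding sums directly.
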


\begin{proof}
  Notons $\alpha_d$ les invariants définis par la partie droite
  de la première formule.
  Si $d=2m$, la formule s'écrit
  \[ \alpha_d = \sum_{k=m+1}^{2m} \binom{m-1}{k-m-1} \{-1\}^{n(2m-k)} f_n^k\]
  ce qui donne
  \[ \alpha_d^+ = \sum_{k=m+1}^{2m} \binom{m-1}{k-m-1} \{-1\}^{n(2m-k)} f_n^{k-1},\]
  et si $d=2m+1$ alors la formule devient
  \[ \alpha_d = \sum_{k=m+1}^{2m+1} \binom{m}{k-m-1} \{-1\}^{n(2m+1-k)} f_n^k\]
  d'où
  \[ \alpha_d^+ = \sum_{k=m+1}^{2m+1} \binom{m}{k-m-1} \{-1\}^{2m+1-k} f_n^{k-1}.\]
  Il faut donc vérifier que dans les deux cas on trouve pour $\alpha_{d+1}^+$
  la récurrence définissant les $g_n^d$.
  C'est immédiat si $d=2m+1$, et si $d=2m$ on doit comparer
  \[ \sum_{k=m}^{2m}\binom{m}{k-m}\{-1\}^{n(2m-1)} f_n^k \]
  et
  \[ \sum_{k=m+1}^{2m}\binom{m-1}{k-m-1}\{-1\}^{n(2m-k)}f_n^k +  \sum_{k=m}^{2m-1}\binom{m-1}{k-m-2}\{-1\}^{n(2m-k)}f_n^k \]
  dont on voit facilement qu'ils sont égaux.

  Pour montrer la deuxième formule, on peut soit inverser la première,
  soit procéder de la même façon. On pose $\beta_d$ défini par la
  partie droite de la deuxième formule. On a alors :
  \begin{align*}
    \beta_d^+ &= (-1)^d\sum_m \binom{d-m-1}{m-1}\{-1\}^{n(d-2m)}(g_n^{2m})^+ \\
              &+ (-1)^{d+1}\sum_m \binom{d-m-2}{m-1}\{-1\}^{n(d-2m-1)}(g_n^{2m+1})^+ \\
              &= (-1)^d\sum_m \binom{d-m-1}{m-1}\{-1\}^{n(d-2m)}g_n^{2m-1} \\
              &+ (-1)^{d+1}\sum_m \binom{d-m-2}{m-1}\{-1\}^{n(d-2m-1)}(g_n^{2m}+\{-1\}^ng_n^{2m-1}) \\
              &= (-1)^{d+1}\sum_m \binom{d-m-2}{m-1}\{-1\}^{n(d-2m-1)}g_n^{2m} \\
              &+ (-1)^d\sum_m \left( \binom{d-m-1}{m-1} - \binom{d-m-2}{m-1}\right) \{-1\}^{n(d-2m)}g_n^{2m-1} \\
              &= (-1)^{d-1}\sum_m \binom{d-1-m-1}{m-1}\{-1\}^{n(d-1-2m)}g_n^{2m} \\
              &+ (-1)^{d-1+1}\sum_m \binom{d-m-1}{m-2} \{-1\}^{n(d-2m)}g_n^{2m-1}
  \end{align*}
  ce qui donne bien $\beta_{d-1}$.
\end{proof}

\begin{coro}\label{cor_f_g_gen}
  En particulier, $(f_n^i)_{i\ppq d}$ et $(g_n^i)_{i\ppq d}$ engendrent le même
  sous-module de $M$.
\end{coro}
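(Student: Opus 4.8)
Ce corollaire découle immédiatement de la proposition \ref{prop_f_g} : le plan est simplement de lire les bornes de sommation des deux formules qui y figurent. La première exprime $g_n^d$ comme combinaison linéaire (à coefficients dans $A(k)$) des $f_n^k$ pour $\lfloor \frac{d}{2}\rfloor +1 \ppq k\ppq d$, et la seconde exprime $f_n^d$ comme combinaison des $g_n^k$ pour $1\ppq k\ppq d$. Le point essentiel est que, dans les deux cas, tous les indices intervenant sont bornés par $d$.

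D'abord je noterais que, pour tout $i\ppq d$, la première formule donne $g_n^i$ comme combinaison des $f_n^k$ avec $k\ppq i\ppq d$ ; ainsi chaque $g_n^i$ appartient au sous-module engendré par $(f_n^k)_{k\ppq d}$, d'où l'inclusion du sous-module engendré par $(g_n^i)_{i\ppq d}$ dans celui engendré par $(f_n^i)_{i\ppq d}$. Symétriquement, la seconde formule exprime chaque $f_n^i$ (pour $i\ppq d$) comme combinaison des $g_n^k$ avec $k\ppq d$, ce qui fournit l'inclusion réciproque. On conclut à l'égalité des deux sous-modules.

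Il n'y a donc pas de véritable obstacle : tout le contenu calculatoire a été absorbé dans la proposition \ref{prop_f_g}, et le corollaire n'en est qu'une conséquence formelle de triangularité. On peut du reste préciser un peu : en examinant le terme $k=d$ dans chacune des deux formules, on vérifie que le coefficient de $f_n^d$ dans $g_n^d$, comme celui de $g_n^d$ dans $f_n^d$, vaut $1$. Le passage d'une famille à l'autre est donc donné, pour l'ordre sur les indices, par une matrice triangulaire unipotente, ce qui redonne aussitôt l'égalité des sous-modules engendrés et montre de surcroît que les familles tronquées $(f_n^i)_{i\ppq d}$ et $(g_n^i)_{i\ppq d}$ sont reliées par un changement de base inversible.
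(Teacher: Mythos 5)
Votre preuve est correcte et suit essentiellement la même démarche que celle du papier, qui se contente de noter que la matrice de passage issue de la proposition \ref{prop_f_g} est triangulaire avec des $1$ sur la diagonale (on vérifie en effet que le terme $k=d$ de chaque formule a coefficient $\binom{m-1}{m-1}=1$ ou $\binom{m}{m}=1$ selon la parité). Votre variante par double inclusion, lisant simplement les bornes de sommation, est un déroulé explicite du même argument et n'apporte pas de différence de fond.
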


\begin{proof}
  On voit que la matrice de transition est triangulaire avec des $1$ sur
  la diagonale.
\end{proof}

La conséquence cruciale de l'équilibre des signes dans la définition
des $g_n^d$ est donnée par :

\begin{prop}\label{prop_g_borne}
  Soit $q\in I^n(K)$, qui s'écrit $q=\sum_{i=1}^s \phi_i - \sum_{i=1}^t\psi_i$,
  où $\phi_i,\psi_i\in \Pf_n(K)$. Alors pour tout $d>2\max(s,t)$, $g_n^d(q)=0$.
\end{prop}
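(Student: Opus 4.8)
The plan is to argue by induction on $s+t$, peeling forms of Pfister off $q$ by means of the shift operators $\Phi^+$ and $\Phi^-$ and tracking how the bound $2\max(s,t)$ decreases. The decisive ingredient, and the very reason the family $(g_n^d)$ was introduced, is the clean double-shift relation $(g_n^d)^{+-}=g_n^{d-2}$ of Proposition \ref{prop_g_pm}: removing one added and one subtracted $n$-form of Pfister simultaneously lowers the degree by exactly two, with no lower-order correction. Combined with the fact that $(g_n^d)^{+}$ and $(g_n^d)^{-}$ are, in every case, $A(k)$-combinations of $g_n^{d-1}$ and $g_n^{d-2}$ alone (again Proposition \ref{prop_g_pm}), this is what makes the bookkeeping close up so that the exponent $2\max(s,t)$ comes out on the nose.

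For the base case $s=t=0$ one has $q=0$, and $g_n^d(0)=0$ for every $d\pgq 1$ by Definition \ref{def_gn}, which settles $d>0$. For the inductive step I would assume the statement for all pairs of strictly smaller total size and distinguish according to $\min(s,t)$. If $s,t\pgq 1$, write $q=q_0+\phi-\psi$ with $\phi=\phi_s$, $\psi=\psi_t$ and $q_0=\sum_{i=1}^{s-1}\phi_i-\sum_{j=1}^{t-1}\psi_j$; expanding with the defining relation (\ref{eq_alpha_pm1}) of $\Phi^+$ and $\Phi^-$ gives
\[ g_n^d(q)=g_n^d(q_0)-f_n(\psi)(g_n^d)^-(q_0)+f_n(\phi)(g_n^d)^+(q_0)-f_n(\phi)f_n(\psi)(g_n^d)^{+-}(q_0). \]
By Proposition \ref{prop_g_pm} the right-hand side is an $A(k)$-combination of the values $g_n^d(q_0)$, $g_n^{d-1}(q_0)$ and $g_n^{d-2}(q_0)$ only. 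If instead $t=0$ (and $s\pgq 1$), I would write $q=q'+\phi_s$ and use $\Phi^+$ alone, getting $g_n^d(q)=g_n^d(q')+f_n(\phi_s)(g_n^d)^+(q')$, again a combination of $g_n^d(q')$, $g_n^{d-1}(q')$ and $g_n^{d-2}(q')$; the case $s=0$ is symmetric, peeling a subtracted form with $\Phi^-$.

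It then remains to check that the induction hypothesis annihilates all the terms. In each case the auxiliary form ($q_0$, $q'$ or its analogue) is a difference of $s-1$ and $t-1$ (resp. $s-1$ and $0$, resp. $0$ and $t-1$) forms of Pfister, so its associated maximum is exactly $\max(s,t)-1$ and its total size is strictly smaller. The indices occurring are $d$, $d-1$ and $d-2$, and since $d>2\max(s,t)$ the smallest of them already satisfies $d-2>2\bigl(\max(s,t)-1\bigr)$; by the induction hypothesis every term vanishes, whence $g_n^d(q)=0$. I expect the only genuine subtlety to be the balanced case $s=t$, where peeling a single form would fail to lower $\max(s,t)$: it is precisely the absence of correction terms in $(g_n^d)^{+-}=g_n^{d-2}$ that allows the simultaneous double peel to drop the degree by two while consuming one form of each sign, so that the bound $2\max(s,t)$ emerges naturally rather than by accident.
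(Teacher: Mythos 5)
Your proof is correct and follows essentially the same route as the paper's: peel Pfister forms off $q$ with the shift operators, use Proposition \ref{prop_g_pm} to re-express $(g_n^d)^{+}$, $(g_n^d)^{-}$ and $(g_n^d)^{+-}$ as combinations of the $g_n^k$ with $k\geq d-2$, and close the induction via $d-2>2(\max(s,t)-1)$. The only cosmetic difference is that the paper first pads one of the sums with hyperbolic forms so that $s=t$ and then always performs the simultaneous double peel, whereas you keep $s$ and $t$ unequal and handle the one-sided cases by a single peel with $\Phi^{+}$ or $\Phi^{-}$.
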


\begin{proof}
  On peut ajouter des formes hyperboliques dans une des sommes de sorte que $s=t$.
  On prouve alors l'énoncé par récurrence sur $s$ : si $s=0$ alors $q=0$,
  donc pour $d>0$ on a en effet $g_n^d(q)=0$ par construction.
  
  Si le resultat tient jusqu'à $s-1$ pour un certain $s\in \N^*$, alors on
  écrit $q' = q - \phi_s$ et $q'' = q' + \psi_s$. On a 
  \begin{align*}
    g_n^d(q) &= g_n^d(q') + f_n(\phi_s)(g_n^d)^+(q') \\
             &= g_n^d(q'') -  f_n(\psi_s)(g_n^d)^-(q'') + f_n(\phi_s)(g_n^d)^+(q'') \\
             &- f_n(\phi_s)f_n(\psi_s)(g_n^d)^{+-}(q'').
  \end{align*}
  Or d'après la proposition \ref{prop_g_pm}, $(g_n^d)^-$, $(g_n^d)^+$
  et $(g_n^d)^{+-}$ peuvent tous s'exprimer comme combinaisons des
  $g_n^k$ avec $k\pgq d-2$, donc on peut appliquer l'hypothèse de récurrence
  à $q''$.
\end{proof}

\begin{coro}\label{cor_g_fixed_dim}
  Si $q\in I(K)$ est la classe de Witt d'une forme de dimension $2m$,
  alors $g_1^d(q)=0$ pour tout $d>2m$.
\end{coro}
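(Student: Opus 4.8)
The plan is to deduce this directly from Proposition \ref{prop_g_borne}, which guarantees that $g_n^d$ vanishes on an element presented as a difference of $s$ and $t$ $n$-Pfister elements as soon as $d > 2\max(s,t)$. For $n=1$ it therefore suffices to exhibit, for the Witt class of a $2m$-dimensional form, a representation
\[ q = \sum_{i=1}^m \phi_i - \sum_{i=1}^m \psi_i \]
with all $\phi_i,\psi_i\in \Pf_1(K)$, i.e. using at most $m$ Pfister elements on each side; then $2\max(s,t)=2m$ and the conclusion follows for every $d>2m$.

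First I would diagonalize $q\cong \fdiag{a_1,\dots,a_{2m}}$ and group the entries into $m$ binary blocks $\fdiag{a_{2j-1},a_{2j}}$. The arithmetic core is the Witt-ring identity, valid for the paper's convention $\pfis{c}=\fdiag{1}-\fdiag{c}$,
\[ \fdiag{a,b} = \pfis{-b} - \pfis{a}, \]
which one checks at once since $(\fdiag{1}-\fdiag{-b})-(\fdiag{1}-\fdiag{a}) = \fdiag{a}+\fdiag{b}$ in $W(K)$. Each binary block thus costs exactly one added and one subtracted $1$-Pfister element, so summing over $j$ yields
\[ q = \sum_{j=1}^m \pfis{-a_{2j}} - \sum_{j=1}^m \pfis{a_{2j-1}}, \]
which is precisely the desired shape with $s=t=m$.

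The only point requiring care is that $g_1^d$ is defined on $I^1(K)=\hat{I}(K)$, so the identity above must hold not merely between Witt classes but between the corresponding virtual-dimension-zero elements of $\hat{I}(K)$. This is harmless: both $\pfis{-a_{2j}}-\pfis{a_{2j-1}}$ and the block $\fdiag{a_{2j-1},a_{2j}}$ reduce to the same Witt class, and since the projection $\hat{I}(K)\to I(K)$ is the isomorphism recalled at the start of the chapter, equality of Witt classes forces equality in $\hat{I}(K)$. With the representation in hand, Proposition \ref{prop_g_borne} applies verbatim and gives $g_1^d(q)=0$ for $d>2m$.

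The genuinely substantive step, and the only place one could go wrong, is insisting on the efficient pairing into binary blocks: the naïve decomposition $\fdiag{a}=\fdiag{1}-\pfis{a}$ would write $q$ as a difference involving roughly $2m$ Pfister elements on one side, and hence only yield vanishing for $d>4m$, which is too weak. Pairing the diagonal entries two at a time is exactly what balances the count to $m$ on each side and produces the sharp bound.
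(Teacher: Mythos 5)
Your proof is correct and takes essentially the same route as the paper, which likewise diagonalizes $q=\fdiag{a_1,b_1,\dots,a_m,b_m}$, writes $q=\sum_{i=1}^m \pfis{-a_i}-\pfis{b_i}$ (the same pairing identity as yours, up to which entry carries the sign), and invokes Proposition \ref{prop_g_borne} with $s=t=m$. Your extra remark about transporting the identity through $\hat{I}(K)\simeq I(K)$ is harmless and consistent with the conventions fixed at the start of the chapter.
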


\begin{proof}
  Si $q=\fdiag{a_1,b_1,\dots,a_m,b_m}$, alors
  $q=\sum_{i=1}^m \pfis{-a_i}-\pfis{b_i}$, ce qui conclut d'après la proposition.
\end{proof}

On peut à présent montrer le théorème central :

\begin{thm}\label{thm_g}
  Soit $N=A(k)^{\N}$, qui est un $A(k)$-module filtré pour la
  filtration $N^m = \ens{(a_d)_{d\in \N}}{a_d\in A^{m-nd}}$.
  
  Les applications suivantes sont des isomorphismes mutuellement réciproques
  de $A(k)$-modules filtrés :
  \[ \isomdef{F}{N}{M}{(a_d)}{\sum_d a_d g_n^d,} \]
  \[ \isomdef{G}{M}{N}{\alpha}{(\alpha^{[d]}(0))_d.} \]
  où on note $\alpha^{[d]} = \alpha^{s+,s-}$ si $d=2s$, et
  $\alpha^{[d]} = \alpha^{(s+1)+,s-}$ si $d=2s+1$.
\end{thm}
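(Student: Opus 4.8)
Le plan est de montrer séparément que $G\circ F=\Id_N$ et $F\circ G=\Id_M$, la seconde égalité étant de loin la plus délicate. On commence par observer que $F$ est bien défini : pour $q\in I^n(K)$ fixé, la proposition \ref{prop_g_borne} assure que $g_n^d(q)=0$ dès que $d$ est assez grand, donc $\sum_d a_d g_n^d(q)$ est une somme finie et $F((a_d))$ est bien un invariant à valeurs dans $A$ (et pas seulement dans son complété). Les deux applications sont clairement $A(k)$-linéaires ; elles sont de plus filtrées, car $g_n^d\in M^{nd}$ donne $F(N^m)\subset M^m$, et comme $\Phi^\eps$ est de degré $-n$ (proposition \ref{prop_def_delta}) l'opérateur $\alpha\mapsto\alpha^{[d]}$ est de degré $-nd$, d'où $G(M^m)\subset N^m$.

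Pour $G\circ F=\Id_N$, je calcule d'abord $G(g_n^e)$. Comme $\Phi^+$ et $\Phi^-$ commutent, $\alpha^{[2s]}=(\Phi^+\Phi^-)^s\alpha$ et $\alpha^{[2s+1]}=\Phi^+(\Phi^+\Phi^-)^s\alpha$. La proposition \ref{prop_g_pm} donne $(g_n^d)^{+-}=g_n^{d-2}$ (avec la convention $g_n^d=0$ pour $d<0$), donc $(\Phi^+\Phi^-)^s(g_n^e)=g_n^{e-2s}$ ; en évaluant en $0$ et en utilisant $g_n^0(0)=1$, $g_n^d(0)=0$ pour $d\pgq 1$ ainsi que la formule pour $(g_n^d)^+$, on obtient, en distinguant selon la parité, $(g_n^e)^{[d]}(0)=\delta_{d,e}$. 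Il reste à voir que $\Phi^\eps$ commute aux sommes (localement finies) $\sum_d a_d g_n^d$ : sur $q$ et $\phi\in\Pf_n$ fixés, la relation \eqref{eq_alpha_pm1} définissant $\Phi^\eps$ ne fait intervenir que des sommes finies, d'où $\Phi^\eps(\sum_d a_d g_n^d)=\sum_d a_d (g_n^d)^\eps$ (encore localement finie par \ref{prop_g_borne}). En itérant, $(F((a_d)))^{[e]}(0)=\sum_d a_d\,(g_n^d)^{[e]}(0)=a_e$, soit $G\circ F=\Id_N$.

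Le cœur de la preuve est le lemme suivant, que j'établis par récurrence sur $D$ : pour tout $\alpha\in M$,
\[ \alpha\equiv \sum_{d=0}^D \alpha^{[d]}(0)\,g_n^d \pmod{M^{n(D+1)}}. \]
Le cas $D=0$ revient à $\alpha-\alpha(0)\in M^n$, qui s'obtient en décomposant $q$ en somme et différence de formes de Pfister et en appliquant \eqref{eq_alpha_pm1}. Pour l'hérédité je pose $\delta=\alpha-\sum_{d=0}^D\alpha^{[d]}(0)g_n^d$ et j'applique $\Phi^\eps$. En utilisant l'hypothèse de récurrence appliquée à $\alpha^\eps$, les formules de $(g_n^d)^\eps$ de la proposition \ref{prop_g_pm}, et surtout les identités
\[ (\alpha^+)^{[e]}=\alpha^{[e+1]}\ (e\text{ pair}),\qquad (\alpha^+)^{[e]}=\alpha^{[e+1]}+\{-1\}^n\alpha^{[e+2]}\ (e\text{ impair}) \]
(ainsi que les identités symétriques pour $\Phi^-$), qui découlent de la commutation des $\Phi^\eps$ et de la relation $\Phi^+-\Phi^-=\{-1\}^n\Phi^+\Phi^-$ de la proposition \ref{prop_phi_pm}, on constate que tous les coefficients de $\Phi^\eps\delta$ devant les $g_n^e$ ($e\ppq D-1$) se compensent, à l'exception possible d'un terme de bord en $e=D-1$ provenant de la contribution manquante $d=D+1$. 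Ce terme de bord disparaît précisément si l'on choisit $\eps=+$ pour $D$ impair et $\eps=-$ pour $D$ pair. On a alors $\Phi^\eps\delta\in M^{nD}$, donc d'après la suite exacte de la proposition \ref{prop_ker_phi} $\delta$ est congru à un invariant constant $c$ modulo $M^{n(D+1)}$ ; comme $\delta(0)=\alpha(0)-\alpha^{[0]}(0)=0$, on a $c\in A^{n(D+1)}(k)$, d'où $\delta\in M^{n(D+1)}$, ce qui conclut la récurrence.

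L'obstacle principal est précisément cette gestion conjointe du terme de bord et de la parité : c'est l'équilibrage des signes de la définition \ref{def_gn} des $g_n^d$ (reflété par la proposition \ref{prop_g_pm}) qui autorise, à chaque étage $D$, le choix de l'opérateur $\Phi^\eps$ annulant le bord ; c'est là que réside la raison d'être de la famille $(g_n^d)$ par rapport aux $(f_n^d)$. Une fois le lemme acquis, la filtration étant séparée on fait tendre $D$ vers l'infini pour obtenir $\alpha=\sum_d\alpha^{[d]}(0)g_n^d=F(G(\alpha))$, soit $F\circ G=\Id_M$. Enfin, $F$ et $G$ étant des bijections réciproques filtrées, les inclusions $F(N^m)\subset M^m$ et $G(M^m)\subset N^m$ sont forcément des égalités, si bien que ce sont des isomorphismes de $A(k)$-modules filtrés.
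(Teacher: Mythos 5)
Votre preuve est correcte, mais elle organise la moitié difficile autrement que le texte. Le papier établit $G\circ F=\Id$ par le même calcul que vous (via la proposition \ref{prop_g_pm}), puis conclut en montrant seulement que $G$ est \emph{injectif} : pour $\alpha\in\Ker(G)$, il invoque la proposition \ref{prop_pi_gen} (le fait, déjà prouvé, que $M/M^d$ est engendré par les $f_n^k$, $nk<d$) et le corollaire \ref{cor_f_g_gen} (changement de base triangulaire entre $f_n^k$ et $g_n^k$) pour écrire $\alpha\equiv\sum_{nk<d}a_kg_n^k$ modulo $M^d$, puis la suite exacte de la proposition \ref{prop_ker_phi} pour identifier $a_k\equiv\alpha^{[k]}(0)=0$ modulo $A^{d-nk}(k)$, d'où $\alpha\in M^d$ pour tout $d$ et $\alpha=0$ par séparation. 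Vous prouvez au contraire $F\circ G=\Id$ directement, par un lemme d'approximation quantitatif $\alpha\equiv\sum_{d=0}^D\alpha^{[d]}(0)g_n^d \pmod{M^{n(D+1)}}$ établi par récurrence dans la base des $g_n^d$ eux-mêmes ; cela vous oblige à gérer le terme de bord par le choix de $\Phi^+$ ou $\Phi^-$ selon la parité de $D$ (j'ai vérifié vos identités $(\alpha^\pm)^{[e]}$, qui découlent bien de $\Phi^+-\Phi^-=\{-1\}^n\Phi^{+-}$, et la compensation des coefficients est exacte : la contribution manquante $d=D+1$ disparaît précisément pour $\eps=+$ si $D$ est impair, $\eps=-$ si $D$ est pair). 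En substance, vous fusionnez en une seule récurrence ce que le papier factorise en trois résultats (\ref{prop_pi_gen}, \ref{cor_f_g_gen}, injectivité de $G$) ; le papier y gagne une récurrence plus simple — dans la base des $f_n^d$, où $\Phi=\Phi^+$ suffit puisque $\Phi(f_n^{k+1})=f_n^k$ — au prix du détour par le changement de base, tandis que votre version donne en prime la formule explicite d'approximation tronquée, au prix d'une comptabilité de parité plus délicate. Votre vérification que $\Phi^\eps$ commute aux sommes localement finies (uniformité en $\phi$ et $L$ de la borne de la proposition \ref{prop_g_borne} appliquée à $q+\eps\phi$) est un point que le papier passe sous silence et qu'il est bienvenu d'expliciter.
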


\begin{proof}
  Déjà, l'application $F$ est bien définie, puisque d'après la
  proposition \ref{prop_g_borne}, pour tout $q\in I^n(K)$ fixé
  on a $g_n^d(q)=0$ pour $d$ assez grand. Alors $F$ et $G$
  sont clairement des morphismes de modules, et le fait qu'ils
  respectent les filtrations est juste une reformulation du fait
  que $g_n^d$ est à valeurs dans $A^{nd}$, et que $\Phi^\eps$
  est de degré $-n$.

  En utilisant la proposition \ref{prop_g_pm}, on voit facilement
  que si $\alpha = \sum_d a_d g_n^d$, alors $a_{2s}=\alpha^{s+,s-}(0)$
  et $a_{2s+1}=\alpha^{(s+1)+,s-}(0)$. Donc $G\circ F = \Id$.

  On prouve à présent que $G$ est injectif, ce qui achève la preuve
  du théorème. Soient $\alpha\in \Ker(G)$ et $d\in \N$.
  D'après la proposition \ref{prop_pi_gen} et le corollaire
  \ref{cor_f_g_gen}, on voit que $\alpha$ est congru à une
  certaine combinaison $\sum_{nk< d}a_k g_n^k$ modulo $M^d$. De là,
  la suite exacte de la proposition \ref{prop_ker_phi} permet de
  voir que $a_k \equiv \alpha^{[k]}(0)$ modulo $A^{d-nk}(k)$, donc,
  puisque $\alpha^{[k]}(0)=0$, $a_k\in A^{d-nk}(k)$. Cela implique
  que $\sum_{nk\ppq d}a_k g_n^k\in M^d$, et donc $\alpha\in M^d$.
  Comme c'est vrai pour tout $d\in \N$, on peut conclure que
  $\alpha=0$.
\end{proof}

On en déduit aisément le résultat suivant, important
pour les invariants cohomologiques :

\begin{coro}\label{cor_relev}
  Tout invariant de $I^n$ à valeur dans $H^d(K,\mu_2)$
  se relève en un invariant à valeurs dans $I^d(K)$.
\end{coro}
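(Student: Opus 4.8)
\emph{Approche.} Le plan est d'appliquer le théorème \ref{thm_g} dans le contexte cohomologique $A=H$ puis de relever les coefficients au niveau de l'anneau de Witt, en s'appuyant sur la compatibilité entre les familles $g_n^d$ des deux contextes $A=W$ et $A=H$.

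La première étape est de comparer ces deux familles. Dans le contexte $W$, $g_n^d$ est à valeurs dans $I^{nd}$ et $f_n^k=\pi_n^k$ ; la proposition \ref{prop_f_g} donne alors $g_n^d = \sum_k c_{d,k}\,\pfis{-1}^{n(d-k)}\pi_n^k$ avec des coefficients \emph{entiers} $c_{d,k}$. Dans le contexte $H$, la même proposition fournit exactement la même combinaison, avec $\{-1\}=e_1(\pfis{-1})$ et $u_{nk}^{(n)}=e_{nk}\circ\pi_n^k$ à la place de $\pi_n^k$. En appliquant $e_{nd}$ à la version $W$, et en utilisant la multiplicativité des applications résiduelles (la conjecture de Milnor affirme que $e_\bullet$ est un morphisme d'anneaux gradués, donc $e_{a+b}(xy)=e_a(x)\,e_b(y)$), ainsi que $e_{n(d-k)}(\pfis{-1}^{n(d-k)})=\{-1\}^{n(d-k)}$, on retrouve terme à terme la combinaison définissant $v_{nd}^{(n)}$. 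Autrement dit $e_{nd}\circ g_n^d = v_{nd}^{(n)}$, ce qui est l'ingrédient-clé.

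Soit maintenant $\alpha\in\Inv^d(I^n,\mu_2)$, à valeurs dans $H^d(K,\mu_2)$. Le théorème \ref{thm_g}, dans le contexte $H$, l'écrit de façon unique $\alpha = \sum_{d'} a_{d'}\,v_{nd'}^{(n)}$ avec $a_{d'}=\alpha^{[d']}(0)\in H^*(k,\mu_2)$. Comme $\Phi^\eps$ est de degré $-n$ (proposition \ref{prop_def_delta}), l'invariant $\alpha^{[d']}$ est à valeurs dans $H^{d-nd'}$, donc $a_{d'}\in H^{d-nd'}(k,\mu_2)$, nul dès que $nd'>d$. La somme est ainsi \emph{finie}, portant sur $0\ppq d'\ppq\lfloor d/n\rfloor$, ce qui évite toute question de convergence. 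Par surjectivité de $e_{d-nd'}:I^{d-nd'}(k)\To H^{d-nd'}(k,\mu_2)$ (conjecture de Milnor), je choisis pour chaque $d'$ un relèvement $\tilde a_{d'}\in I^{d-nd'}(k)$ de $a_{d'}$, et je pose
\[ \tilde\alpha = \sum_{0\ppq d'\ppq\lfloor d/n\rfloor} \tilde a_{d'}\,g_n^{d'} \in \Inv(I^n,W). \]
Chaque terme étant à valeurs dans $I^{d-nd'}(k)\cdot I^{nd'}(K)\subset I^d(K)$, l'invariant $\tilde\alpha$ est bien à valeurs dans $I^d(K)$, et par l'étape précédente et la multiplicativité de $e$,
\[ e_d\circ\tilde\alpha = \sum_{d'} e_{d-nd'}(\tilde a_{d'})\cdot\big(e_{nd'}\circ g_n^{d'}\big) = \sum_{d'} a_{d'}\,v_{nd'}^{(n)} = \alpha, \]
ce qui fournit le relèvement cherché.

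Le point véritablement substantiel est l'identité $e_{nd}\circ g_n^d = v_{nd}^{(n)}$ de la deuxième étape : elle repose de façon essentielle sur la multiplicativité des applications résiduelles $e_m$ et sur le fait que la proposition \ref{prop_f_g} ne fait intervenir que des coefficients entiers et les puissances de $\{-1\}$, indépendamment du contexte. Une fois cette compatibilité acquise, la finitude de l'écriture de $\alpha$ (conséquence du degré de $\Phi^\eps$) et la surjectivité de $e$ sur le corps de base rendent le relèvement purement mécanique.
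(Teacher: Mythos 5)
Votre démonstration est correcte, mais elle emprunte un détour que la preuve du manuscrit court-circuite. Celle-ci raisonne dans la base des $f_n^d$ : par le corollaire \ref{cor_ecr_unique}, $\alpha$ s'écrit $\sum_{d'} a_{d'}\,u_{nd'}^{(n)}$, et le relèvement des éléments de base est alors tautologique puisque $u_{nd}^{(n)} = e_{nd}\circ\pi_n^d$ \emph{par définition} (définition \ref{def_f}) ; c'est tout le contenu de la preuve du texte. En travaillant dans la base des $g_n^d$, vous devez en plus démontrer la compatibilité $e_{nd}\circ g_n^d = v_{nd}^{(n)}$, ce que vous faites correctement via la proposition \ref{prop_f_g} et la multiplicativité (\ref{eq_f_prod}) ; ce lemme est juste, mais c'est un surcoût pur, car l'avantage propre des $g_n^d$ (toute combinaison infinie reste à valeurs dans $A$) ne joue aucun rôle ici, puisque vous établissez vous-même que la somme est finie. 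En revanche, votre rédaction explicite utilement ce que le manuscrit laisse implicite : l'annulation des coefficients pour $nd'>d$, leur relèvement par surjectivité des $e_m$, et la vérification finale $e_d\circ\tilde{\alpha}=\alpha$. Un seul point est à retoucher : l'appartenance $a_{d'}\in H^{d-nd'}(k,\mu_2)$ ne découle pas littéralement du degré filtré $-n$ de $\Phi^\eps$, qui donne seulement des valeurs dans la pièce de filtration $A^{d-nd'}$ et ne donne rien lorsque $nd'>d$ ; il faut la version \emph{homogène} — si $\alpha$ est à valeurs dans $H^d$ alors $\alpha^\eps$ est à valeurs dans $H^{d-n}$ exactement — qui se démontre comme la proposition \ref{prop_def_delta}, l'unicité dans la condition \ref{cond_4} jointe à (\ref{eq_f_gen}) forçant l'annulation des composantes de $\alpha^\eps$ en degré $\neq d-n$. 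C'est une retouche mineure, non une lacune.
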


\begin{proof}
  Cela vient simplement du fait que les $u_{nd}^{(n)}$,
  qui sont les $f_n^d$ dans le cas $A=H$, se relèvent
  en $\pi_n^d$.
\end{proof}

\begin{coro}\label{cor_ecr_unique}
  Tout $\alpha\in M$ peut s'écrire de façon unique comme
  $\sum a_d f_n^d$ avec $a_d\in A(k)$, en prenant $a_d=\alpha^{d+}(0)$.
  En particulier, $\alpha$ est déterminé par ses valeurs sur
  les sommes de formes de Pfister.
\end{coro}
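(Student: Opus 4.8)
Le plan est de déduire le résultat du théorème \ref{thm_g}, qui fournit déjà l'écriture unique $\alpha = \sum_d b_d g_n^d$ avec $b_d = \alpha^{[d]}(0)\in A(k)$, en effectuant le changement de base entre les $g_n^d$ et les $f_n^d$ donné par la proposition \ref{prop_f_g}. Le point crucial est que dans la première formule de cette proposition, $f_n^k$ n'intervient dans $g_n^d$ que pour $\lfloor d/2\rfloor +1 \ppq k\ppq d$, c'est-à-dire, à $k$ fixé, seulement pour $k\ppq d\ppq 2k-1$. En notant $c_{d,k}$ le coefficient correspondant et en posant $a_k = \sum_{d=k}^{2k-1} c_{d,k}b_d$, on obtient donc une somme \emph{finie}, donc un élément bien défini $a_k\in A(k)$. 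Comme une combinaison infinie des $f_n^d$ ne converge en général pas dans $A$ (exemple \ref{ex_moins_pfis}), j'interpréterais l'égalité $\alpha = \sum_d a_d f_n^d$ sur les sommes de formes de Pfister : sur une somme de $r$ telles formes, $f_n^k$ s'annule pour $k>r$ (proposition \ref{prop_somme_pfis}) et $g_n^d$ pour $d>2r$ (proposition \ref{prop_g_borne}), de sorte que le réarrangement de la double somme $\sum_d b_d\sum_k c_{d,k}f_n^k$ y est licite, étant entièrement fini. Cela établit l'existence de l'écriture.

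Pour la formule explicite $a_d = \alpha^{d+}(0)$ et l'unicité, je procéderais par \og récurrence sur le décalage\fg{} (remarque \ref{rem_shift}). Sur les sommes de formes de Pfister, la proposition \ref{prop_f} donne $f_n^k(q+\phi) = f_n^k(q) + f_n(\phi)f_n^{k-1}(q)$ pour tout $\phi\in \Pf_n$, d'où $\alpha^+ = \sum_k a_{k+1}f_n^k$ sur ces sommes, conformément à la proposition \ref{prop_phi_pi}. En itérant, $\alpha^{d+} = \sum_k a_{k+d}f_n^k$ sur les sommes de formes de Pfister ; puisque $f_n^0(0)=1$ et $f_n^k(0)=0$ pour $k\pgq 1$, l'évaluation en $0$ donne $\alpha^{d+}(0)=a_d$. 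Les coefficients étant ainsi intrinsèquement déterminés par $\alpha$, l'unicité suit.

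Enfin, pour la dernière assertion, chaque $a_d=\alpha^{d+}(0)$ ne dépend que des valeurs de $\alpha$ sur les sommes d'au plus $d$ formes de Pfister, car l'opérateur $\Phi^+$ ne fait intervenir que des \emph{additions} de telles formes. Il reste à voir que la famille $(a_d)$ détermine $\alpha$, ce qui est le point le plus délicat : l'application $(b_d)\mapsto(a_d)$ doit être injective, alors que sa réciproque fait intervenir des sommes infinies ne convergeant que dans le complété de $A(k)$. Pour contourner cela, j'exploiterais le fait que, pour $d>k$, le coefficient $c_{d,k}$ contient un facteur $\{-1\}^{n(d-k)}\in A^{n(d-k)}(k)$, tandis que $c_{k,k}=1$ : si tous les $a_k$ sont nuls, alors $b_k = -\sum_{d>k}c_{d,k}b_d\in A^n(k)$, puis en réinjectant $b_k\in A^{2n}(k)$, et par récurrence $b_k\in\bigcap_m A^{mn}(k)=0$ par séparation de la filtration. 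Combiné à la bijectivité de $G$ (théorème \ref{thm_g}), cela montre que $\alpha$ est entièrement déterminé par les $a_d$, donc par ses valeurs sur les sommes de formes de Pfister. La difficulté principale réside précisément dans cet argument de séparation, qui permet de reconstruire $\alpha$ à partir des $a_d$ en dépit de la non-convergence de $\sum_d a_d f_n^d$ dans $A$.
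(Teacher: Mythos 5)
Votre démonstration est correcte et suit essentiellement la même voie que le texte : existence via le théorème \ref{thm_g} et le changement de base de la proposition \ref{prop_f_g} (dont vous notez à juste titre que, à $k$ fixé, seuls les $d$ avec $k\ppq d\ppq 2k-1$ contribuent, ce qui garantit $a_k\in A(k)$), puis identification $a_d=\alpha^{d+}(0)$ par récurrence sur le décalage. Votre argument final de séparation, qui montre que la famille $(a_d)$ détermine $(b_d)$ grâce au facteur $\{-1\}^{n(d-k)}\in A^{n(d-k)}(k)$ dans les coefficients et à $\bigcap_m A^{mn}(k)=0$, ne fait qu'expliciter rigoureusement un point que le texte traite implicitement en lisant l'égalité $\alpha=\sum_d a_d f_n^d$ comme convergente à valeurs dans le complété $\hat{A}$.
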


\begin{proof}
  En utilisant les formules de la proposition \ref{prop_f_g},
  on voit qu'une combinaison à coefficients dans $A(k)$
  des $g_n^d$ donne encore une telle combinaison des $f_n^d$.
  La formule pour $a_d$ est claire, et comme les
  $\Phi^d(\alpha)(0)$ sont entièrement déterminés par les
  valeurs de $\alpha$ sur les sommes de formes de Pfister,
  on en déduit le résultat.
\end{proof}

\begin{rem}
  Comme on l'a déjà mentionné, si on prend une combinaison
  infinie des $f_n^d$, on trouve en général un invariant
  à valeurs dans $\hat{A}$, la complétion de $A$ vis-à-vis
  de sa filtration. On peut adapter les preuves ci-dessus
  pour montrer que les $\Inv(I^n,\hat{A})$ sont exactement
  les combinaisons $\sum a_d f_n^d$ avec $a_d\in \hat{A}(k)$,
  ou alternativement les combinaisons $\sum a_d g_n^d$, cela
  ne fait plus de différence.

  Une façon de voir : si on observe les formules
  de la proposition \ref{prop_f_g}, on constate que si on
  exprime $\sum a_d g_n^d$ avec $a_d\in A(k)$ en fonction
  des $f_n^d$, on trouve $\sum b_d f_n^d$ avec $b_d\in A(k)$,
  tandis que si on part de $\sum a_d f_n^d$ avec $a_d\in A(k)$,
  on trouve en général $\sum b_d g_n^d$ avec $b_d\in \hat{A}(k)$
  (parce que l'expression de $f_n^d$ en fonction des $g_n^k$ fait
  intervenir des $k$ arbitrairement petits, ce qui n'est pas
  le cas dans l'autre sens).
\end{rem}

\begin{rem}
  Si $k$ n'est pas formellement réel, alors $\{-1\}^d=0$ pour $d$
  assez grand, et donc par la proposition \ref{prop_f_simil_pfis}
  $f_n^d(-\phi)=0$ pour tout $\phi\in \Pf_n(K)$. On voit donc
  que dans ce cas, pour tout $q\in I^n(K)$ on a $f_n^d(q)=0$
  pour $d$ assez grand (pour les mêmes raisons que dans la proposition
  \ref{prop_somme_pfis}), et donc on peut utiliser les $f_n^d$ au lieu
  des $g_n^d$ dans le théorème (avec $G(\alpha) = (\alpha^{d+}(0))_d$).
  Dans le cas extrême où $-1$ est un carré dans $k$, on a même $f_n^d=g_n^d$,
  comme le montre la proposition \ref{prop_f_g}. De façon générale,
  quand $k$ n'est pas formellement réel, cette proposition montre
  qu'une somme $\sum a_d f_n^d$ s'écrit cette fois $\sum b_d g_n^d$ avec $b_d\in A(k)$
  contrairement à ce qu'on avait noté dans la remarque précédente.

  D'un autre côté, l'exemple \ref{ex_moins_pfis} montre qu'on ne peut
  pas utiliser les $f_n^d$ si $k$ est formellement réel, donc on peut
  établir une caractérisation exacte : toute combinaison des $f_n^d$
  est à valeurs dans $A$ si et seulement si $k$ n'est pas formellement
  réel.
\end{rem}

\begin{rem}
  On voit qu'on peut construire des invariants cohomologiques $\alpha$
  (par exemple $\alpha = \sum_d v_{nd}^{(n)}$)
  tels que, bien que le degré cohomologique de $\alpha(q)$ soit
  borné pour tout $q$ fixé, il n'est pas uniformément borné quand $q$
  varie (donc $\alpha$ est bien à valeurs dans $H^*(K,\mu_2)$ mais
  dans aucun $H^d(K,\mu_2)$). Cela reflète en un certain sens la nature
  \og infinie\fg{} de $I^n$, et c'est un comportement qui n'apparaît
  jamais pour les invariants de groupes algébriques.

  Le sous-module $M'$ des invariants cohomologiques de degré
  uniformément borné est précisément le sous-module engendré (finiment)
  par les $f_n^d$ (ou les $g_n^d$).
\end{rem}

\subsection{Structure d'algèbre}

Comme $\Inv(I^n,A)$ n'est pas seulement un $A(k)$-module, mais également
une algèbre, on souhaite comprendre comment le produit s'exprime en
fonction des éléments de base $f_n^d$.

On rappelle que le coefficient multinomial $\binom{n}{a_1,\dots,a_r}$ (où $a_1+\cdots+a_r=n$)
est défini comme $\frac{n!}{a_1!\cdots a_r!}$ (et en particulier, le coefficient
binomial usuel $\binom{n}{m}$ est $\binom{n}{m,n-m}$).
Quand $m<0$ ou $m>n$, on s'autorise quand même à écrire $\binom{n}{m}$,
qui vaut alors 0.

\begin{prop}\label{prop_pi_prod}
Soient $s,t\in \N$. On a 
\[ f_n^s \cdot f_n^t = \sum_{d=\max(s,t)}^{s+t}\binom{d}{s+t-d,d-s,d-t}\{-1\}^{n(s+t-d)} f_n^d. \]
\end{prop}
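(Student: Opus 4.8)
The plan is to use Corollaire \ref{cor_ecr_unique}, which says that an element of $M = \Inv(I^n,A)$ is entirely determined by its values on sums of $n$-formes de Pfister. Both sides of the claimed identity are genuine invariants (the left one a product, the right one an $A(k)$-combination of the $f_n^d$), so it suffices to check that they agree on every $q = \sum_{i=1}^r \phi_i$ with $\phi_i \in \Pf_n(L)$, over every extension $L/K$. I set $x_i = f_n(\phi_i) \in A^n(L)$; by Proposition \ref{prop_somme_pfis} one has $f_n^d(q) = e_d(x_1,\dots,x_r)$, the $d$-th elementary symmetric polynomial in the $x_i$. The whole statement thus reduces to an identity between products of elementary symmetric functions in the $x_i$.

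The single relation I will exploit is $x_i^2 = \{-1\}^n x_i$ (the symbol relation $\{a\}^2 = \{-1\}\{a\}$ iterated $n$ times, already used in the proof of Proposition \ref{prop_phi_pm}). Writing $x_I = \prod_{i \in I} x_i$ for $I \subseteq \{1,\dots,r\}$, this gives at once
\[ x_I x_J = \{-1\}^{n|I \cap J|}\, x_{I \cup J}, \]
since an index lying in both $I$ and $J$ contributes $x_i^2 = \{-1\}^n x_i$ while the others contribute a single factor.

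Expanding $e_s(x)\,e_t(x) = \sum_{|I|=s,\,|J|=t} x_I x_J$ and grouping the terms according to $d = |I \cup J|$, I get $x_I x_J = \{-1\}^{n(s+t-d)} x_{I \cup J}$ since $|I \cap J| = s+t-d$. For a fixed union $U$ of size $d$, the number of pairs $(I,J)$ with $I \cup J = U$, $|I|=s$, $|J|=t$ is computed by sorting the $d$ elements of $U$ into the three classes ``in both'', ``in $I$ only'', ``in $J$ only'', whose cardinalities are forced to be $s+t-d$, $d-t$, $d-s$; this count is the multinomial $\binom{d}{s+t-d,\,d-t,\,d-s}$, independent of $U$. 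Summing over all $U$ of each given size $d$ turns $\sum_{|U|=d} x_U$ into $e_d(x)$, whence
\[ e_s(x)\,e_t(x) = \sum_{d=\max(s,t)}^{s+t} \binom{d}{s+t-d,\,d-s,\,d-t} \{-1\}^{n(s+t-d)}\, e_d(x), \]
the range coming from the nonnegativity of the three lower entries. This is exactly the claimed formula evaluated at $q$, and Corollaire \ref{cor_ecr_unique} concludes.

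The computation is essentially routine; the only points requiring care are the legitimacy of reducing to sums of Pfister forms (which is precisely what Corollaire \ref{cor_ecr_unique} grants) and the multinomial bookkeeping. As an alternative avoiding the combinatorics on the nose, I could verify the identity by \emph{récurrence sur le décalage}: expanding $(\alpha\beta)(q+\phi)$ and using $f_n(\phi)^2 = \{-1\}^n f_n(\phi)$ yields the Leibniz-type rule $(\alpha\beta)^+ = \alpha^+\beta + \alpha\beta^+ + \{-1\}^n \alpha^+\beta^+$, hence $(f_n^s f_n^t)^+ = f_n^{s-1}f_n^t + f_n^s f_n^{t-1} + \{-1\}^n f_n^{s-1}f_n^{t-1}$ via Proposition \ref{prop_phi_pi}. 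Matching this against $\Phi^+$ of the right-hand side reduces the statement, by induction on $s+t$ with base cases $s=0$ or $t=0$, to the multinomial Pascal identity $\binom{e+1}{a,b,c} = \binom{e}{a-1,b,c} + \binom{e}{a,b-1,c} + \binom{e}{a,b,c-1}$, the constant of integration being pinned down by $f_n^s(0)=0$ for $s \geq 1$. The main obstacle in either route is purely combinatorial accounting rather than any conceptual difficulty.
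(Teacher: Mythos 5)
Your proof is correct, and your primary route is genuinely different from the paper's. The paper first derives the Leibniz-type rule $\Phi\left(f_n^s\cdot f_n^t\right) = f_n^s\cdot f_n^{t-1} + f_n^{s-1}\cdot f_n^t + \{-1\}^n f_n^{s-1}\cdot f_n^{t-1}$, then runs a lexicographic induction on $(s,t)$, verifies three multinomial Pascal-type identities to match coefficients, and \og{}integrates\fg{} via the remark \ref{rem_shift} — which is exactly the alternative you sketch in your last paragraph, down to the same recurrence and base cases. Your main argument instead reduces, via the corollary \ref{cor_ecr_unique} (legitimately available at this point of the paper), to sums of Pfister forms, where proposition \ref{prop_somme_pfis} identifies $f_n^d$ with the $d$-th elementary symmetric function in the symbols $x_i = f_n(\phi_i)$, and the whole identity collapses to a single count: pairs $(I,J)$ with $|I|=s$, $|J|=t$ and prescribed union of size $d$, weighted by $x_i^2 = \{-1\}^n x_i$ on the overlap, which forces the three classes of sizes $s+t-d$, $d-t$, $d-s$ and hence the multinomial coefficient. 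Note that both routes ultimately rest on the classification theorem \ref{thm_g} (yours through \ref{cor_ecr_unique}, the paper's through the integration step of \ref{rem_shift}), so neither is lighter in prerequisites; what yours buys is the elimination of the inductive coefficient bookkeeping and a structural explanation of the formula — the $f_n^d$ multiply like elementary symmetric functions in elements satisfying $x^2=\{-1\}^n x$ — while the paper's buys a self-contained computation in the shift-operator calculus it uses throughout the section. The only point you leave implicit is the commutativity of $A(K)$, used when regrouping $x_I x_J$; this is built into the paper's setting (both $W(K)$ and $H^*(K,\mu_2)$ are commutative) but deserves a word since the axioms on $A$ in section \ref{sec_contexte} do not state it explicitly.
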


\begin{proof}
Soient $q\in I^n(K)$ et $\phi\in \Pf_n(K)$. Alors on a
\[ (f_n^s\cdot f_n^t)(q+\phi) = (f_n^s(q) + f_n(\phi) f_n^{s-1}(q))\cdot (f_n^t(q) + f_n(\phi) f_n^{t-1}(q))   \]
d'où
\begin{equation}\label{eq_prod}
  \Phi\left( f_n^s \cdot f_n^t\right) = f_n^s\cdot f_n^{t-1} + f_n^{s-1}\cdot f_n^t + \{-1\}^n f_n^{s-1}\cdot f_n^{t-1}.  
\end{equation}

On procède par récurrence, disons sur $(s,t)$ avec l'ordre lexicographique.
Déjà le résultat est clair si $s=0$ ou $t=0$. Par symétrie on peut supposer $s>t$
(il reste alors le cas $s=t$ qui se traite de façon similaire).

Alors par récurrence on peut remplacer chaque terme dans (\ref{eq_prod}),
et réarranger les termes pour trouver
\begin{align*}
  \Phi\left( f_n^s\cdot f_n^t\right) &= \binom{s}{t}\cdot \{-1\}^{nt} f_n^{s-1} + \binom{s+t}{t} f_n^{s+t-1}  \\
                                     &+  \sum_{d=s}^{s+t-2}\binom{d+1}{s+t-d-1,d-s+1,d-t+1}\{-1\}^{n(s+t-d-1)} f_n^d
\end{align*}
où pour le coefficient devant $f_n^{s-1}$ on utilise $\binom{s-1}{t} + \binom{s-1}{t-1} = \binom{s}{t}$,
pour celui de $f_n^{s+t-1}$ on se sert de 
$\binom{s+t-1}{t} + \binom{s+t-1}{t-1} = \binom{s+t}{t}$, et pour les autres termes on emploie
$\binom{d}{s+t-1-d,d-s+1,d-t} + \binom{d}{s+t-1-d,d-s,d-t+1}  +  \binom{d}{s+t-d-2,d-s+1,d-t+1} = \binom{d+1}{s+t-d-1,d-s+1,d-t+1}$.

On exploite alors l'idée dans la remarque \ref{rem_shift} pour trouver
la formule attendue pour $f_n^s\cdot f_n^t$.
\end{proof}

On peut noter quelques cas particuliers simples, qui
s'appliquent notamment à la cohomologie. On introduit quelques notations :
si $s,t\in \N$, on notera $s\lor t$
et $s\land t$ les entiers obtenus en appliquant respectivement un \emph{OU}
et un \emph{ET} logiques sur les représentations binaires de $s$ et $t$.
En particulier, $s\lor t +s\land t = s+t$. Si $s\land t=0$, on dit que
$s$ et $t$ ont des représentations binaires \emph{disjointes}.

\begin{coro}\label{cor_prod_f}
  Si $A(k)$ est de caractéristique 2, alors :
  \[ f_n^s \cdot f_n^t = \{-1\}^{s\land t} f_n^{s\lor t}. \]

  Si $-1$ est un carré dans $k$, alors $f_n^s\cdot f_n^t$
  vaut $f_n^{s+t}$ si $s$ et $t$ ont des représentations binaires
  disjointes, et $0$ sinon.
\end{coro}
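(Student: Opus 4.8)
Les deux identités se déduisent de la proposition \ref{prop_pi_prod} par une analyse de la parité des coefficients ; je traiterais d'abord le cas de la caractéristique $2$. Comme $A(k)$ est de caractéristique $2$, seuls subsistent dans la somme de la proposition \ref{prop_pi_prod} les indices $d$ pour lesquels le coefficient multinomial $\binom{d}{s+t-d,d-s,d-t}$ est impair. Je poserais $a = s+t-d$, $b = d-s$ et $c = d-t$, de sorte que $a+b+c = d$, $a+b = t$ et $a+c = s$. D'après le théorème de Kummer, la valuation $2$-adique de ce multinomial est le nombre de retenues dans l'addition $a+b+c$ effectuée en base $2$ ; le coefficient est donc impair si et seulement si les supports binaires de $a$, $b$ et $c$ sont deux à deux disjoints.

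L'étape principale, qui est le cœur de l'argument, consiste à montrer que cette condition équivaut à $d = s\lor t$. Des égalités $a+c = s$ et $a+b = t$ (sans retenue) je tirerais $\mathrm{supp}(a)\subseteq \mathrm{supp}(s)\cap \mathrm{supp}(t) = \mathrm{supp}(s\land t)$ ; puis, comme $\mathrm{supp}(b) = \mathrm{supp}(t)\setminus \mathrm{supp}(a)$ et $\mathrm{supp}(c) = \mathrm{supp}(s)\setminus \mathrm{supp}(a)$, la disjonction de $b$ et $c$ imposerait $\mathrm{supp}(s\land t)\subseteq \mathrm{supp}(a)$, d'où $a = s\land t$ et donc $d = s+t-a = s\lor t$. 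Réciproquement, pour $d = s\lor t$ on a $a = s\land t$, tandis que $b$ et $c$ sont respectivement les bits propres de $t$ et de $s$, bien disjoints des uns des autres et de $a$ ; le coefficient vaut alors $1$ dans $A(k)$. Il ne subsiste donc que le terme d'indice $d = s\lor t$, dont le coefficient de signe est $\{-1\}^{n(s+t-d)} = \{-1\}^{n(s\land t)}$, ce qui donne l'identité voulue.

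Pour la seconde assertion, je commencerais par observer que si $-1$ est un carré dans $k$, alors $\pfis{-1} = 0$ dans $I(K)$ pour toute extension $K/k$, si bien que $\{-1\} = f_1(\pfis{-1}) = 0$ ; de plus $A(k)$ est alors de caractéristique $2$ (la forme $\fdiag{1,1}$ étant hyperbolique lorsque $A = W$, et $A = H$ étant toujours une $\mathbb{F}_2$-algèbre). Il suffirait alors d'invoquer la première partie : on obtient $f_n^s\cdot f_n^t = \{-1\}^{n(s\land t)} f_n^{s\lor t}$, qui vaut $f_n^{s\lor t} = f_n^{s+t}$ lorsque $s\land t = 0$ (car alors $s\lor t = s+t$ et l'exposant de $\{-1\}$ est nul), et $0$ dès que $s\land t \neq 0$, puisque $\{-1\} = 0$. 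C'est précisément la dichotomie selon que $s$ et $t$ ont ou non des représentations binaires disjointes.

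L'obstacle principal est l'argument combinatoire d'unicité de la deuxième étape, à savoir l'équivalence entre l'absence de retenue dans $a+b+c$ et l'égalité $d = s\lor t$ ; le reste n'est qu'une spécialisation de la proposition \ref{prop_pi_prod} et la vérification élémentaire que $-1$ carré entraîne à la fois $\{-1\} = 0$ et la caractéristique $2$ de $A(k)$.
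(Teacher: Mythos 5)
Votre démonstration est correcte et suit pour l'essentiel la même route que le texte : on spécialise la proposition \ref{prop_pi_prod}, et tout se ramène à déterminer la parité du coefficient multinomial, c'est-à-dire au contenu du lemme \ref{lem_binom}. Les différences sont de détail, mais méritent d'être signalées. Pour la parité, vous invoquez le théorème de Kummer (comptage des retenues dans l'addition $a+b+c$ en base 2), là où le texte passe par la formule de Legendre $v_2(a!)=a-\nu(a)$ et la sous-additivité de $\nu$ ; les deux critères coïncident (supports binaires de $s+t-d$, $d-s$, $d-t$ deux à deux disjoints). Pour l'équivalence avec $d=s\lor t$, votre identification directe $a=s\land t$ par un calcul de supports est un peu plus économique que l'argument du texte, qui raisonne par contraposée sur le bit le plus faible où $m$ et $s\lor t$ diffèrent ; les deux sont valides. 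Notez au passage que votre calcul donne le coefficient $\{-1\}^{n(s\land t)}$, qui est bien celui qui découle de la proposition \ref{prop_pi_prod} ; l'énoncé du corollaire, où figure $\{-1\}^{s\land t}$, omet apparemment le facteur $n$ dans l'exposant. Enfin, pour la seconde assertion, le texte renvoie à la remarque \ref{rem_car} (démontrée plus loin via la relation $\{-1\}\delta(A)=2$ de la proposition \ref{prop_exist_delta}), tandis que votre argument est auto-contenu ; attention toutefois : votre vérification de la caractéristique 2 n'est faite que pour $A=W$ et $A=H$, alors que le corollaire porte sur le foncteur abstrait $A$ de la partie \ref{sec_contexte}. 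Dans ce cadre général, il faut bien la relation $2=\{-1\}\delta(A)$, qui, combinée à votre observation $\{-1\}=f_1(\pfis{-1})=f_1(0)=0$, donne la conclusion pour tout $A$ ; c'est une lacune mineure, puisque vos deux cas couvrent les seuls exemples utilisés dans la thèse, mais il convient de la mentionner.
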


\begin{proof}
  Pour le premier point, il faut appliquer le lemme combinatoire
  \ref{lem_binom}. Pour le deuxième, on montre plus loin (voir \ref{rem_car})
  que si $-1$ est un carré alors $A(k)$ est de caractéristique 2.
\end{proof}

\begin{lem}\label{lem_binom}
  Le coefficient multinomial $\binom{m}{s+t-m,m-s,m-t}$ est impair si
  et seulement si $m=s\lor t$.
\end{lem}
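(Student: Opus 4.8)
The plan is to use Kummer's theorem: the $2$-adic valuation of a multinomial coefficient $\binom{m}{a,b,c}$ equals the number of carries occurring when the parts $a,b,c$ are added in base $2$. Hence the coefficient is odd precisely when there are no carries, which (for nonnegative parts) amounts to saying that $a$, $b$ and $c$ are supported on pairwise disjoint sets of binary digits. First I would set $a = s+t-m$, $b = m-s$, $c = m-t$, noting that $a+b+c = m$; these are all nonnegative exactly when $\max(s,t)\ppq m\ppq s+t$, and outside this range one part is negative, so the coefficient vanishes and is even. Since $s\lor t$ always lies in $[\max(s,t),\,s+t]$ (it dominates both $s$ and $t$ bitwise, and $s+t = (s\lor t)+(s\land t)\pgq s\lor t$), the claimed equivalence is trivially true outside the range, so I may restrict to $\max(s,t)\ppq m\ppq s+t$.

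For the forward implication, assume $m = s\lor t$. Using $s+t = (s\lor t)+(s\land t)$ I get $a = s\land t$. Since every binary digit of $s$ is a digit of $m = s\lor t$, the difference $b = m-s$ is the number whose digits are exactly those of $t$ not already present in $s$; symmetrically $c = m-t$ records the digits of $s$ absent from $t$. The three numbers $a$ (common digits), $b$ (digits of $t$ only) and $c$ (digits of $s$ only) are then supported on pairwise disjoint digit sets, so their base-$2$ addition produces no carry and $\binom{m}{a,b,c}$ is odd.

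Conversely, if the coefficient is odd then $a,b,c$ are nonnegative and pairwise digit-disjoint, so the additions $a+b = t$ and $a+c = s$ are carry-free, giving $t = a\lor b$ and $s = a\lor c$; therefore $s\lor t = a\lor b\lor c = a+b+c = m$. The only genuinely delicate point is the digit bookkeeping in the forward step — in particular the identity that, when the digits of $u$ contain those of $v$, the difference $u-v$ is carry-free and records exactly the digits of $u$ absent from $v$ — together with the careful handling of the nonnegativity range; everything else reduces to a direct application of Kummer's theorem.
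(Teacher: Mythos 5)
Your proof is correct and takes essentially the same route as the paper: both reduce the parity of $\binom{m}{s+t-m,m-s,m-t}$ to the criterion that the three parts have pairwise disjoint binary representations (the paper derives this from Legendre's formula $v_2(a!)=a-\nu(a)$ together with $\nu(a+b)\ppq \nu(a)+\nu(b)$, with equality exactly in the digit-disjoint case, while you invoke Kummer's theorem, which is the same fact), and then identify that criterion with $m=s\lor t$. The only divergences are in the bookkeeping: you prove the converse directly (carry-free sums give $t=a\lor b$ and $s=a\lor c$, hence $m=a+b+c=s\lor t$) where the paper argues by contrapositive on the lowest bit at which $m$ and $s\lor t$ differ, and you explicitly dispose of the cases where a part is negative, which the paper covers by its standing convention that out-of-range coefficients vanish.
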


\begin{proof}
  Il est connu que pour tout $a\in \Z$, la valuation $2$-adique de $a!$
  est $a-\nu(a)$ où $\nu(a)$ est le nombre de $1$ dans la représentation
  binaire de $a$. Alors :
  \begin{align*}
    v_2\binom{m}{s+t-m,m-s,m-t} &= (m-\nu(m)) - (s+t-m -\nu(s+t-m)) \\
                                &   - (m-s - \nu(m-s)) - (m-t - \nu(m-t)) \\
                                &=  \nu(s+t-m) + \nu(m-s) + \nu(m-t) - \nu(m).
  \end{align*}

  Mais il est facile de voir que pour tous $a,b\in \Z$, $\nu(a+b)\ppq \nu(a)+\nu(b)$,
  avec égalité ssi $a\land b =0$. Donc $\binom{m}{s+t-m,m-s,m-t}$ est impair
  si et seulement si $s+t-m$, $m-s$ et $m-t$ ont tous des représentations
  binaires disjointes deux à deux.

  On affirme que c'est équivalent à $m=s \lor t$. En effet, si $m=s\lor t$
  c'est évident, et si $m\neq s\lor t$,
  considérons le bit le plus faible où $m$ et $s\lor t$ diffèrent ; on a plusieurs
  possibilités pour les bits de $s$, $t$ et $m$ à cet emplacement :
  $s$ a un 1 et $m$ un 0, $t$ a un 1 et $m$ un 0, ou $s$ et $t$ ont un 0
  et $m$ un 1. Dans tous ces cas, au moins deux nombres parmi $m-s$, $m-t$
  et $s+t-m$ ont un 1 à cet emplacement, et leurs représentations binaires
  ne sont pas disjointes.
\end{proof}

\begin{rem}
  Dans le cas où $A(k)$ est de caractéristique 2, on obtient une
  présentation d'algèbre très simple pour $M'$, le sous-module des invariants
  qui est constitué des combinaisons \emph{finies} des $f_n^d$ (il est plus
  délicat de parler de présentation pour $\Inv(I^n,A)$ puisqu'il
  faut utiliser des sommes infinies pour représenter un élément quelconque
  en fonction des $f_n^d$).

  En effet, la $A(k)$-algèbre $M'$ a pour générateurs les $x_i=f_n^{2^i}$,
  et les relations sur ces générateurs sont données par $x_i^2 = \{-1\}^{n2^i}x_i$.
\end{rem}

\subsection{Restriction de $I^n$ à $I^{n+1}$}

\label{par_restr}On dispose bien évidemment d'un morphisme de restriction de $\Inv(I^n,A)$
vers $\Inv(I^{n+1},A)$, et on notera $\alpha_{|I^{n+1}}$ la restriction
de $\alpha\in \Inv(I^n,A)$.

Le comportement de la restriction dépend dans une certaine mesure
de la nature de $A$ :

\begin{prop}\label{prop_exist_delta}
  Il existe un unique $\delta(A)\in A(k)$ tel que
  $(f_1)_{I^2} = \delta(A) f_2$, vérifiant $\{-1\}\delta(A)=2$.

  De plus, pour tous $n\in \N^*$ et $d\in \N$ on a
  $(f_n)_{I^{n+d}} = \delta(A)^df_{n+d}$
\end{prop}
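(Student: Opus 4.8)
The plan is to treat the two assertions in turn, deriving the general identity $(f_n)_{|I^{n+d}} = \delta(A)^d f_{n+d}$ from the case $d=1$ by a short induction, and to obtain the case $d=1$ itself from the multiplicativity \eqref{eq_f_prod} once the base case $n=1$ (which is the first assertion) is settled.

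\emph{Existence, uniqueness, and the relation $\{-1\}\delta(A)=2$.} Since $f_1$ is a group morphism, its restriction $(f_1)_{|I^2}$ is an \emph{additive} invariant of $I^2$; Corollary~\ref{cor_additif} applied with $n=2$ therefore yields a unique $\delta(A)\in A(k)$ with $(f_1)_{|I^2}=\delta(A)f_2$. To compute $\{-1\}\delta(A)$ I would evaluate both sides on a $2$-fold Pfister form. Using the identity $\pfis{a,b}=\pfis{a}+\pfis{b}-\pfis{ab}$ in $\hat I(K)$ (an immediate computation with the $\fdiag{\cdot}$, valid already in $GW(K)$) together with the additivity of $f_1$, one gets
\[ (f_1)_{|I^2}(\pfis{a,b}) = \{a\}+\{b\}-\{ab\}, \]
whereas $\delta(A)f_2(\pfis{a,b})=\delta(A)\{a\}\{b\}$. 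Specializing $b=a$, and using that $\{a^2\}=f_1(\pfis{a^2})=0$ (as $a^2$ is a square, so $\pfis{a^2}=0$) together with the Pfister relation $\{a\}^2=\{-1\}\{a\}$ (coming from $\pfis{a,a}=\pfis{a,-1}$, and already used in Proposition~\ref{prop_phi_pm}), this becomes $2\{a\}=\delta(A)\{-1\}\{a\}$, i.e. $\bigl(2-\{-1\}\delta(A)\bigr)\{a\}=0$ for every $a\in L^*$ and every $L/k$. The consequence \eqref{eq_f_gen} of condition~\ref{cond_3}, applied with $n=1$, then forces $2-\{-1\}\delta(A)=0$.

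\emph{The case $d=1$.} I claim $(f_n)_{|I^{n+1}}=\delta(A)f_{n+1}$ for all $n\geq 1$; for $n=1$ this is the previous paragraph. For $n\geq 2$, both sides are additive and $I^{n+1}(K)$ is generated as an abelian group by $(n+1)$-fold Pfister forms, so it suffices to verify equality on a form $\phi=\pfis{a_1,\dots,a_{n+1}}$. I would factor $\phi=\xi\cdot\eta$ with $\xi=\pfis{a_1,\dots,a_{n-1}}\in\Pf_{n-1}$ and $\eta=\pfis{a_n,a_{n+1}}\in\Pf_2\subset I^1$. Applying \eqref{eq_f_prod} with the splitting $I^{n-1}\cdot I^1$, and then the already-proven case $n=1$ to $\eta$, gives
\[ f_n(\phi)=f_{n-1}(\xi)\,f_1(\eta)=\{a_1,\dots,a_{n-1}\}\cdot\delta(A)\{a_n\}\{a_{n+1}\}=\delta(A)\{a_1,\dots,a_{n+1}\}, \]
which is exactly $\delta(A)f_{n+1}(\phi)$ (moving $\delta(A)$ to the front uses commutativity of $A(K)$).

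\emph{General $d$, and the main difficulty.} The formula then follows by induction on $d$, the case $d=0$ being trivial: as $I^{n+d+1}\subset I^{n+d}$, restriction is transitive, and
\[ (f_n)_{|I^{n+d+1}}=\bigl((f_n)_{|I^{n+d}}\bigr)_{|I^{n+d+1}}=\bigl(\delta(A)^d f_{n+d}\bigr)_{|I^{n+d+1}}=\delta(A)^d\cdot\delta(A)f_{n+d+1}=\delta(A)^{d+1}f_{n+d+1}, \]
using the inductive hypothesis and the case $d=1$ applied to $f_{n+d}$. The only genuinely delicate point I anticipate is the extraction of $\{-1\}\delta(A)=2$: one must correctly collapse the multiplicative identity $\{a\}+\{b\}-\{ab\}=\delta(A)\{a\}\{b\}$ into a scalar identity over $A(k)$ and then invoke the genericity consequence \eqref{eq_f_gen}. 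Everything else is formal, resting on additivity, on the generation of each $I^m(K)$ by Pfister forms, and on the multiplicativity \eqref{eq_f_prod}.
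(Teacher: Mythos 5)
Votre preuve est correcte et suit pour l'essentiel la même route que celle du texte : existence et unicité de $\delta(A)$ via le corollaire \ref{cor_additif}, la relation $\{-1\}\delta(A)=2$ tirée de l'identité de Pfister $\pfis{a,a}=\pfis{-1,a}$ combinée à (\ref{eq_f_gen}) (le texte utilise la forme équivalente $2q=\pfis{-1}q$), et la formule générale par évaluation sur les $(n+d)$-formes de Pfister via (\ref{eq_f_prod}) et récurrence, la réduction aux formes de Pfister étant légitime par additivité (le texte invoque le corollaire \ref{cor_ecr_unique}, ce qui revient au même). Votre seule déviation, cosmétique, est de mener la récurrence sur $d$ par transitivité de la restriction plutôt que, comme dans le texte, à l'intérieur de $f_1$ appliqué aux formes de Pfister longues ; les deux variantes sont trivialement équivalentes.
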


\begin{proof}
  On rappelle que $f_1=f_1^1$ est un morphisme de groupes.
  Notons $\alpha = (f_1)_{I^2}$ ; c'est un invariant de $I^2$
  et d'après le corollaire \ref{cor_additif}, on a bien
  existence et unicité de $\delta(A)$.

  On a $f_1(2q)=2f_1(q)$ et $f_1(2q)=f_1(\pfis{-1}q)=\delta \{-1\}f_1(q)$,
  donc $2f_1=\{-1\}\delta f_1$, d'où $\{-1\}\delta = 2$. 

  D'après le corollaire \ref{cor_ecr_unique}, pour établir $(f_n)_{I^{n+d}} = \delta(A)^df_{n+d}$,
  il suffit de montrer qu'ils ont la même valeur sur les
  $(n+d)$-formes de Pfister, donc que
  $f_n(\pfis{a_1,\dots,a_{n+d}}) = \delta(A)^d\{a_1,\dots,a_{n+d}\}$.
  Or :
  \begin{align*}
    f_n(\pfis{a_1,\dots,a_{n+d}}) &=  f_{n-1}(\pfis{a_1,\dots,a_{n-1}})f_1(\pfis{a_n,\dots a_{n+d}}) \\
                                  &= \{a_1,\dots, a_{n-1}\} f_1(\pfis{a_n,\dots, a_{n+d}})
  \end{align*}
  et on montre facilement par récurrence que
  \[ f_1(\pfis{a_n,\dots a_{n+d}})=\delta(A)^d\{a_n,\dots,a_{n+d}\}. \]
\end{proof}

\begin{ex}
  Si $A(K)=W(K)$, alors $f_1$ et $f_2$ sont l'identité
  donc $\delta(A)=1$. En revanche, si $A(K)=H^*(K,\mu_2)$, $f_1^n=e_1$
  est nul sur $I^2(K)$, donc $\delta(A)=0$.
\end{ex}

\begin{rem}\label{rem_car}
  Si $\delta(A)=0$ ou si $-1$ est un carré dans $K$,
  $A(K)$ est de caractéristique 2.
\end{rem}

Quand il n'y a pas d'ambiguité on écrira $\delta=\delta(A)$.
Une fois ces notations posées, on peut écrire :

\begin{prop}\label{prop_restr_f}
  Pour tout $n\in \N^*$ et tout $d\in \N^*$ :
  \[ (f_n^d)_{I^{n+1}} = \sum_{\frac{d}{2}\ppq k\ppq d} \binom{k}{d-k}\delta(A)^{2k-d}\{-1\}^{(d-k)(n-1)} f_{n+1}^k   \]
\end{prop}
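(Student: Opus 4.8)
The plan is to deduce the formula directly from the relation between the $\pi_n^d$ and the $\pi_{n+1}^k$ proved in Corollary~\ref{cor_relat_pi}, namely
\[ \pi_n^d = \sum_{\frac{d}{2}\ppq k\ppq d} \binom{k}{d-k}2^{(d-k)(n-1)}\pi_{n+1}^k, \]
an identity of operations on the $\lambda$-anneau $GW(K)$, which I evaluate on an arbitrary $q\in I^{n+1}(K)\subset I^n(K)$. Since $(f_n^d)_{I^{n+1}}(q)=f_n^d(q)=f_{nd}(\pi_n^d(q))$ by Definition~\ref{def_f}, and since $f_{(n+1)k}\circ\pi_{n+1}^k=f_{n+1}^k$, the goal is to apply $f_{nd}$ to the right-hand side term by term and recognize each $f_{n+1}^k(q)$.

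The main obstacle is that $f_{nd}$ is only defined on $I^{nd}(K)$, whereas the summand $\pi_{n+1}^k(q)$ merely lies in $\hat{I}^{(n+1)k}(K)$; for $n\pgq 2$ and small $k$ one has $(n+1)k<nd$, so the individual terms do not lie in $I^{nd}(K)$ and only their sum $\pi_n^d(q)$ does. The device that resolves this is to pass to Witt classes before splitting the sum: as $f_n$ is defined on the Witt ideal $I^{nd}(K)$ (recall the identification $\hat{I}^{nd}\simeq I^{nd}$), its value depends only on the class in $W(K)$, and in $W(K)$ one has $2=\pfis{-1}$ with the convention $\pfis{-1}=\fdiag{1}-\fdiag{-1}$. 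Projecting the identity above to $W(K)$ I may therefore replace each integer factor $2^{(d-k)(n-1)}$ by $\pfis{-1}^{(d-k)(n-1)}$. This substitution lifts the filtration of the $k$-th term from $(n+1)k$ to $(d-k)(n-1)+(n+1)k=nd+(2k-d)$, which is $\pgq nd$ precisely because $k\pgq d/2$; hence after the substitution every summand genuinely lands in $I^{nd}(K)$ and $f_{nd}$ can be distributed over the sum by $\Z$-linearity.

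It then remains to evaluate a single term. The class $\pfis{-1}^{(d-k)(n-1)}\pi_{n+1}^k(q)$ lies in $I^{nd+(2k-d)}(K)$, so Proposition~\ref{prop_exist_delta} gives $f_{nd}=\delta^{2k-d}f_{nd+(2k-d)}$ on it, and the multiplicativity~(\ref{eq_f_prod}) factors
\[ f_{nd+(2k-d)}\!\left(\pfis{-1}^{(d-k)(n-1)}\pi_{n+1}^k(q)\right)=f_{(d-k)(n-1)}\!\left(\pfis{-1}^{(d-k)(n-1)}\right)\cdot f_{(n+1)k}\!\left(\pi_{n+1}^k(q)\right), \]
which equals $\{-1\}^{(d-k)(n-1)}f_{n+1}^k(q)$ by the definition of the symbol $\{-1\}$ and of $f_{n+1}^k$. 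Reinstating the coefficient $\binom{k}{d-k}$ and summing over $k$ yields exactly the claimed expression. The only delicate point is the bookkeeping of the three filtration indices and the check that the powers of $\delta$ and of $\{-1\}$ combine as $\delta^{2k-d}$ and $\{-1\}^{(d-k)(n-1)}$; this is routine once the substitution $2\leadsto\pfis{-1}$ has placed every term in its correct graded piece. As the computation holds for every $q$ and every $K/k$, it is an identity of invariants.
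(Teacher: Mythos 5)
Your proof is correct and follows exactly the paper's route: the paper's own proof is the one-line \og{}il suffit d'appliquer $f_{nd}$ à la formule du corollaire \ref{cor_relat_pi}\fg{}, which is precisely what you do. You merely make explicit the bookkeeping the paper leaves implicit — the substitution $2\leadsto\pfis{-1}$ in $W(K)$ to place each summand in $I^{nd}(K)$, and the use of la proposition \ref{prop_exist_delta} together with (\ref{eq_f_prod}) to produce the factors $\delta(A)^{2k-d}$ and $\{-1\}^{(d-k)(n-1)}$ — and this accounting is carried out correctly.
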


\begin{proof}
  Il suffit d'appliquer $f_{nd}$ à la formule du corollaire \ref{cor_relat_pi}
  exprimant $\pi_n^d$ en fonction des $\pi_{n+1}^k$.
\end{proof}

\begin{coro}\label{cor_restr_cohom}
  Si $\delta=0$ (et en particulier quand $A(K)=H^*(K,\mu_2)$), on a :
  \[   (f_n^d)_{|I^{n+1}} = \left\{ \begin{array}{lc}
\{-1\}^{m(n-1)} f_{n+1}^m & \text{si $d=2m$} \\
 0 & \text{si $d$ impair}
 \end{array} \right.     \]
\end{coro}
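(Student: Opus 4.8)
Le plan est de partir directement de la formule générale de la proposition \ref{prop_restr_f},
\[ (f_n^d)_{I^{n+1}} = \sum_{\frac{d}{2}\ppq k\ppq d} \binom{k}{d-k}\delta(A)^{2k-d}\{-1\}^{(d-k)(n-1)} f_{n+1}^k, \]
et d'y substituer $\delta(A)=0$. Le point crucial est d'analyser l'exposant $2k-d$ de $\delta(A)$ : avec la convention $\delta(A)^0=1$ (valable puisque $A(k)$ est un anneau unitaire), le terme d'indice $k$ ne peut être non nul que lorsque $2k-d=0$, c'est-à-dire $k=\frac{d}{2}$. Toute la preuve se réduit alors à déterminer, selon la parité de $d$, quels indices du domaine de sommation vérifient cette égalité.

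Je distinguerais ensuite selon la parité de $d$. Si $d=2m$ est pair, alors l'unique valeur de $k$ annulant $2k-d$ est $k=m$, qui appartient bien à l'intervalle $\frac{d}{2}\ppq k\ppq d$ (soit $m\ppq m\ppq 2m$). Le coefficient survivant est $\binom{m}{2m-m}\{-1\}^{(2m-m)(n-1)}=\binom{m}{m}\{-1\}^{m(n-1)}=\{-1\}^{m(n-1)}$, ce qui donne exactement le terme annoncé $\{-1\}^{m(n-1)}f_{n+1}^m$.

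Si $d$ est impair, aucun entier $k$ ne vérifie $2k=d$ ; plus précisément, comme $k$ parcourt les entiers avec $k\pgq \frac{d}{2}$, on a $k\pgq \frac{d+1}{2}$, d'où $2k-d\pgq 1>0$. Ainsi chaque terme de la somme fait apparaître $\delta(A)$ à une puissance strictement positive, et la somme entière s'annule, donnant $(f_n^d)_{|I^{n+1}}=0$. Il reste à rappeler, pour justifier l'incise \og en particulier quand $A(K)=H^*(K,\mu_2)$\fg{}, que l'exemple suivant la proposition \ref{prop_exist_delta} établit précisément que $\delta(A)=0$ dans ce cas.

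L'essentiel de la difficulté — s'il faut en trouver une — est purement comptable : il s'agit de vérifier que, pour $d$ pair, l'unique indice survivant tombe bien dans le domaine de sommation et produit le bon coefficient binomial, et que, pour $d$ impair, ce domaine exclut toute solution de $2k=d$. Aucune idée nouvelle au-delà de la proposition \ref{prop_restr_f} n'est requise, cette dernière ayant déjà absorbé tout le contenu non trivial via le corollaire \ref{cor_relat_pi}.
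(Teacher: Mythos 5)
Votre preuve est correcte et suit exactement la démarche que le papier sous-entend (le corollaire y est d'ailleurs énoncé sans démonstration, comme spécialisation immédiate de la proposition \ref{prop_restr_f}) : substituer $\delta(A)=0$, observer que seul le terme avec $2k-d=0$ survit, puis vérifier que $k=m$ est dans le domaine de sommation avec coefficient $\binom{m}{m}\{-1\}^{m(n-1)}$ pour $d=2m$, et qu'aucun $k$ ne convient pour $d$ impair. Rien à redire.
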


\begin{coro}
  Si $-1$ est un carré dans $k$, on a $(f_n^d)_{|I^{n+1}} = \delta^d f_{n+1}^d$
  pour tout $n\pgq 2$, et donc $(f_2^d)_{|I^n} = \delta^{(n-2)d}f_n^d$.

  Si en plus $\delta=0$, alors $(f_n^d)_{|I^{n+1}} = 0$
  si $d\pgq 1$, sauf quand $n=1$ et $d=2m$, auquel cas
  $(f_1^{2m})_{|I^2} = f_2^m$.
\end{coro}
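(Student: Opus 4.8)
The plan is to specialize the general restriction formula of Proposition~\ref{prop_restr_f} to the hypothesis that $-1$ is a square, tracking which terms of the sum vanish.

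First I would record the key simplification. If $-1$ is a square in $k$, then it is a square in every extension $K/k$, so $\pfis{-1} = \fdiag{1} - \fdiag{-1} = 0$ in $GW(K)$, and since $f_1$ is a morphism of groups we get $\{-1\} = f_1(\pfis{-1}) = 0$ in $A^1(K)$. This is consistent with the relation $\{-1\}\delta = 2$ of Proposition~\ref{prop_exist_delta} and with Remark~\ref{rem_car}, which already tells us that $A(k)$ has characteristic $2$ in this situation.

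Next, for $n\pgq 2$ I would substitute $\{-1\}=0$ into
\[ (f_n^d)_{|I^{n+1}} = \sum_{\frac{d}{2}\ppq k\ppq d} \binom{k}{d-k}\delta^{2k-d}\{-1\}^{(d-k)(n-1)} f_{n+1}^k. \]
Since $n-1\pgq 1$, the factor $\{-1\}^{(d-k)(n-1)}$ vanishes unless $(d-k)(n-1)=0$, that is, unless $k=d$; only the $k=d$ term survives, equal to $\binom{d}{0}\delta^{d}\{-1\}^0 f_{n+1}^d = \delta^d f_{n+1}^d$, which is the first claimed identity. The formula $(f_2^d)_{|I^n} = \delta^{(n-2)d} f_n^d$ then follows by an immediate induction on $n$: restriction is functorial, so composing the one-step identities $(f_m^d)_{|I^{m+1}} = \delta^d f_{m+1}^d$ for $2\ppq m<n$ and using that the constant $\delta\in A(k)$ commutes with the $A(k)$-linear restriction maps yields $(f_2^d)_{|I^{n+1}} = \delta^{(n-2)d}\cdot \delta^d f_{n+1}^d = \delta^{((n+1)-2)d} f_{n+1}^d$, with the base case $n=2$ being trivial.

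For the second statement I would add the hypothesis $\delta=0$. When $n\pgq 2$ and $d\pgq 1$ the identity just obtained reads $(f_n^d)_{|I^{n+1}} = \delta^d f_{n+1}^d = 0$, since $\delta^d=0$. The case $n=1$ must be treated on its own, and this is really the only point requiring care: here the exponent $(d-k)(n-1)$ vanishes for \emph{every} $k$, so no power of $\{-1\}$ intervenes, and Proposition~\ref{prop_restr_f} reduces to $(f_1^d)_{|I^2} = \sum_{\frac{d}{2}\ppq k\ppq d} \binom{k}{d-k}\delta^{2k-d} f_2^k$. With $\delta=0$ (and the standing convention $\delta^0=\{-1\}^0=1$ in $A(k)$) only the term with $2k-d=0$ can survive, which forces $d=2m$ even and $k=m$; then $\binom{m}{m}=1$ gives $(f_1^{2m})_{|I^2} = f_2^m$, while for odd $d$ no such term exists and the restriction is $0$. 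The whole argument is thus purely a matter of bookkeeping of vanishing terms; the only subtlety worth flagging is the distinct behaviour of $n=1$, where the selection of surviving terms is governed by the power of $\delta$ rather than by the power of $\{-1\}$.
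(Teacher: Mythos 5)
Your proof is correct and follows exactly the route the paper intends: the corollary is stated without proof precisely because it is the specialization of Proposition \ref{prop_restr_f} (or of Corollary \ref{cor_restr_cohom}) under $\{-1\}=f_1(\pfis{-1})=0$, which is what you carry out, including the correct isolation of the $k=d$ term for $n\pgq 2$ and of the $2k=d$ term for $n=1$. The only remark worth making is that the second half could have been quoted directly from Corollary \ref{cor_restr_cohom} with $n=1$, since $\{-1\}^{m(n-1)}=1$ there; your re-derivation from Proposition \ref{prop_restr_f} is equivalent.
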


On peut tirer quelques conséquences dans le cas des invariants
cohomologiques.

\begin{rem}\label{rem_restr_cohom}
  En particulier, on obtient la formule $(u_{2d}^{(1)})_{|I^2} = u_{2d}^{(2)}$
  (quel que soit $k$), qui montre que tout invariant de $I^2$
  s'étend à $I$ (pas de façon unique). En revanche, si $n\pgq 3$ alors
  pour $d\pgq 1$ $u_{nd}^{(n)}$ ne s'étend jamais à $I^{n-1}$.
  
  On retrouve notamment comme cas particulier le fait
  bien connu que $e_2$ s'étend à tout $I$ alors que $e_3$ ne s'étend pas
  à $I^2$.
\end{rem}

\begin{rem}
  On peut observer que si $(-1)\in H^1(k,\mu_2)$ n'est pas un diviseur
  de zéro dans $H^*(k,\mu_2)$ alors le noyau de la restriction des
  invariants de $I^n$ à $I^{n+1}$ est
  exactement l'idéal engendré par $e_n$, mais en général il est plus
  gros (quand $-1$ est un carré, il est même constitué de tous les invariants
  normalisés). C'est notable dans la mesure où on aurait pu s'attendre
  à ce que les invariants nuls sur les formes vérifiant $\alpha(q)=0$
  soient exactement les multiples de $\alpha$.
\end{rem}

\subsection{Similitudes}

Dans cette partie on étudie le comportement des invariants
vis-à-vis des similitudes.

\begin{propdef}\label{prop_def_psi}
  Il existe un unique morphisme de $A(k)$-modules filtrés, de degré $-1$,
\[ \begin{foncdef}{\Psi}{M}{M}{\alpha}{\tld{\alpha}}\end{foncdef}  \]
tel que
\begin{equation}
\alpha(\fdiag{\lambda}q) = \alpha(q) + \{\lambda\}\tld{\alpha}(q)
\end{equation}
pour tous $\alpha\in M$, $q\in I^n(K)$ et $\lambda\in K^*$.
\end{propdef}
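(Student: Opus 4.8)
Le plan est de reproduire fidèlement la stratégie de la proposition-définition \ref{prop_def_delta}, en remplaçant l'ajout d'une forme de Pfister par la multiplication par un scalaire $\fdiag{\lambda}$, ce qui revient à travailler avec $\Pf_1$ plutôt qu'avec $\Pf_n$. Le point préliminaire à noter est que pour $q\in I^n(K)$ et $\lambda\in K^*$, l'élément $\fdiag{\lambda}q$ reste dans $I^n(K)$ : en effet $\fdiag{\lambda}q = q - \pfis{\lambda}q$ avec $\pfis{\lambda}q\in I^{n+1}(K)\subset I^n(K)$, de sorte que $\alpha(\fdiag{\lambda}q)$ a bien un sens pour $\alpha\in M$.

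Ensuite, pour $\alpha\in M$ et $q\in I^n(K)$ fixés, je définis un invariant $\gamma_q\in \Inv(\Pf_1,A)$ sur $K$ en posant, pour toute extension $L/K$ et tout $\lambda\in L^*$, $\gamma_q(\pfis{\lambda}) = \alpha(\fdiag{\lambda}q_L)$. Cette valeur ne dépend que de $\fdiag{\lambda}$, donc que de $\pfis{\lambda}\in \Pf_1(L)$, et la naturalité de $\alpha$ garantit celle de $\gamma_q$ ; c'est donc bien un invariant de $\Pf_1$ défini sur $K$. La condition \ref{cond_4} appliquée à $n=1$ fournit alors une écriture unique $\gamma_q = x_q + y_q\cdot f_1$ avec $x_q,y_q\in A(K)$ ; en évaluant sur $\lambda$ carré (donc $\pfis{\lambda}=0$) on obtient $x_q=\alpha(q)$, et comme $f_1(\pfis{\lambda})=\{\lambda\}$ on pose $\tld{\alpha}(q)=y_q$, ce qui donne simultanément la formule voulue et l'unicité de $\tld{\alpha}(q)$.

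Restent les propriétés structurelles. La naturalité de $\tld{\alpha}$ en $K$ découle de ce que, pour $K'/K$, l'invariant $\gamma_{q_{K'}}$ est la restriction de $\gamma_q$, et de l'unicité de la décomposition \ref{cond_4} sur $K'$ (le morphisme $A(K)\To A(K')$ commutant avec $f_1$ par naturalité) ; ainsi $\tld{\alpha}\in M$. Le fait que $\Psi$ soit un morphisme de $A(k)$-modules se lit directement sur la formule définissante, par unicité de $\tld{\alpha}$. Pour le degré $-1$ : si $\alpha\in M^d$, alors $\{\lambda\}\tld{\alpha}(q)=\alpha(\fdiag{\lambda}q)-\alpha(q)\in A^d(L)$ pour tout $\lambda\in L^*$ et toute extension $L/K$, et la condition \ref{cond_3} sur $A$ donne exactement $\tld{\alpha}(q)\in A^{d-1}(K)$. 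Enfin, l'unicité de $\Psi$ comme opérateur résulte de (\ref{eq_f_gen}) appliquée à $n=1$ : si deux opérateurs conviennent, leur différence $x=\tld{\alpha}(q)-\tld{\alpha}'(q)$ vérifie $\{\lambda\}x=0$ pour tout $\lambda$ sur toute extension, d'où $x=0$.

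\textbf{Difficulté principale :} il n'y a pas d'obstacle réel, la preuve étant formellement parallèle à celle de \ref{prop_def_delta}. Le seul point qui demande un minimum de soin est de reconnaître $\lambda\mapsto \alpha(\fdiag{\lambda}q)$ comme un authentique invariant de $\Pf_1$ (via $\lambda\mapsto\pfis{\lambda}$), afin de pouvoir invoquer \ref{cond_4} avec $n=1$, et de s'assurer au passage que $\fdiag{\lambda}q$ ne quitte jamais $I^n$.
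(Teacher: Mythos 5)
Votre preuve est correcte et suit essentiellement la même démarche que celle du texte : on reconnaît $\lambda\mapsto\alpha(\fdiag{\lambda}q)$ comme un invariant de $\Pf_1$ (le foncteur des classes de carrés) défini sur $K$, on applique la condition \ref{cond_4} pour obtenir l'écriture unique $\alpha(q)+\{\lambda\}\cdot y_q$, et l'unicité de $y_q$ donne à la fois la définition de $\tld{\alpha}$ et sa naturalité. Les vérifications que vous explicitez en plus (le fait que $\fdiag{\lambda}q=q-\pfis{\lambda}q$ reste dans $I^n(K)$, le degré $-1$ via la condition \ref{cond_3}, l'unicité via (\ref{eq_f_gen})) sont laissées implicites dans le texte, qui renvoie au schéma identique de la proposition \ref{prop_def_delta}.
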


\begin{proof}
Soient $\alpha\in \Inv(I^n,A)$ et $q\in I^n(K)$. Pour tout $\lambda\in L^*$, 
avec $L/K$ extension de corps, on pose $\beta_q(\lambda) = \alpha(\fdiag{\lambda} q)$.

Alors $\beta_q$ est un invariant sur $K$ des classes de carrés à valeurs dans $A$.
Or le foncteur des classes de carrés est isomorphe à $\Pf_1$, donc on peut
appliquer la condition \ref{cond_4} sur $A$ : il existe d'uniques $x_q,y_q\in A(K)$
tels que $\beta_q(\lambda) = x_q + \{\lambda\}\cdot y_q$.
En prenant $\lambda = 1$ on voit que $x_q = \alpha(q)$, et on pose $\tilde{\alpha}(q) = y_q$.

L'unicité de $y_q$ permet de voir que $\tilde{\alpha}\in \Inv(I^n,A)$.
\end{proof}

\begin{rem}\label{rem_norm}
  On peut remarquer que $\tilde{\alpha}$ est toujours un invariant normalisé.
\end{rem}

La composition des similitudes donne le résultat intéressant suivant :

\begin{prop}\label{prop_psi_carre}
  On a $\Psi^2=-\delta(A)\Psi$.
\end{prop}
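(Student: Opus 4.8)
The plan is to evaluate $\alpha(\fdiag{\lambda\mu}q)$ in two different ways and compare, exactly as one proves a "derivation squared" relation. On one hand, applying the defining relation of $\Psi$ (Proposition-définition \ref{prop_def_psi}) directly to the scalar $\lambda\mu$ gives $\alpha(\fdiag{\lambda\mu}q) = \alpha(q) + \{\lambda\mu\}\tld\alpha(q)$. On the other hand, writing $\fdiag{\lambda\mu}q = \fdiag{\lambda}(\fdiag{\mu}q)$ and applying the relation twice (first to $\fdiag{\mu}q$, then expanding both $\alpha(\fdiag{\mu}q)$ and $\tld\alpha(\fdiag{\mu}q)$) yields $\alpha(\fdiag{\lambda\mu}q) = \alpha(q) + (\{\lambda\}+\{\mu\})\tld\alpha(q) + \{\lambda\}\{\mu\}\Psi^2(\alpha)(q)$, where $\Psi^2(\alpha) = \tld{\tld\alpha}$. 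Comparing the two expressions requires understanding how the symbol $\{\lambda\mu\}$ decomposes.

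The key algebraic input is the identity $\{\lambda\mu\} = \{\lambda\} + \{\mu\} - \delta\{\lambda\}\{\mu\}$, where $\delta = \delta(A)$. To obtain it I would start from the Pfister relation $\pfis{\lambda\mu} = \pfis{\lambda} + \pfis{\mu} - \pfis{\lambda}\pfis{\mu}$ in $\hat I(K)$, which follows immediately from the convention $\pfis a = \fdiag 1 - \fdiag a$ by direct expansion. Applying the group morphism $f_1$ gives $\{\lambda\mu\} = \{\lambda\} + \{\mu\} - f_1(\pfis{\lambda}\pfis{\mu})$; since $\pfis{\lambda}\pfis{\mu}\in I^2(K)$, Proposition \ref{prop_exist_delta} lets me replace $f_1$ by $\delta f_2$ on this element, and by (\ref{eq_f_prod}) we have $f_2(\pfis{\lambda}\pfis{\mu}) = \{\lambda\}\{\mu\}$, giving the claimed formula.

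Substituting this into the first computation and cancelling the common terms $\alpha(q) + (\{\lambda\}+\{\mu\})\tld\alpha(q)$ leaves the identity $\{\lambda\}\{\mu\}\bigl(\Psi^2(\alpha)(q) + \delta\,\tld\alpha(q)\bigr) = 0$, valid for all $q \in I^n(K)$, all $\lambda,\mu \in L^*$, and all extensions $L/K$. Since $\{\lambda\}\{\mu\} = f_2(\pfis{\lambda}\pfis{\mu})$ ranges over all $f_2(\phi)$ with $\phi \in \Pf_2(L)$ as $\lambda,\mu$ vary, the non-degeneracy condition (\ref{eq_f_gen}) applied with $n = 2$ forces $\Psi^2(\alpha)(q) + \delta\,\tld\alpha(q) = 0$. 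As this holds for every $q$ over every field, we conclude $\Psi^2(\alpha) = -\delta\,\Psi(\alpha)$, that is $\Psi^2 = -\delta(A)\Psi$. The only delicate point is the bookkeeping that makes the final cancellation legitimate: one must assert the identity over \emph{all} extensions $L/K$ so that the products $\{\lambda\}\{\mu\}$ genuinely exhaust the values $f_2(\phi)$, which is precisely what allows (\ref{eq_f_gen}) to strip off the symbol factor; the rest is routine expansion.
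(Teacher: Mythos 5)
Your proof is correct and follows essentially the same route as the paper: evaluate $\alpha(\fdiag{\lambda\mu}q)$ once directly and once by applying the defining relation of $\Psi$ twice through $\fdiag{\lambda}(\fdiag{\mu}q)$, cancel using the identity (\ref{eq_delta_sum}), and strip the factor $\{\lambda,\mu\}$ via the non-degeneracy condition (\ref{eq_f_gen}) with $n=2$ over all extensions. The only (welcome) difference is that you actually justify the identity $\{\lambda\mu\}=\{\lambda\}+\{\mu\}-\delta(A)\{\lambda,\mu\}$ from the relation $\pfis{\lambda\mu}=\pfis{\lambda}+\pfis{\mu}-\pfis{\lambda}\pfis{\mu}$ together with Proposition \ref{prop_exist_delta} and (\ref{eq_f_prod}), whereas the paper states it without proof.
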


\begin{proof}
  On note que
  \begin{equation}\label{eq_delta_sum}
    \{\lambda\mu\}=\{\lambda\}+\{\mu\} - \delta(A)\{\lambda,\mu\}.
  \end{equation}
  De là, 
  \begin{align*}
    \alpha(\fdiag{\lambda\mu}q) &= \alpha(\fdiag{\lambda}q)
                                  + \{\mu\} \tld{\alpha}(\fdiag{\lambda}q) \\
    &= \alpha(q) + \{\lambda\} \tld{\alpha}(q) + \{\mu\} \tld{\alpha}(q)
      + \{\lambda,\mu\} \tld{\tld{\alpha}}(q) \\
    &= \alpha(q) + \{\lambda\mu\}\tld{\alpha}(q) + \delta(A)\{\lambda,\mu\}\tld{\alpha}(q) + \{\lambda,\mu\} \tld{\tld{\alpha}}(q)
  \end{align*}
  et également
  \[ \alpha(\fdiag{\lambda\mu}q) = \alpha(q) + \{\lambda\mu\} \tld{\alpha}(q) \]
  donc $\{\lambda,\mu\}(\delta(A)\tld{\alpha}(q)+ \tld{\tld{\alpha}}(q))=0$.
  Comme c'est vrai pour tous $\lambda$, $\mu$ sur toute extension
  on peut conclure que $\tld{\tld{\alpha}}=-\delta(A)\tld{\alpha}$.
\end{proof}

\begin{rem}\label{rem_inv_simil}
  Par définition, $\tld{\alpha}=0$ si et seulement si $\alpha(\fdiag{\lambda}q)=\alpha(q)$,
  ce qu'on qualifiera d'\emph{invariance par similitude}. Mais la
  proposition précédente suggère qu'il est intéressant d'étudier aussi
  la propriété $\tld{\alpha} = -\delta(A)\alpha$, qui est notamment
  vérifiée par tout élément de la forme $\tld{\beta}$. Dans le cas des
  invariants cohomologiques, on retrouve la même propriété puisque $\delta=0$, mais
  dans le cas des invariants de Witt elle est équivalente à
  $\alpha(\fdiag{\lambda}q)=\fdiag{\lambda}\alpha(q)$ ; on dira alors
  que $\alpha$ est \emph{compatible avec les similitudes}.

  Lorsque $\delta\neq 0$ (et donc en particulier pour les invariants
  de Witt), la proposition montre alors que tout $\alpha$ peut être décomposé
  de façon unique en $\alpha = \beta + \gamma$ où $\beta$ est compatible
  avec les similitudes, et $\gamma$ est invariant par similitudes ;
  précisément, $\beta = -\tld{\alpha}$ et $\gamma = \alpha + \tld{\alpha}$.

  De façon moins intrinsèque, tout invariant de Witt $\alpha$ est une combinaison
  des $\lambda^d$ (puisque c'est le cas des $\pi_n^d$), et alors $\beta$
  correspond aux termes avec $d$ impair, et $\gamma$ aux termes avec
  $d$ pair (pour cette raison on parlera parfois de \emph{partie paire}
  et \emph{partie impaire} de $\alpha$).
\end{rem}

On veut maintenant étudier l'action de $\Psi$ sur les $g_n^d$,
qui s'avèrent avoir un bien meilleur comportement que les $f_n^d$
relativement aux similitudes (ce qui n'est pas surprenant si on
considère la cas de la mutliplication par $\fdiag{-1}$).

\begin{prop}\label{prop_simil}
  Soient $n\in \N^*$, $d\in \N$. Alors
  \[ \tld{g_n^d} = \left\{ \begin{array}{lc}
                             -\delta(A) g_n^d & \text{si $d$ est impair} \\
                             \{-1\}^{n-1}g_n^{d-1} & \text{si $d$ est pair.}
                           \end{array} \right. \]
\end{prop}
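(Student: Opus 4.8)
The plan is to argue by \og{}récurrence sur le décalage\fg{}, mirroring the way the shifts of the $g_n^d$ themselves were obtained. Since $\Psi$ always produces normalized invariants (remarque \ref{rem_norm}), one has $\widetilde{g_n^d}(0)=0$; hence by the integration principle attached to $\Phi$ (corollaire \ref{cor_phi_exact} and remarque \ref{rem_shift}) it suffices to identify a single shift of $\widetilde{g_n^d}$ and then to induct on $d$, using the recursive definition \ref{def_gn} of the $g_n^d$ together with the explicit shifts of proposition \ref{prop_g_pm}. The base cases are immediate: $\widetilde{g_n^0}=0$ as $g_n^0$ is constant, and $\widetilde{g_n^1}=\widetilde{f_n}=-\delta(A)f_n$ follows from additivity of $f_n$ and the restriction formula $(f_n)_{|I^{n+1}}=\delta(A)f_{n+1}$ of proposition \ref{prop_exist_delta}.

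The clean structural input I would record first is the involution $\rho:\alpha\mapsto\alpha(\fdiag{-1}\,\cdot\,)$. Because $\fdiag{-1}q=-q$ as Witt classes, the very definition of $\Psi$ (proposition-définition \ref{prop_def_psi}) gives $\rho=\Id+\{-1\}\Psi$; using $\Psi^2=-\delta(A)\Psi$ (proposition \ref{prop_psi_carre}) and $\{-1\}\delta(A)=2$ (proposition \ref{prop_exist_delta}) one checks that $\rho$ is an $A(k)$-linear ring automorphism with $\rho^2=\Id$, and expanding $\alpha(-(q+\phi))$ yields the intertwining $\Phi^+\rho=-\rho\Phi^-$. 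Running the recursion \ref{def_gn} through this relation (with the integration constant fixed at $0$, where $\rho(g_n^d)(0)=g_n^d(0)$) gives a clean parity induction proving $\rho(g_n^d)=-g_n^d$ for $d$ odd and $\rho(g_n^d)=g_n^d+\{-1\}^n g_n^{d-1}$ for $d$ even; via $\{-1\}\widetilde{g_n^d}=\rho(g_n^d)-g_n^d$ this is exactly $\{-1\}$ times the asserted formula.

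The main obstacle is that this determines only $\{-1\}\widetilde{g_n^d}$. The operators $\Psi$ and $\Phi^{\pm}$ do not commute — scaling turns a form of Pfister $\phi$ into $\fdiag{\lambda}\phi=\phi-\pfis{\lambda}\phi$, injecting an $(n+1)$-form of Pfister — and a purely formal two-way expansion of $\alpha(\fdiag{\lambda}(q+\phi))$ is tautological, so the only relation the formalism produces is the one above, carrying the zero-divisor $\{-1\}$, which cannot simply be cancelled. To pin down $\widetilde{g_n^d}$ itself I would inject the explicit behaviour on scaled forms of Pfister from proposition \ref{prop_f_simil_pfis}: by additivity of $f_n^k$ (proposition \ref{prop_f}) and \ref{prop_f_simil_pfis}, the increment $f_n^k(Q+\fdiag{a}\phi)-f_n^k(Q)$ at an arbitrary base $Q$ is completely explicit, and specializing $a=\lambda$, $Q=\fdiag{\lambda}q$ and extracting the $\{\lambda\}$-linear part computes $(\widetilde{f_n^k})^+$, hence all $\widetilde{f_n^k}$, which proposition \ref{prop_f_g} then converts into the $g_n^d$ statement. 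Most economically, by corollaire \ref{cor_ecr_unique} it is enough to verify the identity on sums $q=\sum_i\phi_i$ of forms of Pfister, where $\widetilde{g_n^d}(q)$ is the $\{\lambda\}$-coefficient of $g_n^d(\sum_i\fdiag{\lambda}\phi_i)$; expanding through \ref{prop_f_g}, \ref{prop_f} and \ref{prop_f_simil_pfis} reduces everything to a binomial identity. The genuinely delicate step is the bookkeeping of the quadratic relation $\{\lambda\}^2=\{-1\}\{\lambda\}$, which is precisely what makes all the scaled contributions collapse onto the single clean term predicted by the statement.
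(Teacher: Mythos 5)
Your argument is correct in outline, but it takes a genuinely different route from the paper, and one of your framing claims about the paper's kind of argument is off. The paper proves the statement by induction on $d$: it expands $g_n^d(\fdiag{\lambda}(q+\phi))$ in two ways, once through $\Psi$ then $\Phi^+$, and once through the factorization $\phi=\pfis{a}\psi$, which gives $\fdiag{\lambda}\phi=\pfis{\lambda a}\psi-\pfis{\lambda}\psi$ and hence keeps every increment an $n$-form of Pfister; extracting residues for generic $\lambda$, $a$, $\psi$ yields
\[ \tld{g_n^d}^+ = -\delta\,(g_n^d)^+ - \tld{(g_n^d)^+} + \{-1\}^{n-1}(g_n^d)^{+-} + \{-1\}^n\,\tld{(g_n^d)^{+-}}, \]
after which proposition \ref{prop_g_pm} and the normalization $\tld{g_n^d}(0)=0$ (remarque \ref{rem_norm}) close the induction by integration on the shift. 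So your assertion that a two-way expansion of $\alpha(\fdiag{\lambda}(q+\phi))$ is \emph{tautological} is not quite right: the factorization trick is precisely what makes it productive, and it is the paper's substitute for your injection of proposition \ref{prop_f_simil_pfis}. Your route --- determine $\tld{g_n^d}$ by direct evaluation on sums of forms of Pfister via \ref{prop_f}, \ref{prop_f_simil_pfis} and the transition formulas \ref{prop_f_g}, then conclude by corollaire \ref{cor_ecr_unique} --- is logically sound, since both $\tld{g_n^d}$ (by \ref{prop_def_psi}) and the claimed right-hand side are invariants, so agreement on sums of forms of Pfister over all extensions suffices; your base case $\tld{f_n}=-\delta f_n$ and your preliminary involution $\rho=\Id+\{-1\}\Psi$ with $\Phi^+\rho=-\rho\,\Phi^-$ are both correct, and you rightly flag that the $\rho$-computation only pins down $\{-1\}\tld{g_n^d}$, which is insufficient since $\{-1\}$ can be a zero divisor. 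What the paper's induction buys is that each step is a small, local verification; what your route buys is a non-inductive, closed-form identification, at the price of an $r$-variable combinatorial verification (in the ring generated by the $x_i=f_n(\phi_i)$ and $\{\lambda\}$ subject to $x_i^2=\{-1\}^n x_i$, $\{\lambda\}^2=\{-1\}\{\lambda\}$, $\delta\{-1\}=2$) that you leave entirely as a sketch --- this is the one piece you should actually carry out, most efficiently via your alternative recursion computing $(\tld{f_n^k})^+$ from the $\{\lambda\}$-linear part of the two expansions of $f_n^k(\fdiag{\lambda}(q+\phi))$, which reproduces exactly the formula for $\tld{f_n^d}$ recorded in the remark following proposition \ref{prop_simil} and converts to the stated $g_n^d$ formulas through \ref{prop_f_g}.
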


\begin{proof}
  On procède par récurrence sur $d$, les cas $d=0,1$ étant triviaux.
  Si $q\in I^n(K)$, $\phi\in \Pf_n(K)$ et $\lambda\in K^*$, alors
  \begin{align*}
    g_n^d(\fdiag{\lambda}(q+\phi)) &= g_n^d(q+\phi) + \{\lambda\}\tld{g_n^d}(q+\phi) \\
                                   &= g_n^d(q) + f_n(\phi)(g_n^d)^+(q) + \{\lambda\} \tld{g_n^d}(q) +
                                     \{\lambda\}f_n(\phi) \tld{g_n^d}^+(q).
  \end{align*}
  D'un autre côté, si on écrit $\phi = \pfis{a}\psi$ :
  \begin{align*}
    g_n^d(\fdiag{\lambda}(q+\phi)) &= g_n^d(\fdiag{\lambda}q + \pfis{\lambda a}\psi - \pfis{\lambda}\psi) \\
                                   &= g_n^d(\fdiag{\lambda}q - \pfis{\lambda}\psi) + \{\lambda a\}f_{n-1}(\psi) (g_n^d)^+(\fdiag{\lambda}q - \pfis{\lambda}\psi) \\
                                   &= g_n^d(\fdiag{\lambda}q) - \{\lambda\}f_{n-1}(\psi)(g_n^d)^-(\fdiag{\lambda}q) \\
                                   &+ \{\lambda a\}f_{n-1}(\psi)(g_n^d)^+(\fdiag{\lambda}q) - \{\lambda,\lambda a\}\{-1\}^{n-1}f_{n-1}(\psi)(g_n^d)^{+-}(\fdiag{\lambda}q) \\
                                   &= g_n^d(q) + \{\lambda\}\tld{g_n^d}(q) - \{\lambda\}f_{n-1}(\psi)(g_n^d)^-(q) - \{\lambda,\lambda\}f_{n-1}(\psi)\tld{(g_n^d)^-}(q) \\
                                   &+ \{\lambda a\}f_{n-1}(\psi)(g_n^d)^+(q) - \{\lambda,\lambda a\}\{-1\}^{n-1}f_{n-1}(\psi)(g_n^d)^{+-}(q)  \\
                                   &+ \{\lambda, \lambda a\}f_{n-1}(\psi)\tld{(g_n^d)^+}(q) - \{\lambda, \lambda,\lambda a\}\{-1\}^{n-1}f_{n-1}(\psi)\tld{(g_n^d)^{+-}}(q).
  \end{align*}
  En prenant $\lambda$, $a$ et $\psi$ génériques et en considérant les résidus,
  on trouve :
  \[ \tld{g_n^d}^+ = -\delta (g_n^d)^+  - \tld{(g_n^d)^+} + \{-1\}^{n-1}(g_n^d)^{+-} + \{-1\}^n\tld{(g_n^d)^{+-}} \]
  en utilisant plusieurs fois l'équation (\ref{eq_delta_sum}) et le fait que
  $\delta \{-1\} = 2$.

  Si $d$ est pair, alors $(g_n^d)^+=g_n^{d-1}$, donc $\tld{(g_n^d)^+} = -\delta g_n^{d-1}$,
  et $(g_n^d)^{+-}=g_n^{d-2}$ donc $\tld{(g_n^d)^{+-}} = \{-1\}^{n-1}g_n^{d-3}$.
  On obtient donc :
  \begin{align*}
    \tld{g_n^d}^+ &= -\delta g_n^{d-1} + \delta g_n^{d-1} + \{-1\}^{n-1}(g_n^{d-2}+\{-1\}^ng_n^{d-3}) \\
                  &= \{-1\}^{n-1}(g_n^{d-1})^+
  \end{align*}
  ce qui permet de conclure puisque $\tld{g_n^d}$ est normalisé (voir
  la remarque (\ref{rem_norm})).

  De même, si $d$ est impair, $(g_n^d)^+=g_n^{d-1} + \{-1\}^ng_n^{d-2}$,
  donc $\tld{(g_n^d)^+} = \{-1\}^{n-1}g_n^{d-2} - \delta\{-1\}^ng_n^{d-2} = -\{-1\}^{n-1}g_n^{d-2}$,
  et $(g_n^d)^{+-}=g_n^{d-2}$ so $\tld{(g_n^d)^{+-}} = -\delta g_n^{d-2}$.
  Alors
  \begin{align*}
    \tld{g_n^d}^+ &= -\delta (g_n^d)^+ + \{-1\}^{n-1} g_n^{d-2}  + \{-1\}^{n-1}g_n^{d-2} -\delta \{-1\}^n g_n^{d-2} \\
                  &= -\delta (g_n^d)^+
  \end{align*}
  ce qui permet encore de conclure.
\end{proof}

Ce comportement très simple permet quelques observations :

\begin{rem}
  On retrouve facilement en utilisant les $g_n^d$ le fait que
  tout invariant $\alpha$ de la forme $\tld{\beta}$ est compatible
  avec les similitudes (au sens de la remarque \ref{rem_inv_simil}).
  On peut également répondre à la question légitime de savoir si
  ces deux conditions sont équivalentes. Si $\alpha = \sum a_d g_n^d$,
  alors $\alpha$ est invariant par similitude si et seulement si
  pour tout $d$ pair, $\delta(A)a_d=0$ et $\{-1\}^{n-1}a_d=0$ (si $d\pgq 2$) ;
  en revanche $\alpha$ est de la forme $\tld{\beta}$ pour un certain $\beta$
  si et seulement si $a_d=0$ pour $d$ pair, et $a_d=\{-1\}^{n-1}x_d-\delta(A)y_d$
  pour certains $x_d,y_d$ si $d$ est impair. En particulier, les conditions
  sont équivalentes si $\delta(A)=1$ (donc pour les invariants de Witt),
  et dans ce cas elles correspondent au fait d'être une combinaison des
  $g_n^d$ avec $d$ impair, mais elles ne sont jamais équivalentes si
  $\delta(A)=0$ (donc pour les invariants cohomologiques) et $n>1$.
\end{rem}

\begin{rem}
  On a défini deux opérateurs, $\Phi$ et $\Psi$, qui contrôlent respectivement
  le comportement des invariants vis-à-vis des sommes et des similitudes.
  Il est tentant d'essayer d'établir un lien entre $\Phi\circ \Psi$ et
  $\Psi\circ \Phi$, mais en fait il est relativement facile de vérifier
  en utilisant la base des $g_n^d$ que lorsque $\delta(A)=0$ ces deux compositions sont
  complètement décorrélées, au sens suivant : si $\beta$ est n'importe
  quel invariant dans l'image de $\Phi\circ \Psi$, et $\gamma$ dans l'image
  de $\Psi\circ \Phi$, alors il existe $\alpha$ tel que $\beta = (\tld{\alpha})^+$
  et $\gamma = \tld{(\alpha^+)}$.
\end{rem}

\begin{rem}
  On peut également établir des formules pour l'action de $\Psi$
  sur les $f_n^d$, soit directement par des méthodes similaires,
  soit en utilisant les formules de passage de la proposition
  \ref{prop_f_g}. On ne détaillera pas le calcul ici, mais
  on peut donner ces formules à titre indicatif :
  \[ \tld{f_n^d} = (-1)^d\sum_{k=1}^{d-1}\binom{d-1}{k-1} \{-1\}^{n(d-k)-1} f_n^k
     + \left\{ \begin{array}{cl} 0 & \text{si $d$ pair} \\ -\delta(A) f_n^d & \text{si $d$ impair.}\end{array} \right. \]
\end{rem}

\subsection{Ramification des invariants}

Dans cette courte partie on étudie le comportement des invariants
par rapport aux résidus de valuations discrètes. Soit donc $(K,v)$
un corps valué, où $v$ est une $k$-valuation discrète de rang 1.

Pour éviter d'avoir à introduire une notion abstraite de ramification
sur un $A$ général vérifiant nos conditions, on se restreint ici
explicitement à $A(K)=W(K)$ ou $A(K)=H^*(K,\mu_2)$.

\begin{prop}\label{prop_ram_witt}
Soient $n\in \N^*$ et $q\in I^n(K)$. Si $q$ est non ramifiée,
alors $\alpha(q)$ est non ramifiée pour tout $\alpha\in \Inv(I^n, A)$.
\end{prop}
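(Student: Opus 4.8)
The plan is to reduce, via the classification of Theorem \ref{thm_g}, to a statement about the generators $g_n^d$ (equivalently the $f_n^d$), and then to use that the basic building blocks of these invariants --- the $\lambda$-operations on $GW$ and the maps $e_m$ --- are compatible with the residue. Throughout, denote by $\kappa$ the residue field of $(K,v)$ and by $\partial$ the residue homomorphism attached to $v$ (the second residue $W(K)\to W(\kappa)$ when $A=W$, the Milnor residue $H^*(K,\mu_2)\to H^{*-1}(\kappa,\mu_2)$ when $A=H$); ``$q$ non ramifiée'' means $\partial(q)=0$. Recall that in both cases the unramified classes $\ker\partial$ form a \emph{subring} of $A(K)$, equal to the image of good reduction (the image of $W(\mathcal{O}_v)\to W(K)$, resp.\ the unramified cohomology), and that since $v$ is trivial on $k$ every constant of $A(k)$ is unramified.

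First I would write $\alpha=\sum_d a_d\, g_n^d$ with $a_d\in A(k)$, as permitted by Theorem \ref{thm_g}. For our fixed $q\in I^n(K)$, write $q=\sum_{i=1}^s\phi_i-\sum_{j=1}^t\psi_j$ with $\phi_i,\psi_j\in\Pf_n(K)$ (possible since $I^n$ is additively generated by $n$-formes de Pfister); Proposition \ref{prop_g_borne} then gives $g_n^d(q)=0$ for $d>2\max(s,t)$, so $\alpha(q)=\sum_{d\le D}a_d\,g_n^d(q)$ is a \emph{finite} $A(k)$-combination. As $\ker\partial$ is a subring containing the $a_d$, it suffices to show each $g_n^d(q)$ is unramified; and by the transition formulas of Proposition \ref{prop_f_g} (whose coefficients are integer multiples of the unramified constants $\{-1\}^{\bullet}$) this in turn reduces to showing that each $f_n^k(q)$ is unramified.

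For $A=W$ we have $f_n^k=\pi_n^k$, and Corollary \ref{cor_relat_pi} expresses $\pi_n^k$ as a $\Z$-linear combination of the operations $\lambda^j$. Hence it is enough to prove the key lemma: \emph{if $q$ is unramified then $\lambda^j(q)$ is unramified}. I would prove this by good reduction: an unramified class lifts to $GW(\mathcal{O}_v)$ (lift a representative over $W(\mathcal{O}_v)$, the two $GW$-classes differing by a hyperbolic form, which also has good reduction), the $\lambda$-operations are defined and natural over the base ring $\mathcal{O}_v$, so $\lambda^j(q)$ again lies in the image of $GW(\mathcal{O}_v)\to GW(K)$, i.e.\ is unramified. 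This settles $A=W$, and in particular shows $\pi_n^k(q)\in I^{nk}(K)$ is unramified whenever $q$ is.

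For $A=H$ we have $f_n^k=u_{nk}^{(n)}=e_{nk}\circ\pi_n^k$. By the previous paragraph $\pi_n^k(q)$ is unramified in the Witt sense, and I would invoke the standard compatibility of the invariants $e_m$ with the residue maps, namely $\partial\circ e_m=e_{m-1}\circ\partial$ on $I^m$; applied to the unramified class $\pi_n^k(q)$ it yields $\partial\bigl(e_{nk}(\pi_n^k(q))\bigr)=e_{nk-1}(0)=0$, so $f_n^k(q)=u_{nk}^{(n)}(q)$ is unramified. The main obstacle is precisely the content hidden in these two compatibilities --- that the $\lambda$-operations preserve good reduction, and that the $e_m$ commute with residues --- everything else being the bookkeeping made possible by the classification theorem and the finiteness Proposition \ref{prop_g_borne}.
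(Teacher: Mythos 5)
Your proof is correct and follows essentially the same route as the paper's: reduce via the classification theorem to the generators, show that the $\lambda$-operations (hence the $\pi_n^d$) preserve unramified classes by good reduction over the valuation ring, and conclude in the cohomological case by the compatibility of the $e_m$ with residues. The only real difference is bookkeeping: you handle the reduction step more carefully (working in the $g_n^d$ basis and invoking Proposition \ref{prop_g_borne} so that the sum is finite at each fixed $q$), whereas the paper simply asserts that, $v$ being a $k$-valuation, it suffices to treat $\alpha=f_n^d$, and proves the key lemma by diagonalizing $q$ over units of $\mathcal{O}_K$ rather than by lifting to $GW(\mathcal{O}_v)$.
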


\begin{proof}
  Comme $v$ est une $k$-valuation, il suffit de montrer le résultat
  pour $\alpha = f_n^d$. Soit $q\in I^n(K)$ non ramifiée.
  Comme on peut écrire $q = \fdiag{a_1,\dots,a_r}$
  avec $a_i\in \mathcal{O}_K$, $\lambda^k(q)$ est également non ramifiée
  pour tout $k\in \N$, et donc $\pi_n^d(q)$ aussi. Comme $e_{nd}$
  envoie une forme non ramifiée sur une classe de cohomologie
  non ramifiée, $u_{nd}^{(n)}(q)$ est également non ramifié.
\end{proof}

\subsection{Invariants en dimension fixée}\label{sec_dim_fix}

Dans cette partie, on décrit les restrictions des invariants
de $I^n$ en dimension fixée pour $n=1$ et $2$.

\subsubsection*{Invariants de $I$}

Dans \cite[thm 17.3, thm 28.5]{GMS}, Serre décrit les invariants cohomologiques ainsi
que les invariants de Witt de $\Quad_{2r}$ (les formes quadratiques
en dimension fixée $2r$). On trouve dans les deux cas un $A(k)$-module
libre de rang $n+1$, respectivement sur les classes de Stiefel-Whitney $w_k$
et sur les $\lambda^k$, pour $0\ppq k\ppq n$ (on rappelle que
$w_k(\fdiag{a_1,\dots,a_n}) = \sum_{i_1<\dots <i_k}(a_{i_1},\dots , a_{i_k})$).
Comme on peut restreindre les invariants de $I$ à $\Quad_{2r}$,
on peut se demander comment ils se décomposent
en fonction des classes de Stiefel-Whitney et des $\lambda$-opérations.

Il faut ici faire attention aux définitions que l'on utilise : quand on écrit
(par exemple) que $\pi_1^2(q)=\lambda^2(q)-\lambda^1(q)$, on travaille au niveau
des classes de Witt, ramenées dans $GW(K)$ par l'isomorphisme canonique
$\hat{I}(K)\simeq I(K)$. Si $q$ est de dimension $n=2r$, on applique donc en
réalité les $\lambda^k$ à $q-r\pfis{1}\in \hat{I}(K)$. Pour dissiper les
ambiguïtés de notations, on utilisera dans cette partie $\hat{\lambda}^k$
pour désigner cette opération, par opposition à $\lambda^k$ qui désignera
directement l'opération sur $GW(K)$ (en particulier, $\lambda^k$ n'est
pas bien défini sur les classes de Witt, par opposition à $\hat{\lambda}^k$).
On peut passer de l'un à l'autre de la façon suivante :

\begin{lem}\label{lem_hat_lambda}
  Soit $q$ une forme quadratique de dimension $2r$ sur $K$. Alors
  pour tout $d\in \N$, on a dans $W(K)$: 
  \[ \lambda^d(q) = \sum_{\substack{i=0 \\ i\equiv d \text{mod} 2}}^d \binom{r}{\frac{d-i}{2}} (-1)^{\frac{d-i}{2}}\hat{\lambda}^i(q) \]
  et
  \[ \hat{\lambda}^d(q) = \sum_{\substack{i=0 \\ i\equiv d \text{mod} 2}}^d \binom{r+\frac{d-i}{2}-1}{\frac{d-i}{2}} \lambda^i(q). \]
\end{lem}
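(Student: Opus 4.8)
The plan is to work with the total $\lambda$-operation $\lambda_t$ on $GW(K)$ and exploit its multiplicativity, so that both formulas reduce to expanding $(1-t^2)^{\pm r}$. First I would fix notation: write $\hat q \in \hat I(K)$ for the canonical preimage of the Witt class of $q$ under the isomorphism $\hat I(K)\simeq I(K)$. Since $\dim q = 2r$, one has $\hat q = q - r\fdiag{1,-1}$ (subtracting $r$ hyperbolic planes brings the virtual dimension to $0$ without changing the Witt class). By definition, $\hat\lambda^d(q)$ is the image in $W(K)$ of $\lambda^d(\hat q)$ computed in $GW(K)$, whereas $\lambda^d(q)$ is the image of $\lambda^d$ applied directly to $q$.

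The key step is the identity
\[ \lambda_t(q) = \lambda_t(\hat q)\times_G \lambda_t(\fdiag{1,-1})^r \]
in $G(GW(K))$, which holds because $\lambda_t$ is a group homomorphism from $(GW(K),+)$ (Proposition \ref{prop_alpha_t}) and $q = \hat q + r\fdiag{1,-1}$. Since $\fdiag 1$ and $\fdiag{-1}$ are of $\lambda$-dimension $1$, we have $\lambda_t(\fdiag{1,-1}) = (1+\fdiag 1 t)(1+\fdiag{-1}t)$. Now project coefficientwise to $W(K)$: this projection is a ring homomorphism, hence compatible with the power-series product, and it sends $\fdiag{-1}\mapsto -1$ and $\fdiag 1 \mapsto 1$, so the image of $\lambda_t(\fdiag{1,-1})$ is $(1+t)(1-t)=1-t^2$. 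We thus obtain in $W(K)[[t]]$
\[ \sum_{d} \lambda^d(q)\, t^d = \left(\sum_{i} \hat\lambda^i(q)\, t^i\right)(1-t^2)^r. \]

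To conclude I would extract coefficients. Expanding $(1-t^2)^r = \sum_j \binom{r}{j}(-1)^j t^{2j}$ and reading off the coefficient of $t^d$ gives the first formula, the substitution $j=(d-i)/2$ forcing the parity condition $i\equiv d \pmod 2$. For the second formula I would multiply the displayed identity by $(1-t^2)^{-r}$ and use the negative binomial expansion $(1-t^2)^{-r} = \sum_j \binom{r+j-1}{j} t^{2j}$, once more reading off the coefficient of $t^d$.

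The only genuine subtlety — and the point worth stressing — is that $\lambda^d$ does \emph{not} descend to $W(K)$, so the generating-function identity must first be established in $GW(K)[[t]]$ and only afterwards pushed to $W(K)[[t]]$; this pushforward is legitimate precisely because the projection $GW(K)\to W(K)$ is a ring homomorphism, which is what lets the product $\lambda_t(\fdiag{1,-1})^r$ become $(1-t^2)^r$ after projection. Everything else is the routine combinatorics of $(1\pm t^2)^{\pm r}$.
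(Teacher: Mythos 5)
Your proof is correct and follows essentially the same route as the paper: both establish the generating-function identity $\lambda_t(q) = (1-t^2)^r\,\hat{\lambda}_t(q)$ in $W(K)[[t]]$ from $\lambda_t(\fdiag{1})\lambda_t(\fdiag{-1}) = (1+t)(1-t)$ and then extract coefficients via the binomial and negative-binomial expansions of $(1-t^2)^{\pm r}$. One notational quibble: the product you write as $\times_G$ should be the group law $+_G$ of $G(GW(K))$, i.e.\ ordinary multiplication of power series (in this text $\times_G$ denotes the other, Grothendieck-style product on $G(R)$); your argument plainly uses the former, so this is a slip of notation rather than of substance.
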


\begin{proof}
  On note, similairement à la notation familière $\lambda_t(q)\in W(K)[[t]]$,
  $\hat{\lambda}_t(q) = \sum_{d\in \N}\hat{\lambda}^d(q)t^d$. On a alors
  dans $W(K)$:
  \begin{align*}
    \lambda_t(\pfis{1}) &= \lambda_t(\fdiag{1})\lambda_t(\fdiag{-1}) \\
                        &= (1+t)(1-t) \\
                        &= 1-t^2
  \end{align*}
  donc $\hat{\lambda}_t(q) = \lambda_t(q-r\pfis{1}) = (1-t^2)^{-r}\lambda_t(q)$,
  et réciproquement $\lambda_t(q) = (1-t^2)^r\hat{\lambda}_t(q)$,
  ce qui donne les formules voulues en développant
  $(1-t^2)^r$ et $(1-t^2)^{-r}$.
\end{proof}

On peut exprimer les $g_1^d$ à partir des opérations $\lambda$ :

\begin{prop}\label{prop_pi1_g1}
  Soit $q$ une forme quadratique de dimension $2r$ sur $K$.
  Alors pour tout $m\in \N^*$, on a
  \[ g_1^{2m}(q) = \sum_{i=0}^{2m}(-1)^i\binom{r-\lceil \frac{i}{2}\rceil}{m-\lceil\frac{i}{2}\rceil} \lambda^i(q) \]
  et pour tout $m\in \N$ :
  \[ g_1^{2m+1}(q) = \sum_{i=0}^m \binom{r-1-i}{m-i} \lambda^{2i+1}(q). \]
  
\end{prop}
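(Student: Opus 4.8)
The plan is to prove both identities simultaneously by induction on $m$, using the recursive definition of the $g_1^d$ (définition \ref{def_gn}) together with the action of the shift operators $\Phi^+,\Phi^-$ on the $\lambda$-basis. By Serre's theorem (\cite[thm 28.5]{GMS}) the Witt invariants of $\Quad_{2r}$ form a free $W(k)$-module on $\lambda^0,\dots,\lambda^{2r}$, so the restriction of any $\alpha\in\Inv(I,W)$ can be written uniquely as $\alpha|_{\Quad_{2r}}=\sum_j c_j(r)\lambda^j$, and it suffices to identify the coefficients $c_j(r)$. Working directly with $\Phi^\pm$ has the decisive advantage of bypassing the triple sum one would get from combining propositions \ref{prop_f_g}, \ref{cor_relat_pi} and lemme \ref{lem_hat_lambda}.

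The key computation is the action of $\Phi^+$ and $\Phi^-$ on such coefficients. Writing $\phi=\pfis a$, the factorisation $\lambda_t(\fdiag 1+\fdiag{-a})=(1+t)(1-\fdiag a t)$ gives $\lambda^j(q\perp\langle 1,-a\rangle)=\lambda^j(q)+\pfis a\,\lambda^{j-1}(q)+(\pfis a-1)\lambda^{j-2}(q)$, and likewise $\lambda^j(q\perp\langle -1,a\rangle)=\lambda^j(q)-\pfis a\,\lambda^{j-1}(q)+(\pfis a-1)\lambda^{j-2}(q)$. Substituting these into the defining relation $\alpha(q\pm\phi)=\alpha(q)\pm f_1(\phi)\,\alpha^{\pm}(q)$ (proposition \ref{prop_def_delta}), while keeping in mind that $q\perp\langle\pm1,\mp a\rangle$ has dimension $2r+2$ so that its coefficients are the $c_j(r+1)$, and comparing the $\pfis a$-free and $\pfis a$-linear parts yields
\[ (\alpha^+)_j(r)=c_{j+1}(r+1)+c_{j+2}(r+1),\qquad (\alpha^-)_j(r)=c_{j+1}(r+1)-c_{j+2}(r+1). \]
(The $\pfis a$-free comparison merely reproduces the dimension-compatibility $c_j(r)=c_j(r+1)-c_{j+2}(r+1)$ of a genuine invariant.)

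With these in hand the induction is mechanical. Write $E_m,O_m$ for the two claimed right-hand sides, with coefficients $e_{2l}(r)=\binom{r-l}{m-l}$, $e_{2l+1}(r)=-\binom{r-l-1}{m-l-1}$ and $o_{2l+1}(r)=\binom{r-1-l}{m-l}$, $o_{2l}(r)=0$. The base case $m=0$ is $g_1^0=1=E_0$ and $g_1^1=f_1=\lambda^1=O_0$. To pass from $E_m$ to $O_m$: by définition \ref{def_gn} and corollaire \ref{cor_phi_exact}, $g_1^{2m+1}$ is the unique invariant with $(g_1^{2m+1})^-=g_1^{2m}$ and value $0$ at $0$, so the $\Phi^-$ formula forces its coefficients by the downward recursion $o_{j+1}(r+1)-o_{j+2}(r+1)=e_j(r)$, which $O_m$ satisfies by plain substitution (even and odd $j$ reproduce $\binom{r-l}{m-l}$ and $-\binom{r-l-1}{m-l-1}$ directly). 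To pass from $O_m$ to $E_{m+1}$: the relation $(g_1^{2m+2})^+=g_1^{2m+1}$ together with the $\Phi^+$ formula gives $\hat e_{j+1}(r+1)+\hat e_{j+2}(r+1)=o_j(r)$, and checking that $E_{m+1}$ fits reduces to the single Pascal identity $\binom{r-l}{m-l}-\binom{r-l-1}{m-l-1}=\binom{r-l-1}{m-l}$.

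The only genuine content is the coefficient-action formulas for $\Phi^\pm$; the main pitfall there is the dimension shift $r\mapsto r+1$ concealed in adjoining a binary form, which is precisely what injects the $r$-dependence into the binomial coefficients. Everything downstream is either a tautological matching or one use of Pascal's rule, the vanishing $\lambda^j(q)=0$ for $j>\dim q$ supplying the boundary that makes the downward recursions determine the coefficients uniquely. As a sanity check one recovers $g_1^2(q)=\lambda^2(q)-\lambda^1(q)+r$, which matches $E_1$ as well as the direct computation $g_1^2=\pi_1^2=\hat\lambda^2-\hat\lambda^1$ evaluated via lemme \ref{lem_hat_lambda}.
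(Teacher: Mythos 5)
Your derivation of the coefficient action of $\Phi^\pm$ is correct, and your route is in substance the paper's own: the paper likewise defines the right-hand sides as isometry-class invariants $\alpha_d$ in each even dimension and verifies, via the same addition formula $\lambda^i(q+\pfis{a})=\lambda^i(q)-\lambda^{i-2}(q)+\pfis{a}(\lambda^{i-1}(q)+\lambda^{i-2}(q))$ with the same dimension shift $r\mapsto r+1$, that they satisfy the shift recursions characterizing the $g_1^d$. The only organizational difference is that the paper works exclusively with $\Phi^+$ (so its odd step uses $(g_1^{2m+1})^+=g_1^{2m}+\pfis{-1}g_1^{2m-1}$ from la proposition \ref{prop_g_pm}), whereas your use of $\Phi^-$ for the odd step follows la définition \ref{def_gn} directly and keeps that step sign-free.

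There is, however, a gap in your uniqueness step. The downward recursion $c_{j+1}(r+1)-c_{j+2}(r+1)=e_j(r)$ for $0\ppq j\ppq 2r$ only prescribes the consecutive differences of $c_1(r+1),\dots,c_{2r+2}(r+1)$: it determines that chain up to one common additive constant and says nothing at all about $c_0(r+1)$, so it does not \og{}force\fg{} the coefficients — there is a two-parameter family of solutions at each dimension level, and the vanishing $\lambda^j(q)=0$ for $j>\dim q$ supplies no boundary inside the basis range $0\ppq j\ppq 2r+2$. The legitimate way to conclude is the one the paper uses: apply the uniqueness of la proposition \ref{prop_ker_phi} (via le corollaire \ref{cor_phi_exact}) to the candidate \emph{as an invariant of $I$} — but for that you must first check that your candidate is well defined on Witt classes and vanishes at $0$, and you verify neither. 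Both are easy inside your own framework: Witt well-definedness is exactly the free-part relation $c_j(r)=c_j(r+1)-c_{j+2}(r+1)$, which you dismiss as automatic for \og{}a genuine invariant\fg{} but must be checked for the candidate (it holds, by one more application of Pascal's rule in each parity); vanishing at $0$ is trivial for $O_m$ (odd $\lambda$'s kill hyperbolic forms in $W(K)$, since $\lambda_t(s\mathcal{H})=(1-t^2)^s$) and for $E_{m+1}$ follows from $\sum_i(-1)^i\binom{r}{i}\binom{r-i}{m+1-i}=\binom{r}{m+1}\sum_i(-1)^i\binom{m+1}{i}=0$. Alternatively, both checks come for free by specializing your full addition identity to $a=1$, which is precisely the trick the paper's proof is built around. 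With these two verifications added, your argument is complete.
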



\begin{proof}
  On pose $\alpha_d(q)$ défini par le terme de droite des équations
  de l'énoncé. A priori ce n'est pas un invariant de classes de  Witt
  mais simplement un invariant de classes d'isométrie défini en toute
  dimension paire $2r$. On va montrer que $\alpha_{2m+2}(q+\pfis{a})
  =\alpha_{2m}(q)+\pfis{a}\alpha_{2m+1}(q)$  et $\alpha_{2m+1}(q+\pfis{a})
  =\alpha_{2m+1}(q)+\pfis{a}(\alpha_{2m}(q)+ \pfis{-1}\alpha_{2m-1}(q)$,
  ce qui montre à la fois que $\alpha_d$ est bien défini sur les classes
  de Witt en prenant $a=1$, et qu'ils vérifient la même récurrence que
  les $g_1^d$.

  On voit facilement que
  \[ \lambda^i(q+\pfis{a}) = \lambda^i(q)-\lambda^{i-2}(q) +\pfis{a}(\lambda^{i-1}(q)+\lambda^{i-2}(q). \]
  De là:
  \begin{align*}
   & \alpha_{2m+2}(q+\pfis{a}) \\
   &= \sum_j\binom{r+1-j}{m+1-j}\lambda^{2j}(q+\pfis{a}) - \sum_j \binom{r-j}{m-j}\lambda^{2j+1}(q+\pfis{a}) \\
   &= \sum_j\binom{r+1-j}{m+1-j}(\lambda^{2j}(q)-\lambda^{2j-2}(q)) - \sum_j \binom{r-j}{m-j}(\lambda^{2j+1}(q)-\lambda^{2j-1}(q)) \\
   &+ \pfis{a}\left( \sum_j\binom{r+1-j}{m+1-j}(\lambda^{2j-1}(q)+\lambda^{2j-2}(q)) - \sum_j \binom{r-j}{m-j}(\lambda^{2j}(q)+\lambda^{2j-1}(q)) \right) \\
   &= \sum_j \left( \binom{r+1-j}{m+1-j}-\binom{r-j}{m-j}\right)\lambda^{2j}(q) - \sum_j\left( \binom{r-j}{m-j}-\binom{r-j-1}{m-j-1}\right)\lambda^{2j+1}(q) \\
   &+ \pfis{a}\sum_j\left( \binom{r+1-j}{m+1-j}-\binom{r-j}{m-j}\right)\lambda^{2j-1}(q) \\
   &= \sum_j \binom{r-1}{m+1-j}\lambda^{2j}(q) - \sum_j \binom{r-j-1}{m-j}\lambda^{2j+1}(q) + \pfis{a}\sum_j\binom{r-j}{m+1-j}\lambda^{2j-1}(q)\\
   &= \alpha_{2m+2}(q)+\pfis{a}\alpha_{2m+1}(q).
  \end{align*}
  De même:
  \begin{align*}
    & \alpha_{2m+1}(q+\pfis{a})\\
    &= \sum_i \binom{r-i}{m-i}\lambda^{2i+1}(q+\pfis{a}) \\
    &= \sum_i \binom{r-i}{m-i}(\lambda^{2i+1}(q)-\lambda^{2i-1}(q)) + \pfis{a}\sum_i\binom{r-i}{m-i}(\lambda^{2i}(q) + \lambda^{2i-1}(q)) \\
    &= \sum_i \left( \binom{r-i}{m-i}-\binom{r-1-i}{m-1-i}\right) \lambda^{2i+1}(q) \\
    &+ \pfis{a}\left( \sum_i \binom{r-i}{m-i}\lambda^{2i}(q)-\sum_i\binom{r-1-i}{m-1-i}\lambda^{2i+1}(q) +2\sum_i \binom{r-1-i}{m-1-i}\lambda^{2i+1}(q)\right) \\
    &= \alpha_{2m+1}(q) +\pfis{a}(\alpha_{2m}(q)+2\alpha_{2m-1}(q)).
  \end{align*}
\end{proof}

\begin{rem}
  En particulier, on retrouve le fait que à $r$ fixé,
  $g_1^d$ est nul pour $d>2r$.
\end{rem}

\begin{rem}
  On rappelle (voir corollaire \ref{cor_relat_pi}) que
  \[ \pi_1^d = \sum_{k=1}^d (-1)^{d-k}\binom{d-1}{k-1}\hat{\lambda}^k. \]
  En revanche, il est assez pénible d'exprimer les $\pi_n^d$
  en fonction des $\lambda^k$, ce qui est cohérent avec le fait
  que les $\pi_n^d$ se comportent moins bien en dimension fixée.
\end{rem}

On veut également exprimer les $u_d^{(1)}$ et $v_d^{(1)}$ en
fonction des invariants de Stiefel-Whitney, et pour cela
on propose un formalisme qui s'applique également aux invariants
de Witt, donnant ainsi des formules valables dans tous les cas.

\begin{defi}\label{def_pd}
  Pour tout $d\in \N$, on définit $P^d: \Quad_n\To W$ par :
  \[ P^d(q) = \sum_{i=0}^d (-1)^i \binom{n-i}{d-i} \lambda^i(q). \]
\end{defi}

Comme la matrice de passage de $(\lambda^d)$ à $(P^d)$ est triangulaire
avec des 1 sur la diagonale, $(P^d)_{0\ppq d\ppq n}$ est
aussi une $W(k)$-base de $\Inv(\Quad_n, W)$. Sa définition
est motivée par la formule suivante :

\begin{prop}
  Pour tous $a_i\in K^*$, $1\ppq i\ppq n$, on a
  \[ P^d(\fdiag{a_1,\dots,a_n}) = \sum_{i_1<\dots <i_d} \pfis{a_{i_1},\dots,a_{i_d}} \]
  dans $W(K)$.
  En particulier, $P^d\in \Inv(\Quad_n,I^d)$, et $w_d = e_d\circ P^d$.
\end{prop}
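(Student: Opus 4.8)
Le plan est de tout calculer dans l'anneau de Grothendieck-Witt $GW(K)$, où les $\lambda$-opérations se manipulent commodément via la série génératrice $\lambda_t$, puis de projeter dans $W(K)$. Je commence par calculer $\lambda_t(q)$ pour $q=\fdiag{a_1,\dots,a_n}=\sum_i\fdiag{a_i}$. Comme chaque $\fdiag{a_i}$ est de $\lambda$-dimension $1$ dans $GW(K)$, on a $\lambda_t(\fdiag{a_i})=1+\fdiag{a_i}t$, et puisque $\lambda_t$ est un morphisme de groupes (cf. proposition \ref{prop_alpha_t}),
\[ \lambda_t(q) = \prod_{i=1}^n (1+\fdiag{a_i}t), \]
qui est un polynôme de degré $n$ en $t$. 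En utilisant la convention $\pfis{a}=\fdiag{1}-\fdiag{a}$, soit $\fdiag{a_i}=1-\pfis{a_i}$, ceci se réécrit $\lambda_t(q)=\prod_{i=1}^n\bigl((1+t)-\pfis{a_i}t\bigr)$.

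L'étape centrale est une substitution dans cette série. Je pose $E_d=\sum_{i_1<\dots<i_d}\pfis{a_{i_1},\dots,a_{i_d}}$, la $d$-ième fonction symétrique élémentaire des $\pfis{a_i}$, de sorte que $\prod_i(1-\pfis{a_i}u)=\sum_d(-1)^dE_du^d$. En substituant $t=\frac{u}{1-u}$ (donc $1+t=\frac{1}{1-u}$), chaque facteur devient $(1+t)-\pfis{a_i}t=\frac{1-\pfis{a_i}u}{1-u}$, d'où
\[ \lambda_{\frac{u}{1-u}}(q) = \frac{1}{(1-u)^n}\prod_{i=1}^n(1-\pfis{a_i}u) = \frac{1}{(1-u)^n}\sum_{d}(-1)^dE_du^d. \]
Je multiplie par $(1-u)^n$ et développe le membre de gauche sous la forme $\sum_i\lambda^i(q)\,u^i(1-u)^{n-i}$ ; l'identification des coefficients de $u^d$ donne $(-1)^dE_d=\sum_{i=0}^d(-1)^{d-i}\binom{n-i}{d-i}\lambda^i(q)$, soit
\[ E_d = \sum_{i=0}^d(-1)^i\binom{n-i}{d-i}\lambda^i(q) = P^d(q) \]
par la définition \ref{def_pd}. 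L'égalité $P^d(q)=\sum_{i_1<\dots<i_d}\pfis{a_{i_1},\dots,a_{i_d}}$ est ainsi établie dans $GW(K)$, et en particulier dans $W(K)$.

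Les deux dernières assertions s'en déduisent immédiatement. Chaque $\pfis{a_{i_1},\dots,a_{i_d}}$ appartient à $I^d(K)$, donc $P^d(q)\in I^d(K)$, ce qui donne $P^d\in\Inv(\Quad_n,I^d)$ (la naturalité provient de celle des $\lambda^i$). Enfin, comme $e_d(\pfis{a_{i_1},\dots,a_{i_d}})=(a_{i_1},\dots,a_{i_d})$, l'application de $e_d$ terme à terme fournit
\[ e_d\bigl(P^d(\fdiag{a_1,\dots,a_n})\bigr) = \sum_{i_1<\dots<i_d}(a_{i_1},\dots,a_{i_d}) = w_d(\fdiag{a_1,\dots,a_n}). \]
La seule difficulté réelle est la mise en place de la substitution $t=\frac{u}{1-u}$ ; une fois celle-ci posée, l'identification des coefficients n'est qu'un calcul binomial de routine. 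On pourrait également raisonner par récurrence sur $n$, mais cette lecture en termes de fonctions symétriques (exprimer $e_d(1-x_1,\dots,1-x_n)$ via les $e_i(x_1,\dots,x_n)$) me paraît la plus transparente.
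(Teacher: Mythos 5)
Your proof is correct, and it reaches the identity by a route that is genuinely different in technique from the paper's, even though both ultimately verify the same binomial identity. The paper's own proof is a two-line multiplicity count: expand each $\pfis{a_{i_1},\dots,a_{i_d}}$ as $\sum_{I\subset\{i_1,\dots,i_d\}}(-1)^{|I|}\prod_{j\in I}\fdiag{a_j}$, then observe that a fixed monomial $\prod_{j\in I}\fdiag{a_j}$ with $|I|=i$ occurs in the double sum once for every $d$-subset containing $I$, hence $\binom{n-i}{d-i}$ times; regrouping over $i$ gives exactly $P^d(q)$ as in definition \ref{def_pd}. You instead encode the same bookkeeping in a generating-series computation, substituting $t=u/(1-u)$ into $\lambda_t(q)=\prod_i(1+\fdiag{a_i}t)$; this is sound (the substituted series has zero constant term and $\lambda_t(q)$ is a polynomial of degree $n$, so the identification of coefficients is a genuine polynomial identity), and your deduction of the two final assertions is the intended one. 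What your approach buys is coherence with the rest of the chapter: the substitution $t\mapsto u/(1-u)$ is precisely composition with the Greek letter $\tau=t/(1-t)$ of the formalism of the section \ref{sec_grec} (see the proof of proposition \ref{prop_pi}), so the series you compute is nothing but the classical $\gamma$-operation $\gamma_u(q)=\lambda_{u/(1-u)}(q)$, and the same manipulation drives the paper's lemma \ref{lem_hat_lambda}; it also makes explicit that the identity already holds at the level of $GW(K)$. What the paper's counting argument buys is brevity: no series manipulation at all, just one combinatorial observation.
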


\begin{proof}
  Si on développe $\pfis{a_{i_1},\dots,a_{i_d}}$ on trouve
  \[ \sum_{I\subset \{i_1,\dots,i_d\}} (-1)^{|I|}\prod_{i\in I}\fdiag{a_i} \]
  et pour $I$ fixé le terme $\prod_{i\in I}\fdiag{a_i}$ apparaît
  $\binom{n-i}{d-i}$ fois dans $\sum_{i_1<\dots <i_d} \pfis{a_{i_1},\dots,a_{i_d}}$.
\end{proof}

\label{par_h}On peut donc otenir des énoncés unifiés pour $A=W$ et $A=H$, en
posant $h^d=P^d$ dans le premier cas, et $h^d=w_d$ dans le second.
On peut alors exprimer nos invariants en fonction des $h^d$ :

\begin{prop}\label{prop_fixed_dim}
  Soient $d\in \N$ et $q\in \Quad_n(K)$. Alors (si $A=W$ ou $A=H$) :
  \[ f_1^d(q) = \sum_{i=0}^d (-1)^i \binom{r-i}{d-i}\{-1\}^{d-i} h^i(q);  \]
  \[ g_1^d(q) = \sum_{i=0}^d (-1)^i\binom{r-i-1 + \lfloor \frac{d+1}{2}\rfloor}{d-i} \{-1\}^{d-i} h^i(q). \]
\end{prop}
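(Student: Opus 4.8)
Le plan est de tout ramener à un calcul explicite sur une forme diagonale $q=\fdiag{a_1,\dots,a_{2r}}$, en commençant par déterminer sa classe dans $\hat I(K)$. Comme $\fdiag{a_j}=1-\pfis{a_j}$ et $\mathbb{H}=2-\pfis{-1}$, on obtient $\hat q = q - r\mathbb{H} = r\pfis{-1}-\sum_{j=1}^{2r}\pfis{a_j}$. Les deux membres des formules à démontrer sont des invariants de $\Quad_{2r}$ à valeurs dans $A$, et $h^i(q)$ est bien défini indépendamment de la diagonalisation ; il suffira donc d'établir chaque égalité sur un $q$ diagonal arbitraire, pour une diagonalisation fixée.

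Pour $f_1^d$, j'introduirais la série génératrice $f_{1,t}(x)=\sum_d f_1^d(x)t^d$ : d'après le point \ref{cond_f_som} de la proposition \ref{prop_f}, c'est un morphisme du groupe additif $(I(K),+)$ vers $(1+tA(K)[[t]],\times)$, et sur un élément de Pfister on a simplement $f_{1,t}(\pfis a)=1+\{a\}t$ (les $f_1^d(\pfis a)$ étant nuls pour $d\pgq 2$). On obtient donc \[ f_{1,t}(\hat q)=(1+\{-1\}t)^r\prod_{j=1}^{2r}(1+\{a_j\}t)^{-1}. \] En posant $s=\frac{t}{1+\{-1\}t}$ et en utilisant la relation de Pfister $\{a\}^2=\{-1\}\{a\}$ (conséquence de $\pfis a^2=\pfis{-1}\pfis a$ et de (\ref{eq_f_prod})), on vérifie que $(1+\{a_j\}t)^{-1}=1-\{a_j\}s$, d'où $\prod_j(1+\{a_j\}t)^{-1}=\sum_k(-s)^k h^k(q)$, puisque les fonctions symétriques élémentaires des $\{a_j\}$ sont exactement les $h^k(q)=\sum_{j_1<\dots<j_k}\{a_{j_1},\dots,a_{j_k}\}$ (formule donnant $h^d$ sur une forme diagonale). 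Il resterait à écrire $(1+\{-1\}t)^r s^k=t^k(1+\{-1\}t)^{r-k}$, à développer $(1+\{-1\}t)^{r-k}=\sum_l\binom{r-k}{l}\{-1\}^l t^l$ (binôme généralisé, licite car $\{-1\}$ est dans l'idéal de filtration) et à extraire le coefficient de $t^d$, ce qui donne exactement $f_1^d(q)=\sum_k(-1)^k\binom{r-k}{d-k}\{-1\}^{d-k}h^k(q)$.

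Pour $g_1^d$, j'utiliserais la première formule de la proposition \ref{prop_f_g} avec $n=1$, soit $g_1^d=\sum_k\binom{\lfloor(d-1)/2\rfloor}{k-\lfloor d/2\rfloor-1}\{-1\}^{d-k}f_1^k$, puis j'y injecterais l'expression de $f_1^k$ obtenue ci-dessus. Les puissances de $\{-1\}$ se recombinent en $\{-1\}^{d-k}\{-1\}^{k-i}=\{-1\}^{d-i}$, indépendamment de $k$, de sorte que le coefficient de $h^i(q)$ est $(-1)^i\{-1\}^{d-i}$ fois la somme $\sum_k\binom{\lfloor(d-1)/2\rfloor}{k-\lfloor d/2\rfloor-1}\binom{r-i}{k-i}$. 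Le point technique principal sera d'évaluer cette somme : la formule de Vandermonde la met sous la forme $\binom{r-i+\lfloor(d-1)/2\rfloor}{\lfloor(d-1)/2\rfloor+\lfloor d/2\rfloor+1-i}$, et comme $\lfloor(d-1)/2\rfloor+\lfloor d/2\rfloor=d-1$ et $\lfloor(d-1)/2\rfloor=\lfloor(d+1)/2\rfloor-1$, cela se simplifie précisément en $\binom{r-i-1+\lfloor(d+1)/2\rfloor}{d-i}$, ce qui conclut. La seule difficulté réelle tient à cette gymnastique binomiale (et au soin à apporter aux coefficients binomiaux à argument supérieur négatif). Une alternative, si l'on préfère éviter la proposition \ref{prop_f_g}, serait de partir de la proposition \ref{prop_pi1_g1} et de convertir les $\lambda^i$ en $h^i=P^i$ via la définition \ref{def_pd}, mais cela obligerait à traiter séparément les cas $A=W$ et $A=H$, alors que l'approche ci-dessus les unifie.
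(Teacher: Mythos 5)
Your proof is correct, and for the $f_1^d$ formula it takes a genuinely different route from the paper's. The paper defines $\alpha_d$ as the right-hand side, viewed a priori as an isometry-class invariant in each fixed even dimension, and then verifies that $\alpha_d$ vanishes on hyperbolic forms and satisfies $\alpha_d(q+\pfis{a})=\alpha_d(q)+\{a\}\alpha_{d-1}(q)$, using $h^i(q+\pfis{a})=h^i(q)+\{-a\}h^{i-1}(q)$ and $h^i(r\pfis{1})=\binom{r}{i}\{-1\}^i$; taking $a=1$ gives well-definedness on Witt classes, and the addition formula says $\alpha_d^+=\alpha_{d-1}$, so the identification $\alpha_d=f_1^d$ follows by récurrence sur le décalage (remark \ref{rem_shift}). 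You instead compute the generating series $f_{1,t}(\hat q)$ in closed form on a diagonalization, via multiplicativity and the substitution $s=t/(1+\{-1\}t)$, then extract coefficients; both arguments rest on the same two inputs (additivity of $f_{1,t}$ and the Pfister relation $\{a\}^2=\{-1\}\{a\}$), but yours produces the closed form in one stroke, whereas the paper's recursion buys Witt-well-definedness and the identification with $f_1^d$ simultaneously without any symmetric-function manipulation. A genuine merit of your derivation is that it pins down the binomial convention: for $d\pgq r+2$ the formula is only correct with \emph{generalized} binomial coefficients of negative upper argument (e.g.\ for $\dim q=2$ one has $f_1^3(q)=-\{-1\}h^2(q)$, coming from $\binom{-1}{1}=-1$), so your caution on this point is warranted, and the convention \og{}$\binom{n}{m}=0$ si $m>n$\fg{} stated before proposition \ref{prop_pi_prod} must not be applied here. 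For $g_1^d$, the paper's displayed proof treats only $f_1^d$ and remarks that the $g$ case follows \og{}en utilisant la proposition \ref{prop_f_g}, ou directement en utilisant la même méthode\fg{}; your computation carries out exactly the first of these routes, and your Vandermonde evaluation is correct: with $M=\lfloor (d-1)/2\rfloor$ one indeed gets $\sum_k\binom{M}{k-\lfloor d/2\rfloor-1}\binom{r-i}{k-i}=\binom{r-i+M}{d-i}$ (Chu--Vandermonde is a polynomial identity in the upper arguments, hence valid for $r-i<0$ as long as lower-negative binomials are set to zero), and $M=\lfloor (d+1)/2\rfloor-1$ gives the stated coefficient. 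One small quibble: your justification for expanding $(1+\{-1\}t)^{r-k}$ (\og{}licite car $\{-1\}$ est dans l'idéal de filtration\fg{}) is beside the point — since $1+\{-1\}t$ is invertible in $A(K)[[t]]$, the expansion with integer generalized binomial coefficients is a formal identity valid in any commutative ring, and no filtration argument is needed.
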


\begin{proof}
  On prouve l'énoncé concernant $f_1^d$ ; le cas des $g_n^d$
  peut s'en déduire en utilisant la proposition \ref{prop_f_g},
  ou directement en utilisant la même méthode.

  On note $\alpha_d$ pour l'invariant défini par le terme de droite de
  l'équation. A priori ce ne sont pas des invariants de classes de Witt
  mais simplement des invariants de classes d'isométrie en toute
  dimension paire.
  
  Il est clair par définition que $\alpha_0 = 1 = f_1^0$. On doit alors montrer
  que pour tout $d\in \N^*$, $\alpha_d(q)=0$ si $q$ est hyperbolique, et que
  \begin{equation}\label{eq_alpha_add}
    \alpha_d(q+\pfis{a}) = \alpha_d(q)+\{a\} \alpha_{d-1}(q)
  \end{equation}
  pour tout $a\in K^*$, ce qui montre à la fois que $\alpha_d$ est un invariant
  de classes de Witt (en prenant $a=1$), et que $\alpha_d^+ = \alpha_{d-1}$,
  ce qui permet de conclure par récurrence sur le décalage.

  On obtient
  \begin{equation}\label{eq_sw_hyp}
    h^i(r\pfis{1}) = \binom{r}{i}\{-1\}^i
  \end{equation}
  et
  \begin{align}\label{eq_sw_add}
    h^i(q+\pfis{a}) &= h^i(q)+\{-a\} h^{i-1}(q) \nonumber \\
                    &= \left(h^i(q)+\{-1\}h^{i-1}(q)\right) -\{a\}h^{i-1}(q). 
  \end{align}

  Soit $q$ hyperbolique en dimension $n=2r$. D'après l'équation (\ref{eq_sw_hyp}),
  \[\alpha_d(q) =  \left( \sum_{i=0}^d(-1)^i \binom{d-r}{d-i}\binom{r}{i}\right)\{-1\}^d = 0\]
  en utilisant une formule combinatoire simple.

  Alors pour (\ref{eq_alpha_add}), on pose $q$ de dimension $2r$,
  et on utilise l'équation (\ref{eq_sw_add}) pour trouver :
  \begin{align*}
    \alpha_d(q+\pfis{a}) &= \sum_{i=0}^d (-1)^i\binom{r+1-i}{d-i}\{-1\}^{d-i}\left(h^i(q)+\{-1\}h^{i-1}(q)\right) \\
                         &- \{a\} \sum_{i=0}^d (-1)^i\binom{r+1-i}{d-i}\{-1\}^{d-i}h^{i-.1}(q) \\
                         &= \sum_{i=0}^{d-1}\left( (-1)^i\binom{r-i+1}{d-i} + (-1)^{i+1}\binom{r-i}{d-i-1} \right) \{-1\}^{d-i} h^i(q)  \\
                         &+ (-1)^d h^d(q) - \{a\} \sum_{i=0}^{d-1}(-1)^{i+1}\binom{r-i}{d-i-1} \{-1\}^{d-i-1}h^i(q) \\
                         &= \sum_{i=0}^d (-1)^i\binom{r-i}{d-i}\{-1\}^{d-i}h^i(q) \\
                         &+ \{a\} \sum_{i=0}^{d-1}(-1)^i\binom{r-i}{d-1-i} \{-1\}^{d-1-i}h^i(q)
  \end{align*}
  ce qui donne la formule attendue.
\end{proof}

\begin{rem}
  Si $-1$ est un carré dans $k$, alors $f_1^d=g_1^d=h^d$.
\end{rem}

\begin{coro}
  Pour tout $m\in \N$, $\Inv(\Quad_{2m},\mu_2)$ est le $H^*(k,\mu_2)$-module
  libre sur les $u_d^{(1)}$ pour $0\ppq d\ppq 2m$.

  En particulier, tout invariant de $\Quad_{2m}$ peut être
  étendu en un invariant de $I$.
\end{coro}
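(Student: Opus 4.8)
The plan is to deduce this directly from Serre's description recalled at the start of the section, together with Proposition \ref{prop_fixed_dim}. Serre's theorem (\cite[thm 17.3]{GMS}) asserts that $\Inv(\Quad_{2m},\mu_2)$ is a free $H^*(k,\mu_2)$-module with basis the Stiefel--Whitney classes $w_0,\dots,w_{2m}$. Since a form of dimension $2m$ is even-dimensional, its Witt class lies in $I(K)$, so every invariant of $I$ restricts, by composition with $q\mapsto[q]$, to an invariant of $\Quad_{2m}$; in particular the $u_d^{(1)}$ restrict to elements of $\Inv(\Quad_{2m},\mu_2)$, and it suffices to show that $u_0^{(1)},\dots,u_{2m}^{(1)}$ is \emph{another} basis of this same free module.

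First I would invoke Proposition \ref{prop_fixed_dim} in the case $A=H$ (so that $h^i=w_i$ and $r=m$), which gives, for $q\in\Quad_{2m}(K)$,
\[ u_d^{(1)}(q) = \sum_{i=0}^d (-1)^i\binom{m-i}{d-i}\{-1\}^{d-i}\,w_i(q). \]
This expresses each $u_d^{(1)}$ as an $H^*(k,\mu_2)$-linear combination of $w_0,\dots,w_d$ only, so the change-of-basis matrix from $(w_i)_{0\ppq i\ppq 2m}$ to $(u_d^{(1)})_{0\ppq d\ppq 2m}$ is lower triangular. Its diagonal coefficient in row $d$ is $(-1)^d\binom{m-d}{0}\{-1\}^0=(-1)^d$; since $H^*(k,\mu_2)$ is an $\mathbb{F}_2$-algebra this equals $1$, so the matrix is unipotent and in particular invertible over $H^*(k,\mu_2)$. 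Hence $(u_d^{(1)})_{0\ppq d\ppq 2m}$ is indeed a free basis of $\Inv(\Quad_{2m},\mu_2)$, which is the first assertion.

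For the second assertion, any $\alpha\in\Inv(\Quad_{2m},\mu_2)$ now writes uniquely as a \emph{finite} combination $\alpha=\sum_{d=0}^{2m}a_d\,(u_d^{(1)})_{|\Quad_{2m}}$ with $a_d\in H^*(k,\mu_2)$. The same finite combination $\sum_{d=0}^{2m}a_d\,u_d^{(1)}$, formed with the global invariants $u_d^{(1)}\in\Inv(I,\mu_2)$, is itself an invariant of $I$ whose restriction to $\Quad_{2m}$ is $\alpha$; thus $\alpha$ extends to $I$.

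I do not expect a genuinely hard step: the whole argument rests on a triangularity observation layered on top of the already-proved Proposition \ref{prop_fixed_dim} and the cited theorem of Serre. The only point needing a little care is verifying that the diagonal entries are \emph{units} (not merely nonzero), so that the transition matrix is honestly invertible; this is immediate here because the binomial coefficient $\binom{m-i}{d-i}$ specializes to $1$ on the diagonal and the coefficient ring has characteristic $2$, making the matrix unipotent.
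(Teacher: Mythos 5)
Your proof is correct and follows essentially the same route as the paper: the paper's own (one-line) argument is precisely that the transition matrix between the $u_d^{(1)}$ and the $w_d$, given by Proposition \ref{prop_fixed_dim} with $A=H$, is triangular with $1$'s on the diagonal. You merely spell out the details the paper leaves implicit (the diagonal entries $(-1)^d$ being units since $H^*(k,\mu_2)$ has characteristic $2$, and the extension to $I$ via the same finite combination of the global $u_d^{(1)}$), which is a faithful expansion rather than a different approach.
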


\begin{proof}
  On voit que la matrice de passage entre les $u^{(1)}_d$ et les $w_d$
  est triangulaire avec des 1 sur la diagonale.
\end{proof}

\subsubsection*{Invariants de $I^2$}

\label{par_quad}Dans \cite[thm 20.6]{GMS}, Serre décrit également les
invariants cohomologiques de $\Quad'_{2r}$, le foncteur des formes
de dimension $2r$ et de discriminant trivial. Spécifiquement,
$\Inv(\Quad'_{2r},\mu_2)$ est la somme directe de l'image de
$\Inv(\Quad_{2r},\mu_2)$ par restriction à $\Quad'_{2r}$, et
de $I_{(-1)^r}$ où $I_\delta = \ens{x\in H^*(k,\mu_2)}{(\delta)\cup x=0}$.
L'invariant correspondant à $x\in I_{(-1)^r}$ est $b_x$
qui envoie $\fdiag{a_1,\dots,a_{2r}}$ sur $(a_1,\dots,a_{2r-1})\cup x$.

En particulier, sauf dans des cas très exceptionnels (quand
$I_{(-1)^r}=0$), il existe des invariants de $\Quad'_{2r}$
qui ne sont pas la restriction d'un invariant de $\Quad_{2r}$.
Or, d'après la remarque \ref{rem_restr_cohom}, tout invariant
de $I^2$ s'étend à $I$, donc il existe des invariants de
$\Quad'_{2r}=\Quad_{2r}\cap I^2$ qui ne s'étendent pas à $I^2$
(et l'obstruction est précisément $I_{(-1)^r}$).


\subsection{Opérations sur la cohomologie}\label{sec_operations}

Dans cette partie on s'intéresse spécifiquement au cas des invariants
cohomologiques. Il a été observé par Serre qu'on pouvait définir des sortes de carrés
divisés sur la cohomologie modulo 2 :

\[ \anonfoncdef{H^n(K,\mu_2)}{H^{2n}(K,\mu_2)/(-1)^{n-1}\cup H^{n+1}(K,\mu_2)}
{\sum_i \alpha_i}{\sum_{i<j} \alpha_i\cup \alpha_j.} \]

Le quotient à droite est nécessaire pour que l'application soit bien
définie. De même, on peut définir des puissances divisées de degré
supérieur :

\[ \anonfoncdef{H^n(K,\mu_2)}{H^{dn}(K,\mu_2)/(-1)^{n-1}\cup H^{(d-1)n+1}(K,\mu_2)}
{\sum_i \alpha_i}{\sum_{i_1<\dots <i_d} \alpha_{i_1}\cup\cdots \cup \alpha_{i_d}.} \]

D'un autre côté, Vial (\cite{Via}) caractérise les opérations
\[ K^M_n(K)/2 \To K^M_*(K)/2 \]
en fonction de certaines puissances divisées. En utilisant
la conjecture de Milnor, cela se traduit par la classification
des opérations
\[ H^n(K,\mu_2) \To H^*(K,\mu_2). \]

De même que pour les opérations de Serre il était nécessaire de
considérer un quotient à droite, il faut ici s'attendre à une certaine
restriction sur l'ensemble de départ. De fait, l'énoncé de Vial
est le suivant :

\begin{prop}[\cite{Via}, Thm 2]
  Si $n\in \N^*$, le module des opérations $H^n(K,\mu_2)\to H^*(K,\mu_2)$ est
  le $H^*(k,\mu_2)$-module
  \[ H^*(k,\mu_2)\cdot 1 \oplus H^*(k,\mu_2)\cdot \Id \oplus  \bigoplus_{d\in \N} \Ker(\tau_n)\cdot \gamma_d \]
  où $\tau_n: H^*(k,\mu_2)\To H^*(k,\mu_2)$ est défini par $\tau_n(x) = (-1)^{n-1}\cup x$
  et si $a\in \Ker(\tau_n)$, alors
  \[ a\cdot \gamma_d \left(\sum_{1\ppq i\ppq r} x_i\right) = a\cdot \sum_{i_1<\cdots <i_d}x_{i_1}\cup \cdots \cup x_{i_d} \]
  où les $x_i$ sont des symboles.
\end{prop}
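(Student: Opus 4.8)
The plan is to reinterpret operations $H^n(-,\mu_2)\to H^*(-,\mu_2)$ as invariants of $I^n$ already classified in Theorem \ref{thm_g}, and then to single out those that descend to $H^n$. By the Milnor conjecture $e_n\colon I^n(K)\to H^n(K,\mu_2)$ is surjective with kernel $I^{n+1}(K)$, so the functor $H^n(-,\mu_2)$ is naturally isomorphic to $I^n(-)/I^{n+1}(-)$. An operation is therefore precisely an $\alpha\in\Inv(I^n,H)$ that is constant on cosets of $I^{n+1}$, i.e. $\alpha(q+\psi)=\alpha(q)$ for all $q\in I^n(K)$ and all $\psi\in I^{n+1}(K)$. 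Since $I^{n+1}$ is generated by $(n+1)$-fold Pfister forms and invariance under $+\psi$ forces invariance under $-\psi$, it suffices, by the induction on the number of Pfister summands used throughout (as in the proof of Proposition \ref{prop_ker_phi}), to impose this only for $\psi\in\Pf_{n+1}(K)$.

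First I would compute the obstruction to descent in the basis $(f_n^d)$, writing $\alpha=\sum_d a_d f_n^d$ as in Corollary \ref{cor_ecr_unique}. The additivity of Proposition \ref{prop_f} gives $\alpha(q+\psi)-\alpha(q)=\sum_{k\geq 0}f_n^k(q)\left(\sum_{m\geq 1}a_{k+m}f_n^m(\psi)\right)$, so the values $f_n^m(\psi)$ for $\psi\in\Pf_{n+1}$ are the crucial input. Here the key simplification is the restriction formula of Corollary \ref{cor_restr_cohom}: restricting $f_n^d$ from $I^n$ to $I^{n+1}$ yields $\{-1\}^{m(n-1)}f_{n+1}^m$ when $d=2m$ and $0$ when $d$ is odd. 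Since $\psi$ is an $(n+1)$-Pfister form, $f_{n+1}^m(\psi)=0$ for $m\geq 2$ while $f_{n+1}^1(\psi)=e_{n+1}(\psi)$, so $f_n^m(\psi)$ vanishes for all $m$ except $m=0$ (value $1$) and $m=2$ (value $\{-1\}^{n-1}e_{n+1}(\psi)$). The obstruction collapses to $\sum_{k\geq 0}f_n^k(q)\,a_{k+2}\,\{-1\}^{n-1}e_{n+1}(\psi)$; in particular $a_0$ and $a_1$ never appear.

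It then remains to read off the coefficient conditions. Varying $q$ over sums of more and more Pfister forms and using Proposition \ref{prop_somme_pfis} to separate the terms $f_n^k(q)$, together with a generic choice of symbol $e_{n+1}(\psi)$ (exactly as genericity is used in Proposition \ref{prop_f_simil_pfis}), the vanishing of the obstruction for all $q$ and $\psi$ forces $a_d\cdot\{-1\}^{n-1}=0$, i.e. $a_d\in\Ker(\tau_n)$, for every $d\geq 2$, leaving $a_0,a_1$ free; conversely any such family descends. Translating back through $f_n^0=1$, $f_n^1=e_n=\Id$, and the identity $f_n^d(\sum_i\phi_i)=\sum_{i_1<\cdots<i_d}e_n(\phi_{i_1})\cup\cdots\cup e_n(\phi_{i_d})$ of Proposition \ref{prop_somme_pfis} (which is exactly $\gamma_d$ on sums of symbols), this gives precisely the asserted module $H^*(k,\mu_2)\cdot 1\oplus H^*(k,\mu_2)\cdot\Id\oplus\bigoplus_{d\geq 2}\Ker(\tau_n)\cdot\gamma_d$.

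The main obstacle I expect is not the core computation but the bookkeeping around finiteness. The expansion $\alpha=\sum_d a_d f_n^d$ may be infinite, so one must reconcile Vial's notion of operation (and the direct sum in the statement) with the framework here: the genuinely $H^*$-valued combinations are controlled by the $g_n^d$ basis of Theorem \ref{thm_g}, and one checks via the triangular change of basis of Proposition \ref{prop_f_g} that the descent condition transports between the $f_n^d$ and $g_n^d$ descriptions (the finitely generated submodule $M'$ corresponding to operations of uniformly bounded degree). The second delicate point is the passage from ``$\sum_k f_n^k(q)(\cdots)=0$ for all $q$'' to pointwise vanishing of each inner coefficient, which must be justified carefully using axioms \ref{cond_3} and \ref{cond_4} on $A$ rather than taken for granted.
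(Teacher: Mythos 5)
Correct, and essentially the paper's own argument from the section on cohomology operations: the paper likewise identifies operations on $H^n$ with invariants $\alpha\in\Inv(I^n,\mu_2)$ satisfying $\alpha(q+\phi)=\alpha(q)$ for all $\phi\in\Pf_{n+1}(K)$, evaluates $u_{nk}^{(n)}(\phi)$ via le corollaire \ref{cor_restr_cohom} (valant $1$ pour $k=0$, $(-1)^{n-1}\cup e_{n+1}(\phi)$ pour $k=2$, et $0$ sinon), et lit l'obstruction comme $(-1)^{n-1}\cup e_{n+1}(\phi)\cup\alpha^{++}(q)$ --- votre $\sum_k a_{k+2}f_n^k$ est exactement $\alpha^{++}$ d'après la proposition \ref{prop_phi_pi} --- d'où la condition $a_d\in\Ker(\tau_n)$ pour $d\pgq 2$. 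Vos précautions finales sur les combinaisons infinies et sur la séparation des coefficients au moyen des conditions \ref{cond_3} et \ref{cond_4} sont des raffinements légitimes de points que le texte laisse implicites, non des divergences d'approche.
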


Dans les deux cas, il est nécessaire pour que les opérations soient bien définies
d'annuler certaines puissances de $(-1)$, que ce soit par un noyau à gauche ou
un conoyau à droite.

Les invariants $u^{(n)}_{nd}$ peuvent être vus comme des relevés de ces
opérations de puissances divisées au niveau de $I^n$. Le phénomène
remarquable qui se produit alors est que ces opérations y sont définies
sans aucune restriction.

De plus, on peut sans difficulté retrouver la classification de Vial
en utilisant notre analyse des invariants de $I^n$ : les opérations
sur $H^n(K,\mu_2)$ ne sont rien d'autre que les invariants dans $\Inv(I^n,\mu_2)$
qui vérifient la condition
\begin{equation}\label{eq_vial}
\alpha(q+\phi)=\alpha(q) \quad \forall q\in I^n(K), \phi\in \Pf_{n+1}(K).
\end{equation}

On peut maintenant utiliser le fait que, d'après le corollaire \ref{cor_restr_cohom},
$u_{nk}^{(n)}(\phi)$ vaut $1$ si $k=0$, $(-1)^{n-1}\cup e_{n+1}(\phi)$ si $k=2$,
et $0$ autrement ; couplé à la formule d'addition, on obtient
\[ u_{nd}^{(n)}(q+\phi)=u_{nd}^{(n)}(q) + (-1)^{n-1}\cup e_{n+1}(\phi)\cup u_{n(d-2)}^{(n)}(q). \]
En écrivant $\alpha$ comme combinaison des $u_{nd}^{(n)}$, on obtient
\[ \alpha(q+\phi) = \alpha(q) + (-1)^{n-1}\cup e_{n+1}(\phi)\cup \alpha^{++}(q). \]

De là, $\alpha$ vérifie la condition (\ref{eq_vial}) si et seulement si 
$(-1)^{n-1}\cup \alpha^{++}=0$, ce qui revient précisément à dire que le coefficient
devant $u^{(n)}_{nd}$ pour $d\pgq 2$ est dans le noyau de $\tau_n$, et on retrouve
exactement l'énoncé de Vial.

\subsection{Invariants de formes semi-factorisées}

Dans \cite[def 20.8]{Gar}, Garibaldi définit un invariant cohomologique sur les formes de
rang 12 dans $I^3$ de la façon suivante : toute telle forme peut
s'écrire $q=\pfis{c}q'$ où $q'\in I^2(K)$, et on pose $a_5(q)=e_5(\pfis{c} \pi_2^2(q))$
(en utilisant notre notation). Bien entendu, l'ingrédient non trivial
est que $\pfis{c} \pi_2^2(q)$ est indépendant de la décomposition de $q$.

On souhaite, notamment en vue d'application aux algèbres à involution,
couvrir ce type de construction. Notamment, cet invariant $a_5$
peut se définir sur tout élément de $I^3$ qui s'écrit comme le produit
d'une 1-forme de Pfister et d'un élément de $I^2$ ; il est donc
naturel de travailler au niveau des classes de Witt.

De façon plus générale, on pose :

\begin{defi}\label{def_in_r}
  On pose $I^{n,r}(K) = \ens{\phi\cdot q}{\phi\in \Pf_r(K),\, q\in I^{n-r}(K)}$,
  pour tous $n\in \N^*$ et $0\ppq r < n$.
  
  En particulier, $I^{n,0} = I^n$.
\end{defi}

\begin{propdef}\label{prop_delta_inv}
  Il existe un unique morphisme de $A(k)$-modules filtrés
  \[  \foncdef{\Delta^{n,r}}{\Inv_{norm}(I^{n,r},A)}{\Inv_{norm}(I^{n-r},A)}{\alpha}{\alpha^{(r)},} \]
  injectif et de degré $-r$, tel que
  \[ \alpha(\phi\cdot q) = f_r(\phi)\cdot \alpha^{(r)}(q) \]
  pour tous $\alpha\in \Inv_{norm}(I^{n,r},A)$, $\phi\in \Pf_r(K)$ et $q\in I^{n-r}(K)$.
\end{propdef}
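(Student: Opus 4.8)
Le plan est de reproduire le schéma de preuve des propositions-définitions \ref{prop_def_delta} et \ref{prop_def_psi} : on fixe $q$, on interprète $\alpha$ comme un invariant de formes de Pfister en la variable restante, puis on invoque la condition \ref{cond_4} sur $A$. Précisément, pour $\alpha\in \Inv_{norm}(I^{n,r},A)$ et $q\in I^{n-r}(K)$ fixé, je poserais, pour toute extension $L/K$ et tout $\phi\in \Pf_r(L)$, $\beta_q(\phi)=\alpha(\phi\cdot q)$. Comme $I^{n,r}$ est un sous-foncteur pointé de $I^n$ et que l'application $\phi\mapsto \phi\cdot q$ est naturelle en $L$ (à $q$ fixé), $\beta_q$ définit un élément de $\Inv(\Pf_r,A)$ sur $K$. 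La condition \ref{cond_4} fournit alors d'uniques $x_q,y_q\in A(K)$ tels que $\beta_q = x_q + y_q\cdot f_r$.

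J'obtiendrais ensuite $x_q=0$ en évaluant en $\phi=0$ : puisque $f_r$ est un morphisme de groupes, $f_r(0)=0$, et $\alpha(0)=0$ car $\alpha$ est normalisé, d'où $x_q=\beta_q(0)=\alpha(0)=0$. On a donc $\alpha(\phi\cdot q)=f_r(\phi)\cdot y_q$, et je poserais $\alpha^{(r)}(q)=y_q$. L'unicité de $y_q$ est garantie par la condition (\ref{eq_f_gen}) : si $f_r(\phi)y=f_r(\phi)y'$ pour tout $\phi$ sur toute extension, alors $y=y'$. Cette unicité donne d'un seul coup la naturalité de $\alpha^{(r)}$ (les deux membres de l'équation de définition associés à une extension de corps coïncident), le fait que $\Delta^{n,r}$ soit un morphisme de $A(k)$-modules (on transporte les combinaisons linéaires par unicité), la normalisation $\alpha^{(r)}(0)=0$ (appliquer $q=0$), et l'unicité du morphisme $\Delta^{n,r}$ lui-même.

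Pour le degré $-r$, je partirais de $\alpha\in \Inv(I^{n,r},A^d)$ ; alors $f_r(\phi)\cdot \alpha^{(r)}(q)=\alpha(\phi q)\in A^d(L)$ pour tout $\phi\in \Pf_r(L)$ et toute extension $L/K$, et la reformulation de la condition \ref{cond_3} appliquée avec le rang $r$ et l'indice $d-r$ (de sorte que $(d-r)+r=d$) donne $\alpha^{(r)}(q)\in A^{d-r}(K)$. L'injectivité est immédiate : si $\alpha^{(r)}=0$, alors $\alpha(\phi q)=f_r(\phi)\cdot 0=0$ pour tous $\phi\in \Pf_r(K)$ et $q\in I^{n-r}(K)$ ; mais par définition tout élément de $I^{n,r}(K)$ est un tel produit $\phi q$, donc $\alpha=0$.

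Le point demandant le plus de soin n'est pas une difficulté profonde mais la justification que $\beta_q$ est bien un invariant de $\Pf_r$ \emph{défini sur $K$}, c'est-à-dire la naturalité en $L$ de $\phi\mapsto \alpha(\phi q)$, condition indispensable pour pouvoir appliquer \ref{cond_4}. Une fois ceci acquis, toute la preuve repose, exactement comme pour $\Phi^\eps$ et $\Psi$, sur la décomposition de \ref{cond_4} et sur la condition d'annulation (\ref{eq_f_gen}), les propriétés de module filtré, de degré $-r$ et d'injectivité s'en déduisant formellement.
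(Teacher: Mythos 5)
Your proposal is correct and takes essentially the same route as the paper's own proof: fix $q$, regard $\phi\mapsto\alpha(\phi\cdot q)$ as an element of $\Inv(\Pf_r,A)$ defined over $K$, and apply condition \ref{cond_4} to get the unique decomposition $x_q+y_q\cdot f_r$, where $x_q=\alpha(0)=0$ by normalization, so that one may set $\alpha^{(r)}(q):=y_q$. The paper states this in three lines and leaves the degree $-r$, injectivity and naturality assertions implicit; your verifications of these points (via condition \ref{cond_3}, the definition of $I^{n,r}$ as the set of products $\phi\cdot q$, and the uniqueness in \ref{cond_4}) are exactly the intended ones.
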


\begin{proof}
  Soient $\alpha\in \Inv(I^{n,r},A)$ et $q\in I^{n-r}(K)$. On peut alors définir
  un invariant sur $K$ des $r$-formes de Pfister par $\phi\mapsto \alpha(\phi\cdot q)$.
  D'après la condition \ref{cond_4} sur $A$, il y a
  d'uniques $x(q),y(q)\in A(K)$ tels que
  \[  \alpha(\phi\cdot q) = x(q) + f_n(\phi)\cdot y(q)  \]
  et par unicité ce sont des invariants de Witt de $I^{n-r}$,
  avec $x = \alpha(0)$ constant. On pose $\alpha^{(r)} := y$.
\end{proof}

Comme $I^{n,r}\subset I^n$, on peut se poser la question de la
restriction des invariants de $I^n$ à $I^{n,r}$. On a le résultat
élémentaire suivant :

\begin{prop}
Soient $n\in \N^*$ et $0\ppq r< n$. Alors pour tout $d\in \N^*$, 
\[  \Delta^{n,r}(f_n^d) = \{-1\}^{r(d-1)}f_{n-r}^d.       \]
\end{prop}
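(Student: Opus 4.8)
The plan is to reduce the identity, via the uniqueness built into $\Delta^{n,r}$ together with the classification of invariants, to a direct computation on sums of Pfister forms, where the operators $f_n^d$ are controlled by Proposition \ref{prop_somme_pfis}.

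First I would note that for $d\geq 1$ the restriction of $f_n^d$ to $I^{n,r}$ is a normalized invariant (since $f_n^d(0)=0$), so that $\Delta^{n,r}(f_n^d)$ is well defined, and that both $\Delta^{n,r}(f_n^d)$ and $\{-1\}^{r(d-1)}f_{n-r}^d$ are invariants of $I^{n-r}$. By the corollaire \ref{cor_ecr_unique} an invariant of $I^{n-r}$ is entirely determined by its values on sums of $(n-r)$-formes de Pfister, so it suffices to establish the equality of these two invariants on every $q=\sum_{i=1}^s\psi_i$ with $\psi_i\in\Pf_{n-r}(K)$.

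For such a $q$ and any $\phi\in\Pf_r(K)$, each product $\phi\psi_i$ lies in $\Pf_n(K)$, so $\phi q=\sum_i\phi\psi_i$ is a sum of $n$-formes de Pfister and the proposition \ref{prop_somme_pfis} yields
\[ f_n^d(\phi q)=\sum_{i_1<\dots<i_d}f_n(\phi\psi_{i_1})\cdots f_n(\phi\psi_{i_d}). \]
Using the multiplicativity (\ref{eq_f_prod}) in the form $f_n(\phi\psi_i)=f_r(\phi)f_{n-r}(\psi_i)$, together with the relation $f_r(\phi)^2=\{-1\}^rf_r(\phi)$ (already exploited in the proof of la proposition \ref{prop_phi_pm}), which gives by induction $f_r(\phi)^d=\{-1\}^{r(d-1)}f_r(\phi)$, I would factor the common power of $f_r(\phi)$ out of each summand and apply la proposition \ref{prop_somme_pfis} once more to recognize the remaining sum as $f_{n-r}^d(q)$; this produces
\[ f_n^d(\phi q)=f_r(\phi)\cdot\bigl(\{-1\}^{r(d-1)}f_{n-r}^d(q)\bigr). \]
By the defining property of $\Delta^{n,r}$, and the uniqueness of the decomposition $\Inv(\Pf_r,A)=A(K)\oplus A(K)\cdot f_r$ coming from la condition \ref{cond_4}, comparing this with $f_n^d(\phi q)=f_r(\phi)\cdot\Delta^{n,r}(f_n^d)(q)$ forces $\Delta^{n,r}(f_n^d)(q)=\{-1\}^{r(d-1)}f_{n-r}^d(q)$ for every such $q$.

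The computation itself is routine once the ingredients are assembled; the delicate point is conceptual rather than technical. One can evaluate $f_n^d$ transparently only on \emph{sommes} of formes de Pfister, since la proposition \ref{prop_somme_pfis} behaves badly on differences, so the pointwise identity is obtained only for $q$ a sum of $(n-r)$-formes de Pfister. The main obstacle is therefore to pass from sums of Pfister forms to an arbitrary $q\in I^{n-r}(K)$; I would handle this not by extending the explicit computation but by observing that both sides of the asserted formula are \emph{invariants} of $I^{n-r}$ and invoking le corollaire \ref{cor_ecr_unique}, so that agreement on sums of Pfister forms already forces global equality.
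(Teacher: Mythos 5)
Your proof is correct, but it follows a genuinely different route from the paper's. The paper first establishes the commutation formula $(\alpha^{(r)})^+ = \{-1\}^r(\alpha^+)^{(r)}$ for an arbitrary normalized invariant $\alpha$ (the proposition stated immediately after this one), then deduces the present identity by \og récurrence sur le décalage\fg{} : since $\Phi(f_n^{d+1})=f_n^d$ (proposition \ref{prop_phi_pi}) and the kernel of $\Phi$ consists of the constant invariants (proposition \ref{prop_ker_phi}), the formula at rank $d$ propagates to rank $d+1$ once the values at $0$ are matched. You instead evaluate both sides directly on sums of Pfister forms via proposition \ref{prop_somme_pfis}, the multiplicativity (\ref{eq_f_prod}) and the relation $f_r(\phi)^d=\{-1\}^{r(d-1)}f_r(\phi)$, and you close the argument with the determination principle of the corollaire \ref{cor_ecr_unique}; your two delicate steps are handled correctly — the uniqueness in condition \ref{cond_4} does identify $\Delta^{n,r}(f_n^d)(q)$ pointwise (your computation is uniform in the extension $L/K$, as needed), and the passage from sums of Pfister forms to arbitrary $q\in I^{n-r}(K)$ is legitimate precisely because both sides are invariants of $I^{n-r}$. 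The trade-off between the two approaches: yours is a self-contained one-shot computation that makes the origin of the factor $\{-1\}^{r(d-1)}$ completely transparent, but it leans on the classification theorem \ref{thm_g} (through the corollaire \ref{cor_ecr_unique}), the heaviest result of the chapter; the paper's route needs only the lighter kernel statement for the shift operator and, in exchange, yields the commutation formula $(\alpha^{(r)})^+=\{-1\}^r(\alpha^+)^{(r)}$, valid for every normalized invariant and of independent interest, rather than just for the $f_n^d$.
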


Cette proposition découle immédiatement, par récurrence sur le décalage, de
la proposition suivante, qui est indépendamment intéressante.

\begin{prop}
  Soit $\alpha\in \Inv_{norm}(I^n,A)$. Alors
  \[ (\alpha^{(r)})^+ = \{-1\}^r(\alpha^+)^{(r)}. \]
\end{prop}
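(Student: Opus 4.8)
The right-hand side only makes sense once $\Delta^{n,r}$ is applied to $\alpha^+$, so I first observe that I need $\alpha^+$ to be normalized (its restriction to $I^{n,r}$ must lie in $\Inv_{norm}$, i.e. $\alpha^+(0)=0$). This is automatic in the setting where the statement is invoked for the recursion on the décalage, namely for $\alpha=f_n^d$ with $d\ge 2$, where $\alpha^+=f_n^{d-1}$; tracking this constant is in fact the one delicate point, and I will return to it. Granting it, both sides are normalized invariants of $I^{n-r}$, so it suffices to show they coincide at an arbitrary $q\in I^{n-r}(L)$ over an arbitrary extension $L/K$. The device is to evaluate $\alpha$ on $\phi\cdot(q+\psi)=\phi q+\phi\psi$ with $\phi\in\Pf_r(L)$ and $\psi\in\Pf_{n-r}(L)$, and to expand this in two ways: the point is that $\phi\psi\in\Pf_n(L)$ whereas $\phi q\in I^{n,r}(L)$, so the same expression unfolds either through the shift on $I^n$ followed by $\Delta^{n,r}$, or through $\Delta^{n,r}$ followed by the shift on $I^{n-r}$.

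For the first expansion I treat $\phi\psi$ as an $n$-fold Pfister element and apply the defining relation of the shift operator (proposition-définition \ref{prop_def_delta}):
\[ \alpha(\phi q+\phi\psi)=\alpha(\phi q)+f_n(\phi\psi)\,\alpha^+(\phi q). \]
I then substitute $f_n(\phi\psi)=f_r(\phi)f_{n-r}(\psi)$ from (\ref{eq_f_prod}), expand $\alpha(\phi q)=f_r(\phi)\alpha^{(r)}(q)$ and $\alpha^+(\phi q)=f_r(\phi)(\alpha^+)^{(r)}(q)$ via the defining relation of $\Delta^{n,r}$ (proposition-définition \ref{prop_delta_inv}), and finally use $f_r(\phi)^2=\{-1\}^r f_r(\phi)$ (the identity already exploited in the proof of proposition \ref{prop_phi_pm}). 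This gives
\[ \alpha(\phi q+\phi\psi)=f_r(\phi)\alpha^{(r)}(q)+\{-1\}^r f_r(\phi)f_{n-r}(\psi)\,(\alpha^+)^{(r)}(q). \]
The sign $\{-1\}^r$ enters precisely through this squaring of $f_r(\phi)$, which I expect to be the crucial, genuinely non-formal step.

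For the second expansion I use that $q+\psi\in I^{n-r}(L)$, so $\Delta^{n,r}$ applies directly, and then the shift on $I^{n-r}$:
\[ \alpha(\phi(q+\psi))=f_r(\phi)\,\alpha^{(r)}(q+\psi)=f_r(\phi)\alpha^{(r)}(q)+f_r(\phi)f_{n-r}(\psi)\,(\alpha^{(r)})^+(q). \]
Equating the two expansions, the terms $f_r(\phi)\alpha^{(r)}(q)$ cancel and there remains
\[ f_n(\phi\psi)\bigl((\alpha^{(r)})^+(q)-\{-1\}^r(\alpha^+)^{(r)}(q)\bigr)=0. \]

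To finish I let $\phi$ and $\psi$ range over all Pfister elements over all extensions of $K$. Since every $n$-fold Pfister element $\pfis{a_1,\dots,a_n}$ factors as $\pfis{a_1,\dots,a_r}\cdot\pfis{a_{r+1},\dots,a_n}$, the products $\phi\psi$ exhaust $\Pf_n$, so hypothesis (\ref{eq_f_gen}) on $A$ lets me cancel the common factor $f_n(\phi\psi)$ and conclude $(\alpha^{(r)})^+(q)=\{-1\}^r(\alpha^+)^{(r)}(q)$ for every $q$, as claimed. The main obstacle is not the comparison itself, which is routine, but the normalization bookkeeping flagged at the outset: without the assumption $\alpha^+(0)=0$ the second step of the first expansion carries a spurious constant $\alpha^+(0)$, and it is exactly because the normalized shift is what intervenes in the recursion on the décalage that this term is harmless in the intended applications.
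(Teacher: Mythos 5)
Your proof is correct and follows essentially the same route as the paper's: the double expansion of $\alpha(\phi(q+\psi))$ — once via the shift on $I^n$ with the $n$-fold Pfister element $\phi\psi$ and the factorization through $\Delta^{n,r}$, once via $\Delta^{n,r}$ followed by the shift on $I^{n-r}$ — with the sign produced by $f_r(\phi)^2=\{-1\}^r f_r(\phi)$ and the conclusion drawn from (\ref{eq_f_gen}). Your normalization caveat is a genuine catch rather than mere bookkeeping: the paper's second equality silently assumes $\alpha^+(\phi q)=f_r(\phi)(\alpha^+)^{(r)}(q)$, which fails when $\alpha^+(0)\neq 0$ (e.g.\ $\alpha=f_n$, where $\alpha^+=1$ and the stated identity would read $1=0$), so the formula really does require $\alpha^+$ normalized — exactly as in the intended application $\alpha=f_n^d$, $d\pgq 2$, with the case $d=1$ handled directly.
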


\begin{proof}
  Soit $\alpha\in \Inv_{norm}(I^n,A)$. Si $\phi\in \Pf_r(K)$,
  $q\in I^{n-r}(K)$ et $\psi\in \Pf_{n-r}(K)$, on a
  \[ \alpha(\phi(q+\psi)) = \alpha(\phi q) + f_n(\phi\psi) \alpha^+(\phi q) = f_r(\phi)\alpha^{(r)}(q) + \{-1\}^rf_n(\phi\psi)(\alpha^+)^{(r)}(q) \]
  ainsi que
  \[ \alpha(\phi(q+\psi)) = f_r(\phi)\alpha^{(r)}(q+\psi) = f_r(\phi)\alpha^{(r)}(q) + f_r(\phi)f_{n-r}(\psi)(\alpha^{(r)})^+(q) \]
  d'où la formule.
\end{proof}

La question qu'on peut alors se poser est : l'application $\Delta^{n,r}$
est-elle surjective ? Autrement dit, tout invariant de $I^{n-r}$
permet-il de définir un invariant de $I^{n,r}$ comme on l'a illustré
avec $a_5$ ?

On ne dispose pas de réponse en toute généralité, mais on peut
quand même traiter un cas particulier :

\begin{prop}\label{prop_pi_delta}
Pour tout $n\pgq 2$, $\Delta^{n,1}$ est un isomorphisme.
\end{prop}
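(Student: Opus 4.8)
The statement asserts that $\Delta^{n,1}$ is an isomorphism, and since Proposition-définition \ref{prop_delta_inv} already provides injectivity and degree $-1$, the entire content is surjectivity. So the plan is: given a normalized invariant $\beta\in\Inv_{norm}(I^{n-1},A)$, produce $\alpha\in\Inv_{norm}(I^{n,1},A)$ with $\Delta^{n,1}(\alpha)=\beta$, that is, with $\alpha(\pfis a\cdot q)=\{a\}\beta(q)$ for all $a\in K^*$ and $q\in I^{n-1}(K)$. I would first stress that one cannot hope to obtain $\alpha$ by restricting an invariant of $I^n$: the case $r=1$ of the identity above reads $\Delta^{n,1}(f_n^d)=\{-1\}^{d-1}f_{n-1}^d$, so restrictions of invariants of $I^n$ reach only the submodule generated by the $\{-1\}^{d-1}f_{n-1}^d$, which is proper as soon as $\{-1\}\neq 0$. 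The invariants $\pfis a\,q\mapsto\{a\}\beta(q)$ are genuinely new, not extendable to $I^n$.

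First I would take the candidate assignment $\alpha(\pfis a\cdot q):=\{a\}\beta(q)$ and check the harmless points: writing $\beta=\sum_d b_d g_{n-1}^d$, for fixed $q$ only finitely many $g_{n-1}^d(q)$ are nonzero by Proposition \ref{prop_g_borne}, so the value lies in $A(K)$; naturality in $K$ is immediate; and $\alpha$ is normalized because $\beta$ is. The whole difficulty is well-definedness: the same element of $I^{n,1}(K)$ may be written $\pfis a\,q=\pfis b\,q'$ in several ways, and one must check $\{a\}\beta(q)=\{b\}\beta(q')$.

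The main obstacle is exactly this coherence, and the plan is to isolate the decisive relation, the associativity move $\pfis a\cdot(\pfis c\,s)=\pfis c\cdot(\pfis a\,s)$ for $s\in I^{n-2}(K)$, which forces the identity
\[ \{a\}\,\beta(\pfis c\,s)=\{c\}\,\beta(\pfis a\,s), \]
henceforth $(\star)$, and then to prove $(\star)$ via the classification. By linearity it suffices to treat $\beta=g_{n-1}^d$, and by Corollaire \ref{cor_ecr_unique} the two sides of $(\star)$, viewed as invariants of $s\in I^{n-2}$, agree as soon as they agree on sums of $(n-2)$-forms of Pfister. For $s=\sum_i\rho_i$ with $\rho_i\in\Pf_{n-2}(K)$, each $\pfis c\,\rho_i$ is an $(n-1)$-form of Pfister with $f_{n-1}(\pfis c\,\rho_i)=\{c\}\{\rho_i\}$; expanding $g_{n-1}^d$ in the $f_{n-1}^k$ via Proposition \ref{prop_f_g} and using Proposition \ref{prop_somme_pfis} yields $g_{n-1}^d(\pfis c\,s)=\sum_k\theta_{d,k}\{c\}^k\sigma_k$, where $\sigma_k=\sum_{i_1<\cdots<i_k}\{\rho_{i_1}\}\cdots\{\rho_{i_k}\}$ is independent of $c$ and $\theta_{d,0}=0$. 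Thus $(\star)$ reduces to $\{a\}\{c\}^k=\{c\}\{a\}^k$ for $k\geq 1$, which holds because $\{x\}^2=\{-1\}\{x\}$ gives $\{x\}^k=\{-1\}^{k-1}\{x\}$, so both sides equal $\{-1\}^{k-1}\{a\}\{c\}\,\sigma_k$. For the base case $n=2$ one argues directly on $I^0=W$, where $\pfis c\,\fdiag t=\fdiag t\,\pfis c$ and the similitude operator $\Psi$ plays the role of the expansion.

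Granting that these coherences exhaust the fibres of the multiplication map $\Pf_1\times I^{n-1}\to I^{n,1}$, well-definedness follows, $\alpha$ is a genuine normalized invariant, and by construction $\Delta^{n,1}(\alpha)=\beta$; together with injectivity this shows $\Delta^{n,1}$ is an isomorphism. I expect the truly delicate point, beyond $(\star)$, to be precisely this last claim, namely that the associativity move (together with the substitutions $q\mapsto q'$ with $\pfis a(q-q')=0$, handled by the same classification technique) generates all relations among factorizations. This is where one needs either a common-slot / chain-equivalence argument for products by $1$-forms of Pfister, or a reduction to generic forms where the fibres are transparent.
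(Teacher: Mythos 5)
Your frame (reduce the statement to well-definedness of $\pfis{a}q\mapsto\{a\}\beta(q)$) matches the paper's, and your verification of $(\star)$ via Corollaire \ref{cor_ecr_unique} is correct as far as it goes; but the gap you flag in your last paragraph is exactly where the proof lives, and it is not closable by the tools you deploy. The relation $(\star)$ only compares two factorizations of $\pfis{a}\pfis{c}s$ that already share a visible common refinement, whereas a general equality $\pfis{a}q=\pfis{b}q'$ in $W(K)$ carries no such structure. Moreover your parenthetical claim that the substitutions $q\mapsto q'$ with $\pfis{a}(q-q')=0$ are ``handled by the same classification technique'' cannot work: the classification governs natural transformations, while $\pfis{a}(q-q')=0$ is a pointwise relation between specific forms over a specific field, invisible to invariants-theoretic arguments. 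The paper closes both gaps with structural results on multiples of Pfister forms. For $a=b$: by \cite[thm 41.3]{EKM}, $\pfis{a}(q-q')=0$ forces $q-q'=\sum_i\pfis{c_i}\phi_i$ with each $c_i$ represented by $\pfis{a}$; since $f_{n-1}^k(\pfis{c_i}\phi_i)$ is a multiple of $\{c_i\}$ and $\{a\}\{c_i\}=0$, the sum formula gives $\{a\}f_{n-1}^d(q)=\{a\}f_{n-1}^d(q')$ by induction on the number of terms. For $a\neq b$: by the appendix B of \cite{Gar} there is a single $\phi=\sum_i\fdiag{\lambda_i}\pfis{c_i}$, with $c_i$ represented by $\pfis{ab}$, such that $\pfis{a}q=\pfis{a}\phi=\pfis{b}\phi=\pfis{b}q'$ — precisely the common-slot input you name as a hope but do not supply.

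Note also that the resulting key identity, $\{a\}f_{n-1}^d(\phi)=\{b\}f_{n-1}^d(\phi)$ for such $\phi$, is not your slot-swap $(\star)$: the paper proves it as a separate lemma, at the level of Witt invariants ($A=W$, then pushed forward by $f_{d+1}$), for arbitrary $q\in I(K)$ of the form $\sum_i\fdiag{\lambda_i}\pfis{c_i}$ with $c_i$ represented by $\pfis{ab}$, by induction on the number of terms. The base case writes $\pi_n^d=\sum_k x_k\pi_1^k$ (corollaire \ref{cor_relat_pi}) and uses $\pfis{a}\pi_1^k(\fdiag{\lambda}\pfis{c})=(-1)^k\pfis{a}\pfis{-1}^{k-2}\pfis{\lambda,c}$ together with $\pfis{a,c}=\pfis{b,c}$ (the content of ``$c$ represented by $\pfis{ab}$''); the induction step uses the multiplicative expansion of $\pi_n^d$ on sums, as in proposition \ref{prop_somme_pfis}. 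Finally, your expansion of a general $\beta$ in the $g_{n-1}^d$ is unnecessary overhead: it suffices to treat $\beta=f_{n-1}^d$, since once each $\pfis{a}q\mapsto\{a\}f_{n-1}^d(q)$ is well defined, any $\beta=\sum_d a_df_{n-1}^d$ is handled termwise and $\{a\}\beta(q)\in A(K)$ automatically. In short: your coherence $(\star)$ is correct but insufficient, and without \cite[thm 41.3]{EKM} and the common-factorization result of \cite{Gar} the fibres of $(a,q)\mapsto\pfis{a}q$ remain uncontrolled, so well-definedness — hence surjectivity — is unproven.
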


\begin{proof}
  Il suffit de montrer que $f_{n-1}^d$ est dans l'image pour tout $d\pgq 1$.
  On doit donc montrer que $\pfis{a}q\mapsto \{a\}f_{n-1}^d(q)$ est
  un invariant bien défini, autrement dit que si $q,q'\in I^{n-1}(K)$
  et $a,b\in K^*$, alors $\pfis{a}q = \pfis{b}q'$ implique que
  $\{a\}f_{n-1}^d(q) = \{b\}f_{n-1}^d(q')$.

  Supposons d'abord que $a=b$. Alors d'après \cite[thm 41.3]{EKM}, 
  \[ q - q' = \sum_{i\in J} \pfis{c_i}\phi_i \]
  où $c_i$ est représenté par $\pfis{a}$. On peut alors raisonner
  par récurrence sur $|J|$, et on est donc réduit au cas où
  $q' = q +\pfis{c}\phi$ avec $c$ représenté par $\pfis{a}$.

  Or pour tout $k\in \N^*$,  $f_{n-1}^k(\pfis{c}\phi)$ est
  un multiple de $\{c\}$, et donc $\{a\}f_{n-1}^k(\pfis{c}\phi) = 0$.
  De là :
  \[ \{a\}f_{n-1}^d(q') = \{a\}\sum_{k=0}^d f_{n-1}^k(q)f_{n-1}^{d-k}(\pfis{c}\phi) = \{a\}f_{n-1}^d(q). \]

  Supposons maintenant $a\neq b$. Alors d'après l'appendice B de \cite{Gar}, on a
  \[ \pfis{a}q = \pfis{a}\phi = \pfis{b}\phi = \pfis{b}q' \]
  avec $\phi = \sum_{i\in J} \fdiag{\lambda_i}\pfis{c_i}$, et $c_i$
  représenté par $\pfis{ab}$.

  La discussion précédente montre que $\{a\}f_{n-1}^d(q) = \{a\}f_{n-1}^d(\phi)$
  et \linebreak $\{b\}f_{n-1}^d(q) = \{b\}f_{n-1}^d(\phi)$, donc il suffit de montrer
  que $\{a\}f_{n-1}^d(\phi) = \{b\}f_{n-1}^d(\phi)$ pour tout $\phi$
  ayant une décomposition comme ci-dessus, ce qui constitue
  précisément l'énoncé du lemme ci-après.
\end{proof}

\begin{lem}
  Soient $a,b\in K^*$, et soit $q\in I^n(K)$ de la forme
  \[ q = \sum_{i=1}^r \fdiag{\lambda_i}\pfis{c_i} \]
  où $c_i$ est représenté par $\pfis{ab}$. Alors pour tout
  $d\pgq 1$, 
  \[ \{a\}f_n^d(q) = \{b\}f_n^d(q). \]
\end{lem}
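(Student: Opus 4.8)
Le plan est de se ramener à une identité dans l'anneau de Grothendieck-Witt, puis de l'attaquer terme à terme. D'abord, comme $f_n^d = f_{nd}\circ \pi_n^d$ et $\{a\}=f_1(\pfis{a})$, la formule (\ref{eq_f_prod}) donne $\{a\}f_n^d(q)=f_{nd+1}(\pfis{a}\,\pi_n^d(q))$, et de même pour $b$. Puisque $f_{nd+1}$ est un morphisme de groupes, il suffit de montrer que $(\pfis{a}-\pfis{b})\pi_n^d(q)=0$ dans $W(K)$. Or dans $GW(K)$ on calcule $\pfis{a}-\pfis{b}=\fdiag{b}-\fdiag{a}=-\fdiag{a}\pfis{ab}$, de sorte qu'il suffit en réalité d'établir l'identité $\pfis{ab}\,\pi_n^d(q)=0$.

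L'ingrédient de base est que $\pfis{ab}\pfis{c_i}=0$ dans $GW(K)$ pour tout $i$ : comme $c_i$ est représenté par la forme de Pfister $\pfis{ab}$, la rondeur de celle-ci donne $\fdiag{c_i}\pfis{ab}=\pfis{ab}$, d'où $\pfis{ab}\pfis{c_i}=\pfis{ab}(\fdiag{1}-\fdiag{c_i})=0$. Je prévois ensuite de montrer que chaque $\pi_n^k(t_i)$, pour $t_i=\fdiag{\lambda_i}\pfis{c_i}$ et $k\pgq 1$, est un multiple de $\pfis{c_i}$, ce qui entraînera aussitôt $\pfis{ab}\,\pi_n^k(t_i)=0$. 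Pour cela, j'écris $u=\fdiag{c_i}$ et $v=\fdiag{\lambda_i}$ (avec $u^2=v^2=1$, tous deux de $\lambda$-dimension $1$) et j'utilise que l'opération $\pi_n$ s'obtient de $\lambda$ par substitution, soit $(\pi_n)_t(x)=\lambda_{\pi_n(t)}(x)$. Comme $t_i=v-vu$, ceci fournit
\[ (\pi_n)_t(t_i)=\frac{\lambda_{\pi_n(t)}(v)}{\lambda_{\pi_n(t)}(vu)}=\frac{1+v\,\pi_n(t)}{1+vu\,\pi_n(t)}=1+\frac{v\,\pi_n(t)\,(1-u)}{1+vu\,\pi_n(t)}. \]
En développant $\frac{1}{1+vu\,\pi_n(t)}$ en série et en utilisant $(1-u)u^m=(-1)^m(1-u)$, chaque coefficient $\pi_n^k(t_i)$ (pour $k\pgq 1$) apparaît bien comme un multiple de $\pfis{c_i}=\fdiag{1}-u$.

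Il reste à passer des termes $t_i$ à la somme $q=\sum_i t_i$. Comme $\pi_{n,t}$ est une opération grecque, elle transforme les sommes en produits de séries : $\pi_{n,t}(q)=\prod_i \pi_{n,t}(t_i)$, si bien que $\pi_n^d(q)$ est une somme de produits $\prod_i \pi_n^{k_i}(t_i)$ avec $\sum_i k_i=d$. Pour $d\pgq 1$, chacun de ces produits possède au moins un facteur $\pi_n^{k_j}(t_j)$ avec $k_j\pgq 1$ ; par commutativité de $GW(K)$ et par l'étape précédente, $\pfis{ab}$ annule ce facteur, donc le produit tout entier. On en déduit $\pfis{ab}\,\pi_n^d(q)=0$, puis l'énoncé.

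L'obstacle principal est l'étape intermédiaire du deuxième paragraphe : l'élément $t_i$ n'étant qu'un multiple scalaire d'une $1$-forme de Pfister, et non une $n$-forme de Pfister, on ne peut pas appliquer directement le corollaire \ref{cor_pi_simil}, et il faut dérouler la définition de $\pi_n$ à partir de $\lambda$ pour contrôler la divisibilité de $\pi_n^k(t_i)$ par $\pfis{c_i}$. Une fois cette divisibilité acquise, tout le reste n'est que manipulation formelle des opérations grecques et de la formule produit.
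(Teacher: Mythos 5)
Votre preuve est correcte, mais elle emprunte une route réellement différente de celle du manuscrit. Les deux arguments partagent la réduction initiale : via la formule (\ref{eq_f_prod}) et le fait que $f_{nd+1}$ est un morphisme de groupes, tout se ramène à une identité de classes de Witt faisant intervenir $\pi_n^d(q)$ (c'est exactement le « on se ramène au cas $A=W$ » du manuscrit). Ensuite les chemins divergent : le manuscrit procède par récurrence sur $r$, le cas $r=1$ étant traité en écrivant $\pi_n^d$ comme combinaison à coefficients entiers des $\pi_1^k$ (corollaire \ref{cor_relat_pi}) puis en calculant explicitement $\pi_1^k(\fdiag{\lambda}\pfis{c}) = (-1)^k\pfis{-1}^{k-2}\pfis{\lambda,c}$ pour $k\pgq 2$ (corollaire \ref{cor_pi_simil}), l'étape de récurrence utilisant la formule d'addition de $\pi_n^d$. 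Vous évitez entièrement la récurrence et la réduction à $\pi_1$ : votre lemme de divisibilité $\pi_n^k(\fdiag{\lambda_i}\pfis{c_i})\in \pfis{c_i}\cdot GW(K)$ pour $k\pgq 1$, obtenu par le calcul en série via $(\pi_n)_t=\lambda_{\pi_n(t)}$ et les relations $u^2=v^2=1$, $(1-u)u^m=(-1)^m(1-u)$, est uniforme en $n$ et ne mobilise aucune formule explicite sur les $\pi_n$ ; combiné à la reformulation $\pfis{a}-\pfis{b}=-\fdiag{a}\pfis{ab}$ et à la multiplicativité de l'opération grecque sur la somme, il donne directement $\pfis{ab}\,\pi_n^d(q)=0$. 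Votre calcul du coefficient est exact (on trouve $(\pi_n)_t(t_i)=1+(1-u)\sum_{m\pgq 1}v^m\pi_n(t)^m$), et l'identité de rondeur $\fdiag{c_i}\pfis{ab}=\pfis{ab}$ que vous invoquez vaut bien dans $GW(K)$ avec la convention $\pfis{x}=\fdiag{1}-\fdiag{x}$ de la thèse (les plans hyperboliques se compensent puisque $\fdiag{c_i}\mathcal{H}=\mathcal{H}$), de sorte que votre argument est même légitime avant passage aux classes de Witt. Ce que chaque approche apporte : celle du manuscrit rentabilise les corollaires déjà établis et reste courte étant donné l'outillage ; la vôtre isole proprement l'unique ingrédient arithmétique ($\pfis{ab}\pfis{c_i}=0$) et dégage un énoncé structurel de divisibilité réutilisable. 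Comme la preuve du manuscrit (qui affaiblit explicitement l'hypothèse à $q\in I(K)$), votre coeur de calcul n'utilise jamais $q\in I^n(K)$, et établit donc la même version renforcée de l'énoncé.
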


\begin{proof}
  On se ramène au cas où $A=W$, puisque si
  \[ \pfis{a}\pi_n^d(q) = \pfis{b}\pi_n^d(q) \]
  alors on peut appliquer $f_{d+1}$ pour obtenir le résultat voulu.
  Dans le cas $A=W$, on montre une propriété plus forte : on
  suppose seulement $q\in I(K)$ (ce qui a un sens puisqu'on peut
  appliquer $\pi_n^d$ à n'importe quel élément de $GW(K)$).

  On raisonne par récurrence sur $r$. Si $r=0$, le résultat est trivial
  puisque $q=0$. Pour le cas $r=1$ : on écrit $\pi_n^d=\sum_{k=1}^d x_k\pi_1^k$
  avec $x_k\in \Z$, et $q=\fdiag{\lambda}\pfis{c}$ ; alors par
  hypothèse $\pfis{a,c}=\pfis{b,c}$ donc le résultat vaut clairement
  pour $k=1$, et si $k\pgq 2$ :
  \begin{align*}
    \pfis{a}\pi_1^k(\fdiag{\lambda}\pfis{c}) &= (-1)^k\pfis{a}\pfis{-1}^{k-2}\pfis{\lambda,c} \\
                                             &= (-1)^k\pfis{b}\pfis{-1}^{k-2}\pfis{\lambda,c} \\
                                             &= \pfis{b}\pi_1^k(\fdiag{\lambda}\pfis{c})
  \end{align*}
  et donc on conclut par combinaison linéaire.
 
  Si le résultat vaut jusqu'à $r\pgq 1$, on écrit
  $q=q_0+\fdiag{\lambda}\pfis{c}$, où $q_0$ correspond à une
  écriture de longueur $r$ et $c$ est représenté par $\pfis{ab}$.
  Alors par hypothèse de récurrence on a
  \[ \pfis{a}\pi_n^k(q_0) = \pfis{b}\pi_n^k(q_0),\quad \pfis{a}\pi_n^k(\fdiag{\lambda}\pfis{c})
    = \pfis{b}\pi_n^k(\fdiag{\lambda}\pfis{c})  \]
  donc
  \begin{align*}
    \pfis{a}\pi_n^d(q) &= \sum_{k=0}^d\pfis{a}\pi_n^k(q_0)\pi_n^{d-k}(\fdiag{\lambda}\pfis{c}) \\
                       &= \sum_{k=0}^d\pfis{b}\pi_n^k(q_0)\pi_n^{d-k}(\fdiag{\lambda}\pfis{c}) \\
                       &= \pfis{b}\pi_n^d(q).
  \end{align*}
\end{proof}

\section{Tours d'invariants cohomologiques}

On peut réinterpréter les invariants cohomologiques qu'on a construit dans 
le cadre suivant : on part de $I$, et on a un certain nombre d'invariants,
dont $e_1$ (qui peut d'ailleurs être caractérisé par le fait que c'est l'invariant
normalisé non nul de plus petit degré). On considère alors l'ensemble des formes
sur lesquelles cet invariant s'annule, à savoir $I^2$. On a également classifié
les invariants sur $I^2$, parmi lesquels $e_2$ (qui peut encore être caractérisé
par sa minimalité parmi les invariants normalisés), et on regarde encore les formes
sur lesquelles il s'annule, soit $I^3$. Et ainsi de suite.

On peut donc voir nos constructions comme définissant des \emph{tours d'invariants} :

\begin{defi}
Soit $F$ et $H$ deux foncteurs sur une catégorie $\mathbf{C}$ à valeurs
dans $\mathbf{Set}$ et $\mathbf{Set_*}$ respectivement (on note $0$ le point base
des ensembles pointés). 
Une tour d'invariant sur $F$ à valeurs dans $H$ est
une suite finie $(\alpha_1,\dots,\alpha_r)$ telle que $\alpha_i$ est un
invariant de $F_i$ à valeurs dans $H$, où $F_1=F$, et 
$F_{i+1}(X) = \ens{x\in F_i(X)}{\alpha_i(x)=0}$.
\end{defi}

Dans notre cas, un invariant cohomologique de $I^n$ peut être vu
comme s'insérant dans une tour d'invariants $(e_1,e_2,\dots,e_{n-1},\alpha)$
sur $I$.
\\

On peut alors se poser la question des tours d'invariants cohomologiques
générales sur $I$. Notamment, si $\alpha\in \Inv(I^n,\mu_2)$, quels sont
les invariants définis sur $I_\alpha(K)  = \ens{q\in I^n(K)}{\alpha(q)=0}$ ?
On a déjà la restriction des invariants définis sur $I^n$, mais en général
(comme nous le suggère l'exemple $\alpha = e_n$) il y en a bien d'autres.

On utilise l'idée suivante : supposons que $\alpha$ soit homogène de degré $d$. 
On peut choisir un invariant $\hat{\alpha}\in \Inv(I^n,I^d)$ tel que 
$\alpha = e_n\circ \hat{\alpha}$ (il est bien défini modulo $\Inv(I^n,I^{d+1})$).
Alors pour $q\in I^n(K)$, $\alpha(q)=0$ si et seulement si $\hat{\alpha}(q)\in I^{d+1}(K)$.
On peut alors définir $\beta(\hat{\alpha}(q))$ pour tout $\beta\in \Inv(I^{d+1},\mu_2)$,
ce qui définit un invariant de $I_\alpha$. Bien évidemment, cet invariant dépend
du choix de $\hat{\alpha}$. En revanche, on montre que l'ensemble des invariants
qu'on obtient ainsi en faisant varier $\beta$ ne dépend pas de ce choix.

Si on veut itérer la construction, on se heurte à une difficulté : si $\alpha_2$
est un invariant de $I_\alpha$, on ne sait pas garantir que $\alpha_2$ se relève
à un invariant de Witt. À défaut, on va se restreindre à ce type d'invariant :

\begin{defi}
Soit $F$ un foncteur de $\mathbf{Field}_{/k}$. On dit qu'un invariant 
$\alpha\in \Inv^d(F,\mu_2)$ est \emph{spécial} s'il existe 
$\tld{\alpha}\in \Inv(F,I^d)$ tel que $\alpha = e_d \circ \tld{\alpha}$.

Un invariant $\alpha\in \Inv(F,\mu_2)$ est spécial si toutes ses
composantes homogènes le sont. On note $\Inv_s(F,\mu_2)$ le module
des invariants spéciaux.

Une tour d'invariant est spéciale si tous les invariants de la tour le sont.
\end{defi}

\begin{rem}
  D'après le corollaire \ref{cor_relev}, tout invariant cohomologique
  de $I^n$ est spécial.
\end{rem}

On peut alors préciser notre construction :

\begin{prop}
Soit $F$ un foncteur de $\mathbf{Field}_{/k}$ vers $\mathbf{Set}$, 
et soit $\alpha\in \Inv^d_s(F,\mu_2)$
un invariant spécial. On pose $F_\alpha(K) = \ens{x\in F(K)}{\alpha(x)=0}$.
On choisit $\hat{\alpha}\in \Inv(F,I^d)$ tel que 
$\alpha = e_d \circ \hat{\alpha}$, et on pose 
\[ \foncdef{\Phi_{\hat{\alpha}}}{\Inv(I^{d+1},\mu_2)}{\Inv_s(F_\alpha,\mu_2)}{\beta}{\beta\circ \hat{\alpha}.} \]
Alors la sous-algèbre de $\Inv_s(F_\alpha,\mu_2)$ engendrée par $\Inv_s(F,\mu_2)$ et
par l'image de $\Phi_{\hat{\alpha}}$ ne dépend pas du choix de $\hat{\alpha}$.
\end{prop}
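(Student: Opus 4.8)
The plan is to prove that the subalgebra does not depend on the lift by comparing two choices. Fix a second lift $\hat\alpha'\in\Inv(F,I^d)$ with $e_d\circ\hat\alpha'=\alpha$, and let $R\subset\Inv_s(F_\alpha,\mu_2)$ be the subalgebra generated by the restrictions to $F_\alpha$ of $\Inv_s(F,\mu_2)$ together with $\Ima(\Phi_{\hat\alpha})$. By symmetry it suffices to show $\Ima(\Phi_{\hat\alpha'})\subset R$, since interchanging the two lifts then gives the reverse inclusion and hence equality. First I would record that two lifts differ by a Witt invariant landing one step deeper in the filtration: the difference $\gamma:=\hat\alpha'-\hat\alpha$ lies in $\Inv(F,I^d)$ and satisfies $e_d\circ\gamma=0$, so since $\Ker\big(e_d:I^d\to H^d\big)=I^{d+1}$ we obtain $\gamma\in\Inv(F,I^{d+1})$.

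Next, fix $\beta\in\Inv(I^{d+1},\mu_2)$ and evaluate $\Phi_{\hat\alpha'}(\beta)=\beta\circ\hat\alpha'$ at $q\in F_\alpha(K)$. Both $\hat\alpha(q)$ and $\gamma(q)$ lie in $I^{d+1}(K)$, so the additivity of the invariants $f_{d+1}^k$ (Proposition \ref{prop_f}(ii)) applies. Writing $\beta=\sum_e a_e f_{d+1}^e$ as in Corollary \ref{cor_ecr_unique} and reindexing the resulting double sum yields the Taylor-type expansion
\[ \beta\big(\hat\alpha(q)+\gamma(q)\big)=\sum_{k\geq 0} f_{d+1}^k\big(\gamma(q)\big)\cdot(\Phi^k\beta)\big(\hat\alpha(q)\big), \]
where $\Phi=\Phi^+$ is the shift operator, which maps $\Inv(I^{d+1},\mu_2)$ into itself; the identity $\sum_l a_{k+l}f_{d+1}^l=\Phi^k\beta$ follows from $\Phi f_{d+1}^e=f_{d+1}^{e-1}$ together with the $A(k)$-linearity of $\Phi$. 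I would then identify the two factors of each term. The factor $(\Phi^k\beta)\circ\hat\alpha=\Phi_{\hat\alpha}(\Phi^k\beta)$ lies in $\Ima(\Phi_{\hat\alpha})$; the factor $f_{d+1}^k\circ\gamma$ is a \emph{special} invariant of $F$, because it lifts to the Witt invariant $\pi_{d+1}^k\circ\gamma\in\Inv(F,I^{(d+1)k})$, using that $\pi_{d+1}^k$ sends $I^{d+1}$ into $I^{(d+1)k}$ (Proposition \ref{prop_f}). Restricted to $F_\alpha$ this is a generator of $R$, so each summand is a product of two generators of $R$, the term $k=0$ recovering $\Phi_{\hat\alpha}(\beta)$ itself.

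The main obstacle is that the displayed expansion is genuinely an infinite sum when $k$ is formally real (Example \ref{ex_moins_pfis}), so one must argue that it is well defined and that the subalgebra is to be read as closed under such combinations. I would settle this with the local-finiteness principle underlying the whole chapter: for fixed $q$ the factor $f_{d+1}^k(\gamma(q))$ lies in $H^{(d+1)k}(K,\mu_2)$, hence in any fixed cohomological degree only finitely many $k$ contribute, and $\beta(\hat\alpha'(q))$ is a finite sum degree by degree. Thus the right-hand side defines a bona fide element of $\Inv_s(F_\alpha,\mu_2)$ belonging to $R$, once $R$ is understood, as everywhere in the paper, as closed under locally finite combinations of its generators. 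A secondary point worth checking explicitly is that $f_{d+1}^k\circ\gamma$ is special rather than merely cohomological, which is precisely what the lift $\pi_{d+1}^k$ (Corollary \ref{cor_relev}) guarantees.
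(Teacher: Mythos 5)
Your proof is correct and follows essentially the same route as the paper: decompose $\hat{\alpha}'=\hat{\alpha}+\gamma$ with $\gamma\in\Inv(F,I^{d+1})$ (via $\Ker(e_d)=I^{d+1}$), expand by the addition formula of Proposition \ref{prop_f}, and recognize the factors $f_{d+1}^k\circ\gamma$ as special invariants of $F$ (lifted by $\pi_{d+1}^k$) and $(\Phi^k\beta)\circ\hat{\alpha}$ as lying in $\Ima(\Phi_{\hat{\alpha}})$. The only real difference is presentational: the paper checks the single generators $u^{(d+1)}_{m(d+1)}$, for which the expansion is the finite sum $\sum_{k=0}^m \gamma_{m-k}\cup \Phi_{\hat{\alpha}}\bigl(u^{(d+1)}_{k(d+1)}\bigr)$, whereas you expand a general $\beta$ at once via the shift operator and consequently must address the locally finite infinite sums explicitly — a subtlety the paper leaves implicit in the reduction from arbitrary $\beta$ to generators, so your extra care is warranted rather than a deviation.
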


\begin{proof}
  Soit $\hat{\alpha}'$ un autre relevé de $\alpha$. Il suffit alors de montrer
  que $\Phi_{\hat{\alpha}'}(u_{m(d+1)}^{(d+1)})$ est dans l'algèbre engendrée
  par $\Inv(F,\mu_2)$ et l'image de $\Phi_{\hat{\alpha}}$.
  
  Soit $\gamma\in \Inv(F,I^{d+1})$ tel que $\hat{\alpha}'=\hat{\alpha}+\gamma$.
  On a pour tout $q\in F_\alpha(K)$ :
  \begin{align*}
    \Phi_{\hat{\alpha}'}(u_{m(d+1)}^{(d+1)}) &= u_{m(d+1)}^{(d+1)}\circ (\hat{\alpha}+\gamma) \\
                                             &= \sum_{k=0}^m\left( u_{(m-k)(d+1)}^{(d+1)}\circ \gamma\right) \cup \left( u_{k(d+1)}^{(d+1)}\circ \hat{\alpha} \right) \\
                                             &= \sum_{k=0}^m \gamma_{m-k} \cup \Phi_{\hat{\alpha}}(u_{k(d+1)}^{(d+1)})
  \end{align*}
  où $\gamma_k = u_{k(d+1)}^{(d+1)}\circ \gamma$ est un invariant spécial de $F$.  
\end{proof}

Comme cette construction donne des invariants qui sont eux-mêmes
spéciaux par définition, on peut itérer la construction pour définir
des tours d'invariants (spéciaux).

Dans le cas qui nous intéresse, on part de $F=I$. On part de n'importe quel
invariant homogène $\alpha$ de $I$, qui est forcément spécial. Le procédé
ci-dessus donne alors toute une algèbre d'invariants spéciaux de $I_\alpha$
(qui est canoniquement déterminée par $\alpha$). Si on en choisit un,
disons $\beta$, on peut encore utilise ce procédé pour construire des invariants
(spéciaux) de $I_{\alpha,\beta}$, et ainsi de suite.

Il se pose alors naturellement un certain nombre de questions : obtient-on
ainsi toutes les tours d'invariants de $I$ ? Toutes les tours d'invariants spéciales ?

\chapter{Anneaux de Witt mixtes}

L'objectif de ce chapitre est de définir des anneaux commutatifs
$\tld{GW}(A,\sigma)$ et $\tld{W}(A,\sigma)$ pour tout algèbre à
involution $(A,\sigma)$ de première espèce, qui contiennent
les classes d'isométrie des formes $\eps$-hermitiennes sur $(A,\sigma)$.
Précisément, on note $GW^\eps(A,\sigma)$ le groupe de Grothendieck-Witt
des formes $\eps$-hermitiennes sur $(A,\sigma)$, et
$GW^\pm(A,\sigma)=GW(A,\sigma)\oplus GW^-(A,\sigma)$, et alors
\[ \tld{GW}(A,\sigma) = GW^\pm(K)\oplus GW^\pm(A,\sigma). \]
L'idée est qu'on peut définir à travers une équivalence de Morita
canonique (on développe au préalable un formalisme de théorie de Morita hermitienne
dans la première partie) le produit de deux formes $\eps$-hermitiennes, mais que
l'élément résultant est alors une forme bilinéaire sur $K$ ; on
peut donc définir une structure stable par produit en rassemblant
ces deux types d'éléments dans un seul ensemble. On montre
dans le théorème \ref{thm_gw} qu'on obtient effectivement une
structure d'anneau commutatif en procédant de la sorte. On
définit également un analogue de l'anneau de Witt $\tld{W}(A,\sigma)$
en quotientant par les formes hyperboliques.

On définit également sur $\tld{GW}(A,\sigma)$ une structure de
pré-$\lambda$-anneau (ou d'anneau grec selon la terminologie du
premier chapitre), voir la proposition \ref{prop_lambda_mixte}.
Un point important est que toutes nos constructions sont naturellement
compatibles avec les équivalences de Morita hermitiennes (voir
la proposition \ref{prop_lambda_morita}). On étudie en particulier
le cas des algèbres de quaternions munies de leur involution
canonique, ainsi que le cas des algèbres déployées. Il est
à noter que ce cas des algèbres de quaternions avait en réalité
déjà été traité par Lewis dans \cite{Lew}, bien qu'il ne donne
pas de preuve de l'associativité de l'anneau (qui n'a rien d'une évidence)
et ne construise pas les $\lambda$-opérations.
Cependant les arguments de Lewis sont basés sur l'existence de la
forme norme des algèbres de quaternions, qui contrairement à la forme
trace d'involution ne se généralise pas au cas d'une algèbre à involution
quelconque.

Enfin, la dernière partie met en place une filtration \og fondamentale\fg{}
$\tld{I}^n(A,\sigma)$ de $\tld{W}(A,\sigma)$, par analogie avec le cas
des formes quadratiques (voir la définition \ref{def_filtr}), et définit
par analogie avec la conjecture de Minor une \og cohomologie mixte\fg{}
$\tld{H}^n(A,\sigma)$ comme le gradué associé. Cette filtration et ce gradué
sont alors étudiés, avec une dichotomie importante entre les cas déployé
et non déployé.

\section{Théorie de Morita hermitienne}\label{sec_morita}

La théorie de Morita classique admet un analogue hermitien
bien connu (voir par exemple \cite[§9]{Knu}). On peut encoder
cette théorie dans une structure assez compacte, qu'on
appelera le \emph{2-groupe de Brauer hermitien} de $K$.
Il s'agit d'une catégorie monoïdale symétrique, qui a de plus de
bonnes propriétés d'inversibilité. On n'utilisera pas la
notion de 2-groupe dans la suite, et on ne rentrera donc pas dans
les détails de la définition, mais on se servira en revanche
librement du formalisme des catégories monoïdales,
qui permet une unification et une simplification des notations.

\subsection{Le 2-groupe de Brauer}\label{sec_brauer}

En guise de motivation et d'introduction, on commence par
décrire la version non hermitienne, qui
correspond à la théorie usuelle des algèbres simples
centrales et du groupe de Brauer. On définit donc une
certaine catégorie $\CBr$ qu'on appelle le
2-groupe de Brauer de $K$ :

\begin{itemize}
\item Les objets sont les algèbres
  simples centrales sur $K$. Le produit monoïdal de $\CBr$
  est simplement le produit tensoriel des algèbres sur $K$, l'objet
  unité étant l'algèbre triviale $K$.

\item Si $A$ et $B$ sont deux algèbres simples centrales
  sur $K$, un morphisme de $B$ vers $A$ dans $\CBr$
  est une classe d'isomorphisme de $B$-$A$-bimodule $U$ sur $K$,
  (donc une classe d'isomorphisme de
  $B\otimes_K A^{op}$-module) tel que
   $B\simeq \End_A(U)$, ce qui est équivalent à
  $A\simeq \End_B(U)$ (où comme on en a l'habitude, si $U$ est
  un $B$-module à gauche $\End_B(U)$ désigne l'algèbre opposée
  à l'algèbre d'endomorphismes \og{} naïve\fg{}, de sorte à ce
  que $\End_B(U)$ agisse naturellement à \emph{droite} sur $U$).

  La composition est donnée
  par le produit tensoriel des modules : la composition de
  $C\xrightarrow U B$ et $B\xrightarrow V A$
  est $C\xrightarrow {U\otimes_B V} A$. L'identité de $A$ est
  $A$ lui-même vu canoniquement comme un $A$-$A$-bimodule.
  L'action du produit monoïdal sur les morphismes est
  celle qu'on attend : si on dispose de $B\xrightarrow U A$
  et $B'\xrightarrow V A'$ alors on obtient
  $B\otimes_K B'\xrightarrow {U\otimes_K V} A\otimes_K A'$.
\end{itemize}

On voit aisément le lien avec la théorie de Morita usuelle :
il existe un morphisme entre $A$ et $B$ si et seulement si
$A$ et $B$ sont Brauer-équivalentes, ce qu'on notera $A\sim B$,
et la donnée d'un morphisme spécifique correspond à une donnée de Morita.
Les propriétés d'associativité et de symétrie requises correspondent aux
propriétés usuelles des produits tensoriels. 
\\

\begin{rem}\label{rem_cat_mor}
  On peut observer que la définition s'étend manifestement
  pour tout anneau commutatif $R$ à une catégorie $\mathbf{Mor}(R)$
  dont les objets sont des $R$-algèbres quelconques, et les
  morphismes des bimodules quelconques.
\end{rem}

Le fait de restreindre la définition comme on l'a fait correspond à considérer des éléments
\og inversibles\fg{}, ce qui justifie la terminologie de \og groupe\fg{}.
La question de l'inversibilité des objets dans les catégories monoïdales
est délicate (voir par exemple \cite{BL}); on évitera donc de se perdre dans
des considérations générales, et on donne simplement les notations qui
nous seront utiles.

Si $U$ est un $B$-$A$-bimodule, alors $\Hom_A(U,A)$ et $\Hom_B(U,B)$
sont naturellement munis d'une structure de $A$-$B$-bimodule, définie
respectivement par $(a\phi b)(x) = a\phi(bx)$ et $(a\phi b)(x) = \phi(xa)b$.
Alors on a un isomorphisme naturel
\begin{equation}\label{eq_isom_dual}
 \anonisomdef{\Hom_B(U,B)}{\Hom_A(U,A)}{\phi}{\left( u \mapsto (v\mapsto \phi(v)u)\right)}
\end{equation}
où on utilise l'identification $A\simeq \End_B(U)$. On considèrera
cet isomorphisme comme une identification, et on utilise $U^*$
pour dénoter l'un ou l'autre de ces bimodules, indifféremment.
On dispose donc d'un $B$-$B$-bimodule $U\otimes_A U^*$, et d'un $A$-$A$-bimodule
$U^*\otimes_B U$. Un résultat classique de théorie de Morita affirme
alors que si $U$ est un bimodule simple, alors $U^*$ aussi et on a des
isomorphismes naturels de bimodules $U\otimes_A U^*\simeq B$ et
$U^*\otimes_B U\simeq A$. On peut donc dire que $U^*$ est un inverse
de $U$ dans $\CBr$, puisque la composition de $U$
et $U^*$ (vus comme morphismes) dans un sens comme dans l'autre
est isomorphe (donc égale) à l'identité. 

Si $A$ est un objet de $\CBr$, alors l'algèbre opposée
$A^{op}$ est un inverse faible de $A$ pour la structure monoïdale,
au sens où on dispose d'un isomorphisme naturel $A\otimes_K A^{op}\simeq K$
dans $\CBr$, donnée par le bimodule $A$ muni à gauche de l'action
\og sandwich\fg{} de $A\otimes_K A^{op}$, et à droite de l'action
tautologique de $K$.


\subsection{Le 2-groupe de Brauer hermitien}\label{sec_brauer_herm}

On présente maintenant la version hermitienne de la construction
précédente, qu'on note $\CBrh$.

\begin{itemize}
\item Les objets sont les algèbres
  à involution $(A,\sigma)$ de première espèce sur $K$.
  Le produit monoïdal de $\CBrh$
  est encore le produit tensoriel sur $K$, à savoir
  $(A,\sigma)\otimes (B,\tau) = (A\otimes_KB, \sigma\otimes \tau)$,
  et l'objet unité est $(K,\Id)$.

\item Un morphisme de $(B,\tau)$ vers $(A,\sigma)$ est une classe d'isométrie de
  $B$-$A$-bimodule simple $\eps$-\emph{hermitien}, à savoir un couple
  $(U,h)$ où $U$ est un $B$-$A$-bimodule simple, et $h: U\times U\To A$
  est une forme $\eps$-hermitienne (où $\eps=\pm 1$) relativement à $\sigma$ telle que
  à travers l'identification $B = \End_A(U)$ l'involution adjointe
  de $h$ soit $\tau$.

  La composition de $(C,\theta)\xrightarrow {(U,h)} (B,\tau)$
  et $(B,\tau)\xrightarrow {(V,g)} (A,\sigma)$
  est \[ (C,\theta) \xrightarrow {(U\otimes_B V,f)} (A,\sigma) \]
  où
  \begin{equation}\label{eq_compo_herm}
    f(u\otimes v, u'\otimes v') = g(v, h(u,u')v').
  \end{equation}
  Si $h$ est $\eps$-hermitienne et $g$ $\eps'$-hermitienne, alors
  $f$ est $\eps\eps'$-hermitienne.
  L'identité de $(A,\sigma)$ est le $A$-$A$-bimodule $A$, muni de
  \begin{equation}\label{eq_ha}
    \foncdef{h_A}{A\times A}{A}{(x,y)}{\sigma(x)y}.
  \end{equation}
  L'action du produit monoïdal sur les morphismes est donnée
  naturellement par le produit tensoriel.
\end{itemize}

On a de façon évidente un foncteur canonique (\og d'oubli\fg{})
de $\CBrh$ vers $\CBr$, qui est monoïdal.

Le fait que la composition des morphismes dans $\CBrh$ soit bien définie
et associative est une simple reformulation de la théorie
de Morita hermitienne telle qu'elle est présentée dans \cite{Knu}.

\begin{rem}
  Il existe un (iso)morphisme entre $(A,\sigma)$ et $(B,\tau)$ si
  et seulement si $A\sim B$ (voir par exemple \cite[4.2]{BOI}) ;
  si $\sigma$ et $\tau$ sont de même type, le bimodule
  donnant le morphisme sera hermitien, et anti-hermitien si $\sigma$
  et $\tau$ sont de type différent. De plus, tandis que dans
  $\CBr$ on a au plus un morphisme entre deux objets, ici
  si $A\sim B$ deux morphismes entre $A$ et $B$ correspondent
  à deux modules hermitiens semblables, et en général
  non isomorphes.
\end{rem}

Les propriétés d'inversibilité dans $\CBrh$ sont en un certain sens
plus fortes que dans $\CBr$, reflétant un phénomène d'auto-dualité
caractéristique des théories hermitiennes et des involutions.

En effet, comme inverse faible de $(A,\sigma)$, on peut choisir  $(A,\sigma)$
lui-même : on dispose d'une équivalence $(A\otimes_K A,\sigma\otimes \sigma) \simeq (K,\Id)$
donnée par le bimodule $A$, où l'action de $K$ à droite est tautologique, et
$A\otimes_K A$ agit à gauche par l'action sandwich tordue par $\sigma$ :
\begin{equation}
  (a\otimes b)\cdot x = ax\sigma(b).
\end{equation}
La structure hermitienne, correspondant en l'occurence à une forme bilinéaire
symétrique, est donnée par la forme trace d'involution (voir \cite[11.1]{BOI})
\begin{equation}\label{eq_t_sigma}
  \foncdef{T_\sigma}{A\times A}{K}{(x,y)}{\Trd_A(\sigma(x)y)}.
\end{equation}

Quant aux inverses de morphismes, si $(B,\tau)\xrightarrow {(U,h)} (A,\sigma)$
est un morphisme dans $\CBrh$, il admet comme inverse $(\mbox{}^\sigma U^\tau,h^*)$,
où $\mbox{}^\sigma U^\tau$ est le $A$-$B$-bimodule donné par $U$ muni de l'action tordue
\begin{equation}\label{eq_u_tordu}
a\cdot u\cdot b= \tau(b)\cdot u\cdot \sigma(a),
\end{equation}
et
\[ h^*:\mbox{}^\sigma U^\tau\times \mbox{}^\sigma U^\tau\To B \]
est définie par
\begin{equation}\label{eq_h_star}
 h^*(x,y)u = xh(y,u). 
\end{equation}

\begin{rem}\label{rem_cat_mor_herm}
  Dans la lignée de la remarque \ref{rem_cat_mor},
  On peut étendre cette définition à une catégorie $\mathbf{Mor}_h(R,\iota)$
  où $(R,\iota)$ est un anneau commutatif muni d'une involution, auquel
  cas les objets sont les $R$-algèbres $A$ munies d'une involution $\sigma$
  telles que $\sigma_{|R}=\iota$. On perd alors les propriétés d'inversibilité,
  mais on obtient toujours une catégorie monoïdale. Notons que dans
  ce contexte une forme peut être $\eps$-hermitienne sur $(A,\sigma)$
  pour tout $\eps\in Z(A)^*$ tel que $\eps\sigma(\eps)=1$.
\end{rem}

Si on dispose de deux morphismes $f$ et $f'$ vers $(A,\sigma)$ correspondant
à des modules $\eps$-hermitiens, on peut former la somme directe orthogonale
$f\oplus f'$, qui est encore un morphisme vers $(A,\sigma)$ correspondant
à un module $\eps$-hermitien.

\begin{lem}\label{lem_somme_compo}
  Soient $f,f'$ morphismes dans $\CBrh$ vers $(B,\tau)$ de même signe,
  et soit $g:(B,\tau)\To (A,\sigma)$. Alors
  \[ (f\oplus f')\circ g = (f\circ g)\oplus (f'\circ g). \]
\end{lem}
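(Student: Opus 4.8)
The plan is to unwind the definitions and reduce the claimed equality of morphisms to a natural isometry of $\eps$-hermitian bimodules. Write $f=(U,h)$ and $f'=(U',h')$, where $U,U'$ are the underlying bimodules and $h,h'$ are the $\eps$-hermitian forms valued in $B$ (both of the same sign, by hypothesis), and write $g=(V,k)$ with $k$ valued in $A$. By definition the orthogonal sum $f\oplus f'$ is carried by $U\oplus U'$ equipped with the block-diagonal form $h\perp h'$, characterised by $(h\perp h')((u,u'),(w,w'))=h(u,w)+h'(u',w')$.

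Next I would compute both composites explicitly by means of the composition formula (\ref{eq_compo_herm}). The left-hand side $(f\oplus f')\circ g$ is carried by $(U\oplus U')\otimes_B V$, with form
\[ \big((u,u')\otimes v,\ (w,w')\otimes v'\big)\longmapsto k\big(v,\,(h(u,w)+h'(u',w'))v'\big), \]
whereas the right-hand side $(f\circ g)\oplus(f'\circ g)$ is carried by $(U\otimes_B V)\oplus(U'\otimes_B V)$ with the block-diagonal form whose value on $\big((u\otimes v,u'\otimes v),(w\otimes v',w'\otimes v')\big)$ is $k(v,h(u,w)v')+k(v,h'(u',w')v')$.

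The key step is the canonical distributivity isomorphism
\[ \Phi:(U\oplus U')\otimes_B V \Isom (U\otimes_B V)\oplus(U'\otimes_B V),\qquad (u,u')\otimes v\longmapsto (u\otimes v,\,u'\otimes v), \]
which is well defined (the assignment is $B$-balanced) and is manifestly an isomorphism of bimodules: the left action passes through $U\oplus U'$ on both sides and the right $A$-action through $V$. It then remains to check that $\Phi$ is an isometry, which amounts to matching the two forms above. Using that $k$ is additive in its second argument and that $V$ is a left $B$-module, one has $k(v,(h(u,w)+h'(u',w'))v')=k(v,h(u,w)v')+k(v,h'(u',w')v')$, which is exactly the value of the target form on the $\Phi$-images; hence $\Phi$ intertwines the two forms.

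Since morphisms in $\CBrh$ are isometry classes of hermitian bimodules, the existence of the isometry $\Phi$ shows that $(f\oplus f')\circ g$ and $(f\circ g)\oplus(f'\circ g)$ represent the same morphism. I expect no genuine obstacle here: the content is entirely formal once the distributivity isomorphism is in place, the only points requiring care being that $\Phi$ respect the full bimodule structure (routine) and that the composition formula (\ref{eq_compo_herm}) be additive in the inner form, which follows from the additivity and $B$-sesquilinearity of $k$.
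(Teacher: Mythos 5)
Your proof is correct and follows essentially the same route as the paper, whose own proof is just the one-line remark that the identity follows directly from the definition of composition via the tensor product — your distributivity isomorphism $\Phi\colon (U\oplus U')\otimes_B V \Isom (U\otimes_B V)\oplus(U'\otimes_B V)$ together with the additivity check against the formula (\ref{eq_compo_herm}) is exactly the elaboration of that remark.
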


\begin{proof}
  Cela découle directement de la définition à partir du produit tensoriel.
\end{proof}

\section{L'anneau de Witt mixte}

\subsection{Anneau de Grothendieck-Witt mixte}

\subsubsection*{Définition}

On définit d'abord l'analogue de l'anneau de Grothendieck-Witt, qui
est en un sens plus fondamental que l'anneau de Witt, et sur lequel on définira
plus tard les opération $\lambda^d$. Comme dans le cas de l'anneau
de Grothendieck-Witt usuel, on le définit à partir d'un certain
semi-anneau, qui dans notre cas est gradué. Ce semi-anneau n'est
pas le plus intéressant en soi, mais il est pratique de l'utiliser
pour les preuves car tous les éléments sont représentés par des modules
hermitiens, puis de transférer les résultats à l'anneau.

\begin{defi}\label{def_gw_mixte}
  Soit $(A,\sigma)$ une algèbre à involution de première espèce
  sur $K$. On définit les monoïdes additifs commutatifs suivants,
  tous munis de la somme orthogonale :
  $SGW(K)$ est constitué des classes d'isométrie de modules bilinéaires symétriques non
  dégénérés sur $K$, $SGW^-(K)$ est constitué des classes d'isométrie de modules bilinéaires
  alternés, $SGW^+(A,\sigma)$ des classes d'isométrie de modules hermitiens (à droite) sur
  $(A,\sigma)$, et $SGW^-(A,\sigma)$ des classes d'isométrie de modules anti-hermitiens.

  On pose alors
  \[ \tld{SGW}(A,\sigma) = SGW(K)\oplus SGW^-(K) \oplus SGW^+(A,\sigma) \oplus SGW^-(A,\sigma) \]
  ainsi que son groupe de Grothendieck, qu'on appelle le groupe de Grothendieck-Witt \emph{mixte} de $(A,\sigma)$ :
  \[ \tld{GW}(A,\sigma) = GW(K)\oplus GW^-(K) \oplus GW^+(A,\sigma) \oplus GW^-(A,\sigma). \]
\end{defi}

Pour l'instant $\tld{SGW}(A,\sigma)$ est seulement un monoïde additif,
et $\tld{GW}(A,\sigma)$ un $GW(K)$-module. Notons que comme les
modules $\eps$-hermitiens sur une algèbre à involution vérifient
le théorème de décomposition de Witt, $\tld{SGW}(A,\sigma)$ est
un monoïde à simplification, et donc le morphisme naturel
$\tld{SGW}(A,\sigma) \To \tld{GW}(A,\sigma)$ est injectif.

\subsubsection*{Graduation}

On définit le groupe
\begin{equation}\label{eq_gamma}
  \Gamma = \{+, -, o, s\},
\end{equation}
isomorphe au groupe de Klein $(\Z/2\Z)^2$ de sorte que $+$ soit l'élément
neutre. Les symboles sont à interpréter respectivement comme \emph{symétrique},
\emph{alterné}, \emph{orthogonal} et \emph{symplectique}, correspondant
aux différents types de formes bilinéaires et d'involutions.

\label{par_grad}On munit alors $\tld{SGW}(A,\sigma)$ d'une $\Gamma$-graduation :
la composante \linebreak $\tld{SGW}(A,\sigma)_+$ est $SGW(K)$, la composante
$\tld{SGW}(A,\sigma)_-$ est $SGW^-(K)$. La composante $\tld{SGW}(A,\sigma)_o$
est $SGW^+(A,\sigma)$ si $\sigma$ est orthogonale et $SGW^-(A,\sigma)$
si $\sigma$ est symplectique, de sorte que si un élément de
$\tld{SGW}(A,\sigma)_o$ représente un certain $A$-module $\eps$-hermitien $(U,h)$,
alors l'involution adjointe $\sigma_h$ est orthogonale. Inversement,
l'involution adjointe d'un élément de $\tld{SGW}(A,\sigma)_s$ est symplectique.

Cette graduation s'étend de façon unique à $\tld{GW}(A,\sigma)$ (voir
lemme \ref{lem_semi_grad}).

\subsubsection*{Produit}

On veut maintenant donner une structure de semi-anneau gradué à \linebreak $\tld{SGW}(A,\sigma)$.
Il suffit de définir le produit et de vérifier distributivité et associativité
composante par composante vis-à-vis de la $\Gamma$-graduation : on veut
des applications $\tld{SGW}(A,\sigma)_{\gamma_1}\times \tld{SGW}(A,\sigma)_{\gamma_2} \To
\tld{SGW}(A,\sigma)_{\gamma_1 + \gamma_2}$ pour tous $\gamma_1,\gamma_2\in \Gamma$.

Comme les éléments de $\tld{SGW}(A,\sigma)_{\gamma_i}$ correspondent à des
modules $\eps_i$-hermitiens sur une certaine algèbre à involution $(A_i,\sigma_i)$
(qui vaut $(A,\sigma)$ si $\gamma_i$ est $o$ ou $s$, et $(K,\Id)$ si $\gamma_i$
est $+$ ou $-$), donc à des morphismes vers $(A_i,\sigma_i)$ dans $\CBrh$,
on peut déjà utiliser la structure monoïdale de $\CBrh$ pour définir un
morphisme vers $(A_1\otimes_K A_2,\sigma_1\otimes \sigma_2)$. Si un des $A_i$
est égal à $K$, on obtient alors bien un élément de $\tld{SGW}(A,\sigma)$.

Il reste donc à traiter le cas $\gamma_1,\gamma_2\in \{o,s\}$. Dans ce cas
on obtient un morphisme vers $(A\otimes_K A,\sigma\otimes \sigma)$ ; or on
dispose d'un isomorphisme canonique dans $\CBrh$ de cet objet vers
$(K,\Id)$, à savoir l'espace bilinéaire symétrique $(A,T_\sigma)$
(voir (\ref{eq_t_sigma})), qui nous permet de transporter par composition des
morphismes vers $(A\otimes_K A,\sigma\otimes \sigma)$ sur des morphismes
vers $(K,\Id)$, et donc des éléments de $\tld{SGW}(A,\sigma)$. On explicite
cette définition :

\begin{defi}
  Soient $(U,h)$ et $(V,g)$ des modules respectivement $\eps$-hermitien
  et $\eps'$-hermitien à droite sur $(A,\sigma)$. Alors $(U,h)\cdot (V,g)$,
  qu'on notera souvent $h\cdot g$, est le $K$-espace bilinéaire donné par
  \begin{equation}
    \foncdef{h\cdot g}{W \times W}{K}
    {\left( (u\otimes v)\otimes a, (u'\otimes v')\otimes a'\right)}
    {\Trd_A\left( \sigma(a) h(u,u') a' \sigma(g(v,v')) \right)}
  \end{equation}
  où $W = (U\otimes_K V)\otimes_{A\otimes_K A} A$.
\end{defi}

La définition vient de la formule (\ref{eq_compo_herm}), sachant que 
\[   h(u,u')\otimes g(v,v')\cdot a' =  h(u,u') a' \sigma(g(v,v')).  \]

\begin{lem}\label{lem_biadd}
  Le produit ainsi défini est commutatif et biadditif, et s'étend de
  façon unique en un produit commutatif et distributif sur $\tld{SGW}(A,\sigma)$
  et $\tld{GW}(A,\sigma)$.
\end{lem}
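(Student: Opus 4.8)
The plan is to view the product $h\cdot g$ as it was built, namely as the composite in $\CBrh$ of the monoidal product $(U\otimes_K V,\,h\otimes g)$, a morphism toward $(A\otimes_K A,\sigma\otimes\sigma)$, with the canonical isomorphism $(A,T_\sigma)\colon (A\otimes_K A,\sigma\otimes\sigma)\To (K,\Id)$; unwinding the composition rule (\ref{eq_compo_herm}) together with the sandwich action $(b_1\otimes b_2)\cdot a = b_1 a\sigma(b_2)$ recovers exactly the formula of the definition on $W=(U\otimes_K V)\otimes_{A\otimes_K A}A$, so that the form is well defined by the Morita theory of \ref{sec_brauer_herm}. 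I would then treat the three assertions — biadditivity, commutativity, and the unique extension — in turn.

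Biadditivity is the quick part. Since $h\cdot g$ is $(h\otimes g)$ post-composed with $T_\sigma$, and since the monoidal product of morphisms is additive in each argument while composition is additive by Lemme \ref{lem_somme_compo}, one gets $(h\oplus h')\cdot g=(h\cdot g)\oplus(h'\cdot g)$ and the symmetric statement in the second variable. Concretely this is the canonical splitting $W\cong W_1\oplus W_2$ induced by $U=U_1\oplus U_2$, under which the defining trace formula is visibly an orthogonal sum; I would record it as a one-line consequence of the definition and \ref{lem_somme_compo}.

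Commutativity is the heart of the matter. I would produce an explicit isometry $\Theta\colon W\To W'$, with $W'=(V\otimes_K U)\otimes_{A\otimes_K A}A$, defined by
\[ \Theta\bigl((u\otimes v)\otimes a\bigr)=(v\otimes u)\otimes\sigma(a). \]
First one checks that $\Theta$ respects the relations over $A\otimes_K A$: both sides of $((ub_1)\otimes(vb_2))\otimes a=(u\otimes v)\otimes(b_1 a\sigma(b_2))$ are sent to $(v\otimes u)\otimes\bigl(b_2\sigma(a)\sigma(b_1)\bigr)$, using $\sigma(b_1 a\sigma(b_2))=b_2\sigma(a)\sigma(b_1)$, so $\Theta$ is well defined, and it is bijective with the evident inverse since $\sigma^2=\Id$. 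It then remains to verify that $\Theta$ carries $h\cdot g$ onto $g\cdot h$, which collapses to the single identity
\[ \Trd_A\bigl(\sigma(a)\,h(u,u')\,a'\,\sigma(g(v,v'))\bigr)=\Trd_A\bigl(a\,g(v,v')\,\sigma(a')\,\sigma(h(u,u'))\bigr). \]
This I would obtain from the two standard properties of the reduced trace of a central simple algebra carrying an involution of the first kind, namely $\Trd_A(\sigma(x))=\Trd_A(x)$ and $\Trd_A(xy)=\Trd_A(yx)$: applying $\sigma$ inside the trace on the left-hand side turns its argument into $g(v,v')\,\sigma(a')\,\sigma(h(u,u'))\,a$, and one cyclic permutation brings $a$ to the front. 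The main obstacle is keeping this bookkeeping honest — verifying compatibility of $\Theta$ with the tensor relations and ensuring no ordering or sign slips through; it is worth noting explicitly that the computation never uses the parities $\eps,\eps'$, so commutativity holds for hermitian and anti-hermitian factors alike.

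For the extension, note that the product has been defined componentwise on the $\Gamma$-graded monoid $\tld{SGW}(A,\sigma)$ — the $o,s$ by $o,s$ part by the formula above, the remaining parts directly from the monoidal structure of $\CBrh$ — and that each component is biadditive by the arguments above. Hence the product is additive in each variable on all of $\tld{SGW}(A,\sigma)$, i.e.\ distributive, and commutative. By the universal property of the Grothendieck group (Lemme \ref{lem_semi_grad}), a biadditive product on the commutative monoid $\tld{SGW}(A,\sigma)$ extends uniquely to a biadditive, hence distributive, product on $\tld{GW}(A,\sigma)$, giving the statement. Associativity is deliberately not claimed at this stage; it is the content of Théorème \ref{thm_gw}.
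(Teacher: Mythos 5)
Your proposal is correct and takes essentially the same approach as the paper: the same swap-and-twist isometry $(u\otimes v)\otimes a\mapsto (v\otimes u)\otimes\sigma(a)$ for commutativity, biadditivity via the monoidal structure together with the lemme \ref{lem_somme_compo}, and the universal property of the Grothendieck group for the extension to $\tld{GW}(A,\sigma)$. The only difference is that you spell out the well-definedness and isometry checks (via $\sigma$-invariance and cyclicity of $\Trd_A$) that the paper declares immediate and omits, which is a welcome addition rather than a divergence.
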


\begin{proof}
  Soient $(V_i,h_i)$ modules $\eps_i$-hermitiens sur $(A,\sigma)$, pour
  $i=1,2,3$. Alors on définit une $K$-isométrie de $(W,f)=(V_1,h_1)\cdot (V_2,h_2)$
  vers $(W',f')=(V_2,h_2)\cdot (V_1,h_1)$ par
  \[ \anonfoncdef{W}{W'}{(v_1\otimes v_2)\otimes a}{(v_2\otimes v_1)\otimes \sigma(a)} \]
  ce qui montre la commutativité (on omet de montrer que cela définit
  bien une isométrie, ce qui résulte d'un déroulage immédiat des définitions).
  
  On a également
  \[ \left( (V_1,h_1)\oplus (V_2,h_2) \right) \otimes_K (V_3,h_3) \simeq (V_1\otimes_KV_3,h_1\otimes h_3)
    \oplus (V_2\otimes_KV_3,h_1\otimes h_3)  \]
  donc on obtient la biadditivité par application du lemme \ref{lem_somme_compo}.
  On a finalement bien construit un produit sur toutes les composantes
  de $\tld{SGW}(A,\sigma)$, et la commutativité et la distributivité
  sont vérifiées composantes par composantes.
  L'assertion sur $\tld{GW}(A,\sigma)$ s'en déduit par propriété
  universelle du groupe de Grothendieck.
\end{proof}

\begin{rem}
  Par précomposition avec $T_A$ dans $\CBrh$, la commutativité démontrée
  ci-dessus implique qu'on a directement une isométrie
  \[ (V_1\otimes_K V_2,h_1\otimes h_2) \simeq  (V_2\otimes_K V_1,h_2\otimes h_1) \]
  en tant que $(A\otimes_K A,\sigma\otimes\sigma)$-modules hermitiens, mais
  la démonstration est sensiblement plus pénible, et exhiber une isométrie
  explicite nécessite de recourir à l'élément de Goldman de $A$. Il est à
  noter qu'en général si $A$ n'est pas munie d'une involution, et si $V$ et
  $W$ sont des $A$-modules, on n'a pas de raison
  d'avoir d'isomorphisme naturel de $A\otimes_K A$-modules entre $V\otimes_K W$
  et $W\otimes V$ (mais ce qu'on a montré est que dès qu'on dispose d'une involution,
  alors non seulement on dispose de cet isomorphisme, mais on peut rajouter gratuitement
  des formes hermitiennes et obtenir une isométrie).
\end{rem}

\subsubsection*{Associativité}

Par utilisation successive de l'isomorphisme canonique entre
$(A\otimes_K A,\linebreak \sigma\otimes \sigma)$ et $(K,\Id)$, on obtient
pour tout $d\in \N$ un isomorphisme dans $\CBrh$ de
$(A,\sigma)^{\otimes d}$ vers $(A,\sigma)$ si $d$ est impair,
et vers $(K,\Id)$ si $d$ est pair. A priori, cet isomorphisme
dépend de la façon dont on regroupe les termes, et il faut
vérifier une certaine condition d'associativité.

On aura besoin par la suite d'étudier le rôle de l'élément
de Goldman (voir \cite[3.5]{BOI}) dans notre contexte.

\begin{lem}\label{lem_action_goldman}
  On considère l'action tordue à gauche de $A\otimes_K A$ sur
  le $K$-espace vectoriel $A$, donnée par $(a\otimes b)\cdot x = ax\sigma(b)$.
  Alors l'élément de Goldman $g\in A\otimes_K A$ vérifie, pour tout $x\in A$,
  $(\Id\otimes \sigma)(g)\cdot x=\Trd_A(x)$ et $g\cdot x = \eps \sigma(x)$
  où $\eps=1$ si $\sigma$ est orthogonale et $-1$ si elle est symplectique.
  Par ailleurs, $(\sigma\otimes \sigma)(g)=g$, et donc
  $(\sigma\otimes \Id)(g)\cdot x=\Trd_A(x)$ et
  $(\sigma\otimes \sigma)(g)\cdot x = \eps \sigma(x)$.
\end{lem}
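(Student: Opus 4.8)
The plan is to verify the four identities by combining an intrinsic description of the Goldman element with one explicit computation in the split case. First I would record the formula for $g$: by \cite[3.5]{BOI}, if $(u_i)$ is any $K$-basis of $A$ and $(v_i)$ its dual basis for the nondegenerate form $(x,y)\mapsto \Trd_A(xy)$ (so $\Trd_A(u_iv_j)=\delta_{ij}$), then $g=\sum_i u_i\otimes v_i$, an expression independent of the chosen basis. Two elementary facts then dispatch three of the claims. On one hand, the ``Casimir'' identity $\sum_i p_i x q_i=\Trd_A(x)$ holds for any basis $(p_i)$ with dual basis $(q_i)$; applying the twisted action and using $\sigma^2=\Id$ gives $(\Id\otimes\sigma)(g)\cdot x=\sum_i u_i x v_i=\Trd_A(x)$, the first claim. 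On the other hand, since $\Trd_A$ is symmetric and $\sigma$ is an anti-automorphism, $\Trd_A(\sigma(u_i)\sigma(v_j))=\Trd_A(u_iv_j)=\delta_{ij}$, so $(\sigma(u_i)),(\sigma(v_i))$ are again dual bases; by basis-independence $(\sigma\otimes\sigma)(g)=\sum_i\sigma(u_i)\otimes\sigma(v_i)=g$, the third claim. Applying the Casimir identity to this new dual pair yields $(\sigma\otimes\Id)(g)\cdot x=\sum_i\sigma(u_i)x\sigma(v_i)=\Trd_A(x)$, and combining $(\sigma\otimes\sigma)(g)=g$ with the last identity gives $(\sigma\otimes\sigma)(g)\cdot x=\eps\sigma(x)$.

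It remains to prove the one form-sensitive identity $g\cdot x=\eps\sigma(x)$, and this is where the sign $\eps$ enters. I would argue by scalar extension: both $g$ and $\sigma$ are canonical and compatible with base change, the type of $\sigma$ is preserved, and the identity is an equality in the $K$-vector space $A\hookrightarrow A_L$, so it suffices to check it over a splitting field $L$. There $(A_L,\sigma_L)\cong(\End_L(V),\sigma_b)$ with $b$ a nondegenerate $\eps$-symmetric form, $\eps=1$ in the orthogonal case and $\eps=-1$ in the symplectic case. Identifying $A_L\otimes_L A_L$ with $\End_L(V\otimes V)$, the Goldman element becomes the switch operator, i.e. $g=\sum_{i,j}E_{ij}\otimes E_{ji}$ in a basis of matrix units. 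Writing $B$ for the Gram matrix of $b$, so that $\transp{B}=\eps B$ and $\sigma_b(f)=B^{-1}\,\transp{f}\,B$, a direct matrix-unit computation gives $\sum_{i,j}E_{ij}\,x\,\sigma_b(E_{ji})=\eps\,B^{-1}\,\transp{x}\,B=\eps\,\sigma_b(x)$, exactly as required.

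The main obstacle is pinning down the sign in this last step: everything else is formal manipulation of dual bases, but the value of $\eps$ is forced precisely by the symmetry $\transp{B}=\eps B$ of the Gram matrix, which is why the orthogonal and symplectic cases genuinely have to be separated. A secondary technical point is to make sure the intrinsic identities of the first paragraph are truly basis-free, hence unaffected by the passage to $A_L$; this follows from the standard verification that $\sum_i u_i\otimes v_i$ is independent of the chosen dual pair.
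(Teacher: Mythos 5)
Your proof is correct, and its core strategy coincides with the paper's: both establish the sign-sensitive identity $g\cdot x=\eps\sigma(x)$ by reducing to a splitting field and computing with the explicit form of the Goldman element — you in matrix units, via $\sigma_b(f)=B^{-1}\,\transp{f}\,B$ with $\transp{B}=\eps B$, the paper in the tensor model $A\simeq V\otimes V$ with $\sigma(v\otimes w)=\eps\, w\otimes v$ and $g=\sum_{i,j}e_i\otimes e_j^*\otimes e_j\otimes e_i^*$; these are the same computation in two equivalent split models, and your sign bookkeeping checks out. Where you genuinely diverge is on the auxiliary identities: the paper treats $(\Id\otimes\sigma)(g)\cdot x=\Trd_A(x)$ as immediate from the defining sandwich property of $g$ and simply cites \cite[10.19]{BOI} for $(\sigma\otimes\sigma)(g)=g$, whereas you derive everything from the intrinsic Casimir description $g=\sum_i u_i\otimes v_i$ for trace-dual bases, obtaining $(\sigma\otimes\sigma)(g)=g$ for free from the observation that $\bigl(\sigma(u_i)\bigr),\bigl(\sigma(v_i)\bigr)$ is again a dual pair (this uses $\Trd_A\circ\sigma=\Trd_A$, valid here since $\sigma$ is of the first kind). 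This buys a self-contained argument replacing the external citation, at the modest cost of justifying the basis-independence of $\sum_i u_i\otimes v_i$, which you correctly flag; your attribution of the dual-basis formula to \cite[3.5]{BOI} is loose (it is the sandwich-trace characterization there, with the explicit formula extracted in the proof of 3.6), but that is a citation quibble, not a gap.
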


\begin{proof}
  La première formule est essentiellement la définition de $g$,
  puisqu'il est défini pour agir comme la trace pour l'action sandwich
  non tordue. Pour la deuxième, on se ramène au cas où $A$ est déployée.
  Alors $A=\End_K(V)$ pour un certain espace vectoriel $V$, et $\sigma=\sigma_b$
  où $b$ est $\eps$-symétrique. On dispose alors de l'isomorphisme
  canonique $A\simeq V\otimes V$, tel que
  $(v\otimes w)(v'\otimes w') = b(w,v')v\otimes w'$ et
  $\sigma(v\otimes w)=\eps w\otimes v$. Soit $(e_i)$ une base de $V$,
  et $(e_i^*)$ sa base duale pour $b$ (si $\sigma$ est orthogonale,
  on peut prendre pour $(e_i)$ une base orthogonale, auquel cas
  $e_i=e_i^*$). Alors $g=\sum_{i,j}e_i\otimes e_j^*\otimes e_j\otimes e_i^*$
  (voir la preuve de \cite[3.6]{BOI}), donc
  \begin{align*}
    g\cdot (v\otimes w) &= \sum_{i,j} (e_i\otimes e_j^*)\cdot (v\otimes w)\cdot \eps(e_i^*\otimes e_j) \\
                        &= \eps\sum_{i,j}b(e_j^*,v)b(w,e_i^*)e_j\otimes e_j \\
                        &= \left( \sum_i b(e_i^*,w)e_i\right)\otimes \left( \sum_j b(e_j^*,v)e_j\right) \\
                        &= w\otimes v.
  \end{align*}
  Pour la dernière affirmation, on renvoie à \cite[10.19]{BOI}.
\end{proof}

\begin{prop}\label{prop_prod_red}
  L'isomorphime $\phi_{(A,\sigma)}^{(d)}$ de $(A,\sigma)^{\otimes d}$ vers $(A,\sigma)$
  ou $(K,\Id)$ dans $\CBrh$ est canonique et ne dépend pas de l'ordre de
  regroupement des termes.
\end{prop}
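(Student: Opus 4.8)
The plan is to read the construction of $\phi_{(A,\sigma)}^{(d)}$ as an instance of monoidal coherence, and to reduce the independence of the grouping to a single identity for $d=3$. First I would record that $\CBrh$ is a symmetric monoidal category in which $(A,\sigma)$ is its own weak inverse, the chosen isomorphism being $\mu := (A,T_\sigma)\colon (A,\sigma)^{\otimes 2}\Isom (K,\Id)$ of (\ref{eq_t_sigma}) (see section \ref{sec_brauer_herm}). A grouping of $(A,\sigma)^{\otimes d}$ is nothing but a binary bracketing of the $d$ factors; reducing such a bracketing uses \emph{only} the isomorphism $\mu$ --- applied at each internal node that joins two sub-bracketings each reducing to $(A,\sigma)$ --- together with the unit isomorphisms absorbing the $(K,\Id)$-factors that arise when a sub-bracketing reduces to $(K,\Id)$. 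In particular no coevaluation $\mu^{-1}$ and no closed loop ever occurs, and every cap $\mu$ is applied to a pair in its original left-to-right order, so the braiding is never invoked. Thus each grouping yields a morphism $(A,\sigma)^{\otimes d}\to(A,\sigma)$ (for $d$ odd) or $\to(K,\Id)$ (for $d$ even) that is a composite of copies of $\mu$, associators and unitors.

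Next I would reduce to $d=3$. Any two binary bracketings of the same ordered sequence of $d$ factors are connected by a finite sequence of elementary rotations (the associahedron on $d$ leaves is connected), each rotation being localized to a three-leaf subtree and corresponding to one application of the associativity move $((xy)z)\leftrightarrow(x(yz))$. By functoriality of $\otimes$ and of composition, and by Mac Lane's coherence theorem (which makes all associators and unitors canonical), it is enough to prove the single identity in $\CBrh$
\[ \lambda_{(A,\sigma)}\circ(\mu\otimes \Id_{(A,\sigma)}) \;=\; \rho_{(A,\sigma)}\circ(\Id_{(A,\sigma)}\otimes \mu) \]
as morphisms $(A,\sigma)^{\otimes 3}\to(A,\sigma)$, where $\lambda,\rho$ are the left and right unit isomorphisms; propagating this identity along the rotations then yields equality of any two groupings.

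The hard part is the $d=3$ identity itself, which I would prove by exhibiting an explicit isometry of hermitian bimodules. Unwinding the composition rule (\ref{eq_compo_herm}) and the descriptions (\ref{eq_t_sigma}), (\ref{eq_ha}) of $\mu$ and of $\Id_{(A,\sigma)}$, the left-hand side produces an $A^{\otimes 3}$-$A$-bimodule supported on $A\otimes_K A$ on which the middle factor acts through $\sigma$ on the \emph{left}, while the right-hand side produces a bimodule on which it acts through $\sigma$ on the \emph{right}. Reconciling these two twists and matching the induced reduced-trace pairings is exactly where the Goldman element intervenes: the formulas of Lemma \ref{lem_action_goldman}, namely $g\cdot x=\eps\sigma(x)$, $(\Id\otimes\sigma)(g)\cdot x=\Trd_A(x)$ and the symmetry $(\sigma\otimes\sigma)(g)=g$, let me transport one action to the other and verify that the candidate map is an isometry. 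I expect this bimodule computation, rather than the coherence bookkeeping, to be the main obstacle.

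Finally I would note that the parity statement is automatic: each grouping of $(A,\sigma)^{\otimes d}$ contains exactly $\lfloor d/2\rfloor$ caps, so the reduction lands in $(K,\Id)$ when $d$ is even and in $(A,\sigma)$ when $d$ is odd, independently of the grouping, which is the content of the proposition.
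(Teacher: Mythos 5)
Your proposal follows essentially the same route as the paper: the paper also reduces immediately to three factors (asserting this \og comme pour toute forme d'associativité\fg{} without the associahedron/coherence bookkeeping you spell out) and then proves the $d=3$ identity by exhibiting an explicit isometry of hermitian bimodules built from the Goldman element, exactly as you anticipate. The one step you leave open as \emph{the main obstacle} is carried out in the paper by the concrete map $\phi(x\otimes y)=\sum_i (xa_iy)\otimes\sigma(b_i)$, where $g=\sum_i a_i\otimes b_i$ is the Goldman element, whose bimodule-morphism and isometry properties follow precisely from the relations of the lemme \ref{lem_action_goldman} that you cite.
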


\begin{proof}
  Clairement, comme pour toute forme d'associativité il suffit de le vérifier
  pour trois termes.
  Il faut donc montrer qu'on a un carré commutatif dans $\CBrh$
  \[ \begin{tikzcd}
      (A\otimes_K A\otimes_K A, \sigma\otimes\sigma\otimes\sigma) \rar \dar &
      (K\otimes A, \Id\otimes \sigma) \dar \\
      (A\otimes K, \sigma\otimes \Id) \rar & (A,\sigma)
    \end{tikzcd} \]
  ce qui revient à construire
  une isométrie entre les bimodules hermitiens $({A\otimes_K A},h_1)$
  et $(A\otimes_K A,h_2)$ où l'action de $A\otimes_KA\otimes_K A$ à gauche
  et de  $A$ à droite est dans le premier cas
  \[ (a\otimes b\otimes c) \cdot_1 (x\otimes y)\cdot_1 d = (ax\sigma(b))\otimes (cyd)  \]
  et dans le deuxième cas
  \[ (a\otimes b\otimes c) \cdot_2 (x\otimes y)\cdot_2 d =  (axd)\otimes (by\sigma(c)) \]
  et où les formes hermitiennes sont
  \[ h_1(x\otimes y, x'\otimes y') = \Trd_A(\sigma(x)x')\sigma(y)y' \]
  et
  \[ h_2(x\otimes y, x'\otimes y') = \Trd_A(\sigma(y)y')\sigma(x)x'.  \]

  Soit $g\in A\otimes_K A$ l'élément de Goldman ;
  si $g = \sum_i a_i\otimes b_i$, on définit notre isométrie de $h_1$ vers $h_2$ par
  \[ \foncdef{\phi}{A\otimes_K A}{A\otimes_K A}{x\otimes y}
    {\sum_i (xa_iy)\otimes \sigma(b_i).} \]
  Notons que $1\otimes 1$ est envoyé sur $g'= (\Id\otimes \sigma)(g)$. Comme
  le bimodule $A\otimes_K A$ est engendré par l'élément $1\otimes 1$
  pour les deux action définies ci-dessus, cela suffit à caractériser $\phi$,
  mais il faut vérifier que $g'$ est un élément admissible pour l'image
  de $1\otimes 1$, ce qui revient à vérifier qu'on a bien défini un
  morphisme de bimodules. On a
  \begin{align*}
    \phi\left( (a\otimes b\otimes c)\cdot_1(x\otimes y)\cdot_1 d \right)
    &= \phi\left( (ax\sigma(b))\otimes (cyd) \right) \\
    &= \sum_i (ax\sigma(b)a_icyd)\otimes \sigma(b_i)
  \end{align*}
  et
  \begin{align*}
    (a\otimes b\otimes c)\cdot_2 \phi(x\otimes y)\cdot_2 d
    &= \sum_i  (a\otimes b\otimes c)\cdot_2 (xa_iy)\otimes \sigma(b_i) \cdot_2 d \\
    &= \sum_i (axa_iyd)\otimes (b\sigma(b_i)\sigma(c)).
  \end{align*}
  On est donc amené à montrer que
  \[ \sum_i \sigma(b)a_ic \otimes \sigma(b_i) = \sum_i a_i\otimes b\sigma(b_i)\sigma(c), \]
  ce qui revient à
  \[ (\Id\otimes \sigma)((\sigma(b)\otimes 1)g(c\otimes 1))
    = (\Id\otimes \sigma)((1\otimes c)g(1\otimes \sigma(b))) \]
  qui découle des propriétés de l'élément de Goldman (voir \cite[3.6]{BOI}, ou
  le lemme \ref{lem_goldman_commut}). Donc $\phi$ définit bien un morphisme
  de bimodules, et comme $g'$ est inversible c'est bien un isomorphisme. 


  Il reste à vérifier que $\phi$ réalise une isométrie de $h_1$ vers $h_2$.
  On a
  \begin{align*}
    h_2(\phi(x\otimes y),\phi(x'\otimes y'))
    &= \sum_{i,j}h_2((xa_iy)\otimes \sigma(b_i), (x'a_jy')\otimes \sigma(b_j)) \\
    &= \sum_{i,j}\Trd_A(b_i\sigma(b_j))\sigma(y)\sigma(a_i)\sigma(x)x'a_jy' \\
    &= \sum_{i,j,k}\sigma(y)\sigma(a_i)\sigma(x)x'a_jy'a_kb_i\sigma(b_j)b_k \\
    &= \eps \sum_{i,k}\sigma(y)\sigma(a_i)\sigma(x)x'\sigma(b_i)\sigma(a_k)\sigma(y')b_k \\
    &= \eps \sigma(y)\Trd_A(\sigma(x)x')\eps y' \\
    &= h_1(x\otimes y,x'\otimes y')
  \end{align*}
  en utilisant les relations du lemme \ref{lem_action_goldman}.

\end{proof}

En conséquence :

\begin{thm}\label{thm_gw}
  L'application $\tld{SGW}(A,\sigma)\times \tld{SGW}(A,\sigma)\To \tld{SGW}(A,\sigma)$
  définie dans la partie précédente  munit $\tld{SGW}(A,\sigma)$ d'une structure de
  semi-anneau commutatif gradué, et induit sur $\tld{GW}(A,\sigma)$ une structure de
  $GW(K)$-algèbre commutative graduée.
\end{thm}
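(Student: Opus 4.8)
The plan is to obtain the theorem as the culmination of the preceding lemmas, the only substantial point being associativity, for which Proposition \ref{prop_prod_red} does the essential work.

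First I would record that commutativity, biadditivity and distributivity of the product on $\tld{SGW}(A,\sigma)$ are exactly Lemma \ref{lem_biadd}, and that by construction the product is compatible with the $\Gamma$-grading, sending $\tld{SGW}(A,\sigma)_{\gamma_1}\times \tld{SGW}(A,\sigma)_{\gamma_2}$ into $\tld{SGW}(A,\sigma)_{\gamma_1+\gamma_2}$. The neutral element is the class of the rank-one form $(K,\fdiag{1})$, lying in the component $\tld{SGW}(A,\sigma)_+$: checking that it is a two-sided identity amounts to unwinding the product and observing that $(K,\fdiag{1})$ is the identity morphism of the unit object $(K,\Id)$ in $\CBrh$, so that multiplying by it leaves every morphism unchanged up to canonical isomorphism.

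The core of the argument is associativity, which I would verify on homogeneous generators, i.e. on classes of $\eps_i$-hermitian modules $(U_i,h_i)$ of types $\gamma_i$, for $i=1,2,3$. Each $U_i$ is a morphism in $\CBrh$ towards $(A_i,\sigma_i)$, equal to $(A,\sigma)$ when $\gamma_i\in\{o,s\}$ and to $(K,\Id)$ otherwise. Forming the tensor product over $K$ of the three modules yields a single morphism towards $(A_1\otimes_K A_2\otimes_K A_3,\sigma_1\otimes\sigma_2\otimes\sigma_3)$, and the two bracketings $(x\cdot y)\cdot z$ and $x\cdot(y\cdot z)$ arise by reducing the tensor factors equal to $(A,\sigma)$ down to $(K,\Id)$ or $(A,\sigma)$ --- via the trace form $T_\sigma$ --- in two different orders. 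The reassociation of the underlying $K$-tensor product is taken care of by the associativity constraint of the monoidal category $\CBrh$; what remains is precisely the statement that the reduction of the copies of $(A,\sigma)$ performed pairwise in either order gives one and the same isomorphism, which is Proposition \ref{prop_prod_red} applied to the subfamily of factors of type $o$ or $s$ (the factors of type $+$ or $-$ being defined over $K$ and hence inert under the reduction). The step I expect to require the most care is the faithful matching of the staged definition of the product with the bracketed reduction isomorphisms $\phi^{(d)}_{(A,\sigma)}$, keeping track of the $T_\sigma$-identifications and of the several hermitian forms involved; the genuine mathematical content has already been discharged in Proposition \ref{prop_prod_red}.

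Finally, the passage to $\tld{GW}(A,\sigma)$ is formal. Since $\tld{SGW}(A,\sigma)$ is now a commutative graded semi-ring and $\tld{GW}(A,\sigma)$ is its Grothendieck group, with the grading extended by Lemma \ref{lem_semi_grad}, the universal property yields a unique biadditive extension of the product, and every ring axiom transfers automatically, being an identity valid on the generators. The $+$-component $\tld{GW}(A,\sigma)_+=GW(K)$, where the product is just the tensor product of symmetric bilinear forms, is then a central subring, and the ambient $GW(K)$-module structure coincides with multiplication by this subring; this exhibits $\tld{GW}(A,\sigma)$ as a commutative graded $GW(K)$-algebra, as claimed.
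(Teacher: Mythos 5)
Your proposal is correct and follows essentially the same route as the paper's proof: reduce to $\tld{SGW}(A,\sigma)$ via the universal property of the Grothendieck group (Lemma \ref{lem_semi_grad}), invoke Lemma \ref{lem_biadd} for commutativity and distributivity, and check associativity on homogeneous $\eps_i$-hermitian modules, where the case of all three factors equal to $(A,\sigma)$ reduces exactly to Proposition \ref{prop_prod_red} and the cases involving a factor $(K,\Id)$ are absorbed by the monoidal unit axioms of $\CBrh$. Your explicit verification of the unit element $\fdiag{1}\in SGW(K)$ and of the $GW(K)$-algebra structure through the $+$-component are harmless additions that the paper leaves implicit.
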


\begin{proof}
  Par propriété universelle des groupes de Grothendieck (voir notamment le lemme
  \ref{lem_semi_grad}), il suffit de montrer
  les affirmations sur \linebreak $\tld{SGW}(A,\sigma)$. Tout ce qu'il reste à démontrer
  pour ça est l'associativité, qui se vérifie composante par composante
  pour la $\Gamma$-graduation.
  
  On pose donc pour $1\ppq i\ppq 3$ $(U_i,h_i)$ module $\eps_i$-hermitien
  à droite sur $(A_i,\sigma_i)$, où $(A_i,\sigma_i)$ est soit $(A,\sigma)$
  soit $(K,\Id)$. Par construction du produit, $(h_1\cdot h_2)\cdot h_3$
  et $h_1\cdot (h_2\cdot h_3)$ sont chacun obtenus à partir d'un module
  $\eps_1\eps_2\eps_3$-hermitien sur $(A_1\otimes A_2\otimes A_3,
  \sigma_1\otimes\sigma_2\otimes\sigma_3)$, transporté par composition
  à travers certains isomorphismes dans $\CBrh$. De plus ces deux modules
  hermitiens sont isométriques par associativité du produit tensoriel.
  Il faut donc montrer qu'on a un carré commutatif dans $\CBrh$
  \[ \begin{tikzcd}
      (A_1\otimes A_2\otimes A_3, \sigma_1\otimes\sigma_2\otimes\sigma_3) \rar \dar &
      (B_1\otimes A_3, \tau_1\otimes \sigma_3) \dar \\
      (A_1\otimes B_2, \sigma_1\otimes \tau_2) \rar & (C,\theta)
  \end{tikzcd} \]
  où les $(B_i,\tau_i)$ et $(C,\theta)$ sont soit $(A,\sigma)$ soit $(K,\Id)$
  selon la nature des $(A_i,\sigma_i)$, et les morphismes sont les identifications
  canoniques de la forme $(A,\sigma)\otimes (K,\Id)\simeq (A,\sigma)$,
  $(K,\Id)\otimes (K,\Id)\simeq (K,\Id)$ ou $(A,\sigma)\otimes (A,\sigma)\simeq (K,\Id)$.

  Le cas où un des $A_i$ vaut $K$ est simplement une conséquence du fait
  que $(K,\Id)$ est une identité pour la structure monoïdale de $\CBrh$,
  et le cas où tous les $A_i$ sont $A$ se ramène exactement à la proposition
  \ref{prop_prod_red}.
\end{proof}

Par construction, pour toute extension $L/K$ on a un morphisme de $GW(K)$-algèbres
graduées $\tld{GW}(A,\sigma)\To \tld{GW}(A_L,\sigma_L)$.

\begin{ex}\label{ex_prod_diag}
  Dans $\tld{GW}(A,\sigma)$, on a certains éléments privilégiés qui
  sont les formes diagonales $\fdiag{a}$ où $a\in A$ est un élément
  inversible symétrique ou antisymétrique. On notera parfois $\fdiag{a}_\sigma$
  ou $\fdiag{a}_A$ s'il y a un risque de confusion.
  Si $A$ est à division, ces
  formes engendent additivement $GW^{\pm}(A,\sigma)$. Alors
  par construction du produit :
  \[ \fdiag{a}\cdot \fdiag{b} = \left( (x,y)\mapsto \Trd_A(\sigma(x)ay\sigma(b)) \right). \]
  En particulier, on dispose de $\fdiag{1}_\sigma\in GW^+(A,\sigma)$, qui est
  l'identité de $A$ dans $\CBrh$. Alors
  \[ \fdiag{a}\cdot \fdiag{1} = T_{\sigma,a} \]
  où $T_{\sigma,a}$ est la forme bilinéaire définie dans \cite[§11]{BOI} par
  \begin{equation}\label{eq_t_sigma_a}
    T_{\sigma,a}(x,y) = \Trd_A(\sigma(x)ay). 
  \end{equation}
  Notamment, $\fdiag{1}_\sigma^2=T_\sigma$, la forme trace d'involution.
\end{ex}

\subsection{Anneau de Witt mixte}

On peut également former l'équivalent de l'anneau de Witt, qui est
généralement plus facilement manipulable, bien qu'il ait moins de structure.

\begin{defi}\label{def_w_mixte}
  Soit $(A,\sigma)$ une algèbre à involution de première espèce sur $K$. On
  définit le $W(K)$-module
  \[ \tld{W}(A,\sigma) = W(K) \oplus W^+(A,\sigma) \oplus W^-(A,\sigma). \]
\end{defi}

\label{par_grad_w}On munit $\tld{W}(A,\sigma)$ d'une $\Gamma$-graduation analogue à celle
de $\tld{GW}(A,\sigma)$, avec une composante nulle pour l'élément $-\in \Gamma$ (puisque
$W^-(K)=0$).

On dispose manifestement d'un morphisme surjectif canonique de $GW(K)$-modules gradués 
$\tld{GW}(A,\sigma)\to \tld{W}(A,\sigma)$ donné par la projection naturelle
sur chaque composante.

\begin{prop}
  Le noyau de l'application canonique $\tld{GW}(A,\sigma)\to \tld{W}(A,\sigma)$
  est l'idéal $\Hyp(A,\sigma)$ engendré par les formes hyperboliques dans chaque composante.
  En particulier, $\tld{W}(A,\sigma)$ est muni d'une unique structure d'anneau
  telle que l'application précédente soit un morphisme d'anneau.
\end{prop}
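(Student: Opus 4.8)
Le plan est de traiter séparément l'identification du noyau comme sous-groupe, puis le fait qu'il s'agit d'un idéal, ce dernier point étant le cœur de l'argument.

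D'abord, l'application canonique $\tld{GW}(A,\sigma)\to \tld{W}(A,\sigma)$ étant définie composante par composante relativement à la $\Gamma$-graduation, son noyau est la somme directe des noyaux sur chaque composante. Sur la composante $GW(K)$ (resp. $GW^\eps(A,\sigma)$), le théorème de décomposition de Witt pour les formes bilinéaires symétriques (resp. pour les formes $\eps$-hermitiennes sur $(A,\sigma)$, voir \cite{Knu}) assure que le noyau de la projection vers $W(K)$ (resp. $W^\eps(A,\sigma)$) est exactement le sous-groupe engendré par les formes hyperboliques correspondantes. Pour la composante $GW^-(K)$, on a $W^-(K)=0$, donc le noyau est tout $GW^-(K)$ ; mais comme toute forme alternée est une somme de plans hyperboliques, $GW^-(K)$ est justement engendré par les formes hyperboliques. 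Le noyau de la projection est donc bien $\Hyp(A,\sigma)$, le sous-groupe engendré par les formes hyperboliques dans chaque composante.

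Ensuite, pour obtenir la structure d'anneau sur $\tld{W}(A,\sigma)$, il suffit de montrer que $\Hyp(A,\sigma)$ est un idéal de $\tld{GW}(A,\sigma)$. Par biadditivité du produit et compatibilité avec la graduation, il suffit de vérifier que le produit d'une forme hyperbolique par une forme quelconque reste hyperbolique. Or, par construction, ce produit s'obtient en formant le produit tensoriel des deux modules dans $\CBrh$, puis en transportant le résultat par composition à travers l'isomorphisme canonique $(A,T_\sigma):(A\otimes_K A,\sigma\otimes\sigma)\Isom (K,\Id)$. On invoque alors deux faits standard de théorie de Morita hermitienne (\cite{Knu}) : d'une part, le produit tensoriel d'une forme hyperbolique $\Hyp(M)$ par une forme non dégénérée $(V,g)$ est isométrique à $\Hyp(M\otimes V)$, donc hyperbolique ; d'autre part, le transport par un (iso)morphisme de $\CBrh$ préserve l'hyperbolicité. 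La composition de ces deux opérations envoie donc une forme hyperbolique sur une forme hyperbolique, ce qui établit que $\Hyp(A,\sigma)$ absorbe le produit. Ce raisonnement couvre uniformément tous les cas dictés par la graduation, y compris le cas délicat où le produit de deux formes hermitiennes retombe dans la composante bilinéaire via $T_\sigma$.

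Enfin, $\Hyp(A,\sigma)$ étant à la fois le noyau de la surjection additive $\tld{GW}(A,\sigma)\to \tld{W}(A,\sigma)$ et un idéal, la structure d'anneau de $\tld{GW}(A,\sigma)$ passe de façon unique au quotient, ce qui munit $\tld{W}(A,\sigma)$ de l'unique structure d'anneau faisant de la projection un morphisme d'anneaux. Le principal obstacle est le deuxième paragraphe : il faut garantir que l'hyperbolicité est effectivement préservée par le produit tel qu'il est concrètement défini via $T_\sigma$, en exhibant au besoin un lagrangien explicite pour $\Hyp(M)\cdot (V,g)$, ce qui demande un soin particulier lorsque les deux facteurs sont hermitiens sur $(A,\sigma)$ et que l'on doit suivre la forme à travers la trace réduite $\Trd_A$.
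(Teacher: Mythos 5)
Votre preuve est correcte, mais elle emprunte, pour l'étape clé, un chemin différent de celui du manuscrit. L'identification du noyau est essentiellement la même dans les deux cas (décomposition de Witt, composante par composante). En revanche, pour montrer que le produit d'une forme hyperbolique par une forme quelconque reste hyperbolique, le manuscrit procède de façon élémentaire : sur toutes les composantes sauf $GW^-(K)$, toute forme hyperbolique s'écrit $h\perp \fdiag{-1}h$, donc son produit par $x$ vaut $(xh)\perp \fdiag{-1}(xh)$, visiblement hyperbolique ; la composante alternée est traitée à part, en se ramenant au plan alterné et en constatant que son produit par $h$ a pour matrice $\begin{pmatrix} 0 & h \\ \fdiag{-1}h & 0 \end{pmatrix}$, qui est hyperbolique. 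Vous remplacez cette discussion par deux faits de théorie de Morita hermitienne : hyperbolique $\otimes$ non dégénérée est hyperbolique, et la composition avec un morphisme de $\CBrh$ préserve l'hyperbolicité. Votre approche est plus uniforme (aucune distinction de cas : l'argument du lagrangien $L\otimes V$ couvre d'office la composante alternée) et plus conceptuelle, puisqu'elle ne dépend que du fait que le produit mixte est construit par produits tensoriels et compositions dans $\CBrh$ ; son coût est de s'appuyer sur la préservation de l'hyperbolicité par les équivalences de Morita hermitiennes, fait standard de \cite{Knu} que le manuscrit évite précisément d'invoquer, au prix d'un petit calcul matriciel. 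Notez enfin que l'obstacle que vous signalez en conclusion se résout sans exhiber quoi que ce soit à travers $\Trd_A$ : la composition dans $\CBrh$ est elle-même un produit tensoriel $U\otimes_B V$ (formule (\ref{eq_compo_herm})), donc le même argument de lagrangien s'applique directement avec $L\otimes_B V$, et votre schéma de preuve se referme sans travail supplémentaire.
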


\begin{proof}
  Par définition des divers groupes de Witt concernés, le noyau est constitué
  des éléments dont toutes les composantes sont hyperboliques. Il s'agit donc
  de montrer que ce sous-module est un idéal, ce qui revient à montrer que le
  produit d'une forme hyperbolique avec n'importe quelle forme est hyperbolique.
  À part pour les composantes de la forme $GW^-(K)$, le résultat est clair puisque
  toute forme hyperbolique est de la forme $h\perp \fdiag{-1}h$, donc le produit
  avec n'importe quel élément $x$ est de la forme $(xh)\perp \fdiag{-1}(xh)$,
  donc est hyperbolique. Pour le cas des formes
  bilinéaires alternées (qui sont toutes hyperboliques) on peut se ramener au cas
  d'un plan, qui a pour matrice $\begin{pmatrix} 0 & 1 \\ -1 & 0 \end{pmatrix}$,
  et donc son produit avec une forme $h$ est de la forme $\begin{pmatrix} 0 & h\\
    \fdiag{-1}h & 0 \end{pmatrix}$ ce qui donne une forme hyperbolique.
\end{proof}

Le morphisme d'extension des scalaires est manifestement compatible avec
la projection, de sorte que pour toute extension $L/K$ on a un morphisme
de $W(K)$-algèbres graduées $\tld{W}(A,\sigma)\To \tld{W}(A_L,\sigma_L)$.

\subsection{Équivalences de Morita}

Les structures d'anneau sur $\tld{GW}(A,\sigma)$ et $\tld{W}(A,\sigma)$
sont définies à partir du produit tensoriel grâce aux isomorphismes
$\phi_{(A,\sigma)}^{(d)}$ décrits par la proposition \ref{prop_prod_red}.
On va établir la compatibilité de ces isomorphismes avec les équivalences
de Morita hermitiennes.

\begin{prop}\label{prop_morita}
  Soit $f: (B,\tau)\To (A,\sigma)$ un morphisme dans $\CBrh$ (correspondant
  à une équivalence de Morita hermitienne). Pour tout $d\in \N$ on a alors
  un carré commutatif dans $\CBrh$
  \[ \begin{tikzcd}
      (B^{\otimes d},\tau^{\otimes d}) \dar{f^{\otimes d}}\rar{\phi_{(B,\tau)}^{(d)}} & (B,\tau) \dar{f} \\
      (A^{\otimes d},\sigma^{\otimes d}) \rar{\phi_{(A,\sigma)}^{(d)}} & (A,\sigma) 
    \end{tikzcd} \]
  si $d$ est impair, et
  \[ \begin{tikzcd}
      (B^{\otimes d},\tau^{\otimes d}) \dar{f^{\otimes d}}\rar{\phi_{(B,\tau)}^{(d)}} & (K,\Id)\dar{\Id} \\
      (A^{\otimes d},\sigma^{\otimes d}) \rar{\phi_{(A,\sigma)}^{(d)}} & (K,\Id)
    \end{tikzcd} \]
  si $d$ est pair.
\end{prop}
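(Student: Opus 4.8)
Le plan est de ramener l'énoncé, par récurrence sur $d$, au seul cas $d=2$, puis de traiter ce cas par une isométrie explicite. Les cas $d=0$ et $d=1$ sont triviaux puisque $\phi^{(d)}$ est alors l'identité. Pour $d\pgq 2$, je me servirais de la proposition \ref{prop_prod_red} : comme $\phi_{(A,\sigma)}^{(d)}$ ne dépend pas du regroupement des facteurs, on peut l'écrire comme la composée
\[ \phi_{(A,\sigma)}^{(d)} = \phi_{(A,\sigma)}^{(d-2)}\circ \left( \Id^{\otimes (d-2)}\otimes \phi_{(A,\sigma)}^{(2)}\right), \]
où $\phi_{(A,\sigma)}^{(2)} = m_{(A,\sigma)}$ est la réduction de base $(A\otimes_K A,\sigma\otimes\sigma)\To (K,\Id)$ donnée par le bimodule $(A,T_\sigma)$ de (\ref{eq_t_sigma}). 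En utilisant la bifonctorialité du produit monoïdal de $\CBrh$ (loi d'échange), on a $\left( \Id^{\otimes(d-2)}\otimes \phi_{(A,\sigma)}^{(2)}\right)\circ f^{\otimes d} = f^{\otimes(d-2)}\otimes \left( \phi_{(A,\sigma)}^{(2)}\circ f^{\otimes 2}\right)$, de sorte que le cas $d=2$ permet de remplacer le facteur $\phi_{(A,\sigma)}^{(2)}\circ f^{\otimes 2}$ par $\phi_{(B,\tau)}^{(2)}$, et l'hypothèse de récurrence au rang $d-2$ (suivie de nouveau de \ref{prop_prod_red} pour $(B,\tau)$) fournit $\phi_{(A,\sigma)}^{(d)}\circ f^{\otimes d} = f\circ \phi_{(B,\tau)}^{(d)}$. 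La dichotomie pair/impair sur le but est automatiquement respectée puisque chaque réduction supprime exactement deux facteurs $A$.

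Pour le cas $d=2$, il s'agit de montrer que $m_{(A,\sigma)}\circ (f\otimes f) = m_{(B,\tau)}$ comme morphismes $(B\otimes_K B,\tau\otimes\tau)\To (K,\Id)$. En écrivant $f = (U,h)$ et en déroulant la formule de composition (\ref{eq_compo_herm}) avec (\ref{eq_t_sigma}), le membre de gauche est le $(B\otimes_K B)$-$K$-bimodule $(U\otimes_K U)\otimes_{A\otimes_K A} A$ muni de la forme bilinéaire symétrique
\[ F\big( (u\otimes u')\otimes a, (w\otimes w')\otimes b\big) = \Trd_A\big( \sigma(a)\, h(u,w)\, b\, \sigma(h(u',w'))\big), \]
tandis que le membre de droite est $(B, T_\tau)$. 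Je construirais l'isométrie au moyen de l'isomorphisme de Morita $U\otimes_A U^*\Isom B=\End_A(U)$ et de l'identification $\hat h : U\Isom U^*$, $u'\mapsto h(u',\cdot)$, induite par la non-dégénérescence de $h$ : concrètement $\Psi\big( (u\otimes u')\otimes a\big)$ est l'endomorphisme $v\mapsto u\,a\,h(u',v)$ de $U$. On vérifie sans peine qu'il passe au produit tensoriel équilibré sur $A\otimes_K A$ (les deux torsions par $\sigma$ se compensent) et qu'il est $(B\otimes_K B)$-équivariant, cette dernière propriété traduisant exactement $\tau=\sigma_h$.

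La difficulté principale sera de vérifier que $\Psi$ est une isométrie, c'est-à-dire $T_\tau(\Psi(\xi),\Psi(\xi')) = F(\xi,\xi')$ : cela revient à comparer la trace réduite $\Trd_B$ sur $\End_A(U)$ à $\Trd_A$ à travers l'équivalence de Morita, via l'identité $\Trd_B(\theta_{u,\phi}) = \Trd_A(\phi(u))$. Comme une égalité de formes bilinéaires sur un $K$-espace de dimension finie se teste après extension des scalaires, et que $\Psi$ est défini sur $K$, je me ramènerais (comme dans la preuve de \ref{prop_prod_red}) au cas où $A$ est déployée : on a alors $A=\End_K(V)$, $\sigma=\sigma_b$, le bimodule $U$ devient un espace d'applications linéaires explicite, et le calcul se réduit à la même comptabilité avec l'élément de Goldman $g=\sum_{i,j}e_i\otimes e_j^*\otimes e_j\otimes e_i^*$ que dans le lemme \ref{lem_action_goldman}, en suivant le signe $\eps$ qui distingue les cas orthogonal et symplectique. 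C'est cette identité de traces, de type Goldman, qui constitue le c\oe{}ur calculatoire de la preuve ; la bijectivité de $\Psi$ se contrôle de même après déploiement.
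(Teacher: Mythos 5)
Votre démonstration est correcte et suit, dans sa structure, le même chemin que celle du texte : même réduction par récurrence au seul cas $d=2$ au moyen de la proposition \ref{prop_prod_red} et de la bifonctorialité du produit monoïdal, et même isométrie explicite — votre $\Psi\big((u\otimes u')\otimes a\big)\colon v\mapsto u\,a\,h(u',v)$ est exactement le $\psi\big((v\otimes w)\otimes a\big)=\phi_h(va\otimes w)$ du texte, où $\phi_h$ est l'identification canonique $V\otimes_A{}^\tau V\Isom B$ de \cite[5.1]{BOI}. La seule divergence réelle porte sur la vérification du caractère isométrique. Le texte la mène rationnellement, sans extension des scalaires, en appliquant directement les identités de \cite[thm 5.1]{BOI} pour $\phi_h$ : action de $\tau$ (soit $\tau(\phi_h(x\otimes y))=\eps\,\phi_h(y\otimes x)$), formule de produit, et formule de trace $\Trd_B(\phi_h(x\otimes y))=\Trd_A(h(y,x))$ — laquelle est précisément l'identité de trace de Morita que vous isolez comme coeur calculatoire. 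Vous proposez à la place une descente : l'égalité de deux formes bilinéaires sur un $K$-espace de dimension finie se teste après extension des scalaires à un corps de déploiement, où le calcul devient matriciel. Les deux voies aboutissent ; la vôtre est auto-suffisante (elle redémontre en substance les identités de \cite[5.1]{BOI} au lieu de les citer), au prix de deux vérifications routinières qu'il faudrait expliciter : la compatibilité de $\Psi$, de la forme composée $F$ et de $T_\tau$ avec l'extension des scalaires, et la description du bimodule hermitien en situation déployée. Signalons enfin une petite imprécision sans conséquence : l'élément de Goldman n'intervient pas dans cette vérification (ni dans la preuve du texte, ni dans le calcul déployé) ; il est l'ingrédient de l'associativité (proposition \ref{prop_prod_red}) et du lemme \ref{lem_action_goldman}, non de la compatibilité aux équivalences de Morita, et c'est bien l'identité de trace que vous citez qui fait tout le travail.
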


\begin{proof}
  Si $d=0$, le carré est entièrement constitué d'égalités, et si $d=1$
  les flèches verticales sont identiques et les flèches horizontales sont
  des égalités. En utilisant la proposition \ref{prop_prod_red}, on voit
  qu'on peut se ramener par récurrence au cas $d=2$, et il faut donc construire
  une isométrie entre les modules correspondant à $\phi_{(A,\sigma)}^{(2)}\circ f^{\otimes 2}$
  et $\phi_{(A,\sigma)}^{(2)}$.

  On définit alors le morphisme de $(B\otimes_KB)$-$K$-bimodule suivant,
    où $f$ correspond au bimodule $\eps$-hermitien $(V,h)$ :
  \[ \foncdef{\psi}{(V\otimes_K V)\otimes_{A\otimes_K A}A}{B = \End_A(V)}
    {(v\otimes w)\otimes a}{\phi_h(va\otimes w).} \]
  où $\phi_h: V\otimes_K V\to B$ correspond l'identification canonique
  $V\otimes_A \mbox{}^\tau V\to B$ (voir \cite[5.1]{BOI}), et est donnée par
  \[ \phi_h(v\otimes w)(x) = vh(w,x). \]

  Pour montrer que c'est une isométrie, on doit établir une égalité entre
  d'une part
  \begin{align*}
    & \Trd_B\left(\tau \left( \psi((v\otimes w)\otimes a)\right)\cdot \psi(((v'\otimes w')\otimes b))\right) \\
    &= \Trd_B\left( \tau(\phi_h(va\otimes w))\cdot \phi_h(v'b\otimes w')\right)
  \end{align*}
  et d'autre part
  \begin{align*}
    & \Trd_A \left(\sigma(a)(h\otimes h)(v\otimes w,v'\otimes w')\cdot b\right) \\
    &= \eps \Trd_A(\sigma(a)h(v,v')bh(w',w)).
  \end{align*}
  Or en appliquant successivement les formules du théorème \cite[5.1]{BOI},
  on obtient :
  \begin{align*}
    & \Trd_B\left( \tau(\phi_h(va\otimes w)) \cdot \phi_h(v'b\otimes w')\right) \\
    &= \eps \Trd_B\left(\phi_h(w\otimes va) \cdot \phi_h(v'b\otimes w')\right) \\
    &= \eps \Trd_B\left(\phi_h( wh(va,v'b) \otimes w') \right) \\
    &= \eps \Trd_A\left(h(w',wh(va,v'b))\right) \\
    &= \eps \Trd_A\left(h(w',w)\sigma(a)h(v,v')b\right).
  \end{align*}
\end{proof}

Cette compatibilité entraîne un comportement remarquable pour les
algèbres $\tld{GW}$ et $\tld{W}$ : soient $(A,\sigma)$ et $(B,\tau)$
munies d'un morphisme $f: (B,\tau)\to (A,\sigma)$ dans $\CBrh$ ; on peut
alors définir une application
\[ f_* : \tld{SGW}(B,\tau) \To \tld{SGW}(A,\sigma) \]
comme l'identité sur les composantes $SGW^{\pm}(K)$, et la composition
avec $f$ sur les composantes $SGW^\pm(B,\tau)$, en interprétant les
modules $\eps$-hermitiens comme des morphismes vers $(B,\tau)$ dans $\CBrh$.

\begin{thm}\label{thm_morita}
  L'association $(A,\sigma)\mapsto \tld{SGW}(A,\sigma)$ et $f\mapsto f_*$
  définit un foncteur de $\CBrh$ vers la catégorie des semi-anneaux
  $\Gamma$-gradués. En particulier, $\CBrh$ étant un groupoïde,
  $f_*$ est un isomorphisme de semi-anneaux.

  De plus, ce foncteur induit naturellement des foncteurs
  $(A,\sigma)\mapsto \tld{GW}(A,\sigma)$ et $(A,\sigma)\mapsto \tld{W}(A,\sigma)$
  de $\CBrh$ vers respectivement les $GW(K)$-algèbres graduées
  et les $W(K)$-algèbres graduées.
\end{thm}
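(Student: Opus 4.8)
Le plan est de tout ramener au semi-anneau $\tld{SGW}$, puis de propager le résultat à $\tld{GW}$ et $\tld{W}$. Précisément, je commencerais par observer que, d'après la propriété universelle du groupe de Grothendieck (lemme \ref{lem_semi_grad}), il suffit d'établir que $f_*:\tld{SGW}(B,\tau)\To\tld{SGW}(A,\sigma)$ est un morphisme de semi-anneaux $\Gamma$-gradués et que $f\mapsto f_*$ est fonctoriel ; l'extension à $\tld{GW}$ en résulte automatiquement, et celle à $\tld{W}$ découle du fait qu'une équivalence de Morita hermitienne préserve l'hyperbolicité, donc que $f_*(\Hyp(B,\tau))\subseteq\Hyp(A,\sigma)$, ce qui autorise le passage au quotient. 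Le fait que $f_*$ soit l'identité sur les composantes $SGW^\pm(K)$ assurera que les morphismes obtenus respectent bien les structures de $GW(K)$- et $W(K)$-algèbres.

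Je vérifierais ensuite que $f_*$ respecte la $\Gamma$-graduation et l'addition. Pour la graduation, l'argument est que composer avec $f$ un module vu comme morphisme $(C,\theta)\To(B,\tau)$ produit un morphisme $(C,\theta)\To(A,\sigma)$ de même involution adjointe $\theta$, donc de même type orthogonal ou symplectique, ce qui est exactement la condition définissant la graduation (voir \ref{par_grad}). L'additivité est immédiate sur les composantes sur $K$, et sur les composantes hermitiennes c'est une application directe du lemme \ref{lem_somme_compo} à la post-composition par $f$, soit $f_*(x)=x\circ f$.

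Le cœur de la preuve, et le point le plus délicat, sera la multiplicativité, que je traiterais composante par composante selon la $\Gamma$-graduation. Les cas où l'un des facteurs est une forme bilinéaire sur $K$ se règlent par les identifications unité de $\CBrh$ (ce sont les cas $d\ppq 1$ de la proposition \ref{prop_morita}, où les $\phi^{(d)}$ sont triviaux). Le cas véritablement substantiel est celui de deux modules hermitiens $x,y$ sur $(B,\tau)$, dont le produit est par définition $(x\otimes y)\circ\phi_{(B,\tau)}^{(2)}$. En appliquant la loi d'échange de la catégorie monoïdale $\CBrh$, on a $(x\circ f)\otimes(y\circ f)=(x\otimes y)\circ f^{\otimes 2}$, d'où
\[ f_*(x)\cdot f_*(y)=(x\otimes y)\circ\left(f^{\otimes 2}\circ\phi_{(A,\sigma)}^{(2)}\right). \]
Tout repose alors sur la proposition \ref{prop_morita} dans le cas $d=2$, qui donne précisément $f^{\otimes 2}\circ\phi_{(A,\sigma)}^{(2)}=\phi_{(B,\tau)}^{(2)}$ ; on obtient ainsi $f_*(x)\cdot f_*(y)=(x\otimes y)\circ\phi_{(B,\tau)}^{(2)}=x\cdot y$, qui est une forme sur $K$ sur laquelle $f_*$ agit trivialement, donc $f_*(x)\cdot f_*(y)=f_*(x\cdot y)$. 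L'obstacle principal n'est donc pas calculatoire mais réside dans la mise en place correcte de ce calcul catégorique : identifier le produit à la composée $(x\otimes y)\circ\phi^{(2)}$, invoquer la loi d'échange, et reconnaître que la compatibilité des $\phi^{(d)}$ avec $f$, déjà établie en \ref{prop_morita}, fournit exactement l'égalité recherchée.

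Pour conclure, j'établirais $\Id_*=\Id$ ainsi que la compatibilité de $f\mapsto f_*$ avec la composition des morphismes, à partir de l'associativité de la composition dans $\CBrh$ (triviale sur les composantes sur $K$, post-composition itérée sur les composantes hermitiennes). Enfin, comme $\CBrh$ est un groupoïde, tout morphisme $f$ admettant l'inverse explicité en \ref{sec_brauer_herm}, la fonctorialité entraîne que $f_*$ est inversible d'inverse $(f^{-1})_*$, donc est un isomorphisme de semi-anneaux gradués ; les isomorphismes induits sur $\tld{GW}$ et $\tld{W}$ s'en déduisent aussitôt.
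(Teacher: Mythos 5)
Votre preuve est correcte et suit essentiellement la même démarche que celle du texte : réduction à $\tld{SGW}$, additivité via le lemme \ref{lem_somme_compo}, multiplicativité ramenée par la loi d'échange au cas de deux modules hermitiens où la proposition \ref{prop_morita} (cas $d=2$) s'applique, fonctorialité définitionnelle, puis transfert à $\tld{GW}$ et $\tld{W}$ par propriété universelle du groupe de Grothendieck et préservation des formes hyperboliques. La seule différence, cosmétique, est votre argument pour la graduation, qui invoque l'invariance de l'involution adjointe par composition plutôt que la discussion par cas du texte selon le type (hermitien ou anti-hermitien) du bimodule définissant $f$ — les deux sont valables.
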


\begin{proof}
  Les énoncés concernant $\tld{GW}$ et $\tld{W}$ découlent directement de
  celui à propos de $\tld{SGW}$, puisqu'on vérifie facilement que $f_*$
  préserve les sommes orthogonales (voir \ref{lem_somme_compo})
  et les formes hyperboliques.
  
  D'un point de vue ensembliste, il est clair par définition que
  $(g\circ f)_* = g_*\circ f_*$. Le fait que $f_*$ préserve la graduation
  est évident pour les composantes $+$ et $-$, et pour les composantes
  $o$ et $s$ cela vient de notre choix de graduation : si $f$ correspond
  à un module hermitien, alors $\sigma$ et $\tau$ sont de même type, et
  la composition avec $f$ préserve le signe des modules, alors que si $f$
  correspond à un module anti-hermitien, alors $\sigma$ et $\tau$ sont
  de type différent, et la composition avec $f$ inverse le signe des modules.
  Dans tous les cas la graduation est préservée. 

  Il faut donc montrer que $f_*$ préserve le produit. On se ramène à des
  modules $\eps_i$-hermitiens sur $(B_i,\tau_i)$, qui vaut soit $(B,\tau)$
  soit $(K,\Id)$, et on doit montrer que le diagramme naturel suivant commute :
  \[ \begin{tikzcd}[column sep=small, scale=0.1]
      GW^{\eps_1}(B_1,\tau_1) \times GW^{\eps_2}(B_2,\tau_2) \rar \dar &
      GW^{\eps\eps_1}(A_1,\sigma_1) \times GW^{\eps\eps_2}(A_2,\sigma_2) \dar  \\
      GW^{\eps_1\eps_2}(B_1\otimes_K B_2,\tau_1\otimes \tau_2) \dar \rar &
      GW^{\eps_1\eps_2}(A_1\otimes_K A_2,\sigma_1\otimes \sigma_2) \dar \\
      GW^{\eps_1\eps_2}(B_3,\tau_3) \rar &
      GW^{\eps_1\eps_2}(A_3,\sigma_3).
    \end{tikzcd} \]
  Le carré du haut commute toujours par propriété du produit tensoriel,
  et celui du bas ne pose pas de problème si un des $B_i$ vaut $K$.
  On doit donc montrer que le carré du bas commute dans la configuration
  suivante :
  \[ \begin{tikzcd}
      GW^{\eps_1\eps_2}(B\otimes_K B,\tau\otimes \tau) \dar \rar &
      GW^{\eps_1\eps_2}(K) \dar \\
      GW^{\eps_1\eps_2}(A\otimes_K A,\sigma\otimes \sigma) \rar &
      GW^{\eps_1\eps_2}(K)
    \end{tikzcd} \]
  ce qui est une conséquence directe de la proposition \ref{prop_morita}.
\end{proof}

Ce théorème implique notamment qu'à isomorphisme près, $\tld{GW}(A,\sigma)$
et $\tld{W}(A,\sigma)$ ne dépendent que de la classe de Brauer de $A$. En revanche,
comme on prend en compte les formes hermitiennes et anti-hermitiennes, ils ne dépendent
pas du type de l'involution.
En particulier, on peut toujours se ramener au cas d'une algèbre à division,
munie d'une involution du type que l'on veut.

\begin{ex}\label{ex_morita}
  Dans la situation du théorème, si $f: (B,\tau)\To (A,\sigma)$ est représenté
  par $(V,h)$, alors $f_*(\fdiag{1}_\tau) = h$ (où $\fdiag{1}_\tau$ est
  la forme diagonale, voir exemple \ref{ex_prod_diag}). Par conséquent,
  si $h\in GW^\eps(A,\sigma)$ représente un module $\eps$-hermitien
  non nul, alors il existe un isomorphisme $\tld{GW}(B,\tau) \Isom \tld{GW}(A,\sigma)$
  qui envoie $\fdiag{1}_\tau$ sur $h$. On a le même résultat pour $h\in W^\eps(A,\sigma)$
  non nul quelconque (tous les éléments de $W^\eps(A,\sigma)$ sont représentés
  par des modules $\eps$-hermitiens). On peut donc dans une certaine
  mesure toujours se ramener à étudier des formes du type $\fdiag{1}_\tau$.
\end{ex}

\begin{ex}\label{ex_prod_morita}
  Soient $h$ et $h'$ formes respectivement $\eps$-hermitienne
  et $\eps'$-hermitienne sur $(A,\sigma)$, de même dimension.
  D'après l'exemple précédent on peut se ramener par équivalence
  de Morita hermitienne à $h=\fdiag{1}_\tau$, sur $(B,\tau)$. Alors par
  hypothèse sur la dimension $h'$ est de la forme $\fdiag{b}_\tau$ avec
  $b\in B$, et donc $h\cdot h' = T_{\tau,b}$ (voir exemple \ref{ex_prod_diag}).
  Ces formes traces d'involution \og tordues\fg{} étudiées dans
  \cite[§11]{BOI} sont donc l'exemple universel de produit de deux formes
  de même dimension.
\end{ex}

\subsection{Algèbres déployées}\label{sec_witt_depl}

Dans le cas où $A$ est déployée, le théorème \ref{thm_morita} permet
de se ramener (mais de façon non canonique) à $(A,\sigma)=(K,\Id)$. Or dans ce cas, on a par
définition
\begin{equation}\label{eq_gw_depl}
   \tld{GW}(K) := \tld{GW}(K,\Id) = GW(K) \oplus GW^-(K) \oplus GW(K) \oplus GW^-(K) 
\end{equation}
donc les différentes composantes ne sont plus toutes distinguables.
On a en réalité
\[ \tld{GW}(K) = GW^\pm(K)[\Zd] \]
où $GW^\pm(K) = GW(K)\oplus GW^-(K)$ est l'anneau habituel des formes
bilinéaires, et où $R[G]$ désigne l'anneau de groupe de $G$ si $R$ est un
anneau commutatif. De même,
\begin{equation}\label{eq_w_depl}
  \tld{W}(K) := \tld{W}(K,\Id) = W(K)[\Zd]. 
\end{equation}

Pour $R=GW^\pm(K)$ ou $W(K)$ (ou de façon générale pour un anneau commutatif),
on dispose d'un isomorphisme tautologique
\begin{equation}\label{eq_def_phi_ann}
  \Phi : R\oplus R \Isom R[\Zd]
\end{equation}
d'inverse $\Phi^{-1} = (\pi_0,\pi_1)$
où $\pi_0,\pi_1$ sont les projections sur les composantes homogènes.
Cependant, on va plutôt utiliser un autre isomorphisme qui sera
plus naturellement compatible avec les opérations qu'on souhaite
exploiter. Notons $\Phi(x,0) = x_{(0)}$ et $\Phi(0,x) = x_{(1)}$.
On pose
\begin{equation}\label{eq_def_delta_ann}
  \foncdef{\Delta}{R}{R[\Zd]}{x}{x_{(0)}-x_{(1)}} 
\end{equation}
qu'on appelle \emph{plongement diagonal} de $R$ dans $R[\Zd]$.
On définit alors
\begin{equation}\label{eq_def_psi_ann}
   \isomdef{\Psi}{R\oplus R}{R[\Zd]}{(x,y)}{x_{(0)}+\Delta(y),} 
\end{equation}
qui vérifie $\Psi^{-1} = (\pi_0+\pi_1,-\pi_1)$. Notons que $\Phi(x,0)=\Psi(x,0)=x_{(0)}$.

On a alors de façon immédiate :
\begin{equation}
  \Phi(x_1,y_1)\cdot \Phi(x_2,y_2) = \Phi(x_1x_2+y_1y_2, x_1y_2+x_2y_1) 
\end{equation}
et (en utilisant $\Delta(x)\cdot \Delta(y)=2\Delta(xy)$)
\begin{equation}\label{eq_psi_prod}
  \Psi(x_1,y_1)\cdot \Psi(x_2,y_2) = \Psi(x_1x_2, x_1y_2+x_2y_1+2y_1y_2). 
\end{equation}

\subsection{Algèbres de quaternions}

On peut également donner une description concrète de l'anneau de Witt
pour une algèbre de quaternion. Soit donc $(Q,\can)$ une algèbre
de quaternions sur $K$, munie de son involution canonique. Il suffit
pour décrire $\tld{GW}(Q,\can)$ de donner les produits
des formes diagonales élémentaires $\fdiag{z}_Q$ où $z$ est soit
un scalaire soit un quaternion pur (voir exemple \ref{ex_prod_diag}).

On commence par définir une certaine forme de Pfister qui sera
importante dans la suite :

\begin{propdef}\label{prop_phi_quater}
  Soient $z_1, \dots, z_r$ des quaternions purs dans $Q^*$, avec
  $r\pgq 2$. Il existe alors une unique $r$-forme de Pfister
  $\phi_{z_1,\dots,z_r}$ telle que dans $W(K)$ :
  \[ \phi_{z_1,\dots,z_r} = \pfis{z_1^2,\dots,z_r^2} - \pfis{-1}^{r-2}n_Q. \]

  Si deux des $z_i$ anti-commutent, $\phi_{z_1,\dots,z_r}$ est
  hyperbolique.
  De plus, si $z_{r+1},\dots,z_{r+s}$ sont des quaternions purs, alors
  \[ \phi_{z_1,\dots,z_r}\pfis{z_{r+1}^2,\dots,z_{r+s}^2} = \phi_{z_1,\dots,z_{r+s}}. \]
\end{propdef}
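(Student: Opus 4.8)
Le plan est de ramener tout l'énoncé au cas $r=2$. Si $Q$ est déployée, $n_Q$ est hyperbolique, donc la classe de Witt de l'énoncé vaut $\pfis{z_1^2,\dots,z_r^2}$, qui est déjà une $r$-forme de Pfister ; je suppose donc désormais $Q$ à division, de sorte que tout quaternion pur non nul $w$ vérifie $w^2=-\Nrd(w)\neq 0$. L'ingrédient de récurrence est le lemme suivant, valable dans $W(K)$ pour tout quaternion pur $w$ : $n_Q\,\pfis{w^2}=\pfis{-1}\,n_Q$. En effet $\pfis{w^2}-\pfis{-1}=\fdiag{-1}\,\pfis{-w^2}$ dans $W(K)$, et $n_Q\,\pfis{-w^2}$ est hyperbolique puisque $-w^2=\Nrd(w)=n_Q(w)$ est une valeur de la forme ronde $n_Q$. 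Par récurrence, $n_Q\,\pfis{w_1^2,\dots,w_s^2}=\pfis{-1}^s\,n_Q$ pour tous quaternions purs $w_i$.

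En admettant le cas $r=2$, je définirais $\phi_{z_1,\dots,z_r}$ comme le produit $\phi_{z_1,z_2}\cdot\pfis{z_3^2,\dots,z_r^2}$ : c'est un produit de formes de Pfister, donc une $r$-forme de Pfister, et le lemme ci-dessus donne immédiatement que sa classe de Witt est $\pfis{z_1^2,\dots,z_r^2}-\pfis{-1}^{r-2}n_Q$, ce qui prouve l'existence ; la relation multiplicative finale s'obtient de la même façon en développant le produit et en réappliquant le lemme. L'unicité est standard : une forme de Pfister, étant hyperbolique ou anisotrope, est déterminée à isométrie près par sa classe de Witt. Cette unicité rend $\phi_{z_1,\dots,z_r}$ symétrique en les $z_i$ ; l'assertion sur l'anticommutation se ramène alors à $r=2$, où si $z_1$ et $z_2$ anticommutent ils engendrent $Q\simeq(z_1^2,z_2^2)$, d'où $\pfis{z_1^2,z_2^2}=n_Q$ et $\phi_{z_1,z_2}=0$ ; le produit par le reste demeure hyperbolique.

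Le cœur est donc de montrer que $\pfis{z_1^2,z_2^2}-n_Q$ est une $2$-forme de Pfister. J'écrirais $z_1z_2=\lambda+p$ avec $\lambda\in K$ et $p$ quaternion pur ; on vérifie que $z_1$ et $p$ anticommutent, et la multiplicativité de la norme, $\Nrd(z_1z_2)=\Nrd(z_1)\Nrd(z_2)$, donne l'identité décisive $\lambda^2-p^2=z_1^2z_2^2$ (je pose $\alpha=z_1^2$, $\beta=z_2^2$). Si $p=0$, alors $z_2$ est colinéaire à $z_1$, $\beta\equiv\alpha$ modulo les carrés, et un calcul direct dans $W(K)$ donne $\phi_{z_1,z_2}=\pfis{\alpha,-\mu}$ où $Q=(\alpha,\mu)$. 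Si $p\neq 0$, alors $z_1$ et $p$ engendrent $Q\simeq(\alpha,p^2)$, donc $n_Q=\pfis{\alpha,p^2}$, et un calcul dans $W(K)$ fournit $\phi_{z_1,z_2}=\fdiag{p^2}\,\pfis{\alpha,\beta/p^2}$, semblable à la $2$-forme de Pfister $\sigma=\pfis{\alpha,\beta/p^2}$. Lorsque $\lambda=0$ (c'est-à-dire $z_1,z_2$ anticommutent) on a $\beta/p^2\equiv-\alpha$ et $\sigma=\pfis{\alpha,-\alpha}$ est hyperbolique ; lorsque $\lambda\neq 0$, l'identité $p^2+\alpha\beta=\lambda^2$ montre que $\phi_{z_1,z_2}$ représente $1$ (via $p^2(1/\lambda)^2+\alpha\beta(1/\lambda)^2=1$), donc $\fdiag{p^2}\sigma\simeq\sigma$ par rondeur. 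Dans tous les cas $\phi_{z_1,z_2}$ est bien une $2$-forme de Pfister.

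L'obstacle principal est précisément ce cas $r=2$ : il faut non seulement que $\phi_{z_1,z_2}$ soit semblable à une forme de Pfister (ce qui est formel), mais qu'elle en soit effectivement une ; c'est l'identité $\lambda^2-p^2=z_1^2z_2^2$, issue de la structure quaternionique, qui débloque la situation en permettant de représenter $1$. Les vérifications restantes (que $p$ est pur, que $z_1$ et $p$ anticommutent, les manipulations dans $W(K)$) sont routinières.
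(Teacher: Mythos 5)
Votre démonstration est correcte, et son squelette est celui du texte : même lemme clé $n_Q\pfis{w^2}=\pfis{-1}n_Q$ (pour $w$ quaternion pur inversible), servant à la fois à la relation multiplicative et à la réduction de l'existence au cas $r=2$, et même traitement de l'anticommutation via $\pfis{z_i^2,z_j^2}=n_Q$. La vraie divergence est le cas crucial $r=2$. Le texte choisit un quaternion pur inversible $z_0$ anticommutant avec $z_1$, écrit $n_Q=\pfis{z_1^2,z_0^2}$ et conclut par un calcul direct de trois lignes dans $W(K)$ (reposant sur l'identité $\pfis{a}+\pfis{b}-\pfis{a,b}=\pfis{ab}$) que $\pfis{z_1^2,z_2^2}-n_Q=\pfis{z_1^2,z_2^2z_0^2}$ ; ce calcul est uniforme, sans dichotomie déployé/à division ni hypothèse sur la position relative de $z_1$ et $z_2$. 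Vous décomposez au contraire $z_1z_2=\lambda+p$, tirez de la multiplicativité de la norme réduite l'identité $\lambda^2-p^2=z_1^2z_2^2$, identifiez la classe de Witt cherchée à celle de $\fdiag{p^2}\pfis{z_1^2,z_2^2p^2}$, puis absorbez le facteur de similitude $p^2$ par rondeur, la forme représentant $1$ grâce à l'identité précédente ; cela vous coûte des cas particuliers ($Q$ déployée, $p=0$, $\lambda=0$) que le texte évite, mais rend explicite la raison arithmétique pour laquelle la forme est une forme de Pfister, et non seulement semblable à une telle forme. Notez d'ailleurs que votre $p$ est précisément le choix $z_0=(z_1z_2)_0$ fait par le texte dans la preuve de la proposition \ref{prop_prod_quater}, où la même identité de normes réapparaît (sous la forme $\Delta=4z_0^2z_1^2z_2^2$), et que les deux formes de Pfister obtenues coïncident. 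Enfin, vous explicitez l'unicité (une forme de Pfister est hyperbolique ou anisotrope, donc déterminée à isométrie près par sa classe de Witt), point que le texte laisse implicite.
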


\begin{proof}
  On a $\pfis{z_{r+1}^2,\dots,z_{r+s}^2}\cdot \pfis{-1}^{r-2}n_Q = \pfis{-1}^{r+s-2}n_Q$,
  d'où la dernière affirmation.
  
  De là, pour montrer que $\phi_{z_1,\dots,z_r}$ est une $r$-forme de Pfister,
  il suffit de montrer que $\pfis{z_1^2,z_2^2} - n_Q$ est une 2-forme
  de Pfister. Or si $z_0$ est un quaternion pur qui anti-commute
  avec $z_1$, on a dans $W(K)$ :
  \begin{align*}
    \pfis{z_1^2,z_2^2} - n_Q &= \pfis{z_1^2,z_2^2} + n_Q - \pfis{z_2^2}n_Q \\
                             &= \pfis{z_1^2,z_2^2} + \pfis{z_1^2,z_0^2} - \pfis{z_1^2,z_0^2,z_2^2} \\
                             &= \pfis{z_1^2,z_2^2z_0^2}.
  \end{align*}
  
  Si $z_i$ et $z_j$ anti-commutent, alors $\pfis{z_i^2,z_j^2} = n_Q$,
  donc $\pfis{z_1^2,\dots,z_r^2}=\pfis{-1,\dots,-1}n_Q$.
\end{proof}

\begin{rem}
  Dans cet énoncé, on parle de forme de Pfister au sens usuel,
  c'est-à-dire que $\phi_{z_1,\dots,z_r}$ est bien une forme
  quadratique de dimension $2^r$, et non un élément de
  $\hat{I}^r(K)$.
\end{rem}

\begin{prop}\label{prop_prod_quater}
  Soient $z_1,z_2\in Q$ des quaternions purs. Alors on a dans
  $\tld{GW}(Q,\can)$ :
  \[  \fdiag{z_1}\cdot \fdiag{z_2} = \fdiag{-\Trd_Q(z_1z_2)}\phi_{z_1,z_2} \]
  où si $z_1$ et $z_2$ anti-commutent on sous-entend que la
  forme est hyperbolique.

  Si $a,b\in K^*$, alors :
  \[ \fdiag{a}_Q\cdot \fdiag{b}_Q = \fdiag{2ab}n_Q. \]
\end{prop}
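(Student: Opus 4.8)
The plan is to reduce both formulas to the explicit product of Example \ref{ex_prod_diag}. Writing $\overline{x}=\can(x)$, the element $\fdiag{z_1}\cdot\fdiag{z_2}$ is the $K$-bilinear form on $Q$ given by $(x,y)\mapsto \Trd_Q(\overline{x}\,z_1\,y\,\overline{z_2})$. For the scalar case this is immediate: since $\overline{a}=a$ and $\overline{b}=b$, the form equals $ab\,\Trd_Q(\overline{x}y)=ab\,T_\can(x,y)$, and it then suffices to recall that the involution trace form of a quaternion algebra is the polar form of $\Nrd_Q$, so that $T_\can=\fdiag{2}\,n_Q$ (see \cite[§11]{BOI}); diagonalizing in a standard basis $1,i,j,ij$ gives $T_\can=\fdiag{2,-2a,-2b,2ab}=\fdiag{2}\,n_Q$. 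Hence $\fdiag{a}_Q\cdot\fdiag{b}_Q=\fdiag{ab}T_\can=\fdiag{2ab}\,n_Q$.

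The pure case is the substantial one. As $z_1,z_2$ are pure we have $\overline{z_i}=-z_i$, so the form is $b(x,y)=-\Trd_Q(\overline{x}\,z_1\,y\,z_2)$; using $\Trd_Q(uv)=\Trd_Q(vu)$ and $\Trd_Q=\Trd_Q\circ\can$ one checks $b$ is symmetric, so (since $\mathrm{char}\,k\neq2$) it is determined by the quadratic form $q(x)=b(x,x)=-\Trd_Q(\overline{x}\,z_1\,x\,z_2)$. I would then decompose $z_2=\mu z_1+z_2''$, where $z_2''$ is the part of $z_2$ anticommuting with $z_1$, and set $t=\Trd_Q(z_1z_2)=2\mu z_1^2$. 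Assuming first $z_2''\neq0$, one works in the quaternion basis $\{1,z_1,z_2'',z_1z_2''\}$, i.e. with $Q=(z_1^2,(z_2'')^2)$. Writing $\alpha=z_1^2$, $\gamma=(z_2'')^2$ and $\beta=z_2^2=\mu^2\alpha+\gamma$, a scalar-part computation of $\overline{x}\,z_1\,x\,z_2$ yields a Gram matrix that splits into the two coordinate blocks $(x_0,x_3)$ and $(x_1,x_2)$; completing the square in each (both having determinant $-\alpha\gamma(t^2+4\alpha\gamma)=-4\alpha^2\beta\gamma$) gives the diagonalization $q\cong\fdiag{-t}\pfis{z_1^2,(z_2'')^2z_2^2}$.

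It then remains to identify this $2$-Pfister form with $\phi_{z_1,z_2}$. By Proposition-définition \ref{prop_phi_quater} one has $\phi_{z_1,z_2}=\pfis{z_1^2,z_2^2}-n_Q$ in $W(K)$, with $n_Q=\pfis{z_1^2,(z_2'')^2}$ in the chosen basis. I would apply the common-slot identity $\pfis{\alpha,\beta}-\pfis{\alpha,\beta\gamma}=\fdiag{-\beta}\pfis{\alpha,\gamma}$ together with the key observation that $n_Q$ represents $-z_2^2=\Nrd_Q(z_2)$, so that by roundness of Pfister forms $\fdiag{-\beta}n_Q=n_Q$; this collapses the relation to $\pfis{z_1^2,(z_2'')^2z_2^2}=\pfis{z_1^2,z_2^2}-n_Q$ in $W(K)$. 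Being two $2$-fold Pfister forms with the same Witt class they are isometric, whence $q\cong\fdiag{-t}\phi_{z_1,z_2}$, which is exactly the asserted identity in $\tld{GW}(Q,\can)$. The anticommuting case ($\mu=0$, $t=0$) drops out of the same computation, where $q=4\alpha\gamma(x_1x_2-x_0x_3)$ is visibly hyperbolic, and the remaining degenerate case $z_2''=0$ (that is $z_2\in Kz_1$) is settled by the analogous short computation in any basis $\{1,z_1,j,z_1j\}$, giving $q\cong\fdiag{-t}\pfis{z_1^2,-j^2}$ which matches $\phi_{z_1,\mu z_1}$.

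The main obstacle will be the middle step: correctly tracking the factors of $2$ and the signs through the trace computation so that the clean diagonalization $\fdiag{-t}\pfis{z_1^2,(z_2'')^2z_2^2}$ actually emerges, and then recognizing that the defining formula for $\phi_{z_1,z_2}$ appears \emph{only} because $n_Q$ represents $-z_2^2$ (this is what turns the generic common-slot term $\fdiag{-\beta}n_Q$ back into $n_Q$). Everything else is routine bilinear-algebra bookkeeping and an application of Witt cancellation.
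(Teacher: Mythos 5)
Your proposal is correct and follows essentially the same route as the paper: both reduce to the explicit trace form of Example \ref{ex_prod_diag}, compute its Gram matrix in a quaternion basis adapted to $z_1,z_2$ (your basis $(1,z_1,z_2'',z_1z_2'')$ is, up to reordering and a scalar, the paper's $(1,z_0,z_1,z_1z_0)$ with $z_0=z_1z_2''$), obtain the same $2\times 2$ block structure with block determinant $-4\alpha^2\beta\gamma$, and then identify the resulting $2$-Pfister form with $\phi_{z_1,z_2}$ — your common-slot-plus-roundness argument (using that $n_Q$ represents $-z_2^2=\Nrd_Q(z_2)$) is exactly the mechanism in the paper's proof of Proposition-définition \ref{prop_phi_quater}, which the paper's own proof invokes at this step. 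The scalar case and the degenerate (anticommuting and colinear) cases are handled equivalently, so there is no substantive difference.
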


\begin{proof}
  D'après l'exemple \ref{ex_prod_diag}, $\fdiag{z_1}\cdot \fdiag{z_2}=b$
  où $b$ est la forme bilinéaire symétrique sur le $K$-espace vectoriel
  $Q$ telle que $b(x,y)=\Trd_Q(xz_1\overline{y}\overline{z_2})$.

  Soit $z_0$ un quaternion pur qui anti-commute avec $z_1$ et $z_2$
  (ce qui existe toujours, et si $z_1$ et $z_2$ ne commutent pas on peut
  prendre la partie pure de $z_1z_2$). On voit facilement que
  $\phi_{z_1,z_2} = \pfis{z_2^2z_0^2,z_1^2}$ puisque ce sont des 2-formes
  de Pfister avec le même invariant $e_2$ (ou voir la preuve de la
  proposition précédente).
  Dans la base $(1,z_0,z_1,z_1z_0)$ la matrice de $b$ est :
  \[ \begin{pmatrix}
      -Tr_Q(z_1z_2) & Tr_Q(z_1z_2z_0) & 0 & 0 \\
      Tr_Q(z_1z_2z_0) & -z_0^2Tr_Q(z_1z_2) & 0 & 0 \\
      0 & 0 & z_1^2Tr_Q(z_1z_2) & -z_1^2Tr_Q(z_1z_2z_0) \\
      0 & 0 & -z_1^2Tr_Q(z_1z_2z_0) & z_1^2z_0^2Tr_Q(z_1z_2)
    \end{pmatrix}. \]
  Soit $\Delta$ le déterminant du bloc supérieur gauche. Alors $b$
  est équivalente à 
  \[ \fdiag{-Tr_Q(z_1z_2)}\fdiag{1,\Delta,-z_1^2,-z_1^2\Delta} = \fdiag{-Tr_Q(z_1z_2)}\pfis{-\Delta,z_1 ^2} \]
  au sens où $b$ est hyperbolique si $Tr_Q(z_1z_2)= 0$,
  et sinon $b$ est isométrique à la forme ci-dessus. On va donc
  montrer que $\pfis{-\Delta,z_1^2} = \phi_{z_1,z_2}$.

  On a
  \[ \Delta = z_0^2Tr_Q(z_1z_2)^2 - Tr_Q(z_1z_2z_0)^2 \]
  donc on peut distinguer deux cas : si $z_1$ et $z_2$ commutent
  (ce qui revient à dire qu'ils sont colinéaires), alors $Tr_Q(z_1z_2z_0) = 0$
  et $\Delta = z_0^2Tr_Q(z_1z_2) = 4z_0^2z_1^2z_2^2$ ; si maintenant $z_1$ 
  et $z_2$ ne commutent pas, alors on peut choisir
  $z_0 = (z_1z_2)_0$, donc $z_1z_2 = \lambda + z_0$ pour un certain
  $\lambda\in k^*$. Alors:
  \begin{align*}
    \Delta &= z_0^2\Trd_Q(\lambda + z_0)^2 - \Trd_Q((\lambda + z_0)z_0)^2 \\
           &= 4z_0^2\lambda^2 - \Trd_Q(z_0^2)^2 \\
           &= 4z_0^2(\lambda^2 - z_0^2) \\
           &= 4z_0^2\Nrd_Q(z_1z_2) \\
           &= 4z_0^2z_1^2z_2^2
  \end{align*}
  donc dans tous les cas on trouve $\pfis{-\Delta,z_1^2} = \pfis{-z_1^2z_2^2z_0^2,z_1^2}$
  ce qui donne bien $\phi_{z_1,z_2}$.

  Pour le calcul de $\fdiag{a}_Q\cdot \fdiag{b}_Q$ avec $a,b\in K^*$
  on se ramène immédatement au cas $a=b=1$, et d'après l'exemple
  \ref{ex_prod_diag} on trouve la forme trace d'involution, qui
  a pour diagonalisation $\fdiag{2,-2u,-2v,2uv}=\fdiag{2}\pfis{u,v}$
  si $Q=(u,v)$, et donc qui est isométrique à $\fdiag{2}n_Q$.
\end{proof}

\section{Opérations $\lambda$}

L'ingrédient clé des constructions d'invariants de classes de Witt
dans le chapitre précédent était la structure de $\lambda$-anneau
de $GW(K)$. On veut définir une structure semblable sur $\tld{GW}(A,\sigma)$.

\subsection{Puissances alternées d'un module}

\subsubsection*{Groupes symétriques et mélanges}

\label{par_young}On commence par un petit rappel sur les mélanges. Soit $d\in \N$
et soient $p,q\in \N$ tels que $p+q=d$. On dispose alors du
sous-groupe $\mathfrak{S}_{p,q}\subset \mathfrak{S}_d$ constitué
des permutations qui préservent les $p$ premiers (et donc les $q$ derniers)
éléments. On a naturellement $\mathfrak{S}_{p,q}\simeq \mathfrak{S}_p\times \mathfrak{S}_q$,
et la signature à gauche correspond au produit des signatures à droite.

\label{par_shuffle}On définit aussi l'ensemble des $(p,q)$-mélanges $Sh(p,q)\subset \mathfrak{S}_d$,
qui sont les permutations (strictement) croissantes sur les $p$ premiers
et sur les $q$ derniers éléments.

\begin{lem}\label{lem_shuffle}
  Tout élément de $\mathfrak{S}_d$ s'écrit de façon unique
  comme $\pi\sigma$ avec $\pi\in Sh(p,q)$ et $\sigma\in \mathfrak{S}_{p,q}$.
\end{lem}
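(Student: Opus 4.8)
Tout élément de $\mathfrak{S}_d$ s'écrit de façon unique comme $\pi\sigma$ avec $\pi\in Sh(p,q)$ et $\sigma\in\mathfrak{S}_{p,q}$.

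Ce lemme dit que les $(p,q)$-mélanges forment un système de représentants des classes à gauche $\mathfrak{S}_d/\mathfrak{S}_{p,q}$. Je dois montrer l'existence et l'unicité de cette factorisation.

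**Approche par décompte.** La façon la plus rapide est probablement de combiner un argument de cardinalité avec une injectivité (ou surjectivité). Le sous-groupe $\mathfrak{S}_{p,q}\simeq\mathfrak{S}_p\times\mathfrak{S}_q$ a pour cardinal $p!\,q!$, et $Sh(p,q)$ a exactement $\binom{d}{p}$ éléments puisqu'un $(p,q)$-mélange est entièrement déterminé par l'image de $\{1,\dots,p\}$ (un sous-ensemble de taille $p$ de $\{1,\dots,d\}$, l'ordre étant imposé par la croissance sur chaque bloc). Comme $\binom{d}{p}\cdot p!\,q! = d!= |\mathfrak{S}_d|$, il suffira de montrer que l'application $(\pi,\sigma)\mapsto\pi\sigma$ est injective (ou surjective) pour conclure à la bijectivité.

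**Détail de l'argument.** Pour l'existence, je partirais d'un $\tau\in\mathfrak{S}_d$ arbitraire. Sa classe $\tau\mathfrak{S}_{p,q}$ est caractérisée par la donnée de $\tau(\{1,\dots,p\})$ comme sous-ensemble (non ordonné), puisque multiplier à droite par un élément de $\mathfrak{S}_{p,q}$ permute les $p$ premières positions entre elles et les $q$ dernières entre elles. Je choisirais alors l'unique $\pi\in Sh(p,q)$ ayant la même image de $\{1,\dots,p\}$ que $\tau$ : un tel $\pi$ existe (rangement croissant des deux blocs) et $\pi^{-1}\tau$ préserve $\{1,\dots,p\}$, donc appartient à $\mathfrak{S}_{p,q}$, ce qui donne $\sigma=\pi^{-1}\tau$ et la factorisation $\tau=\pi\sigma$. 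Pour l'unicité, si $\pi\sigma=\pi'\sigma'$ avec $\pi,\pi'\in Sh(p,q)$ et $\sigma,\sigma'\in\mathfrak{S}_{p,q}$, alors $\pi$ et $\pi'$ ont la même image de $\{1,\dots,p\}$ (puisque $\sigma,\sigma'$ préservent ce bloc) ; mais un mélange est l'unique permutation croissante sur chaque bloc ayant une image de $\{1,\dots,p\}$ fixée, donc $\pi=\pi'$, puis $\sigma=\sigma'$.

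**Obstacle principal.** Il n'y a pas de difficulté mathématique profonde ici — c'est un résultat classique de combinatoire des groupes symétriques. Le seul point demandant un peu de soin est l'argument \emph{un $(p,q)$-mélange est déterminé par l'image de $\{1,\dots,p\}$}, qu'il faut énoncer proprement : la stricte croissance sur $\{1,\dots,p\}$ et sur $\{p+1,\dots,d\}$ impose l'ordre des images une fois le sous-ensemble $\pi(\{1,\dots,p\})$ fixé, d'où $|Sh(p,q)|=\binom{d}{p}$. Je privilégierais la présentation directe existence-et-unicité (plutôt que l'argument de cardinalité) car elle fournit simultanément les deux assertions du lemme et rend transparent le lien avec les classes $\mathfrak{S}_d/\mathfrak{S}_{p,q}$ utilisé implicitement dans la suite pour les puissances alternées.
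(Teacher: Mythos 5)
Votre preuve est correcte et suit essentiellement la même démarche que celle du manuscrit : dans les deux cas, l'argument clé est qu'un $(p,q)$-mélange est déterminé par l'image (non ordonnée) de $\{1,\dots,p\}$, la différence étant purement de présentation — le manuscrit construit d'abord $\sigma\in\mathfrak{S}_{p,q}$ en ordonnant les images $\tau(1),\dots,\tau(p)$ et $\tau(p+1),\dots,\tau(d)$, puis pose $\pi=\tau\sigma^{-1}$, tandis que vous choisissez d'abord l'unique $\pi\in Sh(p,q)$ tel que $\pi(\{1,\dots,p\})=\tau(\{1,\dots,p\})$ puis posez $\sigma=\pi^{-1}\tau$. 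L'argument d'unicité est également identique (un mélange est l'unique permutation croissante sur chaque bloc d'image de $\{1,\dots,p\}$ fixée), et votre remarque sur le décompte $\binom{d}{p}\cdot p!\,q!=d!$ est un complément valable mais non nécessaire.
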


\begin{proof}
  Soit $\tau\in \mathfrak{S}_d$. Soient $\sigma_1\in \mathfrak{S}_p$
  et $\sigma_2\in \mathfrak{S}_q$ définies par $\tau(\sigma_1^{-1}(1))<\dots<\tau(\sigma_1^{-1}(p))$
  et $\tau(\sigma_2^{-1}(p+1))<\dots<\tau(\sigma_2^{-1}(p+q))$ ;
  autrement dit, $\sigma_1$ est obtenue en ordonnant $\tau(1),\dots,\tau(p)$
  dans l'ordre croissant, et de même pour $\sigma_2$ avec $\tau(p+1),\dots,
  \tau(p+q)$. On pose $\sigma\in \mathfrak{S}_{p,q}$ correspondant
  à $(\sigma_1,\sigma_2)$, et $\pi = \tau\sigma^{-1}$. Alors par
  construction $\pi(1)<\dots<\pi(p)$ et $\pi(p+1)<\dots<\pi(p+q)$,
  donc $\pi\in Sh(p,q)$. Si on dispose d'une autre décomposition
  $\tau=\sigma'\pi'$, alors comme $\pi'\in Sh(p,q)$ on doit
  avoir $\tau((\sigma'_1)^{-1}(1))<\dots<\tau((\sigma'_1)^{-1}(p))$
  et $\tau((\sigma'_2)^{-1}(p+1))<\dots<\tau((\sigma')_2^{-1}(p+q))$,
  et donc $\sigma'=\sigma$ (et $\pi'=\pi$).
\end{proof}

\subsubsection*{Puissances alternées}

Si $V$ est un $K$-espace vectoriel, le fait que $K$ soit de caractéristique
différente de 2 nous permet de voir la puissance extérieure $\Lambda^d V$
de deux façons : soit comme un quotient (ce qui est la construction canonique),
soit comme un sous-espace de $V^{\otimes d}$. Précisément,
pour tout $\pi\in \mathfrak{S}_d$ on pose
\[ \foncdef{g_\pi}{V^{\otimes d}}{V^{\otimes d}}{v_1\otimes \cdots \otimes v_d}
  { v_{\pi^{-1}(1)}\otimes \cdots \otimes v_{\pi^{-1}(d)},} \]
et on définit l'application d'antisymétrisation
\[ s_d = \sum_{\pi\in \mathfrak{S}_d} (-1)^\pi g_\pi \]
où $(-1)^\pi$ désigne la signature de la permutation $\pi$.
Alors l'application $s_d$ est alternée, donc on obtient
par propriété universelle une application induite $\Lambda^d V\To \Alt^d(V)$,
où $\Alt^d(V)\subset V^{\otimes d}$ est l'image de $s_d$, et un
résultat classique d'algèbre linéaire affirme que c'est un isomorphisme,
qu'on peut expliciter comme $v_1\wedge \cdots \wedge v_d\mapsto
s_d(v_1\otimes \cdots \otimes v_d)$.
\\

\label{par_goldman}On va adapter cette contruction dans le cadre des modules sur des
algèbres simples centrales. Soit donc $A$ une algèbre simple centrale
sur $K$. Suivant \cite[10.1]{BOI}, on définit pour tout $d\in \N$ un morphisme
de groupes $\mathfrak{S}_d\To (A^{\otimes d})^*$, qu'on note $\pi\mapsto g_A(\pi)$
(on écrira $g(\pi)$ s'il n'y a pas de confusion possible), caractérisé
par $g_A((i\; i+1)) = 1\otimes 1\otimes \cdots \otimes g_A\otimes 1\otimes \cdots \otimes 1$ où
$g_A\in A^{\otimes 2}$ est l'élément de Goldman. Pour $d=0,1$, il s'agit
du morphisme trivial. On énonce quelques propriétés de base de ces éléments :

\begin{lem}\label{lem_goldman_commut}
  Soit $V$ un $B$-$A$-bimodule simple. Alors si $v_1,\dots,v_d\in V$
  et $\pi\in \mathfrak{S}_d$, on a
  \[ g_B(\pi)\cdot (v_1\otimes\cdots \otimes v_d) =
    (v_{\pi^{-1}(1)}\otimes \cdots \otimes v_{\pi^{-1}(d)}) \cdot g_A(\pi). \]
\end{lem}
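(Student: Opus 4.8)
The plan is to exploit that $\mathfrak{S}_d$ is generated by the adjacent transpositions $s_i=(i\ i+1)$, together with the fact that $\pi\mapsto g_A(\pi)$ and $\pi\mapsto g_B(\pi)$ are group homomorphisms. Write $\mathbf{v}=v_1\otimes\cdots\otimes v_d$ and let $P_\pi$ denote the place permutation $v_1\otimes\cdots\otimes v_d\mapsto v_{\pi^{-1}(1)}\otimes\cdots\otimes v_{\pi^{-1}(d)}$, which is itself a homomorphism $\mathfrak{S}_d\to$ (linear automorphisms of $V^{\otimes d}$). First I would record the two structural facts on which everything rests: the left $B^{\otimes d}$-action and the right $A^{\otimes d}$-action on $V^{\otimes d}$ commute (because $V$ is a bimodule), and the displayed identity is stable under composition. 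Concretely, assuming the identity for $\pi$ and for $\rho$, one computes
\[ g_B(\pi\rho)\cdot\mathbf{v}=g_B(\pi)\cdot\bigl(g_B(\rho)\cdot\mathbf{v}\bigr)=g_B(\pi)\cdot\bigl(P_\rho(\mathbf{v})\cdot g_A(\rho)\bigr)=\bigl(g_B(\pi)\cdot P_\rho(\mathbf{v})\bigr)\cdot g_A(\rho)=P_{\pi\rho}(\mathbf{v})\cdot g_A(\pi\rho), \]
where the third equality uses that the two actions commute and the last one combines the identity for $\pi$ (applied to the vector $P_\rho(\mathbf{v})$), the relation $P_\pi P_\rho=P_{\pi\rho}$, and the multiplicativity of $g_A$. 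It therefore suffices to prove the identity when $\pi=s_i$ is an adjacent transposition.

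For $\pi=s_i$ the elements $g_A(s_i)$ and $g_B(s_i)$ are by definition of the form $1\otimes\cdots\otimes g\otimes\cdots\otimes 1$, with the Goldman element $g$ in the slots $i,i+1$, so they act as the identity on every other tensor slot. The whole identity then localises to those two factors and reduces to the case $d=2$, $\pi=(1\ 2)$. In that case the assertion is the fundamental characterising property of the Goldman element: for a simple $B$-$A$-bimodule $V$, both left multiplication by $g_B$ and right multiplication by $g_A$ realise the transposition of the two tensor factors of $V\otimes_K V$.

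To establish this base case I would pass to a splitting field, over which $A\cong\End_K(E)$, the bimodule $V$ can be identified explicitly, and the Goldman element takes the concrete shape $g=\sum_{i,j}(e_i\otimes e_j^*)\otimes(e_j\otimes e_i^*)$ recalled in the proof of Lemma~\ref{lem_action_goldman}; a direct computation, entirely parallel to the one carried out there, shows that both the left and the right action of $g$ send $v\otimes w$ to its transpose. Since $g_A(\pi)$ and $g_B(\pi)$ are defined over $K$ and the two bimodule actions are compatible with scalar extension, the identity valid over the splitting field descends to $K$. This is in essence \cite[3.6]{BOI}.

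The hard part will be the bookkeeping of conventions rather than any deep computation: the left action $b\mapsto(b\cdot{-})$ is a ring homomorphism whereas the right action $a\mapsto({-}\cdot a)$ is an anti-homomorphism, so one must be careful that the place permutations and the indices $\pi$ versus $\pi^{-1}$ line up correctly when passing through the composition step. Once the $d=2$ base case and the commutation of the two actions are in place, the induction along a reduced word for $\pi$ closes the argument.
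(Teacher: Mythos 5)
Your reduction machinery is sound, and it is exactly what the paper compresses into its opening sentence (\og on se ramène facilement au cas où $A$ et $B$ sont déployées et $d=2$\fg{}): multiplicativity of $\pi\mapsto g_A(\pi)$ and $\pi\mapsto g_B(\pi)$, the homomorphism property of place permutations, commutation of the left and right actions, locality of $g_B(s_i)$ in the slots $i$ and $i+1$, and injectivity of $V^{\otimes d}\hookrightarrow (V\otimes_K L)^{\otimes d}$ for the descent. None of that is at issue.

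The gap is your base case, which as stated is false — and would in fact contradict the lemma you are proving. You assert that \emph{both} left multiplication by $g_B$ and right multiplication by $g_A$ realise the transposition of the two tensor factors of $V\otimes_K V$. If that were so, then $g_B\cdot(v_1\otimes v_2)=v_2\otimes v_1$ while $(v_2\otimes v_1)\cdot g_A=v_1\otimes v_2$, so the two sides of the identity would disagree whenever $v_1\otimes v_2\neq v_2\otimes v_1$. Concretely, take $B=K$ and $V=U^*$ as a right $\End_K(U)$-module: then $g_B=1$ acts as the identity, not as the flip. The correct statement — which is the actual content of the lemma in degree $2$, i.e. \cite[3.6]{BOI} — is two-sided: $g_B\cdot(v_1\otimes v_2)=(v_2\otimes v_1)\cdot g_A$, or equivalently (using $g_A^2=1$) the simultaneous action $\mathbf{v}\mapsto g_B\cdot\mathbf{v}\cdot g_A$ is the flip; neither one-sided action is. This is transparent in the split model the paper uses: with $V\cong\Hom_K(U,W)$, left multiplication by $g_B$ is post-composition with the flip of $W^{\otimes 2}$, right multiplication by $g_A$ is pre-composition with the flip of $U^{\otimes 2}$, and the paper's entire proof is the one-line computation $(g_B\cdot(f\otimes g))(u\otimes u')=g(u')\otimes f(u)=\bigl((g\otimes f)\cdot g_A\bigr)(u\otimes u')$. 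Your misstatement likely comes from Lemma \ref{lem_action_goldman}: there the (twisted) sandwich action of $g$ on $A\simeq V\otimes V$ is indeed the flip, but a sandwich action already combines a left and a right multiplication, so that result is the two-sided statement in disguise, not a one-sided one. Once you replace your base case by the two-sided identity and prove it by the computation above, the rest of your argument (composition step, reduction to adjacent transpositions, descent) goes through.
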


\begin{proof}
  On se ramène facilement au cas où $A$ et $B$ sont déployées et $d=2$,
  avec $g_B(\pi)=g_B$ et $g_A(\pi)=g_A$. On a alors $A\simeq \End_K(U)$,
  $B\simeq \End_K(W)$, et $V\simeq \Hom_K(U,W)$ avec les actions évidentes,
  et:
  \begin{align*}
    (g_B\cdot f\otimes g)(u\otimes u') &= g_B(f(u)\otimes g(u')) \\
                                       &= g(u')\otimes f(u) \\
                                       &= (g\otimes f)(g_A(u\otimes u')) \\
                                       &= (g\otimes f \cdot g_A)(u\otimes u').
  \end{align*}
\end{proof}

\begin{lem}\label{lem_goldman_young}
  Soient $V_1$ et $V_2$ deux $A$-modules à droite, et soient $B_i = \End_A(V_i)$
  et $B = \End_A(V_1\oplus V_2)$. Soit $\pi\in \mathfrak{S}_{p,q}$, correspondant
  au produit de $\pi_1\in \mathfrak{S}_p$ et $\pi_2\in \mathfrak{S}_q$.
  Soient $x\in V_1^{\otimes p}$ et $y\in V_2^{\otimes q}$. Alors :
  \[ g_B(\pi)\cdot (x\otimes y) = (g_{B_1}(\pi_1)\cdot x)\otimes (g_{B_2}(\pi_2)\cdot y). \]
\end{lem}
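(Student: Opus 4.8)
The plan is to reduce the statement, by multiplicativity, to the case where $\pi$ is an adjacent transposition contained in a single block, and then to a direct computation in the split case.

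First I would record the formal structure of the argument. Both sides define $K$-linear operators on the subspace $V_1^{\otimes p}\otimes_K V_2^{\otimes q}$ of $V^{\otimes d}$ (with $V=V_1\oplus V_2$, $d=p+q$): write $T_\pi$ for $z\mapsto g_B(\pi)\cdot z$ and $S_\pi$ for $(g_{B_1}(\pi_1)\cdot -)\otimes (g_{B_2}(\pi_2)\cdot -)$. Since $g_B,g_{B_1},g_{B_2}$ are group morphisms and, by \ref{par_young}, $\mathfrak{S}_{p,q}\To \mathfrak{S}_p\times \mathfrak{S}_q$, $\pi\mapsto(\pi_1,\pi_2)$ is a group isomorphism, one has $T_{\pi\pi'}=T_\pi T_{\pi'}$ and $S_{\pi\pi'}=S_\pi S_{\pi'}$. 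As each $S_\pi$ preserves $V_1^{\otimes p}\otimes_K V_2^{\otimes q}$, if $\pi$ and $\pi'$ both satisfy $T=S$ on this subspace then so does $\pi\pi'$ (apply $T_\pi=S_\pi$ to $T_{\pi'}(z)=S_{\pi'}(z)$, which lies in the subspace). Thus $\ens{\pi}{T_\pi=S_\pi}$ is a submonoid of the finite group $\mathfrak{S}_{p,q}$, hence a subgroup; since $\mathfrak{S}_{p,q}$ is generated by the adjacent transpositions $(i\; i+1)$ with $1\ppq i\ppq p-1$ and with $p+1\ppq i\ppq p+q-1$, it suffices to treat these generators.

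Then I would treat a generator, say $\pi=(i\; i+1)$ with $1\ppq i\ppq p-1$, so that $\pi_1=(i\; i+1)$ and $\pi_2=\Id$ (the other block being symmetric). By the defining relation $g_B(\pi)=1\otimes\cdots\otimes g_B\otimes\cdots\otimes 1$, with the Goldman element $g_B\in B\otimes_K B$ in positions $i,i+1$, the operator $T_\pi$ acts only on the $i$-th and $(i+1)$-th tensor factors, which both lie in $V_1$, and leaves the remaining factors fixed; the same holds for $S_\pi$. Hence the whole statement reduces to the case $p=2$, $q=0$, namely the identity
\[ g_B\cdot (v\otimes v') = g_{B_1}\cdot (v\otimes v') \qquad (v,v'\in V_1), \]
where on the left $v,v'$ are viewed in $V=V_1\oplus V_2$ and $g_B$ is the Goldman element of $B=\End_A(V)$, while on the right $g_{B_1}$ is that of $B_1=\End_A(V_1)$.

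Finally I would prove this last identity exactly as in the proof of the lemme \ref{lem_goldman_commut}: both sides are elements of the $K$-vector space $V_1\otimes_K V_1$, and the Goldman elements commute with scalar extension, so one may extend scalars to a splitting field and assume $A=\End_K(U)$. Then $V_i\simeq \Hom_K(U,W_i)$ and $V\simeq \Hom_K(U,W)$ with $W=W_1\oplus W_2$, while $B\simeq \End_K(W)$ and $B_1\simeq \End_K(W_1)$, the Goldman elements acting as the swap of the two $W$- (resp. $W_1$-) factors. For $v,v'\in V_1\simeq \Hom_K(U,W_1)$ the computation of \ref{lem_goldman_commut} gives that $g_B\cdot(v\otimes v')$ sends $u\otimes u'$ to $v'(u')\otimes v(u)$; since $v(u),v'(u')\in W_1$, this value already lies in $W_1\otimes_K W_1$ and coincides with $g_{B_1}\cdot(v\otimes v')$, whence the claim. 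The main obstacle is precisely this last identity: it is the only place where genuine content enters, namely that the Goldman element of the large algebra $B$ restricts to that of the corner $B_1$ on the sub-bimodule $V_1$; everything preceding it is a formal reduction using the multiplicativity of $g$ and the generation of $\mathfrak{S}_{p,q}$ by block transpositions.
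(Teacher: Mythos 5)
Your proof is correct and rests on the same key step as the paper's one-line argument: extend scalars so that $A$ is split, where $V\simeq \Hom_K(U,W)$, the Goldman element of $B=\End_K(W)$ acts as the switch of the two $W$-factors, and hence visibly preserves $W_1\otimes_K W_1$ and restricts there to the Goldman element of $B_1$. The only difference is that the paper checks the identity for all of $\mathfrak{S}_{p,q}$ at once in the split case, whereas you first reduce to adjacent transpositions via multiplicativity of $\pi\mapsto g(\pi)$ — a harmless extra layer of bookkeeping that does not change the substance.
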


\begin{proof}
  On se ramène au cas où $A$ est déployée, auquel cas le résultat est
  clair par construction.
\end{proof}

On définit alors comme dans \cite[§10.A]{BOI} $s_{d,A} = s_d\in A^{\otimes d}$ par :
\begin{equation}\label{eq_def_sd}
 s_d = \sum_{\pi \in \mathfrak{S}_d} (-1)^\pi g(\pi). 
\end{equation}

\begin{defi}\label{def_alt}
  Soit $V$ un $A$-module à droite, avec $B = \End_A(V)$. On pose
  \[ \Alt^d(V) = s_{B,d}V^{\otimes d}\subset V^{\otimes d} \]
  $A^{\otimes d}$-module à droite, avec en particulier $\Alt^0(V)=K$
  et $\Alt^1(V)=V$
\end{defi}

Notons que si $A=K$, alors on retrouve la construction
ci-dessus pour les espaces vectoriels.

\begin{rem}\label{rem_lambda_alt}
  Dans le cas où $V=A$ muni de l'action tautologique, on
  peut caractériser l'algèbre $\lambda^d(A)$ définie dans
  \cite[10.4]{BOI} par le fait que $\Alt^d(A)$ est un isomorphisme
  dans $\CBr$ de $\lambda^d(A)$ vers $A^{\otimes d}$.
\end{rem}

\subsubsection*{Produit de mélange}

Si $p+q=d$, on définit un morphisme de $A^{\otimes d}$-modules
$V^{\otimes p}\otimes_K V^{\otimes q}\to V^{\otimes d}$,
qu'on appelle \emph{produit de mélange} et qu'on note $x \# y$,
par
\begin{equation}\label{eq_prod_shuffle}
  x \# y = sh_{p,q}(x\otimes y)
\end{equation}
où on définit l'application de mélange
\begin{equation}\label{eq_app_shuffle}
   \foncdef{sh_{p,q}}{V^{\otimes d}}{V^{\otimes d}}{x}{\sum_{\pi\in Sh(p,q)}(-1)^\pi g_B(\pi)\cdot x.}
 \end{equation}
On voit facilement par définition que le produit de mélange est associatif
et alterné, et en particulier anti-symétrique.

\begin{prop}
  Le produit de mélange induit un diagramme commutatif
  \[ \begin{tikzcd}
      V^{ \otimes p}\otimes_K V^{ \otimes q} \dar \rar{\otimes} & V^{ \otimes d} \dar \\
      \Alt^p(V) \otimes_K \Alt^q(V) \rar{\#} & \Alt^d(V).
    \end{tikzcd} \]
\end{prop}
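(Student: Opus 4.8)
The plan is to reduce commutativity of the square to a single operator identity on $V^{\otimes d}$, namely
\[ s_{B,d} = sh_{p,q}\circ (s_{B,p}\otimes s_{B,q}), \]
where $B=\End_A(V)$ and, following Definition \ref{def_alt}, the vertical arrows are the antisymmetrisations $x\mapsto s_{B,p}(x)$, $y\mapsto s_{B,q}(y)$ and $z\mapsto s_{B,d}(z)$, whose images are by definition $\Alt^p(V)$, $\Alt^q(V)$ and $\Alt^d(V)$. Indeed, for $x\in V^{\otimes p}$ and $y\in V^{\otimes q}$ the two ways round the square give $s_{B,d}(x\otimes y)$ and $(s_{B,p}x)\#(s_{B,q}y)=sh_{p,q}\big((s_{B,p}x)\otimes(s_{B,q}y)\big)$ by (\ref{eq_prod_shuffle}), so the displayed identity is exactly what is required. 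It also shows at once that $\#$ carries $\Alt^p(V)\otimes_K\Alt^q(V)$ into $\Alt^d(V)$, so that the bottom arrow is well defined.

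To establish the identity I would begin from the definition (\ref{eq_def_sd}) of $s_{B,d}=\sum_{\tau\in\mathfrak{S}_d}(-1)^\tau g_B(\tau)$ and apply the shuffle decomposition of Lemme \ref{lem_shuffle}: each $\tau\in\mathfrak{S}_d$ is uniquely $\pi\sigma$ with $\pi\in Sh(p,q)$ and $\sigma\in\mathfrak{S}_{p,q}$. As $g_B$ is a group homomorphism and the signature is multiplicative, the sum factors as
\[ s_{B,d}=\Big(\sum_{\pi\in Sh(p,q)}(-1)^\pi g_B(\pi)\Big)\cdot\Big(\sum_{\sigma\in\mathfrak{S}_{p,q}}(-1)^\sigma g_B(\sigma)\Big), \]
and by (\ref{eq_app_shuffle}) the left factor is exactly the shuffle operator $sh_{p,q}$.

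It then remains to identify the right factor, acting on $V^{\otimes p}\otimes_K V^{\otimes q}$, with $s_{B,p}\otimes s_{B,q}$, which is where the only real content lies. Writing $\sigma\in\mathfrak{S}_{p,q}$ as a pair $(\sigma_1,\sigma_2)\in\mathfrak{S}_p\times\mathfrak{S}_q$, I would use that $g_B$ is generated by the Goldman element placed in adjacent slots, so that $g_B(\sigma)=g_B(\sigma_1\times 1)\,g_B(1\times\sigma_2)$ with the first factor supported on the first $p$ tensor slots and the second on the last $q$; these commute, and on $x\otimes y$ they act as $(g_B(\sigma_1)x)\otimes(g_B(\sigma_2)y)$. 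Summing over $\sigma$ with signs, using $(-1)^\sigma=(-1)^{\sigma_1}(-1)^{\sigma_2}$, yields $(s_{B,p}x)\otimes(s_{B,q}y)$, as wanted; alternatively one reduces to the split case $A=K$, where this is the classical factorisation of the antisymmetriser through shuffles, just as in the proof of Lemme \ref{lem_goldman_young}. Combining the three steps gives the operator identity, hence the commutative square. The main obstacle is purely the bookkeeping in this last point — keeping track of the left and right supports of the Goldman elements — rather than any genuine difficulty beyond Lemme \ref{lem_shuffle} and the homomorphism property of $g_B$.
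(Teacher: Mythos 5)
Your proof is correct and follows essentially the same route as the paper's: both rest on the unique factorisation $\tau=\pi\sigma$ of Lemme \ref{lem_shuffle}, the homomorphism property of $g_B$ together with multiplicativity of the signature, and Lemme \ref{lem_goldman_young} to identify $\sum_{\sigma\in\mathfrak{S}_{p,q}}(-1)^\sigma g_B(\sigma)$ acting on $x\otimes y$ with $(s_{B,p}x)\otimes(s_{B,q}y)$. The only difference is presentational — you factor the antisymmetriser $s_{B,d}$ as $sh_{p,q}\circ(s_{B,p}\otimes s_{B,q})$, whereas the paper expands $(s_px)\#(s_qy)$ until it equals $s_d(x\otimes y)$ — which is the same computation read in the opposite direction.
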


\begin{proof}
  On doit vérifier que pour tout $x\in V^{\otimes p}$ et tout
  $y\in V^{\otimes q}$ on a $(s_p x)\#(s_qy) = s_d(x\otimes y)$.
  Or:
  \begin{align*}
    (s_p x)\#(s_qy) &= sh_{p,q}(s_px\otimes s_qy) \\
                       &= \sum_{\pi\in Sh(p,q)}(-1)^\pi g_B(\pi)(s_px\otimes s_qy) \\
                       &= \sum_{\pi\in Sh(p,q)}(-1)^\pi g_B(\pi)\left( \sum_{\sigma\in \mathfrak{S}_{p,q}}(-1)^\sigma g_B(\sigma)(x\otimes y)\right) \\
                       &= \sum_{\pi\in Sh(p,q), \sigma\in \mathfrak{S}_{p,q}}(-1)^{\pi\sigma}g_B(\pi\sigma)(x\otimes y) \\
                       &= s_d(x\otimes y)
  \end{align*}
  en utilisant les lemmes \ref{lem_goldman_young} et \ref{lem_shuffle}.
\end{proof}

On montre maintenant l'analogue de la formule d'addition bien
connue pour les puissances extérieures d'espaces vectoriels :

\begin{prop}\label{prop_add_mod_alt}
  Soient $U$ et $V$ deux $A$-modules à droite. Alors pour tout
  $d\in \N$ le produit de mélange induit un isomorphisme de
  $A^{\otimes d}$-modules :
  \[ \bigoplus_{k=0}^d \Alt^k(U)\otimes_K \Alt^{d-k}(V) \Isom \Alt^d(U\oplus V). \]
\end{prop}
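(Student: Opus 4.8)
Le plan est de se ramener au cas déployé, conformément à l'idiome des preuves précédentes, puis d'invoquer la formule d'addition classique pour les puissances extérieures d'espaces vectoriels. D'abord, d'après la proposition précédente, le produit de mélange $x\#y = sh_{k,d-k}(x\otimes y)$ envoie $\Alt^k(U)\otimes_K \Alt^{d-k}(V)$ — vu comme sous-module de $W^{\otimes k}\otimes_K W^{\otimes(d-k)} = W^{\otimes d}$, où $W=U\oplus V$ — dans $\Alt^d(W)$. En sommant sur $k$ on obtient donc une application
\[ \Theta : \bigoplus_{k=0}^d \Alt^k(U)\otimes_K \Alt^{d-k}(V) \To \Alt^d(U\oplus V), \]
qui est un morphisme de $A^{\otimes d}$-modules, puisque chaque $sh_{k,d-k}$ est par construction un morphisme de $A^{\otimes d}$-modules.

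Pour montrer que $\Theta$ est un isomorphisme, j'utiliserais le fait que $A^{\otimes d}$ est une algèbre simple centrale sur $K$ : un morphisme de $A^{\otimes d}$-modules est un isomorphisme si et seulement s'il le devient après extension des scalaires à un corps déployant $L$ de $A$ (l'extension $-\otimes_K L$ étant exacte et fidèlement plate). Or toutes les constructions en jeu — éléments de Goldman $g(\pi)$, antisymétriseur $s_d$, module $\Alt^d$, produit de mélange — sont manifestement compatibles avec l'extension des scalaires, de sorte que $\Theta_L$ s'identifie au morphisme analogue associé à $A_L$, $U_L$ et $V_L$. On se ramène ainsi au cas où $A$ est déployée.

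Dans ce cas, $A=\End_K(E)$ et tout $A$-module à droite $V$ s'écrit $\Hom_K(E,F)$ avec $B=\End_A(V)=\End_K(F)$, l'équivalence de Morita associant à $V$ le $K$-espace $F$. Via l'identification $W^{\otimes d}\simeq \Hom_K(E^{\otimes d},G^{\otimes d})$, où $G=F_U\oplus F_V$ correspond à $W=U\oplus V$, le lemme \ref{lem_goldman_commut} permet de traduire l'action de $s_{B,d}$ en l'antisymétrisation usuelle sur le facteur $G^{\otimes d}$ ; autrement dit, $\Alt^d(W)$ correspond sous Morita à la puissance extérieure ordinaire $\Lambda^d G$, et de même $\Alt^k(U)\leftrightarrow \Lambda^k F_U$ et $\Alt^{d-k}(V)\leftrightarrow \Lambda^{d-k} F_V$. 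Le produit de mélange se traduit alors en le produit extérieur classique, et $\Theta$ devient le morphisme
\[ \bigoplus_{k=0}^d \Lambda^k F_U\otimes_K \Lambda^{d-k}F_V \To \Lambda^d(F_U\oplus F_V), \]
dont on sait qu'il est un isomorphisme d'espaces vectoriels. Comme l'équivalence de Morita est exacte et conservative, $\Theta_L$ est bien un isomorphisme, ce qui conclut.

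L'obstacle principal sera précisément cette traduction dans le cas déployé : il faut vérifier soigneusement, en combinant le lemme \ref{lem_goldman_commut} (commutation des éléments de Goldman aux permutations des vecteurs), le lemme \ref{lem_goldman_young} (comportement relativement aux sous-groupes $\mathfrak{S}_{p,q}$) et le lemme \ref{lem_shuffle} (factorisation $\mathfrak{S}_d=Sh(p,q)\cdot\mathfrak{S}_{p,q}$), que l'antisymétriseur $s_{B,d}$ relatif à $B=\End_K(F)$ induit bien, sous l'identification de Morita et signes compris, l'antisymétriseur ordinaire sur $G^{\otimes d}$, et que le produit de mélange correspond au produit extérieur. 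Une alternative, qui évite de citer la formule classique, serait de calculer directement l'image $s_{B',d}W^{\otimes d}=\sum_k sh_{k,d-k}\bigl(\Alt^k(U)\otimes_K\Alt^{d-k}(V)\bigr)$ à l'aide de ces mêmes lemmes (ce qui donne la surjectivité), puis d'établir que la somme est directe par un comptage de dimensions après déploiement ; mais la réduction au cas déployé me paraît la voie la plus propre.
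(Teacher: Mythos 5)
Votre preuve est correcte dans ses grandes lignes, mais elle emprunte une route réellement différente de celle du texte. Le texte ne déploie jamais l'algèbre : il observe, grâce à la proposition précédente ($ (s_px)\#(s_qy)=s_d(x\otimes y)$), que $\Alt^d(U\oplus V)$ est linéairement engendré par les produits de mélange $x_1\#\cdots\#x_d$ de vecteurs, qu'on peut par bilinéarité prendre $x_i\in U$ ou $V$, puis par anti-symétrie du produit de mélange réordonner pour se ramener à $x_1,\dots,x_k\in U$, $x_{k+1},\dots,x_d\in V$ ; cela donne la surjectivité directement sur $K$, et la bijectivité suit d'une comparaison de dimensions sur $K$. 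C'est, à peu de chose près, l'« alternative » que vous esquissez puis écartez dans votre dernier paragraphe. Votre voie principale (descente fidèlement plate vers un corps déployant, puis traduction par Morita vers la formule d'addition classique des puissances extérieures) est légitime : le critère d'isomorphisme après extension des scalaires est correct, et la compatibilité des éléments de Goldman, de $s_{B,d}$ et de $\Alt^d$ au changement de base tient (l'image d'un endomorphisme commute au changement de base plat, et $g_{B_L}=（g_B)_L$ par construction). Ce que chaque approche achète : la vôtre ramène tout à un énoncé classique et dispense de tout calcul de dimension séparé, au prix de la vérification soigneuse du dictionnaire de Morita que vous signalez vous-même (action de $g_B(\pi)$ par post-composition sur $\Hom_K(E^{\otimes d},G^{\otimes d})$, correspondance $\Alt^d(W)\leftrightarrow \Lambda^dG$, mélange $\leftrightarrow$ produit extérieur) ; celle du texte est plus courte et élémentaire, reste sur le corps de base, et n'utilise que les lemmes déjà établis — étant entendu que son comptage de dimensions se vérifie de toute façon après déploiement, puisque les dimensions sur $K$ sont invariantes par extension des scalaires. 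Notez enfin que les deux preuves produisent le même morphisme explicite (le produit de mélange), ce qui importe pour la suite (proposition \ref{prop_add_alt}), donc aucune des deux ne perd d'information ; une coquille de détail dans votre rédaction : il faut dire explicitement que, pour $x\in U^{\otimes k}\subset W^{\otimes k}$, $s_{B_1,k}x=s_{B,k}x$ (cas $q=0$ du lemme \ref{lem_goldman_young}), afin que le plongement de $\Alt^k(U)$ dans $W^{\otimes k}$ soit bien celui que vous utilisez.
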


\begin{proof}
  En utilisant la proposition précédente, on établit facilement
  que $\Alt^d(U\oplus V)$ est linéairement engendré par les éléments
  de la forme $x_1 \# \cdots \# x_d$, et par bilinéarité du produit
  de mélange, on peut clairement prendre
  $x_i\in U$ ou $V$. Alors comme le produit de mélange est anti-symétrique,
  on peut permuter les $x_i$ jusqu'à se ramener à $x_1,\dots,x_k\in U$
  et $x_{k+1},\dots,x_d\in V$. Or tout élément de cette forme
  est clairement dans l'image de l'application de l'énoncé, de sorte
  qu'elle surjective. On peut alors conclure qu'elle est bijective en
  comparant les dimensions sur $K$.
\end{proof}

\subsection{Puissances alternées d'une forme $\eps$-hermitienne}

On suppose maintenant que $A$ est munie d'une involution de première
espèce $\sigma$, et que $V$ est munie d'une forme $\eps$-hermitienne $h$
relativement à $\sigma$. On note $\tau$ l'involution adjointe sur $B$.

\begin{lem}
  L'élément $s_d\in B^{\otimes d}$ est symétrique pour l'involution
  $\tau^{\otimes d}$, et pour tous $x,y\in V$ on a
  \[ h^{\otimes d}(x,s_dy) = h^{\otimes d}(s_dx,y). \]
\end{lem}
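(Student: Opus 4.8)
Il s'agit de montrer deux faits sur l'élément d'antisymétrisation $s_d\in B^{\otimes d}$, où $B=\End_A(V)$ et $\tau$ est l'involution adjointe de la forme $\eps$-hermitienne $h$ : d'une part que $s_d$ est symétrique pour $\tau^{\otimes d}$, d'autre part que $h^{\otimes d}(x,s_d y)=h^{\otimes d}(s_d x,y)$ pour $x,y\in V^{\otimes d}$. Ces deux énoncés sont en réalité liés : par définition même de l'involution adjointe, $\tau^{\otimes d}$ est l'involution adjointe de $h^{\otimes d}$ sur $V^{\otimes d}$, donc la deuxième formule équivaut exactement à $h^{\otimes d}(x,s_d y)=h^{\otimes d}(\tau^{\otimes d}(s_d)x,y)$, c'est-à-dire à la première. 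Le cœur de la preuve consistera donc à établir $\tau^{\otimes d}(s_d)=s_d$, puis à en déduire la seconde formule sans effort.

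**Stratégie principale.** Le plan est de calculer l'action de $\tau^{\otimes d}$ sur chaque $g_B(\pi)$. L'observation clé est le comportement de l'involution adjointe vis-à-vis de l'élément de Goldman : pour la transposition simple $(i\;i+1)$, on a $g_B((i\;i+1))=1\otimes\cdots\otimes g_B\otimes\cdots\otimes 1$, et le lemme \ref{lem_action_goldman} nous apprend que $(\tau\otimes\tau)(g_B)=g_B$, c'est-à-dire que l'élément de Goldman est symétrique pour $\tau^{\otimes 2}$. Ainsi $\tau^{\otimes d}$ fixe chaque générateur $g_B((i\;i+1))$. Comme $\tau^{\otimes d}$ est un anti-automorphisme, pour un produit $g_B(\pi)=g_B(\pi_1)\cdots g_B(\pi_r)$ (décomposition de $\pi$ en transpositions simples) on obtient $\tau^{\otimes d}(g_B(\pi))=g_B(\pi_r)\cdots g_B(\pi_1)=g_B(\pi_r\cdots\pi_1)=g_B(\pi^{-1})$, où l'on utilise que $\pi\mapsto g_B(\pi)$ est un morphisme de groupes de $\mathfrak{S}_d$ vers $(B^{\otimes d})^*$. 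On conclut alors par le calcul
\[ \tau^{\otimes d}(s_d)=\sum_{\pi\in\mathfrak{S}_d}(-1)^\pi\tau^{\otimes d}(g_B(\pi))=\sum_{\pi\in\mathfrak{S}_d}(-1)^\pi g_B(\pi^{-1})=\sum_{\pi\in\mathfrak{S}_d}(-1)^{\pi^{-1}}g_B(\pi^{-1})=s_d, \]
le réindexage par $\pi\mapsto\pi^{-1}$ étant licite car il s'agit d'une bijection de $\mathfrak{S}_d$ et la signature vérifie $(-1)^\pi=(-1)^{\pi^{-1}}$.

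**Déduction de la seconde formule.** Une fois $s_d$ symétrique, la relation sur $h^{\otimes d}$ est immédiate : par définition de l'involution adjointe $\tau^{\otimes d}$ de $h^{\otimes d}$ (relation $h^{\otimes d}(u\cdot b,w)=h^{\otimes d}(u,\tau^{\otimes d}(b)w)$ pour tout $b\in B^{\otimes d}$), appliquée à $b=s_d$ et en utilisant $\tau^{\otimes d}(s_d)=s_d$, on obtient directement $h^{\otimes d}(s_d x,y)=h^{\otimes d}(x,s_d y)$. Il conviendra simplement de préciser au passage la convention d'action (à droite ou à gauche) de $B^{\otimes d}$ sur $V^{\otimes d}$ pour que les signes soient cohérents, mais cela ne change rien à la substance.

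**Point délicat.** L'obstacle principal, qui est davantage une vérification de cohérence qu'une difficulté conceptuelle, est le point de réduction justifiant $(\tau\otimes\tau)(g_B)=g_B$ : cette égalité est exactement l'affirmation finale du lemme \ref{lem_action_goldman} (renvoyant à \cite[10.19]{BOI}), que l'on peut invoquer directement. Il faut néanmoins s'assurer que la symétrie de $g_B$ vaut bien pour \emph{toute} involution adjointe $\tau$, quel que soit le signe $\eps$ de la forme $h$ — ce qui est le cas car le lemme \ref{lem_action_goldman} est énoncé pour une involution arbitraire $\sigma$ (ici $\tau$), sans restriction de type, et l'identité $(\sigma\otimes\sigma)(g)=g$ y figure sans condition sur $\eps$. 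Le reste n'est qu'arithmétique sur les signatures et manipulation du morphisme de groupes $g_B$.
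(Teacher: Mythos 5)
Votre preuve est correcte et suit essentiellement la même voie que celle du manuscrit : réduction de la symétrie de $s_d$ à l'identité $(\tau\otimes\tau)(g_B)=g_B$ du lemme sur l'élément de Goldman, puis déduction immédiate de la formule sur $h^{\otimes d}$ par le fait que $\tau^{\otimes d}$ est l'involution adjointe de $h^{\otimes d}$. Vous explicitez simplement ce que le texte résume par \og{}se ramène par construction\fg{} (anti-automorphisme, $\tau^{\otimes d}(g_B(\pi))=g_B(\pi^{-1})$, réindexation via $(-1)^\pi=(-1)^{\pi^{-1}}$), ce qui est un déroulé fidèle et exact de l'argument.
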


\begin{proof}
  La première affirmation se ramène par construction au fait que
  l'élément de Goldman est symétrique pour $\tau\otimes \tau$, ce
  qu'on a déjà observé dans le lemme \ref{lem_action_goldman} (voir
  aussi directement \cite[10.19]{BOI}), et la deuxième découle directement
  de ce fait puisque $\tau^{\otimes d}$ est l'involution adjointe
  à $h^{\otimes d}$.
\end{proof}

\begin{defi}\label{def_alt_h}
  Le lemme précédent permet alors de poser:
  \[ \foncdef{\Alt^d(h)}{\Alt^d(V)\times \Alt^d(V)}{A^{\otimes d}}
    {(s_dx, s_dy)}{h^{\otimes d}(x,s_dy) = h^{\otimes d}(s_dx,y).} \]
\end{defi}

\begin{prop}
  L'application $\Alt^d(h)$ est une forme $\eps^d$-hermitienne
  sur $(A^{\otimes d},\sigma^{\otimes d})$.
\end{prop}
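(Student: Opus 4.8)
Le plan est de ramener l'énoncé à deux ingrédients, puis de les transporter à travers la définition \ref{def_alt_h}. Le premier ingrédient est que $h^{\otimes d}$ est elle-même une forme $\eps^d$-hermitienne sur $V^{\otimes d}$ relativement à $(A^{\otimes d},\sigma^{\otimes d})$ ; le second est que $\Alt^d(V)=s_dV^{\otimes d}$ est bien un sous-$A^{\otimes d}$-module à droite de $V^{\otimes d}$. Une fois ces deux points acquis, la sesquilinéarité et la $\eps^d$-symétrie de $\Alt^d(h)$ s'obtiendront par un calcul court utilisant les deux écritures $h^{\otimes d}(x,s_dy)=h^{\otimes d}(s_dx,y)$ fournies par le lemme précédent.

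On commencera par établir le caractère $\eps^d$-hermitien de $h^{\otimes d}$. Par multiplicativité des formes hermitiennes sous produit tensoriel (déjà observée après (\ref{eq_compo_herm})), il suffira de traiter le cas $d=2$ et de conclure par récurrence : avec $h^{\otimes 2}(v_1\otimes v_2,w_1\otimes w_2)=h(v_1,w_1)\otimes h(v_2,w_2)$, la relation $h(xa,yb)=\sigma(a)h(x,y)b$ fournira la sesquilinéarité vis-à-vis de $\sigma^{\otimes 2}$, tandis que $h(y,x)=\eps\sigma(h(x,y))$ fournira l'identité de $\eps^2$-symétrie relativement à $\sigma^{\otimes 2}$. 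Le même calcul, itéré, produira le signe $\eps^d$ et l'involution $\sigma^{\otimes d}$ en degré général.

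Pour le second ingrédient, on remarquera que $s_d\in B^{\otimes d}$ agit à gauche sur le bimodule $V^{\otimes d}$ et commute donc à l'action à droite de $A^{\otimes d}$ ; ainsi $s_d(\xi a)=(s_d\xi)a$, ce qui montre d'un seul coup que $\Alt^d(V)$ est stable par l'action à droite et que, pour $\xi=s_dx$ et $\eta=s_dy$, on a $\xi a=s_d(xa)$. La vérification finale sera alors directe. Pour la sesquilinéarité :
\[ \Alt^d(h)(\xi a,\eta b)=h^{\otimes d}(xa,s_d(yb))=\sigma^{\otimes d}(a)\,h^{\otimes d}(x,s_dy)\,b=\sigma^{\otimes d}(a)\,\Alt^d(h)(\xi,\eta)\,b ; \]
et pour la $\eps^d$-symétrie, en passant par l'autre présentation de la définition :
\[ \Alt^d(h)(\eta,\xi)=h^{\otimes d}(y,s_dx)=\eps^d\,\sigma^{\otimes d}\!\left(h^{\otimes d}(s_dx,y)\right)=\eps^d\,\sigma^{\otimes d}\!\left(\Alt^d(h)(\xi,\eta)\right). \]

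La principale subtilité — plus qu'un véritable obstacle — sera que $\Alt^d(h)$ n'est pas la restriction brute de $h^{\otimes d}$ à $\Alt^d(V)$ : un facteur $s_d$ est absorbé par la définition, de sorte qu'on devra systématiquement passer par les deux écritures $h^{\otimes d}(x,s_dy)=h^{\otimes d}(s_dx,y)$ du lemme afin que le $s_d$ tombe du bon côté à chaque étape. C'est précisément ce qui rendra biadditivité, sesquilinéarité et $\eps^d$-symétrie simultanément compatibles.
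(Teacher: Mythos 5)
Votre preuve est correcte et suit essentiellement le même chemin que celle du manuscrit : les deux vérifications finales (sesquilinéarité via $h^{\otimes d}(xa,s_d(yb))=\sigma^{\otimes d}(a)h^{\otimes d}(x,s_dy)b$, puis $\eps^d$-symétrie en basculant entre les deux écritures $h^{\otimes d}(x,s_dy)=h^{\otimes d}(s_dx,y)$) sont exactement les calculs du texte. Vous explicitez simplement deux points que le manuscrit utilise tacitement — le caractère $\eps^d$-hermitien de $h^{\otimes d}$ et la commutation de $s_d\in B^{\otimes d}$ avec l'action à droite de $A^{\otimes d}$ — ce qui est soigné mais ne change pas l'argument.
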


\begin{proof}
  On a
  \begin{align*}
    \Alt^d(h)(s_dx\cdot a,s_dy\cdot b) &= h^{\otimes d}(xa,s_dyb) \\
                                       &= \sigma^{\otimes d}(a)h^{\otimes d}(x,s_dy)b \\
                                       &= \sigma^{\otimes d}(a)\Alt^d(h)(s_dx,s_dy)b
  \end{align*}
  et
  \begin{align*}
    \Alt^d(h)(s_dy,s_dx) &= h^{\otimes d}(y,s_dx) \\
                         &= \eps^d\sigma^{\otimes d}(h^{\otimes d}(s_dx,y)) \\
                         &= \eps^d\sigma^{\otimes d}(\Alt^d(h)(s_dx,s_dy)).
  \end{align*}
\end{proof}

\begin{rem}\label{rem_lambda_alt_sigma}
  Dans la continuité de la remarque \ref{rem_lambda_alt}, on peut
  observer que si $V=A$ et $h=\fdiag{1}_\sigma$, alors l'involution
  adjointe de $\Alt^d(h)$ est l'involution $\sigma^{\wedge d}$ telle
  que définie dans \cite[10.20]{BOI}, de sorte que $(\Alt^d(A),\Alt^d(h))$
  réalise un isomorphisme dans $\CBrh$ de $(\lambda^d(A),\sigma^{\wedge d})$
  vers $(A^{\otimes d},\sigma^{\otimes d})$.
\end{rem}

\begin{rem}\label{rem_alt_restr}
  On a construit $\Alt^d(V)$ comme un sous-module de $V^{\otimes d}$, il
  est donc muni de la restriction de la forme hermitienne $h^{\otimes d}$.
  On observe facilement que
  \[ h^{\otimes d}_{| \Alt^d(V)} = \fdiag{d!}\Alt^d(h) \]
  puisque $h^{\otimes d}(s_dx,s_dy)=h^{\otimes d}(x,s_d^2y)$ et $s_d^2=d!s_d$.
  En particulier, en caractéristique non nulle on ne peut pas
  définir simplement $\Alt^d(h)$ à partir de $h^{\otimes d}$.
\end{rem}

On montre alors la compatibilité avec la formule d'addition :

\begin{prop}\label{prop_add_alt}
  Soient $(U,h)$ et $(V,h')$ deux modules $\eps$-hermitiens sur
  $(A,\sigma)$. L'isomorphisme de modules de la proposition \ref{prop_add_mod_alt}
  induit une isométrie de
  \[ \bigoplus_{k=0}^d (\Alt^k(U),\Alt^k(h))\otimes_K (\Alt^{d-k}(V),\Alt^{d-k}(h')) \]
  vers
  \[ (\Alt^d(U\oplus V), \Alt^d(h\perp h')). \]
\end{prop}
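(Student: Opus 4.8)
The plan is to reduce the statement to the module-level isomorphism already established in Proposition \ref{prop_add_mod_alt}, and then verify that this isomorphism is an isometry for the relevant hermitian forms. Concretely, let $\Theta: \bigoplus_{k=0}^d \Alt^k(U)\otimes_K \Alt^{d-k}(V) \Isom \Alt^d(U\oplus V)$ be the shuffle-product isomorphism of Proposition \ref{prop_add_mod_alt}. I already know $\Theta$ is an isomorphism of $A^{\otimes d}$-modules; what remains is to check that it intertwines the form $\bigoplus_k \Alt^k(h)\otimes_K \Alt^{d-k}(h')$ on the source with the form $\Alt^d(h\perp h')$ on the target. Since both forms are determined by their values on generators of the shape $s_p x \# s_q y$ (with $x\in U^{\otimes p}$, $y\in V^{\otimes q}$), by sesquilinearity it suffices to compute the target form on such elements and match it with the source form.

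First I would recall that, by the compatibility square preceding the proposition, the shuffle product satisfies $(s_p x)\#(s_q y) = s_d(x\otimes y)$ when $x\otimes y$ is viewed inside $(U\oplus V)^{\otimes d}$; this identifies the image under $\Theta$ of the generator $s_p x \otimes s_q y$ with the element $s_d(x\otimes y)\in \Alt^d(U\oplus V)$. Then, using the very definition of $\Alt^d(h\perp h')$ (Definition \ref{def_alt_h}), I would write
\[ \Alt^d(h\perp h')\bigl(s_d(x\otimes y), s_d(x'\otimes y')\bigr) = (h\perp h')^{\otimes d}\bigl(x\otimes y, s_d(x'\otimes y')\bigr). \]
The key computation is to expand $s_d$ over $\mathfrak{S}_d$ and observe that, because $(h\perp h')^{\otimes d}$ pairs a pure tensor in $U$-slots against a pure tensor in $V$-slots to zero (the summands $U$ and $V$ being orthogonal in $h\perp h'$), only those permutations survive that send the first $p$ positions into themselves, i.e. the subgroup $\mathfrak{S}_{p,q}$. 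This orthogonality vanishing is the crucial simplification, and invoking Lemma \ref{lem_goldman_young} to split $g_B(\sigma) = g_{B_1}(\sigma_1)\otimes g_{B_2}(\sigma_2)$ over $\mathfrak{S}_{p,q}\simeq \mathfrak{S}_p\times \mathfrak{S}_q$ lets the surviving sum factor as a product of an antisymmetrizer over the $U$-slots and one over the $V$-slots.

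After this factorization the right-hand side becomes $h^{\otimes p}(x, s_p x')\cdot h'^{\otimes q}(y, s_q y')$ up to identifying the Goldman elements correctly, which is exactly $\Alt^p(h)(s_p x, s_p x')\cdot \Alt^{d-p}(h')(s_q y, s_q y')$, i.e. the value of the source form on the corresponding generator. The diagonal-block structure of $\bigoplus_k \Alt^k(h)\otimes_K \Alt^{d-k}(h')$ (no cross terms between different values of $k$) matches the vanishing of pairings between generators coming from different splittings $p+q=d$ versus $p'+q'=d$, which again follows from the same orthogonality-of-$U$-and-$V$ argument. I expect the main obstacle to be bookkeeping the signs and the Goldman elements $g_B(\pi)$ carefully: one must ensure that restricting the sum from $\mathfrak{S}_d$ to $\mathfrak{S}_{p,q}$ produces precisely the product $s_{B_1,p}\otimes s_{B_2,q}$ with the correct sign $(-1)^\sigma = (-1)^{\sigma_1}(-1)^{\sigma_2}$, which is guaranteed by Lemma \ref{lem_goldman_young} together with the multiplicativity of the signature on $\mathfrak{S}_{p,q}$. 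Once the single-generator identity is in hand, sesquilinearity and the already-known bijectivity of $\Theta$ finish the proof.
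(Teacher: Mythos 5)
Your proposal is correct and takes essentially the same route as the paper's proof: reduce to generators via $(s_k u)\#(s_{d-k}v)=s_d(u\otimes v)$, unfold $\Alt^d(h\perp h')$ through the antisymmetrizer $s_d$, kill every permutation outside $\mathfrak{S}_{k,d-k}$ using the orthogonality of $U$ and $V$ inside $h\perp h'$, and factor the surviving sum through Lemma \ref{lem_goldman_young} into $\Alt^k(h)\otimes \Alt^{d-k}(h')$. Your explicit observation that generators from blocks with different $k$ pair to zero by the same orthogonality argument is left implicit in the paper, but it rests on the identical mechanism, so there is no genuine divergence.
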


\begin{proof}
  Soient $u,u'\in U^{\otimes k}$ et $v,v'\in V^{\otimes d-k}$. Alors
  \begin{align*}
    & \Alt^d(h\perp h')(s_ku \# s_{d-k}v, s_ku \# s_{d-k}v) \\
    &= \Alt^d(h\perp h')(s_d(u\otimes v), s_d(u'\otimes v')) \\
    &= (h\perp h')^{\otimes d}(s_d(u\otimes v),u'\otimes v') \\
    &= \sum_{\pi\in \mathfrak{S}_d}(-1)^\pi (h\perp h')^{\otimes d}(g(\pi)(u\otimes v),u'\otimes v').
  \end{align*}
  On veut montrer que $(h\perp h')^{\otimes d}(g(\pi)(u\otimes v),u'\otimes v')$
  est nul si $\pi\not\in \mathfrak{S}_{k,d-k}$. Or si $u = x_1\otimes\cdots\otimes x_k$,
  $u' = y_1\otimes\cdots\otimes y_k$, et $v= x_{k+1}\otimes\cdots\otimes x_d$,
  $v'= y_{k+1}\otimes\cdots\otimes y_d$, alors
  \begin{align*}
    & (h\perp h')^{\otimes d}(g(\pi)(u\otimes v),u'\otimes v') \\
    &= (h\perp h')^{\otimes d}((x_{\pi^{-1}(1)}\otimes \cdots \otimes x_{\pi^{-1}(d)}) \cdot g_A(\pi),
      (y_1\otimes \cdots \otimes y_d)) \\
    &= \sigma^{\otimes d}(g_A(\pi))\cdot (h\perp h')(x_{\pi^{-1}(1)}, y_1)\otimes \cdots \otimes
      (h\perp h')(x_{\pi^{-1}(d)},y_d)
  \end{align*}
  ce qui est en effet nul si $\pi\not\in \mathfrak{S}_{k,d-k}$. De là :
  \begin{align*}
    & \Alt^d(h\perp h')(s_ku \# s_{d-k}v, s_ku \# s_{d-k}v) \\
    &= \sum_{\pi\in \mathfrak{S}_{k,d-k}}(-1)^\pi (h\perp h')^{\otimes d}(g(\pi)(u\otimes v),u'\otimes v') \\
    &= \sum_{\pi_1\in \mathfrak{S}_k}\sum_{\pi_2\in \mathfrak{S}_{d-k}}(-1)^{\pi_1\pi_2} (h\perp h')^{\otimes d}(g(\pi_1)u\otimes g(\pi_2)v),u'\otimes v') \\
    &= h(s_ku,u')\otimes h'(s_{d-k}v,v').
  \end{align*}
\end{proof}

\subsection{Puissances extérieures d'un module $\eps$-hermitien}

Dans le cas des espaces vectoriels, $(\Alt^d(V), \Alt^d(h))$ fournissait
une définition appropriée pour une opération $\lambda^d: GW(K)\To GW(K)$,
mais si $A$ n'est pas déployée, on obtient seulement un module $\eps$-hermitien
sur $(A^{\otimes d}, \sigma^{\otimes d})$. On va alors utiliser l'isomorphisme
$\phi_{(A,\sigma)}^{(d)}$ dans $\CBrh$.

\begin{defi}\label{def_lambda_v}
  Soit $(V,h)$ un module $\eps$-hermitien sur $(A,\sigma)$. Pour tout $d\in \N$,
  on définit $(\Lambda^d(V), \lambda^d(h))$ comme la composition (comme morphismes
  dans $\CBrh$) de $(\Alt^d(V),\Alt^d(h))$ et $\phi_{(A,\sigma)}^{(d)}$.
  Il s'agit donc d'un espace bilinéaire symétrique sur $K$ si $d$ est pair,
  et d'un module $\eps$-hermitien sur $(A,\sigma)$ si $d$ est impair.
\end{defi}

\begin{rem}
  Noter que $\Lambda^d(V)$ dépend de $\sigma$, bien que le module $V$ soit
  défini indépendamment. En revanche $\Lambda^d(V)$ ne dépend pas de $h$.
  Voir l'exemple ci-dessous.
\end{rem}

\begin{ex}\label{ex_lambda2}
  Considérons la forme hermitienne $h=\fdiag{1}_\sigma$ (voir exemple \ref{ex_prod_diag}),
  de module sous-jacent $V=A$.
  Alors si $\sigma$ est orthogonale $\Lambda^2(A)$ est naturellement identifié
  à l'espace vectoriel $Skew(A,\sigma)$ des éléments antisymétriques de $A$,
  et si $\sigma$ est symplectique $\Lambda^2(A)$ est naturellement $Sym(A,\sigma)$,
  l'espace des éléments symétriques. En effet, par construction
  $\Lambda^2(A)$ est obtenu à partir du $A\otimes_K A$-module à droite
  $(1-g)(A\otimes_K A)$ (où $g$ est l'élément de Goldman) par l'équivalence
  de Morita entre $A\otimes_K A$ et $K$. De là, $\Lambda^2(A)$ est le
  sous-espace vectoriel $(1-g)\cdot A\subset A$ où l'action à gauche de $A\otimes_K A$
  est l'action tordue par $\sigma$. Or d'après le lemme \ref{lem_action_goldman},
  on a $g\cdot a=\eps\sigma(a)$ où $\eps=1$ si $\sigma$ est orthogonale
  et $\eps=-1$ si $\sigma$ est symplectique, donc $\Lambda^2(A)$ est
  l'espace des éléments symétrisés ou anti-symétrisés selon le type de $\sigma$
  (et comme la caractéristique est différente de 2, c'est bien équivalent
  aux éléments symétriques ou anti-symétriques).

  De plus, si $\sigma$ est orthogonale $\lambda^2(h)$ est isométrique à
  $\fdiag{\frac{1}{2}}T_\sigma^-$, et si $\sigma$ est symplectique alors
  $\lambda^2(h)$ est isométrique à $\fdiag{\frac{1}{2}}T_\sigma^+$,
  où $T_\sigma^\pm$ est la restriction de la forme trace
  d'involution aux éléments symétriques ou antisymétriques. En effet,
  $T_\sigma$ correspond par l'équivalence de Morita ci-dessus à
  $h\otimes h$, tandis que $\lambda^2(h)$ correspond à $\Alt^2(h)$,
  et on sait que la restriction de $h\otimes h$ à $\Alt^2(A)$ est
  $\fdiag{2}\Alt^2(h)$ (voir la remarque \ref{rem_alt_restr}).
\end{ex}

\begin{ex}\label{ex_disc}
  On prend encore le cas de $h=\fdiag{1}_\sigma$ pour simplifier,
  où on suppose $\sigma$ orthogonale ;
  alors si $d=\deg(A)$, on a $\lambda^d(h)=\fdiag{\det(\sigma)}$
  (discriminant non signé).
  En effet, on le vérifie facilement après déploiement générique,
  où c'est une conséquence de la propriété correspondante pour les
  formes quadratiques, mais le morphismes $K^*/(K^*)^2\to F^*/(F^*)^2$,
  où $F$ est un corps de déploiement générique de $A$, est injectif.
\end{ex}

\begin{rem}
  En utilisant la remarque \ref{rem_lambda_alt_sigma}, on constate
  qu'en prenant encore $V=A$ et $h=\fdiag{1}_\sigma$, $(\lambda^d(V),\lambda^d(h))$
  est un isomorphisme dans $\CBrh$ de $(\lambda^d(A),\sigma^{\wedge d})$
  vers $(A,\sigma)$ ou $(K,\Id)$ selon la parité de $d$. 
\end{rem}

On va montrer que ces opérations vérifient bien les propriétés attendues.
On renvoie au premier chapitre pour les définitions nécessaires.

\begin{prop}\label{prop_lambda_mixte}
  L'association $(V,h)\mapsto (\Lambda^d(V), \lambda^d(h))$ pour $(V,h)$
  module $\eps$-hermitien sur $(A,\sigma)$ ou $(K,\Id)$ définit une
  opération grecque homogène sur $\tld{SGW}(A,\sigma)$, qui induit une
  structure d'anneau grec gradué sur $\tld{GW}(A,\sigma)$.
\end{prop}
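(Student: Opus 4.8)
Le plan est de vérifier que l'association $(V,h)\mapsto(\Lambda^d(V),\lambda^d(h))$ satisfait les trois axiomes d'une opération grecque de la définition \ref{defi_ope_grec}, en travaillant d'abord au niveau du semi-anneau $\tld{SGW}(A,\sigma)$, puis en invoquant le lemme \ref{lem_semi_grec} pour étendre à $\tld{GW}(A,\sigma)$. D'abord, je vérifierais que les opérations sont bien homogènes pour la $\Gamma$-graduation : par construction (définition \ref{def_lambda_v}), $\lambda^d(h)$ est à valeurs dans $(K,\Id)$ si $d$ est pair et dans $(A,\sigma)$ si $d$ est impair, via l'isomorphisme $\phi_{(A,\sigma)}^{(d)}$, et il faut voir que le degré est multiplié par $d$ ; cela découle du fait que $\phi_{(A,\sigma)}^{(d)}$ envoie un $\eps^d$-hermitien (degré $o^d$ ou $s^d$) sur le type correct, et que le signe $\eps^d$ se comporte additivement dans $\Gamma\simeq(\Z/2\Z)^2$.

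Ensuite viennent les trois conditions. La condition (i), $\lambda^0(h)=1$, est immédiate puisque $\Alt^0(V)=K$ et $\phi^{(0)}$ est l'identité sur $(K,\Id)$. La condition (ii), $\lambda^1(h)=h$, résulte de $\Alt^1(V)=V$ (définition \ref{def_alt}) et du fait que $\phi^{(1)}$ est l'identité (trivialité pour $d=1$, cf.\ proposition \ref{prop_prod_red}). Le cœur de la preuve est la condition (iii), la formule d'addition
\[ \lambda^d(h\perp h') = \sum_{k=0}^d \lambda^k(h)\cdot\lambda^{d-k}(h'), \]
et c'est là que réside le travail principal. Je procéderais en deux temps : la proposition \ref{prop_add_alt} fournit déjà une isométrie
\[ (\Alt^d(U\oplus V),\Alt^d(h\perp h')) \simeq \bigoplus_{k=0}^d (\Alt^k(U),\Alt^k(h))\otimes_K (\Alt^{d-k}(V),\Alt^{d-k}(h')) \]
au niveau des formes $\eps^d$-hermitiennes sur $(A^{\otimes d},\sigma^{\otimes d})$. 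Il reste alors à transporter cette isométrie par l'isomorphisme canonique $\phi_{(A,\sigma)}^{(d)}$ de $\CBrh$ pour obtenir la formule voulue dans $\tld{SGW}(A,\sigma)$.

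L'obstacle principal sera précisément ce transport : le produit à droite $\lambda^k(h)\cdot\lambda^{d-k}(h')$ est défini dans $\tld{SGW}(A,\sigma)$ à partir des isomorphismes $\phi_{(A,\sigma)}^{(k)}$ et $\phi_{(A,\sigma)}^{(d-k)}$ appliqués \emph{séparément}, tandis que le membre de gauche utilise $\phi_{(A,\sigma)}^{(d)}$ appliqué globalement. Il faut donc vérifier la compatibilité
\[ \phi_{(A,\sigma)}^{(d)} \text{ composé avec } \# \;=\; (\text{produit dans }\tld{SGW})\circ(\phi_{(A,\sigma)}^{(k)}\otimes\phi_{(A,\sigma)}^{(d-k)}), \]
c'est-à-dire que le regroupement $(A^{\otimes k})\otimes(A^{\otimes d-k})$ suivi des réductions partielles coïncide avec la réduction globale. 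C'est exactement l'énoncé d'associativité garanti par la proposition \ref{prop_prod_red}, qui assure que $\phi_{(A,\sigma)}^{(d)}$ ne dépend pas de l'ordre de regroupement des termes. Je m'appuierais donc sur cette canonicité pour conclure que l'isométrie de la proposition \ref{prop_add_alt} descend bien sur la formule d'addition requise, ce qui achève la vérification des axiomes d'opération grecque homogène ; le lemme \ref{lem_semi_grec} donne alors automatiquement la structure d'anneau grec gradué sur $\tld{GW}(A,\sigma)$.
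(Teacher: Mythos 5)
Votre preuve est correcte et suit essentiellement la même démarche que celle du texte : homogénéité par construction, formule d'addition via la proposition \ref{prop_add_alt}, puis passage à $\tld{GW}(A,\sigma)$ par le lemme \ref{lem_semi_grec}. La seule différence est que vous explicitez la compatibilité entre $\phi_{(A,\sigma)}^{(d)}$ et les réductions partielles $\phi_{(A,\sigma)}^{(k)}\otimes\phi_{(A,\sigma)}^{(d-k)}$ (via la canonicité donnée par la proposition \ref{prop_prod_red}), point que la preuve du texte laisse implicite en invoquant directement la proposition \ref{prop_add_alt}.
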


\begin{proof}
  Pour tout $\gamma\in \Gamma$ et tout $d\in \N$ on a défini
  $\lambda^d: \tld{SGW}(A,\sigma)_\gamma\to \tld{SGW}(A,\sigma)_{d\gamma}$,
  et la proposition \ref{prop_add_alt} montre que l'application
  induite $\lambda_t: \tld{SGW}(A,\sigma)_\gamma\to G(\tld{SGW}(A,\sigma))$
  est un morphisme de monoïdes, donc par définition d'une somme directe
  on obtient de façon naturelle un morphisme
  $\lambda_t: \tld{SGW}(A,\sigma)\to G(\tld{SGW}(A,\sigma))$
  qui définit une opération grecque graduée.

  L'affirmation concernant $\tld{GW}(A,\sigma)$ est une simple
  application du lemme \ref{lem_semi_grec}.
\end{proof}

Le théorème \ref{thm_morita} s'étend lui aussi à la structure d'anneau grec :

\begin{prop}\label{prop_lambda_morita}
  Le foncteur $(A,\sigma)\mapsto \tld{GW}(A,\sigma)$ définit un foncteur
  de $\CBrh$ dans la catégorie des algèbres \emph{grecques} graduées.
\end{prop}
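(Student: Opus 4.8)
L'idée est de raffiner la fonctorialité déjà établie au théorème \ref{thm_morita} : on sait que pour tout morphisme $f:(B,\tau)\To(A,\sigma)$ dans $\CBrh$, l'application $f_*:\tld{GW}(B,\tau)\To\tld{GW}(A,\sigma)$ est un morphisme de $GW(K)$-algèbres graduées, de sorte qu'il reste seulement à vérifier qu'il s'agit d'un morphisme d'anneaux grecs. La source et le but sont tous deux munis de la $\lambda$-opération privilégiée de la proposition \ref{prop_lambda_mixte}, dont la lettre grecque est $\lambda\in L(\Z)$ (puisque $\Lambda^d$ de l'unité $\fdiag{1}_K$ est nul pour $d\pgq 2$), et $f_*$ envoie cette lettre sur elle-même car c'est un morphisme d'anneaux préservant $1$. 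D'après la proposition-définition caractérisant les morphismes d'anneaux grecs, il suffit donc de vérifier que $f_*(\lambda^d(x))=\lambda^d(f_*(x))$ pour tout $d\in\N$ et tout $x$. Comme $\lambda_t$ est une opération grecque (un morphisme de monoïdes à valeurs dans $G$) et que $f_*$ préserve les sommes orthogonales, les deux membres sont des morphismes de monoïdes $\tld{SGW}(B,\tau)\To G(\tld{SGW}(A,\sigma))$, et il suffit donc de traiter le cas où $x$ est la classe d'un module. Sur les composantes $SGW^\pm(K)$, $\lambda^d$ est la puissance extérieure usuelle et $f_*$ est l'identité : il n'y a rien à montrer. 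Tout le contenu se concentre sur les composantes $SGW^\pm(B,\tau)$.

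On fixe donc un module $\eps$-hermitien sur $(B,\tau)$, vu comme un morphisme $g:(C,\rho)\To(B,\tau)$ de bimodule $W$, et on écrit $f$ comme le bimodule $\eps'$-hermitien $(V,h)$. En déroulant la définition \ref{def_lambda_v}, $\lambda^d(g)$ est le composé de $\Alt^d(g)$ et de $\phi_{(B,\tau)}^{(d)}$, et $\lambda^d(f_*g)$ est le composé de $\Alt^d(f\circ g)$ et de $\phi_{(A,\sigma)}^{(d)}$. L'étape clé est un lemme de compatibilité :
\[ \Alt^d(f\circ g)\ \simeq\ f^{\otimes d}\circ\Alt^d(g) \]
comme morphismes vers $(A^{\otimes d},\sigma^{\otimes d})$ dans $\CBrh$ (de source commune la puissance extérieure de la source de $g$), où $f^{\otimes d}$ est le bimodule $V^{\otimes d}$. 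Au niveau des bimodules c'est quasiment tautologique : on a $(W\otimes_B V)^{\otimes d}\simeq W^{\otimes d}\otimes_{B^{\otimes d}}V^{\otimes d}$, et comme $s_{C,d}$ agit par le facteur $C^{\otimes d}$ à gauche, $\Alt^d(f\circ g)=s_{C,d}(W\otimes_B V)^{\otimes d}=(s_{C,d}W^{\otimes d})\otimes_{B^{\otimes d}}V^{\otimes d}$ est exactement le composé voulu.

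Une fois ce lemme acquis, on conclut grâce à la proposition \ref{prop_morita}. En écrivant $\lambda^d(f_*g)$ comme le composé de $f^{\otimes d}\circ\Alt^d(g)$ et de $\phi_{(A,\sigma)}^{(d)}$, le carré commutatif de \ref{prop_morita} donne que $\phi_{(A,\sigma)}^{(d)}\circ f^{\otimes d}$ vaut $f\circ\phi_{(B,\tau)}^{(d)}$ si $d$ est impair et $\phi_{(B,\tau)}^{(d)}$ si $d$ est pair. Ainsi, pour $d$ impair, $\lambda^d(f_*g)=f\circ(\phi_{(B,\tau)}^{(d)}\circ\Alt^d(g))=f_*(\lambda^d(g))$, et pour $d$ pair, $\lambda^d(f_*g)=\phi_{(B,\tau)}^{(d)}\circ\Alt^d(g)=\lambda^d(g)=f_*(\lambda^d(g))$, la dernière égalité venant de ce que $f_*$ est l'identité sur la composante à valeurs dans $K$. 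C'est l'identité recherchée ; valable sur toutes les classes de modules, elle se propage à $\tld{GW}$ par additivité et propriété universelle du groupe de Grothendieck. Le principal obstacle est la moitié \og{}forme hermitienne\fg{} du lemme de compatibilité : il faut vérifier que $\Alt^d$ de la forme composée (\ref{eq_compo_herm}) coïncide avec la forme du composé $f^{\otimes d}\circ\Alt^d(g)$ ; c'est un déroulage direct de (\ref{eq_compo_herm}) et de la définition \ref{def_alt_h}, dans l'esprit du calcul de \ref{prop_morita}, l'antisymétriseur $s_{C,d}$ étant transporté à travers $\otimes_B$ par les relations de Goldman du lemme \ref{lem_goldman_commut}.
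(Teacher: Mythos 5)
Your proposal is correct and takes essentially the same route as the paper: you reduce, via Proposition \ref{prop_morita}, to the compatibility of $\Alt^d$ with the Morita composition, and you identify the underlying modules through $s_{d,C}(W\otimes_B V)^{\otimes d}=\left(s_{d,C}W^{\otimes d}\right)\otimes_{B^{\otimes d}}V^{\otimes d}$, with the antisymmetrizer acting through the common endomorphism algebra $C$, exactly as in the text. The only difference is one of detail: where you declare the isometry between the hermitian forms a direct unwinding of (\ref{eq_compo_herm}) and Definition \ref{def_alt_h}, the paper carries out that computation explicitly, using precisely the Goldman relations of Lemma \ref{lem_goldman_commut} that you cite.
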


\begin{proof}
  Il s'agit donc de vérifier que les opérations $\lambda$ sont respectées
  par les isomorphismes induits par les équivalences de Morita. Soient
  donc $(B,\tau)$ et $(A,\sigma)$, et $(U,g)$ qui définit un morphisme
  $f: (B,\tau)\To (A,\sigma)$ dans $\CBrh$, et soit $(V,h)$ un
  module $\eps$-hermitien sur $(B,\tau)$.

  Comme $(\Lambda^d(V),\lambda^d(h))$ est obtenu par composition avec
  $\phi_{(B,\tau)}^{(d)}$ à partir de $(\Alt^d(V),\Alt^d(h))$ et que la proposition
  \ref{prop_morita} montre la compatibilité de $\phi_{(B,\tau)}^{(d)}$ avec
  les équivalences de Morita, il suffit de vérifier que $f^{\otimes d}$
  induit un isomorphisme entre $\Alt^d(h)$ et $\Alt^d(h')$ où $h'= f_*(h)$.

  On note $W = V\otimes_B U$, sur lequel est défini $h'$. Alors on a canoniquement
  $\End_A(W)\simeq \End_B(V)$ (qu'on notera $C$), donc
  \begin{align*}
    \Alt^d(V)\otimes_{B^{\otimes d}} U^{\otimes d}
    &= s_{d,C}\cdot V^{\otimes d}\otimes_{B^{\otimes d}} U^{\otimes d} \\
    &= s_{d,C}\cdot (V\otimes_B U)^{\otimes d} \\
    &= s_{d,C}\cdot W^{\otimes d} \\
    &= \Alt^d(W).
  \end{align*}
  Si $x,y\in V^{\otimes d}$ et $u,u'\in U^{\otimes d}$, où
  $x=x_1\otimes\cdots\otimes x_d$ et de même pour les autres :
  \begin{align*}
    & f^{\otimes d}_*(\Alt^d(h)) (s_{d,C}x\otimes u, s_{d,C}y\otimes u') \\
    &= g^{\otimes d}(u, \Alt^d(h)(s_{d,C}x,s_{d,C}y)\cdot u') \\
    &= g^{\otimes d}(u, h^{\otimes d}(x,s_{d,C}y)\cdot u') \\
    &= \sum_{\pi \in \mathfrak{S}_d}(-1)^\pi g(u_1,h(x_1,y_{\pi^{-1}(1)})u'_{\pi^{-1}(1)})\otimes \cdots\otimes g(u_d,h(x_d,y_{\pi^{-1}(d)})u'_{\pi^{-1}(d)})  g_{A}(\pi) \\
    &= \sum_{\pi \in \mathfrak{S}_d}(-1)^\pi h'(x_1\otimes u_1, y_{\pi^{-1}(1)}\otimes u'_{\pi^{-1}(1)}) \otimes \cdots \otimes h'(x_d\otimes u_d, y_{\pi^{-1}(d)}\otimes u'_{\pi^{-1}(d)})  g_{A}(\pi) \\
    &= \sum_{\pi \in \mathfrak{S}_d}(-1)^\pi (h')^{\otimes d}(x\otimes u, g_C(\pi)y\otimes u') \\
    &= h'(x\otimes u, s_{d,C}y\otimes u') \\
    &= \Alt^d(h')(s_{d,C}x\otimes u, s_{d,C}y\otimes u').
  \end{align*}
\end{proof}

\begin{ex}
  Si $h$ est une forme $\eps$-hermitienne sur $(A,\sigma)$,
  alors en combinant les exemples \ref{ex_morita} et \ref{ex_lambda2},
  la proposition montre que $\lambda^2(h) = T_{\sigma_h}^+$ si
  $\sigma_h$ est symplectique, et $\lambda^2(h) = T_{\sigma_h}^-$
  si $\sigma_h$ est orthogonale.
\end{ex}

\section{Idéal fondamental et filtration}

Outre la structure d'anneau grec, l'ingrédient principal pour étendre
les méthodes du chapitre précédent aux algèbres à involution est l'existence
de la filtration fondamentale de l'anneau de Grothendieck-Witt et son lien
avec la cohomologie galoisienne.

\subsection{Dimension et idéaux}

Comme pour l'idéal fondamental de $GW(K)$, l'idéal fondamental de $\tld{GW}(A,\sigma)$
est défini par rapport à une application \og dimension\fg{}.

\begin{defi}\label{def_dim}
  Soit $V$ un module à droite sur une algèbre simple centrale $A$. On appelle
  \emph{dimension réduite} de $V$, et on note $\dim(V)$, le degré de l'algèbre $\End_A(V)$
  (voir \cite[def 1.9]{BOI}).

  Si $(V,h)$ est un module $\eps$-hermitien sur $(A,\sigma)$, on notera
  aussi $\dim(h) = \dim(V)$.
\end{defi}

\begin{prop}\label{prop_diag_dim}
  L'application $(V,h)\mapsto \dim(V)$ définit un morphisme de
  semi-anneaux grecs gradués $\tld{SGW}(A,\sigma)\To \N[\Gamma]$,
  qui se prolonge de façon unique en
  un morphisme d'anneaux grecs gradués $\tld{GW}(A,\sigma)\To \Z[\Gamma]$.
  On a un diagramme commutatif :
  \[ \begin{tikzcd}\label{eq_diag_dim}
      GW^\pm(K) \rar \dar & \tld{GW}(A,\sigma) \dar \rar{\dim} & \Z[\Gamma] \rar{\eps} \dar & \Z \dar \\
      W(K) \rar & \tld{W}(A,\sigma) \rar{\overline{\dim}} & \Z/2\Z[\Gamma] \rar{\eps} & \Z/2\Z
    \end{tikzcd} \]
  où $\eps$ est la flèche d'augmentation. Ce diagramme est naturel en
  $(A,\sigma)$ (par rapport à la catégorie $\CBrh$), et naturel en $K$.
\end{prop}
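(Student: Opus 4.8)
The plan is to prove that $\dim$ is a morphism of graded greek semirings (then extend to the Grothendieck ring by universal property, as done several times before via \ref{lem_semi_grec}), and then verify the commutativity and naturality of the diagram. The statement decomposes into three independent tasks: additivity (respecting $\perp$), multiplicativity (respecting the product of \ref{thm_gw}), and compatibility with the greek operations $\lambda^d$. For additivity, the reduced dimension of an orthogonal sum $V\oplus V'$ is $\dim(V)+\dim(V')$, which is immediate from $\End_A(V\oplus V')$ having degree $\deg\End_A(V)+\deg\End_A(V')$ (this is just the behaviour of reduced dimension under direct sums). The grading is respected by construction: a $+$ or $-$ component lands in the $+$ or $-$ part of $\Gamma$, and an $o$ or $s$ component in the corresponding part.

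The main work is multiplicativity. First I would reduce to the split case: by \ref{prop_lambda_morita} (or already \ref{thm_morita}) every construction is compatible with hermitian Morita equivalence, and $\dim$ is manifestly a Morita invariant since it only depends on the endomorphism algebra; so after a generic splitting field extension $A$ becomes split and we may assume $(A,\sigma)=(K,\Id)$. In that case $\tld{GW}(K)=GW^\pm(K)[\Zd]$ by \eqref{eq_gw_depl}, and the product is the group-algebra product described in \ref{sec_witt_depl}; the reduced dimension of a form $\eps$-hermitian over $(K,\Id)$ is its usual $K$-dimension, and the product of forms of dimensions $m$ and $m'$ has dimension $mm'$ (using that $\dim T_\sigma=\deg A$ and that the Morita product multiplies dimensions). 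Concretely one checks that $\dim$ sends the product formula to the product in $\Z[\Gamma]$, the grading part $\Gamma\cong(\Z/2\Z)^2$ matching the sign bookkeeping of \eqref{eq_psi_prod}. The compatibility with $\lambda^d$ follows from \ref{prop_add_alt}: the additive formula $\lambda_t(x+y)=\lambda_t(x)\cdot\lambda_t(y)$ for the alternating powers translates under $\dim$ into the binomial identity satisfied by $\dim\Alt^d(V)=\binom{\dim V}{d}$, which is exactly the greek operation on $\Z[\Gamma]$ induced by the lettre grecque $\lambda$ (cf. \ref{ex_Z_lambda}); here the homogeneity $\lambda^d(S_\gamma)\subset S_{d\gamma}$ is built into \ref{def_lambda_v} and \ref{prop_lambda_mixte}.

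For the commutativity of the diagram, the left square (relating $GW^\pm(K)$, $\tld{GW}(A,\sigma)$, $W(K)$, $\tld{W}(A,\sigma)$) commutes because the vertical maps are the Witt-quotient projections and $\overline{\dim}$ is induced from $\dim$ by reduction mod $2$, using that hyperbolic forms have even reduced dimension; this is what makes $\overline{\dim}$ well defined on $\tld{W}$. The middle and right squares commute by the definition of the augmentation $\eps:\Z[\Gamma]\to\Z$ collapsing the grading, again compatibly with reduction mod $2$. Naturality in $(A,\sigma)$ with respect to $\CBrh$ reduces to the fact, already packaged in \ref{thm_morita} and \ref{prop_lambda_morita}, that $f_*$ is a morphism of graded greek rings and that reduced dimension is preserved by Morita equivalence; naturality in $K$ is the evident compatibility of reduced dimension and all the structure maps with scalar extension.

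The hard part will be the multiplicativity check, and specifically pinning down the reduced dimension of the product of two $\eps$-hermitian forms over a non-split algebra \emph{before} invoking the Morita reduction --- but precisely because $\dim$ is a Morita invariant and the product is defined through the canonical isomorphism $\phi_{(A,\sigma)}^{(d)}$, I expect the generic-splitting argument to dissolve this obstacle cleanly, turning the whole verification into the elementary split computation above. The only subtlety to keep in mind is that $\dim T_\sigma=\deg A$ so that the product of two degree-$n$ forms has reduced dimension $n^2$, matching multiplication in $\Z[\Gamma]$; I would confirm this via \ref{ex_prod_diag} where $\fdiag{1}_\sigma^2=T_\sigma$.
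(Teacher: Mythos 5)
Your proposal is correct in substance but takes a genuinely more roundabout route than the paper. The paper's own proof is essentially a one-liner: multiplicativity $\dim(h\cdot g)=\dim(h)\dim(g)$ and the binomial formula $\dim(\lambda^d(h))=\binom{\dim(h)}{d}$ are read off directly from the constructions, because reduced dimension is multiplicative under tensor products of modules and invariant under Morita transfer (if $f$ corresponds to the bimodule $U$, then $\End_A(V\otimes_B U)\simeq\End_B(V)$); the diagram and naturality in $K$ are declared clear, and naturality in $(A,\sigma)$ is precisely the Morita invariance of reduced dimension — which, as the paper notes, is the reason the \emph{reduced} dimension was chosen in the first place. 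Your alternative — extend scalars to a splitting field, transfer to $(L,\Id)$ by a hermitian Morita equivalence, and do the elementary split computation — is legitimate: $\dim$ takes values in the discrete ring $\Z[\Gamma]$, which is insensitive to base change, and the product, the $\lambda$'s and $f_*$ all commute with these operations by \ref{thm_morita} and \ref{prop_lambda_morita}. It buys a completely elementary endgame (tensor products of bilinear forms, classical exterior powers) at the cost of invoking the two heaviest compatibility theorems of the chapter, where the paper only needs the Morita invariance of $\dim$ itself. One small caution: scalar extension and Morita equivalence are two distinct steps — a non-split $(A,\sigma)$ is \emph{not} Morita equivalent to $(K,\Id)$ over $K$ — and your write-up conflates them slightly, though the argument as structured is sound.

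One genuine correction: you assert $\dim T_\sigma=\deg A$. This is false, and it contradicts the very conclusion you draw from it. The form $T_\sigma$ is a symmetric bilinear form on the $K$-vector space $A$, so its reduced dimension is $\dim_K A=(\deg A)^2$. That value is exactly what multiplicativity requires: $\fdiag{1}_\sigma$ has reduced dimension $\deg\End_A(A)=\deg A$, and $\fdiag{1}_\sigma^2=T_\sigma$ by example \ref{ex_prod_diag}, so $\dim(T_\sigma)$ must equal $(\deg A)^2$. Had $\dim T_\sigma$ really been $\deg A$, this example would refute multiplicativity rather than confirm it, so fix the statement before relying on it as your consistency check.
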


\begin{proof}
  La première assertion dit simplement que $\dim(h\cdot g)= \linebreak \dim(h)\dim(g)$
  et $\dim(\lambda^d(h)) = \binom{\dim(h)}{d}$, ce qui découle des constructions.
  Le diagramme commutatif est clair, de même que la naturalité en $K$,
  et la naturalité en $(A,\sigma)$ dit essentiellement que la dimension
  est préservée par équivalence de Morita, ce qui résulte du fait d'avoir
  choisi la dimension réduite.
\end{proof}

\label{par_gi}On définit $\tld{GI}(A,\sigma)$ comme le noyau du morphisme $\tld{GW}(A,\sigma)\to \Z$
et $\tld{I}(A,\sigma)$ comme le noyau de $\tld{W}(A,\sigma)\to \Zd$ (ou de
façon équivalente comme la projection de $\tld{GI}(A,\sigma)$ dans $\tld{W}(A,\sigma)$).
On observe ici une dichotomie importante selon si $A$ est déployée ou non :
si $A$ n'est pas déployée, tous les éléments de $GW^\pm(A,\sigma)$ sont
de dimension paire (et les éléments de $GW^-(K)$ sont de dimension paire
dans tous les cas), donc $\tld{I}(A,\sigma) = I(K)\oplus W^\pm(A,\sigma)$
est un idéal homogène.

Si $\gamma=o,s\in \Gamma$, on notera $\tld{GI}(A,\sigma)_\gamma$
(resp. $\tld{I}(A,\sigma)_\gamma$) l'intersection de $\tld{GI}(A,\sigma)$
(resp. $\tld{I}(A,\sigma)$) avec $\tld{GW}(A,\sigma)_\gamma\subset \tld{GW}(A,\sigma)$
(resp. $\tld{W}(A,\sigma)_\gamma\subset \tld{W}(A,\sigma)$).

\begin{defi}\label{def_filtr}
  La \emph{filtration fondamentale} de $\tld{GW}(A,\sigma)$
  (resp. $\tld{W}(A,\sigma)$) est $(\tld{GI}^n(A,\sigma))_{n\in \N}$
  (resp. $(\tld{I}^n(A,\sigma))_{n\in \N}$). On note
  \[ \tld{H}^n(A,\sigma) = \tld{I}^n(A,\sigma)/\tld{I}^{n+1}(A,\sigma) \]
  le gradué associé, qu'on qualifiera de \emph{cohomologie mixte}
  sur $(A,\sigma)$.
\end{defi}

Par définition, $\tld{H}^0(A,\sigma) = \Zd$. Notons que par naturalité
du diagramme (\ref{eq_diag_dim}), $\tld{H}^n(A,\sigma)$ ne dépend à
isomorphisme (non canonique) près que de la classe de Brauer de $A$.

\subsection{Le cas déployé}

Comme on vient d'en faire l'observation, le cas où $A$ est déployée
est sensiblement différent du cas général. On le traite à part, en
se ramenant par équivalence de Morita (non canonique a priori, donc)
au cas $(A,\sigma)=(K,\Id)$, et on utilise les notations de la
partie \ref{sec_witt_depl}.
\label{par_gi_depl}On utilise un léger abus de notation en écrivant $\tld{GI}(K)$ et
$\tld{I}(K)$ au lieu de $\tld{GI}(K,\Id)$ et $\tld{I}(K,\Id)$.
Pour décrire ces idéaux, on pose $GI(K) = \Ker(GW^\pm(K)\to \Z)$
et $GJ(K) = \Ker(GW^\pm(K)\to \Zd)$. On pose également
$GJ^{(n)}(K)=GI^{n-1}(K)GJ(K)$ si $n\pgq 1$ (et $GJ^{(0)}(K)=GW^\pm(K)$).

\begin{rem}
  On peut montrer que si $n\pgq 2$, alors $GJ^{(n)}(K)= GJ^n(K)\cap GI(K)$,
  et que l'inclusion $GI^n(K)\subset GJ^{(n)}(K)$ est en général stricte.
\end{rem}

Le fait que les idéaux qui nous intéressent ne soient pas homogènes
signifie qu'ils ne se décomposent pas sous la forme $I_1\oplus I_2\subset R\oplus R$
à travers l'application $\Phi$. En revanche, on a :

\begin{prop}
  Soit $n\pgq 1$. Alors (toujours avec les notations de la partie \ref{sec_witt_depl}):
  \begin{align*}
    \tld{GI}^n(K) &= \Psi(GI^n(K)\oplus GJ^{(n-1)}(K)), \\
    \tld{I}^n(K) &= \Psi(I^n(K)\oplus I^{n-1}(K)).
  \end{align*}
\end{prop}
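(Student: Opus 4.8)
The plan is to work entirely within the split description of Section~\ref{sec_witt_depl}, where $\tld{GW}(K)=GW^\pm(K)[\Zd]$ and $\tld{W}(K)=W(K)[\Zd]$, and to prove both equalities simultaneously by induction on $n$, using the explicit isomorphisms $\Psi$ and $\Delta$ of (\ref{eq_def_psi_ann}) and (\ref{eq_def_delta_ann}) together with the product formula (\ref{eq_psi_prod}). First I would dispose of the base case $n=1$. Writing $\Psi(x,y)=(x+y)_{(0)}-y_{(1)}$, the composite augmentation $\tld{GW}(K)\To\Z$ of the diagram (\ref{eq_diag_dim}) sends $\Psi(x,y)$ to $\dim(x+y)-\dim(y)=\dim(x)$, since reduced dimension is additive over the grading. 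Hence $\tld{GI}(K)=\Ker$ consists exactly of the $\Psi(x,y)$ with $x\in GI(K)$ and $y$ unconstrained, that is $\tld{GI}(K)=\Psi(GI(K)\oplus GW^\pm(K))=\Psi(GI^1(K)\oplus GJ^{(0)}(K))$, and reducing modulo hyperbolic forms gives the $n=1$ case of the Witt identity.

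For the inductive step I would write $\tld{GI}^{n+1}(K)=\tld{GI}(K)\cdot\tld{GI}^n(K)$ and compute a generic product of generators by (\ref{eq_psi_prod}): for $x_1\in GI$, $y_1\in GW^\pm(K)$ and $(x_2,y_2)\in GI^n\oplus GJ^{(n-1)}$ (the inductive description of $\tld{GI}^n$), one obtains
\[ \Psi(x_1,y_1)\cdot\Psi(x_2,y_2)=\Psi\bigl(x_1x_2,\ x_1y_2+x_2y_1+2y_1y_2\bigr). \]
For the inclusion $\subseteq$ the first component lies in $GI\cdot GI^n=GI^{n+1}$, and I would check the second lies in $GJ^{(n)}$ term by term. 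For $\supseteq$ I would show that the two summands of $\Psi(GI^{n+1}\oplus GJ^{(n)})$ are separately attained: products $\Psi(x,0)\Psi(x',0)=\Psi(xx',0)$ generate $\Psi(GI^{n+1},0)$, while products of the shapes $\Psi(x,0)\Psi(0,y)=\Psi(0,xy)$ and $\Psi(0,y)\Psi(0,y')=\Psi(0,2yy')$ generate $\Psi(0,GJ^{(n)})$; additivity of $\Psi$ then recovers the full image.

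The delicate point, and the main obstacle, is controlling the cross term $2y_1y_2$ in the second component, which mixes the two filtrations $GI^\bullet$ and $GJ^\bullet$. The facts to isolate and prove are the identity $GJ(K)=GI(K)+2\,GW^\pm(K)$, so that $2\in GJ(K)$ while $2\notin GI(K)$, together with the stability properties $GI(K)\cdot GJ^{(n-1)}(K)\subseteq GJ^{(n)}(K)$ and $GJ(K)\cdot GJ^{(n-1)}(K)\subseteq GJ^{(n)}(K)$; the latter, applied to the scalar $2\in GJ(K)$, is exactly what keeps the cross term inside $GJ^{(n)}(K)$ and what forces $GJ^{(n)}$ rather than the smaller $GI^n$ to appear. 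Once this closure is established, the Witt identity follows more cheaply: since $2=\fdiag{1}+\fdiag{1}$ already lies in $I(K)$, the ideal $J$ coincides with $I$ and $GJ^{(n-1)}$ collapses to $I^{n-1}(K)$, so applying the canonical surjection $\tld{GW}(K)\To\tld{W}(K)$, which intertwines the two copies of $\Psi$, to the first identity yields $\tld I^n(K)=\Psi(I^n(K)\oplus I^{n-1}(K))$ directly.
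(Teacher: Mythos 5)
Your overall strategy coincides with the paper's: the base case via $\dim(\Psi(x,y))=\dim(x)$, then induction on $n$ through the product formula (\ref{eq_psi_prod}). In fact your treatment of the reverse inclusion is \emph{more} careful than the paper's, which only uses products of the shapes $\Psi(x_1,0)\Psi(x_2,0)$ and $\Psi(y_1,0)\Psi(0,y_2)$ and therefore implicitly assumes $GJ^{(n)}=GI\cdot GJ^{(n-1)}$; this fails at $n=1$ (where $GJ^{(1)}=GJ$ while $GI\cdot GJ^{(0)}=GI$), and your third shape $\Psi(0,y)\Psi(0,y')=\Psi(0,2yy')$ together with the identity $GJ=GI+2\,GW^\pm$ is exactly what repairs it.

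However, the ``delicate point'' you isolate hides a genuine gap: the stability lemma $GJ\cdot GJ^{(n-1)}\subseteq GJ^{(n)}$ that you propose to prove is \emph{false} for $n\pgq 2$ with the paper's definition $GJ^{(n)}=GI^{n-1}GJ$. Indeed $GJ^{(n)}\subseteq GI$ for $n\pgq 2$, so all its elements have dimension zero, whereas $GJ\cdot GJ^{(1)}=GJ^2$ contains $2\cdot 2=4$, of dimension $4$. Consequently your forward inclusion cannot be completed at the step $n=2\to 3$; worse, the statement itself fails there as printed: the element $\Delta(\fdiag{1})=\fdiag{1}_{(0)}-\fdiag{1}_{(1)}$ lies in $\tld{GI}(K)$ and, by $\Delta(x)\Delta(y)=2\Delta(xy)$, satisfies $\Delta(\fdiag{1})^3=\Delta(4)=\Psi(0,4)$, while $4\notin GJ^{(2)}$, so $\tld{GI}^3(K)\neq \Psi(GI^3(K)\oplus GJ^{(2)}(K))$. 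This is not a defect of your argument alone: the paper's own proof asserts $2x_2y_2\in GJ^{(n)}$ at exactly this spot, justifying only the case $n=1$. Everything becomes correct --- your stability lemma (which turns into the tautology $GJ\cdot GJ^{n-2}=GJ^{n-1}$), both inclusions, and your deduction of the Witt-ring identity by projection --- once $GJ^{(n-1)}$ is replaced throughout by the honest ideal power $GJ^{n-1}$, whose image in $W(K)$ is still $I^{n-1}(K)$. The Witt-ring half of the proposition is unaffected in any case, since $2=\pfis{-1}\in I(K)$, which is why the problematic cross term is invisible there.
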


\begin{proof}
  On procède naturellement par récurrence sur $n$. Il est
  clair par définition de $\Psi$ que $\dim(\Psi(x,y))=\dim(x)$,
  donc $\tld{GI}(K) = \Psi(GI(K)\oplus GW^\pm(K))$, et donc
  $\tld{I}(K) = \Psi(I(K)\oplus W(K))$.

  Supposons que la propriété tienne jusqu'à $n\in \N^*$. Soient alors
  $x\in \tld{GI}(K)$ et $y\in \tld{GI}^n(K)$, qu'on écrit $x=\Psi(x_1,x_2)$
  et $y=\Psi(y_1,y_2)$ avec $x_1\in GI(K)$, $y_1\in GI^n(K)$ et $y_2\in GJ^{(n-1)}(K)$.
  On a d'après la formule (\ref{eq_psi_prod}) $xy = \Psi(z_1,z_2)$
  avec $z_1 = x_1y_1\in GI^{n+1}(K)$ et $z_2 = x_1y_2+x_2y_1+2x_2y_2\in GJ^{(n)}(K)$
  (noter que c'est le terme $2x_2y_2$ dans le cas $n=1$ qui nous oblige
  à prendre $GJ^{(n)}$ et non $GI^n$),
  donc $\tld{GI}^{n+1}(K) \subset \Psi(GI^{n+1}(K)\oplus GJ^{(n)}(K))$.

  Réciproquement, si $x\in GI^{n+1}(K)$ est de la forme $x=x_1x_2$
  avec $x_1\in GI(K)$ et $x_2\in GI^n(K)$, alors $\Psi(x,0)=\Psi(x_1,0)\Psi(x_2,0)$
  donc $\Psi(x,0)\in \tld{GI}^{n+1}(K)$, et si $y\in GJ^{(n)}(K)$ est de la forme
  $y=y_1y_2$ avec $y_1\in GI(K)$ et $y_2\in GJ^{(n-1)}(K)$, alors
  $\Psi(0,y) = \Psi(y_1,0)\Psi(0,y_2)$, donc $\Psi(0,y)\in \tld{GI}^{n+1}(K)$.
  Finalement, on a bien $\tld{GI}^{n+1}(K) \supset \Psi(GI^{n+1}(K)\oplus GJ^{(n)}(K))$.

  L'assertion équivalente dans $\tld{W}(K)$ en découle facilement. 
\end{proof}

\begin{coro}\label{cor_h_tilde_depl}
  En particulier, $\tld{H}^n(K) \simeq H^n(K,\mu_2)\oplus H^{n-1}(K,\mu_2)$
  à travers $\Psi$.
\end{coro}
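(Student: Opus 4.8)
The plan is to read this off directly from the proposition just established, which identifies $\tld{I}^n(K)=\Psi(I^n(K)\oplus I^{n-1}(K))$, together with the defining formula $\tld{H}^n(K)=\tld{I}^n(K)/\tld{I}^{n+1}(K)$. First I would record the one structural fact that does the work: the map $\Psi\colon W(K)\oplus W(K)\isom \tld{W}(K)=W(K)[\Zd]$ of (\ref{eq_def_psi_ann}) is an isomorphism of $W(K)$-modules, being additive by construction (it is assembled from the module embeddings $x\mapsto x_{(0)}$, $x\mapsto x_{(1)}$ and the diagonal map $\Delta$ of (\ref{eq_def_delta_ann})). Since $\Psi$ is bijective, the inclusion $\tld{I}^{n+1}(K)\subset \tld{I}^n(K)$ corresponds exactly to the componentwise inclusion $I^{n+1}(K)\oplus I^n(K)\subset I^n(K)\oplus I^{n-1}(K)$, which genuinely holds because $I^{m+1}(K)\subset I^m(K)$ for every $m$.

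An isomorphism of modules induces an isomorphism on the associated quotients, so applying $\Psi$ gives
\[ \tld{H}^n(K)=\tld{I}^n(K)/\tld{I}^{n+1}(K)\isom \big(I^n(K)\oplus I^{n-1}(K)\big)/\big(I^{n+1}(K)\oplus I^n(K)\big). \]
Because the denominator sits componentwise inside the numerator, the right-hand quotient splits as a direct sum of graded pieces,
\[ \big(I^n(K)/I^{n+1}(K)\big)\oplus \big(I^{n-1}(K)/I^n(K)\big). \]
Finally I would invoke the Milnor conjecture (resolved by Voevodsky, recalled in the introduction), which furnishes isomorphisms $e_m\colon I^m(K)/I^{m+1}(K)\isom H^m(K,\mu_2)$ for every $m$; applied to $m=n$ and $m=n-1$ this yields $\tld{H}^n(K)\simeq H^n(K,\mu_2)\oplus H^{n-1}(K,\mu_2)$, as claimed.

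There is no genuine obstacle here, since the substance is entirely carried by the preceding proposition and by the bijectivity of $\Psi$. The only point I would flag for care is that $\Psi$ must be used purely as an isomorphism of $W(K)$-modules, not as a ring isomorphism — the product on $\tld{W}(K)$ is the twisted one of (\ref{eq_psi_prod}) — but this is all that is required to transport the quotient. As already noted after Definition \ref{def_filtr}, the resulting identification is canonical only up to the non-canonical Morita reduction of the split case to $(K,\Id)$, which is why the statement is phrased with $\simeq$ rather than a natural isomorphism.
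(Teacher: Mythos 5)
Your proposal is correct and follows exactly the route the paper intends: the corollary is stated without proof precisely because it is the immediate consequence you spell out, namely that the additive bijection $\Psi$ transports $\tld{I}^{n+1}(K)\subset\tld{I}^n(K)$ onto the componentwise inclusion $I^{n+1}(K)\oplus I^n(K)\subset I^n(K)\oplus I^{n-1}(K)$, so the quotient splits as $\bigl(I^n(K)/I^{n+1}(K)\bigr)\oplus\bigl(I^{n-1}(K)/I^n(K)\bigr)$, identified with $H^n(K,\mu_2)\oplus H^{n-1}(K,\mu_2)$ via the Milnor-conjecture isomorphisms $e_m$. Your two flagged caveats — that $\Psi$ is used only as a $W(K)$-module isomorphism, not a ring map, and that the identification is non-canonical through the choice of Morita equivalence — are both accurate and consistent with the paper's remarks.
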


\begin{rem}
  Si $A$ est déployée, alors dans l'isomorphisme
  $\tld{H}^n(A,\sigma)\simeq H^n(K,\mu_2)\oplus H^{n-1}(K,\mu_2)$,
  si on note $(x_1,x_2)$ l'image de $x\in\tld{H}^n(A,\sigma)$, alors
  $x_2$ ne dépend pas du choix de l'équivalence de Morita entre $(A,\sigma)$
  et $(K,\Id)$, mais $x_1$ n'est défini qu'à $x_2\cup H^1(K,\mu_2)$ près. 
\end{rem}

\subsection{Le cas non déployé}

Dans le cas où $A$ n'est pas déployée, on a déjà une bonne décomposition
homogène $I(A,\sigma) = I(K)\oplus W^\pm(A,\sigma)$.

\begin{prop}\label{prop_decomp_cohom}
  Pour tout $n\pgq 1$, la décomposition homogène de $\tld{I}^n(A,\sigma)$
  est donnée par
  \[ \tld{I}^n(A,\sigma) = I^n(K) \oplus I^n(A,\sigma)_o \oplus I^n(A,\sigma)_s \]
  avec $I^n(A,\sigma)_\gamma = I^{n-1}(K)W(A,\sigma)_\gamma$ pour $\gamma =o,s\in \Gamma$.
\end{prop}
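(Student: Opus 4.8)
Le plan est de raisonner par récurrence sur $n$, le cas $n=1$ ayant déjà été traité juste avant la définition \ref{def_filtr} (dans le cas non déployé, $\tld I(A,\sigma)=I(K)\oplus W(A,\sigma)_o\oplus W(A,\sigma)_s$ est homogène). Comme $\tld I^{n+1}(A,\sigma)=\tld I(A,\sigma)\cdot \tld I^n(A,\sigma)$ est un produit d'idéaux homogènes, sa décomposition se calcule composante par composante via la formule $(\mathfrak a\mathfrak b)_\gamma=\sum_{\gamma_1+\gamma_2=\gamma}\mathfrak a_{\gamma_1}\mathfrak b_{\gamma_2}$. L'observation essentielle est que la composante de degré $-$ de $\tld W(A,\sigma)$ est nulle (les formes alternées sur $K$ sont hyperboliques) : les produits croisés entre degrés $o$ et $s$, qui atterriraient en degré $-$, s'annulent donc, et dans chaque somme seuls subsistent les couples $(\gamma_1,\gamma_2)$ ne faisant pas intervenir le degré $-$.

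En supposant la formule acquise au rang $n$, les composantes $o$ et $s$ se calculent formellement : la composante $o$ de $\tld I^{n+1}$ provient des seuls couples $(+,o)$ et $(o,+)$, d'où
\[ (\tld I^{n+1})_o = I(K)\cdot I^{n-1}(K)W(A,\sigma)_o + W(A,\sigma)_o\cdot I^n(K) = I^n(K)W(A,\sigma)_o, \]
comme voulu (et de même pour $s$). La composante $+$ provient des couples $(+,+)$, $(o,o)$ et $(s,s)$, soit
\[ (\tld I^{n+1})_+ = I^{n+1}(K) + I^{n-1}(K)W(A,\sigma)_o^2 + I^{n-1}(K)W(A,\sigma)_s^2. \]
Pour conclure que cette composante vaut exactement $I^{n+1}(K)$, il suffit du lemme clé suivant : dans le cas non déployé, $W(A,\sigma)_o^2\subset I^2(K)$ et $W(A,\sigma)_s^2\subset I^2(K)$. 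On aurait alors $I^{n-1}(K)W(A,\sigma)_\gamma^2\subset I^{n-1}(K)I^2(K)=I^{n+1}(K)$, les inclusions réciproques étant immédiates (un produit de $n+1$ facteurs de degré $+$, resp. de $n$ facteurs $+$ et d'un facteur $o$, appartient à $\tld I^{n+1}$).

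Le point délicat est donc ce lemme clé, que je prouverais par déploiement générique. Soit $z=xy$ avec $x,y\in W(A,\sigma)_o$. Puisque la dimension réduite est multiplicative (proposition \ref{prop_diag_dim}) et paire dans le cas non déployé, $z\in W(K)$ est de dimension paire, donc $z\in I(K)$ ; il reste à voir que son discriminant $e_1(z)\in H^1(K,\mu_2)=K^*/(K^*)^2$ est nul. On étend les scalaires à un corps de déploiement générique $F$ de $A$. Via un isomorphisme d'anneaux $\Gamma$-gradués $\tld W(A_F,\sigma_F)\simeq \tld W(F)=W(F)[\Zd]$ issu d'une équivalence de Morita (théorème \ref{thm_morita} et partie \ref{sec_witt_depl}), les images $x_F,y_F$ restent en degré $o$, composante identifiée à $W(F)$ ; la dimension réduite étant préservée par extension de scalaires puis par Morita, elles correspondent à des éléments $\xi,\eta\in I(F)$. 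Le produit dans l'anneau de groupe $W(F)[\Zd]$ envoyant deux éléments de degré $o$ sur leur produit placé en degré $+$, et $\tld W(A_F,\sigma_F)\to \tld W(F)$ étant l'identité sur la composante $+$, on obtient $z_F=\xi\eta\in I^2(F)$, donc $e_1(z_F)=0$.

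Il reste à redescendre à $K$ : le morphisme $K^*/(K^*)^2\to F^*/(F^*)^2$ est injectif, car $F/K$ est une extension régulière (c'est précisément l'argument déjà employé pour le discriminant non signé d'une involution), donc $e_1(z)=0$ et finalement $z\in I^2(K)$. Le cas de $W(A,\sigma)_s$ étant strictement identique, le lemme clé est acquis, ce qui ferme la récurrence. Je m'attends à ce que la seule vraie difficulté soit la trivialité du discriminant : les composantes $o$ et $s$ ainsi que toutes les inclusions réciproques sont formelles, et c'est uniquement pour contrôler $e_1$ des produits de formes hermitiennes qu'un argument géométrique (déploiement générique et injectivité sur $H^1$) est nécessaire.
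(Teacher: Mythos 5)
Votre preuve est correcte et suit essentiellement la même démarche que celle du texte : récurrence sur $n$, réduction au lemme clé $W^{\eps}(A,\sigma)\cdot W^{\eps'}(A,\sigma)\subset I^2(K)$, établi par déploiement générique (parité de la dimension réduite donnant $I(F)\cdot I(F)\subset I^2(F)$) puis détection du discriminant grâce à l'injectivité de $K^*/(K^*)^2\To F^*/(F^*)^2$ pour le corps de fonctions de la variété de Severi--Brauer. La seule différence est de présentation : vous explicitez la combinatoire des composantes graduées (notamment l'annulation des termes croisés $o\cdot s$, qui tombent dans la composante $-$ nulle de $\tld{W}(A,\sigma)$), ce que la preuve du texte passe sous silence en se ramenant directement à des éléments homogènes.
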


\begin{proof}
  L'inclusion de la droite vers la gauche est claire. Pour l'autre
  sens on raisonne par récurrence sur $n$, le cas $n=1$ étant
  trivial. On suppose que l'égalité tient jusqu'à $n\pgq 1$.

  Soient $x\in \tld{I}(A,\sigma)$ et $y\in \tld{I}^n(A,\sigma)$. On
  peut se ramener à $x$ et $y$ homogènes, et on voit que pour montrer
  $xy\in I^n(K)$, $I^n(A,\sigma)_o$ ou $I^n(A,\sigma)_s$, il
  suffit de montrer que $W^\eps(A,\sigma)\cdot W^{\eps'}(A,\sigma)\subset I^2(K)$.
  Or c'est vrai après déploiement générique puisque tout élément
  de $W^\eps(A,\sigma)$ est de dimension paire ; mais le fait d'être dans $I^2(K)$
  est détecté par le discriminant, qui est préservé par déploiement générique.
\end{proof}

On pose pour $\gamma=o,s\in \Gamma$ :
\begin{equation}\label{eq_h_gamma}
  H^n(A,\sigma)_\gamma = I^n(A,\sigma)_\gamma/I^{n+1}(A,\sigma)_\gamma. 
\end{equation}

\begin{coro}
  On a pour tout $n\pgq 1$
  \[ \tld{H}^n(A,\sigma) = H^n(K,\mu_2)\oplus H^n(A,\sigma)_o \oplus H^n(A,\sigma)_s, \]
\end{coro}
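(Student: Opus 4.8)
The plan is to deduce this directly from the homogeneous decomposition of the fundamental filtration established in Proposition \ref{prop_decomp_cohom}, combined with the identification of the graded pieces of $W(K)$ furnished by the Milnor conjecture. First I would recall that by definition $\tld{H}^n(A,\sigma) = \tld{I}^n(A,\sigma)/\tld{I}^{n+1}(A,\sigma)$, and that Proposition \ref{prop_decomp_cohom} gives, for every $n\pgq 1$, the $\Gamma$-homogeneous decomposition
\[ \tld{I}^n(A,\sigma) = I^n(K) \oplus I^n(A,\sigma)_o \oplus I^n(A,\sigma)_s. \]
The crucial point is that this is a decomposition with respect to the \emph{fixed} $\Gamma$-grading of $\tld{W}(A,\sigma)$, independent of $n$; hence the inclusion $\tld{I}^{n+1}(A,\sigma) \subset \tld{I}^n(A,\sigma)$ is graded, carrying each homogeneous summand into the corresponding one, so that $I^{n+1}(A,\sigma)_\gamma \subset I^n(A,\sigma)_\gamma$ for $\gamma = o,s$ and $I^{n+1}(K)\subset I^n(K)$.

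Next I would observe that taking the quotient respects this direct sum. Since both $\tld{I}^n$ and $\tld{I}^{n+1}$ split as direct sums indexed by the three homogeneous types, with the inclusions of summands just noted, the quotient decomposes as
\[ \tld{H}^n(A,\sigma) = I^n(K)/I^{n+1}(K) \oplus I^n(A,\sigma)_o/I^{n+1}(A,\sigma)_o \oplus I^n(A,\sigma)_s/I^{n+1}(A,\sigma)_s. \]
The two rightmost summands are by definition $H^n(A,\sigma)_o$ and $H^n(A,\sigma)_s$ (see (\ref{eq_h_gamma})), so it only remains to identify the first.

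Finally, the summand $I^n(K)/I^{n+1}(K)$ is canonically isomorphic to $H^n(K,\mu_2)$ by the Milnor conjecture (Voevodsky's theorem), already invoked in the first chapter. Combining these three identifications yields the stated formula. There is essentially no genuine obstacle here: the content is entirely formal once Proposition \ref{prop_decomp_cohom} is in hand, the only non-elementary ingredient being the Milnor conjecture used to name the $+$-component. Even that serves merely to rewrite $I^n(K)/I^{n+1}(K)$ in cohomological terms, consistently with the analogy motivating the name \og cohomologie mixte\fg{} for $\tld{H}^n(A,\sigma)$.
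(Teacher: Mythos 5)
Your proof is correct and is exactly the argument the paper intends: the corollary is stated without proof precisely because it follows formally from Proposition \ref{prop_decomp_cohom}, as you show — the decomposition is homogeneous for the fixed $\Gamma$-grading, so it passes to the quotient $\tld{I}^n(A,\sigma)/\tld{I}^{n+1}(A,\sigma)$, the $o$- and $s$-components being $H^n(A,\sigma)_o$ and $H^n(A,\sigma)_s$ by definition (\ref{eq_h_gamma}) and the $+$-component being $H^n(K,\mu_2)$ via the Milnor conjecture, as used throughout the first chapter. No gap; your identification of the Milnor conjecture as the only non-formal ingredient is accurate.
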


Notons que d'après la proposition \ref{prop_decomp_cohom}, on peut
définir une application naturelle biadditive et surjective
\[ H^{n-1}(K,\mu_2)\times W(A,\sigma)_\gamma \To H^n(A,\sigma)\gamma, \]
dont il serait intéressant de comprendre le noyau. On note
\[ (a_1,\dots,a_{n-1}; a)_\sigma \]
l'image de $\left( (a_1,\dots,a_{n-1}), \fdiag{a}_\sigma \right)$ par cette
application ($a_i\in K^*$, $a\in A^*$ symétrique ou anti-symétrique).

\begin{rem}\label{rem_cohom_depl}
  A priori, si $\gamma=o,s\in \Gamma$, on peut définir $\tld{I}^n(A,\sigma)_\gamma$
  soit comme la $n$-ème puissance de $\tld{I}(A,\sigma)_\gamma$, soit comme 
  l'intersection de $\tld{I}^n(A,\sigma)$ avec $\tld{W}(A,\sigma)_\gamma$.
  La proposition \ref{prop_decomp_cohom} permet alors de voir que ces deux définitions
  sont équivalentes quand $A$ n'est pas déployée, donc la notation n'est pas ambiguë.

  En revanche, si $A$ est déployée, $\tld{I}(A,\sigma)_s=I(K)$, donc sa $n$-ième
  puissance est $I^n(K)$, tandis que l'intersection de $\tld{I}(A,\sigma)^n$
  avec $\tld{W}(A,\sigma)_s=W(K)$ est $I^{n-1}(K)$. C'est alors la première
  définition qu'on emploiera.
\end{rem}







\subsection{Réduction générique d'indice}

Un des outils standard dans l'étude des algèbres à involution est
le déploiement générique des algèbres en question, ou, de façon
un peu plus générale, la réduction générique de l'indice de l'algèbre.
Il va de soi qu'on dispose pour toute extension de corps $L/K$ d'un
morphisme de restriction $r_{L/K}: \tld{H}^n(A,\sigma)\to \tld{H}^n(A_L,\sigma_L)$,
et dans la mesure où on a une bonne description de $\tld{H}^n$ dans le
cas déployé, ce type de méthode s'impose de façon assez naturelle
dans notre contexte.

Supposons que $A$ ne soit pas déployée. Si $A_L$ n'est pas déployée
non plus, alors le morphisme $\tld{H}^n(A,\sigma)\to \tld{H}^n(A_L,\sigma_L)$
est simplement défini sur chaque composante : $H^n(K,\mu_2)\to H^n(L,\mu_2)$,
et $H^n(A,\sigma)_\gamma\to H^n(A_L,\sigma_L)_\gamma$ pour $\gamma=o,s$.
Si maintenant $A_L$ est déployée :

\begin{prop}
  Soit $(A,\sigma)$ non déployée sur $K$, et soit $L/K$ une extension
  qui déploie $A$. Le morphisme de restriction
  \[ H^n(K,\mu_2)\oplus H^n(A,\sigma)_o\oplus H^n(A,\sigma)_s \To H^n(L,\mu_2)\oplus H^{n-1}(L,\mu_2)\]
  est indépendant du choix d'une équivalence de Morita hermitienne
  entre $(A_L,\sigma_L)$ et $(L,\Id)$. Il est de la forme
  \[ (x,y,z)\mapsto (x_L+y_L,0) \]
  où $y\mapsto y_L$ est un morphisme $H^n(A,\sigma)_o\to H^n(L,\mu_2)$ également
  indépendant de tout choix d'équivalence de Morita.
\end{prop}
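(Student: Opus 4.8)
The plan is to factor the restriction through a single graded filtered ring map and then read off both assertions grade by grade. Fix a hermitian Morita equivalence $f:(A_L,\sigma_L)\to (L,\Id)$ in $\CBrh$; by the theorem \ref{thm_morita} it induces a graded ring isomorphism $f_*:\tld{W}(A_L,\sigma_L)\Isom \tld{W}(L,\Id)=W(L)[\Zd]$. Setting $\Theta=f_*\circ r_{L/K}$, both factors preserve the $\Gamma$-grading and the fundamental filtration, so $\Theta$ is a filtered morphism of $\Gamma$-graded rings; on the associated graded in degree $n$ it is exactly the map under study
\[ \tld{H}^n(A,\sigma)\To \tld{H}^n(L,\Id)\simeq H^n(L,\mu_2)\oplus H^{n-1}(L,\mu_2), \]
where the last isomorphism is \ref{cor_h_tilde_depl}: via $\Psi$ (see (\ref{eq_def_psi_ann})) the class of $\Psi(x,y)$ goes to $(e_n(x),e_{n-1}(y))$. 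I then plug in the homogeneous decomposition $\tld{H}^n(A,\sigma)=H^n(K,\mu_2)\oplus H^n(A,\sigma)_o\oplus H^n(A,\sigma)_s$ of \ref{prop_decomp_cohom} and follow each summand, using that in $W(L)[\Zd]$ the slot $W(L)_{(0)}$ is the $+$-component and $W(L)_{(1)}$ the $o$-component, while the $-$ and $s$ slots vanish.

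The two easy summands come first. The component $H^n(K,\mu_2)$ consists of native $K$-forms, which $f_*$ leaves untouched and which land in the degree-$0$ slot; since $\Psi(x,0)=x_{(0)}$, its graded image is $(e_n(x),0)=(x_L,0)$. The component $H^n(A,\sigma)_s$ is $\Gamma$-homogeneous of type $s$, so $\Theta$ sends it into the $s$-slot of $\tld{W}(L,\Id)$, which is zero because alternating bilinear forms over $L$ are hyperbolic; hence $H^n(A,\sigma)_s\mapsto 0$.

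The heart is the $o$-component. It maps into the $o$-slot $W(L)_{(1)}$, and the key point is to identify $W(L)_{(1)}\cap \tld{I}^n(L)$. Since $c_{(1)}=\Psi(c,-c)$ and $\tld{I}^n(L)=\Psi(I^n(L)\oplus I^{n-1}(L))$, one has $c_{(1)}\in \tld{I}^n(L)\iff c\in I^n(L)$, and then the graded class of $c_{(1)}$ is $(e_n(c),e_{n-1}(-c))=(e_n(c),0)$, the second coordinate dying because $c\in I^n\subset I^{n-1}$ vanishes in $H^{n-1}=I^{n-1}/I^n$. Filtration-preservation of $\Theta$ then forces $\Theta(\tld{I}^n(A,\sigma)_o)$ into this set, so the $o$-part contributes only to the first factor, through $y\mapsto y_L:=e_n(c)$, where $c\in I^n(L)$ is the bilinear Witt class produced by restriction and Morita. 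This is consistent with $\tld{I}^n(A,\sigma)_o=I^{n-1}(K)W(A,\sigma)_o$: as $A$ is non-split, forms in $W(A,\sigma)_o$ have even reduced dimension, so they restrict to even-dimensional, hence $I(L)$-valued, bilinear forms, and their product with $I^{n-1}(L)$ lies in $I^n(L)$. Assembling the three summands gives $(x,y,z)\mapsto (x_L+y_L,0)$.

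Finally, for independence of the Morita choice: two equivalences $f,f'$ differ by an automorphism of $(L,\Id)$ in $\CBrh$, i.e. a one-dimensional form $\fdiag{\mu}$ with $\mu\in L^*$. By \ref{ex_morita} the induced automorphism of $W(L)[\Zd]$ fixes the unit and the degree-$0$ slot and acts on the degree-$1$ slot by multiplication by $\fdiag{\mu}$. Hence $x_L$ and the vanishing second coordinate are manifestly unchanged, while $y_L=e_n(c)$ becomes $e_n(\fdiag{\mu}c)$; but for $c\in I^n(L)$ we have $\fdiag{\mu}c-c=-\pfis{\mu}c\in I^{n+1}(L)$, so $e_n(\fdiag{\mu}c)=e_n(c)$. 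Thus $y\mapsto y_L$ is a well-defined morphism $H^n(A,\sigma)_o\to H^n(L,\mu_2)$ and the whole map is Morita-independent. The main obstacle is precisely this degree-$o$ bookkeeping: correctly pinning down which $\Gamma$-slots survive after splitting and checking, through the $\Psi$-coordinates together with the even-reduced-dimension (parity) fact built into \ref{prop_decomp_cohom}, that the $o$-summand lands in $H^n$ with vanishing $H^{n-1}$-part.
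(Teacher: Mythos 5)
Votre démonstration est correcte et suit essentiellement la même route que celle du texte : dans les deux cas tout se lit dans les coordonnées $\Psi$ de $\tld{W}(L)=W(L)[\Zd]$, la composante symplectique meurt parce que $\tld{W}(L,\Id)_s=W^-(L,\Id)=0$, la composante orthogonale tombe dans $I^n(L)$ (dimension réduite paire fois $I^{n-1}$) de sorte que sa coordonnée en $H^{n-1}$ s'annule, et l'indépendance du choix de Morita résulte de la relation de similitude $\fdiag{\mu}c\equiv c \bmod I^{n+1}(L)$. Votre habillage par le morphisme gradué filtré $\Theta=f_*\circ r_{L/K}$ ne fait que systématiser le calcul explicite du texte sur un relevé $(q,q'h,q''h')$.
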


\begin{proof}
  On se fixe un choix d'équivalence de Morita hermitienne entre
  $(A_L,\sigma_L)$ et $(L,\Id)$. On en déduit un morphisme
  $W(A,\sigma)_o\to I(L)$ (puisque tout élément de $W(A,\sigma)_o$
  est de dimension paire), qu'on note $h\mapsto h_L$, et un choix différent
  d'équivalence donne un morphisme différent d'une similitude.
  Soit $(x,y,z)\in \tld{H}^n(A,\sigma)$ se relevant en
  $(q,q'h,q''h')\in \tld{I}^n(A,\sigma)$. Son image par
  le morphisme de restriction
  \[ I^n(K)\oplus I^{n-1}(K)W(A,\sigma)_o\oplus I^{n-1}(K)W(A,\sigma)_s \To I^n(L)\oplus I^{n-1}(L) \]
  est donnée par $(q_L+q'_Lh_L,-q'_Lh_L)$, et donc l'image de $(x,y,z)$
  par la restriction est
  \[ (e_n(q_L)+ e_n(q'_Lh_L),e_{n-1}(q'_Lh_L)) = (x_L+y_L,0) \]
  où $y_L=e_n(q'_Lh_L)$ ne dépend que de $y$.
\end{proof}

\label{par_red_gen}Pour tout $r\in \N$, on pose $K_r(A)$ un corps de réduction générique
à l'indice $2^r$ d'une algèbre simple centrale $A$ ; notamment $K_0(A)$
est un corps de déploiement générique de $A$, et si $2^r\pgq \ind(A)$
alors $K_r(A)=K$.

\begin{defi}\label{def_gi_red}
  Pour tous $n,r\in \N$, on pose
  \[ \tld{GI}_r^n(A,\sigma) = r_{K_r(A)/K}^{-1}(\tld{GI}^n(A_{K_r(A)},\sigma_{K_r(A)})) \]
  où $r_{K_r(A)/K}: \tld{GW}(A,\sigma) \To \tld{GW}(A_{K_r(A)},\sigma_{K_r(A)})$
  est le morphisme de restriction.

  On pose de même
  \[ \tld{I}_r^n(A,\sigma) = r_{K_r(A)/K}^{-1}(\tld{I}^n(A_{K_r(A)},\sigma_{K_r(A)})). \]
\end{defi}

Notons que $\tld{GI}^1_r(A,\sigma)=\tld{GI}(A,\sigma)$ pour tout $r\in \N$
puisque la condition sur la dimension est inchangée par extension des
scalaires ; de même, $\tld{I}^1_r(A,\sigma)=\tld{I}(A,\sigma)$.
Notons également que la composante symplectique de $\tld{GW}(A,\sigma)$
est contenue dans tous les $\tld{GI}^n_0(A,\sigma)$ puisqu'après
déploiement générique une involution symplectique devient hyperbolique.

On a clairement une croissance par rapport à $n$ et à $r$ :
\[ \tld{GI}^n_r(A,\sigma) \subset \tld{GI}^{n+1}_r(A,\sigma) ,\quad
  \tld{GI}^n_{r+1}(A,\sigma) \subset \tld{GI}^n_r(A,\sigma). \]

Cette hiérarchie permettra de donner des énoncés plus fins concernant
les invariants cohomologiques dans le chapitre suivant.

\subsection{Algèbres de quaternions}\label{sec_quater}

On s'attarde ici sur le cas des algèbres de quaternions munies de leur
involution canonique, et de leur déploiement générique. Soit donc
$(Q,\can)$ une algèbre de quaternions sur $K$, munie de son
involution canonique.

\label{par_valu}Dans \cite{QT17}, Quéguiner et Tignol décrivent l'effet du déploiement
générique de $Q$ sur $W^\eps(Q,\can)$. Soit $C$ la variété de Severi-Brauer
de $Q$, qui est une conique, et soit $F=K(C)$ son corps de fonction,
qui est donc un corps de déploiement générique de $Q$. Pour tout
point fermé $\mathfrak{p}$ de $C$, on a une valuation $v_{\mathfrak{p}}$ discrète de rang
1 sur $F$, et donc on a les deux application résidus
usuelles $\partial_{1,\mathfrak{p}},\partial_{2,\mathfrak{p}}: W(F)\To W(K(\mathfrak{p}))$
(où $\partial_{2,\mathfrak{p}}$ dépend du choix d'une uniformisante locale
en $\mathfrak{p}$). Le choix d'un point fermé $a\in C$ de degré 2,
qu'on notera en général $a=\infty$,
donne en particulier une équivalence de Morita hermitienne entre $(Q_F,\can)$
et $(F,\Id)$ et donc un isomorphisme $W^-(Q_F,\can)\simeq W(F)$,
ce qui donne une application d'extension des scalaires
\begin{equation}\label{eq_ext_a}
  \ext_a : W^-(Q,\can) \To W(F). 
\end{equation}

Si on choisit un autre point $b\in C$ comme point à l'infini,
on dispose alors d'un scalaire $\lambda_{a,b}\in F^*$ tel que
le diagamme suivant commute :
\[ \begin{tikzcd}
    & W(F) \arrow{dd}{\cdot \fdiag{\lambda_{a,b}}} \\
    W^-(Q,\can) \urar{\ext_a} \drar{\ext_b}  & \\
    & W(F).
  \end{tikzcd} \]

On pose alors :
\begin{equation}\label{eq_w1}
  W_{nr,1,\infty}(F) = \ens{q\in W(F)}{\partial_{1,\infty}(q)=0,\,
    \forall \mathfrak{p}\neq \infty, \partial_{2,\mathfrak{p}}(q)=0} 
\end{equation}
et
\begin{equation}\label{eq_w2}
  W_{nr,2}(F) = \ens{q\in W(F)}{\forall \mathfrak{p}, \partial_{2,\mathfrak{p}}(q)=0}. 
\end{equation}
De plus on peut montrer que $\lambda_{a,b}$ est non ramifié sur tout
$c\in C$ sauf en $a$ et $b$, de sorte que la multiplication par $\lambda_{a,b}$
envoie $W_{nr,1,a}(F)$ sur $W_{nr,1,b}(F)$.
Dans \cite{QT17} il est démontré que $\ext_\infty$ induit :
\begin{equation}
  W^-(Q,\can)\Isom W_{nr,1,\infty}(F)
\end{equation}
et
\begin{equation}
  0\To n_QW(K) \To W(K) \To W_{nr,2}(F) \To 0.
\end{equation}

En particulier, le noyau de l'application de restriction
$\tld{W}(Q,\can)\to \tld{W}(Q_F,\can)$ est $n_QW(K)\oplus W^+(Q,\can)$.

\begin{rem}
  On pourrait tenter d'utiliser ces résultats pour définir les opérations
  $\lambda^d$ pour des formes quaternioniques : si $d$ est impair,
  on voit facilement que $\lambda^d$ sur $W(F)$ préserve $W_{nr,1,\infty}(F)$,
  donc si $h\in W^-(Q,\can)$, on peut définir $\lambda^d(h)$ comme
  $\ext_\infty^{-1}(\lambda^d(\ext_\infty(h)))$, ce qui ne dépend pas
  du choix de $\infty$, puisque $\lambda^d$ commute avec la multiplication
  par $\fdiag{\lambda_{a,b}}$ où $a$ et $b$ sont deux choix différents
  pour $\infty$. En revanche, si $d$ est pair, on ne peut définir
  par cette méthode que $\lambda^d(h)$ modulo $n_Q$ : en effet $\lambda^d$
  préserve cette fois $W_{nr,2}(F)$, mais le morphisme de restriction
  $W(F) \To W_{nr,2}(F)$ n'est pas injectif.
\end{rem}

\label{par_valu_cohom}En ce qui concerne la cohomologie, on a également des applications
de résidu $\partial_{\mathfrak{p}}: H^d(F,\mu_2)\To H^{d-1}(K(\mathfrak{p}),\mu_2)$,
et on pose
\begin{equation}\label{eq_h_nr}
  H^d_{nr}(F,\mu_2) = \Ker\left(\bigoplus_{\mathfrak{p}} \partial_{\mathfrak{p}}\right).
\end{equation}
On peut alors noter que si $q\in I^d(F)\cap W_{nr,i}(F)$ pour $i=1,2$, alors
$e_d(q)\in H^d_{nr}(F,\mu_2)$.

Cela nous permet de définir un morphisme
\begin{equation}\label{eq_ed_tilde}
   \tld{e}_d : \tld{I}_0^d(Q,\can) \To  H^{d-1}_{nr}(F,\mu_2) 
\end{equation}
qui est indépendant du choix de $\infty$. En effet, 
après déploiement générique, et équivalence de Morita
définie par le choix de $\infty$, un élément de $\tld{I}_0^d(Q,\can)$
est envoyé sur un élément de $\tld{I}^d(F)$ tel que la partie
paire soit dans $W_{nr,2}(F)$ et la partie impaire dans $W_{nr,1,\infty}(F)$,
donc on obtient un élément de $I^{d-1}(F)\cap W_{nr,2}(F)$ (indépendant de $\infty$)
et un élément de $I^{d-1}(F)\cap W_{nr,1,\infty}(F)$ (qui ne dépend de $\infty$
qu'à similitude près), dont la somme est dans $I^d(F)$,
et donc l'invariant $e_{d-1}$ de chacun de ces éléments donne la
même classe dans $H^{d-1}_{nr}(F,\mu_2)$ (qui ne dépend donc pas de $\infty$).

\begin{rem}
  On constate qu'on peut définir par ce déploiement générique
  une classe dans $H^d(F)$ (autre façon de le voir: on peut naturellement
  définir une classe dans $\tld{H}^d(F) = H^d(F)\oplus H^{d-1}(F)$),
  mais cette classe est en général ramifiée en $\infty$, et on ne pourra
  donc pas la redescendre sur $K$. 
\end{rem}

\label{par_m_n}On trouve dans \cite[prop A.1]{KRS} une description
du noyau et du conoyau de $H^d(K,\Q/\Z(d-1))\To H^d(F,\Q/\Z(d-1)$.
En ce qui nous concerne, pour toute algèbre simple
centrale $A$, on définit $M_A^d(K)$ comme étant le sous-groupe
de 2-torsion de
\[  H^d(K,\mu_4^{\otimes d-1})/[A]\cdot H^{d-2}(K,\mu_2) \]
(qui ne dépend que de la classe de Brauer de $A$), ainsi que
$N_A^d(K)\subset M_A^d(K)$ l'image de l'application naturelle
$H^d(K,\mu_2)\To M_A(K)$.
Le résultat de \cite{KRS} donne alors un isomorphisme induit par la restriction :
\begin{equation}\label{eq_depl_ind_2}
  M_Q^d(K) \simeq H_{nr}^d(F,\mu_2).
\end{equation}

La construction précédente nous donne donc un morphisme
\[ \tld{e}_d : \tld{I}_0^d(Q,\can) \To  M_Q^{d-1}(K). \].
Il s'avère qu'on peut faire un peu mieux. 

\begin{prop}\label{prop_ed_quater}
  Pour tout $d\pgq 1$, le morphisme $\tld{e}_d$ ci-dessus est
  à valeurs dans $N_Q^{d-1}(K)$.
\end{prop}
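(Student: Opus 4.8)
Le plan est de ramener l'énoncé à une propriété de restriction. Je commencerai par réinterpréter le sous-groupe cible : sous l'isomorphisme de (\ref{eq_depl_ind_2}), je m'attends à ce que $N_Q^{d-1}(K)$ corresponde exactement à l'image du morphisme de restriction $r_{F/K}: H^{d-1}(K,\mu_2)\To H^{d-1}_{nr}(F,\mu_2)$ (une classe provenant de $K$ étant automatiquement non ramifiée sur $F$). Cela devrait résulter de la naturalité de l'isomorphisme de \cite{KRS} vis-à-vis de la restriction : le morphisme naturel $H^{d-1}(K,\mu_2)\To M_Q^{d-1}(K)$ qui définit $N_Q^{d-1}(K)$ se factorise, après identification, par $r_{F/K}$. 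Il suffira donc de montrer que l'image de $\tld{e}_d$ est contenue dans l'image de $r_{F/K}$ restreinte à $H^{d-1}(K,\mu_2)$.

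Ensuite, je reprendrai la construction même de $\tld{e}_d$ donnée avant l'énoncé. Pour $x\in \tld{I}_0^d(Q,\can)$, après déploiement générique et l'équivalence de Morita associée au choix de $\infty$, la classe $\tld{e}_d(x)$ est l'invariant $e_{d-1}(w)$ où $w$ désigne la partie paire de l'image de $x$, qui vérifie $w\in I^{d-1}(F)\cap W_{nr,2}(F)$. C'est précisément ce choix de la partie paire, vivant dans $W_{nr,2}(F)$, qui est crucial (la partie impaire donne la même classe mais vit dans $W_{nr,1,\infty}(F)$, ce qui est moins immédiatement exploitable). D'après la suite exacte de \cite{QT17} rappelée ci-dessus,
\[ 0\To n_QW(K) \To W(K) \To W_{nr,2}(F) \To 0, \]
on peut écrire $w = r_{F/K}(w_0)$ pour un certain $w_0\in W(K)$, bien défini modulo $n_QW(K)$.

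Le cœur de la preuve, et sa principale difficulté, consistera à relever $w$ dans la bonne strate de la filtration : je veux montrer qu'on peut choisir le relevé $w_0$ dans $I^{d-1}(K)$. Autrement dit, il faut établir que si $r_{F/K}(w_0)\in I^{d-1}(F)$, alors $w_0\in I^{d-1}(K) + n_QW(K)$ — une compatibilité entre la filtration fondamentale et le déploiement générique par la conique, qui devrait se déduire des résultats de \cite{QT17} (ou d'un argument standard sur les résidus, par récurrence sur $d$, le noyau de la restriction $H^i(K,\mu_2)\to H^i(F,\mu_2)$ étant contrôlé par la classe de $Q$). Comme $n_Q$ devient hyperbolique sur $F$, un tel $w_0\in I^{d-1}(K)$ vérifiera encore $r_{F/K}(w_0)=w$, d'où
\[ \tld{e}_d(x) = e_{d-1}(w) = r_{F/K}(e_{d-1}(w_0)), \]
avec $e_{d-1}(w_0)\in H^{d-1}(K,\mu_2)$. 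Cette classe est donc dans l'image de $r_{F/K}$, c'est-à-dire dans $N_Q^{d-1}(K)$ d'après la première étape, ce qui conclut.
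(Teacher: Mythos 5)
Votre proposition est correcte et suit pour l'essentiel la même démarche que le texte : votre étape centrale — si $w_0\in W(K)$ vérifie $(w_0)_F\in I^{d-1}(F)$, alors $w_0\in I^{d-1}(K)+n_QW(K)$ — est exactement le lemme préliminaire sur lequel repose la preuve du manuscrit, démontré comme vous le suggérez par récurrence sur le degré, en combinant le fait que le noyau de $H^m(K,\mu_2)\to H^m(F,\mu_2)$ est $[Q]\cup H^{m-2}(K,\mu_2)$ avec la conjecture de Milnor pour redescendre au niveau des formes (le cas $m\ppq 2$ servant d'initialisation). La seule différence, sans conséquence, est que vous produisez le relevé $w_0$ via la surjectivité $W(K)\to W_{nr,2}(F)$ de \cite{QT17}, alors que le texte prend directement pour relevé la composante quadratique $q\in W(K)$ de $x$, qui en est déjà un par construction de $\tld{e}_d$.
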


Pour cela on démontre d'abord le lemme suivant :

\begin{lem}
  Soient $m\in \N$ et $q\in W(K)$ telle que $q_F\in I^m(F)$.
  Alors il existe $q'\in W(K)$ telle que $q-n_Qq'\in I^m(K)$.
\end{lem}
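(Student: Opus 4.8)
Le plan est de raisonner par récurrence sur $m$, en contrôlant à chaque étape l'invariant cohomologique $e_m$ de $q$. Le cas $m=0$ est immédiat puisque $I^0(F)=W(F)$ et $I^0(K)=W(K)$ : on prend $q'=0$. Supposons donc le résultat acquis au rang $m$ et donnons-nous $q\in W(K)$ avec $q_F\in I^{m+1}(F)\subset I^m(F)$. Par hypothèse de récurrence il existe $q_0'\in W(K)$ tel que $q_1:=q-n_Qq_0'\in I^m(K)$. Comme $Q$ est déployée sur $F$, la forme $n_Q$ y devient hyperbolique, donc $(n_Q)_F=0$ dans $W(F)$, et par conséquent $(q_1)_F=q_F\in I^{m+1}(F)$.

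On considère alors $e_m(q_1)\in H^m(K,\mu_2)$, bien défini puisque $q_1\in I^m(K)$. Par naturalité, sa restriction à $F$ est $e_m((q_1)_F)$, qui est nulle car $(q_1)_F\in I^{m+1}(F)=\Ker(e_m)$. Ainsi $e_m(q_1)$ appartient au noyau de la restriction $H^m(K,\mu_2)\To H^m(F,\mu_2)$. Le point clé est ici le calcul de ce noyau : $F$ étant le corps de fonctions de la conique de Severi-Brauer de $Q$, ce noyau est exactement $[Q]\cup H^{m-2}(K,\mu_2)$, où $[Q]=e_2(n_Q)\in H^2(K,\mu_2)$ est la classe de $Q$ (résultat classique, voir \cite{Ara}, avec la convention $H^{m-2}=0$ si $m<2$, auquel cas le noyau est nul). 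On écrit donc $e_m(q_1)=e_2(n_Q)\cup \beta$ avec $\beta\in H^{m-2}(K,\mu_2)$.

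Il reste à corriger $q_1$ par un multiple de $n_Q$ de façon à annuler cet invariant. La conjecture de Milnor assure que $e_{m-2}:I^{m-2}(K)\To H^{m-2}(K,\mu_2)$ est surjective ; on choisit $\psi\in I^{m-2}(K)$ relevant $\beta$. Alors $n_Q\psi\in I^m(K)$ et, par multiplicativité de $e_\bullet$ (formule (\ref{eq_f_prod}) dans le cas $A=H$), $e_m(n_Q\psi)=e_2(n_Q)\cup e_{m-2}(\psi)=e_m(q_1)$. Donc $e_m(q_1-n_Q\psi)=0$, et comme $q_1-n_Q\psi\in I^m(K)$, la conjecture de Milnor donne $q_1-n_Q\psi\in I^{m+1}(K)$. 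En posant $q'=q_0'+\psi$, on obtient $q-n_Qq'\in I^{m+1}(K)$, ce qui achève la récurrence.

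Le principal obstacle est l'identification du noyau de la restriction en cohomologie modulo $2$ au corps de déploiement générique $F$ : c'est là qu'intervient de façon essentielle le théorème d'Arason sur les corps de fonctions de coniques (ou, de manière équivalente, le théorème de noyau de Witt $\Ker(W(K)\to W(F))=n_QW(K)$ issu de \cite{QT17}, couplé à la conjecture de Milnor pour passer aux invariants $e_m$). Tout le reste se réduit à une manipulation formelle des $e_m$ et de leur multiplicativité, et au fait que $n_Q$ s'annule génériquement.
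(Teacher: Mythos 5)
Votre preuve est correcte et suit essentiellement la même démarche que celle du texte : récurrence sur $m$ en contrôlant $e_m(q-n_Qq_1)$, identification du noyau de la restriction $H^m(K,\mu_2)\To H^m(F,\mu_2)$ avec $[Q]\cup H^{m-2}(K,\mu_2)$, puis correction par $n_Q\psi$ où $\psi\in I^{m-2}(K)$ relève la classe $\beta$, la conjecture de Milnor donnant la conclusion. La seule différence est cosmétique : le texte initialise la récurrence à $m\ppq 2$ (où $q\in I^m(K)$ directement), tandis que vous partez de $m=0$ en traitant les petits degrés via la trivialité du noyau, ce qui revient au même.
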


\begin{proof}[Démonstration du lemme]
  Le résultat est clair si $m\ppq 2$ car alors on a directement
  $q\in I^m(K)$ (donc on peut prendre $q'=0$).
  Pour le cas général on procède par récurrence : supposons
  que le résultat vale jusqu'à $m\pgq 2$. Si $q_F\in I^{m+1}(F)$,
  alors par hypothèse on a $q_1\in W(K)$ tel que $q-n_Qq_1\in I^m(K)$.
  On a alors $e_m(q-n_Qq_1)_F=e_m(q_F)=0$, donc $e_m(q-n_Qq_1)\in [Q]H^{m-2}(K,\mu_2)$.
  On pose $q_2\in I^{m-2}(K)$ tel que $e_m(q-n_Qq_1)=[Q]e_{m-2}(q_2)$,
  et on définit $q'=q_1+q_2$. On a bien $e_m(q-n_Qq')=0$ donc
  $q-n_Qq'\in I^{m+1}(K)$.
\end{proof}

\begin{proof}[Démonstration de la proposition]
  Les cas $d=1,2$ sont clairs puisque $N_Q^0(K)=M_Q^0(K)=H^0(K,\mu_2)$ et
  $N_Q^1(K)=M_Q^1(K)=H^1(K,\mu_2)$.
  Si $d\pgq 3$, on prend $q\in W(K)$ la
  composante quadratique de $x\in \tld{I}_0^d(Q,\can)$, et
  on choisit $q'$ comme dans le lemme pour que $q-n_Qq'\in I^{d-1}(K)$.
  Alors $e_{d-1}(q-n_Qq')_F=e_{d-1}(q_F)$ donc
  $e_{d-1}(q-n_Qq')$ est bien une descente de $\tld{e}_d(x)$
  dans $H^{d-1}(K,\mu_2)$.
\end{proof}

\begin{rem}
  Il y a une dimension \og{}concrète\fg{} à cette proposition,
  contrairement à la définition donnée ci-dessus de $\tld{e}_d$,
  qui utilise le résultat de \cite{KRS} comme boîte noire
  (lui-même utilisant des résultats difficiles comme la conjecture
  de Milnor) et ne permet pas d'avoir des éléments explicites de
  $M_Q^d(K)$. En effet, il suffit en théorie de suivre le fil de la
  preuve du lemme, et de trouver à chaque étape une factorisation
  de $e_m(q-n_Qq_1)\in H^m(K,\mu_2)$ (en reprenant les notations de la preuve) par
  $[Q]$. On utilise ici seulement le résultat de \cite{KRS} pour garantir
  qu'une telle factorisation existe.
\end{rem}

\chapter{Invariants d'algèbres à involution}

Dans ce chapitre on met en action les différentes techniques
développées précédemment pour construire des invariants
cohomologiques d'algèbres à involution.

Dans la première partie, on exploite le fait que les invariants
$\Inv(I,I^d)$ construits dans le premier chapitre peuvent en réalité
s'appliquer dans n'importe quel anneau grec, puisque ce sont des combinaisons
d'opérations $\lambda$. On peut donc les appliquer dans l'anneau
$\tld{GW}(A,\sigma)$ défini au deuxième chapitre. L'espoir initial
qu'on obtienne ainsi des applications de $\tld{GI}(A,\sigma)$ (voir
\ref{def_filtr}) vers $\tld{H}^d(A,\sigma)$ est mis en défaut, mais
on obtient tout de même des invariants de degré arbitrairement grands
en tenant compte de l'obstruction donnée par l'indice de l'algèbre
(voir le corollaire \ref{cor_inv_herm} et la discussion qui suit).
On accorde un soin particulier à l'étude des algèbres d'indice 2, pour lesquelles
on dispose de formules plus précises (voir la proposition
\ref{prop_g_quater}), ainsi que de méthodes de déploiement générique.

La deuxième partie est consacrée à des calculs explicites autour
de l'équivalence exceptionnelle entre algèbres de type $A_3$ et $D_3$
(voir les théorèmes \ref{thm_d3_a3} et \ref{thm_a3_d3}). Ces calculs
sont mobilisés dans la partie suivante, qui donne quelques exemples
d'applications de nos méthodes (et notamment des méthodes de déploiement
générique en indice 2, voir la proposition \ref{prop_ber} adaptée de \cite{Ber})
pour généraliser certains invariants précédemment connus. Notamment,
on étend la définition d'un invariant de degré 4 construit dans \cite{RST}
(voir la proposition \ref{prop_a3_a4}) pour les algèbres de type $D_3$,
et d'un invariant de degré 5 construit dans \cite{Gar} pour les
formes quadratiques de degré 12 dans $I^3$ (voir la partie \ref{sec_a5}).

\section{Invariants de formes hermitiennes}\label{sec_inv_herm}

\subsection{Opérations $\pi_1^d$}

Soit $(A,\sigma)$ une algèbre à involution de première espèce sur $K$.
Comme $\tld{GW}(A,\sigma)$ est un anneau grec, on peut y appliquer
les opérations $\pi_n^d$ définies dans le premier chapitre. On dispose
de la filtration $\tld{GI}^n(A,\sigma)$, qui est l'analogue de la filtration
fondamentale de $GW(K)$, et on peut donc espérer que
$\pi_n^d(\tld{GI}^n(A,\sigma))\subset \tld{GI}^{nd}(A,\sigma)$. Ce
résultat est en général faux, mais on obtient quand même (où on
note $\lceil x \rceil$ la partie entière \emph{supérieure} d'un
réel $x\in \R$) :

\begin{prop}\label{prop_inv_herm}
  Soit $m=\max(2,\ind(A))$. Alors pour tout $d\in \N$, l'opération
  $\pi_1^d$ définit une application $\tld{GI}(A,\sigma)\to
  \tld{I}^{\lceil d/m \rceil}(A,\sigma)$.

  De plus, en prenant $p=\ind(A)$, $\pi_1^d$ définit une application
  $\tld{GI}(A,\sigma)_o\to \tld{I}^{\lceil d/p \rceil}(A,\sigma)_o$.
\end{prop}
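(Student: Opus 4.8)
The plan is to work modulo hyperbolic forms (the target filtration lives in $\tld{W}(A,\sigma)$) and to lean on three facts: that $\dim\colon \tld{GW}(A,\sigma)\to\Z[\Gamma]$ is a morphism of Greek rings (Proposition~\ref{prop_diag_dim}), that the entire construction is natural under hermitian Morita equivalence (Proposition~\ref{prop_lambda_morita}), and that $\pi_1$ is a Greek operation, so $\pi_1^d$ obeys the addition formula $\pi_1^d(x+y)=\sum_k\pi_1^k(x)\pi_1^{d-k}(y)$. The first fact already settles the case $n=1$: since $\dim(\pi_1^d(x))=\pi_1^d(\dim x)=\pi_1^d(0)=0$ for $x\in\tld{GI}(A,\sigma)$, one always has $\pi_1^d(x)\in\tld{GI}(A,\sigma)=\tld{I}^1(A,\sigma)$. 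Using Proposition~\ref{prop_lambda_morita} I would then replace $(A,\sigma)$ by a division algebra $(D,\tau)$ with $\deg D=\ind A=p$, which is harmless because the filtration is defined through the Morita-invariant reduced dimension.

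First I would reduce the general statement to a single ``hermitian'' generator. Over a division algebra every graded component of $\tld{GW}(D,\tau)$ is additively generated by rank-one forms, so $\tld{GI}(D,\tau)$ is generated by the Pfister elements $\pfis{a}=\fdiag{1}-\fdiag{a}\in\hat I(K)$ and by differences $\fdiag{a}_\tau-\fdiag{b}_\tau$ in a fixed component $o$ or $s$ (hence of reduced dimension $0$). Applying the addition formula, $\pi_1^d(x)$ becomes a sum of products $\pi_1^k(q)\,\pi_1^{d-k}(h)$ with $q\in\hat I(K)$ and $h$ a pure hermitian generator; the split computation gives $\pi_1^k(q)\in I^k(K)$ (this is $f_1^k$ for $A=W$, Proposition~\ref{prop_f}). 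Granting the core estimate $\pi_1^{d-k}(h)\in\tld I^{\lceil(d-k)/m\rceil}(A,\sigma)$, the product lies in $\tld I^{k+\lceil(d-k)/m\rceil}$, and since $k+\lceil(d-k)/m\rceil\geq\lceil k/m\rceil+\lceil(d-k)/m\rceil\geq\lceil d/m\rceil$ the bound follows. The orthogonal refinement is then obtained by tracking the grading (the product $\pi_1^k(\fdiag a)\pi_1^{d-k}(\fdiag b)$ sits in the component $(d)\,o$) and observing that for a pure $o$-generator the rate is governed by $p$ directly rather than by $\max(2,p)$.

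The heart of the matter is thus the estimate for a single difference $h=\fdiag{a}_\tau-\fdiag{b}_\tau$. Expanding $(\pi_1)_t(\fdiag{a}_\tau)=\lambda_{t/(1+t)}(\fdiag{a}_\tau)$ and using that $\fdiag{a}_\tau$ has reduced dimension $p$, so $\lambda^i(\fdiag a_\tau)=0$ for $i>p$, one sees that $\pi_1^j(\fdiag{a}_\tau)$ is an integral combination of the \emph{fixed} elements $\lambda^1(\fdiag a_\tau),\dots,\lambda^p(\fdiag a_\tau)$ with explicit binomial coefficients in $j$. Two distinct mechanisms then create filtration: the diagonal contributions, whose integer coefficients acquire $2$-adic valuation growing at rate $1/p$, combined with $2\,\tld I^n\subseteq\tld I^{n+1}$; and the genuine cross terms, where a product of two hermitian classes already lies in $I^2(K)$ (the fact used in the proof of Proposition~\ref{prop_decomp_cohom}), with products of several $\lambda^i(\fdiag a_\tau)$ climbing the fundamental filtration further. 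To organize both I would use generic splitting: over $F=K_0(A)$ the class $\pi_1^j(h)_F$ lands in $I^j(F)$ by the split result, and the descent to $K$ would be controlled one index-halving at a time along the generic index-reduction tower $K_r(A)$ of Definition~\ref{def_gi_red}, each halving costing exactly one factor of two in the attainable exponent and thereby producing the rate $1/p$.

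The main obstacle is precisely this descent: quantifying how much of the filtration of $\pi_1^j(h)$ over the generic splitting field survives over $K$ and showing that the loss is measured by $\ind A$. The combinatorial reductions and the grading bookkeeping are routine given the earlier results, but establishing the index defect — equivalently, proving the $2$-adic valuation growth of the coefficients at rate $1/p$ together with the compatibility of the cross terms with the fundamental filtration — is where the real work lies, and is exactly what forces the weaker exponent $\lceil d/m\rceil$ rather than the naive $d$ valid in the split case.
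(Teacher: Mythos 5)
Your overall skeleton (reduce to additive generators, use the addition formula for $\pi_1^d$ plus the superadditivity $\lceil k/m\rceil+\lceil (d-k)/m\rceil\pgq\lceil d/m\rceil$, and the dimension morphism to stay in $\tld{GI}$) is the same as the paper's opening reduction, but the proposal has two genuine gaps. First, your generating set is wrong: Pfister elements of $\hat{I}(K)$ together with within-component differences $\fdiag{a}_\tau-\fdiag{b}_\tau$ do \emph{not} additively generate $\tld{GI}(A,\sigma)$. Their span consists of elements whose reduced dimension vanishes in \emph{each} component of the $\Gamma$-grading separately, whereas $\tld{GI}(A,\sigma)$ only requires the \emph{total} dimension to vanish; the element $\fdiag{a}_\tau-p\fdiag{1}$ (or $\fdiag{1}_\sigma-\deg(A)\fdiag{1}$, precisely the kind of element one cares about for involutions) lies in $\tld{GI}(A,\sigma)$ but not in your span. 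The cross-component generators are the essential ones, and the paper's generating set is exactly of this type: $\hat{I}(K)$ together with $y-r\fdiag{-1}$, where $y$ is an effective form of minimal dimension $r$ in $GW^-(K)$ or $SGW^\eps(A,\sigma)$. You also omit the alternating component $GW^-(K)$ entirely, and it is its minimal dimension $2$ that forces $m=\max(2,\ind(A))$ in the first statement.

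Second, your ``core estimate'' — the only place the rate $1/m$ could come from — is not proved: you sketch two speculative mechanisms (2-adic valuation growth of coefficients at rate $1/p$, and a descent along the tower $K_r(A)$ ``costing one factor of two per halving'') and yourself admit this is where the real work lies. Neither is needed, and neither is how the paper argues; in particular no generic splitting enters at all. The paper's proof rests on a purely algebraic identity in the Witt ring: since $\fdiag{-1}=-1$ in $\tld{W}(A,\sigma)$ and $\pi_1^d=\sum_{k}(-1)^{d-k}\binom{d-1}{k-1}\lambda^k$, one gets
\[ \pi_1^d(x)=\lambda^d\bigl(x+(d-1)\fdiag{-1}\bigr) \]
in $\tld{W}(A,\sigma)$. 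Hence for a generator $x=y-r\fdiag{-1}$ with $y$ effective of dimension $r$, $\pi_1^d(x)=\lambda^d\bigl(y+(d-1-r)\fdiag{-1}\bigr)$ is $\lambda^d$ of an \emph{effective} element of dimension $d-1<d$, hence zero, for every $d>r$. So $\pi_1^d$ of each generator vanishes beyond degree $m$ (and lies in $\tld{I}(A,\sigma)$ in degrees $\ppq m$ by the dimension argument), after which the superadditivity inequality on products of generators yields $\lceil d/m\rceil$ outright — there is no ``index defect'' to control. The orthogonal refinement with $p=\ind(A)$ then falls out because the minimal generators of the orthogonal component have dimension exactly $\ind(A)$, unobstructed by the alternating planes.
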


\begin{proof}
  On commence par traiter le cas où $A$ n'est pas déployée ;
  le deuxième énoncé découle alors du premier, et on a $m=\ind(A)$.
  On peut conclure si on arrive à montrer que $\tld{GI}(A,\sigma)$
  est additivement engendré par des éléments $x$ tels que la classe
  de Witt de $\pi_1^d(x)$ est dans $\tld{I}^{\lceil d/m \rceil}(A,\sigma)$
  pour tout $d\in \N$. En effet, si cette propriété vaut pour $x$
  elle vaut aussi pour $-x$ : on a $0 = \pi_1^d(x - x)$ donc
  $\pi_1^d(-x) = -\sum_{k=0}^{d-1}\pi_1^{d-k}(x)\pi_1^k(-x)$, et on
  peut procéder par récurrence sur $d$, le cas $d=0$ étant trivial.
  Il suffit alors de voir que pour tout $0\ppq k\ppq d-1$,
  $\lceil (d-k)/m \rceil + \lceil k/m \rceil\pgq \lceil d/m \rceil$.
  De même, si $x$ et $y$ vérifient la propriété, on montre que c'est aussi
  le cas de $x+y$ puisque $\pi_1^d(x+y)=\sum_{k=0}^d\pi_1^k(x)\pi_1^{d-k}(y)$
  et on utilise la même inégalité. On peut alors conclure par récurrence
  sur le nombre de générateurs additifs intervenant dans l'écriture
  d'un élément.
  
  On rappelle que $\pi_1^d = \sum_{k=1}^d (-1)^{d-k}\binom{d-1}{k-1}\lambda^k$.
  Or si $x\in \tld{GW}(A,\sigma)$, dans $\tld{W}(A,\sigma)$ on a
  $(-1)^k\lambda^k(x) = \fdiag{-1}^k\lambda^k(x)=\lambda^k(\fdiag{-1}x)$,
  donc :
  \begin{align*}
    \pi_1^d(x) &= (-1)^d\sum_{k=0}^d \lambda^{d-k}(d-1)(-1)^k\lambda^k(x) \\
               &= (-1)^d\sum_{k=0}^d\lambda^{d-k}(d-1)\lambda^k(\fdiag{-1}x)\\
               &= (-1)^d\lambda^d(\fdiag{-1}x+(d-1)\fdiag{1}) \\
               &= \lambda^d(x+(d-1)\fdiag{-1}).
  \end{align*}

  De là, on utilise le fait que $\tld{GI}(A,\sigma)$ est additivement
  engendré par : les éléments de $\hat{I}(K)$ (pour lesquels la propriété
  voulue est une conséquence des résultats du premier chapitre),
  et les éléments de la forme
  $x=y-r\fdiag{-1}$ où $y$ est un élément de dimension minimale $r$
  dans $GW^-(K)$ (auquel cas $r=2$) ou $SGW^\eps(A,\sigma)$ (auquel cas
  $r=m$). Pour un tel élément $x$, on a $\pi_1^d(x) = \lambda^d(y+(d-1-r)\fdiag{-1})$,
  donc si $d>r$ (et donc en particulier si $d>m$) on a $\pi_1^d(x)=0$
  (en effet pour un élément $x$ de $\tld{SGW}(A,\sigma)$, si $d$ est strictement
  plus grand que la dimension de $x$, $\lambda^d(x)=0$).

  Supposons maintenant que $A$ soit déployée. Pour le premier énoncé,
  on a $m=2$. On peut alors reprendre la même preuve que dans le cas
  non déployé, en observant que les générateurs de dimension minimale
  dans les composantes alternée et symplectique sont de dimension $m=2$
  bien que l'indice soit $1$. Pour le deuxième énoncé, on a $p=1$,
  mais les générateurs de dimension minimale sont bien cette fois de
  dimension $p=\ind(A)=1$, puisqu'on n'a plus l'obstruction des plans
  hyperboliques alternés.
\end{proof}

En utilisant les notations de réducion d'indice de la définition
\ref{def_gi_red}, on obtient :

\begin{coro}
  Soit $r\pgq 1$. Alors pour tout $d\in \N$, $\pi_1^d$ induit
  des applications $\tld{GI}_r(A,\sigma)\to \tld{I}_r^{\lceil d/2^r \rceil}(A,\sigma)$,
  $\tld{GI}_0(A,\sigma)\to \tld{I}_0^{\lceil d/2 \rceil}(A,\sigma)$, et
  $\tld{GI}_0(A,\sigma)_{o}\to \tld{I}_0^d(A,\sigma)_o$.
\end{coro}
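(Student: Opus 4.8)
The plan is to deduce all three statements directly from Proposition \ref{prop_inv_herm} by base change to the generic index-reduction fields $K_r(A)$, exploiting that $\pi_1^d$ is a universal $\Z$-combination of $\lambda$-operations (Corollaire \ref{cor_relat_pi}) and therefore commutes with the restriction maps. First I would record the one structural fact this requires: for every extension $L/K$ the scalar-extension map $r_{L/K}\colon \tld{GW}(A,\sigma)\to \tld{GW}(A_L,\sigma_L)$ is a morphism of graded anneaux grecs. Compatibility with the $\Gamma$-grading is already noted after Théorème \ref{thm_morita}, and compatibility with the $\lambda^d$ follows because the alternating powers $\Alt^d$ (Définitions \ref{def_alt} and \ref{def_alt_h}) are assembled from the Goldman symmetrizers $s_d$, whose formation commutes with base change; hence $r_{L/K}\circ \pi_1^d=\pi_1^d\circ r_{L/K}$.

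For the first statement, set $L=K_r(A)$. Since $(A,\sigma)$ carries an involution of the first kind, $A$ has exponent dividing $2$ and hence index a power of $2$, and by construction of $K_r(A)$ one has $\ind(A_L)\leq 2^r$; consequently $m_L:=\max(2,\ind(A_L))\leq 2^r$ for $r\geq 1$, so $\lceil d/m_L\rceil\geq \lceil d/2^r\rceil$. Now take $x\in \tld{GI}_r(A,\sigma)=\tld{GI}^1_r(A,\sigma)=\tld{GI}(A,\sigma)$; by Définition \ref{def_gi_red} its restriction lies in $\tld{GI}(A_L,\sigma_L)$, so Proposition \ref{prop_inv_herm} applied over $L$ gives, at the level of Witt classes, $\pi_1^d(r_{L/K}(x))\in \tld{I}^{\lceil d/m_L\rceil}(A_L,\sigma_L)\subseteq \tld{I}^{\lceil d/2^r\rceil}(A_L,\sigma_L)$. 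By the naturality of the first step, $r_{L/K}(\pi_1^d(x))$ coincides with this element, whence $\pi_1^d(x)\in r_{L/K}^{-1}(\tld{I}^{\lceil d/2^r\rceil}(A_L,\sigma_L))=\tld{I}^{\lceil d/2^r\rceil}_r(A,\sigma)$, which is the claim.

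The second and third statements are the case $r=0$, where $L=K_0(A)$ is a generic splitting field, $A_L$ is split and $\ind(A_L)=1$. For the second statement the first part of Proposition \ref{prop_inv_herm} over $L$ yields $m_L=2$, hence the bound $\lceil d/2\rceil$, and the same transport argument concludes. For the third statement I would invoke the orthogonal refinement (the second part of the proposition, with $p=\ind(A_L)=1$), which gives the bound $\lceil d/1\rceil=d$ on the orthogonal component; here I additionally use that $r_{L/K}$ preserves the $\Gamma$-grading, so that $x\in \tld{GI}_0(A,\sigma)_o$ restricts into $\tld{GI}(A_L,\sigma_L)_o$ and the orthogonal statement of the proposition applies verbatim before transporting back.

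The only genuinely delicate point is the naturality invoked in the first step: one must be certain that the greek/$\lambda$-structure built from $\Alt^d$ is preserved by scalar extension, and not merely by hermitian Morita equivalence — Proposition \ref{prop_lambda_morita} establishes the latter, so this case needs its own (straightforward) verification from the functoriality of the symmetrizers $s_d$ under $-\otimes_K L$. Everything else is a purely formal transport of Proposition \ref{prop_inv_herm} through the preimage definition of the index-reduction filtrations $\tld{GI}_r$ and $\tld{I}_r$, together with the elementary monotonicity $\lceil d/m_L\rceil\geq \lceil d/2^r\rceil$ that lets a sharper bound over $L$ imply the stated one.
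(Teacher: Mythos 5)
Your proof is correct and takes essentially the same route as the paper, whose entire argument is the one-line observation that it suffices to apply Proposition \ref{prop_inv_herm} after scalar extension to $K_r(A)$ and pull back through the preimages defining $\tld{GI}_r$ and $\tld{I}_r^n$ (Définition \ref{def_gi_red}). You merely make explicit the ingredients the paper leaves implicit — the commutation of $\pi_1^d$ with restriction (valid since $\pi_1^d$ is a $\Z$-combination of the $\lambda^k$, which are natural in the base field), the bound $\ind(A_{K_r(A)})\ppq 2^r$, and the compatibility of restriction with the $\Gamma$-grading for the orthogonal statement.
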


\begin{proof}
  Il suffit d'appliquer les propositions précédentes après
  une extension des scalaires à $K_r(A)$, le corps de réduction
  générique à l'indice $2^r$.
\end{proof}

\subsection{Invariants cohomologiques}

On peut utiliser les résultats précédents sur les opérations
$\pi_1^d$ pour étendre tous les
invariants de Witt de $I$ étudiés dans le premier chapitre.

\begin{coro}\label{cor_inv_herm}
  Soient $d\in \N$ et $\alpha\in \Inv(I,I^d)$. Pour tout
  $r\pgq 1$, l'opération $\alpha$ sur $\tld{GW}(A,\sigma)$
  induit des applications $\tld{GI}_r(A,\sigma)\to \tld{I}_r^{\lceil d/2^r \rceil}(A,\sigma)$,
  $\tld{GI}_0(A,\sigma)\to \tld{I}_0^{\lceil d/2 \rceil}(A,\sigma)$, et
  $\tld{GI}_0(A,\sigma)_{o}\to \tld{I}_0^d(A,\sigma)_o$.
\end{coro}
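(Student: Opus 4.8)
The plan is to reduce the statement for an arbitrary $\alpha$ to the case $\alpha=\pi_1^d$ settled in the preceding corollary, but to carry out the reduction in the \emph{basis of the $g_1^d$} rather than the $f_1^d=\pi_1^d$, since only the former behaves well under the infinite combinations that are forced here. Concretely, by Theorem \ref{thm_g} together with Corollary \ref{cor_f_g_gen} I would write $\alpha=\sum_j b_j\,g_1^j$; the hypothesis $\alpha\in\Inv(I,I^d)$ means exactly, through the filtered isomorphism $F:N\Isom M$ of Theorem \ref{thm_g}, that $b_j\in I^{d-j}(k)$ for every $j$ (with the convention $I^{d-j}(k)=W(k)$ when $j\pgq d$). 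Each operation $g_1^j$, being a fixed $\Z[\pfis{-1}]$-combination of the $\lambda$-operations, is defined on the Greek ring $\tld{GW}(A,\sigma)$ (Proposition \ref{prop_lambda_mixte}), and I would \emph{define} the extension of $\alpha$ by $\alpha(x)=\sum_j b_j\,g_1^j(x)$, noting that this is consistent with the original $\alpha$ on the subring $\hat I(K)=I(K)\subseteq\tld{GI}(A,\sigma)$.

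For the filtration bookkeeping, Proposition \ref{prop_f_g} expresses $g_1^j$ as a $\Z$-combination of the products $\pfis{-1}^{j-k}\pi_1^k$ with $\lfloor j/2\rfloor<k\ppq j$. The preceding corollary gives $\pi_1^k(\tld{GI}_r(A,\sigma))\subseteq\tld{I}_r^{\lceil k/2^r\rceil}(A,\sigma)$, and the filtration is multiplicative in the sense $I^m(K)\cdot\tld{I}_r^n(A,\sigma)\subseteq\tld{I}_r^{m+n}(A,\sigma)$ — this holds because $I(K)\subseteq\tld{I}(A,\sigma)$ and restriction to $K_r(A)$ preserves the filtration — so $\pfis{-1}^{j-k}\pi_1^k(\tld{GI}_r)\subseteq\tld{I}_r^{\lceil k/2^r\rceil+(j-k)}$. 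Since $k\mapsto\lceil k/2^r\rceil+(n-k)$ is non-increasing (its consecutive differences lie in $\{-1,0\}$) with value $\lceil n/2^r\rceil$ at $k=n$, each such term lies in $\tld{I}_r^{\lceil j/2^r\rceil}$, whence $g_1^j(\tld{GI}_r)\subseteq\tld{I}_r^{\lceil j/2^r\rceil}$; multiplying by $b_j\in I^{d-j}(k)$ and invoking the same inequality once more (with $n=d$) places $b_j\,g_1^j(\tld{GI}_r)$ inside $\tld{I}_r^{\lceil d/2^r\rceil}(A,\sigma)$.

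The main obstacle is well-definedness, i.e. showing that for a fixed $x\in\tld{GI}_r(A,\sigma)$ only finitely many $g_1^j(x)$ are nonzero. This is precisely where the $g_1^d$-basis is indispensable: the expansion $\sum b_d\pi_1^d$ is genuinely infinite (already $\pi_1^d(-\pfis{-1})\neq 0$ for all $d$ by Corollary \ref{cor_pi_simil}, whereas $g_1^d(-\pfis{-1})=0$ for $d>2$ by Proposition \ref{prop_g_borne}). What I need is an analogue in $\tld{GW}(A,\sigma)$ of Proposition \ref{prop_g_borne}. I would first treat the additive generators of $\tld{GI}(A,\sigma)$ isolated in the proof of Proposition \ref{prop_inv_herm}: on an element of $\hat I(K)$ this is Corollary \ref{cor_g_fixed_dim}, and on a generator $x_0=y-r\fdiag{-1}$ of minimal dimension $r$ one has $\pi_1^k(x_0)=0$ for $k>r$, hence $g_1^j(x_0)=0$ as soon as $\lfloor j/2\rfloor\pgq r$, because $g_1^j$ only involves $\pi_1^k$ with $k>\lfloor j/2\rfloor$. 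The delicate point is to propagate this vanishing to arbitrary $\Z$-combinations of such generators — in particular to \emph{differences} — which is exactly the content of Proposition \ref{prop_g_borne}; I expect to reprove it abstractly for $\tld{GW}(A,\sigma)$ by observing that the shift operators $\Phi^\pm$, equivalently the addition formulas of Proposition \ref{prop_g_pm}, are formal identities valid in any Greek ring, so that stripping off one generator at a time lowers the vanishing threshold in a controlled way.

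Finally, the two remaining assertions follow verbatim from the other two bounds of the preceding corollary. For $\tld{GI}_0(A,\sigma)$ one uses $\pi_1^k(\tld{GI}_0)\subseteq\tld{I}_0^{\lceil k/2\rceil}(A,\sigma)$ and the same arithmetic inequality with $2^r$ replaced by $2$. For the orthogonal component one uses $\pi_1^k(\tld{GI}_0(A,\sigma)_o)\subseteq\tld{I}_0^k(A,\sigma)_o$: here every $\pfis{-1}^{j-k}\pi_1^k$ lands in $\tld{I}_0^{k+(j-k)}(A,\sigma)_o=\tld{I}_0^j(A,\sigma)_o$ (no ceiling, exact equality), and multiplication by $b_j\in I^{d-j}(k)$, a class of grade ${+}$, preserves the grade $o$ since $o+{+}=o$, so every term lands in $\tld{I}_0^d(A,\sigma)_o$, as required.
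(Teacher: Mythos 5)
Your filtration bookkeeping is correct and, despite the change of basis, is essentially the paper's computation: the paper's proof expands $\alpha=\sum_{n=0}^{d}a_n\pi_1^n$ with $a_n\in I^{d-n}(k)$ and concludes termwise from the preceding corollary together with exactly your inequality $d-n+\lceil n/2^r\rceil\pgq\lceil d/2^r\rceil$; you route the same termwise estimate through the $g_1^j$ via Proposition \ref{prop_f_g}, at no cost since the extra factors $\pfis{-1}^{j-k}$ are absorbed by the multiplicativity of the filtration, and your treatment of the orthogonal component (no ceiling, grade $o+{+}=o$) matches the paper's unelaborated « on procède de même ». To your credit, your route also exposes a point the paper glosses over: the finite decomposition $\sum_{n=0}^{d}a_n\pi_1^n$ is not available for every $\alpha\in\Inv(I,I^d)$ (by the uniqueness in Corollaire \ref{cor_ecr_unique}, already $\pi_1^{d+1}\in\Inv(I,I^{d+1})\subset\Inv(I,I^d)$ is not of that form for the given $d$ — harmless for the filtration bound, since tail terms with $n>d$ satisfy $\lceil n/2^r\rceil\pgq\lceil d/2^r\rceil$ anyway, but not for convergence).

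The genuine gap is precisely in the well-definedness step you flag as the main obstacle. Your proposed mechanism — that the identities behind Proposition \ref{prop_g_pm} are formal in any Greek ring, so one can strip generators of $\tld{GI}(A,\sigma)$ one at a time as in Proposition \ref{prop_g_borne} — only licenses stripping elements of $\pi_1$-dimension $1$: the formal input is $\pi_1^k(q\pm\phi)=\pi_1^k(q)\pm\cdots$ for a Pfister element $\phi$ (i.e.\ $\phi^2=2\phi$ and $\pi_1$-dimension $1$). But the additive generators of $\tld{GI}(A,\sigma)$ outside $\hat{I}(K)$, namely the blocks $x_0=y-r\fdiag{-1}$ isolated in the proof of Proposition \ref{prop_inv_herm}, only satisfy $\pi_1^k(x_0)=0$ for $k>r$ at the Witt level; neither they nor their summands $\fdiag{a}-m\fdiag{-1}$ are Pfister elements, so Proposition \ref{prop_g_pm} gives no way to strip them, and in particular says nothing about $g_1^j(z-x_0)$. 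What your induction actually requires is a new bounded-support lemma of the type « if $x=u-v$ with $u,v$ of $\pi_1$-dimension $\ppq s,t$, then $g_1^j(x)=0$ for $j$ beyond a bound in $\max(s,t)$ », which does not follow formally from the quoted results and which your sketch does not prove (note that a reduction to the split case is blocked by the non-injectivity of $\tld{W}(Q,\can)\to\tld{W}(Q_F,\can)$). The paper's own three-line proof sidesteps the issue entirely via the finite $\pi_1$-expansion, which is legitimate exactly for the sub-class of invariants that are finite combinations of the $\pi_1^n$ — and these are all that is used afterwards (e.g.\ $\lambda^2$ and the $g_1^d$ in Proposition \ref{prop_g_quater}); restricting your argument to that sub-class closes it, but as written, for a general infinite $g$-expansion, your definition $\alpha(x)=\sum_j b_jg_1^j(x)$ has not been shown to be a finite sum.
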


\begin{proof}
  On écrit $\alpha = \sum_{n=0}^da_n\pi_1^n$ avec $a_n\in I^{d-n}(k)$.
  Alors l'image de $a_n\pi_1^n(\tld{GI}(A,\sigma))$ est dans
  $I^{d-n}(k)\tld{I}_r^{\lceil n/2^r \rceil}(A,\sigma)$, et
  $d-n+\lceil n/2^r \rceil \pgq \lceil d/2^r \rceil$.

  On procède de même pour les cas où $r=0$.
\end{proof}

On peut alors obtenir des invariants cohomologiques :
si $\alpha\in \Inv(I,I^d)$, on obtient une application
naturelle
\[ \bar{\alpha}: \tld{GI}(A,\sigma)\To \tld{H}^{\lceil d/m\rceil}(A,\sigma) \]
où $m=\max(2,\ind(A))$. Sachant que, par la proposition
\ref{prop_decomp_cohom}, $\tld{H}^n(A,\sigma)$ a une composante
identifiée à $H^n(K,\mu_2)$, on obtient donc également une
application naturelle
\[ \alpha': \tld{GI}(A,\sigma)\To H^{\lceil d/m\rceil}(K,\mu_2) \]
donc un authentique invariant cohomologique. On obtient
par ailleurs des invariants à valeurs dans les autres composantes
de $\tld{H}^n(A,\sigma)$, à savoir $H_o^n(A,\sigma)$ et $H_s^n(A,\sigma)$,
mais la nature précise de ces groupes reste à explorer, donc nous
nous limiterons ici à la projection sur la composante cohomologique.

Pour obtenir un invariant \og{}de $(A,\sigma)$\fg{}, on peut
appliquer une de ces constructions à un élément $h\in \tld{GI}(A,\sigma)$
qui \og{}représente\fg{} $(A,\sigma)$ : par exemple
$h=\fdiag{1}_\sigma-\frac{\deg(A)}{2}\mathcal{H}$ (où $\mathcal{H}\in GW(K)$
est le plan hyperbolique), ou $h=\fdiag{1}_\sigma-\deg(A)\fdiag{1}$, ou
encore si $A$ est de coindice $s$ pair $h=\fdiag{1}_\sigma-\frac{s}{2}\mathcal{H}(A,\sigma)$
où $\mathcal{H}(A,\sigma)\in GW(A,\sigma)$ est l'espace hyperbolique
hermitien. Tous ces choix ne sont pas fondamentalement différents
et correspondent simplement à différentes façon de normaliser
l'élément canonique $\fdiag{1}_\sigma$ pour obtenir un élément
de $\tld{GI}(A,\sigma)$ ; précisément, à $\alpha$ fixé ces
choix conduisent à des invariants différents, mais en faisant
varier $\alpha$ l'ensemble des invariants obtenus ne change pas.

Il est à noter que l'élément de $H^{\lceil d/m\rceil}(K,\mu_2)$ ainsi
obtenu ne constitue pas toujours un invariant intéressant
en soi : en effet, en général il est nul après déploiement
générique, puisque par construction l'élément de $I^{\lceil d/m\rceil}(K)$
dont il est la réduction est en réalité dans $I^d$ après
déploiement générique. Il est tout à fait possible que la classe
obtenue soit de la forme $x\cup [A]$ où $x$ ne dépend pas de $\sigma$
(voir l'exemple \ref{ex_discr_sympl} ainsi que la remarque
\ref{rem_gd_ind_2}), et
en particulier les différents $\alpha$ ne donnent a priori pas toujours
des invariants distincts dans les cas non déployés (bien qu'ils
soient distincts en tant qu'invariants globaux puisqu'ils
le sont dans le cas déployé). On peut se faire la représentation
imagée suivante : l'élément $\alpha(h)$ (ou disons sa composante
dans $W(K)$) possède effectivement de l'information \og{}intéressante\fg{}
au niveau de $I^d(K)$ comme dans le cas déployé, mais on n'y a
pas accès directement parce qu'il y a une obstruction au niveau
de $I^{\lceil d/m\rceil}(K)$ (qui disparaît après déploiement),
et possiblement à d'autres $I^n$ pour $n<d$.

Une façon possible de remédier à cela dans certains cas favorables
est de construire des invariants relatifs : au lieu de considérer
la classe de cohomologie de $\alpha(h)$, on peut étudier l'élément
$\alpha(h)-\alpha(h')$, qui peut potentiellement se trouver
dans une puissance supérieure de $I$ si les \og{}obstructions\fg{}
évoquées précédemment se compensent. On illustre cette idée dans la
partie \ref{sec_ind_2} en indice 2, ainsi que dans l'exemple
suivant.

\begin{ex}\label{ex_discr_sympl}
  Le discriminant des involutions symplectiques, qui est un
  invariant de degré 3, peut être défini de la façon suivante
  (voir \cite[thm 4]{BMT}) : soient $\sigma$ et $\sigma'$ deux involutions
  symplectiques sur $A$; alors $T_\sigma^+-T_{\sigma'}^+\in I^3(K)$
  et le discriminant (relatif) $d_\sigma(\sigma')$ est l'invariant
  $e_3$ de cette forme. Or dans $\tld{GW}(A,\sigma)$, si $h\in GW(A,\sigma)$
  a pour adjoint $\sigma'$, on a $T_\sigma^+ = \fdiag{2}\lambda^2(\fdiag{1}_\sigma)$ et
  $T_{\sigma'}^+ = \fdiag{2}\lambda^2(h)$ (voir exemple \ref{ex_lambda2}),
  donc cela correspond à l'idée présentée
  précédemment, avec $\alpha=\lambda^2$. Dans \cite{BMT}, le fait que
  la différence soit dans $I^3$ est justifiée par le fait qu'un groupe
  symplectique n'a pas d'invariant cohomologique non constant en degré 1 et 2,
  mais on peut le justifier par un calcul explicite, qui permet
  de faire certaines remarques.

  On pose $q_\sigma = \fdiag{2}T_\sigma^+ + r\fdiag{-1}$ où $\deg(A)=2r$ ;
  c'est une forme quadratique de dimension $2r^2$.
  Dans \cite{Q97}, Quéguiner montre que $w_1(T_\sigma^+)=w_1(T_\sigma^-)=(2^r)$,
  ce qui implique que $e_1(q_\sigma)=0$ (voir la proposition \ref{prop_fixed_dim}
  pour le lien entre les invariants de Stiefel-Whitney $w_1$ et $w_2$
  et les invariants $e_1$ et $e_2$, en se rappelant que $e_1=u_1^{(1)}$
  et $e_2$ est la restriction à $I^2$ de $u_2^{(1)}$). Elle montre également que
  $w_2(T_\sigma^+) = \binom{r}{2}[A] + \binom{r}{2}(-1,-1)$, ce dont
  on déduit que $w_2(q_\sigma)=\binom{r}{2}[A]$, puis que
  \begin{align*}
    e_2(q_\sigma) &= w_2(q_\sigma) + (r^2-1)(-1)\cup w_1(q_\sigma) + \binom{r^2-2}{2}(-1,-1) \\
                  &= \binom{r}{2}[A] + \binom{r^2-2}{2}(-1,-1).
  \end{align*}
  On voit donc que, comme $T_\sigma^+-T_{\sigma'}^+$ est semblable
  à $q_\sigma-q_{\sigma'}$, $e_2(T_\sigma^+-T_{\sigma'}^+)=0$, ce qui
  justifie la définition de l'invariant (on voit ici un exemple
  où l'invariant absolu n'est que de degré 2 a priori, mais la
  partie de degré 2 ne dépend pas de $\sigma$ donc elle disparaît
  quand on considère la version relative).

  On peut aller plus loin : lorsque $r$ est divisible par 4,
  $e_2(q_\sigma)=(-1,-1)$, donc on peut poser $\phi_\sigma = q_\sigma-\pfis{-1,-1}$,
  et définir un invariant absolu par $e_3(\phi_\sigma)$. Quel que soit $r$,
  lorsque l'indice est au plus 2, on peut poser
  $\phi_\sigma = q_\sigma - \binom{r}{2}n_Q + \binom{r^2-2}{2}\pfis{-1,-1}$,
  et encore définir un invariant absolu par $e_3(\phi_\sigma)$.
  Ces cas couvrent tous ceux où le discriminant symplectique
  peut être défini de façon absolue (voir \cite{GPT}, qui utilise une
  preuve très différente). Il ne reste en réalité que le cas
  où $A$ est d'indice exactement 4 et de coindice impair, car dans ce cas
  on a $e_2(q_\sigma) = [A] + (-1,-1)$, et on ne dispose pas
  d'un relevé canonique de $[A]$ dans $I^2(K)$, ce qui nous
  empêche de faire notre construction.

  On constate en tout cas qu'on peut gérer les obstructions
  évoquées précédemment dans certains cas, soit en considérant
  des invariants relatifs, soit en éliminant les obstructions
  quand elles ne dépendent que de la dimension, soit en les éliminant
  en indice 2 quand elles ne dépendent que de la dimension et de $[Q]$,
  en utilisant la forme norme comme relevé canonique de cette classe
  de Brauer.
\end{ex}

\subsection{Invariants en indice 2}\label{sec_ind_2}

\subsubsection*{Déploiement générique}

On présente ici une méthode due à Berhuy dans \cite{Ber},
basée sur l'isomorphisme décrit au chapitre précédent (voir
(\ref{eq_depl_ind_2})) entre $M_Q^d(K)$ et $H_{nr}^d(K(Q),\mu_2)$ pour
une algèbre de quaternions $Q$ sur $K$. On présente ici
une variation de l'énoncé de Berhuy.

\label{par_fq}Soit $F$ un sous-foncteur de $I$ ou de $\Quad$, le foncteur
de $\mathbf{Field}_{/k}$ vers $\mathbf{Set}$ donné par
les classes d'isométries de formes quadratiques. On suppose
que $F$ est stable par similitude : pour tout $K/k$,
si $q\in F(K)$ alors $\fdiag{\lambda}q\in F(K)$ pour tout
$\lambda\in K^*$ ; c'est notamment le cas si $F=I^n$.
Soit $Q$ une algèbre de quaternions
sur $k$ ; à partir de $F$ et de $Q$,
on définit le foncteur $F_Q$ sur $\mathbf{Field}_{/k}$ tel que $F_Q(K)$
soit l'ensemble des classes d'isométrie
de formes anti-hermitiennes $h$ relativement à $(Q_K,\can)$
pour lesquelles, quelle que soit l'extension $L/K$ déployant $Q$,
il existe $q\in F(L)$ tel que $h_L \simeq q$ pour une certaine
équivalence de Morita (et par hypothèse sur $F$ s'il existe un
tel $q$, alors tous les autres $q$ tels que $h_L\simeq q$ sont aussi
dans $F(L)$).

\label{par_fq_alpha}Soit maintenant $\alpha\in \Inv^d(F,\mu_2)$. On définit
le sous-foncteur $F_{\alpha,Q}\subset F_Q$ par la condition
que $h\in F_Q(K)$ est dans $F_{\alpha,Q}(K)$ si et seulement si
pour tout $q\in F(L)$ tel que $h_L\simeq q$ où $L$ déploie $Q$,
pour tout $\lambda\in L^*$ on a $\alpha(\fdiag{\lambda}q)=\alpha(q)$.
Notamment, si $\alpha$ est invariant par similitude, alors
$F_{\alpha,Q}=F_Q$.

\begin{rem}\label{rem_f_invol}
  Par construction, si $(A,\sigma)$ est une algèbre à involution
  orthogonale d'indice 2 sur $K$, et
  si $Q_K$ est l'algèbre de quaternions Brauer-équivalente à
  $A$, alors cela a un sens de dire que $\sigma\in F_Q(K)$
  ou que $\sigma\in F_{\alpha,Q}(K)$, puisque $F_Q$ et $F_{\alpha,Q}$
  sont stables par similitude.
\end{rem}

Enfin, on dit que $\alpha$ est non ramifié si pour tout
corps $K$ muni d'une $k$-valuation discrète de rang 1,
si $q\in F(K)$ est non ramifiée alors $\alpha(q)\in H^d(K,\mu_2)$
est une classe non ramifiée. On arrive alors à l'énoncé
adapté de celui de Berhuy :

\begin{prop}\label{prop_ber}
  Si $\alpha$ est non-ramifié, alors il existe un unique
  $\hat{\alpha}\in \Inv(F_{\alpha,Q}, M_Q^d)$ tel que pour
  tout $K/k$ déployant $Q$, tout $h\in F_{\alpha,Q}(K)$
  et tout $q\in F(K)$ tel que $\sigma_h\simeq \sigma_q$
  on ait $\hat{\alpha}(h)=\alpha(q)$.
\end{prop}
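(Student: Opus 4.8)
Proposition \ref{prop_ber} asserts that a non-ramified cohomological invariant $\alpha$ on $F$ descends to an invariant $\hat{\alpha}$ on $F_{\alpha,Q}$ with values in $M_Q^d$, characterized by its behavior after generic splitting. Let me sketch a proof.

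Let me reconstruct the statement and the strategy.

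\medskip

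The plan is to define $\hat\alpha$ via generic splitting and then check that the resulting class is both well-defined (independent of all the choices involved) and actually descends from $H^d_{nr}(K(Q),\mu_2)$ to $M_Q^d(K)$ through the isomorphism (\ref{eq_depl_ind_2}). First I would fix $h\in F_{\alpha,Q}(K)$ and let $F=K(C)$ be the function field of the Severi-Brauer conic $C$ of $Q$, which is a generic splitting field for $Q$. Choosing a point at infinity $\infty$ gives a hermitian Morita equivalence between $(Q_F,\can)$ and $(F,\Id)$, hence an isometry $h_F\simeq q$ for some $q\in F(F)$ (using the hypothesis defining $F_Q$). I would then set $\hat\alpha(h)$ to be the class of $\alpha(q)\in H^d(F,\mu_2)$, and the first task is to verify that this lands in the unramified subgroup $H^d_{nr}(F,\mu_2)$. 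This is exactly where the non-ramification hypothesis on $\alpha$ enters: for any closed point $\mathfrak p\neq\infty$ of $C$, the form $h$ is defined over $K$ so $q$ extends to an unramified form at $v_{\mathfrak p}$ (the module $h$ itself provides an integral model), and the hypothesis forces $\alpha(q)$ to be unramified at $\mathfrak p$. The potential ramification at $\infty$ is the delicate point, and it is controlled precisely by the \emph{similitude invariance} encoded in the definition of $F_{\alpha,Q}$.

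\medskip

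The key obstacle, and the heart of the proof, is independence from the choice of the point $\infty$. If one chooses a different point $b$ in place of $a=\infty$, the two extensions $\ext_a$ and $\ext_b$ differ by multiplication by the scalar $\lambda_{a,b}\in F^*$ described in the excerpt just before (\ref{eq_w1}); concretely, $q$ gets replaced by $\fdiag{\lambda_{a,b}}q'$ for the corresponding form $q'$. Since $\alpha\in\Inv(F,\mu_2)$ and $F_{\alpha,Q}$ was defined by the requirement that $\alpha(\fdiag{\lambda}q)=\alpha(q)$ for all $q$ representing $h$ and all $\lambda$, the two classes $\alpha(q)$ and $\alpha(\fdiag{\lambda_{a,b}}q')$ agree in $H^d(F,\mu_2)$. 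This is the reason $F_{\alpha,Q}$, rather than all of $F_Q$, is the correct domain: similitude invariance along the fiber is exactly what kills the dependence on $\infty$. One also has to check independence from the auxiliary choice of $q$ itself (two forms $q,q'$ with $q\simeq q'\simeq h_F$ are isometric, so this is immediate), and independence from the generic splitting field $F$ up to the restriction maps, which follows from functoriality of $\alpha$ together with the fact that any two generic splitting fields are mutually specializable.

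\medskip

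Once $\hat\alpha(h)$ is a well-defined element of $H^d_{nr}(F,\mu_2)$, I would transport it to $M_Q^d(K)$ using the isomorphism (\ref{eq_depl_ind_2}) of \cite{KRS}, giving the desired value in $\Inv(F_{\alpha,Q},M_Q^d)$. Naturality in $K$ is then inherited from the naturality of $\alpha$, of the residue maps, and of the isomorphism (\ref{eq_depl_ind_2}), so $\hat\alpha$ is genuinely an invariant; I would verify compatibility with scalar extension on a commutative square relating the generic splitting of $A_K$ and of $A_L$ for $L/K$. Finally, uniqueness is essentially forced by the defining property: any two invariants satisfying the stated characterization agree on every $h$ after restriction to a generic splitting field $F$, and since (\ref{eq_depl_ind_2}) identifies $M_Q^d(K)$ injectively with a subgroup of $H^d_{nr}(F,\mu_2)$, agreement over $F$ implies agreement over $K$. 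I expect the main technical effort to be in the careful bookkeeping of the point-at-infinity dependence and the verification that $\alpha(q)$ is unramified away from $\infty$; both reduce to the residue computations of \cite{QT17} recalled in the previous section, combined with the similitude hypothesis.
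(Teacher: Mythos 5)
Your proposal is correct and follows essentially the same route as the paper, which for this proposition simply invokes Berhuy's proof of \cite[prop 9]{Ber} amended by the similitude condition: you correctly reconstruct that argument, with the unramifiedness away from $\infty$ coming from the residue computations of \cite{QT17} (namely $\ext_\infty(h)\in W_{nr,1,\infty}(F)$), the dependence on the point at infinity killed by the similitude invariance built into $F_{\alpha,Q}$ (via the scalar $\lambda_{a,b}$), descent through the isomorphism (\ref{eq_depl_ind_2}) of \cite{KRS}, and uniqueness from injectivity of $M_Q^d(K)\to H^d_{nr}(F,\mu_2)$. Your sketch also matches the paper's own complementary justification given after the statement, where the Witt lift $\beta$ is split into its odd part $-\tld{\beta}$ (well defined up to similitude, landing in $W_{nr,1,\infty}$) and even part $\beta+\tld{\beta}$ (landing in $W_{nr,2}$), the similitude hypothesis being exactly what makes the two resulting cohomology classes unramified and choice-independent.
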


On notera souvent $\hat{\alpha}$ simplement $\alpha$
puisque par construction ils coïncident dans les
cas où ils ont tous les deux un sens.

On peut reprendre la preuve donnée dans \cite[prop 9]{Ber}, à ceci près
que dans l'énoncé original on n'a pas de condition sur
$\alpha$ vis-à-vis des similitudes. Or il est indispensable
de rajouter une condition de ce type (éventuellement
une variante) pour avoir une compatibilité avec
les équivalences de Morita, qui ne permettent de définir
une forme quadratique à partir d'une forme anti-hermitienne
quaternionique qu'à similitude près. On donnera également
une autre justification plus loin.

\begin{rem}
  Suivant la démarche de la remarque \ref{rem_f_invol},
  on peut aussi bien définir $\hat{\alpha}$ sur des involutions
  plutôt que des formes anti-hermitiennes : si $(A,\sigma)\in F_{\alpha,Q}(K)$,
  on peut définir $\hat{\alpha}(A,\sigma)\in M_Q^d(K)$ (simplement
  en appliquant $\hat{\alpha}$ à n'importe quelle forme anti-hermitienne
  induisant $\sigma$).
\end{rem}

L'exemple naturel est $F = I^n$, qui est bien stable par similitude.
On sait alors (voir la proposition \ref{prop_ram_witt}) que tout $\alpha\in \Inv(I^n,\mu_2)$
est non ramifié. Si $\tld{\alpha}=0$, on peut alors définir
$\alpha(A,\sigma)$ pour toute algèbre à involution orthogonale
d'indice 2 \og{}génériquement dans $I^n$\fg{}.

On peut même obtenir un peu mieux : si $\alpha$ est quelconque,
alors on peut toujours définir $\tld{\alpha}(A,\sigma)$
puisque $\tld{\alpha}$ est invariant par similitude (voir la remarque
\ref{rem_inv_simil}), et
alors si $\tld{\alpha}(A,\sigma)=0$, on peut définir
$\alpha(A,\sigma)$. On peut donc interpréter $\tld{\alpha}(A,\sigma)$
comme une obstruction au fait que $\alpha(A,\sigma)$
soit défini. En effet, si $\tld{\alpha}(A,\sigma)=0$,
cela signifie que pour toute extension de déploiement
$L$ et tout $q\in I^n(L)$ tel que $\sigma_L\simeq \sigma_q$,
on a $\tld{\alpha}(q)=0$, donc $\alpha(q)$ ne dépend que
de la classe de similitude de $q$, autrement dit $\sigma\in F_{\alpha,Q}(K)$.

On donne une autre façon de comprendre ce phénomène. On
part de $\alpha\in \Inv^d(I^n,\mu_2)$, qu'on relève en un
invariant de Witt $\beta\in \Inv(I^n,I^d)$. Soit $h\in GW^-(Q,\can)$
de dimension $2r$, tel que $\sigma_h=\sigma$.
On étend les scalaires de façon à considérer
$h_{K(Q)}\in GW^-(Q_{K(Q)},\can)$, et on choisit une équivalence de Morita
entre $(Q_{K(Q)},\can)$ et $(K(Q),\Id)$ qui fait correspondre $h$ à un
certain $q\in GW(K(Q))$ (qui est donc bien défini seulement
à similitude près), dont par hypothèse la classe de Witt est
dans $I^n(K(Q))$. On pose $q'=q-r\mathcal{H}\in \hat{I}^n(K(Q))$,
et alors $\beta(q')\in \hat{I}^d(K(Q))$. 
On rappelle (voir la partie \ref{sec_quater}) que si l'équivalence de Morita choisie
correspond à un certain point fermé $\infty\in C$ où $C$
est la variété de Severi-Brauer de $Q$, alors $q'\in W_{nr,1,\infty}(K(Q))$.
De plus il est facile de voir que pour tout $i\in \N^*$,
$\lambda^i(q')$ est dans $W_{nr,1,\infty}(K(Q))$ si $i$ est impair,
et dans $W_{nr,1,\infty}(K(Q))$ si $i$ est pair. En particulier,
comme $-\tld{\beta}$ est la partie impaire de $\beta$, et
$\beta+\tld{\beta}$ est sa partie paire (voir la remarque
\ref{rem_inv_simil}), on a $\beta(q')$
somme de $-\tld{\beta}(q')\in I^{d-1}(K(Q))\cap  W_{nr,1,\infty}(K(Q))$
(qui est bien définie à similitude près)
et de $(\beta+\tld{\beta})(q')\in I^{d-1}(K(Q))\cap  W_{nr,2,\infty}(K(Q))$
(qui est bien définie).
En général, on voit que la somme des deux n'est ni dans
$W_{nr,1,\infty}(K(Q))$ ni dans $W_{nr,2,\infty}(K(Q))$, et donc
$\alpha(q)=e_d(\beta(q'))$ n'a aucune raison d'être non ramifié.
En revanche, lorsque $\tld{\beta}(q')\in I^d(K(Q))$, ce qui
correspond précisément à l'hypothèse $\tld{\alpha}(A,\sigma)=0$,
alors on peut écrire $\alpha(q) = e_d(-\tld{\beta}(q')) + e_d((\beta+\tld{\beta})(q'))$,
et chacune de ces deux classes de cohomologie est non ramifiée
qui est bien définie indépendamment de tout choix. On peut
alors définir $\alpha(A,\sigma)$ comme l'élément de $M_Q^d(K)$
correspondant.

\subsubsection*{Méthodes rationnelles}

Au lieu d'utiliser le déploiement générique pour se ramener
au cas de formes quadratiques, on peut mobiliser les méthodes
développées dans la section \ref{sec_inv_herm}, combinées au traitement
spécifique qu'on a accordé aux algèbres d'indice 2 dans le chapitre
précédent, pour directement travailler au niveau des formes
anti-hermitiennes.

Si on reprend les considérations de la partie précédente, et qu'on
suppose que $\deg(A)$ est divisible par 4, donc que $r$ est pair,
alors $q'$ provient par extension des scalaires de $h'=h-h_0\in \tld{GW}(Q,\can)$
où $h_0\in GW^-(Q,\can)$ est hyperbolique de dimension $2r$.
Il faut alors voir $q'$ comme étant dans la composante impaire
de $\tld{GW}(K(Q))=GW^\pm(K(Q))[\Zd]$, et alors si on écrit
$\beta(h')=x+y$ avec $x\in GW(K)$ et $y\in GW^-(Q,\can)$,
on obtient que $-\tld{\beta}(q') = x_{K(Q)}$, et $(\beta+\tld{\beta})(q')$
correspond à travers l'équivalence de Morita choisie à $y_{K(Q)}$.
En général, on sait par le corollaire \ref{cor_inv_herm} que la classe de Witt
de $\beta(h')$ est dans $\tld{I}_0^d(Q,\can)$, donc par la proposition
\ref{prop_ed_quater} on obtient un invariant à valeurs dans $N_Q^{d-1}(K)=
H^{d-1}(K,\mu_2)/[Q]H^{d-3}(K,\mu_2)$, qui est exactement $\tld{\alpha}(A,\sigma)$,
mais en restant au niveau du corps de base on a gardé suffisamment
de contrôle pour garantir que la classe est dans $N_Q^{d-1}(K)$
et non simplement dans $M_Q^{d-1}(K)$. De plus, si cette classe
est nulle alors à nouveau on peut définir $\alpha(A,\sigma)$
à valeurs dans $N_Q^d(K)$ et non simplement dans $M_Q^d(K)$.

Si $r$ n'est pas nécessairement pair, on ne peut pas normaliser
$h$ de la sorte, et on prend plutôt $h'=h-r\mathcal{H}\in \tld{GI}(Q,\can)$,
où $\mathcal{H}$ désigne le plan hyperbolique dans $GW(K)$.
On a alors encore $\beta(h')\in \tld{I}_0^d(Q,\can)$, donc on
en retire un invariant à valeurs dans $N_Q^{d-1}(K)$ (et s'il est nul
on peut en déduire un invariant dans $N_Q^d(K)$). En revanche ces
invariants ne coïncident plus tout à fait avec ceux obtenus
par la méthode de Berhuy à cause du choix de normalisation différent.
\\

On présente un exemple un peu détaillé, le cas de $\beta = g_1^d\in \Inv(I,I^d)$.
On part donc de $h=\fdiag{z_1,\dots,z_r}\in GW^-(Q,\can)$ et on
veut calculer la composante dans $GW(K)$ de $g_1^d(h')$
avec $h'=h-r\mathcal{H}$, ou disons son image dans $W(K)$.
Déjà on peut se limiter à $d$ pair, puisque si $d$ est impair
$g_1^d(h')\in GW^-(Q,\can)$. On utilise alors les formules de
la proposition \ref{prop_pi1_g1}, et on est donc amené à calculer
\[ q = \sum_{i=0}^d \binom{r-i}{d-i}\lambda^{2i}(h), \]
dont on sait a priori qu'il est dans $I^d(K)$, et qu'après
extension des scalaires à $K(Q)$ il donne un élément de
$I^{2d-1}(K(Q))$.

\begin{prop}\label{prop_g_quater}
  La composante symétrique de $g_1^{2d}(h-r\mathcal{H})\in \tld{I}^d(Q,\can)$
  est donnée par
  \[ \sum_{s=0}^d \sum_{i_1<\dots <i_{2s}} \fdiag{z_{i_1}}\cdots \fdiag{z_{i_{2s}}}
    \left( \sum_{t=d-2s}^{d-s}\binom{s}{d-s-t}\sum_{\substack{j_1<\dots <j_t \\ j_i,\dots,j_t\neq i_1,\dots i_{2s}}}\pfis{z_{j_1}^2,\dots,z_{j_t}^2} \right) \]
  (où la somme entre parenthèses vaut $\binom{s}{d-s}$ si $t=0$, et est nulle si $t<0$).
\end{prop}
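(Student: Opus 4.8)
The plan is to compute the symmetric (i.e.\ $+$, scalar) component of $g_1^{2d}(h-r\mathcal{H})$ by reducing to an explicit calculation of $\lambda^{2i}(h)$ and extracting the $GW(K)$-part, then reorganizing the double sum into the stated form. I start from the formula of Proposition \ref{prop_pi1_g1}, which expresses $g_1^{2d}$ of a form of reduced dimension $2r$ in terms of the $\lambda^{2i}$; since $h-r\mathcal{H}$ has virtual symmetric dimension $2r$ (the hyperbolic planes correct the dimension so that the $\lambda$-operations are the ones attached to dimension $2r$), this gives
\[ g_1^{2d}(h-r\mathcal{H}) = \sum_{i=0}^{d} \binom{r-i}{d-i}\,\lambda^{2i}(h) \]
in $\tld{W}(Q,\can)$, and only the even powers $\lambda^{2i}$ contribute to the symmetric component (the odd ones land in $W^-(Q,\can)$). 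So the whole problem is to compute the $GW(K)$-component of $\lambda^{2i}(h)$ for $h=\fdiag{z_1,\dots,z_r}$.

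The central computational step is the addition formula for $\lambda$-operations, Proposition \ref{prop_add_alt} (equivalently, the fact that $\lambda_t$ is a greek operation, Proposition \ref{prop_lambda_mixte}), which gives
\[ \lambda^{m}(\fdiag{z_1,\dots,z_r}) = \sum_{i_1<\dots<i_k} \fdiag{z_{i_1}}\cdots\fdiag{z_{i_k}}\cdot \lambda^{m-k}(\text{rest}) \]
more precisely a sum of products of the one-dimensional pieces $\lambda^1(\fdiag{z_j})=\fdiag{z_j}$ with higher $\lambda^\ell(\fdiag{z_j})$. The key algebraic input is that for a single anti-hermitian diagonal form $\fdiag{z}$ over $(Q,\can)$, the operations $\lambda^\ell(\fdiag{z})$ are controlled: one must express them using the product formula of Proposition \ref{prop_prod_quater}, $\fdiag{z_i}\cdot\fdiag{z_j}=\fdiag{-\Trd_Q(z_iz_j)}\phi_{z_i,z_j}$ and in particular $\fdiag{z_i}^2$ via the Pfister forms $\phi$, together with the multiplicativity $\phi_{z_1,\dots,z_r}\pfis{z_{r+1}^2,\dots}=\phi_{z_1,\dots,z_{r+s}}$ of Proposition-définition \ref{prop_phi_quater}. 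Grouping the terms according to how many of the $\fdiag{z_{i_j}}$ survive as genuine one-dimensional factors (the index $2s$, since an even number is needed to land in the symmetric component) and how many get absorbed into a Pfister form $\pfis{z_{j_1}^2,\dots,z_{j_t}^2}$ produces the binomial coefficient $\binom{s}{d-s-t}$ and the inner sum over disjoint index sets $j_1<\dots<j_t$.

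The main obstacle I anticipate is the bookkeeping of the two competing index sets: the $2s$ indices $i_1<\dots<i_{2s}$ that remain as diagonal factors $\fdiag{z_{i_\bullet}}$ and the $t$ indices $j_1<\dots<j_t$ (disjoint from the former) that feed the Pfister form, all while keeping the binomial coefficients $\binom{r-i}{d-i}$ from Proposition \ref{prop_pi1_g1} consistent with the coefficient $\binom{s}{d-s-t}$ appearing in the statement. Concretely, the hard part is verifying the combinatorial identity that collapses the iterated sum $\sum_i\binom{r-i}{d-i}[\lambda^{2i}]_+$ into the claimed form; I expect this to follow by a Vandermonde-type convolution once the partition of indices is fixed, and the boundary conventions ($\binom{s}{d-s}$ when $t=0$, vanishing when $t<0$) must be checked by hand. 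As usual in this setting, the cleanest route is a récurrence sur le décalage: rather than proving the formula directly, I would check that its right-hand side $\beta_{2d}(h)$ satisfies $\beta_{2d}^{+}=\beta_{2d-1}$ and the normalization $\beta_{2d}(0)=0$ characterizing the $g_1^{2d}$ via Définition \ref{def_gn}, using the addition formula for $\lambda^\ell$ to compute $\beta_{2d}(h+\fdiag{z})-\beta_{2d}(h)$; this reduces the global combinatorial identity to a single-step recursion on Pfister-form coefficients, which is the most delicate but the only genuinely non-routine verification.
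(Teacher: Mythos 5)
Your global strategy is exactly the paper's: reduce via Proposition \ref{prop_pi1_g1} to computing $\sum_{i=0}^d\binom{r-i}{d-i}\lambda^{2i}(h)$ (only the even exterior powers land in the symmetric component), expand over the orthogonal basis of $h$, regroup, and finish with a Vandermonde convolution. But the central computational input is misidentified, and this is a genuine gap. The Pfister forms $\pfis{z_{j_1}^2,\dots,z_{j_t}^2}$ of the statement are \emph{not} produced by ``absorbing'' diagonal factors through the product formula of Proposition \ref{prop_prod_quater}: that formula yields the scalars $-\Trd_Q(z_iz_j)$ and the forms $\phi_{z_i,z_j}$, which is precisely the shape of the \emph{corollary} following the proposition, whereas the proposition itself deliberately leaves the products $\fdiag{z_{i_1}}\cdots\fdiag{z_{i_{2s}}}$ unevaluated. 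What is actually needed is the value of $\lambda^2$ on a one-dimensional anti-hermitian form, namely $\lambda^2(\fdiag{z_j})=\fdiag{-z_j^2}$, a rank-one quadratic form (each $\fdiag{z_j}$ has reduced dimension $2$, hence $\lambda$-dimension $2$, so $\lambda_t(\fdiag{z_j})=1+\fdiag{z_j}t+\fdiag{-z_j^2}t^2$; compare l'exemple \ref{ex_lambda2} et l'exemple suivant la proposition \ref{prop_lambda_morita}). With this, the addition formula gives $\lambda^{2i}(h)$ as a sum over disjoint sets $I$ ($|I|=2s$, factors $\fdiag{z}$) and $J$ ($|J|=i-s$, factors $\fdiag{-z^2}$), and after exchanging sums the inner coefficient of $\fdiag{z_{i_1}}\cdots\fdiag{z_{i_{2s}}}$ is $\sum_{t=0}^{d-s}(-1)^t\binom{r-s-t}{d-s-t}\lambda^t(\psi)$ with $\psi=\fdiag{z_{j}^2}_{j\notin I}$ of dimension $r-2s$. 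The step converting this signed $\lambda$-combination into $\sum_{t=d-2s}^{d-s}\binom{s}{d-s-t}P^t(\psi)$ — where $P^t$ is the operation of Définition \ref{def_pd}, whose values are exactly the sums of Pfister forms in the statement — is the Vandermonde identity you anticipated, $\sum_t\binom{r-2s-i}{t-i}\binom{s}{d-s-t}=\binom{r-s-i}{d-s-i}$; but without identifying the $P^t$-basis, your plan has no mechanism producing the $\pfis{z_{j_1}^2,\dots,z_{j_t}^2}$ with the coefficient $\binom{s}{d-s-t}$, and following \ref{prop_prod_quater} instead would steer you toward the corollary's $\phi$-expression, not the stated formula.

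Your fallback, a récurrence sur le décalage against Définition \ref{def_gn}, is also not available here: the operators $\Phi^{\pm}$ and the uniqueness they provide (corollaire \ref{cor_phi_exact}) live in $\Inv(I^1,A)$ and constrain values on quadratic classes over field extensions after adding $\pm$ a Pfister class; they say nothing directly about the value at the mixed class $h-r\mathcal{H}\in\tld{GW}(Q,\can)$, and the perturbation you propose, $h\mapsto h+\fdiag{z}$, is not a Pfister shift — it changes $r$ to $r+1$ and hence all the coefficients $\binom{r-i}{d-i}$ in the target identity. The viable version of your idea is an induction on $r$ via the addition formula (as in the proofs of \ref{prop_pi1_g1} and \ref{prop_fixed_dim}), checking that the right-hand side at $(z_1,\dots,z_{r+1})$ decomposes as dictated by $\lambda_t(\fdiag{z_{r+1}})=1+\fdiag{z_{r+1}}t+\fdiag{-z_{r+1}^2}t^2$ — but this again requires the identity $\lambda^2(\fdiag{z})=\fdiag{-z^2}$, which your proposal never isolates.
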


\begin{proof}
  On a dans $W(K)$:
  \begin{align*}
    & \sum_{i=0}^d \binom{r-i}{d-i}\lambda^i(h) \\
    &= \sum_{i=0}^d\binom{r-i}{d-i}\sum_{s=0}^i\sum_{i_1<\dots <i_{2s}}\sum_{\substack{j_1<\dots <j_{i-s} \\ j_i,\dots,j_{i-s}\neq i_1,\dots i_{2s}}}
      \fdiag{(-z_{j_1}^2)\cdots (-z_{j_{i-s}}^2)}\fdiag{z_{i_1}}\cdots \fdiag{z_{i_{2s}}} \\
    &= \sum_{s=0}^d\sum_{i_1<\dots <i_{2s}}\fdiag{z_{i_1}}\cdots \fdiag{z_{i_{2s}}}\left(
      \sum_{t=0}^{d-s}(-1)^t\binom{r-s-t}{d-s-t}\lambda^t(\psi_{i_1,\dots,i_{2s}}) \right)
  \end{align*}
  où $\psi_{i_1,\dots,i_{2s}} = \fdiag{z_1^2,\dots,z_r^2}$ dans laquelle ne figurent
  pas les $z_{i_1}^2,\dots,z_{i_{2s}}^2$ (donc $\psi_{i_1,\dots,i_{2s}}$
  est de dimension $r-2s$). On doit donc montrer que pour une
  forme de dimension $r-2s$ on a
  \[ \sum_{t=0}^{d-s}(-1)^t\binom{r-s-t}{d-s-t}\lambda^t = \sum_{t=d-2s}^{d-s}\binom{s}{d-s-t}P^t.\]
  Or (voir \ref{def_pd}) en dimension $r-2s$
  \[ P^t = \sum_{i=0}^t (-1)^i\binom{r-2s-i}{t-i}\lambda^i, \]
  donc
  \begin{align*}
    \sum_{t=d-2s}^{d-s}\binom{s}{d-s-t}P^t &= \sum_{t=d-2s}^{d-s}\sum_{i=0}^t(-1)^i\binom{r-2s-i}{t-i}\binom{s}{d-s-t} \lambda^i \\
                                           &= \sum_{i=0}^{d-s}(-1)^i\left( \sum_{t=i}^{d-s}\binom{r-2s-i}{t-i}\binom{s}{d-s-t}\right) \lambda^i.
  \end{align*}
  Or $\sum_t\binom{r-2s-i}{t-i}\binom{s}{d-s-t}=\binom{r-s-i}{d-s-i}$
  par la formule de Vandermonde, d'où le résultat.
\end{proof}

\begin{rem}
  On retrouve sur cette formule le fait que cet élément est
  dans $I^d(K)$, et aussi qu'il est nul sur $r<2d$.
\end{rem}

On peut également écrire (voir \ref{prop_phi_quater} pour la
définition des $\phi_{z_{a_1},\dots,z_{a_p}}$) :

\begin{coro}
  Si $d \pgq2$, la forme ci-dessus est aussi égale à
  \[  \binom{r}{d}\pfis{-1}^{d-2}n_Q + \sum_{p=d}^{2d}\sum_{a_1<\dots <a_p}\phi_{z_{a_1},\dots,z_{a_p}}
    \left( \sum_{s=0}^{\lfloor p/2\rfloor}\binom{s}{d+s-p} \omega_s(z_{a_1},\dots,z_{a_p}) \right) \]
  où $\omega_s(z_1,\dots,z_p) = \sum_{i_1<\dots<i_{2s}}\fdiag{(-1)^s\Trd_Q(z_{i_1}z_{i_2})\cdots \Trd_Q(z_{i_{2s-1}}z_{i_{2s}})}$.
\end{coro}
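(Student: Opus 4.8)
Les deux membres appartenant à $W(K)$, le plan est de transformer l'expression de la proposition \ref{prop_g_quater} en développant les produits de formes diagonales, puis de regrouper les termes selon l'ensemble d'indices intervenant dans chaque forme $\phi$. Tout repose sur le lemme clé suivant : pour des quaternions purs $z_1,\dots,z_{2s}\in Q^*$, on a dans $W(K)$
\[ \fdiag{z_1}\cdots\fdiag{z_{2s}} = \fdiag{(-1)^s\Trd_Q(z_1z_2)\cdots \Trd_Q(z_{2s-1}z_{2s})}\,\phi_{z_1,\dots,z_{2s}}. \]
Je commencerais par établir ce lemme par récurrence sur $s$, le cas $s=1$ étant exactement la proposition \ref{prop_prod_quater}. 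Pour l'hérédité, la commutativité de $\tld{GW}(Q,\can)$ (théorème \ref{thm_gw}) permet d'isoler les deux derniers facteurs via $\fdiag{z_{2s-1}}\fdiag{z_{2s}} = \fdiag{-\Trd_Q(z_{2s-1}z_{2s})}\phi_{z_{2s-1},z_{2s}}$, de sorte qu'on se ramène à montrer $\phi_{z_1,\dots,z_{2s-2}}\,\phi_{z_{2s-1},z_{2s}} = \phi_{z_1,\dots,z_{2s}}$.

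C'est là le point technique principal, et de loin l'étape la plus délicate à rédiger. En écrivant $\phi_{z_{2s-1},z_{2s}} = \pfis{z_{2s-1}^2,z_{2s}^2} - n_Q$ (proposition-définition \ref{prop_phi_quater}) puis en appliquant la relation de multiplicativité $\phi_{z_1,\dots,z_{2s-2}}\pfis{z_{2s-1}^2,z_{2s}^2} = \phi_{z_1,\dots,z_{2s}}$ de cette même proposition, il reste à vérifier $\phi_{z_1,\dots,z_{2s-2}}\cdot n_Q = 0$ dans $W(K)$. En développant $\phi_{z_1,\dots,z_{2s-2}} = \pfis{z_1^2,\dots,z_{2s-2}^2} - \pfis{-1}^{2s-4}n_Q$ et en utilisant $n_Q^2 = \pfis{-1}^2 n_Q$, on se ramène à l'identité $\pfis{z_i^2}\,n_Q = \pfis{-1}\,n_Q$. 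Celle-ci se démontre exactement comme dans la preuve de \ref{prop_phi_quater} : en choisissant un quaternion pur $z_0$ anti-commutant avec $z_i$, on a $n_Q = \pfis{z_i^2,z_0^2}$, d'où $\pfis{z_i^2}n_Q = \pfis{z_i^2}^2\pfis{z_0^2} = \pfis{-1}\pfis{z_i^2}\pfis{z_0^2} = \pfis{-1}n_Q$ grâce à la relation $\pfis{a}^2 = \pfis{-1}\pfis{a}$. En itérant, $\pfis{z_1^2,\dots,z_{2s-2}^2}n_Q = \pfis{-1}^{2s-2}n_Q$, ce qui annule bien $\phi_{z_1,\dots,z_{2s-2}}\cdot n_Q$.

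Une fois le lemme acquis, la fin est purement combinatoire. Dans chaque terme de la proposition \ref{prop_g_quater}, le produit $\fdiag{z_{i_1}}\cdots\fdiag{z_{i_{2s}}}$ devient le coefficient scalaire consécutif fois $\phi_{z_{i_1},\dots,z_{i_{2s}}}$, et le facteur $\pfis{z_{j_1}^2,\dots,z_{j_t}^2}$ s'absorbe via \ref{prop_phi_quater} pour donner $\phi$ sur l'ensemble réunion $\{a_1,\dots,a_p\}$ de cardinal $p=2s+t$. Je regrouperais alors selon cet ensemble : pour $p$ fixé, la somme des coefficients scalaires sur tous les $2s$-sous-ensembles de $\{a_1,\dots,a_p\}$ (chacun apparié consécutivement) est précisément $\omega_s(z_{a_1},\dots,z_{a_p})$, et la substitution $t=p-2s$ transforme le coefficient $\binom{s}{d-s-t}$ en $\binom{s}{d+s-p}$, ce qui redonne exactement le coefficient $D_p$ voulu. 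Les bornes $d\ppq p\ppq 2d$ résultent de l'intervalle de variation de $t$ dans \ref{prop_g_quater}.

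Enfin, le terme constant $\binom{r}{d}\pfis{-1}^{d-2}n_Q$ provient uniquement des termes $s=0$, à savoir $\sum_{j_1<\dots<j_d}\pfis{z_{j_1}^2,\dots,z_{j_d}^2}$ : ce sont les seuls où aucun facteur diagonal n'intervient, de sorte que la conversion $\pfis{z_{j_1}^2,\dots,z_{j_d}^2} = \phi_{z_{j_1},\dots,z_{j_d}} + \pfis{-1}^{d-2}n_Q$ fait apparaître $\binom{r}{d}$ copies de $\pfis{-1}^{d-2}n_Q$, tandis que la part $\phi$ alimente le terme $p=d$, $s=0$ de la cible (où $\omega_0=\fdiag{1}$). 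Pour tous les termes avec $s\pgq 1$, l'absorption des facteurs de Pfister par \ref{prop_phi_quater} étant exacte, aucune contribution supplémentaire en $n_Q$ n'apparaît, ce qui achève l'identification des deux membres.
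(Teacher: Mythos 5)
Votre démonstration est correcte et suit pour l'essentiel la même démarche que celle du texte : réécriture des produits $\fdiag{z_{i_1}}\cdots\fdiag{z_{i_{2s}}}$ comme multiples scalaires de $\phi_{z_{i_1},\dots,z_{i_{2s}}}$, absorption des facteurs $\pfis{z_{j_1}^2,\dots,z_{j_t}^2}$ par la multiplicativité de la proposition \ref{prop_phi_quater}, regroupement selon $p=2s+t$, et traitement à part des termes $s=0$ qui fournissent le $\binom{r}{d}\pfis{-1}^{d-2}n_Q$. Vous y ajoutez une justification (récurrence reposant sur l'annulation $\phi_{z_1,\dots,z_{2s-2}}\cdot n_Q=0$ dans $W(K)$) de l'identité-clé que le texte se contente d'écrire sans preuve, ce qui est un complément bienvenu mais ne change pas la nature de l'argument.
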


\begin{proof}
  On part de la formule de la proposition, en écrivant
  \[ \fdiag{z_{i_1}}\cdots \fdiag{z_{i_{2s}}}=\fdiag{(-1)^s\Trd_Q(z_{i_1}z_{i_2})\cdots \Trd_Q(z_{i_{2s-1}}z_{i_{2s}})}\phi_{z_{i_1},\dots,z_{i_{2s}}}. \]
  On pose $p=2s+t$, et
  $\{a_1,\dots,a_p\}=\{i_1,\dots, i_{2s}\}\cup \{j_1,\dots, j_t\}$.
  Alors si $s>0$ on a
  \[ \pfis{z_{j_1}^2,\dots,z_{j_t}^2}\phi_{z_{i_1},\dots,z_{i_{2s}}} = \phi_{z_{a_1},\dots,z_{a_p}}, \]
  tandis que si $s=0$ (auquel cas $t=p=d$),
  \[ \pfis{z_{j_1}^2,\dots,z_{j_d}^2} = \pfis{-1}^{d-2}n_Q+\phi_{z_{j_1},\dots,z_{j_d}}. \]
  Le cas $s=0$ correspond à $\binom{r}{d}$ termes (donnés par chaque
  choix des $j_1,\dots,j_d$), ce qui donne le $\binom{r}{d}\pfis{-1}^{d-2}n_Q$.
  Les autres termes correspondent simplement au regroupement
  donné en sommant d'abord sur $p$ puis sur $s$.
\end{proof}

\begin{rem}\label{rem_gd_ind_2}
  On constate que l'invariant $e_d$ de la forme est
  \[ \binom{r}{d}(-1,\dots,-1)\cup[Q]. \]
  En effet, il correspond au terme $\binom{r}{d}\pfis{-1}^{d-2}n_Q$,
  puisque les termes pour $p>d$ sont dans $I^p$ donc ne contribuent pas,
  et devant chaque $\phi_{z_{a_1},\dots,z_{a_d}}\in I^d$ on a $\sum_{s=0}^{\lfloor d/2\rfloor}\omega_s(z_{a_1},\dots,z_{a_d})$ qui est de dimension $\sum _{s=0}^{\lfloor d/2\rfloor}\binom{d}{2s}=2^{d-1}$,
  donc est au moins dans $I$.

  On en déduit que la version relative de cet invariant est au moins
  à valeur dans $I^{d+1}$, donc on peut en déduire un invariant relatif
  de degré au moins $d+1$ ; de plus, on peut soustraire $\binom{r}{d}\pfis{-1}^{d-2}n_Q$
  à cette forme, ce qui nous donne un invariant absolu directement
  à valeurs dans $I^{d+1}$.
\end{rem}

\section{Autour de l'équivalence entre $A_3$ et $D_3$}\label{sec_a3_d3}

Dans cette partie on souhaite calculer de façon explicite l'équivalence
entre algèbres de type $A_3$ et $D_3$. On commence pour cela par expliquer
comment on peut définir une involution sur un produit croisé.

\subsection{Produits croisés à involution}

On veut donc décrire à l'aide d'un produit croisé
une algèbre à involution. Pour couvrir les cas qui nous intéressent,
on doit inclure les involutions unitaires, et donc potentiellement
des algèbres qui ne sont pas simples dans le cas où l'extension
quadratique centrale est déployée. De plus, on veut autoriser
la sous-algèbre galoisienne donnant le produit croisé à être
seulement étale et non un corps.

\label{par_etale}On notera pour cette partie $k$ notre corps de base, et $K$
sera simplement une algèbre étale (généralement quadratique)
sur $k$, et non plus une extension de corps comme dans le
reste de la thèse. On rappelle que si $K$ est une $k$-algèbre étale,
on a $K = \prod_{\p\in \Spec(K)}K_\p$, où $K_\p$ est une
extension finie séparable de $k$.

\subsubsection*{Algèbres galoisiennes}

On commence par quelques rappels sur les algèbres galoisiennes quand l'anneau de base 
est une algèbre étale sur un corps.

\begin{defi}
Soit $K$ une $k$-algèbre étale. Soient $L$ une $K$-algèbre étale 
et $G$ un sous-groupe de $\Aut_K(L)$. On dit que $L$ est $G$-galoisienne sur $K$
si pour tout $\p \in \Spec(K)$, $L_\p/K_\p$ est étale de degré $|G|$ et l'action de $G$ sur
$L_\p$ est fidèle.
\end{defi}

La proposition suivante résume les propriétés élémentaires des
algèbres galoisiennes dans ce contexte (voir \cite{FP} pour des
preuves).

\begin{prop}
  $L$ est $G$-galoisienne sur $K$ si et seulement s'il existe $K'_\p/K_\p$
  extension galoisienne de corps de groupe $H_\p\subset G$ telle que
  $L_\p = \Ind_{H_\p}^G(K'_\p)$, \ie{} $L_\p=(K'_\p)^{[G:H_\p]}$ en tant que $K_\p$-algèbre, 
  et $G$ agit sur $L_\p$ en permutant les copies de $K'_\p$ selon l'action
  régulière sur $G/H_\p$ et en agissant comme $H_\p$ sur les copies de $K'_\p$.

  L'application $H\mapsto L^H$ donne une correspondance bijective
  entre les sous-groupes de $G$ et les sous-algèbres $M$ de $L$ telles
  que $M_\p/K_\p$ est séparable et chaque élément de $G$
  agit soit trivialement sur $M$, soit non-trivialement sur chaque $M_\p$ 
  (sans forcément les stabiliser).

  De plus, si $H\subset G$, $L$ est $H$-galoisienne sur $L^H$.
\end{prop}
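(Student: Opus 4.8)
The whole statement is local over $\Spec(K)$, and this is the first reduction I would make. Since $G$ acts by $K$-linear automorphisms it fixes $K$ pointwise, hence fixes the primitive idempotents of $K$ and preserves the canonical decomposition $L = \prod_{\p} L_\p$; moreover each $L_\p$ is a $G$-stable étale $K_\p$-algebra. As the three assertions are phrased $\p$ by $\p$ (degree, fidelity, separability of the $M_\p$, and the action of $G$ on each $M_\p$), it suffices to treat each factor separately, so I may and will assume from now on that $K = K_\p$ is a field, with separable closure $K_s$ and absolute Galois group $\Gamma_K = \Gal(K_s/K)$.

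For the first assertion in the field case, the plan is to identify $L$ with a $G$-torsor over $K$ and to invoke the classification of such torsors. Base change to $K_s$ trivialises the étale algebra: $L \otimes_K K_s \simeq \mathrm{Map}(X, K_s)$, where $X = \Hom_{K\text{-alg}}(L, K_s)$ is a finite set of cardinality $\dim_K L = |G|$ carrying commuting actions of $\Gamma_K$ (through $K_s$) and of $G$ (through the $G$-action on $L$). The Galois hypothesis is exactly what forces $G$ to act simply transitively on $X$: the action is free, and a free action of $G$ on a set of size $|G|$ has a single orbit, hence is regular. Fixing a base point $x_0 \in X$ and using that the $\Gamma_K$-action commutes with the regular $G$-action, I would define $\rho \colon \Gamma_K \to G$ by the rule that $\rho(\gamma)$ is the unique element with $\gamma \cdot x_0 = \rho(\gamma) \cdot x_0$; commutation of the two actions makes $\rho$ a well-defined continuous homomorphism. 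Setting $H = \rho(\Gamma_K)$ and $K' = (K_s)^{\ker \rho}$, the field $K'$ is $H$-Galois over $K$, and unwinding the identification of $L$ with the $\Gamma_K$-invariants of $\mathrm{Map}(X, K_s)$ yields $L \simeq \Ind_H^G(K')$, with $G$ permuting the $[G:H]$ copies through $G/H$ and acting as $H$ inside each copy. The converse, that every $\Ind_H^G(K')$ satisfies the defining conditions, is a routine degree count together with a fidelity check.

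The third assertion, that $L$ is $H$-Galois over $L^H$ for any $H \subset G$, I would obtain most robustly from the standard ring-theoretic criterion: $L$ is $G$-Galois over $K$ if and only if the map $L \otimes_K L \to \prod_{g \in G} L$, $x \otimes y \mapsto (x\, g(y))_g$, is an isomorphism; restricting the product over $G$ to a product over $H$ gives the same criterion over $L^H$, so $L$ is $H$-Galois over $L^H$. The second assertion, the correspondence $H \mapsto L^H$, then follows from the fundamental theorem of Galois theory for Galois algebras (Chase--Harrison--Rosenberg, see \cite{FP}) combined with the third: $H \mapsto L^H$ is a bijection onto the separable $K$-subalgebras satisfying the appropriate stability condition, and I would check that this condition is precisely the stated one (each $g \in G$ acts trivially on $M$, or non-trivially on every factor $M_\p$) by translating it, orbit by orbit on $X$, into the alternative $g \in H$ versus $g \notin H$ on the relevant cosets.

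The main obstacle will be the step establishing that the $G$-action on $X$ is free, hence regular: this is where the geometric content of the hypotheses \og{}étale de degré $|G|$ et fidèle\fg{} must genuinely be used, since fidelity has to be promoted to freeness on geometric points, and it is this torsor condition — rather than fidelity alone — that the definition is meant to capture. Everything else is bookkeeping: tracking the two commuting actions through the base change to $K_s$ and reassembling the étale base $K$ from its factors $K_\p$, the only delicate point there being the matching, in the second assertion, of the intrinsic description of the admissible subalgebras with the fixed-ring subalgebras produced by the correspondence.
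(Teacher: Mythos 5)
First, a point of comparison: the paper offers no proof of this proposition at all — it is explicitly stated as a summary of standard facts, with proofs deferred to \cite{FP} — so your proposal can only be judged on its own soundness. There it has a genuine gap, precisely at the step you yourself flag as ``the main obstacle'': the freeness of the $G$-action on $X = \Hom_{K\text{-alg}}(L,K_s)$. You assert that the Galois hypothesis ``is exactly what forces $G$ to act simply transitively on $X$'', but the hypothesis as literally written in the paper — $L_\p/K_\p$ étale of degree $|G|$ with \emph{faithful} $G$-action — does not imply freeness, and no promotion of fidelity will produce it: take $K$ a field, $G=\Zd\times\Zd=\langle s,t\rangle$, and $L=K^4$ with $s$ and $t$ acting by the permutations $(1\,2)$ and $(3\,4)$. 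This is étale of degree $4=|G|$ with faithful action, yet $X$ has two orbits of size $2$, $L^G=K\times K\neq K$, and $L$ is not of the form $\Ind_H^G(K')$ (for any induced algebra $X$ is a regular $G$-set). So under the face-value reading the first assertion is simply false; the definition has to be understood as carrying the stronger torsor-type condition (equivalently $L^G_\p=K_\p$, or the isomorphism $L\otimes_K L\isom \prod_{g\in G}L$), and then freeness of $X$ is immediate by taking spectra in that isomorphism. Your sketch is also internally inconsistent on exactly this point: for the third assertion you invoke the Chase--Harrison--Rosenberg tensor criterion as \emph{the} characterization of Galois, while for the first you argue from the fidelity definition; the equivalence of the two is precisely what was never established, and once you grant the tensor criterion, the hard part of assertion 1 evaporates.

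A second, smaller issue concerns your opening reduction. It is fine for assertions 1 and 3, but assertion 2 is not componentwise over $\Spec(K)$: the bijection is with subgroups of the single group $G$, and it is exactly the coupled condition (each $g$ acts trivially on \emph{all} of $M$ or nontrivially on \emph{every} component) that forces the local pointwise stabilizers $\{g : g|_{M_\p}=\mathrm{id}\}$ to agree across components. Moreover the admissible subalgebras need not be $G$-stable (``sans forcément les stabiliser''), so ``acts nontrivially on $M_\p$'' must be read as comparing $g|_{M_\p}$ with the inclusion into $L$, and the indexing must be by the primitive idempotents of $M$ rather than those of $K$ — otherwise the condition is vacuous over a field and, e.g., $M=K'\times K\subset K'\times K'$ (with $K'/K$ quadratic and the swap/conjugation action of $\Zd\times\Zd$) would be admissible without being a fixed ring. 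This matching with the ``strongly distinct'' condition of the Chase--Harrison--Rosenberg correspondence, together with the freeness step above, is where the mathematical content of the proposition actually sits; calling it bookkeeping underestimates it. The remainder of your outline (descent to $K_s$, the continuous homomorphism $\rho$, $K'=(K_s)^{\ker\rho}$, and the identification $L\simeq\Ind_H^G(K')$) is sound once those two points are repaired.
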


Si pour tout $\p\in \Spec(K)$, $L_\p/K_\p$ est une extension de corps,
alors $L$ est $G$-galoisienne sur $K$ si et seulement si pour tout $\p\in \Spec(K)$,
$L_\p/K_\p$ est galoisienne au sens usuel des extensions de corps,
de groupe de Galois $G$, et $G$ agit diagonalement sur $L$.
En particulier dans ce cas $G$ est entièrement déterminé par $L/K$ ; le fait d'être
galoisienne est juste une propriété de l'algèbre.

Mais dans le cas général, $L/K$ peut être galoisienne pour différents $G$ non isomorphes, y compris
lorsque $K$ est un corps. En effet, si $K'/K$ est une extension galoisienne de corps, 
alors le théorème montre que pour tout $G$ tel que $H$ soit un sous-groupe d'indice $r$, 
l'algèbre $L=(K')^r$ est $G$-galoisienne avec l'action de permutation de $G$ sur $G/H$ 
(l'idée étant que $\Aut_K(L)$ est gros~: il contient au moins le groupe symétrique $S_r$).

\subsubsection*{Une version de Skolem-Noether}

L'ingrédient essentiel de la construction des produits croisés
est le théorème de Skolem-Noether, qui permet à partir d'un sous-corps
galoisien d'une algèbre simple d'obtenir des éléments qui agissent
par conjugaison comme le groupe de Galois. Or on sait qu'en général
on n'a pas d'équivalent du théorème de Skolem-Noether pour des sous-algèbres
semi-simples d'algèbres d'Azumaya (même sur un corps).

\begin{ex}\label{ex_plong_conjug}
Les deux plongements  $(a,b)\mapsto diag(a,a,b)$ et $(a,b)\mapsto diag(a,b,b)$
de $k^2$ dans $M_3(k)$ ne sont pas conjugués.
\end{ex}

On va donc donner un énoncé un peu plus fin pour travailler avec des
sous-algèbres étales (plus fin que ce dont on a strictement besoin, par ailleurs).

\begin{defi}
  Soit $K$ une $k$-algèbre étale, et soit $E$ une algèbre
  d'Azymaya sur $K$. Soit $M$ une sous-$K$-algèbre séparable de $E$, avec
  $L=Z(M)$. Si $\p \in \Spec(L)$, on note $\mathfrak{q} \in \Spec(K)$ l'idéal
  sous $\p$, et on pose $C_\p=C_{E_\mathfrak{q}}(M_\p)$ (pour clarifier les
  notations, qui peuvent s'avérer encombrantes, $E_\mathfrak{q}$ est une algèbre simple centrale sur le corps
  $K_\mathfrak{q}$, et $M_\p$, algèbre simple centrale sur $L_\p$,
  est une des composantes directes de $M_\mathfrak{q}$, qui est une algèbre d'Azumaya
  sur $L_\mathfrak{q}$).
  
  On définit alors la \emph{signature} du plongement $M\to E$ comme
  la fonction $\Spec(L)\to \N$ donnée par $\p\mapsto \deg(C_\p)$.
\end{defi}

La signature dépend en général du plongement particulier de $M$
dans $E$, mais on voit facilement que deux plongements
conjugués ont la même signature, ce qui justifie la validité de l'exemple
\ref{ex_plong_conjug} (les signatures des deux plongements
étant respectivement de la forme $(2,1)$ et $(1,2)$).
Réciproquement :

\begin{prop}
  Soit $K$ une $k$-algèbre étale, et soit $E$ une algèbre
  d'Azumaya sur $K$. Soit $M$ une $K$-algèbre séparable, et
  soient $j_1,j_2:M \To E$ deux $K$-plongements. S'ils ont
  la même signature, alors $j_1$ et $j_2$ sont conjugués
  dans $E$.
\end{prop}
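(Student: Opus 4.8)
The statement is a Skolem–Noether type result for separable subalgebras of an Azumaya algebra over an étale base, where the obstruction illustrated by Example \ref{ex_plong_conjug} is precisely captured by the signature. My plan is to reduce to the classical case over a field by localizing at the primes of $\Spec(K)$, then reassemble. First I would observe that since $K$ is étale over $k$, we have $K = \prod_{\mathfrak{q}\in\Spec(K)}K_\mathfrak{q}$, and $E = \prod_\mathfrak{q} E_\mathfrak{q}$ with each $E_\mathfrak{q}$ a central simple algebra over the field $K_\mathfrak{q}$. Likewise $M$ decomposes as $\prod_\mathfrak{q} M_\mathfrak{q}$, and each $M_\mathfrak{q}$, being separable over $K_\mathfrak{q}$, is a finite product of central simple algebras over finite separable field extensions $L_\p/K_\mathfrak{q}$ indexed by the primes $\p\in\Spec(L)$ lying over $\mathfrak{q}$. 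Since a $K$-embedding $j:M\to E$ is just a collection of $K_\mathfrak{q}$-embeddings $j_\mathfrak{q}:M_\mathfrak{q}\to E_\mathfrak{q}$, and conjugacy can be checked componentwise, the whole problem reduces to the case where $K=k$ is a field.

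\textbf{The field case.} So I would fix $K=k$ a field, $E$ central simple over $k$, and $M$ a separable $k$-algebra with two embeddings $j_1,j_2:M\to E$. Write $M = \prod_{i} M_i$ as a product of central simple algebras over separable field extensions $L_i$ of $k$; the primes $\p\in\Spec(L)$ correspond to the indices $i$, and the value of the signature at $\p_i$ is $\deg(C_{E}(j_s(M_i)))$. The key classical input is the \emph{double centralizer theorem}: for a simple subalgebra $S$ of a central simple algebra $E$, one has $\dim_k(S)\cdot\dim_k(C_E(S)) = \dim_k E$, and the centralizer determines $S$ up to conjugacy once its isomorphism class is fixed — more precisely, two $k$-embeddings of a \emph{simple} algebra into $E$ are conjugate if and only if the centralizers of their images are isomorphic. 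For a product $M = \prod_i M_i$, an embedding is equivalent to an orthogonal decomposition of $1\in E$ into idempotents $e_i = j(1_{M_i})$ together with embeddings $M_i\hookrightarrow e_iEe_i$.

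\textbf{Reassembling via the idempotents.} The main obstacle is precisely the bookkeeping of the idempotents: two embeddings $j_1,j_2$ of $M=\prod_i M_i$ send the central idempotents $1_{M_i}$ to idempotents $e_i^{(1)}, e_i^{(2)}$ in $E$, and I must show that when the signatures agree, these two systems of idempotents are conjugate, after which each simple block can be handled by classical Skolem–Noether. The point is that the corner algebra $e_i^{(s)}Ee_i^{(s)}$ is a central simple algebra whose degree is determined by the rank of $e_i^{(s)}$, and the signature value $\deg(C_\p)$ at the prime corresponding to $i$ controls exactly this data: by the double centralizer relation applied inside $E$, the degree of the centralizer $C_E(j_s(M_i))$ together with $\deg M_i$ pins down $\dim_k(e_i^{(s)}Ee_i^{(s)})$, hence the rank of $e_i^{(s)}$. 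Thus equal signatures force the two idempotent systems $(e_i^{(1)})_i$ and $(e_i^{(2)})_i$ to have the same ranks componentwise, so they are conjugate by some $u\in E^*$ (two decompositions of unity into idempotents of equal ranks in a central simple algebra are conjugate). After conjugating $j_2$ by this $u$ I may assume $e_i^{(1)}=e_i^{(2)}=:e_i$ for all $i$, reducing to the case of a single simple block $M_i\hookrightarrow e_iEe_i$ with matching centralizer degrees, which is settled by the classical Skolem–Noether theorem for simple subalgebras. Composing all these conjugations (which can be done simultaneously since they act on disjoint corners $e_iEe_i$, commuting up to the single global conjugation by $u$) yields the desired $g\in E^*$ with $j_2 = g\,j_1\,g^{-1}$.
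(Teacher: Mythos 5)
Your proof is correct, but it follows a genuinely different route from the paper's. The paper runs the classical bimodule argument globally, exactly as in the field case of Skolem--Noether: it forms $A = M\otimes_K E^{op}$, an Azumaya algebra over $Z(M)$, endows $E$ with two $A$-module structures $(a\otimes b)\cdot_i x = j_i(a)xb$, identifies $\End_A(E_i)$ with $C_E(j_i(M))$, deduces from the equality of signatures that these endomorphism algebras coincide component by component over $\Spec(Z(M))$, hence that $E_1\simeq E_2$ as $A$-modules, and reads the conjugating unit off as $\phi(1)$ for such a module isomorphism $\phi$. You instead localize to the field case, split $M$ into its simple blocks, and do the bookkeeping by hand: a double-centralizer dimension count showing that the signature determines the ranks of the idempotents $j_s(1_{M_i})$, conjugation of the two systems of idempotents, the classical Skolem--Noether theorem in each corner $e_iEe_i$, and assembly of the block units into the global unit $\sum_i u_i\in E^*$. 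The paper's argument buys brevity and uniformity --- the module theory over $M\otimes_K E^{op}$ treats the étale base, the non-simple $M$ and the idempotents all at once, and exhibits the signature as exactly the isomorphism class of the module $E_i$ --- while yours buys elementarity and transparency: the only black box is the field-case Skolem--Noether theorem, and the obstruction encoded by the signature becomes concrete (the ranks of the idempotents), which is precisely what Example \ref{ex_plong_conjug} illustrates. Your componentwise treatment also makes explicit a point the paper glosses over: the claim that two modules over a separable algebra with isomorphic endomorphism algebras are isomorphic holds only componentwise over the centre (consider $k\times k$ and its two non-isomorphic simple modules), which is harmless in both proofs because signature equality is, by definition, componentwise. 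One detail you should state more carefully: the double-centralizer count must be performed inside the corner $e_i^{(s)}Ee_i^{(s)}$ rather than in $E$ itself, since the centralizer of $j_s(M_i)$ in $E$ also contains the opposite corner $(1-e_i^{(s)})E(1-e_i^{(s)})$ and is not simple; this is also the reading of $\deg(C_\p)$ under which the signature is well defined. Relatedly, your parenthetical ``if and only if the centralizers are isomorphic'' for embeddings of a \emph{simple} algebra is redundant: for a simple subalgebra of a central simple algebra, any two embeddings are conjugate with no hypothesis, which is what your block-by-block step actually uses.
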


\begin{proof}
  On va imiter la preuve de la version classique (voir par exemple
  \cite[thm 4.9]{Jac}).
  On pose $A= M\otimes_K E^{op}$, qui est une algèbre
  d'Azumaya sur $Z(M)$. On définit alors deux actions de $A$
  sur $E$ : $(a\otimes b)\cdot_i x = j_i(a)xb$ pour $i=1,2$ ;
  on note $E_i$ pour désigner $E$ muni de la structure de $A$-module
  correspondante. Alors $\End_A(E_i)=C_E(j_i(M))$ de façon naturelle,
  et par hypothèse sur les signatures ces deux algèbres sont isomorphes. Or pour une
  algèbre séparable sur un corps (ici $k$), deux modules ayant des algèbres
  d'endomorphismes isomorphes sont isomorphes. On a donc un
  isomorphisme $K$-linéaire $\phi: E\to E$ telle que
  $\phi(j_1(a)xb)=j_2(a)\phi(x)b$ pour tous $a\in M$, $x,b\in E$.
  En prenant $a=1$, on voit que $\phi(xb)=\phi(x)b$ donc
  $\phi(x) = ex$ pour $e=\phi(1)\in E^*$,
  et en prenant $x=b=1$ on obtient $ej_1(a)=j_2(a)e$, donc $j_1$ et $j_2$
  sont conjugués par $e$.
\end{proof}

Dans la situation de la proposition, si $M=L$ est commutative
(donc étale sur $K$), on dit qu'elle est \emph{strictement maximale}
si pour tout $\p\in \Spec(K)$, $\dim_{K_\p}(L_\p)=\deg(E_\p)$ (autrement
dit elle est de dimension maximale possible étant donné le
degré local de $E$).

\begin{coro}\label{cor_sko}
  Soit $K$ une $k$-algèbre étale, et soit $E$ une algèbre
  d'Azumaya sur $K$. Soit $L\subset E$ une sous-$K$-algèbre
  étale strictement maximale. Alors tout $K$-automorphisme
  de $L$ s'étend en un automorphisme intérieur de $E$, et
  l'élément réalisant la conjugaison est unique à multiplication
  par $L^*$ près.
\end{coro}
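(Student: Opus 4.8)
Ce corollaire se déduit presque formellement de la proposition précédente (le Skolem--Noether étale). Le plan est d'appliquer cette proposition aux deux $K$-plongements de $L$ dans $E$ donnés respectivement par l'inclusion $\iota : L\inj E$ et par $\iota\circ\theta$, où $\theta\in\Aut_K(L)$ est l'automorphisme que l'on cherche à prolonger. Comme $\theta$ est un automorphisme, on a $\iota\circ\theta(L)=\iota(L)=L$ en tant que sous-algèbre de $E$: les deux plongements ont donc exactement la même image.

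Je montrerais d'abord que, du fait de la stricte maximalité de $L$, la signature de tout $K$-plongement de $L$ dans $E$ est la fonction constante égale à $1$. En effet, pour $\p\in\Spec(L)$ au-dessus de $\mathfrak{q}\in\Spec(K)$, le facteur $L_\p$ est une sous-algèbre étale du facteur simple central $E_\mathfrak{q}$, et la condition $\dim_{K_\mathfrak{q}}(L_\mathfrak{q})=\deg(E_\mathfrak{q})$ force le centralisateur correspondant à être de degré $1$. En particulier les signatures de $\iota$ et de $\iota\circ\theta$ coïncident (c'est ici que la stricte maximalité est indispensable: sans la constance de la signature, la permutation des idéaux premiers induite par $\theta$ empêcherait de conclure directement). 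La proposition précédente fournit alors un élément $e\in E^*$ tel que $\iota\circ\theta=\Inn(e)\circ\iota$, c'est-à-dire $\theta(a)=eae^{-1}$ pour tout $a\in L$ (en identifiant $L$ à son image par $\iota$). Autrement dit, l'automorphisme intérieur $\Inn(e)$ de $E$ prolonge $\theta$, ce qui établit la première assertion.

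Pour l'unicité, je considérerais deux éléments $e,e'\in E^*$ réalisant tous deux la conjugaison $\theta$. De $eae^{-1}=e'ae'^{-1}$ pour tout $a\in L$ on tire que $e^{-1}e'$ commute à tout $L$, donc $e^{-1}e'\in C_E(L)$. Le point clé est alors que la stricte maximalité entraîne $C_E(L)=L$: localement, $C_{E_\mathfrak{q}}(L_\mathfrak{q})$ est semi-simple et le théorème du double centralisateur donne $\dim_{K_\mathfrak{q}}C_{E_\mathfrak{q}}(L_\mathfrak{q})\cdot\dim_{K_\mathfrak{q}}L_\mathfrak{q}=\deg(E_\mathfrak{q})^2$; comme $\dim_{K_\mathfrak{q}}L_\mathfrak{q}=\deg(E_\mathfrak{q})$ et $L_\mathfrak{q}\subset C_{E_\mathfrak{q}}(L_\mathfrak{q})$, on obtient l'égalité, puis $C_E(L)=L$ en recollant sur $\Spec(K)$. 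Ainsi $e^{-1}e'\in L$, et étant inversible il appartient à $L^*$, d'où $e'\in eL^*$.

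La principale difficulté technique réside dans le traitement soigneux du cas où $L$ n'est pas un corps mais une algèbre étale quelconque: les localisés $L_\mathfrak{q}$ se décomposent alors en produits de facteurs $L_\p$, et il faut manier le théorème du double centralisateur pour des sous-algèbres séparables non nécessairement connexes, en suivant précisément ces décompositions, afin de justifier que la signature vaut identiquement $1$ et que $C_E(L)=L$. Tout le reste de l'argument est formel une fois la proposition précédente acquise.
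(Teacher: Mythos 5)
Votre démonstration est correcte et suit essentiellement la même démarche que celle du texte : on applique la proposition précédente aux deux plongements $\iota$ et $\iota\circ\theta$, la maximalité stricte garantissant $C_E(L)=L$ quel que soit le plongement, d'où l'égalité des signatures puis l'unicité de l'élément conjugant à un facteur de $C_E(L)^*=L^*$ près. Vous explicitez simplement, via un comptage de dimensions de type double centralisateur recollé sur $\Spec(K)$, le fait que $C_E(L)=L$ et que la signature vaut identiquement $1$, points que le texte se contente d'affirmer.
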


\begin{proof}
  Un automorphisme de $L$ définit un deuxième plongement
  de $L$ dans $E$ en le composant avec l'inclusion naturelle.
  Or comme $L$ est strictement maximale, on doit avoir $C_E(L)=L$
  quel que soit le plongement, donc la signature est nécessairement
  la même. Ces deux plongements sont donc conjugués dans $E$,
  ce qui revient exactement à dire que l'automorphisme de $L$
  se prolonge en un automorphisme intérieur. L'élément réalisant
  la conjugaison est déterminé à un facteur dans $C_E(L)$ près,
  mais $C_E(L)=L$.
\end{proof}

\subsubsection*{Produits croisés}

On se donne $K$ une algèbre étale sur $k$, $E$ une algèbre
d'Azumaya sur $K$, et $L\subset E$ une sous-$K$-algèbre
$G$-galoisienne strictement maximale. On dit alors que
$E$ est un produit croisé de $L$ par $G$.

\label{par_ug}On peut alors appliquer le corollaire \ref{cor_sko} pour
imiter la situation classique et choisir pour tout $g\in G$
un élément $u_g\in E^*$ qui induit par conjugaison
l'automorphisme $g$ sur $L$. On vérifie en regardant la
dimension que $E = \bigoplus_{g\in G}L\cdot u_g$.

On a $u_g$ bien défini à multiplication par un élément de $L^*$ près,
donc en particulier, comme la conjugaison par $u_gu_h$ induit
l'automorphisme $gh$, on a $\alpha(g,h)\in L^*$ tel que
\begin{equation}\label{eq_alpha_gh}
   u_gu_h = \alpha(g,h)u_{gh} 
\end{equation}
et on vérifie comme dans le cas usuel que l'associativité
de $E$ est équivalente au fait que $\alpha$ soit un 2-cocycle
de $G$ à valeurs dans $L^*$, et qu'un choix différent de $u_g$
donne un cocycle cohomologue de la façon naturelle,
et ainsi on obtient avec exactement
la même preuve la bijection usuelle entre $H^2(G,L^*)$
et les classes d'isomorphismes de produits croisés de $L$
par $G$ (le seul ingrédient supplémentaire est la version
du théorème de Skolem-Noether énoncée précédemment).

Notons qu'on prendra toujours $u_1=1$, ce qui correspond
au fait que le cocycle soit normalisé.

\subsubsection*{Descente}

En gardant les mêmes notations, soit $V$ un $E$-module
à droite, et soit $A=\End_E(V)$. Alors $V_L$ est
un $A_L$-$E_L$-bimodule simple ; de plus, $E\otimes_K L$
agit à gauche sur $E$ par $(a\otimes \lambda)\cdot x=ax\lambda$,
et $A\otimes_K L$ agit à gauche sur $V$ par
$(a\otimes \lambda)\cdot v =av\lambda$. Cela donne un triangle
commutatif dans $\mathbf{Mor}(K)$ (voir la remarque
\ref{rem_cat_mor}) :

\[\begin{tikzcd}
  & E_L \dlar[swap]{V_L} \drar{E} & \\
A_L \arrow{rr}{V} & & L. 
\end{tikzcd}\]

On a donc une identification canonique $A_L = \End_L(V)$,
et on doit pouvoir en déduire une action naturelle de
$G$ sur $\End_L(V)$. On pose pour tout $g\in G$
\begin{equation}\label{eq_def_tg}
   \foncdef{T_g}{V}{V}{v}{vu_g^{-1}} 
\end{equation}
et
\begin{equation}\label{eq_def_sg}
   \foncdef{S_g}{\End_L(V)}{\End_L(V)}{f}{T_g\circ f\circ T_g^{-1}}. 
\end{equation}

\begin{prop}\label{prop_desc}
  La fonction $g\mapsto S_g$ définit une action semi-linéaire de $G$
  sur $\End_L(V)$ qui coïncide avec l'action galoisienne naturelle
  sur $A_L$. En particulier, la sous-algèbre
  des éléments invariants est $A = \End_E(V)\subset \End_L(V)$.
\end{prop}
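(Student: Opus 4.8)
The plan is to verify the claim of Proposition \ref{prop_desc} in three parts, following the structure of the statement: (i) that $g \mapsto S_g$ is a semilinear action of $G$ on $\End_L(V)$; (ii) that this action coincides with the natural Galois action on $A_L = \End_E(V)_L$; and (iii) that consequently the fixed subalgebra is $A$ itself. The essential observation is that all three parts reduce to understanding how the operators $T_g$ defined by $T_g(v) = vu_g^{-1}$ interact with the module structures, using the cocycle relation (\ref{eq_alpha_gh}).

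\textbf{The action is semilinear.} First I would check that $g \mapsto S_g$ is a homomorphism of groups up to the correct twist. Since $T_g \circ T_h (v) = v u_h^{-1} u_g^{-1}$, and by the cocycle relation $u_g u_h = \alpha(g,h) u_{gh}$, we get $u_h^{-1} u_g^{-1} = (u_g u_h)^{-1} = u_{gh}^{-1}\alpha(g,h)^{-1}$. This introduces an extra factor of $\alpha(g,h)^{-1} \in L^*$, but because we conjugate (forming $S_g = T_g \circ (-) \circ T_g^{-1}$), the scalar factors from $L^*$ act on the left and cancel in the conjugation on $\End_L(V)$, so that $S_g \circ S_h = S_{gh}$ genuinely holds on $\End_L(V)$. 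The key point to verify is \emph{semilinearity}: for $\lambda \in L$ and $f \in \End_L(V)$, one must show $S_g(\lambda f) = g(\lambda) S_g(f)$, where $L$ acts on $\End_L(V)$ via its natural embedding. This follows because conjugation by $u_g$ induces the automorphism $g$ on $L \subset E$, so pushing a scalar $\lambda$ through $T_g$ converts it to $g(\lambda)$; I would make this explicit by evaluating $S_g(\lambda f)$ on a vector and tracking how the right-multiplication by $u_g^{-1}$ transforms $\lambda$.

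\textbf{Coincidence with the Galois action and conclusion.} The main obstacle, and the heart of the proposition, is step (ii): showing $S_g$ agrees with the canonical Galois action on $A_L = \End_E(V) \otimes_K L$. Here I would use the identification $A_L = \End_L(V)$ coming from the commutative triangle in $\mathbf{Mor}(K)$ (Remark \ref{rem_cat_mor}), under which the Galois action on the tensor factor $L$ corresponds to some operation on $\End_L(V)$; the task is to identify that operation with $S_g$. The natural strategy is to check the agreement on a generating set: elements of $A = \End_E(V)$ embedded diagonally (on which the Galois action is trivial), and scalars from $L$ (on which it acts by $g$). For $f \in \End_E(V)$, one has $f(vu_g^{-1}) = f(v)u_g^{-1}$ since $f$ is $E$-linear and $u_g^{-1} \in E$, whence $S_g(f) = f$; this shows $A$ lies in the fixed points and that $S_g$ acts trivially on the $\End_E(V)$-factor. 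Combined with the semilinearity established above, $S_g$ matches the Galois action on both factors of $A_L = \End_E(V) \otimes_K L$, hence everywhere. Finally, step (iii) follows from Galois descent: the fixed points of a (semilinear) Galois action recovering the descended algebra are exactly $A$, since $L/K$ is $G$-Galois and strictly maximal. I would close by invoking that $(\End_L(V))^G = (A_L)^G = A$, the last equality being the defining property of Galois descent for the étale $G$-Galois extension $L/K$.
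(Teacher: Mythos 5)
Your proposal is correct and takes essentially the same route as the paper: the same verification that $S_gS_h=S_{gh}$ via the cocycle relation $u_gu_h=\alpha(g,h)u_{gh}$ with the scalar absorbed by $L$-linearity of elements of $\End_L(V)$, the same semilinearity check using that conjugation by $u_g$ induces $g$ on $L$, and the same observation that $E$-linear maps commute with right multiplication by the $u_g$. The only (harmless) difference is the order of the final step: the paper first computes the fixed points exactly, as the commutant of the $u_g$, equal to $\End_E(V)$ since $E$ is generated by $L$ and the $u_g$, and then infers the action is the natural one, whereas you first identify $S_g$ with the natural action on the generators $A$ and $L$ of $A_L$ and then read off the fixed points by Galois descent.
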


\begin{proof}
  La composition $S_gS_h$ est la conjugaison par $T_gT_h$, qui
  est la multiplication à droite par $u_h^{-1}u_g^{-1}$, qui à
  un élément de $L^*$ près est $u_{gh}^{-1}$, donc par $L$-linéarité
  des éléments de $\End_L(V)$, $S_gS_h=S_{gh}$.
  De plus, $S_g(f\lambda)(v) = (f(vu_g)\lambda)u_g^{-1} = f(vu_g)u_g^{-1}g(\lambda)$,
  donc $S_g$ est bien semi-linéaire, d'action $g$ sur $L$.

  Pour montrer que c'est l'action galoisienne naturelle sur $A_L$,
  il suffit de montrer que $A$ est bien l'ensemble des points fixes.
  Or $S_g(f)=f$ est équivalent à $f(vu_g)=f(v)u_g$ pour tout $v\in V$,
  donc les points fixes sous l'action des $S_g$ sont exactement
  les $f$ qui commutent avec l'action des $u_g$, ce qui est équivalent
  à commuter avec l'action de $E$ puisque $E$ est engendré par les $u_g$
  et $L$.
\end{proof}

\subsubsection*{Involutions}

On décrit maintenant comment définir une involution sur un
produit croisé, en conservant les notations précédentes.

\begin{defi}
  On dit qu'une involution $\theta$ sur $E$ est $G$-centrée
  sur $L$ si $\theta(L)=L$ et $\theta_{|L} G \theta_{|L} = G$.
\end{defi}

\begin{rem}
  La dernière hypothèse est automatiquement vérifiée si $L$
  est un corps, puisque dans ce cas $G=\Aut_K(L)$, et la restriction
  de $\theta$ à $K$ est un automorphisme d'ordre au plus 2.
\end{rem}

On se munit pour la suite d'une involution $\theta$ $G$-centrée
sur $L$.  Pour tout $g\in G$, on notera
\begin{equation}\label{eq_g_bar}
  \overline{g} = \theta_{|L} g^{-1}\theta_{|L},
\end{equation}
et
\begin{equation}\label{eq_theta_g}
  \theta_g = g\theta_{|L},\quad \theta_g'=g^{-1}\theta_{|L}.
\end{equation}

\begin{rem}\label{rem_theta_g}
  Les automorphismes $\theta_g$ et $\theta_g'$ sont chacun involutifs
  si et seulement si $g=\overline{g}$, ce qui est par exemple le
  cas pour $g=1$, auquel cas $\theta_g = \theta_g' = \theta$.
  Si $\theta_{|L}\in G$ (ce qui arrive
  notamment lorsque $L$ est un corps et $\theta$ est de première espèce),
  alors on peut prendre $g=\theta_{|L}$ et dans ce cas
  $\theta_g = \theta_g' = \Id$.
\end{rem}

\begin{prop}\label{prop_mu}
  Pour tout $g\in G$, il existe un unique $\mu_g\in L^*$
  tel que
  \begin{equation}\label{eq_mu}
    \theta(u_g) = \mu_g u_{\overline{g}},
  \end{equation}
  qui vérifie
  \begin{equation}\label{eq_structure_mu}
    \mu_{\overline{g}}\theta_g(\mu_g) = 1
  \end{equation}
  et
  \begin{equation}\label{eq_mu_alpha}
    \mu_h\overline{g}(\mu_g)\alpha(\overline{h},\overline{g}) = \mu_{gh}\theta_{\overline{gh}}(\alpha(g,h)).
  \end{equation}
  De plus, si on remplace les $u_g$ par $c_gu_g$ avec
  $c_g\in L^*$, on trouve $\mu'_g = \mu_g\frac{\theta_{\overline{g}}(c_g)}{c_{\overline{g}}}$.

  Réciproquement, si $\theta\in \Aut_k(L)$ est un automorphisme d'ordre
  au plus 2 tel que $\theta G \theta = G$, et si on se donne $(\mu_g)_{g\in G}$
  vérifiant les équations (\ref{eq_structure_mu}) et (\ref{eq_mu_alpha}),
  alors $\theta$ s'étend de façon unique en une involution de $E$
  vérifiant (\ref{eq_mu}).
\end{prop}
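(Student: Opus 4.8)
The plan is to treat the two directions separately, with the forward direction resting entirely on the Skolem--Noether statement of Corollary \ref{cor_sko}.

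\textbf{Existence and uniqueness of $\mu_g$.} First I would determine, for each $g\in G$, the automorphism of $L$ induced by conjugation by $\theta(u_g)$. Since $\theta$ is an anti-automorphism with $\theta(L)=L$ and $\theta_{|L}^2=\Id$, for $\lambda\in L$ one writes $\lambda=\theta_{|L}(\nu)$ and computes $\theta(u_g)\lambda\theta(u_g)^{-1}=\theta(u_g^{-1}\nu u_g)=\theta(g^{-1}(\nu))=(\theta_{|L}\circ g^{-1}\circ \theta_{|L})(\lambda)=\overline{g}(\lambda)$, using the commutation rule $u_g^{-1}\nu u_g=g^{-1}(\nu)$. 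The hypothesis that $\theta$ is $G$-centered on $L$ guarantees $\overline{g}\in G$, so conjugation by $\theta(u_g)$ is exactly the automorphism $\overline{g}$; by Corollary \ref{cor_sko} the elements of $E^*$ inducing $\overline{g}$ form the single coset $L^*u_{\overline{g}}$, whence $\theta(u_g)=\mu_g u_{\overline{g}}$ for a unique $\mu_g\in L^*$. This is equation (\ref{eq_mu}).

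\textbf{The relations (\ref{eq_structure_mu}) and (\ref{eq_mu_alpha}).} Relation (\ref{eq_structure_mu}) comes from $\theta^2=\Id$: applying $\theta$ to (\ref{eq_mu}) and using $\overline{\overline{g}}=g$ gives $u_g=\theta(u_{\overline{g}})\theta_{|L}(\mu_g)=\mu_{\overline{g}}u_g\theta_{|L}(\mu_g)$, and moving $\theta_{|L}(\mu_g)$ past $u_g$ (which turns it into $g(\theta_{|L}(\mu_g))=\theta_g(\mu_g)$) yields $\mu_{\overline{g}}\theta_g(\mu_g)=1$. Relation (\ref{eq_mu_alpha}) comes from applying the anti-automorphism property to (\ref{eq_alpha_gh}): expanding $\theta(u_gu_h)=\theta(u_h)\theta(u_g)$ via (\ref{eq_mu}), then comparing the coefficients of $u_{\overline{gh}}$ on both sides (using $\overline{h}\,\overline{g}=\overline{gh}$ and again the commutation rule $u_x\lambda=x(\lambda)u_x$), gives the stated cocycle-type identity. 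The change-of-basis formula is then obtained directly: replacing $u_g$ by $c_gu_g$ and computing $\theta(c_gu_g)=\mu_g u_{\overline{g}}\theta_{|L}(c_g)=\mu_g\theta_{\overline{g}}(c_g)u_{\overline{g}}$, then re-expressing the result in terms of the new basis element $c_{\overline{g}}u_{\overline{g}}$, yields $\mu_g'=\mu_g\theta_{\overline{g}}(c_g)/c_{\overline{g}}$.

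\textbf{The converse.} Given $\theta_{|L}$ and a family $(\mu_g)$ satisfying (\ref{eq_structure_mu}) and (\ref{eq_mu_alpha}), I would define a $k$-linear map $\theta$ on $E=\bigoplus_{g\in G}Lu_g$ by $\theta(\lambda u_g)=\mu_g u_{\overline{g}}\theta_{|L}(\lambda)$; this formula is forced by (\ref{eq_mu}) and the anti-automorphism law, which gives uniqueness. One first checks $\mu_1=1$ (specialize (\ref{eq_mu_alpha}) at $g=h=1$ with the normalized cocycle $u_1=1$), so that $\theta$ restricts to $\theta_{|L}$ on $L$. The involutivity $\theta^2=\Id$ is precisely the reverse of the computation above and reduces to (\ref{eq_structure_mu}) on the $u_g$ (and to $\theta_{|L}^2=\Id$ on $L$). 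The main work is verifying that $\theta$ is anti-multiplicative, i.e.\ $\theta((\lambda u_g)(\nu u_h))=\theta(\nu u_h)\theta(\lambda u_g)$; expanding both sides through the crossed-product multiplication (\ref{eq_alpha_gh}) and the commutation rule, this collapses exactly to relation (\ref{eq_mu_alpha}). This anti-multiplicativity check is the one genuinely computational step and the main obstacle, although it is a mechanical reversal of the forward expansion; the conceptual crux is really the conjugation computation combined with the uniqueness clause of Corollary \ref{cor_sko}, which is what makes the $\mu_g$ well defined in the first place.
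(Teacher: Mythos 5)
Your proposal is correct and follows essentially the same route as the paper: you identify the conjugation action of $\theta(u_g)$ on $L$ with $\overline{g}$ (your direct conjugation of $\lambda$ is trivially equivalent to the paper's two-way evaluation of $\theta(\theta(\lambda)u_g)$), invoke the uniqueness clause of Corollary \ref{cor_sko} to get $\mu_g$, derive the two relations by applying $\theta^2=\Id$ and anti-multiplicativity to the defining relations, and obtain the converse by reversing these computations. The only difference is presentational: you spell out the converse (the forced formula $\theta(\lambda u_g)=\mu_g u_{\overline{g}}\theta_{|L}(\lambda)$, the check $\mu_1=1$, involutivity, anti-multiplicativity) in more detail than the paper, which merely states that the equalities can be run backwards.
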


\begin{proof}
  Soit $\lambda\in L$. Alors $\theta(\theta(\lambda)u_g) = \theta(u_g)\lambda$
  mais aussi
  \begin{align*}
    \theta(\theta(\lambda)u_g) &= \theta(u_g(g^{-1}\theta)(\lambda)) \\
                               &= (\theta g^{-1} \theta)(\lambda)\theta(u_g)
  \end{align*}
  donc $\theta(u_g)$ agit comme $\overline{g}$ sur $L$,
  et donc il existe un unique $\mu_g\in L^*$ tel que
  $\theta(u_g) = \mu_g u_{\overline{g}}$.

  On a
  \begin{align*}
    \theta(\theta(u_g)) &= \theta(\mu_g u_{\overline{g}}) \\
                        &= \mu_{\overline{g}}u_g\theta(\mu_g) \\
                        &= \mu_{\overline{g}}(g\theta)(\mu_g)u_g
  \end{align*}
  d'où la formule (\ref{eq_structure_mu}).
  De plus,
  \begin{align*}
    \theta(u_g u_h)  &=  \theta(\alpha(g,h)u_{gh}) \\
    \mu_hu_{\overline{h}}\mu_gu_{\overline{g}}  &= \mu_{gh}u_{\overline{gh}}\theta(\alpha(g,h)) \\
    \mu_h\overline{g}(\mu_g)\alpha(\overline{h},\overline{g})u_{\overline{gh}}   
                     &=  \mu_{gh}\theta_{\overline{gh}}(\alpha(g,h))u_{\overline{gh}}
  \end{align*}
  d'où la formule (\ref{eq_mu_alpha}). Si on remplace
  $u_g$ par $u'_g=c_gu_g$, alors
  \begin{align*}
    \theta(u'_g) &= \theta(u_g)\theta(c_g) \\
                 &= \mu_g (\overline{g}\theta)(c_g)u_{\overline{g}} \\
                 &= \mu_g \frac{\theta_{\overline{g}}(c_g)}{c_{\overline{g}}} u'_{\overline{g}}
  \end{align*}
  d'où la formule de l'énoncé.

  Réciproquement, on peut remonter toutes les égalités ci-dessus
  pour montrer que si on définit $\theta$ par les $\mu_g$ vérifiant
  les équations, on obtient bien $\theta(\theta(u_g))=u_g$
  et $\theta(u_g u_h)=\theta(u_h)\theta(u_g)$, et $\theta$ définit
  bien une involution.
\end{proof}

\subsubsection*{Modules hermitiens et déploiement}

Soit $(V,h)$ module $\eps$-hermitien à droite sur $(E,\theta)$,
et soit $A = \End_E(V)$ munie de $\sigma=\sigma_h$. On est
intéressé par la question suivante : comme $\theta$ est une involution
sur $L$, on peut considérer l'involution $\sigma\otimes \theta$
sur $A_L$ ; sachant que $A_L$ est déployée, peut-on trouver
une forme hermitienne naturelle sur $(L,\theta)$ dont c'est l'involution
adjointe ?

On va en fait considérer un cadre un peu plus large.
On écrit
\begin{equation}\label{eq_hg}
  h(x,y) = \sum_{g\in G} u_gh_g(x,y)
\end{equation}
avec $h_g:V\times V\To L$, uniquement définie par $h_g = \pi_h\circ h$
où
\begin{equation}\label{eq_pi_g}
  \pi_g\left(\sum_t u_tc_t\right) = c_g
\end{equation}
(attention, on prend ici la convention
d'agir à droite). On pose également $f_g: E\times E\To L$ définie
par
\begin{equation}\label{eq_fg}
  f_g(x,y) = \pi_g(\theta(x)y).
\end{equation}
On décrit le comportement général de ces applications :

\begin{lem}\label{lem_relat_pig}
  Soient $g,t\in G$, $x\in E$ et $a,b\in L$. Alors :
  \begin{align*}
    \pi_g(axb) &= g^{-1}(a)\pi_g(x)b, \\
    \pi_g(\theta(x)) &= g^{-1}(\mu_{\overline{g}})\theta'_g(\pi_{\overline{g}}(x)), \\
    \pi_g(u_tx) &= g^{-1}(\alpha(t,t^{-1}g))\pi_{t^{-1}g}(x), \\
    \pi_g(x u_t) &= g^{-1}(\alpha(gt^{-1},t))t^{-1}(\pi_{gt^{-1}}(x)).
  \end{align*}
\end{lem}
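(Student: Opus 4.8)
La stratégie est de tout ramener par linéarité au cas d'un monôme $x = u_t c$ (avec $t\in G$ et $c\in L$), puisque $E = \bigoplus_{g\in G}u_g L$ et que $\pi_g$ est $k$-linéaire. L'ingrédient de base est la règle de commutation entre $L$ et les $u_g$ : comme $u_g$ induit par conjugaison l'automorphisme $g$ sur $L$, on a $u_g\lambda = g(\lambda)u_g$, soit de façon équivalente $\lambda u_g = u_g g^{-1}(\lambda)$ pour tout $\lambda\in L$. Je commencerais par établir la première relation, qui sert ensuite de brique pour les autres. Sur un monôme $u_tc$, on fait passer $a$ à droite de $u_t$ : $a(u_tc)b = u_t\,(t^{-1}(a)cb)$, d'où $\pi_g(au_tcb) = \delta_{g,t}\,t^{-1}(a)cb$, ce qui coïncide bien avec $g^{-1}(a)\pi_g(u_tc)b = \delta_{g,t}\,g^{-1}(a)cb$ (les deux membres s'annulant pour $t\neq g$, et $t^{-1}(a)=g^{-1}(a)$ valant quand $t=g$).

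Les troisième et quatrième relations en découlent en combinant la règle de cocycle $u_gu_h = \alpha(g,h)u_{gh}$ (\ref{eq_alpha_gh}) avec la première relation. Pour la troisième, on calcule $u_t(u_sc) = \alpha(t,s)u_{ts}c$ puis $\pi_g(\alpha(t,s)u_{ts}c) = g^{-1}(\alpha(t,s))\delta_{g,ts}c$ par la relation déjà établie : le terme est non nul exactement lorsque $s=t^{-1}g$, ce qui donne la formule voulue. Pour la quatrième, on écrit $(u_sc)u_t = u_su_t\,t^{-1}(c) = \alpha(s,t)u_{st}\,t^{-1}(c)$ en faisant de nouveau commuter $c$ avec $u_t$, et on obtient un terme non nul seulement pour $s=gt^{-1}$, faisant apparaître le facteur $t^{-1}(\pi_{gt^{-1}}(x))$ attendu.

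Le point le plus délicat est la deuxième relation, car c'est la seule qui fait intervenir l'involution. Sur un monôme $u_tc$, on utilise que $\theta$ est un anti-automorphisme fixant $L$, puis la formule $\theta(u_t) = \mu_t u_{\overline{t}}$ (\ref{eq_mu}) :
\[ \theta(u_tc) = \theta(c)\theta(u_t) = \theta_{|L}(c)\,\mu_t\, u_{\overline{t}}. \]
En appliquant la première relation, $\pi_g(\theta(u_tc)) = g^{-1}(\theta_{|L}(c)\mu_t)\,\delta_{g,\overline{t}}$. Comme $\theta_{|L}$ est d'ordre au plus $2$, l'application $g\mapsto \overline{g}$ (\ref{eq_g_bar}) est involutive, de sorte que $\overline{t}=g$ équivaut à $t=\overline{g}$. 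Dans ce cas, en séparant le produit dans $L$ commutatif, le terme vaut $g^{-1}(\mu_{\overline{g}})\,g^{-1}(\theta_{|L}(c)) = g^{-1}(\mu_{\overline{g}})\,\theta'_g(c)$ par définition de $\theta'_g = g^{-1}\theta_{|L}$ (\ref{eq_theta_g}). On vérifie alors que ceci coïncide avec $g^{-1}(\mu_{\overline{g}})\,\theta'_g(\pi_{\overline{g}}(u_tc))$, les deux membres s'annulant lorsque $t\neq \overline{g}$.

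Aucune de ces vérifications ne présente de difficulté conceptuelle ; l'essentiel du travail consiste à manipuler avec soin la règle de commutation et la convention d'action à droite des coefficients. Le principal obstacle est la deuxième relation, où il faut correctement identifier le coefficient résiduel avec l'automorphisme tordu $\theta'_g$ et exploiter le caractère involutif de $g\mapsto \overline{g}$ pour substituer $t=\overline{g}$.
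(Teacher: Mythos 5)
Your proof is correct and takes essentially the same route as the paper's: the paper decomposes $x=\sum_g u_g x_g$ and applies the commutation rule $\lambda u_g = u_g g^{-1}(\lambda)$, the cocycle relation $u_tu_g=\alpha(t,g)u_{tg}$ and the formula $\theta(u_g)=\mu_g u_{\overline{g}}$ with a reindexation $g\mapsto \overline{g}$ (resp. $g\mapsto t^{-1}g$, $g\mapsto gt^{-1}$), which is exactly your monomial-by-monomial computation written as a single sum. Your explicit appeal to the involutivity of $g\mapsto\overline{g}$ is the same point the paper uses implicitly in its reindexation, so the two arguments match step for step.
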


\begin{proof}
  On écrit $x=\sum_g u_gx_g$. Alors $axb = \sum_g au_gx_gb = \sum_g u_gg^{-1}(a)x_gb$,
  d'où la première formule. Pour la deuxième :
  \begin{align*}
    \theta(x) &= \sum_g \theta(x_g)\theta(u_g) \\
              &= \sum_g u_{\overline{g}}\overline{g}^{-1}(\mu_g \theta(x_g)) \\
              &= \sum_g u_g g^{-1}(\mu_{\overline{g}} \theta(x_{\overline{g}})).
  \end{align*}
  Pour la troisième formule :
  \begin{align*}
    u_t x &= \sum_g u_tu_g x_g \\
          &= \sum_g \alpha(t,g)u_{tg}x_g \\
          &= \sum_g \alpha(t,t^{-1}g)u_gx_{t^{-1}g}
  \end{align*}
  et la quatrième se démontre de façon identique en calculant
  $xu_t$.
\end{proof}

\begin{lem}\label{lem_relat_fg}
  Soient $g\in G$, $x,y\in E$ et $a,b\in L$. Alors :
  \begin{align*}
    f_g(xa,yb) &= \theta'_g(a)f_g(x,y)b, \\
    f_g(y,x) &= g^{-1}(\mu_{\overline{g}})\theta'_g(f_{\overline{g}}(x,y))
  \end{align*}
\end{lem}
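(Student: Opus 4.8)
Soient $g\in G$, $x,y\in E$ et $a,b\in L$. Alors :
\begin{align*}
    f_g(xa,yb) &= \theta'_g(a)f_g(x,y)b, \\
    f_g(y,x) &= g^{-1}(\mu_{\overline{g}})\theta'_g(f_{\overline{g}}(x,y)).
\end{align*}

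**Plan de preuve.**

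L'idée centrale est que ces deux identités sur $f_g(x,y) = \pi_g(\theta(x)y)$ se déduisent \emph{mécaniquement} des formules de manipulation de $\pi_g$ établies dans le lemme \ref{lem_relat_pig}. Je traiterai les deux formules séparément, chacune par une courte chaîne de réécritures.

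Pour la première formule, je pars de la définition $f_g(xa,yb) = \pi_g(\theta(xa)\,yb)$. Comme $\theta$ est un anti-automorphisme de $E$ qui restreint en $\theta_{|L}$ sur $L$, on a $\theta(xa) = \theta(a)\theta(x) = \theta_{|L}(a)\theta(x)$ puisque $a\in L$. Ainsi $f_g(xa,yb) = \pi_g\big(\theta_{|L}(a)\,\theta(x)y\,b\big)$, où l'on reconnaît un élément de la forme $\pi_g(a' z b)$ avec $a' = \theta_{|L}(a)\in L$, $b\in L$ et $z = \theta(x)y\in E$. La première identité du lemme \ref{lem_relat_pig} donne alors $\pi_g(a'zb) = g^{-1}(a')\pi_g(z)b = g^{-1}(\theta_{|L}(a))\,f_g(x,y)\,b$. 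Il reste à identifier $g^{-1}(\theta_{|L}(a)) = \theta'_g(a)$, ce qui est exactement la définition (\ref{eq_theta_g}) de $\theta'_g = g^{-1}\theta_{|L}$. On conclut $f_g(xa,yb) = \theta'_g(a)f_g(x,y)b$.

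Pour la seconde formule, je pars de $f_g(y,x) = \pi_g(\theta(y)x)$ et je vais essentiellement relier $\theta(y)x$ à $\theta(x)y$ en appliquant $\theta$. Comme $\theta$ est involutive, $\theta(y)x = \theta\big(\theta(x)\theta(y)\big)\cdots$ — plus précisément, j'écris $\theta(y)x = \theta\big(\theta(x)y\big)$ en utilisant $\theta(\theta(x)y) = \theta(y)\theta(\theta(x)) = \theta(y)x$. Donc $f_g(y,x) = \pi_g\big(\theta(w)\big)$ avec $w = \theta(x)y$. La deuxième identité du lemme \ref{lem_relat_pig}, à savoir $\pi_g(\theta(w)) = g^{-1}(\mu_{\overline{g}})\,\theta'_g\big(\pi_{\overline{g}}(w)\big)$, s'applique directement et livre $f_g(y,x) = g^{-1}(\mu_{\overline{g}})\,\theta'_g\big(\pi_{\overline{g}}(\theta(x)y)\big) = g^{-1}(\mu_{\overline{g}})\,\theta'_g\big(f_{\overline{g}}(x,y)\big)$, comme voulu.

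Le seul point qui demande un peu de vigilance — et que je considère comme l'obstacle principal, quoique mineur — est de bien \emph{vérifier les conventions d'action} (droite contre gauche) dans l'étape $\theta(y)x = \theta(\theta(x)y)$ et dans l'application des formules de \ref{lem_relat_pig}, car le lemme \ref{lem_relat_pig} a été énoncé avec la convention d'agir à droite (comme signalé après (\ref{eq_pi_g})). Une fois les conventions fixées, aucune difficulté conceptuelle ne subsiste : tout découle formellement de \ref{lem_relat_pig} et de la définition de $\theta'_g$, sans recourir aux relations de cocycle (\ref{eq_alpha_gh}) ni aux équations sur les $\mu_g$ (\ref{eq_structure_mu})–(\ref{eq_mu_alpha}).
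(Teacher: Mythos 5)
Votre preuve est correcte et suit essentiellement la même démarche que celle du texte : la première identité s'obtient en développant $\theta(xa)=\theta(a)\theta(x)$ puis en appliquant la première formule du lemme \ref{lem_relat_pig}, et la seconde en écrivant $\theta(y)x=\theta(\theta(x)y)$ puis en appliquant la deuxième formule du même lemme. Rien à redire.
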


\begin{proof}
  En utilisant la première formule du lemme précédent :
  $f_g(xa,yb) = \pi_g(\theta(a)\theta(x)yb) = (g^{-1}\theta)(a)f_g(x,y)b$.
  Puis on applique la deuxième formule du même lemme
  à $f_g(y,x) = \pi_g(\theta(\theta(x)y))$.
\end{proof}

\begin{lem}\label{lem_relat_hg}
  Soient $g,t\in G$, $x,y\in V$, $a,b\in L$. Alors :
  \begin{align*}
    h_g(xa,yb) &= \theta'_g(a)h_g(x,y)b, \\
    h_g(y,x) &= \eps g^{-1}(\mu_{\overline{g}}) \theta'_g(h_{\overline{g}}(x,y)), \\
    h_g(xu_t,y) &= g^{-1}\left(\mu_t \alpha(\overline{t},\overline{t}^{-1}g)\right)h_{\overline{t}^{-1}g}(x,y), \\
   h_g(x,yu_t) &= g^{-1}\left( \alpha(gt^{-1},t)\right) t^{-1}\left( h_{gt^{-1}}(x,y)\right).
  \end{align*}
\end{lem}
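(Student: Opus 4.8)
The plan is to derive all four identities by the same mechanism used for Lemma \ref{lem_relat_fg}: apply $\pi_g$ to $h(x',y')$ after rewriting the argument via the defining axioms of the $\eps$-hermitian form, and then invoke the matching relation from Lemma \ref{lem_relat_pig}. Recall that $h_g = \pi_g \circ h$, that $h$ is $\theta$-sesquilinear with $h(xa,yb) = \theta(a)h(x,y)b$ for $a,b \in E$ and $\eps$-symmetric with $h(y,x) = \eps\theta(h(x,y))$, and that $\theta(u_t) = \mu_t u_{\overline{t}}$ by (\ref{eq_mu}). Throughout I use that $g^{-1}$ restricts to a ring automorphism of $L$, hence is multiplicative, so that products of $L$-scalars may be grouped under a single $g^{-1}$.

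For the first identity, since $a,b \in L$ I would write $h(xa,yb) = \theta(a)h(x,y)b$ with $\theta(a)\in L$, then apply the first relation of Lemma \ref{lem_relat_pig} to obtain $\pi_g(\theta(a)\,h(x,y)\,b) = g^{-1}(\theta(a))\,h_g(x,y)\,b$; since $g^{-1}\theta_{|L} = \theta'_g$ by (\ref{eq_theta_g}), this is exactly the claimed formula. For the second, I would use $h(y,x) = \eps\theta(h(x,y))$ and pull the scalar $\eps$ out, then apply the second relation of Lemma \ref{lem_relat_pig} with $X = h(x,y)$, giving $\pi_g(\theta(X)) = g^{-1}(\mu_{\overline{g}})\theta'_g(\pi_{\overline{g}}(X)) = g^{-1}(\mu_{\overline{g}})\theta'_g(h_{\overline{g}}(x,y))$. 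These two steps are essentially transcriptions of the proof of Lemma \ref{lem_relat_fg}.

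The genuinely new content is the behaviour under the $u_t$-action on $V$, absent in the $f_g$ case. For the third identity I would rewrite $h(xu_t,y) = \theta(u_t)h(x,y) = \mu_t u_{\overline{t}} h(x,y)$ using (\ref{eq_mu}); pulling out the left scalar $\mu_t \in L$ by the first relation of Lemma \ref{lem_relat_pig} and then applying the third relation to $\pi_g(u_{\overline{t}} X)$ yields $g^{-1}(\mu_t)\,g^{-1}(\alpha(\overline{t},\overline{t}^{-1}g))\,h_{\overline{t}^{-1}g}(x,y)$, which collapses to the stated expression by multiplicativity of $g^{-1}$. For the fourth identity I would simply use $h(x,yu_t) = h(x,y)u_t$ and apply the fourth relation of Lemma \ref{lem_relat_pig} with $X = h(x,y)$.

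There is no serious obstacle here: the statement is a purely formal consequence of Lemma \ref{lem_relat_pig} together with the hermitian axioms and equation (\ref{eq_mu}). The only care required is bookkeeping — tracking the twisted restrictions $\theta'_g$ against $\overline{g}$, keeping the cocycle arguments $\alpha(\overline{t},\overline{t}^{-1}g)$ and $\alpha(gt^{-1},t)$ straight, and remembering that $g^{-1}$ acts semilinearly and multiplicatively on $L$ so that the scalar factors can be merged.
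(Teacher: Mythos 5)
Your proposal is correct and follows exactly the paper's proof: the paper likewise derives each of the four identities by applying, in order, the four formulas of Lemma \ref{lem_relat_pig} to the relations $h(xa,yb)=\theta(a)h(x,y)b$, $h(y,x)=\eps\theta(h(x,y))$, $h(xu_t,y)=\mu_t u_{\overline{t}}h(x,y)$ and $h(x,yu_t)=h(x,y)u_t$. Your write-up only adds the small bookkeeping steps (pulling out $\mu_t$ via the first relation, merging scalars under $g^{-1}$) that the paper leaves implicit.
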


\begin{proof}
  Sachant que $h_g(x,y)=\pi_g(h(x,y))$,
  on applique successivement les quatre formules du lemme \ref{lem_relat_pig}
  aux relations $h(xa,yb) = \theta(a)h(x,y)b$, $h(y,x) = \eps\theta(h(x,y))$,
  $h(xu_t,y)=\theta(u_t)h(x,y)=\mu_tu_{\overline{t}}h(x,y)$ et
  $h(x,yu_t)=h(x,y)u_t$.
\end{proof}

On en déduit :

\begin{prop}\label{prop_morita_crois}
  Si $g=\overline{g}$, alors $h_g$ et $f_g$ sont respectivement
  $\eps\eps_g$-hermitienne et $\eps_g$-hermitienne sur $(L,\theta'_g)$,
  avec $\eps_g = g^{-1}(\mu_g)$. De plus, on a un triangle commutatif
  dans $\mathbf{Mor}_h(L,\theta'_g)$ (voir la remarque \ref{rem_cat_mor_herm}) :
  \[ \begin{tikzcd}
      & (E_L, \theta\otimes \theta'_g)  \drar{f_g} & \\
      (A_L, \sigma\otimes \theta'_g) \urar{h_L} \arrow{rr}{h_g} & & (L,\theta'_g)
    \end{tikzcd} \]
\end{prop}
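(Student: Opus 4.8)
L'énoncé affirme trois choses dans le cas $g = \overline{g}$ : que $h_g$ est $\eps\eps_g$-hermitienne sur $(L,\theta'_g)$, que $f_g$ est $\eps_g$-hermitienne sur $(L,\theta'_g)$, et que le triangle dans $\mathbf{Mor}_h(L,\theta'_g)$ commute. Ces affirmations sont essentiellement des conséquences directes des lemmes \ref{lem_relat_fg} et \ref{lem_relat_hg}, combinées à la discussion sur la descente (\ref{prop_desc}) pour l'identification $A_L = \End_L(V)$. La stratégie est donc de déplier soigneusement les formules déjà établies, en exploitant de façon cruciale l'hypothèse $g = \overline{g}$ qui force $\theta'_g = g^{-1}\theta_{|L}$ à être involutive (voir \ref{rem_theta_g}).

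Je commencerais par le caractère hermitien de $f_g$ et $h_g$. Pour $f_g$, la première formule du lemme \ref{lem_relat_fg} donne directement la $\theta'_g$-sesquilinéarité $f_g(xa,yb) = \theta'_g(a)f_g(x,y)b$. Pour la symétrie, la deuxième formule donne $f_g(y,x) = g^{-1}(\mu_{\overline{g}})\theta'_g(f_{\overline{g}}(x,y))$ ; comme $g = \overline{g}$, ceci devient $f_g(y,x) = g^{-1}(\mu_g)\theta'_g(f_g(x,y)) = \eps_g \theta'_g(f_g(x,y))$ avec $\eps_g = g^{-1}(\mu_g)$. Il faut alors vérifier que $\eps_g$ est bien un scalaire admissible, c'est-à-dire $\eps_g \theta'_g(\eps_g) = 1$, ce qui découle de l'équation (\ref{eq_structure_mu}) $\mu_{\overline{g}}\theta_g(\mu_g) = 1$ appliquée au cas $g = \overline{g}$ (en traduisant entre $\theta_g$ et $\theta'_g$ via l'action de $g$). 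Le même schéma fonctionne pour $h_g$ via le lemme \ref{lem_relat_hg}, le facteur $\eps$ supplémentaire dans la formule de symétrie donnant le signe $\eps\eps_g$ attendu.

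Pour la commutativité du triangle dans $\mathbf{Mor}_h(L,\theta'_g)$, il faut interpréter les trois flèches comme des morphismes (donc des modules hermitiens) et vérifier que la composition de $h_L$ et $f_g$ redonne $h_g$ via la formule de composition (\ref{eq_compo_herm}). Concrètement, la flèche $h_L$ correspond à $V_L$ muni de (la $L$-linéarisation de) $h$, vue comme bimodule entre $(A_L,\sigma\otimes\theta'_g)$ et $(E_L,\theta\otimes\theta'_g)$ ; la flèche $f_g$ est le bimodule $E_L$ muni de $f_g$ ; et la flèche $h_g$ est $V_L$ muni de $h_g$. La vérification revient à substituer la définition $h_g = \pi_g \circ h$ et $f_g(x,y)=\pi_g(\theta(x)y)$ dans (\ref{eq_compo_herm}) et à constater l'identité, ce qui est un calcul de routine une fois les structures de bimodules correctement identifiées.

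L'obstacle principal sera la mise en place rigoureuse des \emph{structures de bimodules} le long du triangle, et plus précisément la vérification que les actions à gauche et à droite sont compatibles avec $\theta'_g$ de la bonne manière pour que les objets vivent bien dans $\mathbf{Mor}_h(L,\theta'_g)$ et non dans une autre catégorie de Morita. La torsion par $g$ (encodée dans $\theta'_g = g^{-1}\theta_{|L}$) intervient à chaque étape, et il faudra être attentif à ce que les différents facteurs $g^{-1}(\cdots)$ apparaissant dans les lemmes \ref{lem_relat_pig}--\ref{lem_relat_hg} se combinent correctement ; c'est là que l'hypothèse $g=\overline{g}$ est indispensable, car elle seule garantit que $\theta'_g$ est une involution et donc que la notion de forme $\eps_g$-hermitienne sur $(L,\theta'_g)$ a un sens (au sens de la remarque \ref{rem_cat_mor_herm}, avec $\eps_g \in L^*$ vérifiant $\eps_g\theta'_g(\eps_g)=1$). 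Le reste n'est qu'un report systématique des formules déjà démontrées.
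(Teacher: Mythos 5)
Votre plan est correct et suit essentiellement la même démarche que le texte : le caractère $\eps\eps_g$- (resp. $\eps_g$-) hermitien de $h_g$ (resp. $f_g$) s'obtient en spécialisant les lemmes \ref{lem_relat_hg} et \ref{lem_relat_fg} à $g=\overline{g}$, et le triangle se vérifie en reportant $h_g=\pi_g\circ h$ et $f_g(x,y)=\pi_g(\theta(x)y)$ dans la formule de composition (\ref{eq_compo_herm}), le texte rendant explicite ce que vous appelez l'identification des structures de bimodules au moyen de l'isomorphisme $\Phi: V_L\otimes_{E_L}E\isom V$, $(v\otimes\lambda)\otimes x\mapsto vx\lambda$ (notez au passage que le bimodule sous-jacent à $h_g$ est $V$ et non $V_L$). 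Votre vérification supplémentaire de l'admissibilité $\eps_g\theta'_g(\eps_g)=1$, laissée implicite dans le texte mais nécessaire au vu de la remarque \ref{rem_cat_mor_herm}, aboutit bien : $g=\overline{g}$ équivaut à $\theta_{|L}g^{-1}=g\theta_{|L}$, donc $\theta'_g(\eps_g)=\theta(\mu_g)$, et l'équation (\ref{eq_structure_mu}) appliquée à $g=\overline{g}$ donne $g^{-1}(\mu_g)\theta(\mu_g)=1$, qui est exactement la relation voulue.
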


\begin{rem}
  Jusqu'ici on a toujours implicitement considéré que les
  formes étaient $\eps$-hermitiennes avec $\eps=\pm 1$
  puisqu'on travaillait avec des involutions de première
  espèce. Ici si $g\neq \theta$ on a en général $\theta'_g$
  non triviale sur $L$, donc une forme peut être $\eps$-hermitienne
  pour tout $\eps$ tel que $\eps\theta'_g(\eps)=1$.
\end{rem}

\begin{proof}
  Si $g=\overline{g}$ alors déjà $\theta'_g$ est une involution
  (voir remarque \ref{rem_theta_g}), et  la deuxième formule du lemme
  \ref{lem_relat_hg} devient
  \[ h_g(y,x) = \eps g^{-1}(\mu_g) \theta'_g(h_g(x,y))  \]
  ce qui combiné avec la première formule de ce lemme montre bien que
  $h_g$ est $\eps\eps_g$-hermitienne relativement à $\theta'_g$.

  De même, les deux formules du lemme \ref{lem_relat_fg} montrent
  que quand $g=\overline{g}$, $f_g$ est $\eps_g$-hermitienne
  relativement à $\theta'_g$.

  On doit alors montrer la relation entre les trois formes hermitiennes.
  Si on identifie naturellement $V$ à $V_L\otimes_{E_L} E$ par
  $\Phi: (v\otimes \lambda)\otimes x\mapsto vx\lambda$, on doit
  alors vérifier que
  \[ h_g(\Phi((v\otimes \lambda)\otimes x),\Phi((v'\otimes \lambda')\otimes x')) 
    = f_g(x,h_L(v\otimes \lambda,v'\otimes\lambda')x'). \]
  Or
  \begin{align*}
    f_g(x,h_L(v\otimes \lambda,v'\otimes\lambda')x')
    &= \pi_g(\theta(x)h_L(v\otimes\lambda,v'\otimes\lambda')x') \\
    &= \pi_g(\theta(x)h(v,v')x'(\theta'_g)(\lambda)\lambda') 
  \end{align*}
  et de l'autre côté
  \begin{align*}
    h_g(\Phi((v\otimes \lambda)\otimes x),\Phi((v'\otimes \lambda')\otimes x'))
    &= \pi_g(h(vx\lambda,v'x'\lambda'))\\
    &= \pi_g(\theta(\lambda)\theta(x)h(v,v')x'\lambda') \\
    &= \theta'_g(\lambda) \pi_g(\theta(x)h(v,v')x'\lambda') \\
    &=  \pi_g(\theta(x)h(v,v')x'\lambda'(\theta'_g)(\lambda)).
\end{align*}
\end{proof}

\subsection{Algèbre de Clifford en indice 2}

Soit $(A,\sigma)$ algèbre à involution orthogonale sur $k$, qu'on
suppose d'indice au plus 2. Soit donc $Q$ l'algèbre de quaternions Brauer-équivalente
à $A$, et on écrit $A = \End_Q(V)$ où $V$ est un $Q$-module à droite.
On a alors $\sigma = \sigma_h$ où $h$ est une forme anti-hermitienne
sur $(Q,\can)$. On choisit une base orthogonale $(e_i)$, qui donne
une diagonalisation $h=\fdiag{z_1,\dots,z_r}$ avec $z_i\in Q$ quaternion
pur inversible. On supposera dans la suite que $r\pgq 2$.

On souhaite décrire explicitement, par générateurs, l'algèbre
de Clifford $B=Cl(A,\sigma)$ de $(A,\sigma)$, munie de son involution
canonique $\tau$. La méthode utilisée est tirée de
\cite{SH}, notre apport se situant principalement dans le calcul
explicite des relations entre les générateurs. On commence par énoncer
le résultat auquel on veut arriver.

\begin{thm}\label{thm_d3_a3}
  On suppose que les $z_i$ donnés par la diagonalisation de $h$ sont
  $k$-linéairement indépendants deux à deux. Alors il existe $L\subset B$ sous-$k$-algèbre
  étale strictement maximale de la forme $L=k(\xi_1,\dots,\xi_r)$
  avec $\xi_i^2=z_i^2$, de sorte que $Z(B)=k(\xi)$ où $\xi=\prod_i \xi_i$.
  L'algèbre $L$ est naturellement $G$-galoisienne sur $k$ avec
  $G = (\Zd)^r$, l'involution $\tau$ agit comme l'élément $(1,\dots,1)$,
  et $Z(B)=L^H$ où $H\subset G$ est le noyau du morphisme
  $G\to \Zd$ donné par la somme des composantes. Le groupe $H$ est engendré
  par $\tau_{p,q}$ pour tous $p\neq q$, dont les seules composantes non
  nulles sont en indice $p$ et $q$. Alors pour tous $p\neq q$ il existe
  $u_{p,q}\in B$ tels que :
  \begin{enumerate}
    \bitem la conjugaison par $u_{p,q}$ induit $\tau_{p,q}$ sur $L$;
    \bitem $\tau(u_{p,q})=-u_{p,q}$ ;
    \bitem si $\{i,j\}\cap \{p,q\}=\emptyset$, $u_{i,j}$ et $u_{p,q}$ commutent;
    \bitem $u_{p,q}^2 = Tr_Q(z_pz_q)-2\xi_p\xi_q$;
    \bitem si $i,p,q$ sont distincts :
    \[ u_{i,p}u_{i,q} = (\lambda_0 - \xi_i + \lambda_p\xi_p -\lambda_q\xi_q -\lambda_{p,q}\xi_p\xi_q)u_{p,q}  \]
    où $z_i = \lambda_0 + \lambda_pz_p + \lambda_qz_q + \lambda_{p,q}z_pz_q$.
  \end{enumerate}
\end{thm}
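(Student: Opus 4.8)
The plan is to follow the method of \cite{SH} for presenting the even Clifford algebra $B = Cl(A,\sigma)$, the new work being the explicit determination of the multiplication relations; throughout, the strategy for verifying an identity in $B$ is to reduce it to the split situation. Recall that $\deg(A) = 2r$, so $\dim_k B = 2^{2r-1}$ and $Z(B)$ is quadratic étale over $k$. Let $F$ be a generic splitting field of $Q$ (the function field of its Severi--Brauer conic); since $B$ is Azumaya over $Z(B)$ and $Z(B) \hookrightarrow Z(B)\otimes_k F$ is injective, the restriction $B \hookrightarrow B_F$ is injective, and under splitting one has $B_F \simeq C_0(q)$ where $q$ is the $2r$-dimensional quadratic form attached to $h$ via the Morita correspondence of Section \ref{sec_morita}. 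An orthogonal $k$-basis $(e_i)$ of $V$ realizing $h = \fdiag{z_1,\dots,z_r}$ gives, after splitting, an orthogonal basis $(v_i, w_i)_{1\le i\le r}$ of $q$, the pair $(v_i,w_i)$ spanning the plane attached to $\fdiag{z_i}$. First I would define $\xi_i \in B$ as the image, under the canonical embedding of \cite{SH}, of the rank-one element of $A$ supported on $e_iQ$ and carrying the datum $z_i$, so that $\xi_i$ specializes to $v_iw_i \in C_0(q)$. Because the summands $e_iQ$ are mutually $h$-orthogonal the $\xi_i$ commute, and $\xi_i^2 = z_i^2$; verifying these two facts (the square being checked in $B_F$) and that the induced surjection $k[X_1,\dots,X_r]/(X_i^2 - z_i^2) \to L := k(\xi_1,\dots,\xi_r)$ is an isomorphism of dimension $2^r$ (again read off in $B_F$) shows $L$ is étale over $k$, since $z_i^2 \in k^*$ and $\operatorname{char}(k) \ne 2$.

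Next I would describe the Galois structure and the center. The group $G = (\Zd)^r$ acts $k$-linearly and faithfully on $L$ by independent sign changes $\xi_i \mapsto \pm \xi_i$, with $L^G = k$; since $L_F$ is a product of the quadratic étale algebras $k(\sqrt{z_i^2})$, this exhibits $L$ as $G$-Galois over $k$. The element $\xi = \prod_i \xi_i$ satisfies $\xi^2 = \prod_i z_i^2 \in k^*$ and is fixed exactly by the subgroup $H = \ker(G \to \Zd)$ of even sign changes, so $k(\xi) = L^H$; identifying $k(\xi)$ with $Z(B)$ amounts to recognizing $\xi$ as the canonical discriminant generator of the centre of the even Clifford algebra, which is again checked after splitting. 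Finally, since $\tau$ is the canonical involution of $B$ and reverses products of generators, it sends each degree-two element $\xi_i = v_iw_i$ to $w_iv_i = -\xi_i$; hence $\tau$ acts on $L$ as $(1,\dots,1) \in G$.

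Once $L$ is known to be étale with $\dim_k L = 2^r$ and $Z(B) \subseteq L$, it is maximal commutative, hence self-centralizing and strictly maximal in the Azumaya algebra $B$ over $Z(B)$. For each $p \ne q$ the automorphism $\tau_{p,q} \in H$ then lifts, by the Skolem--Noether statement of Corollary \ref{cor_sko}, to conjugation by an element $u_{p,q} \in B^*$, unique up to $L^*$; this gives relation (1) by construction. I would normalize $u_{p,q}$ so that $\tau(u_{p,q}) = -u_{p,q}$, which is possible because $u_{p,q}$ specializes to a product of two orthogonal basis vectors (one from the $p$-block, one from the $q$-block) and $\tau$ reverses such products, giving relation (2). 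Relations (3), (4) and (5) are then identities in $L$ (note $u_{p,q}^2 \in C_B(L) = L$, since conjugation by $u_{p,q}$ is the involutive $\tau_{p,q}$), and I would prove them by computing in $B_F \simeq C_0(q)$: relation (4) comes from expanding $u_{p,q}^2$ over the four-dimensional block spanned by $v_p,w_p,v_q,w_q$, the quaternionic inner product of the two blocks producing the coefficient $\Tr_Q(z_pz_q) - 2\xi_p\xi_q$; relation (3) follows from the disjointness of the supporting blocks, the commutator being computed to be trivial in $B_F$.

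The hard part will be relation (5). Here the product $u_{i,p}u_{i,q}$ telescopes the doubly-occurring block $i$ into an element of $L$ times $u_{p,q}$, and the resulting $L$-coefficient is governed by the ternary structure of the space of pure quaternions: since $\dim_k Q_0 = 3$ and the hypothesis forces $z_p, z_q$ to be linearly independent, $(1, z_p, z_q, z_pz_q)$ is a $k$-basis of $Q$, and writing $z_i = \lambda_0 + \lambda_p z_p + \lambda_q z_q + \lambda_{p,q} z_pz_q$ the Clifford product transports these coordinates into the stated coefficient $\lambda_0 - \xi_i + \lambda_p\xi_p - \lambda_q\xi_q - \lambda_{p,q}\xi_p\xi_q$. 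The main obstacle is the bookkeeping that couples the quaternion multiplication (traces, the decomposition of $z_i$, the signs coming from anticommutation of pure quaternions) with the Clifford products through the Morita and splitting identifications, together with keeping track of the degenerate configurations excluded by the pairwise-independence hypothesis; once relation (5) and the sign normalizations are settled, the decomposition $B = \bigoplus_{h \in H} L\,u_h$ (with $|H| = 2^{r-1}$ cosets and $\dim_{Z(B)}L = 2^{r-1}$) realizes $B$ as the announced crossed product and completes the proof.
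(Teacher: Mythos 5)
Your overall strategy (construct explicit generators, then check identities after a scalar extension, using the injectivity of $B \hookrightarrow B_F$) is legitimate, and your treatment of $L$ — its étale structure, the $G$-action, the identification $Z(B)=L^H$, and the fact that $\tau$ acts as $(1,\dots,1)$ — is essentially sound and parallel to the text. The gap is in the construction of the $u_{p,q}$. You define them by Skolem--Noether (corollaire \ref{cor_sko}), which only determines $u_{p,q}$ up to multiplication by $L^*$, and you then normalize by requiring $\tau(u_{p,q})=-u_{p,q}$. This normalization is far from sufficient: if $\tau(u)=-u$, then $\lambda u$ still satisfies both (1) and (2) for every invertible $\lambda$ in the fixed algebra of $\tau_{p,q}\circ\tau_{|L}$, which has dimension $2^{r-1}$ over $k$, while relation (4) transforms by the factor $\lambda\,\tau_{p,q}(\lambda)$ and relation (5) is likewise not invariant. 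So (4) and (5) are statements about a \emph{specific} representative, and no computation in $B_F$ can verify them for an abstractly chosen lift whose image in $B_F$ you do not know explicitly. This is why the text constructs $u_{p,q}$ explicitly as $\frac{1}{2}\psi(e_p\otimes e_q + e_q\otimes e_p)$, exactly parallel to $\xi_i = \frac{1}{2}\psi(e_i\otimes e_i)$, where $\psi: V\otimes_Q({}^{\gamma}V)\to Cl(A,\sigma)$ is the canonical map, made computable by the formula $\psi(x\otimes y)=\pi(T(x)\cdot y - x\cdot T(y))$ (proposition \ref{prop_cliff_psi}) obtained by descent along a quadratic subfield $K\subset Q$ rather than by passing to a generic splitting field.

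Moreover, your justification of the normalization — that $u_{p,q}$ ``specializes to a product of two orthogonal basis vectors'' — is incompatible with relation (4) itself: in $C_0(q)$ a product $vw$ of two orthogonal vectors satisfies $(vw)^2=-q(v)q(w)$, a scalar, whereas $\Trd_Q(z_pz_q)-2\xi_p\xi_q$ is not a scalar, since $\xi_p\xi_q$ is a basis monomial of the étale algebra $L$. The correct element is a combination of three such products; in the notation of the text's proof,
\[ u_{p,q} = \frac{1}{y_py_q}\bigl((x_qy_p-x_py_q)e_pe_q - y_pe_pf_q - y_qe_qf_p\bigr), \]
and it is precisely this mixing of terms that produces the non-scalar part $-2\xi_p\xi_q$ of the square. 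To repair your argument you would have to define every generator (not only the $\xi_i$) through the canonical map $\psi$, compute its image under an explicit splitting identification $B_F\simeq C_0(q)$, and only then run the verification of (3), (4), (5); at that point your route coincides with the paper's, with the descent to a quadratic subfield of $Q$ replaced by a generic splitting field, and with the same explicit Clifford-algebra computations left to carry out.
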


Comme les $\tau_{p,q}$ engendrent $H$, les $u_{p,q}$ engendrent $B$
avec $L$, et donc le théorème donne une description complète de la
structure de $(B,\tau)$. Avant de passer à la preuve proprement
dite, on va développer les constructions nécessaires.

\begin{rem}
  L'hypothèse d'indépendance des $z_i$ permet d'assurer que $(1,z_p,z_q,z_pz_q)$
  forme une $k$-base de $Q$ pour tous $p,q$, ce qui légitime
  la décomposition $z_i = \lambda_0 + \lambda_pz_p + \lambda_qz_q + \lambda_{p,q}z_pz_q$
  utilisée dans l'énoncé.
  
  Si $k$ est infini (ce qu'on peut supposer sans conséquences,
  puisque sur un corps fini la théorie n'a pas d'intérêt),
  quitte à changer les $e_i$, on peut toujours se ramener à cette situation,
  et ce sans changer
  la classe de $z_i^2$ dans $k^*/(k^*)^2$. En effet, si $z_p$ et
  $z_q$ sont colinéaires, on remplace $e_q$ par $e_qz$ pour n'importe
  quel $z$ qui ni ne commute ni n'anti-commute avec $z_q$, ce qui revient
  à exclure deux $k$-plans. On peut faire cette
  modification récursivement sur toutes les paires problématiques,
  et comme une union finie de plans ne peut pas être égale à $Q$,
  on peut toujours trouver un $z$ qui convient.

  De plus, quand cette condition est vérifiée, alors $(1,z_p,z_q,z_pz_q)$
  forme une $k$-base de $Q$ pour tous $p\neq q$ donc l'écriture
  de $z_i$ dans cette base utilisée dans l'énoncé existe toujours
  et est unique.
\end{rem}

Pour prouver le théorème, on va utiliser une
méthode de descente. Soit $K\subset Q$ extension quadratique de $k$
(on peut toujours en trouver, sauf si $k$ est quadratiquement clos,
auquel cas rien de tout cela n'a d'intérêt), et soit $j\in Q$
un quaternion pur tel que $j\lambda=\overline{\lambda}j$
pour tout $\lambda\in K$. On pose $j^2 = \pi\in k^*$. On a alors
$Q = K \oplus jK$. On est donc dans la situation d'un produit
croisé à involution (l'algèbre $(Q,\can)$ munie du sous-corps
galoisien $K/k$ de groupe $\Zd$) et d'un module anti-hermitien $(V,h)$ sur
cette algèbre, et on peut appliquer les résultats de la partie
précédente.

D'après la proposition \ref{prop_desc}, on a donc $A = \End_K(V)^{\Gal(K/k)}$
où si $s$ est l'élément non trivial de $\Gal(K/k)$ on a $s\cdot f=T\circ f\circ T^{-1}$
avec $T(v)=vj^{-1}$. De plus, en appliquant la proposition
\ref{prop_morita_crois} à $g=s$ (on est bien ici dans le cas
où l'involution se restreint à un élément du groupe de Galois),
si on note $\sigma_K$ l'involution sur $\End_K(V)$ obtenue
à partir de $\sigma\otimes \Id$ par identification de $A_K$
et $\End_K(V)$, alors $\sigma_K=\sigma_b$ où $b:V\times V\to K$
est caractérisée par $h(x,y)=a(x,y)+jb(x,y)$.

\begin{lem}\label{lem_a_b}
  Pour tous $x,y\in V$, on a
  \[ b(T(x),y) = -\frac{1}{\pi}a(x,y)= -\overline{b(x,T(y))}. \]
\end{lem}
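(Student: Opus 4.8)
The plan is to prove both equalities by direct computation of $h(T(x),y)$ and $h(x,T(y))$, using only the semilinearity of $h$ over $(Q,\can)$ and the multiplication rules of the crossed product $Q = K \oplus jK$, then reading off the components against the decomposition $h = a + jb$.

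First I would record the elementary identities in $Q$ that drive everything. Since $j$ is a pure quaternion one has $\overline{j} = -j$; since $j^2 = \pi \in k^*$ one has $j^{-1} = \pi^{-1}j$; and the defining relation of the embedding gives $j\lambda = \overline{\lambda}\,j$, equivalently $\lambda j = j\overline{\lambda}$, for all $\lambda \in K$. Finally, $(V,h)$ being a right module over $(Q,\can)$, the form obeys $h(xq,yq') = \overline{q}\,h(x,y)\,q'$ for all $q,q'\in Q$. These are the only ingredients needed; note that the $\eps$-symmetry of $h$ is not required.

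For the first equality I would compute, using $T(x)=xj^{-1}$,
\[ h(T(x),y) = h(xj^{-1},y) = \overline{j^{-1}}\,h(x,y) = -\pi^{-1}j\,h(x,y), \]
and then expand $j\,h(x,y) = j\,a(x,y) + j^2 b(x,y) = j\,a(x,y) + \pi\,b(x,y)$ to obtain
\[ h(T(x),y) = -b(x,y) + j\bigl(-\pi^{-1}a(x,y)\bigr). \]
Comparing with $h(T(x),y) = a(T(x),y) + j\,b(T(x),y)$ and invoking the directness of $Q = K \oplus jK$ gives $a(T(x),y) = -b(x,y)$ as a byproduct and, more to the point, the desired $b(T(x),y) = -\pi^{-1}a(x,y)$.

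For the second equality I would act on the right instead:
\[ h(x,T(y)) = h(x,yj^{-1}) = h(x,y)\,j^{-1} = \pi^{-1}h(x,y)\,j, \]
and move $j$ to the left using $\lambda j = j\overline{\lambda}$, namely $h(x,y)\,j = a(x,y)\,j + j\,b(x,y)\,j = j\,\overline{a(x,y)} + j^2\,\overline{b(x,y)} = \pi\,\overline{b(x,y)} + j\,\overline{a(x,y)}$. Hence
\[ h(x,T(y)) = \overline{b(x,y)} + j\bigl(\pi^{-1}\overline{a(x,y)}\bigr), \]
so $b(x,T(y)) = \pi^{-1}\overline{a(x,y)}$; taking conjugates and using $\overline{\pi^{-1}} = \pi^{-1}$ (as $\pi \in k$) yields $\overline{b(x,T(y))} = \pi^{-1}a(x,y)$, and therefore $-\overline{b(x,T(y))} = -\pi^{-1}a(x,y) = b(T(x),y)$, closing the chain. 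The computation is entirely mechanical, and the only place demanding vigilance is the consistent application of $j\lambda = \overline{\lambda}j$ when commuting $j$ past a $K$-coefficient, since a slip there would misplace the conjugation bars on $a$ and $b$; there is no conceptual obstacle beyond this bookkeeping.
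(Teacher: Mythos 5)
Your proof is correct, but it takes a more self-contained route than the paper. The paper's own proof is a two-line specialization of the general crossed-product machinery: it applies the last two formulas of Lemma \ref{lem_relat_hg} with $h_1=a$, $h_s=b$ and $u_t=u_s=j$ to get $b(xj,y)=-a(x,y)$ and $b(x,yj)=\overline{a(x,y)}$, then combines these with $j^{-1}=\pi^{-1}j$. You instead redo the computation from scratch in the quaternion case, expanding $h(T(x),y)=\overline{j^{-1}}\,h(x,y)$ and $h(x,T(y))=h(x,y)j^{-1}$ against the decomposition $Q=K\oplus jK$; your intermediate identities (e.g.\ $b(x,T(y))=\pi^{-1}\overline{a(x,y)}$, and the byproduct $a(T(x),y)=-b(x,y)$) are exactly the specializations of the formulas of Lemma \ref{lem_relat_hg}, and all signs and conjugation bars check out, including $\overline{j^{-1}}=-\pi^{-1}j$ and the placement of bars when commuting $j$ past $K$-coefficients. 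What each approach buys: the paper's citation is shorter and fits its systematic strategy of treating $(Q,\can)$ with $K\subset Q$ as an instance of a crossed product with involution, so the lemma comes for free from machinery already established (and reused elsewhere); your direct computation is more elementary, avoids the cocycle data $\alpha$, $\mu_g$ and the projections $\pi_g$ entirely, and makes visible that only sesquilinearity $h(xq,yq')=\overline{q}\,h(x,y)q'$ and the relation $j\lambda=\overline{\lambda}j$ are needed — in particular, as you correctly observe, the anti-hermitian symmetry of $h$ plays no role here, which is also true of the paper's argument since the two formulas of Lemma \ref{lem_relat_hg} it invokes do not involve $\eps$.
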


\begin{proof}
  On applique les deux dernières formules du lemme \ref{lem_relat_hg},
  avec ici $h_1=a$, $h_s=b$, et $u_t=u_s=j$. On trouve alors
  $b(xj,y)= -a(x,y)$ et $b(x,yj)= \overline{a(x,y)}$, et
  en combinant ces deux formules on trouve le résultat voulu.
\end{proof}

Ainsi, $(B,\tau)$ est la descente de $Cl^0(V,b)$ pour l'action
galoisienne induite par celle décrite ci-dessus. On décrit
cette action :

\begin{prop}
  L'action naturelle de $\Gal(K/k)$ sur $Cl^0(V,b)$ est donnée pour
  tous $x,y\in V$ par
  \[ s(x\cdot y) = -\pi T(x)\cdot T(y), \]
  et $B = Cl^0(V,b)^s$ pour cette action.
\end{prop}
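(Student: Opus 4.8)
The plan is to verify the formula $s(x \cdot y) = -\pi\, T(x) \cdot T(y)$ directly from the constructions of the previous subsection, and then to derive the statement $B = Cl^0(V,b)^s$ as a consequence of the descent proposition \ref{prop_desc} specialized to this situation.

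First I would recall the precise form of the Galois action. By the general descent result (proposition \ref{prop_desc}), the action of $s$ on $\End_K(V) = A_K$ is $S_s(f) = T \circ f \circ T^{-1}$ with $T(v) = vj^{-1}$, and the fixed algebra is $A = \End_Q(V)$. Since $(B,\tau) = Cl(A,\sigma)$ is the even Clifford algebra $Cl^0(V,b)$ after extension to $K$ (where $\sigma_K = \sigma_b$, as established just before via lemma \ref{lem_a_b}), the action of $s$ on $A_K$ induces an action on $Cl^0(V,b)$, and $B$ is recovered as the fixed subalgebra. The only genuinely computational point is to identify the induced action on the generators $x \cdot y$ of $Cl^0$.

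The key step is the computation of $s(x \cdot y)$. The even Clifford algebra $Cl^0(V,b)$ is generated by products $x \cdot y$ of two vectors, and these can be identified with certain rank-one endomorphisms (or with elements of $V \otimes_K V$ through the bilinear form $b$); the semilinear action $S_s$ transported through this identification acts on $x \cdot y$. Concretely, I would express $x \cdot y \in Cl^0(V,b)$ via the action on $V$ determined by $b$, apply the semilinear operator $T$ on each vector slot, and track the scalars. The map $T$ is only $K$-semilinear, so applying it to a product of two vectors produces $T(x) \cdot T(y)$ up to a scalar coming from the interaction of $T$ with the form $b$. Using $T(v) = vj^{-1}$ together with the relations of lemma \ref{lem_a_b} — specifically $b(T(x), y) = -\frac{1}{\pi} a(x,y) = -\overline{b(x, T(y))}$ and $T^2 = $ multiplication by $j^{-2} = \pi^{-1}$ (so $T$ squares to a scalar) — I would compute how $b(T(x), T(y))$ relates to $\overline{b(x,y)}$. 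The factor $-\pi$ should emerge precisely from $b(T(x), T(y)) = -\frac{1}{\pi}\, a(x, T(y)) = -\frac{1}{\pi}\cdot(-\pi)\,\overline{b(x,y)} = \overline{b(x,y)}$ and from keeping track of the semilinearity constant $j^{-2}$ attached to each slot.

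The hard part will be getting the scalar $-\pi$ exactly right, because there are several competing sign and scaling contributions: the two factors of $j^{-1}$ from $T$ applied in each slot (contributing $j^{-2} = \pi^{-1}$, hence a factor that must be cleared), the sign from $b(T(x),y) = -\frac{1}{\pi}a(x,y)$, and the compatibility between the Clifford multiplication and the semilinear transport. I would therefore reduce to the split case or to an explicit orthogonal basis $(e_i)$ where $h = \langle z_1, \dots, z_r\rangle$, compute $s(e_p \cdot e_q)$ concretely using $b(e_p, e_q)$ and the values $a(e_i, e_i), b(e_i,e_i)$, and confirm that the formula holds on generators; since both sides are $K$-semilinear in the appropriate sense and $Cl^0$ is generated by such products, equality on generators suffices. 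Once $s(x\cdot y) = -\pi\, T(x)\cdot T(y)$ is established, the identity $B = Cl^0(V,b)^s$ is immediate from the fact that $B$ is by definition the descent of $Cl^0(V,b)$ along the Galois action transported from $A_K$, i.e. the fixed subalgebra under $s$.
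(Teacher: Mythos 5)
You follow the same route as the paper: identify $x\cdot y$ with the endomorphism $\phi_b(x\otimes y)\colon v\mapsto x\,b(y,v)$, transport the action $S_s = T\circ(-)\circ T^{-1}$ of the proposition \ref{prop_desc} through this identification, extract the scalar via the lemme \ref{lem_a_b}, and obtain $B=Cl^0(V,b)^s$ by descent — this is exactly the paper's proof. However, the one scalar identity you actually display is wrong. From $h(x,T(y))=h(x,y)j^{-1}$ and $\lambda j^{-1}=j^{-1}\overline{\lambda}$ for $\lambda\in K$, one gets $a(x,T(y))=\overline{b(x,y)}$, not $-\pi\,\overline{b(x,y)}$; consequently $b(T(x),T(y))=-\tfrac{1}{\pi}\,\overline{b(x,y)}$, not $\overline{b(x,y)}$. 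Your value fails the basic consistency test on $x\cdot x=b(x,x)$: the claimed formula forces $\overline{b(x,x)}=s(x\cdot x)=-\pi\,b(T(x),T(x))$, which holds with the corrected value and not with yours. (The missing factor comes from $T^2$ being multiplication by $\pi^{-1}$, which you mention but do not actually feed into the chain of equalities.)

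That said, the slip sits in a step you do not need. The clean computation — and the one the paper performs — evaluates the conjugated endomorphism directly on a vector: $(T\circ\phi_b(x\otimes y)\circ T^{-1})(v) = x\,b(y,vj)\,j^{-1} = \pi\,T(x)\,\overline{b(y,T(v))} = -\pi\,T(x)\,b(T(y),v) = \phi_b\bigl(-\pi\,T(x)\otimes T(y)\bigr)(v)$, using only $vj=\pi T(v)$, the semilinearity of right multiplication by $j^{-1}$, and the second identity of the lemme \ref{lem_a_b}. This settles the scalar $-\pi$ in three lines without ever computing $b(T(x),T(y))$, so the fallback reduction to an explicit orthogonal basis that you propose is unnecessary. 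With the sign bookkeeping corrected as above, your argument coincides with the paper's, including the final descent step.
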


\begin{proof}
  On considère l'identification canonique
  \[ \foncdef{\phi_b}{V\otimes_K V}{\End_K(V)}{x\otimes y}{v\mapsto xb(y,v)} \]

  Alors $s \cdot \phi_b(x\otimes y) = T\circ \phi_b(x\otimes y)\circ T^{-1}$ donc
  \begin{align*}
    (s\cdot \phi_b(x\otimes y))(v) &= xb(y,vj)j^{-1} \\
                                   &= \pi T(x)\overline{b(y,T(v))} \\
                                   &= -\pi T(x)b(T(y),v) \\
                                   &= -\pi \phi_b(T(x)\otimes T(y))(v).
  \end{align*}
  Donc $s\cdot \phi_b(x\otimes y)) = \phi_b(-\pi T(x)\otimes T(y))$,
  d'où le résultat puisque l'action sur $Cl^0(V,b)$ est caractérisée
  par l'action sur l'image de $\phi_b$.
\end{proof}

On sait que $B$ est engendrée par l'image du morphisme canonique
$\psi: V\otimes_Q ( ^\gamma V)\to Cl(A,\sigma)$. On va décrire
de façon explicite ce morphisme :

\begin{prop}\label{prop_cliff_psi}
  La composition de $\psi$ avec l'injection canonique
  $Cl(A,\sigma)\inj Cl^0(V,b)$ est donné par
  \[ \psi(x\otimes y) = \pi(T(x)\cdot y - x\cdot T(y)).\]
\end{prop}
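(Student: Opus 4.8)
The plan is to work inside $Cl^0(V,b)$, where everything is explicit, and then read off the formula for $\psi$ by tracking the canonical maps. I recall that $B = Cl(A,\sigma)$ is the descent of $Cl^0(V,b)$ for the Galois action computed in the preceding proposition, and that $B$ is generated by the image of the canonical map $\psi : V\otimes_Q ({}^\gamma V)\to Cl(A,\sigma)$. The strategy is to use the standard description of this canonical map for the even Clifford algebra of a quadratic space: for a $K$-quadratic space $(V,b)$, the even Clifford algebra receives a map from $V\otimes_K V$ sending $x\otimes y$ to the product $x\cdot y\in Cl^0(V,b)$. The point is that the map $\psi$ for the involution $(A,\sigma)$ must, after scalar extension to $K$ and the identification $A_K\simeq \End_K(V)$, factor through this even-Clifford description, and I would identify the correct $Q$-balanced combination of $x\cdot y$ terms that survives descent.

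**Identifying the descent-invariant combination.** First I would write down the natural $K$-linear candidate built from $x\cdot y$ and its Galois twist. Since $\psi(x\otimes y)$ must land in $B = Cl^0(V,b)^s$, the expression has to be $s$-invariant, so I would symmetrize an elementary expression using the explicit action $s(x\cdot y) = -\pi\, T(x)\cdot T(y)$ established just above. The natural antisymmetric-looking combination to try is $T(x)\cdot y - x\cdot T(y)$: applying $s$ and using $s(T(x)\cdot y) = -\pi\, T^2(x)\cdot T(y)$ together with $T^2 = j^{-2} = \pi^{-1}$ (since $T(v)=vj^{-1}$ and $j^2=\pi$) should collapse $s$ acting on this combination back to a scalar multiple of itself, confirming invariance up to the factor $\pi$ out front. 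This is why the claimed formula carries the prefactor $\pi$ and the particular sign pattern.

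**Matching normalizations.** The main work is then to check that this invariant element is genuinely the image of $x\otimes y$ under the \emph{canonical} map $\psi$, i.e. that the normalization is correct and not off by a scalar. For this I would use the definition of $\psi$ as the canonical $V\otimes_Q ({}^\gamma V)\to Cl(A,\sigma)$ map intrinsic to the involution, unwind it through the Morita identification $A_K\simeq\End_K(V)$ and through $\phi_b$, and compare with the even-Clifford multiplication. The relation $b(T(x),y) = -\frac1\pi a(x,y) = -\overline{b(x,T(y))}$ from Lemme \ref{lem_a_b} is exactly the bridge needed to rewrite products $x\cdot y$ in $Cl^0(V,b)$ in terms of the quaternionic data, so that the $Q$-sesquilinear structure on $V\otimes_Q({}^\gamma V)$ matches what $\psi$ produces. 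The twisted module structure ${}^\gamma V$ contributes the $T$-twist on the second factor, which is precisely what accounts for the two terms $T(x)\cdot y$ and $x\cdot T(y)$ appearing with opposite signs.

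**Expected obstacle.** I expect the delicate point to be not the invariance computation (which is a short manipulation with $T^2=\pi^{-1}$ and the sign $s(x\cdot y)=-\pi\,T(x)\cdot T(y)$) but rather pinning down the exact scalar: verifying that $\psi(x\otimes y)$ equals $\pi(T(x)\cdot y - x\cdot T(y))$ \emph{on the nose} rather than up to a unit. This requires being careful about the conventions in the definition of the canonical map into $Cl(A,\sigma)$ and about how the identification $Cl(A,\sigma)\hookrightarrow Cl^0(V,b)$ is set up, since a competing normalization would rescale the right-hand side. I would settle this by testing the formula on the diagonal generators $x=y=e_i$ (where $T(e_i)$ and the values $b(e_i,e_i)$, $a(e_i,e_i)$ are computable from $h=\fdiag{z_1,\dots,z_r}$ via Lemme \ref{lem_a_b}) and checking consistency with the generator relations that Théorème \ref{thm_d3_a3} will later require, in particular that the resulting $u_{p,q}$ satisfy $\tau(u_{p,q})=-u_{p,q}$ and $u_{p,q}^2 = \Trd_Q(z_pz_q)-2\xi_p\xi_q$.
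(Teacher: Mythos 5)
Your plan A — unwind the canonical map through the identifications $\phi_h$ and $\phi_b$ and use Lemme \ref{lem_a_b} as the bridge — is exactly the paper's proof, but you never execute it, and the method you actually commit to for the point you call delicate (the normalization) would fail. Testing the formula on the diagonal generators $x=y=e_i$ and checking consistency with the relations of Théorème \ref{thm_d3_a3} is circular: the relations $\tau(u_{p,q})=-u_{p,q}$ and $u_{p,q}^2=\Trd_Q(z_pz_q)-2\xi_p\xi_q$ are \emph{consequences} of the present proposition (the theorem's proof computes $\xi_i$ and $u_{p,q}$ precisely by substituting $\psi(x\otimes y)=\pi(T(x)\cdot y-x\cdot T(y))$), so they cannot be used to certify it. It is also insufficient: $\psi$ and your candidate are two $k$-bilinear maps $V\times V\to Cl^0(V,b)$ with no a priori proportionality between them, so agreement on finitely many special elements proves nothing about general $x\otimes y$; there is no ``global scalar'' to pin down unless one first proves the two maps are proportional, which you do not do.

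In fact the issue you defer evaporates as soon as the deferred computation is done, and it is three lines. One checks that the square $\phi_b\circ f=\iota\circ\phi_h$ commutes, where $f(x\otimes y)=\pi\left(T(x)\otimes y-x\otimes T(y)\right)$ and $\iota:\End_Q(V)\inj\End_K(V)$ is the inclusion: evaluating on $v\in V$,
\[ \phi_b(f(x\otimes y))(v)=\pi\bigl(T(x)\,b(y,v)-x\,b(T(y),v)\bigr), \]
and using $T(x)=xj^{-1}$, $\pi j^{-1}=j$ and the relations of Lemme \ref{lem_a_b} this equals $x\,h(y,v)=\phi_h(x\otimes y)(v)$. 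Since the canonical map $\End_K(V)\to Cl^0(V,b)$ is the Clifford product composed with $\phi_b^{-1}$, and $\psi$ is its restriction along $\phi_h$, this identity determines $\psi(x\otimes y)$ on the nose, scalar included — no spot checks needed. Your $s$-invariance computation (via $T^2=\pi^{-1}$ and $s(x\cdot y)=-\pi T(x)\cdot T(y)$) is correct and a reasonable sanity check, but it carries no logical weight: many expressions are $s$-invariant, and the invariance of this one follows anyway from the formula once it is proved.
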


\begin{proof}
  Il s'agit de vérifier que l'application $f$ définie par
  $f(x\otimes y) = \pi(T(x)\otimes y - x\otimes T(y))$
  fait commuter le diagramme
  \[ \begin{tikzcd}
      V\otimes_Q ^{\gamma} V \rar{f} \dar{\phi_h} & V\otimes_K V \dar{\phi_b} \\
      \End_Q(V) \rar & \End_K(V).
    \end{tikzcd} \]
  Or :
  \begin{align*}
    \phi_b(f(x\otimes y))(v) &= \phi_b(\pi(T(x)\otimes y-x\otimes T(y)))(v) \\
                             &= \pi(T(x)b(y,v)-xb(T(y),v)) \\
                             &= x(jb(y,v)-a(y,v)) \\
                             &= \phi_h(x\otimes y)(v)
  \end{align*}
  où on a utilisé les relations du lemme \ref{lem_a_b}.
\end{proof}

On peut passer à la preuve du théorème :

\begin{proof}
  Pour tous $x,y\in V$, on pose  $u(x,y) = \frac{1}{2}\psi(x\otimes y + y\otimes x)$
  et $u(x)=\frac{1}{2}\psi(x\otimes x)$. On définit alors
  les générateurs de l'énoncé comme :
  \[ \xi_i = u(e_i),\quad u_{p,q}=u(e_p,e_q). \]
  Le choix de $K$ n'intervenant pas dans l'énoncé, on
  peut toujours supposer qu'aucun des $z_i$ n'est dans $K$.
  Pour tout $1\ppq i\ppq r$, on écrit $z_i=x_i+jy_i$
  avec $x_i,y_i\in K$, et on peut donc supposer $y_i\neq 0$.
  Si on pose $f_i = e_iz_i$, alors
  $(e_i,f_i)_i$ est une $K$-base de $V$, orthogonale pour $b$.
  En effet, la seule chose à vérifier est que $b(e_i,f_i)=0$,
  mais $h(e_i,f_i)=h(e_i,e_i)z_i=z_i^2\in k$. Notons que
  $b(e_i,e_i)=y_i$, et $b(f_i,f_i)=-z_i^2y_i$.

  On a
  \[ \xi_i = -\frac{1}{y_i}e_if_i \]
  et
  \[ u_{p,q} = \frac{1}{y_py_q}\left( (x_qy_p-x_py_q)e_pe_q - y_pe_pf_q -y_qe_qf_p \right).  \]
  En effet, on a
  \begin{align*}
    e_ij &= e_i\left[ (x_i+jy_i)\frac{1}{y_i} - \frac{x_i}{y_i}\right] \\
         &= (-e_ix_i + f_i)\frac{1}{y_i}
  \end{align*}
  ce qu'on peut injecter dans $\xi_i = \frac{1}{2}((e_ij)e_i - e_i(e_ij))$
  et $u_{p,q} = \frac{1}{2}((e_pj)e_q - e_p(e_q)j + (e_qj)e_p - e_q(e_pj))$
  en tenant compte des relations d'anticommutation dans $Cl(V,b)$.

  Toutes les relations résultent alors de calculs explicites dans $Cl(V,b)$
  en utilisant la base des $e_i$ et $f_i$. Notamment, on voit
  immédiatement par les relations d'orthogonalité que les $\xi_i$
  commutent, que $u_{p,q}$ commute avec $\xi_i$ si $i\not\in \{p,q\}$,
  qu'ils anti-commutent si $i\in \{p,q\}$,
  et que $u_{i,j}$ et $u_{p,q}$ commutent quand $\{i,j\}\cap \{p,q\}=\emptyset$.
  L'action de l'involution canonique est aussi facile à établir puisque
  par définition $\tau(x\cdot y) = y\cdot x = -x\cdot y$ si $x$ et $y$ sont
  orthogonaux.
  
  On a aussi facilement $\xi_i^2 = -\frac{1}{y_i^2}e_i^2f_i^2 = z_i^2$,
  et
  \begin{align*}
    u_{p,q}^2 
    &= \frac{1}{y_p^2y_q^2}\left(-(x_qy_p-x_py_q)^2e_p^2e_q^2 -y_p^2e_p^2f_q^2 -y_q^2e_q^2f_p^2 - 2y_py_qe_pf_pe_qf_q\right) \\
    &= -\frac{1}{y_py_q}(x_qy_p-x_py_q)^2 + \frac{1}{y_q}z_q^2 + \frac{1}{y_p}z_p^2 - 2\frac{1}{y_py_q}e_pf_pe_qf_q \\
    &= (2x_px_q + \pi y_p\overline{y_q} + \pi y_q \overline{y_p}) -2\xi_p\xi_q \\
    &= \Trd_Q(z_pz_q) -2\xi_p\xi_q.
  \end{align*}

  La dernière relation est la plus difficile à établir. Écrivons
  $z_i = \lambda_0 + \lambda_pz_p + \lambda_qz_q + \lambda_{p,q}z_pz_q$.
  Alors
  \begin{align}\label{eq_xy_lambda}
    x_i &= \lambda_0 + \lambda_p x_p + \lambda_{p,q}x_px_q + \lambda_{p,q}\pi \overline{y_p}y_q \\
    y_i &= \lambda_p y_p + \lambda_q y_q + \lambda_{p,q}y_px_q - \lambda_{p,q}x_py_q.
  \end{align}
  Notons que comme $z_i$ est un quaternion pur, $x_i=-\overline{x_i}$ ce
  qui se traduit par l'équation
  \begin{equation}\label{eq_lambda}
    2\lambda_0 + \lambda_{p,q}(2x_px_q + \pi\overline{y_p}y_q + \pi y_p\overline{y_q}) = 0.
  \end{equation}
  On doit comparer d'une part
  \[ \frac{1}{y_i^2y_py_q}\left( (x_py_i-x_iy_p)e_ie_p -y_ie_if_p-y_pe_pf_i \right)\left( (x_qy_i-x_iy_q)e_ie_q -y_ie_if_q-y_qe_qf_i \right)  \]
  et d'autre part
  \begin{align*}
    & \frac{1}{y_py_q}\left( \lambda_0 + \frac{1}{y_i}e_if_i - \frac{\lambda_p}{y_p}e_pf_p + \frac{\lambda_q}{y_q}e_qf_q - \frac{\lambda_{p,q}}{y_py_q}e_pf_pe_qf_q\right) \\
    & \times \left( (x_qy_p-x_py_q)e_pe_q - y_pe_pf_q - y_qe_qf_p \right).
  \end{align*}
  On peut alors développer et identifier les composantes correspondantes.
  On trouve des termes non nuls pour les composantes selon
  $e_pe_q$, $e_pf_q$, $e_qf_p$, $f_pf_q$, $e_if_ie_pe_q$, $e_if_ie_pf_q$,
  et $e_if_ie_qf_p$. Les coefficients selon ces trois dernières
  composantes sont immédiatement égaux après calcul, et ne font pas intervenir
  les la décomposition de $z_i$ (ils valent respectivement $\frac{x_q}{y_iy_q}-\frac{x_p}{y_iy_p}$,
  $-\frac{1}{y_iy_q}$ et $-\frac{1}{y_iy_p}$). Les quatre
  autres coefficients demandent plus de minutie, on est amené à montrer
  les égalités suivantes :
  \begin{align*}
    \lambda_0(x_qy_p-x_py_q) + \lambda_py_qz_p^2 + \lambda_qy_pz_q^2 &= -x_px_qy_i + (x_py_q+x_qy_p)x_i + \pi y_py_q \overline{y_i} ; \\
    \lambda_0y_p + \lambda_q(x_qy_p-x_py_q)+ \lambda_{p,q}y_qz_p^2 &= x_iy_p - y_ix_p ; \\
    \lambda_0y_q + \lambda_p(x_qy_p-x_py_q)+ \lambda_{p,q}y_pz_q^2 &= y_ix_q - x_iy_q ; \\
    \lambda_py_p + \lambda_qy_q + \lambda_{p,q}(x_qy_p-x_py_q) &= y_i.
  \end{align*}
  Pour la deuxième et la quatrième il suffit de remplacer les occurrences de $x_i$ et $y_i$
  par les formules (\ref{eq_xy_lambda}), et pour les deux autres il faut
  en plus utiliser la relation (\ref{eq_lambda}).
\end{proof}

\begin{rem}
  Bien que le théorème donne une description complète de la structure
  de l'algèbre, il ne donne pas directement de formule pour un cocycle
  associé à la structure de produit croisé associée à $L$. En effet,
  on dispose de générateurs canoniques correspondant aux éléments
  $\tau_{i,j}$ du groupe de Galois, mais il n'y a pas vraiment
  de choix qui s'impose naturellement pour les autres éléments
  du groupe. On pourrait par exemple décomposer un $g\in G$
  en produit de $\tau_{i,j}$ de support disjoint, en faisant
  un choix de normalisation (par exemple décomposer en
  $\tau_{i_1,j_1}\cdots \tau_{i_s,j_s}$ avec $i_1<j_1<i_2<\dots <j_s$),
  et décréter que $u_g=u_{i_1,j_1}\cdots u_{i_s,j_s}$. Cela a l'avantage
  de ne fait intervenir que des générateurs qui commutent entre
  eux, mais le calcul des produits entre différents $u_g$ devient
  très pénible car faisant intervenir de façon très lourde la combinatoire
  de l'ordre sur les coefficients. D'un autre côté, on peut
  aussi par exemple décomposer $g$ en $\tau_{i_1,r}\cdots \tau_{i_s,r}$ en utilisant
  l'indice $r$ comme pivot et définir $u_g=u_{i_1,r}\cdots u_{i_s,r}$.
  On a alors plus de régularité dans le schéma, mais les générateurs
  en question ne commutent pas, et les calculs restent donc pénibles.
  Dans tous les cas, bien qu'on puisse en théorie décrire de façon
  élémentaire un cocycle correspondant à $B$, la tâche est en
  réalité assez pénible dès que $r$ n'est pas très petit.
\end{rem}






\subsection{Algèbre discriminante en degré 4}

On veut maintenant décrire l'autre sens de l'équivalence
entre algèbres de type $A_3$ et $D_3$, c'est-à-dire calculer
explicitement l'algèbre discriminante d'une algèbre à involution
unitaire de degré 4.

Soit donc $(B,\tau)$ une telle algèbre sur $k$, de centre $K = k(\xi)$,
avec $\xi^2 = \delta\in k^*$.
On suppose qu'on dispose de $L\subset B$ $k$-algèbre $G$-galoisienne
avec $G = (\Zd)^3$ telle que $\tau(L)=L$, avec $\tau_{|L}\in G$.
Comme on sait que $(B,\tau)$ est l'algèbre de Clifford d'une
certaine algèbre d'indice 2 (précisément son algèbre discriminante),
la partie précédente montre qu'une telle sous-algèbre existe toujours.

On pose alors $H\subset G$ tel que $K = L^H$, et on écrit
$H = \langle s,t\rangle$. On a $L = K(\xi_1,\xi_2)$ avec
$\xi_i^2 = \delta_i\in K^*$ tels que $s(\xi_1)=-\xi_1$, $s(\xi_2)=\xi_2$, 
$t(\xi_1)\xi_1$ et $t(\xi_2)=-\xi_2$. On choisit
$u_s$, $u_t$ et $u_{st}$ dans $B^*$ qui définissent une structure de produit
croisé. En utilisant les formules de la proposition \ref{prop_mu}
et la théorème de Hilbert 90, on voit que quitte à changer les $u_g$
on peut supposer $\tau(u_g)=u_g$ pour tout $g\in H$ (quand on fait le
calcul en sens inverse on tombe plutôt sur des générateurs vérifiant
$\tau(u_g)=-u_g$ mais cela nous permet d'éviter d'avoir à gérer des
signes, et il suffit de remplacer $u_g$ par $\xi u_g$ pour retrouver
l'autre situation).

On pose un certain nombre de notations : pour tout $g\in H$,
$a_g=\alpha(g,g)=u_g^2\in L^*$, $v=\alpha(s,t)$.
On peut remarquer que $a_g\in (L^*)^{\langle g,\tau\rangle}$,
et on pose $n(a_g)=N_{L^{\langle g,\tau\rangle}/k}(a_g)$
et $tr(a_g)=\Tr_{L^{\langle g,\tau\rangle}/k}(a_g)$
(donc par exemple $n(a_s)=a_st(a_s)$ et $tr(a_s)=a_s+t(a_s)$).
On définit également $N = N_{L/K}(v)$, dont on vérifie sans peine que
c'est un élément de $k^*$. La structure de produit croisé
de $(B,\tau)$ est entièrement caractérisée par $a_s$, $a_t$
et $v$. Pour des raisons techniques, on supposera que
les $a_g$ ne sont pas dans $k^*$, ce qui est toujours
possible quitte à changer les $u_g$.
On arrive alors à l'énoncé suivant :

\begin{thm}\label{thm_a3_d3}
  Soit $(A,\sigma)=D(B,\tau)$. Alors $A\simeq M_3(Q)$ où
  $Q = (\delta\delta_1,n(a_s))$. On note $k'= L^{\langle \tau,t \rangle} =k(\xi\xi_1)\subset Q$,
  et $j\in Q$ tel que $j^2=n(a_s)$ et $Q = k'\oplus jk'$.
  Alors $\sigma\simeq \sigma_h$ où $h = \fdiag{z_1,z_2,z_3}$ avec :
  \begin{align*}
    z_1 &= \frac{\delta tr(a_{st})\xi\xi_1}{Nn(a_{st})} + j\frac{2\delta t(v) \xi\xi_1}{Na_sa_{st}\tau(v)}, \\
    z_2 &= \delta_2 N n(a_{st}) \xi\xi_1, \\
    z_3 &= \frac{-tr(a_s)\xi\xi_1}{n(a_s)} + j\frac{-2\xi\xi_1}{n(a_s)}.
  \end{align*}
\end{thm}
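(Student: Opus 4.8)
La stratégie que je suivrais repose sur le fait, établi dans \cite[15.D]{BOI}, que les foncteurs $D$ (algèbre discriminante) et $Cl$ (algèbre de Clifford) sont mutuellement inverses dans l'équivalence entre types $A_3$ et $D_3$ : on dispose d'isomorphismes naturels $D(Cl(A,\sigma))\simeq (A,\sigma)$ et $Cl(D(B,\tau))\simeq (B,\tau)$. Pour établir que $D(B,\tau)\simeq (M_3(Q),\sigma_h)$ avec le $h$ annoncé, il suffit donc de vérifier que l'algèbre de Clifford de $(M_3(Q),\sigma_h)$, calculée par le théorème \ref{thm_d3_a3}, redonne bien $(B,\tau)$ munie de sa structure de produit croisé $(a_s,a_t,v)$. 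Autrement dit, les formules de l'énoncé sont la solution du système obtenu en \emph{inversant} le théorème \ref{thm_d3_a3}, et toute la preuve consiste à valider cette solution.

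Je commencerais par contrôler que les données sont bien définies. Il faut vérifier que les $z_i$ sont des quaternions purs inversibles de $Q=(\delta\delta_1,n(a_s))$, en calculant les $\Nrd_Q(z_i)$ (donc les $z_i^2$) à partir des formules via la décomposition $Q=k'\oplus jk'$ avec $k'=k(\xi\xi_1)$ et $j^2=n(a_s)$. En particulier il faut confirmer que $z_1^2z_2^2z_3^2$ coïncide modulo les carrés avec $\delta$, de sorte que le centre $k(\xi_1\xi_2\xi_3)$ de $Cl$ (notations de \ref{thm_d3_a3}) s'identifie à $K=k(\xi)$, et que les $\xi_i^2=z_i^2$ redonnent les bons $\delta_i$. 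Quitte à modifier la base, comme expliqué dans la remarque suivant le théorème \ref{thm_d3_a3}, on peut supposer les $z_i$ deux à deux $k$-indépendants, ce qui légitime l'application du théorème.

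Le cœur de la preuve est la mise en correspondance des deux structures de produit croisé. Le module $L$ étant $(\Zd)^3$-galoisien dans les deux descriptions, je poserais le dictionnaire $s=\tau_{1,3}$, $t=\tau_{2,3}$, $st=\tau_{1,2}$, vérifiable en regardant l'action sur $\xi_1,\xi_2$ et $\xi_3=\xi/(\xi_1\xi_2)$. Il faut alors tenir compte de la renormalisation des involutions : les générateurs $u_{p,q}$ du théorème \ref{thm_d3_a3} vérifient $\tau(u_{p,q})=-u_{p,q}$, tandis qu'on a choisi $\tau(u_g)=u_g$, d'où $u_g=\xi u_{p,q}$ et donc $a_g=u_g^2=\delta\,u_{p,q}^2$ puisque $\xi$ est central. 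Les relations (iv) et (v) du théorème \ref{thm_d3_a3} se traduisent alors en
\begin{align*}
  a_s &= \delta(\Trd_Q(z_1z_3)-2\xi_1\xi_3), \quad a_t = \delta(\Trd_Q(z_2z_3)-2\xi_2\xi_3), \\
  a_{st} &= \delta(\Trd_Q(z_1z_2)-2\xi_1\xi_2),
\end{align*}
et $v=\alpha(s,t)$ se lit sur la relation mixte $u_su_t=v\,u_{st}$. Ces données déterminant $(B,\tau)$ à isomorphisme près, leur concordance suffit.

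La dernière étape, et le principal obstacle technique, est la vérification effective de ces identités à partir des expressions explicites de $z_1,z_2,z_3$. Ce sont des calculs dans $Q=k'\oplus jk'$ faisant intervenir $\Trd_Q$ et $\Nrd_Q$, où jouent un rôle essentiel les définitions $n(a_g)=N_{L^{\langle g,\tau\rangle}/k}(a_g)$, $tr(a_g)$ et $N=N_{L/K}(v)$, ainsi que les contraintes de cocycle (\ref{eq_structure_mu}) et (\ref{eq_mu_alpha}) reliant $a_s,a_t,a_{st}$ et $v$. La difficulté tient à ce que les $a_g$ vivent dans des sous-corps distincts de $L$ et qu'il faut suivre avec soin l'action galoisienne ; l'hypothèse $a_g\notin k^*$ et le recours au théorème 90 de Hilbert pour normaliser les $u_g$ sont précisément ce qui rend ces calculs cohérents. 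Une fois ces identités établies, le théorème \ref{thm_d3_a3} fournit $Cl(M_3(Q),\sigma_h)\simeq (B,\tau)$, et l'inversibilité de l'équivalence donne la conclusion.
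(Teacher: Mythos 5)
Votre démarche est réellement différente de celle du texte. La preuve du texte est directe et constructive : elle décrit $B$ puis $\Lambda^2 B$ par des cocycles explicites à valeurs dans $PGL_4(L)$ et $PGL_6(L)$, étend le cocycle à $G$ par la matrice $Q_\tau$, exhibe une matrice $Y$ rendant ce cocycle cohomologue à celui définissant $M_3(Q)$, puis descend la forme canonique de $\Lambda^2 E$ par une équivalence de Morita pour lire les $z_i$ ; c'est d'ailleurs ainsi que les formules sont \emph{trouvées}. Vous proposez au contraire de les \emph{vérifier} : appliquer le théorème \ref{thm_d3_a3} à $(M_3(Q),\sigma_h)$, constater que l'on retrouve $(B,\tau)$, et conclure par la quasi-inversibilité de $Cl$ et $D$ (\cite[§15.D]{BOI}). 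Ce schéma est logiquement correct, et votre dictionnaire ($s=\tau_{1,3}$, $t=\tau_{2,3}$, $st=\tau_{1,2}$, $u_g=\xi u_{p,q}$, donc $a_g=\delta u_{p,q}^2$) est exact. Il a le mérite de réutiliser \ref{thm_d3_a3} et d'éviter une seconde descente ; en contrepartie il ne fournit pas l'isomorphisme explicite $\psi(A)=Y^{-1}AY$ que donne le texte.

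Deux lacunes concrètes subsistent. (1) L'hypothèse d'indépendance $k$-linéaire deux à deux des $z_i$, nécessaire pour appliquer \ref{thm_d3_a3}, ne peut pas être obtenue ici \og{}quitte à modifier la base\fg{} : modifier la base remplacerait les $z_i$ par d'autres éléments, alors que ce sont précisément les $z_i$ de l'énoncé que vous devez tester. Il faut donc la vérifier directement : elle est automatique pour $\{z_1,z_2\}$ et $\{z_2,z_3\}$ (car $z_2$ est porté par $\xi\xi_1$ tandis que $z_1$ et $z_3$ ont une composante en $j$ non nulle, $t(v)$ et $n(a_s)$ étant inversibles), mais pour $\{z_1,z_3\}$ c'est une condition qui peut dégénérer et que votre argument ne traite pas. (2) Surtout, la substance du théorème est laissée en suspens : il faut d'abord fixer une identification $G$-équivariante et compatible à $\tau$ entre l'algèbre $k(\xi_1,\xi_2,\xi_3)$ de \ref{thm_d3_a3} (où $\xi_i^2=z_i^2$) et le $L$ de $(B,\tau)$ — ce qui exige de calculer les $z_i^2$ et ne détermine les $\xi_i$ qu'à des constantes près —, puis vérifier que, via cette identification, le cocycle obtenu est cohomologue à celui donné par $(a_s,a_t,v)$ ; vos identités $a_s=\delta(\Trd_Q(z_1z_3)-2\xi_1\xi_3)$, etc., ne sont donc bien posées qu'une fois ces constantes fixées, et il faut s'attendre à une concordance à cobord près plutôt qu'à une égalité stricte. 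Aucun de ces calculs (traces et normes dans $Q$ des produits $z_pz_q$, relations de cocycle de \ref{prop_mu}) n'est mené, et ils sont d'un volume comparable à ceux de la preuve du texte. En l'état, votre proposition est un plan de vérification plausible, non une démonstration.
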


On va passer le reste de cette partie à montrer ce résultat,
et on va procéder par descente. Le plan de la preuve est donc
de caractériser $D(B,\tau)$ par un 1-cocyle $G\To PGL_6(L)$,
noté $g\mapsto [Q_g]$ (où $[Q_g]$ est la classe de $Q_g\in GL_6(L)$),
de sorte que $D(B,\tau)\subset M_6(L)$ soit caractérisé comme
les points fixes de l'action semi-linéaire de $G$ donnée par
$g\ast A = Q_g g(A) Q_g^{-1}$. Pour ça on commence par décrire
$B$ à partir d'un cocycle $H\to PGL_4(L)$, puis on en déduit
un cocycle $H\To PGL_6(L)$ définissant $\Lambda^2B$, et on étend
ce cocycle à $G$ en donnant l'action de $\tau$, comme décrit
dans \cite[§10.E]{BOI}. Il s'agira ensuite de donner un autre 1-cocycle $G\to PGL_6(L)$
(disons $g\mapsto [R_g]$) décrivant $M_3(Q)$, et de montrer que
ces deux cocycles sont cohomologues, ce qui établit l'isomorphisme
de l'énoncé. Cela revient à trouver $Y\in GL_6(L)$ telle que
$[Y^{-1}Q_g g(Y)] = [R_g]$.

En ce qui concerne l'involution $\sigma$, il s'agit d'une descente
de l'involution adjointe de la forme canonique sur
$L^6\simeq \Lambda^2(L^4)$, et on peut utiliser la matrice
explicite $Y$, ainsi qu'une équivalence de Morita, pour
trouver une matrice de la forme hermitienne $h$ à partir
d'une matrice pour la forme canonique.

\subsubsection*{Cocycle définissant $B$}

Soit $E$ un $L$-module libre de dimension 4, de base $(e_i)$,
qu'on utilisera pour identifier $\End_L(E)= M_4(L)$.
On définit alors un $K$-plongement $\phi: B\to \End_L(E)$ par~:
\[\forall \lambda\in L, \phi(\lambda) = \tld{\lambda} = \begin{pmatrix}
\lambda & & & \\
 & s(\lambda) & & \\
 & & t(\lambda) & \\
 & & & st(\lambda)
\end{pmatrix},\]

\[\phi(u_s) = \begin{pmatrix}
0 & 1 & & \\
a_s & 0 & & \\
 & & 0 & u \\
 & & \frac{t(a_s)}{u} & 0
\end{pmatrix},\]

\[\phi(u_t) = \begin{pmatrix}
 & & 1 & 0 \\
 & & 0 & 1 \\
a_t & 0 & & \\
0 & s(a_t) & &  
\end{pmatrix}\]

où on note $u=\frac{v}{st\tau(v)}$, qui vérifie
$u_t u_s = u\cdot u_s u_t$, et $N_{L/K}(u)=1$.

On vérifie qu'on a bien les relations attendues pour que
$\phi$ définisse un plongement. On en déduit un isomorphisme
$\Phi: B_L \Isom \End_L(E)$ defini par
$\Phi(x\otimes \lambda) = \lambda\phi(x)$.
\\

On pose alors

\[P_s = \begin{pmatrix}
0 & 1 & & \\
a_s & 0 & & \\
 & & 0 & 1 \\
 & & a_s & 0
\end{pmatrix}, P_t = \begin{pmatrix}
 & & 1 & 0 \\
 & & 0 & t(u) \\
a_t & 0 & & \\
0 & \frac{a_t}{u} & &  
\end{pmatrix}.\]

On vérifie que si on définit $g\ast A = P_g g(A) P_g^{-1}$ pour $g=s,t$,
alors on obtient l'action semi-linéaire de $H$ sur $\End_L(E)$
venant de $\Phi$ (bien évidemment $P_g$ est défini à un scalaire de $L^*$ près).

Cela se vérifie par le fait que $g\ast \phi(u_h) = \phi(u_h)$
et $g\ast \tld{\lambda}=\tld{g(\lambda)}$ pour
$g,h\in\{s,t\}$ et $\lambda\in L$ (bien sûr il suffit
de définir l'action de $s$ et $t$).

\subsubsection*{Cocycle définissant $\lambda^2 B$}

L'isomorphisme $\Phi$ induit un isomorphisme
$\lambda^2 \Phi: \lambda^2 B_L \Isom \End_L(\Lambda^2 E)$,
qu'on va utiliser pour caractériser $\lambda^2 B$ par descente.
On munit $\Lambda^2 E$ de la base naturelle
$(e_{12}, e_{34}, e_{13}, e_{24}, e_{14}, e_{23})$ où
on écrit $e_{ij}=e_i\wedge e_j$ (l'ordre
est choisi pour faciliter les calculs par la suite), et
on identifiera alors $\End_L(\Lambda^2 E) = M_6(L)$.

Pour tout $f\in \End_L(E)$, on a par fonctorialité un élément
$\Lambda^2(f)\in \End_L(\Lambda^2 E)$ défini par
$(\Lambda^2 f)(x\wedge y) = f(x)\wedge f(y)$. On montre
alors que l'action semi-linéaire de $H$ sur $\End_L(\Lambda^2 E)$
est donnée par $g\ast A = Q_g g(A) Q_g^{-1}$ où
$Q_g = \Lambda^2 P_g$, soit :
\[ Q_s = \begin{pmatrix}
-a_s & 0 & & & & \\
0 & -a_s & & & & \\
 & & 0 & 1 & & \\
 & & a_s^2 & 0 & & \\
 & & & & 0 & a_s \\
 & & & & a_s & 0
\end{pmatrix},\]
\[Q_t = \begin{pmatrix}
0 & \frac{a_t}{s(a_t)} & & & & \\
a_t^2 & 0 & & & & \\
 & & -a_tu & 0 & & \\
 & & 0 & -a_tt(u) & & \\
 & & & & 0 & -a_t \\
 & & & & \frac{-a_t^2}{s(a_t)} & 0
\end{pmatrix}.\]

Il s'agit de montrer que l'image de $\Lambda^2(\Phi)$ est
fixée par cette action. Or si $f$ est dans l'image de $\Phi$
alors $\Lambda^2(f)$ est dans l'image de $\Lambda^2(\Phi)$.
En effet, si on écrit $\lambda^2 B=\End_{B^{\otimes 2}}(B^{\otimes 2}(1-g))$
avec $g$ l'élément de Goldman, alors pour tout $b\in B$
on peut définir $\lambda^2 b\in \lambda^2 B$ par la multiplication
à droite par $b\otimes b$ dans $B^{\otimes 2}(1-g)$ (on a bien
$(b\otimes b)(1-g) = (1-g)(b\otimes b)$ donc cette opération
est bien définie), et cela correspond dans le cas
déployé à la construction $\Lambda^2(f)$, donc par
fonctorialité $(\lambda^2 \Phi)(\lambda^2 x) = \Lambda^2 (\Phi(x))$.
De là, si $A$ est dans l'image de $\Phi$, on a
\[ (\Lambda^2 P_g)\cdot g(\Lambda^2 A)\cdot (\Lambda^2 P_g)^{-1}
  = \Lambda^2(P_g g(A) P_g^{-1}) = \Lambda^2 A \]
et donc l'image de $\Lambda^2(\Phi)$ est bien
fixée par cette action.

\subsubsection*{Cocycle définissant $D(B,\tau)$}

Pour définir $A=D(B,\tau)$, il faut encore définir l'action
de $\tau\in G$ sur $\End_L(\Lambda^2 E)$. On va montrer qu'elle
est donnée par $\tau\ast A = Q_\tau \tau(A) Q_\tau^{-1}$ avec
\[Q_\tau = \begin{pmatrix}
0 & a_s & & & & \\
a_sn(a_t) & 0 & & & & \\
 & & 0 & -a_t & & \\
 & & -a_s^2s(a_t) & 0 & & \\
 & & & & 0 & a_ss(a_t) \\
 & & & & a_sa_t & 0
\end{pmatrix}.\]

D'après \cite[def 10.28]{BOI}, $D(B,\tau)$ est constitué des éléments
fixes par l'action de $\tau$ sur $\lambda^2B$, qui agit par
$\tau^{\wedge 2}\circ \gamma$ où $\gamma$ est l'involution
canonique, donc l'action de $\tau$ sur $\End_L(\Lambda^2 E)$
est donnée par la composition de $(\lambda^2 \Phi)_*(\tau_L^{\wedge 2})$
et $(\lambda^2 \Phi)_*(\gamma_L)$ (qui commutent).

Par fonctorialité, $(\lambda^2 \Phi)_*(\gamma_L)$ est l'adjointe
de la forme canonique
\[ \anonfoncdef{\Lambda^2 E \times \Lambda^2 E}{\Lambda^4 E = L\cdot e_{1234} \simeq L}
  {(x,y)}{x\wedge y}    \]
 qui a pour matrice dans la base choisie de $\Lambda^2 E$:
\[B = \begin{pmatrix}
0 & 1 & & & & \\
1 & 0 & & & & \\
 & & 0 & -1 & & \\
 & & -1 & 0 & & \\
 & & & & 0 & 1 \\
 & & & & 1 & 0
\end{pmatrix}.\]

Quant à $(\lambda^2 \Phi)_*(\tau_L^{\wedge 2})$, elle est l'adjointe
de la forme hermitienne $h^{\wedge 2}$ où $h$ est une forme
hermitienne sur $E$ telle que $\sigma_h = \Phi_*(\tau_L)$.
On montre que $h$ a pour matrice dans la base $(e_i)$
\[ M = \begin{pmatrix}
a_sa_t & & & \\
 & a_t & & \\
 & & a_s & \\
 & & & \frac{a_t}{s(a_t)}
\end{pmatrix}.\]
En effet, on a
\[ \sigma_h(A) = M^{-1} \tau(\transp{A}) M \]
avec $\sigma_h(\tld{\lambda}) = \tld{\tau(\lambda)}$,
donc $M$ commute avec les matrices $\tld{\lambda}$. En prenant
$\lambda$ fixe par aucun élément de $H$, on obtient que $M$ doit
être diagonale. De plus, on doit avoir $\sigma_h(\phi(u_g)) = \phi(u_g)$
pour $g=s,t$, ce qu'on vérifie par calcul.
On a alors comme matrice pour $h^{\wedge 2}$ :
\[ M' = \Lambda^2M = \begin{pmatrix}
n(a_t) & & & & & \\
 & 1 & & & & \\
 & & a_ss(a_t) & & & \\
 & & & \frac{a_t}{a_s} & & \\
 & & & & a_t & \\
 & & & & & s(a_t)
\end{pmatrix}.\]

On trouve donc $\tau\ast A = B^{-1}\transp{((M')^{-1}\tau(\transp{A})M')}B$,
et donc $Q_\tau = B^{-1}\transp{M'}$, ce qui après calcul donne
la matrice annoncée.

\subsubsection*{Cocycle définissant $M_3(Q)$}

On pose donc $Q$ algèbre de quaternions sur $k$, donnée
par la sous-algèbre quadratique $k'=k(\xi\xi_1)$ et le
quaternion pur $j$ tel que $j^2=n(a_s)$ (et $j$ anti-commute
avec $\xi\xi_1$). On a $k'=L^D$ avec $D=\langle t,s\tau \rangle\subset G$.

On choisit alors le plongement
\[ \anonfoncdef{Q}{M_2(k')}{x+jy}{\begin{pmatrix}
x & n(a_s)s(y) \\
y & s(x)
\end{pmatrix}}  \]
où $x,y\in k'$, qui induit un plongement
$M_3(Q)\To M_6(L)$. On vérifie que le 1-cocycle
associé à ce plongement est $g\mapsto [R_g]$
où $R_g=I_6$ si $g\in D$, et $R_g=S$ sinon, où
\[S = \begin{pmatrix}
0 & n(a_s) & & & & \\
1 & 0 & & & & \\
 & & 0 & n(a_s) & & \\
 & & 1 & 0 & & \\
 & & & & 0 & n(a_s) \\
 & & & & 1 & 0
\end{pmatrix}.\]

\subsubsection*{Cocycles cohomologues}

On cherche donc $Y\in GL_6(L)$ telle que
$[Y^{-1}Q_g g(Y)] = [R_g]$ dans $PGL_6(L)$, soit
\[ Y^{-1}Q_gg(Y) = c_g R_g  \]
avec $c_g\in L^*$, et il suffit de le vérifier
pour $g=s,t,\tau$.

On vérifie que 
\[ Y = \begin{pmatrix}
Y_1 & & \\
 & Y_2 & \\
 & & Y_3
\end{pmatrix}\]
convient, où
\[ Y_1 = \xi \begin{pmatrix}
\frac{1}{s(v)} & \frac{-a_s}{st\tau(v)} \\
\frac{a_t}{\tau(v)} & \frac{-a_sa_t}{t(v)}
\end{pmatrix}, Y_2 = \xi_2 \begin{pmatrix}
a_sn(a_t)u & 0 \\
0 & a_s^3n(a_t)us(u)
\end{pmatrix},\]
\[ Y_3 = \begin{pmatrix}
a_tst(u) & a_ss(a_t)u \\
a_t & a_ss(a_t)ut(u)
\end{pmatrix}\]
avec $c_s = \frac{1}{u}$, $c_t = \frac{a_t}{st(u)}$
et $c_\tau = s(a_t)t(u)$.

La preuve est purement calculatoire, on ne reproduira
pas le calcul ici par souci de lisibilité. On indique les
relations à utiliser entre les différents éléments qui
interviennent dans la formule : $\frac{t(a)}{a}=us(u)$,
$\frac{b}{s(b)}=ut(u)$, et $N_{L/K}(u)=1$.
\\

L'existence de $Y$ montre par le mécanisme usuel de la
cohomologie que $D(B,\tau)\simeq M_3(Q)$. Mais on a
quelque chose de plus concret que cela : si on note
$g\ast_1 A =Q_gg(a)Q_g^{-1}$ la première action considérée,
et $g\ast_2 A =R_qg(a)R_g^{-1}$ la deuxième, alors en
posant $\psi(A)=Y^{-1}AY$, qui est un automorphisme
de $\End_L(\Lambda^2 E)$, on a précisément
$g\ast_2(\psi(A))=\psi(g\ast_1 A)$.
Or $D(B,\tau)$ est constituée des points fixes de
l'action $\ast_1$, et $M_3(Q)$ des points fixes de $\ast_2$,
donc on obtient l'isomorphisme concret suivant :
au sein de $\End_L(\Lambda^2 E)$, $\psi$ envoie
$D(B,\tau)$ sur $M_3(Q)$.

\subsubsection*{Descente de la forme quadratique}

On a déjà décrit la forme quadratique canonique sur
$\End_L(\Lambda^2 E)$, de matrice $B$. Elle induit
une certaine involution sur $\End_L(\Lambda^2 E)$,
qui se restreint à $D(B,\tau)$ en son involution
canonique. Pour obtenir l'involution correspondante
sur $M_3(Q)$, on utilise donc le changement de base
donné par $Y$ : l'involution sur $\End_L(\Lambda^2 E)$
correspondant à la forme quadratique de matrice
$B'=\transp{Y}BY$ se restreint en l'involution
cherchée sur $M_3(Q)$.

On est donc dans la situation du triangle commutatif
suivant dans $\CBrh{L}$ :
\[ \begin{tikzcd}
    & (Q_L,\can) \simeq (M_2(L),\can) \drar{\mathcal{H}_{-1}} & \\
    (M_3(Q_L),\sigma_L)\simeq M_6(L) \urar{h_L} \arrow{rr}{b'} & & (L,\Id)
    \end{tikzcd} \]
  où $b'$ est de matrice $B'$ et $\mathcal{H}_{-1}$ est le
  plan (hyperbolique) alterné.
  Par équivalence de Morita on en déduit alors que
  l'involution $\sigma_L$ sur $M_6(L)= M_3(M_2(L))$ est adjointe
  à la forme anti-hermitienne sur $(M_2(L),\can)$
  de matrice $CB'$, où
  \[ C = \begin{pmatrix}
0 & -1 & & & & \\
1 & 0 & & & & \\
 & & 0 & -1 & & \\
 & & 1 & 0 & & \\
 & & & & 0 & -1 \\
 & & & & 1 & 0
\end{pmatrix}.\]

Comme une telle matrice n'est définie qu'à un scalaire
près, on doit avoir $\lambda\in L^*$ tel que $\lambda CB'$
soit de la forme
\[ \begin{pmatrix}
x_1 & n(a)s(y_1) & & & & \\
y_1 & s(x_1) & & & & \\
 & & x_2 & n(a)s(y_2) & & \\
 & & y_2 & s(x_2) & & \\    
 & & & & x_3 & n(a)s(y_3) \\
 & & & & y_3 & s(x_3)
\end{pmatrix}\]
avec $x_i,y_i\in k'$, et alors en posant $z_i = x_i + jy_i\in Q$,
$\fdiag{z_1,z_2,z_3}$ est une diagonalisation de la forme
anti-hermitienne cherchée.

Concrètement, si on choisit $\lambda = \frac{\xi \xi_1}{a_s^2n(a_t)u}$,
on obtient par le calcul les $z_i$ donnés dans l'énoncé du théorème
\ref{thm_a3_d3}, ce qui achève sa démonstration.

\section{Quelques invariants en petit degré}

On donne ici quelques exemples d'invariants d'algèbres à involution,
en utilisant les résultats des parties \ref{sec_inv_herm} et
\ref{sec_a3_d3}.

\subsection{Invariants $a_3$ et $a_4$ en degré 6}

Dans cette partie on se donne une algèbre $(A,\sigma)$ de
type $D_3$ sur $k$: on a donc $A$ de degré 6 et d'indice au plus 2, et $\sigma$
est orthogonale. En particulier, on peut appliquer les
résultats de la partie \ref{sec_ind_2} pour construire
des invariants cohomologiques : d'après la discussion suivant
la proposition \ref{prop_ber}, tout invariant $\alpha\in \Inv^d(I,\mu_2)$
définit un invariant $\tld{\alpha}(A,\sigma)$ à valeur dans $M^{d-1}_Q(K)$,
et si cet invariant est nul on peut définir $\alpha(A,\sigma)\in M^d_Q(K)$.
On va montrer que certains de ces invariants se relèvent en des invariants
cohomologiques (à valeur dans $H^d(K,\mu_2)$ et non
dans $M_Q^d(K)$).

Dans \cite[§7]{RST}, les auteurs considèrent la forme trace d'involution
$T_\tau$ d'une algèbre $(B,\tau)$ de type $A_3$ telle que $Z(B)=k(\sqrt{-1})$,
donc qui correspond à une algèbre $(A,\sigma)$ de type $D_3$ dont
le discriminant est $-1$. Ils prouvent que sous cette condition
$T_\tau = n_Q + q_4$ où $q_4$ est une 4-forme de Pfister (donc
cette écriture est unique), et posent $a_4(A,\sigma)=e_4(q_4)\in H^4(k,\mu_2)$
(on utilise des notations légèrement différentes). On se propose
d'élargir un peu cette construction, en faisant le lien
avec nos méthodes.

Précisément, on va établir un lien avec l'invariant $v_4^{(1)}\in \Inv^4(I,\mu_2)$
(voir \ref{ex_def_v}). Il vérifie $\tld{v_4^{(1)}} = v_3^{(1)}$ (voir
\ref{prop_simil}), donc on peut définir
$v_3^{(1)}(A,\sigma)\in M_Q^3(k)$, et lorsque cette classe est nulle
on peut définir $v_4^{(1)}(A,\sigma)\in M_Q^4(k)$. En particulier,
comme $v_3^{(1)}=(-\delta)\cup u_2^{(1)}$, $v_3^{(1)}$ est toujours
nul en discriminant $-1$, et donc on peut toujours définir
$v_4^{(1)}(A,\sigma)$ dans la situation étudiée dans \cite{RST}.
On va montrer que $a_4(A,\sigma)$ est un relevé de $v_4^{(1)}(A,\sigma)$,
y compris dans une situation plus générale où on ne fait
pas d'hypothèse sur le discriminant.

\subsubsection*{Forme trace d'involution}

Soit $(B,\tau)=Cl(A,\sigma)$. On rappelle que $T_\tau$ est la restriction
de la forme trace de $B$ au sous-$k$-espace des éléments symétriques,
et on rappelle également que si $d$ est pair, $\lambda^d(\sigma)\in GW(k)$
est bien définie (par exemple comme étant $\lambda^d(\fdiag{1}_\sigma)\in GW(k)
\subset \tld{GW}(A,\sigma)$).

\begin{prop}
  On a l'égalité suivante dans $GW(k)$: $T_\tau = 1 + \lambda^4(\sigma)$.
\end{prop}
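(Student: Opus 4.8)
The plan is to exploit the structure of the Clifford algebra $(B,\tau) = Cl(A,\sigma)$ established in the previous sections, together with the $\lambda$-operation machinery on $\tld{GW}(A,\sigma)$. The key observation is that both sides of the claimed equality $T_\tau = 1 + \lambda^4(\sigma)$ should be understood through a \emph{generic splitting} argument: the morphism $K^*/(K^*)^2 \to F^*/(F^*)^2$ is injective for a generic splitting field $F$ of $A$ (this was used already in Example \ref{ex_disc}), and more importantly the restriction to a splitting field allows us to reduce to the case of an actual quadratic form, where the identity is a known fact about trace forms of Clifford algebras.

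First I would translate the two sides into $\tld{GW}$ language. The element $\lambda^4(\sigma)$ is by definition $\lambda^4(\fdiag{1}_\sigma)$, and since $d=4$ is even, Definition \ref{def_lambda_v} guarantees this lands in $GW(k) \subset \tld{GW}(A,\sigma)$, so it genuinely defines an element of $GW(k)$. On the other side, $T_\tau$ is the restriction of the trace form of $B = Cl(A,\sigma)$ to its symmetric elements under $\tau$; since $\tau$ is the canonical involution and $A$ has degree $6$, the algebra $B$ has degree $8$. By the computations of Example \ref{ex_lambda2} and Proposition \ref{prop_lambda_morita}, the trace form $T_\tau$ (up to the factor $\fdiag{1/2}$ and the orthogonal/symplectic dichotomy) is itself expressible via $\lambda^2$ applied to the canonical hermitian form representing $(B,\tau)$. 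The main link to make precise is that passing through the Clifford algebra corresponds, at the level of $\lambda$-operations, to the operation $\lambda^4$ on the original $(A,\sigma)$.

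Next I would carry out the reduction to the split case. After extending scalars to a generic splitting field $F$ of $A$, the involution $\sigma_F$ becomes adjoint to a genuine quadratic form $q$ of dimension $6$, and $(B_F,\tau_F)$ becomes the even Clifford algebra of $q$ with its canonical involution. In this classical setting the identity $T_{\tau} = 1 + \lambda^4(q)$ is exactly the statement relating the trace form of the even Clifford algebra to the fourth exterior power of the form — a computation one can check directly on a diagonalization $q = \fdiag{a_1,\dots,a_6}$ using the explicit basis of $Cl^0(q)$, exactly as the even Clifford algebra is built from products $e_{i_1}\cdots e_{i_{2s}}$. The identity in $GW(F)$ then holds, and since both $T_\tau$ and $\lambda^4(\sigma)$ are elements of $GW(k)$ whose images in $GW(F)$ agree, injectivity of the restriction $GW(k) \to GW(F)$ on the relevant part (again because square classes inject) lets me conclude the identity already holds over $k$.

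The hard part will be verifying that the generic-splitting restriction map is injective on the specific elements at hand, and more delicately, checking that the Clifford-algebra construction is genuinely compatible with $\lambda^4$ in the non-split case — that is, that $\lambda^4(\fdiag{1}_\sigma)$ computed intrinsically in $\tld{GW}(A,\sigma)$ really does equal the trace form of $Cl(A,\sigma)$ and not merely its generic-fiber value. I expect this compatibility to follow from Remark \ref{rem_lambda_alt_sigma}, which identifies $\Alt^d$ with the involution $\sigma^{\wedge d}$ of \cite[10.20]{BOI}, combined with the known relationship (\cite[§8]{BOI}) between the Clifford algebra and $\lambda^d(A,\sigma)$ for $d = \deg(A)/2$; but pinning down the exact normalizations (the factor of $\fdiag{1/2}$ from Remark \ref{rem_alt_restr}, and the constant term $1$ accounting for the scalar part of the trace form) is where the calculation must be done carefully.
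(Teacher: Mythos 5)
Your reduction to the split case founders on the descent step: the restriction $GW(k)\to GW(F)$, for $F=k(C)$ the function field of the Severi--Brauer conic of $Q$, is \emph{not} injective when $Q$ is a division algebra. As recalled in the section \ref{sec_quater} (after \cite{QT17}), the kernel of $W(k)\to W(F)$ is the ideal $n_QW(k)$; correspondingly the element $n_Q-2\mathcal{H}\in GW(k)$, which is nonzero as soon as $n_Q$ is anisotropic, dies in $GW(F)$. The injectivity of $k^*/(k^*)^2\to F^*/(F^*)^2$ that you invoke only controls rank-one forms --- which is exactly why Example \ref{ex_disc} can use it for the discriminant $\lambda^{\deg A}(h)$ --- and says nothing about the $16$-dimensional forms $T_\tau$ and $1+\lambda^4(\sigma)$. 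Your argument therefore only proves the identity modulo the $n_Q$-ambiguity, and that ambiguity is not vacuous here: the two sides genuinely contain quaternionic contributions, since $T_\tau=\fdiag{1,z_1^2z_2^2,z_1^2z_3^2,z_2^2z_3^2}+\sum_{p<q}\fdiag{-\delta}\fdiag{z_p}\fdiag{z_q}$, where each product $\fdiag{z_p}\fdiag{z_q}$ is a multiple of the $2$-fold Pfister form $\phi_{z_p,z_q}$ of Proposition \ref{prop_phi_quater}, an object whose split specialization loses precisely the $n_Q$-information (cf.\ Proposition \ref{prop_prod_quater}). The split-case verification you sketch is fine in principle, but it cannot be transported back to $k$ by generic splitting alone.

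The paper avoids this by computing rationally over $k$ itself, using the explicit crossed-product presentation of $(B,\tau)$ from Theorem \ref{thm_d3_a3}: the $k$-space of $\tau$-symmetric elements decomposes $T_\tau$-orthogonally as $\bigoplus_{g\in H}L_gu_g$; each block $b_g$ is computed from an explicit Gram matrix and simplified, via Lemma \ref{lem_quater_trace}, to $\fdiag{\delta\Trd_Q(z_pz_q)}\phi_{z_p,z_q}=\fdiag{-\delta}\fdiag{z_p}\fdiag{z_q}$, while $1+\lambda^4(h)$ is expanded by the addition formula for $\lambda$-operations in $\tld{GW}(Q,\can)$ applied to $h=\fdiag{z_1}+\fdiag{z_2}+\fdiag{z_3}$; the two expressions then agree because $z_p^2z_q^2$ is represented by $\phi_{z_p,z_q}$, so the similitude factors $-\delta$ and $-z_i^2$ coincide on that Pfister form. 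If you wanted to salvage a splitting-style argument, you would have to pin down the $n_Q$-component separately (say, via second residues at all closed points of $C$ together with one rational identification), at which point you are essentially redoing the direct computation.
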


On aura besoin au cours de la preuve du lemme suivant :

\begin{lem}\label{lem_quater_trace}
  Soit $z\in Q$. Alors
  \[ \Trd_Q(z)^2-4 \Nrd_Q(z) = 4 z_0^2 \]
  où $z_0$ est la partie pure de $z$.
\end{lem}

\begin{proof}
  On a $z=x+z_0$ où $2x=\Trd_Q(z)$, et
  \begin{align*}
    N_Q(z) &= (x+z_0)(x-z_0) \\
           &= x^2 - z_0^2
  \end{align*}
  donc $4x^2-4N_Q(z)=4z_0^2$.
\end{proof}

\begin{proof}
  On écrit $\sigma \simeq \sigma_h$ où $h=\fdiag{z_1,z_2,z_3}$
  avec $z_i\in Q$ quaternion pur, et $Q$ est l'algèbre de quaternions
  Brauer-équivalente à $A$. On supposera que les $z_i$ ne commutent
  pas deux à deux. On utilise alors les notations
  du théorème \ref{thm_d3_a3} pour décrire $(B,\tau)$.
  On note $H = \{1,g_1,g_2,g_3\}$ où $g_i(\xi_i)=\xi_i$,
  donc $g_i=\tau_{p,q}$ où $i\neq p,q$, et $u_i=u_{g_i}=u_{p,q}$.

  Soit $V$ l'ensemble des éléments symétriques de $(B,\tau)$ ; c'est
  un $k$-espace vectoriel de dimension 16.
  Pour tout $\lambda\in L$, et tout $g\in H$, on a $\tau(u_g)=\eps_gu_g$
  avec $\eps_1=1$ et $\eps_g=-1$ si $g\neq 1$, donc 
  $\tau(\lambda u_g) = \tau(u_g)\tau(\lambda)=\eps_g(g\tau)(\lambda)u_g$.
  De là, $\lambda u_g\in V$ si et seulement si
  $(g\tau)(\lambda) = \eps_g\lambda$. On note $L_g$ l'ensemble des
  $\lambda\in L$ vérifiant cette condition, et $V_g=L_gu_g\subset V$.
  Comme $\dim_k(L_g)=4$, on a $\bigoplus_{g\in H}V_g$ de dimension 16
  sur $k$, donc $V=\bigoplus_{g\in H}V_g$. De plus, 
  cette décomposition est orthogonale pour $T_\tau$ puisque
  les $Lu_g$ sont orthogonaux pour la forme trace (en effet,
  $\Trd_B(\lambda u_g)=0$ si $g\neq 1$). On a donc $T_\tau = \sum_g b_g$ où
  \[ \foncdef{b_g}{L_g\times L_g}{k}{(x,y)}{\Tr_{L/K}(xg(y)u_g^2).} \]

  Le $k$-espace $L_1$ a pour base $(1,\xi_1\xi_2,\xi_1\xi_3,\xi_2\xi_3)$,
  qui est une base orthogonale, donc $b_1$ (qui est isométrique à la
  forme trace de l'extension $L^\tau/k$) est isométrique à
  $\fdiag{1,z_1^2z_2^2,z_1^2z_3^2,z_2^2z_3^2}$. Si on note $\{i,p,q\}=\{1,2,3\}$,
  on a comme $k$-base de $L_{g_i}$ : $(\xi,\xi_i,\xi_i\xi_p,\xi_i\xi_q)$.
  Sachant que $u_i^2 = \Trd_Q(z_pz_q)-2\xi_p\xi_q$, on trouve comme
  matrice pour $b_{g_i}$ dans cette base :
  \[ \begin{pmatrix}
      \delta \Trd_Q(z_pz_q) & -2\delta &  & \\
      -2\delta & z_i^2\Trd_Q(z_pz_q) & & \\
      & & -z_i^2z_p^2\Trd_Q(z_pz_q) & 2\delta \\
      & & 2\delta & -z_i^2z_q^2\Trd_Q(z_pz_q)
    \end{pmatrix}. \]
  Les deux blocs diagonaux ont le même déterminant
  \[ \Delta = \delta z_i^2(\Trd_Q(z_pz_q)^2-4z_p^2z_q^2), \]
  qui d'après le lemme \ref{lem_quater_trace} vaut
  \[ \Delta = 4\delta z_i^2(z_pz_q)_0^2 \equiv z_p^2z_q^2(z_pz_q)_0^2 \,\text{mod}\, (k^*)^2. \]
  On en déduit :
  \begin{align*}
    b_{g_i} &= \fdiag{\delta \Trd_Q(z_pz_q)}\fdiag{1,z_p^2z_q^2(z_pz_q)_0^2}
              + \fdiag{-z_i^2z_p^2\Trd_Q(z_pz_q)}\fdiag{1,z_p^2z_q^2(z_pz_q)_0^2} \\
            &= \fdiag{\delta \Trd_Q(z_pz_q)}\fdiag{1,-z_q^2}\fdiag{1,z_p^2z_q^2(z_pz_q)_0^2} \\
            &= \fdiag{\delta \Trd_Q(z_pz_q)}\pfis{z_q^2,-z_p^2z_q^2(z_pz_q)_0^2}.
  \end{align*}
  Or
  \begin{align*}
    (z_q^2,-z_p^2z_q^2(z_pz_q)_0^2) &= (z_q^2,z_p^2) + (z_q^2,(z_pz_q)_0^2) \\
                                    &= (z_p^2,z_q^2)+[Q]
  \end{align*}
  (on rappelle que $z_q$ et $(z_pz_q)_0$ anti-commutent). De là,
  $\pfis{z_q^2,-z_p^2z_q^2(z_pz_q)_0^2} = \phi_{z_p,z_q}$ (d'après la proposition
  \ref{prop_phi_quater}) et finalement $b_{g_i} = \fdiag{-\delta}\fdiag{z_p}\fdiag{z_q}$
  (d'après la proposition \ref{prop_prod_quater}).
  Donc en combinant les calculs de $b_1$ et des $b_{g_i}$ :
  \begin{equation}\label{eq_t_tau}
    T_\tau = \fdiag{1,z_1^2z_2^3,z_1^2z_3^2,z_2^2z_3^2}
    + \sum_{p<q} \fdiag{-\delta}\fdiag{z_p}\fdiag{z_q}.
  \end{equation}
  D'autre part,
  \begin{align*}
    1 + \lambda^4(h) &= 1 + \sum_{p< q}\lambda^2(\fdiag{z_p})\lambda^2(\fdiag{z_q})
                       + \sum_{p<q}\lambda^2(\fdiag{z_i})\fdiag{z_p}\fdiag{z_q} \\
                     &= \fdiag{1,z_1^2z_2^3,z_1^2z_3^2,z_2^2z_3^2}
                       + \sum_{p<q}\fdiag{-z_i^2}\fdiag{z_p}\fdiag{z_q}
  \end{align*}
  et on conclut par le fait que $z_p^2z_q^2$ est représenté par
  $\phi_{z_p,z_q}$  (en effet, $-z_p^2$ et $-z_q^2$ sont tous les deux représentés)
  donc $\fdiag{-\delta}\fdiag{z_p}\fdiag{z_q} = \fdiag{-z_i^2}\fdiag{z_p}\fdiag{z_q}$.
\end{proof}

\subsubsection*{Invariants $a_3$ et $a_4$}

En utilisant la formule (\ref{eq_t_tau}), on voit que
\begin{align*}
  e_2(T_\tau) &= (-z_1^2z_2^2,-z_1^2z_3^2) + \sum_{i<j} [Q] +(z_i^2,z_j^2) \\
              &= (-1,-\delta) + \sum_{i<j} (z_i^2,z_j^2) + [Q] + \sum_{i<j}(z_i^2,z_j^2) \\
              &= (-1,-\delta) + [Q]
\end{align*}
donc on peut poser
\[ q_\tau = T_\tau - \pfis{-1,-\delta} - n_Q \in I^3(k). \]

On définit alors $a_3(A,\sigma) = e_3(q_\tau)$ et si $a_3(A,\sigma)=0$
on pose $a_4(A,\sigma)=e_4(q_\tau)$ (ce qui coïncide bien avec
la définition précédente lorsque $\delta=-1$).

\begin{prop}\label{prop_a3_a4}
  La classe de $a_3(A,\sigma)$ dans $M_Q^3(k)$ est $v_3^{(1)}(A,\sigma)$,
  et si $a_3(A,\sigma)=0$ alors la classe de $a_4(A,\sigma)$ dans
  $M_Q^4(k)$ est $v_4^{(1)}(A,\sigma)$.
\end{prop}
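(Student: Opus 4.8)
The statement relates two \emph{a priori} different invariants: the geometric $a_3,a_4$ (defined via $q_\tau = T_\tau - \pfis{-1,-\delta} - n_Q$) and the generic-splitting invariants $v_3^{(1)}(A,\sigma)$, $v_4^{(1)}(A,\sigma)$ obtained from Berhuy's method applied to $v_3^{(1)},v_4^{(1)}\in\Inv(I,\mu_2)$. The strategy is to compute both sides after generic splitting of $Q$ and check they produce the same class in $H^{*}_{nr}(K(Q),\mu_2)\simeq M_Q^{*}(k)$, then descend. First I would recall that $v_3^{(1)}(A,\sigma)$ is defined as $\tld{v_4^{(1)}}(A,\sigma)$ and, by the discussion after Proposition \ref{prop_ber}, its value is computed by choosing a splitting extension $L/k$ for $Q$, a Morita equivalence turning the anti-hermitian $h$ into a quadratic form $q\in I(L)$, and evaluating $v_3^{(1)}(q)$. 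So the heart of the matter is to identify, after splitting, the quadratic form that $(A,\sigma)$ produces with a form whose $v_3^{(1)}$ (resp.\ $v_4^{(1)}$) is the restriction of $a_3(A,\sigma)$ (resp.\ $a_4(A,\sigma)$).

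\medskip
The key computational input is already available: by the proposition just proved, $T_\tau = 1 + \lambda^4(\sigma)$ in $GW(k)$, and the explicit diagonalization in equation (\ref{eq_t_tau}) expresses $T_\tau$ in terms of the $z_i$. The second step is therefore to relate $q_\tau$ to the operation $\lambda^4$ applied to the element $h' = \fdiag{1}_\sigma - 3\fdiag{1}\in\tld{GI}(A,\sigma)$ that ``represents'' $(A,\sigma)$. Since $\lambda^4$ is an even operation, $\lambda^4(h')$ lands in the symmetric component, and after generic splitting its symmetric part becomes a genuine quadratic form in $I^{d}$. The plan is to show that, modulo the ``obstruction'' terms $\pfis{-1,-\delta}$ and $n_Q$ which are exactly the pieces that vanish or become controlled after splitting, the cohomology class $e_3(q_\tau)$ computed at the base field matches the class that the generic-splitting recipe assigns to $v_3^{(1)}$. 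Concretely, I would use that $v_4^{(1)} = e_4\circ g_1^4$ (Example \ref{ex_def_v}) together with Proposition \ref{prop_g_quater}, which gives the symmetric component of $g_1^{4}(h-r\mathcal H)$ explicitly in terms of the $z_i$; comparing this with (\ref{eq_t_tau}) term by term should exhibit the agreement.

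\medskip
The third step is the descent/compatibility argument. Having matched the two classes after passing to $K(Q)$ (or to a generic index-reduction field), I would invoke the isomorphism (\ref{eq_depl_ind_2}) $M_Q^d(k)\simeq H^d_{nr}(K(Q),\mu_2)$ and Proposition \ref{prop_ed_quater} (which guarantees the class actually lies in $N_Q^{d}(k)$, i.e.\ descends from $H^d(k,\mu_2)$). The point is that $q_\tau\in I^3(k)$ is an honest form over the base field, so $e_3(q_\tau)$ and $e_4(q_\tau)$ are honest classes in $H^*(k,\mu_2)$; their images in $M_Q^*(k)$ are obtained by restriction to $K(Q)$, and the generic-splitting definition of $v_i^{(1)}(A,\sigma)$ is precisely designed to agree with such a restricted class. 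Thus the equality in $M_Q^*(k)$ follows once the restricted forms are shown isometric up to the similitude ambiguity inherent in the Morita equivalence, and here the hypothesis that $v_3^{(1)}$ (resp.\ $v_4^{(1)}$) is \emph{similitude-invariant} on the relevant locus (guaranteed by $\tld{v_4^{(1)}}(A,\sigma)=v_3^{(1)}(A,\sigma)=0$ in the degree-$4$ case) is what makes $v_4^{(1)}(A,\sigma)$ well-defined at all.

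\medskip
\textbf{Main obstacle.} The delicate point is bookkeeping the similitude and normalization ambiguities: after generic splitting, $h$ corresponds to a quadratic form $q$ only up to a scalar factor $\fdiag{\lambda}$ and up to an even-dimensional normalization by $r\mathcal H$ (or $n_Q$), and the invariants $v_i^{(1)}$ interact with these via Proposition \ref{prop_simil}. The computation must show that the specific ``correction terms'' $\pfis{-1,-\delta}$ and $n_Q$ subtracted in the definition of $q_\tau$ are exactly those produced by the $g_1^{4}$-formula of Proposition \ref{prop_g_quater} (the $\binom{r}{d}\pfis{-1}^{d-2}n_Q$ term and the discriminant piece $(-1,-\delta)$), so that the \emph{relative} normalization matches and the residual class is genuinely $v_4^{(1)}(A,\sigma)$. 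Getting these normalization constants to agree on the nose — rather than merely up to a class of the form $x\cup[Q]$, which is killed in $M_Q^*$ but matters for the sharper $N_Q^*$ statement — is where the real care is required.
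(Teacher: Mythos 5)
Your opening and closing moves --- reduce via the isomorphism (\ref{eq_depl_ind_2}) between $M_Q^d(k)$ and $H^d_{nr}(K(Q),\mu_2)$ to an identity of quadratic-form invariants over a splitting field, then descend --- are exactly the paper's one-line reduction (``il suffit de prouver le résultat quand $A$ est déployée''). But the computational core of your plan has two genuine problems. First, once you are in the split case the natural object is a $6$-dimensional form $q$, and the paper simply writes $q=q_1+q_2$ with $q_i=\fdiag{a_i,b_i,c_i}$, uses $T_\tau=1+\lambda^4(q)$ to express $q_\tau = 1+\lambda^4(q)-\pfis{-1,-\delta}$ as an explicit combination of Pfister forms, and compares $e_3$ and $e_4$ of this with the fixed-dimension formulas of Proposition \ref{prop_fixed_dim} (e.g.\ $v_3^{(1)}(q)=w_3(q)+(-1,-1)\cup w_1(q)$) by pure Stiefel--Whitney manipulation. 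Your substitute --- matching Proposition \ref{prop_g_quater} against (\ref{eq_t_tau}) ``term by term'' at the level of the base field --- is both redundant after the reduction and runs into precisely the obstacle the paper flags in the section on rational methods: here $r=3$ is odd ($\deg A=6$ is not divisible by $4$), which is exactly the case where the normalization $h-r\mathcal{H}$ yields invariants that ``ne coïncident plus tout à fait'' with the Berhuy-style $v_d^{(1)}(A,\sigma)$; your proposal never quantifies that normalization discrepancy, and without it the comparison proves nothing about the classes defined by Proposition \ref{prop_ber}.

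Second, and more seriously, in degree $4$ there is no term-by-term agreement to be had: even for split $q$ the two classes genuinely differ, and identifying the difference is the content of the proof. The hypothesis $a_3(q)=0$ is used twice \emph{inside} the split computation --- once to obtain $\pfis{-\delta,-a_1b_1,-a_1c_1}=\pfis{-\delta,-a_2b_2,-a_2c_2}$, which is what lets one rewrite $q_\tau$ as $-\pfis{-1,-\delta,-a_1b_1,-a_1c_1}+\pfis{-a_1b_1,-a_1c_1,-a_2b_2,-a_2c_2}$ so that $e_4(q_\tau)$ becomes computable, and once at the very end, where the computation yields $a_4(q)-v_4^{(1)}(q)=w_1(q_1)\cup a_3(q)$, which vanishes only by hypothesis. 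Your proposal invokes $a_3=0$ solely to make $v_4^{(1)}(A,\sigma)$ well defined, and so misses where the condition actually does its work; an expectation of on-the-nose agreement would fail. Finally, your ``main obstacle'' about landing in $N_Q^*$ rather than $M_Q^*$ is moot for this statement: the proposition asserts equality only in $M_Q^3(k)$ and $M_Q^4(k)$, where classes of the form $x\cup[Q]$ are already killed, so the split-case comparison up to similitude and up to such classes is all that is required.
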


\begin{proof}
  Par construction des invariants dans $M_Q^d$, il suffit de prouver
  le résultat quand $A$ est déployée. Soit donc $q$ de dimension 6
  telle que $\sigma = \sigma_q$. On écrit $q=q_1+q_2$ avec $q_i=\fdiag{a_i,b_i,c_i}$.
  D'après la proposition précédente, $q_\tau = 1 + \lambda^4(q) -\pfis{-1,-\delta}$.
  On a
  \begin{align*}
    1 + \lambda^4(q) &= 1 + \lambda^3(q_1)q_2+\lambda^2(q_1)\lambda^2(q_2)+q_1\lambda^3(q_2) \\
                     &= 1 + \lambda^3(q_1)\lambda^3(q_2)\lambda^2(q_2)+\lambda^2(q_1)\lambda^2(q_2)+\lambda^3(q_1)\lambda^2(q_1)\lambda^3(q_2) \\
                     &= (1 + \fdiag{-\delta}\lambda^2(q_1))(1 + \fdiag{-\delta}\lambda^2(q_2))
  \end{align*}
  car $q_i=\lambda^3(q_i)\lambda^2(q_i)$, et $-\delta=\lambda^3(q_1)\lambda^3(q_2)$.
  Or
  \begin{align*}
    1 + \fdiag{-\delta}\lambda^2(q_i) &= 1 + \fdiag{-\delta}\fdiag{a_ib_i,a_ic_i,b_ic_i} \\
                                      &= (1-\fdiag{-\delta}) + \fdiag{-\delta}\fdiag{1,a_ib_i,a_ic_i,b_ic_i}.
  \end{align*}
  On a donc 
  \begin{align*}
    1 + \lambda^4(q) &= (\pfis{-\delta} + \fdiag{-\delta}\pfis{-a_1b_1,-a_1c_1})(\pfis{-\delta} + \fdiag{-\delta}\pfis{-a_2b_2,-a_2c_2}) \\
                     &= \pfis{-1,-\delta} - \pfis{-\delta}(\pfis{-a_1b_1,-a_1c_1}+\pfis{-a_2b_2,-a_2c_2}) \\
                     &+ \pfis{-a_1b_1,-a_1c_1,-a_2b_2,-a_2c_2}
  \end{align*}
  donc
  \[ q_\tau = - \pfis{-\delta}(\pfis{-a_1b_1,-a_1c_1}+\pfis{-a_2b_2,-a_2c_2}) + \pfis{-a_1b_1,-a_1c_1,-a_2b_2,-a_2c_2}. \]
  En particulier, comme on vérifie que $(-a_ib_i,-a_ic_i) = w_2(q_i)+(-1)\cup w_1(q_i)+(-1,-1)$, on trouve
  \begin{align*}
    a_3(q) &=  w_1(q)\cup ((-1)\cup w_1(q) + w_2(q_1)+w_2(q_2)) \\
           &= (-1,-1)\cup w_1(q) + (w_1(q_1)+w_1(q_2))\cup (w_2(q_1)+w_2(q_2)) \\
           &= (-1,-1)\cup w_1(q) + w_3(q_1) + w_3(q_2) +w_1(q_1)\cup w_2(q_2) + w_1(q_2)\cup w_2(q_1) \\
           &= (-1,-1)\cup w_1(q) + w_3(q).
  \end{align*}
  Or d'après la proposition \ref{prop_fixed_dim} on a justement $v_3^{(1)}(q) = w_3(q) + (-1,-1)\cup w_1(q)$,
  donc on a bien $v_3^{(1)}(q) = a_3(q)$ dans le cas déployé.

  Supposons $a_3(q)=0$. Alors $\pfis{-\delta}(\pfis{-a_1b_1,-a_1c_1}+\pfis{-a_2b_2,-a_2c_2})$
  est dans $I^4(k)$, et donc $\pfis{-\delta,-a_1b_1,-a_1c_1} = \pfis{-\delta,-a_2b_2,-a_2c_2}$,
  ce qui implique que
  \[ q_\tau = - \pfis{-1,-\delta,-a_1b_1,-a_1c_1} + \pfis{-a_1b_1,-a_1c_1,-a_2b_2,-a_2c_2} \]
  et donc
  \begin{align*}
    a_4(q) &= \left( (-1,-1)+(-1)\cup w_1(q_1)+w_2(q_1) \right) \\
    & \cup \left( (-1)\cup w_1(q)+(-1,-1)+(-1)\cup w_1(q_2)+w_2(q_2) \right) \\
           &= (-1)^{\cup 4} + (-1,-1)\cup w_2(q_1)+(-1,-1)\cup w_2(q_2) + w_2(q_1)\cup w_2(q_2) \\
           &+ (-1)^{\cup 3}\cup w_1(q_1) + (-1)\cup w_1(q_1)\cup w_2(q_2) + (-1)\cup w_3(q_1).
  \end{align*}
  D'autre part, d'après la proposition \ref{prop_fixed_dim} on a 
  \begin{align*}
    v_4^{(1)}(q) &= a_3(q) + (-1)^{\cup 4} + (-1,-1)\cup w_2(q)+ w_4(q) \\
                 &= (-1)^{\cup 4} + (-1,-1)\cup w_2(q_1)+(-1,-1)\cup w_2(q_2) + w_2(q_1)\cup w_2(q_2) \\
                 &+ (-1,-1)\cup w_1(q_1)\cup w_1(q_2)  + w_1(q_1)\cup w_3(q_2) + w_3(q_1)\cup w_1(q_2) 
  \end{align*}
  donc
  \begin{align*}
    a_4(q)-v_4^{(1)}(q) &= (-1)^{\cup 3}\cup w_1(q_1) + (-1)\cup w_1(q_1)\cup w_2(q_2) \\
                        &+ (-1)\cup w_3(q_1) + (-1,-1)\cup w_1(q_1)\cup w_1(q_2) \\
                        &+ w_1(q_1)\cup w_3(q_2) + w_3(q_1)\cup w_1(q_2).
  \end{align*}
  On conclut en montrant que cette classe est égale à $w_1(q_1)\cup a_3(q)$,
  qui vaut 0 par hypothèse. En effet, en utilisant le fait que
  $w_1(q_1)\cup w_1(q_1)=(-1)\cup w_1(q_1)$, $w_1(q_1)\cup w_2(q_1) = w_3(q_1)$
  et $w_1(q_1)\cup w_3(q_1)=(-1)\cup w_3(q_1)$, on trouve :
  \begin{align*}
    w_1(q_1)\cup a_3(q) &= w_1(q_1)\cup ((-1,-1)\cup w_1(q) + w_3(q)) \\
                        &= w_1(q_1)\cup((-1,-1)\cup w_1(q_1) +(-1,-1)\cup w_1(q_2) + w_3(q_1) \\
                        &+ w_2(q_1)\cup w_1(q_2) + w_1(q_1)\cup w_2(q_2) + w_3(q_2) \\
                        &= (-1)^{\cup 3}\cup w_1(q_1) + (-1,-1)\cup w_1(q_1)\cup w_1(q_2) \\
                        &+ (-1)\cup w_3(q_1) + w_3(q_1)\cup w_1(q_2) \\
                        &+ (-1)\cup w_1(q_1)\cup w_2(q_2) + w_1(q_1)\cup w_3(q_2).
  \end{align*}
\end{proof}

\subsection{Invariant $a_5$ en degré 12}\label{sec_a5}

Dans le premier chapitre, on a décrit les invariants cohomologiques
des foncteurs $I^n$, mais également des $I^{n,1}$, dont les éléments
sont de la forme $\pfis{c}q$ où $q$ est dans $I^{n-1}$. Ces invariants
sont du type $\Delta^{n,1}(\alpha): \pfis{c}q\mapsto (c)\cup \alpha(q)$ où $\alpha$ est un
invariant quelconque de $I^{n-1}$ (voir la proposition \ref{prop_delta_inv}).
On veut étendre ces invariants à certains types d'algèbres à involution,
et en particulier on étend l'invariant $a_5$ défini dans \cite[def 20.8]{Gar}
(voir aussi \cite[§9]{R99}) aux algèbres d'indice 2.

Précisément, soit $(A,\sigma)$ une algèbre à involution orthogonale de degré 12
telle que $\sigma$ soit \og{}génériquement dans $I^3$\fg{},
c'est-à-dire que sur tout corps de déploiement $L$, $\sigma_L$
est adjointe à une forme $q\in I^3(L)$ (ou encore, c'est équivalent
au fait que $\fdiag{1}_\sigma\in \tld{I}_0^3(A,\sigma)$). Alors,
comme d'après \cite{Pf} toute forme quadratique de degré 12 dans
$I^3$ est en réalité dans $I^{3,1}$, $\sigma$ est génériquement
dans $I^{3,1}$. On suppose maintenant que $A$ est d'indice 2.
Pour tout invariant cohomologique $\alpha\in \Inv^d(I^2,\mu_2)$,
on peut donc appliquer $\Delta^{3,1}(\tld{\alpha})$
à $(A,\sigma)$ pour obtenir un élément de $M_Q^d(K)$, et si cet invariant
est nul on peut définir $\Delta^{3,1}(\alpha)(A,\sigma)\in M_Q^{d+1}(K)$.

\begin{prop}
  Soit $(A,\sigma)$ une algèbre à involution orthogonale de degré 12
  et d'indice au plus 2
  telle que $\sigma$ soit adjointe à une forme dans $I^3(L)$ pour
  tout corps de déploiement $L$ de $A$. Alors si $(-1)\cup e_3(A,\sigma)=0$,
  on peut définir un invariant $a_5(A,\sigma)\in M_A^5(K)$ qui dans le cas
  déployé correspond à l'invariant $a_5$ de \cite{Gar}.
\end{prop}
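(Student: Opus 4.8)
The plan is to recognize the split invariant $a_5$ as the image under the isomorphism $\Delta^{3,1}$ of Proposition~\ref{prop_pi_delta} of the degree-$4$ invariant $u_4^{(2)}$, and then to feed it into the generic splitting machinery of Proposition~\ref{prop_ber}. By \cite{Pf} every $12$-dimensional form in $I^3$ lies in $I^{3,1}$, so the functor $F$ of $12$-dimensional forms in $I^3$ is a subfunctor of $\Quad$ whose elements lie in $I^{3,1}$; it is stable by similitude because $\fdiag{\lambda}q = q - \pfis{\lambda}q \in I^3$ whenever $q\in I^3$. On $F$ the invariant of \cite[def 20.8]{Gar} is exactly $a_5 = \Delta^{3,1}(u_4^{(2)})$, that is $a_5(\pfis{c}q') = \{c\}\cdot u_4^{(2)}(q')$ for $q'\in I^2$, since $e_5(\pfis{c}\pi_2^2(q')) = \{c\}\cup e_4(\pi_2^2(q'))$. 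Before invoking Proposition~\ref{prop_ber} I must secure that $a_5$ is non-ramified in the sense of the paragraph preceding it: I would lift $a_5$ to the Witt invariant $\Delta^{3,1}(\pi_2^2)\colon F\to I^5$ and show its value on a non-ramified $q$ is again a non-ramified quadratic form, using that the $\lambda$-operations (hence $\pi_2^2$) preserve forms diagonalizable over the valuation ring, as in the proof of Proposition~\ref{prop_ram_witt}, together with the factorization-independence of $\Delta^{3,1}$.

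The heart of the argument is the computation of the similitude obstruction $\widetilde{a_5}$. Since $q'\in I^2$ and $I^2$ is stable by similitude, the factorization $\fdiag{\lambda}(\pfis{c}q') = \pfis{c}(\fdiag{\lambda}q')$ is again admissible, so by Proposition-définition~\ref{prop_def_psi},
\[ a_5(\fdiag{\lambda}\pfis{c}q') = \{c\}\,u_4^{(2)}(\fdiag{\lambda}q') = a_5(\pfis{c}q') + \{\lambda\}\{c\}\,\widetilde{u_4^{(2)}}(q'). \]
As cup products of degree-$1$ classes commute modulo $2$, this exhibits $\widetilde{a_5} = \Delta^{3,1}(\widetilde{u_4^{(2)}})$. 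Now $u_4^{(2)} = g_2^2$ (Proposition~\ref{prop_f_g}), so Proposition~\ref{prop_simil} with $n=d=2$ gives $\widetilde{u_4^{(2)}} = \{-1\}g_2^1 = (-1)\cup e_2$. Using the $A(k)$-linearity of $\Delta^{3,1}$ and the identity $\Delta^{3,1}(e_2) = e_3$ on $I^{3,1}$ (which follows from $e_3(\pfis{c}q') = \{c\}\cup e_2(q')$), I obtain
\[ \widetilde{a_5} = (-1)\cup \Delta^{3,1}(e_2) = (-1)\cup e_3. \]

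With this in hand the proposition follows formally. The invariant $\widetilde{a_5}$, being of the form $\widetilde{\beta}$, is invariant by similitude (Remark~\ref{rem_inv_simil}), so $\widetilde{a_5}(A,\sigma)$ is always defined and equals $(-1)\cup e_3(A,\sigma)$, the latter making sense because $e_3$ is itself similitude-invariant (its tilde vanishes since $\delta = 0$ in cohomology). The hypothesis $(-1)\cup e_3(A,\sigma)=0$ therefore says precisely that $\sigma$ lies in $F_{a_5,Q}(K)$, where $Q\sim A$ is the quaternion algebra Brauer-equivalent to $A$. Proposition~\ref{prop_ber} then produces $\widehat{a_5}(A,\sigma)\in M_Q^5(K)$, and since $M_Q^d$ depends only on the Brauer class of $A$ we have $M_Q^5(K)=M_A^5(K)$, yielding the desired $a_5(A,\sigma)$. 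By the defining property of $\widehat{a_5}$, over any splitting field $L$ of $A$ its value agrees with $a_5(q)$ for any $q$ with $\sigma_q\simeq \sigma_L$, i.e. with the invariant of \cite{Gar}.

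The main obstacle I anticipate is the non-ramification check for $a_5$ on $F$: the generic-splitting formalism of Proposition~\ref{prop_ber} applies cleanly only once this is established, and because the Pfister factorization $q=\pfis{c}q'$ need not be realizable with $c$ and $q'$ both unramified, the verification cannot rest on a naive diagonalization but must be carried out at the level of the Witt lift $\Delta^{3,1}(\pi_2^2)$, exploiting that its value depends only on $q$ and not on the chosen factor $\pfis{c}$.
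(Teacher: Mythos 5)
Your proposal is correct and follows essentially the same route as the paper's own (very terse) proof: identify $a_5$ with $\Delta^{3,1}(u_4^{(2)})$, compute the similitude obstruction $\tld{a_5}=\Delta^{3,1}(\tld{u_4^{(2)}})=(-1)\cup e_3$ via $u_4^{(2)}=g_2^2$ and Proposition \ref{prop_simil}, then invoke the generic splitting machinery of Proposition \ref{prop_ber}. You are in fact more explicit than the paper on the two hypotheses of that machinery (the compatibility of $\Psi$ with $\Delta^{3,1}$, and the non-ramification of $a_5$, both left implicit in the paper's appeal to the preceding discussion); your concern about the latter is legitimate but can be settled by passing to the completion, where a non-ramified Witt class in $I^{3,1}$ descends to the residue field by Springer's theorem and Pfister's classification, and then using functoriality together with the factorization-independence of $a_5$.
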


\begin{proof}
  Par construction, l'invariant $a_5$ est
  égal (avec nos notations) à $\Delta^{3,1}(u_4^{(2)})$, donc la
  discussion du paragraphe précédent s'applique avec
  $\alpha=u_4^{(2)}\in \Inv^4(I^2,\mu_2)$. On a
  alors $\tld{\alpha}=(-1)\cup u_2^{(2)}=(-1)\cup e_2$, donc
  $\Delta^{3,1}(\tld{\alpha})=(-1)\cup e_3$. On définit donc,
  lorsque $(-1)\cup e_3(A,\sigma)=0$, $a_5(A,\sigma)\in M^5_Q(K)$
  comme $\Delta^{3,1}(u_4^{(2)})(A,\sigma)$.
\end{proof}

On peut par ailleurs remarquer que la condition de l'énoncé est
toujours vérifiée lorsque $-1$ est un carré dans $K$.




\chapter*{Index des notations}

\addcontentsline{toc}{chapter}{Index des notations}

\section*{Général}

\begin{longtable}{rll}
  $k$ & corps de caractéristique $\neq 2$ & \\
  $K$ & extension de $k$ & \\
  $L$ & extension de $K$ & \\
  $(K,v)$ & corps $K$ muni d'une valuation discrète de rang 1  & \\
  $\mathbf{Field}_{/k}$ & catégorie des extensions de corps de $k$ & \\
  $\mathbf{Set}$ & catégorie des ensembles & \\
  $\mathbf{Set_*}$ & catégorie des ensembles pointés & \\
  $(a_1,\dots,a_r)$ & symbole galoisien dans $H^n(K,\mu_2)$ & \\
  $\binom{n}{a_1,\dots,a_r}$ & coefficient multinomial & \\
  $\lceil x \rceil$ & partie entière supérieure de $x$ & \\
  $\lfloor x \rfloor$ & partie entière inférieure de $x$ & \\
  $s\lor t$ & \emph{OU} logique appliqué aux bits de $s$ et $t$ & (\ref{cor_prod_f})\\
  $s \land t$ & \emph{ET} logique appliqué aux bits de $s$ et $t$ & (\ref{cor_prod_f}) \\
  $\mathfrak{S}_{p,q}$ & sous-groupe de Young de $\mathfrak{S}_d$ & p.\pageref{par_young} \\
  $Sh(p,q)$ & ensemble des $(p,q)$-mélanges de $\mathfrak{S}_d$ & p.\pageref{par_shuffle} \\
  $R[G]$ & algèbre de groupe de $G$ sur $R$ & \\
  $R_g$ & composante de degré $g\in G$ de $R$, anneau $G$-gradué & \\
  $\Phi$ & isomorphisme tautologique $R\oplus R\to R[\Zd]$ & (\ref{eq_def_phi_ann}) \\
  $x_{(0)}$ & $\Phi(x,0)$ & (\ref{eq_def_phi_ann}) \\
  $x_{(1)}$ & $\Phi(0,x)$ & (\ref{eq_def_phi_ann}) \\
  $\Delta$ & plongement diagonal $R\to R[\Zd]$ & (\ref{eq_def_delta_ann}) \\
  $\Psi$ & isomorphisme modifié $R\oplus R\to R[\Zd]$ & (\ref{eq_def_psi_ann}) \\
\end{longtable}

\section*{Formes quadratiques}

\begin{longtable}{rll}
  $\fdiag{a_1,\dots,a_r}$ & forme quadratique diagonalisée & \\
  $C(q)$ & algèbre de Clifford de $q$ & \\
  $C_0(q)$ & algèbre de Clifford paire de $q$ & \\
  $\disc(q)$ & discriminant (signé) de $q$ & \\
  $\mathcal{H}$ & plan hyperbolique symétrique & \\
  $\mathcal{H}_{-1}$ & plan hyperbolique alterné & \\  
  $SGW(K)$ & semi-anneau des formes quadratiques sur $K$ & (\ref{def_gw_mixte}) \\
  $GW(K)$ & anneau de Grothendieck-Witt de $K$ & \\
  $W(K)$ & anneau de Witt de $K$ & \\
  $SGW^-(K)$ & monoïde des formes alternées sur $K$ & (\ref{def_gw_mixte}) \\
  $GW^-(K)$ & groupe de Grothendieck-Witt des formes alternées sur $K$ & \\
  $SGW^\pm(K)$ & $SGW(K)\oplus SGW^-(K)$ & \\
  $GW^\pm(K)$ & $GW(K)\oplus GW^-(K)$ & \\
  $\hat{I}(K)$ & idéal des formes de dimension nulle de $GW(K)$ & p.\pageref{par_pfister} \\
  $I(K)$ & idéal fondamental de $W(K)$ & p.\pageref{par_pfister} \\
  $\pfis{a}$ & forme de Pfister $\fdiag{1}-\fdiag{a}\in \hat{I}(K)$ & p.\pageref{par_pfister} \\
  $\pfis{a_1,\dots,a_r}$ & $r$-forme de Pfister $\pfis{a_1}\cdots\pfis{a_r}\in \hat{I}^r(K)$ & p.\pageref{par_pfister} \\
  $\Quad_{2r}$ & foncteur des formes quadratiques  de dim $2r$ &  \\
  $\Quad'_{2r}$ & sous-foncteur de $\Quad_{2r}$ des formes de discriminant trivial & p.\pageref{par_quad}\\
  $\hat{\lambda}^d(q)$ & opération $\lambda$ vue à travers $I\simeq \hat{I}$ & (\ref{lem_hat_lambda}) \\
  $P^d(q)$ & opération $\fdiag{a_1,\dots,a_n}\mapsto \sum_{i_1<\dots <i_d} \pfis{a_{i_1},\dots,a_{i_d}}$ & (\ref{def_pd}) \\
  $I^{n,r}(K)$ & formes semi-factorisées dans $I^n(K)$ & (\ref{def_in_r}) \\
\end{longtable}

\section*{Invariants}

\begin{longtable}{rll}
  $\Inv(F,H)$ & invariants de $F$ à valeurs dans $H$ & p.\pageref{par_inv} \\
  $\Inv_{norm}(F,H)$ & invariants normalisés de $F$ à valeurs dans $H$ & p.\pageref{par_inv} \\
  $\Inv(F,\mu_2)$ & inv. cohom. de $F$ à valeurs dans $\mu_2$ & p.\pageref{par_inv} \\
  $\Inv^d(F,\mu_2)$ & inv. cohom. de degré $d$ de $F$ à valeurs dans $\mu_2$ & p.\pageref{par_inv} \\
  $\Inv(F,W)$ & invariants de Witt de $F$ & p.\pageref{par_inv} \\
  $w_i$ & invariant de Stiefel-Whitney & (\ref{eq_stiefel}) \\
  $e_n$ & invariant de la conjecture de Milnor & (\ref{eq_milnor}) \\
  $A(K)$ & généralement $H^*(K,\mu_2)$ ou $W(K)$ & p.\pageref{par_a} \\
  $A^d(K)$ & composante de degré $d$ de $A(K)$ & p.\pageref{par_a} \\
  $\delta(A)=\delta$ & élément de $A(k)$ (généralement 0 ou 1) & (\ref{prop_exist_delta}) \\
  $f_n$ & morphisme $I^n(K)\to A^n(K)$ & p.\pageref{par_f} \\
  $\{a_1,\dots,a_r\}$ & $f_n(\pfis{a_1,\dots,a_r})$ & p.\pageref{par_f} \\
  $M$ & $\Inv(I^n, A)$ & p.\pageref{par_m} \\
  $M^d$ & $\Inv(I^n, A^d)$ & p.\pageref{par_m} \\
  $f_n^d$ & $(f_{nd}\circ \pi_n^d)\in \Inv^{nd}(I^n,A)$ & (\ref{def_f}) \\
  $g_n^d$ & invariant dans $\Inv^{nd}(I^n,A)$ & (\ref{def_gn}) \\
  $u_{nd}^{(n)}$ & $f_n^d$ si $A(K)=H^*(K,\mu_2)$ & (\ref{ex_pi_u}) \\
  $v_{nd}^{(n)}$ & $g_n^d$ quand $A(K) = H^*(K,\mu_2)$ & (\ref{ex_def_v}) \\
  $\Phi^\eps$ & opérateur $M\to M$ ($\eps=\pm 1$) & (\ref{prop_def_delta}) \\
  $\Phi$ & opérateur de décalage $\Phi^+$ & (\ref{prop_def_delta}) \\
  $\alpha^\eps$ & $\Phi^\eps(\alpha)$ ($\eps=\pm 1$)& (\ref{prop_def_delta}) \\
  $\alpha^{s+,t-}$ & $(\Phi^+)^s\circ (\Phi^-)^t(\alpha)$ & (\ref{eq_alpha_s_t}) \\
  $\alpha_{|I^{n+1}}$ & restriction de $\alpha\in \Inv(I^n,A)$ à $I^{n+1}$ & p.\pageref{par_restr} \\
  $\Psi$ & opérateur $M\to M$ & (\ref{prop_def_psi}) \\
  $\tld{\alpha}$ & $\Psi(\alpha)$ & (\ref{prop_def_psi}) \\ 
  $h^d(q)$ & $P^d(q)$ ou $w_d(q)$ & p.\pageref{par_h} \\
  $\Delta^{n,r}$ & opérateur $\Inv_{norm}(I^{n,r},A)\to \Inv_{norm}(I^{n-r},A)$ & (\ref{prop_delta_inv}) \\
  $\alpha^{(r)}$ & $\Delta^{n,r}(\alpha)$ & (\ref{prop_delta_inv}) 
\end{longtable}

\section*{Anneaux grecs}

\begin{longtable}{rll}
  $G(R)$ &  $1 + tR[[t]] \subset R[[t]]$ & (\ref{eq_gr}) \\
  $\alpha_t$ & opération grecque $R\to G(R)$ & (\ref{prop_alpha_t}) \\
  $\alpha^d$ & $d$-ième composante d'une opération grecque & (\ref{defi_ope_grec}) \\
  $\Gamma(R)$ & ensembe des opérations grecques de $R$ & (\ref{defi_ope_grec}) \\
  $+_G$ & addition de $G(R)$ (produit des séries) & (\ref{prop_prod_gr}) \\
  $\times_G$ & produit de $G(R)$ & (\ref{prop_prod_gr}) \\
  $\eps_R$ & morphisme naturel $G(R)\to R$ & (\ref{eq_eps_r}) \\
  $L(R)$ & lettres grecques de $R$ & (\ref{eq_l_r}) \\
  $\Lambda_R$ & application naturelle $\Gamma(R)\To L(R)$ & (\ref{def_lettre}) \\
  $A(R)$ & automorphismes de $G(R)$ commutant avec $\eps_R$ & (\ref{eq_a_r}) \\
  $\pi_n^d$ & $d$ième composante de l'opération grecque $\pi_n$ & (\ref{thm_pi}) \\
\end{longtable}

\section*{Algèbres et involutions}

\begin{longtable}{rll}
  $(A,\sigma)$ & algèbre à involution de première espèce sur $K$ & \\
  $(A_L,\sigma_L)$ & extension de $(A,\sigma)$ au corps $L$ & \\
  $Sym(A,\sigma)$ & éléments symétriques de $(A\sigma)$ & \\
  $Skew(A,\sigma)$ & éléments anti-symétriques de $(A,\sigma)$ & \\
  $C_A(B)$ & centralisateur de $B$ dans $A$ & \\
  $Z(A)$ & centre de $A$ & \\
  $A^{op}$ & algèbre opposée de $A$ & \\
  $\Trd_A$ & trace réduite de $A$ & \\
  $T_\sigma$ & forme trace d'involution de $(A,\sigma)$ & (\ref{eq_t_sigma}) \\
  $T_\sigma^+$ & restriction de $T_\sigma$ à $Sym(A,\sigma)$ & \\
  $T_\sigma^-$ & restriction de $T_\sigma$ à $Skew(A,\sigma)$ & \\
  $T_{\sigma,a}$ & forme trace d'involution tordue par $a$ & (\ref{ex_prod_diag}) \\
  $\Nrd_A$ & norme réduite de $A$ & \\
  $\sigma_b$ & involution adjointe de $b$ & \\
  $\sigma_q$ & involution adjointe de $q$ & \\
  $Cl(A,\sigma)$ & algèbre de Clifford (paire) de $(A,\sigma)$ & \\
  $K(A)$ & corps de déploiement générique de $A$ & \\
  $K_r(A)$ & corps de réduction générique à l'indice $2^r$ de $A$ & p.\pageref{par_red_gen} \\
  $\Br(K)$ & groupe de Brauer de $K$ & \\
  $[A]$ & classe de Brauer de $A$ & \\
  $A\sim B$ & $A$ et $B$ sont Brauer-équivalente & \\
  $\CBr$ & 2-groupe de Brauer de $K$ & partie \ref{sec_brauer} \\
  $\mathbf{Mor}(R)$ & catégorie de Morita d'un anneau commutatif $R$ & (\ref{rem_cat_mor}) \\
  $B\xrightarrow V A$ & $V$ est un $B$-$A$-bimodule simple & partie \ref{sec_brauer} \\
  $\dim(V)$ & dimension réduite d'un $A$-module & (\ref{def_dim}) \\
  $U^*$ & dual du $B$-$A$-bimodule $U$ & (\ref{eq_isom_dual}) \\
  $\mbox{}^\sigma U^\tau$ & bimodule dual vu à travers les involutions $\sigma$ et $\tau$ & (\ref{eq_u_tordu}) \\
  $g_A$ & élément de Goldman de $A$ & p.\pageref{par_goldman} \\
  $g_A(\pi)$ & élément de $A^{\otimes d}$ correspondant à $\pi \in \mathfrak{S}_d$ & p.\pageref{par_goldman} \\
  $s_{d,A}=s_d$ & élément anti-symétrisant de $A^{\otimes d}$ & (\ref{eq_def_sd}) \\
  $\Alt^d(V)$ & puissance alternée d'un $A$-module & (\ref{def_alt}) \\
  $\lambda^d(A)$ & $d$ième puissance extérieure de l'algèbre $A$ & (\ref{rem_lambda_alt}) \\
  $sh_{p,q}$ & application de mélange de $V^{\otimes d}$ & (\ref{eq_app_shuffle}) \\
  $x \# y$ & produit de mélange de $x$ et $y$ & (\ref{eq_prod_shuffle}) \\
  $\sigma^{\wedge d}$ & puissance extérieure d'une involution & (\ref{rem_lambda_alt_sigma}) \\
\end{longtable}

\section*{Formes hermitiennes}

\begin{longtable}{rll}
  $\sigma_h$ & involution adjointe de $h$ & \\
  $h\perp h'$ & somme orthogonale de deux formes $\eps$-herm. & \\
  $\fdiag{a}_\sigma$ & forme diagonale sur $(A,\sigma)$ & (\ref{ex_prod_diag}) \\
  $\mathcal{H}(A,\sigma)$ & espace hyperbolique hermitien sur $(A,\sigma)$ & \\
  $\CBrh$ & 2-groupe de Brauer hermitien de $K$ & partie \ref{sec_brauer_herm} \\
  $(B,\tau)\xrightarrow {(V,h)} (A,\sigma)$ & $(V,h)$ est un bimodule $\eps$-hermitien & partie \ref{sec_brauer_herm} \\
  $h_A$ & identité de $(A,\sigma)$ dans $\CBrh$ & (\ref{eq_ha}) \\
  $h^*$ & dual de la forme $\eps$-hermitienne $h$ & (\ref{eq_h_star}) \\
  $\phi_{(A,\sigma)}^{(d)}$ & isom. $(A,\sigma)^{\otimes d}\to (A,\sigma)$ ou $(K,\Id)$  & (\ref{prop_prod_red}) \\
  $\mathbf{Mor}_h(R,\iota)$ & catégorie de Morita hermitienne de $(R,\iota)$ & (\ref{rem_cat_mor}) \\
  $f\oplus f'$ & somme orthogonale de morphismes dans $\CBrh$ & \\
  $SGW^\eps(A,\sigma)$ & monoïde des formes $\eps$-hermitiennes sur $(A,\sigma)$ & (\ref{def_gw_mixte})\\
  $GW^\eps(A,\sigma)$ & gpe de Grothendieck-Witt des formes $\eps$-herm. & \\
  $W^\eps(A,\sigma)$ & groupe de Witt des formes $\eps$-hermitiennes & \\
  $SGW^\pm(A,\sigma)$ & $SGW(A,\sigma)\oplus SGW^-(A,\sigma)$ & (\ref{def_gw_mixte}) \\
  $GW^\pm(A,\sigma)$ & $GW(A,\sigma)\oplus GW^-(A,\sigma)$ & \\
  $W^\pm(A,\sigma)$ & $W(A,\sigma)\oplus W^-(A,\sigma)$ & \\
  $\tld{SGW}(A,\sigma)$ & semi-anneau de Grothendieck-Witt mixte de $(A,\sigma)$ & (\ref{def_gw_mixte}) \\
  $\tld{GW}(A,\sigma)$ & anneau de Grothendieck-Witt mixte de $(A,\sigma)$ & (\ref{def_gw_mixte}) \\
  $\tld{W}(A,\sigma)$ & anneau de Witt mixte de $(A,\sigma)$ & (\ref{def_w_mixte}) \\
  $\Gamma$ & groupe de graduation $\{+,-,o,s\}$ & (\ref{eq_gamma}) \\
  $\tld{SGW}(A,\sigma)_+$ & $SGW(K)$ & p.\pageref{par_grad} \\
  $\tld{GW}(A,\sigma)_+$ & $GW(K)$ & p.\pageref{par_grad} \\
  $\tld{W}(A,\sigma)_+$ & $W(K)$ & p.\pageref{par_grad_w} \\
  $\tld{SGW}(A,\sigma)_-$ & $SGW^-(K)$ & p.\pageref{par_grad} \\
  $\tld{GW}(A,\sigma)_-$ & $GW^-(K)$ & p.\pageref{par_grad} \\
  $\tld{W}(A,\sigma)_-$ & 0 & p.\pageref{par_grad_w} \\
  $\tld{SGW}(A,\sigma)_o$ & composante orthogonale de $\tld{SGW}(A,\sigma)$ & p.\pageref{par_grad} \\
  $\tld{GW}(A,\sigma)_o$ & composante orthogonale de $\tld{GW}(A,\sigma)$ & p.\pageref{par_grad} \\
  $\tld{W}(A,\sigma)_o$ & composante orthogonale de $\tld{W}(A,\sigma)$ & p.\pageref{par_grad_w} \\
  $\tld{SGW}(A,\sigma)_s$ & composante symplectique de $\tld{SGW}(A,\sigma)$ & p.\pageref{par_grad} \\ 
  $\tld{GW}(A,\sigma)_s$ & composante symplectique de $\tld{GW}(A,\sigma)$ & p.\pageref{par_grad} \\  
  $\tld{W}(A,\sigma)_s$ & composante symplectique de $\tld{W}(A,\sigma)$ & p.\pageref{par_grad_w} \\
  $\tld{GW}(K)$ & $\tld{GW}(K,\Id)$ & (\ref{eq_gw_depl}) \\
  $\tld{W}(K)$ & $\tld{W}(K,\Id)$ & (\ref{eq_w_depl}) \\
  $\Alt^d(h)$ & puissance alternée d'une forme $\eps$-herm. & (\ref{def_alt_h}) \\
  $\Lambda^d(V)$ & puissance extérieure d'un $A$-module & (\ref{def_lambda_v}) \\
  $\lambda^d(h)$ & puissance extérieure d'une forme $\eps$-herm. & (\ref{def_lambda_v}) \\ 
  $\dim(h)$ & dimension réduite d'une forme $\eps$-herm. & (\ref{def_dim}) \\
  $\tld{GI}(A,\sigma)$ & idéal fondamental de $\tld{GW}(A,\sigma)$ & p.\pageref{par_gi} \\
  $\tld{I}(A,\sigma)$ & idéal fondamental de $\tld{W}(A,\sigma)$ & p.\pageref{par_gi} \\
  $\tld{GI}(A,\sigma)_o$ & $\tld{GI}(A,\sigma)\cap \tld{GW}(A,\sigma)_o$ & p.\pageref{par_gi} \\
  $\tld{I}(A,\sigma)_o$ & $\tld{I}(A,\sigma)\cap \tld{W}(A,\sigma)_o$ & p.\pageref{par_gi} \\
  $\tld{GI}(A,\sigma)_s$ & $\tld{GI}(A,\sigma)\cap \tld{GW}(A,\sigma)_s$ & p.\pageref{par_gi} \\
  $\tld{I}(A,\sigma)_s$ & $\tld{I}(A,\sigma)\cap \tld{W}(A,\sigma)_s$ & p.\pageref{par_gi} \\
  $\tld{H}^n(A,\sigma)$ & cohomologie mixte & \ref{def_filtr} \\
  $\tld{GI}(K)$ & $\tld{GI}(K,\Id)$ & p.\pageref{par_gi_depl} \\
  $\tld{I}(K)$ & $\tld{I}(K,\Id)$ & p.\pageref{par_gi_depl} \\
  $GI(K)$ & $\Ker(GW^\pm(K)\to \Z)$ & p.\pageref{par_gi_depl}\\
  $GJ(K)$ & $\Ker(GW^\pm(K)\to \Zd)$ & p.\pageref{par_gi_depl}\\
  $GJ^{(n)}(K)$ & $GI^{n-1}(K)GJ(K)$ ($n\pgq 1$) & p.\pageref{par_gi_depl}\\
  $\tld{H}^n(K)$ & $\tld{H}^d(K,\Id)$ & (\ref{cor_h_tilde_depl}) \\
  $I^n(A,\sigma)_o$ & $I^{n-1}(K)W(A,\sigma)_o$ & (\ref{prop_decomp_cohom}) \\
  $H^n(A,\sigma)_o$ & $I^n(A,\sigma)_o/I^{n+1}(A,\sigma)_o$ & (\ref{eq_h_gamma}) \\
  $I^n(A,\sigma)_s$ & $I^{n-1}(K)W(A,\sigma)_s$ & (\ref{prop_decomp_cohom}) \\
  $H^n(A,\sigma)_s$ & $I^n(A,\sigma)_s/I^{n+1}(A,\sigma)_s$ & (\ref{eq_h_gamma}) \\
  $\tld{GI}_r^n(A,\sigma)$ & élém. dans $\tld{GI}^n(A,\sigma)$ sur $K_r(A)$ & (\ref{def_gi_red}) \\
  $\tld{I}_r^n(A,\sigma)$ & élém. dans $\tld{I}^n(A,\sigma)$ sur $K_r(A)$ & (\ref{def_gi_red}) \\
\end{longtable}

\section*{Quaternions}

\begin{longtable}{rll}
  $(Q,\can)$ & algèbre de quaternions munie de son involution canonique & \\
  $n_Q$ & forme norme d'une algèbre de quaternions $Q$ & \\
  $\phi_{z_1,\dots,z_r}$ & $r$-forme de Pfister $\pfis{z_1^2,\dots,z_r^2} - \pfis{-1}^{r-2}n_Q$ & (\ref{prop_phi_quater}) \\
  $\p$ & point fermé de la variété de Severi-Brauer & p.\pageref{par_valu} \\
  $v_{\mathfrak{p}}$ & valuation induite par $\p$ & p.\pageref{par_valu} \\
  $\partial_{1,\mathfrak{p}}$ & premier résidu $W(F)\to W(K(\mathfrak{p}))$ & p.\pageref{par_valu} \\
  $\partial_{2,\mathfrak{p}}$ & deuxième résidu $W(F)\to W(K(\mathfrak{p}))$ & p.\pageref{par_valu} \\
  $\ext_\infty$ & extension des scalaires $W^-(Q,\can) \to W(F)$ & (\ref{eq_ext_a}) \\
  $W_{nr,1,\infty}(F)$ & premier groupe de Witt non ramifié & (\ref{eq_w1}) \\
  $W_{nr,2,\infty}(F)$ & deuxième groupe de Witt non ramifié & (\ref{eq_w2}) \\
  $\partial_{\mathfrak{p}}$ & résidu $H^d(F,\mu_2)\to H^{d-1}(K(\mathfrak{p}),\mu_2)$ & p.\pageref{par_valu_cohom} \\
  $H^d_{nr}(F,\mu_2)$ & cohomologie non ramifiée & (\ref{eq_h_nr}) \\
  $\tld{e}_d$ & invariant $\tld{I}_0^d(Q,\can) \to  H^{d-1}_{nr}(F,\mu_2)$ & (\ref{eq_ed_tilde}) \\
  $M_A^d(K)$ & 2-torsion de $H^d(K,\mu_4^{\otimes d-1})/[A]\cdot H^{d-2}(K,\mu_2)$ & (\ref{eq_depl_ind_2}) \\
  $N_A^d(K)$ & $H^d(K,\mu_2)/[A]\cdot H^{d-2}(K,\mu_2)$ & p.\pageref{par_m_n} \\
  $F_Q$ & formes anti-herm. sur $Q$ compatibles avec $F$ & p.\pageref{par_fq} \\
  $F_{\alpha,Q}$ & sous-foncteur de $F_Q$ des formes compatibles avec $\alpha$ & p.\pageref{par_fq_alpha} \\
  $\hat{\alpha}$ & invariant dans $\Inv(F_{\alpha,Q}, M_Q^d)$ induit par $\alpha$ & (\ref{prop_ber})
\end{longtable}

\section*{Produits croisés}

\begin{longtable}{rll}
  $K$ & $k$-algèbre étale & p.\pageref{par_etale} \\
  $K_\p$ & composante de $K$ correspondant à $\p\i \Spec(K)$ & p.\pageref{par_etale} \\  
  $u_g$ & générateur d'un produit croisé & p.\pageref{par_ug} \\
  $\alpha(g,h)$ & 2-cocycle décrivant un produit croisé & (\ref{eq_alpha_gh}) \\
  $T_g$ & opération semi-linéaire sur $V$ & (\ref{eq_def_tg}) \\
  $S_g$ & opération semi-linéaire sur $\End_L(V)$ & (\ref{eq_def_sg}) \\
  $\overline{g}$ & $\theta_{|L} g^{-1}\theta_{|L}$ & (\ref{eq_g_bar}) \\
  $\theta_g$ & $g\theta_{|L}$ & (\ref{eq_theta_g}) \\
  $\theta'_g$ & $g^{-1}\theta_{|L}$ & (\ref{eq_theta_g}) \\
  $\mu_g$ & élément de $L^*$ décrivant l'involution sur un produit croisé & (\ref{prop_mu}) \\
  $\pi_g$ & projection $E\to L$ liée à $g$ & (\ref{eq_pi_g}) \\
  $h_g(x,y)$ & $\pi_g(h(x,y))$  & (\ref{eq_hg}) \\
  $f_g(x,y)$ & $\pi_g(\theta(x)y)$ & (\ref{eq_fg}) \\
  $\eps_g$ & $g^{-1}(\mu_g)$ & (\ref{prop_morita_crois}) \\
\end{longtable}

\section*{Algèbre de Clifford}

\begin{longtable}{rll}
  $Q$ &  algèbre de quaternion équivalente à $A$ & \\
  $B$ & $Cl(A,\sigma)$ & \\
  $(e_i)$ & base orthogonale de $V$ & \\
  $\fdiag{z_1,\dots,z_r}$ & diagonalisation de $h$ & \\
  $L$ & sous-algèbre étale de $B$ & \\
  $G$ & $\Gal(L/k)$ & \\
  $H$ & $\Gal(L/Z(B))$ & \\
  $\tau_{p,q}$ & élément de $G$ & \\
  $K$ & sous-corps quadratique de $Q$ & \\
  $j$ & quaternion pur, $Q=K\oplus jK$ & \\
  $\pi$ & $j^2$ & \\
  $s$ & élément non trivial de $\Gal(K/k)$ & \\
  $b$ & forme bilinéaire, $\sigma_K\simeq \sigma_b$ & \\
  $\phi_b$ & identification $V\otimes_K V\to \End_K(V)$ & \\
  $\psi$ & morphisme canonique $V\otimes_Q ( ^\gamma V)\to Cl(A,\sigma)$ & \\
  $u(x,y)$ & $\frac{1}{2}\psi(x\otimes y + y\otimes x)$ & \\
  $u(x)$ & $\frac{1}{2}\psi(x\otimes x)$ & \\
  $\xi_i$ & $u(e_i)$ & \\
  $\xi$ & $\prod_i \xi_i$ & \\
  $u_{p,q}$ & $u(e_p,e_q)$ & 
\end{longtable}

\bibliographystyle{plain}
\bibliography{manuscrit}

\pagebreak

\thispagestyle{empty}
\newgeometry{top=4em}

\begin{center}
\textbf{Thèse de mathématiques}
\end{center}

\vspace{2em}

\small{\textbf{Mots-clés:} invariant cohomologique, algèbre à involution,
  opération lambda, anneau de Witt, filtration fondamentale, équivalence
  de Morita hermitienne.}

\vspace{2em}

\small{\textbf{Résumé:} Afin d'étudier certains types d'objets algébriques, et notamment
les groupes algébriques, Serre a introduit la notion d'invariants, en particulier
d'invariants cohomologiques. La construction d'invariants cohomologiques non triviaux
de groupes algébriques est un domaine actif de la recherche actuelle, et très peu
d'invariants sont connus en degré strictement supérieur à 3.

Dans le premier chapitre, on donne une description complète des invariants de Witt
et des invariants cohomologiques des foncteurs $I^n$ comme combinaison d'invariants
fondamentaux se comportant comme des puissances divisées, dont la construction repose
de façon cruciale sur les opérations lambda dans l'anneau de Grothendieck-Witt. On
étudie également le comportement des ces invariants vis-à-vis de diverses opérations
comme le produit ou les similitudes.

Le deuxième chapitre est consacré à la construction d'un anneau de Witt \og{}mixte\fg{}
associé à une algèbre à involution: l'idée fondamentale est de définir le produit de deux
formes $eps$-hermitiennes à l'aide d'une équivalence de Morita canonique donnée par la
forme trace d'involution associée à l'algèbre. On définit également des opérations
lambda sur l'anneau de Grothendieck-Witt mixte, ainsi qu'une filtration fondamentale
imitant le cas déployé. Une attention particulière est portée aux calculs explicites
dans le cas des algèbres d'indice 2.

On mobilise ces outils dans le troisième chapitre pour imiter les constructions du
chapitre 1 dans le cadre des formes hermitiennes, et on construit ainsi des invariants
cohomologiques d'algèbres à involution, avec des résultats plus détaillés en indice 2.
L'intérêt principal est de pouvoir en principe construire des invariants non triviaux
de degré arbitrairement grand, bien que l'indice de l'algèbre constitue une forme
d'obstruction.}

\rule{18em}{0.4pt}

\vspace{1em}

\small{\textbf{Title: Cohomological invariants of algebraic groups and
  algebras with involution}}

\vspace{2em}

\small{\textbf{Abstract:} In order to study certain algebraic objects, and
  notably algebraic groups, Serre introduced the notion on invariants, in
  particular cohomological invariants. The construction of non-trivial
  cohomological invariants of algebraic groups is an active area of modern
  research, and very few invariants are known in degree greater than 3.

  In the first chapter, we give a complete description of Witt and
  cohomological invariants of the functors $I^n$ as combinations of
  fundamental invariants behaving like divided powers, whose construction
  relies crucially on lambda operations in the Grothendieck-Witt ring. We
  also study the behaviour of these invariants with respect to various
  operations such as products or similitudes.

  The second chapter is dedicated to the construction of a ``mixed''
  Witt ring associated to an algebra with involution: the fundamental
  idea is to define the product of two $\eps$-hermitian forms using
  a Morita equivalence given by the involution trace form of the algebra.
  We also define lambda operations on the mixed Grothendieck-Witt ring,
  as well as a fundamental filtration imitating the split case. A particular
  attention is given to explicit calculations in the case of algebras of
  index 2.

  We use those tools in the third chapter to mimic the constructions of
  chapter 1 in the framework of hermitian forms, and thus construct cohomological
  invariants of algebras with involution, with more detailed results in index 2.
  The main interest is to be in principle able to define non-trivial invariants
  of arbitrarily high degree, although the index constitues a form of obstruction.
}

\vspace{2em}

\large{
\textit{Thèse préparée au LAGA (UMR 7539) - Institut Galilée}

\textit{99 avenue Jean Baptiste clément}

\textit{93430 Villetaneuse}}

\end{document}